\def\be{\begin{equation}}
\def\ee{\end{equation}}
\def\beq{\begin{eqnarray*}}
\def\eeq{\end{eqnarray*}}
\def\Z{\mathbb{Z}}
\newcommand{\pic}[3]{\parbox[c]{#1cm}{\includegraphics[scale=#2]{#3}}}
\newcommand{\picw}[2]{\parbox[c]{#1cm}{\includegraphics[width = #1cm]{#2}}}
\newcommand{\cerchio}{\includegraphics[width = .4 cm]{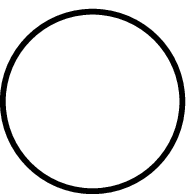}}
\newcommand{\teta}{\includegraphics[width = .4 cm]{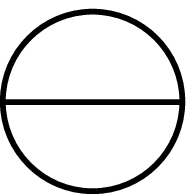}}
\newcommand{\tetra}{\includegraphics[width = .4 cm]{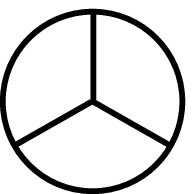}}
\newcommand{\gl}{{\rm gl}}
\newcommand{\ord}{{\rm ord}}
\newtheorem{theo}{Theorem}[section]
\newtheorem{cor}[theo]{Corollary}
\newtheorem{lem}[theo]{Lemma}
\newtheorem{prop}[theo]{Proposition}
\newtheorem{conj}[theo]{Conjecture}
\theoremstyle{definition}
\newtheorem{defn}[theo]{Definition}
\newtheorem{ex}[theo]{Example}
\newtheorem{quest}[theo]{Question}
\newtheorem{rem}[theo]{Remark}
\begin{document}

\begin{titlepage}

\begin{center}

\large{\textbf{UNIVERSIT\`A DEGLI STUDI DI PISA}}\\
\large{\textbf{TESI DI DOTTORATO}}\\
\large{\textbf{IN MATEMATICA}}\\
\vspace{2cm}
\Large{\textbf{ALESSIO CARREGA}}\\
\vspace{1.5cm}
\Large{\textbf{SHADOWS AND QUANTUM INVARIANTS}}\\
\vspace{2cm}
\centerline{\includegraphics[width=3cm]{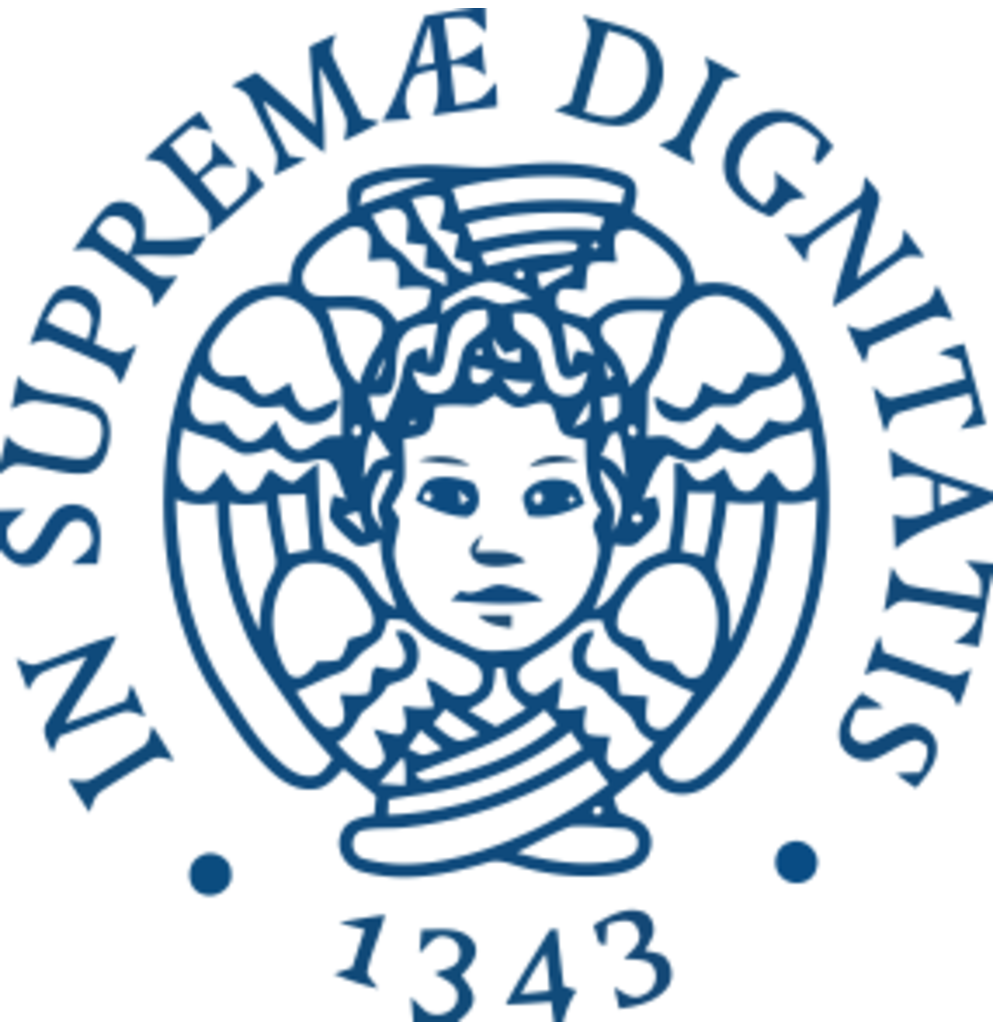}}
\vspace{2cm}
\large{\textbf{RELATORE}}\\
\large{Prof. Bruno Martelli}\\
\vspace{2cm}
\large{$28^\circ$ ciclo di dottorato}\\
\large{2016}

\end{center}

\end{titlepage}

\pagenumbering{roman}

\tableofcontents

\chapter*{Introduction}

\addcontentsline{toc}{chapter}{Introduction}



In 1984 \cite{Jones} V. Jones defined the famous \emph{Jones polynomial}. His discovery unveiled some unexpected connections between algebra, topology, and theoretical physics. Following Jones' initial discovery a variety of knots and 3-manifolds invariants came out. These are called \emph{quantum invariants} and are quite complicated and mysterious. It is a fundamental aim in modern knot theory to ``understand'' the Jones polynomial, that means finding relations between quantum invariants (in particular the Jones polynomial) and geometric and topological proprieties of manifolds (\emph{e.g.} hyperbolic volume, embedded surfaces, decompositions).

Most quantum invariants arise from representations of \emph{quantum groups}, which are deformations $U_q(\mathfrak{g})$ of the universal enveloping algebras $U(\mathfrak{g})$ of a semi-simple complex Lie algebras $\mathfrak{g}$ (see for instance \cite{Chari-Pressley, Kassel}). The most simple and studied quantum group is $U_q(\mathfrak{sl}_2)$. Although it is the simplest case, it is rather general and complicated. 

From this framework of representations of $U_q(\mathfrak{sl}_2)$ we can get the 3-manifold invariants called $SU(2)$-\emph{Reshetikhin-Turaev-Witten invariants} \cite{Turaev} that form the first invariants of 3-manifolds and were given by Witten \cite{Witten} as a part of his quantum-fields-theoretic explanation of the origin of the Jones polynomial. They were then rigorously constructed by Reshetikhin and Turaev \cite{Reshetikhin-Turaev}. Other important quantum invariants coming from the representations of $U_q(\mathfrak{sl}_2)$ are the \emph{colored Jones polynomials}. In particular the $n^{\rm th}$ colored Jones polynomial arises from the $n^{\rm th}$ \emph{indecomposable} representation (or the $(n+1)$-dimensional representation). The first one is the classical Jones polynomial.

Turaev's \emph{shadows} are 2-dimensional polyhedral objects related to smooth 4-manifolds. These are the 4-dimensional analogue of \emph{spines} of 3-manifolds. They were defined by Turaev \cite{Turaev:preprint, Turaev} and then considered by various authors, see for instance \cite{Burri, Carrega_RTW, Carrega_Taitg, Carrega-Martelli, Costantino1, Costantino2, Costantino-Thurston, Costantino-Thurston:preprint, Goussarov, IK, Martelli, Shu, Thurston, Turaev1}. Shadows represent a large class of compact 4-manifolds and encode all the pairs $(M,G)$ where $M$ is an oriented 3-manifold and $G$ a knotted, framed, trivalent graph in $M$ (\emph{e.g.} a framed link).

In \cite{Turaev} Turaev defined shadows and showed how to get the quantum invariants from them through a formula that works in a general context: for any \emph{ribbon category}. Representations of quantum groups form non trivial examples of ribbon categories. These formulas are called \emph{shadow formulas}. These look like Euler characteristics: they are composed by elementary bricks associated to the maximal connected pieces of dimension $0$ (vertices), $1$ (edges) and $2$ (regions) of the shadow, and they are combined together with a ``sign'' depending on the parity of the dimension.

An alternative approach to representation theory for quantum invariants is provided by \emph{skein theory}. The word ``skein'' and the notion were introduced by Conway in 1970 for his model of the Alexander polynomial. This idea became really useful after the work of Kauffman \cite{Kauffman} which redefined the Jones polynomial in a very simple and combinatorial way passing through the \emph{Kauffman bracket}. These combinatorial techniques allow us to reproduce all quantum invariants arising from the representations of $U_q(\mathfrak{sl}_2)$ without any reference to representation theory. This also leads to many interesting and quite easy computations. This skein method was used by Lickorish \cite{Lickorish1, Lickorish2, Lickorish3, Lickorish4}, Blanchet, Habegger, Masbaum and Vogel \cite{BHMV}, and Kauffman and Lins \cite{Kauffman-Lins}, to re-interpret and extend some of the methods of representation theory. We are interested just in these quantum invariants that can be obtained via skein theory, in particular the colored Jones polynomial, the Kauffman bracket, the $SU(2)$-Reshetikhin-Turaev-Witten invariants and the \emph{Turev-Viro invariants}.

The first notion in skein theory is the one of ``\emph{skein space}'' (or \emph{skein module}). These are vector spaces (or modules over a ring) associated to oriented 3-manifolds. These were introduced independently in 1988 by Turaev \cite{Turaev0} and in 1991 by Hoste and Przytycki \cite{HP0}. The framed links in a oriented 3-manifold $M$ can be seen as elements of the skein space of $M$. In fact these generate the skein space. There are many interesting open questions about skein spaces. We can get an important application of quantum invariants already from skein spaces. In fact the evaluation in $A=-1$ of the $\mathbb{C}[A,A^{-1}]$-skein module is an algebra and almost coincides with the ring of the $SL_2(\mathbb{C})$-\emph{character variety} of the 3-manifold \cite{Bullock2}. Moreover they are useful to generalize the Kauffman bracket to manifolds other than $S^3$ and this is the aspect we are mostly interested in. Thanks to result of Hoste-Przytycki \cite{HP2, Pr2} and (with different techniques) to Costantino \cite{Costantino2}, now we can define the Kauffman bracket also in the connected sum $\#_g(S^1\times S^2)$ of $g\geq 0$ copies of $S^1\times S^2$.

Only for few manifolds the skein space is known. A natural open question about skein
spaces is whether the skein vector space of every closed 3-manifold is
finitely generated. In \cite{Carrega_3-torus} we proved that the skein vector space of the 3-torus is finitely generated, in particular we showed $9$ generators. In \cite{Gilmer} it has been proved that that set of generators is actually a basis.

Thirty years after its discovery, we know only a few topological applications of the Jones polynomial. Several topological applications of quantum invariants concern their behavior near a fixed complex point. Some notable applications (or conjectures) are: 
\begin{itemize}
\item{the \emph{Bullock's theorem} about the \emph{character variety} \cite{Bullock2};}
\item{the \emph{volume conjecture} \cite{Murakami};}
\item{the \emph{Chen-Yang's volume conjecture} \cite{Chen-Yang};}
\item{the \emph{slope conjecture} \cite{Garoufalidis};}
\item{the \emph{AJ-conjecture} \cite{ThangLe-Tran, Marche2};}
\item{the \emph{Tait conjecture} \cite{Lickorish, Thistlethwaite, Kauffman_Tait, Murasugi1, Murasugi2};}
\item{the \emph{Eisermann's theorem} \cite{Eisermann}.}
\end{itemize}

The volume conjecture and the Chen-Yang's volume conjecture are about a limit of evaluations respectively of the colored Jones polynomial, and the Turaev-Viro invariants and the Reshetikhin-Turaev-Witten invariants, where the evaluation points converge to $1$. 

The slope conjecture relates the degree of the colored Jones polynomial of a knot in $S^3$ with the slope of the incompressible surfaces of the complement. 

The AJ-conjecture concerns some more complex algebraic properties of the colored Jones polynomial, like generators of principal ideals related to it. This relates the colored Jones polynomial to the $A$-\emph{polynomial}.

The Tait conjecture regards the \emph{breadth} of the Jones polynomial that is something like the degree, it concerns both the behavior near $\infty$ and near $0$. This is a proved theorem about the \emph{crossing number} of \emph{alternating links}.

Eisermann's theorem concerns the behavior of the Kauffman bracket in the imaginary unit $q=A^2= i$. This connects the Jones polynomial to 4-dimensional smooth topology, in particular to \emph{ribbon surfaces}. 

The Tait conjecture (as a result, not just as a conjecture) and Eisermann's theorem have been extended by the author and B. Martelli \cite{Carrega_Tait1, Carrega_Taitg, Carrega-Martelli} in several directions by using the technology of Turaev's shadows and the shadow formula for the Kauffman bracket.

In the $19^{\rm  th}$ century, during his attempt to tabulate all knots in $S^3$, P.G. Tait \cite{Tait} stated three conjectures about crossing number, alternating links and \emph{writhe number}. By ``the Tait conjecture'' we mean the one stating that \emph{alternating} \emph{reduced} diagrams of links in $S^3$ have the minimal number of crossings. As said before, the conjecture has been proved in 1987 by Thistlethwaite-Kauffman-Murasugi studying the Jones polynomial. In \cite{Carrega_Tait1} we proved the analogous result for alternating links in $S^1\times S^2$ giving a complete answer to this problem. In \cite{Carrega_Taitg} we extended the result to alternating links in the connected sum $\#_g(S^1\times S^2)$ of $g\geq 0$ copies of $S^1\times S^2$. In $S^1\times S^2$ and $\#_2(S^1\times S^2)$ the appropriate version of the conjecture is true for $\Z_2$-homologically trivial links, and the proof also uses the Jones polynomial. Unfortunately in the general case the method provides just a partial result and we are not able to say if the appropriate statement is true. For $\Z_2$-homologically non trivial links the appropriate version of the Tait conjecture is false.

Eisermann showed that the Jones polynomial of a $n$-component ribbon link $L\subset S^3$ is divided by the Jones polynomial of the trivial $n$-component link. The theorem has been improved by the author and Martelli \cite{Carrega-Martelli} extending its range of application from links in $S^3$ to colored knotted trivalent graphs in $\#_g(S^1\times S^2)$. The result is based on the \emph{order} at $q=A^2=i$ of the Kauffman bracket. This is an extension of the multiplicity of the Kauffman bracket in $q=A^2=i$ as a zero. In particular we showed that if the Kauffman bracket of a knot in $\#_g(S^1\times S^2)$ has a pole in $q=A^2=i$ of order $n$, the \emph{ribbon genus} of the knot is at least $\frac {n+1}2$. The result could be a tool to show that a slice link in $\#_g(S^1\times S^2)$ is not ribbon, namely that the (extended) \emph{slice-ribbon conjecture} is false.

\subsection*{Structure of the thesis}

$1.$ In the first chapter we talk about general proprieties of quantum invariants, in particular the Jones polynomial, skein spaces, the Kauffman bracket in $\#_g(S^1\times S^2)$ and the $SU(2)$-Reshetikhin-Turaev-Witten invariants, moreover we investigate the skein space of the 3-torus. We start giving some basic notions about knot theory, then we talk about the Jones polynomial for links in $S^3$ in the Kauffman version and we give a brief general overview on quantum invariants. After that, we start with skein theory and we give a brief survey about skein spaces (and skein modules). Then we talk about the skein vector space of the 3-torus showing a basis of $9$ elements. As said before, with skein theory we can define the Kauffman bracket in the connected sum $\#_g(S^1\times S^2)$ of $g$ copies of $S^1\times S^2$, and we dedicate a section to proprieties and examples of the Kauffman bracket in this general setting. We conclude the chapter introducing the $SU(2)$-Reshetikhin-Turaev-Witten invariants via skein theory.

$2.$ The second chapter is devoted to Turaev's shadows. We introduce them, we list some general theorems and some examples. There are moves that relates shadows
representing the same object and we talk also about them. Then we introduce the shadow formula for the Kauffman bracket in $\#_g(S^1\times S^2)$ and the $SU(2)$-Reshetikhin-Turaev-Witten invariants and we provide proofs of these formulas that are based on skein theory, and hence might be easier to understand than the ones presented by Turaev in the extremely general case he was interested in.

$3.$ The third chapter is devoted to topological and geometric applications of the quantum invariants. This is a survey where we present some famous applications (or conjectures) and we describe some little new ones. In particular we focus on: the Bullock's theorem about the character variety, the volume conjecture, the Chen-Yang's volume conjecture, the Tait conjecture, the Eisermann's theorem, the classification of \emph{rational 2-tangles} and a criterion for \emph{non sliceness} of \emph{Montesinos links}. In order to understand some applications we also dedicate a section to the notions of ``ribbon surface'', ``ribbon link'', ``slice link'' \ldots Although the aim of the chapter is to present topological applications of quantum invariants, in the last section we talk about something a little different. We present an application of Turaev's shadows that goes in favor of the slice-ribbon conjecture.

$4.$ In the fourth chapter we state and prove the Tait conjecture in $\#_g(S^1\times S^2)$ as extended by the author. We discuss all the hypothesis of the main theorems. Note that the case $g=1$ needs just some basic notions of skein theory while the general case needs more complicated tools like shadows. We also ask some open questions related to the problem.

$5.$ In the fifth chapter we talk about the extension of Eisermann's theorem. We state and prove the result and we list some examples and corollaries. We give other lower and upper bounds of the order at $q=A^2=i$ and we ask some open questions.

$6.$ In the latest chapter we classify the non \emph{H-split} links in $S^1\times S^2$ that are not contained in a 3-ball (Definition~\ref{defn:split_homotopic_genus} and Proposition~\ref{prop:split}, the really interesting links in $S^1\times S^2$) and have crossing number at most $3$ (Definition~\ref{defn:alt_cr_num}
), and we compute some invariants. In the list links are seen up to reflections with respect to a Heegaard torus and reflections with respect to the $S^1$ factor. Although we just consider links with crossing number at most $3$, interesting examples come out.

\subsection*{Original contributions}

Section~\ref{sec:3-torus}, Section~\ref{sec:Kauf_g}, Subsection~\ref{subsec:proof_sh_for_RTW}, Section~\ref{sec:Montesinos}, Section~\ref{sec:sh_rib_handle}, Chapter~\ref{chapter:Tait}, Section~\ref{sec:lower_bounds}, Section~\ref{sec:upper_bounds} and Chapter~\ref{chapter:table} are original contributions of the author. Subsection~\ref{subsec:proof_sh_for_Kauf_br} and most of Chapter~\ref{chapter:Eisermann} are made in strict collaboration with Bruno Martelli. 

\subsection*{Acknowledgments}

The author is warmly grateful to Bruno Martelli for his constant support and encouragement.

\chapter{Quantum invariants and skein theory}\label{chapter:quantum_inv}

\pagenumbering{arabic}

\section{Preliminaries}

We will work in the smooth category. In particular for ``manifold'' we mean ``smooth manifold'', and for ``embedding we mean ``smooth embedding''. In dimension at most 4 the PL-theory is equivalent to the smooth one and we will always be in this situation. 

It is natural to have an ambient object and to study its sub-objects. Our main objects are low-dimensional manifolds (manifolds of dimension at most $4$), and the sub-objects that we study are mainly smooth sub-manifolds. In topology, it is customary to consider objects up to some transformations: the right transformations to consider here are the
\emph{isotopies}:
\begin{defn}
An \emph{isotopy} is a smooth map $N\times [-1,1] \rightarrow M$ from the product of a manifold $N$ and the interval $[-1,1]$ to the ambient manifold $M$ such that at each time $t$ the restriction $N \times \{t\} \rightarrow M$ is an embedding. An \emph{isotopy} between two embeddings $e_-,e_+:N \rightarrow M$ is an isotopy $N\times [-1,1] \rightarrow M$ such that the restriction $ N \cong N\times \{-1\}\rightarrow M$ to the time $-1$, is the first embedding $e_-$, and the restriction $N \cong N\times\{1\} \rightarrow M$ to the time $1$, is the second one $e_+$. The embeddings $e_-$ and $e_+$ are said to be \emph{isotopic}.

An \emph{ambient isotopy} of a manifold $M$ is a map $M\times [-1,1] \rightarrow M$ that is an isotopy where the restriction to $M \times \{-1\}\cong M$ is the identity of $M$. 
\end{defn}

\begin{defn}
A manifold is said to be \emph{closed} if it is compact and without boundary. 
\end{defn}

\begin{theo}[Thom]
Let $e_-,e_+:N\rightarrow M$ be two embeddings. If $N$ is compact, $e_-$ and $e_+$ are isotopic if and only if there is an ambient isotopy $\Phi : M\times [-1,1] \rightarrow M$ of $M$ that sends $e_-$ to $e_+$, that is $\Phi|_{N\times \{-1\}} =e_-$ and $\Phi|_{N\times \{1\}} =e_+$.
\end{theo}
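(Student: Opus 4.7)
The easy direction is the ``if'': given an ambient isotopy $\Phi$ with $\Phi|_{M\times\{-1\}}=e_-$ and $\Phi|_{M\times\{1\}}=e_+$, the map $(x,t)\mapsto \Phi(x,t)$ restricted to $e_-(N)\times[-1,1]$ (or equivalently reparametrized through $e_-$) yields the required isotopy between $e_-$ and $e_+$. So the content of the statement is the ``only if'' direction, i.e.\ the isotopy extension theorem. My plan is to construct the ambient isotopy by producing a time-dependent vector field on $M$ whose flow realizes the given isotopy, then integrating it.

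More precisely, given an isotopy $F\colon N\times[-1,1]\to M$ between $e_-$ and $e_+$, I would first pass to its \emph{track} $\tilde F\colon N\times[-1,1]\to M\times[-1,1]$, defined by $\tilde F(x,t)=(F(x,t),t)$. Since each slice $F(\cdot,t)$ is an embedding and $\tilde F$ preserves the time coordinate, $\tilde F$ is an embedding of a compact manifold (using that $N$ is compact) whose image $\Sigma\subset M\times[-1,1]$ is a compact submanifold transverse to the time slices. On $\Sigma$ I push forward the coordinate field $\partial/\partial t$ from $N\times[-1,1]$ to obtain a vector field $\xi$ on $\Sigma$ of the form $\xi(F(x,t),t)=W(x,t)+\partial/\partial t$, where $W(x,t)\in T_{F(x,t)}M$ is the instantaneous velocity of $F$.

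The next step is to extend $W$ to a smooth time-dependent vector field $\tilde W$ on the whole of $M$. I would do this using a tubular neighborhood of $\Sigma$ in $M\times[-1,1]$ to define $\tilde W$ near $\Sigma$, then use a smooth bump function equal to $1$ on a neighborhood of $\Sigma$ and with compact support to cut it off to $0$ away from $\Sigma$. Because $\Sigma$ is compact, this cutoff produces a compactly supported time-dependent vector field $\tilde W$ on $M$ which agrees with the velocity of $F$ along its image. Integrating $\tilde W$ starting from the identity at time $t=-1$ gives a smooth flow $\Phi\colon M\times[-1,1]\to M$; completeness is automatic from compact support of $\tilde W$. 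By construction, $\Phi$ carries $e_-(x)$ along the trajectory of $\tilde W$ which coincides with $F(x,\cdot)$, so $\Phi|_{N\times\{1\}}\circ e_-=e_+$, giving the desired ambient isotopy.

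The main obstacle is the extension step: one must produce a smooth extension of the velocity field from the (moving) submanifold $F(N,t)$ to all of $M$, in a way that depends smoothly on $t$. This is exactly where the tubular neighborhood theorem for the compact submanifold $\Sigma\subset M\times[-1,1]$ enters, and where compactness of $N$ is essential (both for $\Sigma$ to be compact, so that the cutoff gives a globally defined compactly supported field, and to guarantee that the flow exists on the full time interval $[-1,1]$). Without compactness of $N$, the flow of the extended field need not be defined for all $t\in[-1,1]$ uniformly on $N$, and the theorem genuinely fails.
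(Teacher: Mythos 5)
The paper states this result without proof---it is the classical isotopy extension theorem, attributed to Thom and used as a black box---so there is no argument in the text to compare against. Your proposal is the standard proof (as in Hirsch's \emph{Differential Topology}, Ch.~8): pass to the track $\tilde F(x,t)=(F(x,t),t)$, which is an embedding because an injective immersion of a compact manifold is an embedding; read off the velocity field along the image; extend, cut off, and integrate. This is correct, and your accounting of where compactness of $N$ enters is accurate. The one step worth tightening is the extension: composing $W$ with the tubular-neighborhood retraction $\pi:U\to\Sigma$ does not by itself yield a vector field on $U$, since $W\circ\pi$ covers $\pi$ followed by projection to $M$ rather than the inclusion of $U$; one instead extends $W$ locally in charts and patches with a partition of unity (or uses a connection to spread $W$ along the normal fibres). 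After that, the compactly supported cutoff and uniqueness of integral curves give $\Phi(e_-(x),t)=F(x,t)$ as you claim.
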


\begin{defn}
Let $M$ be a compact smooth oriented manifold of dimension 3. A \emph{link} $L$ of $M$ is a closed 1-sub-manifold of $M$ considered up to isotopies. If the link has only one component, it is said to be a \emph{knot} (see for instance Fig.~\ref{figure:ex_links}). The \emph{unknot} is the only knot that bounds an embedded 2-disk, while the $k$-\emph{component unlink} is the only link that bounds $k$ disjoint 2-disks.
\end{defn}

\begin{figure}[htbp]
$$
\includegraphics[scale=0.65]{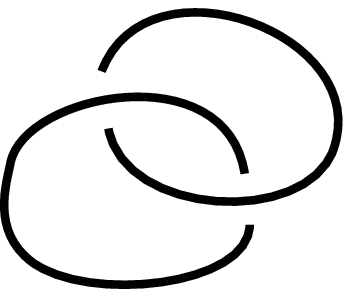} \hspace{2cm} \includegraphics[scale=0.65]{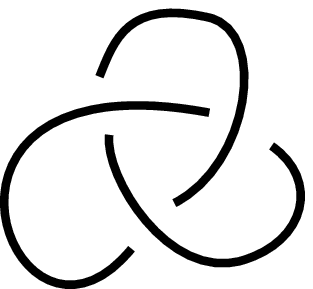}
$$
\caption{Two links in $S^3$: the Hopf link (left) and the trefoil knot (right).}
\label{figure:ex_links}
\end{figure}

\section{Jones polynomial and quantum invariants}

\subsection{Jones polynomial}\label{subsec:Jones_pol}

The classic ambient manifold in knot theory is the 3-sphere $S^3$. One can switch from $S^3$ to $\mathbb{R}^3$ and back by removing/adding a point, so the two ambient spaces make no essential difference.

A 4-valent graph is a graph such that every vertex locally has four edges adjacent to it, counted with multiplicities. Every link in $S^3$ can be represented by a 4-valent graph in the plane where each vertex is equipped with the information of which strand overpasses and which underpasses. Such a graph is called a \emph{diagram} of the link. 

There are three important moves that modify a diagram without altering the represented link: the Reidemeister moves (see Fig.~\ref{figure:Reid}).

\begin{figure}[htbp]
\begin{center}
\begin{tabular}{c|c|c}
 & & \\
\pic{1.5}{0.4}{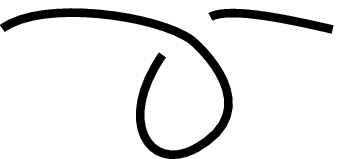} $\stackrel{I}{\leftrightarrow}$  \pic{1.5}{0.4}{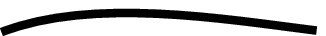} & \pic{1.5}{0.4}{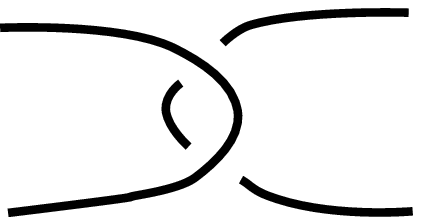} $\stackrel{II}{\leftrightarrow}$ \pic{1.5}{0.4}{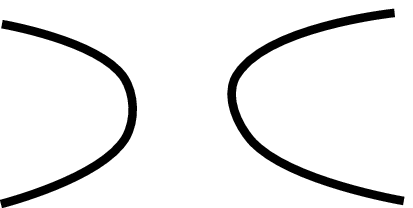} & \pic{1.5}{0.4}{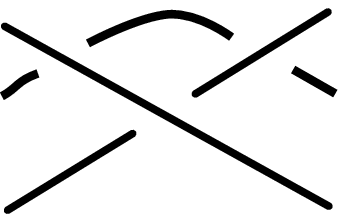} $\stackrel{III}{\leftrightarrow}$ \pic{1.5}{0.4}{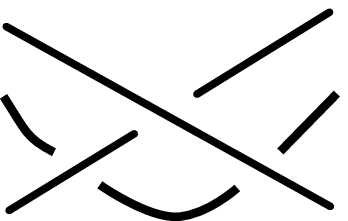} \\
 & & \\
First type & Second type & Third type
\end{tabular}
\end{center}
\caption{The Reidemeister moves.}
\label{figure:Reid}
\end{figure}

\begin{theo}[Reidemeister]
Two diagrams represent the same link in $S^3$ if and only if they are related by a sequence of planar isotopies and Reidemeister moves.
\end{theo}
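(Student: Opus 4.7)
The easy direction is to check that each of the three Reidemeister moves is induced by an ambient isotopy of $S^3$ supported in a small 3-ball, by exhibiting the two sides explicitly as the top and bottom of a small isotopic tangle; and planar isotopies of diagrams obviously lift to ambient isotopies of $S^3$. So all the content is in the converse, and this is what I would concentrate on.

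For the converse my plan is to work in the PL category, which by the preliminaries is equivalent to the smooth one in this dimension. I would replace $L$ by a polygonal representative and decompose any ambient isotopy between two such representatives into a finite sequence of elementary \emph{$\Delta$-moves}: a $\Delta$-move replaces an edge $[A,B]$ of the polygonal link by the two edges $[A,C]\cup[C,B]$ of a triangle $\Delta(A,B,C)$ whose interior is disjoint from the rest of $L$, or its inverse. A standard PL lemma (a consequence of Thom's isotopy extension applied PL-wise) shows that every PL ambient isotopy factors into finitely many such moves, and that the moves can be assumed \emph{generic} with respect to the fixed height function used for the diagram.

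The theorem then reduces to showing: a single $\Delta$-move whose triangle projects to a nondegenerate planar triangle, with the projection of $L\cup\partial\Delta$ in general position (transverse double points only, no triple points, no double points at vertices), can be realized on diagrams by a sequence of planar isotopies and moves $I$, $II$, $III$. To prove this I would subdivide $\Delta$ into many small sub-triangles, using interior PL subdivisions, so that inside each sub-triangle at most one strand of $L\setminus[A,B]$ passes (either above or below). The original $\Delta$-move is then the composition of the $\Delta$-moves through the sub-triangles, and for a single sub-triangle a finite case analysis according to how the sub-triangle meets the projection of $L$ matches the move with: no crossing change (planar isotopy), a strand passing over/under one side (Reidemeister $II$, applied or inverted according to the over/under data), two strands meeting at a crossing inside (Reidemeister $III$), or a sub-triangle touching an endpoint of $[A,B]$ where the link doubles back on itself (Reidemeister $I$).

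The main obstacle is precisely this last combinatorial step: the careful bookkeeping of over/under information for every arrangement of strands that may meet a sub-triangle, and the verification that every arrangement is realized, with correct signs, by one of the three Reidemeister moves up to planar isotopy. The degenerate situations, in particular where the triangle is very thin, where its vertices lie near existing crossings, or where two subdivisions produce a triangle collapsing to an edge, require the most attention and would be handled by shrinking/perturbing the subdivision. As a sanity check I would cross-reference the argument with the smooth Morse-theoretic proof: projecting a generic 1-parameter family of embeddings produces a 1-parameter family of immersed plane curves, whose codimension-$1$ generic degenerations are exactly cusps, tangencies and triple points, corresponding respectively to $I$, $II$, $III$.
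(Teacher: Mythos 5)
The paper states this classical theorem without proof, so there is no in-paper argument to compare against; it is a cited result of Reidemeister (and Alexander--Briggs). Your sketch follows the standard textbook proof: pass to the PL category, generate ambient isotopy by elementary $\Delta$-moves on a polygonal representative, put everything in general position with respect to the projection, subdivide the triangle so that each piece interacts with at most one strand or one crossing, and then match each local configuration with a planar isotopy or one of the moves $I$, $II$, $III$. This is the right approach and the overall structure is correct. The only substantive caveat is that what you call "the main obstacle" really is the proof: the finite case analysis (including the configurations producing move $I$ near a vertex of the moving edge, and the degenerate triangles you mention) is where all the work lives, and as written it is deferred rather than carried out. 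For a complete argument you would need to either execute that case list or cite it (e.g.\ Burde--Zieschang or \cite{Lickorish}); the Morse-theoretic cross-check in your last sentence is a legitimate alternative route but is likewise only sketched.
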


A common way to construct invariants for links in $S^3$ consists in defining a fuction on the set of link diagrams that is invariant under application of planar isotopies and Reidemeister moves.

Links can be equipped with two further structures: \emph{orientation} and \emph{framing}. The first one is very natural and the second one allows to represent closed 3-manifolds with links via Dehn surgery (see Theorem~\ref{theorem:Lickorish-Wallace}).

An \emph{oriented link} in a 3-manifold is a closed oriented 1-sub-manifold considered up to isotopies that respect the orientation. There is a natural version of the Reidemeister theorem for oriented links in $S^3$. 

A \emph{framed link} is a closed 1-sub-manifold with a \emph{framing} considered up to isotopies that respect this structure. A \emph{framing} can be defined as a finite collection of annuli in the ambient manifold such that one of the two boundary components of each annulus is a component of the link and each component of the link touches only one annulus. Equivalently we can say that the core $S^1\times \{\frac 1 2\}$ of each annulus component is a component of the link. By ``\emph{annulus}'' we mean the product $S^1\times [0,1]$ of a circle and an interval. 

Every diagram represents a framed link in $S^3$ via the \emph{black-board framing}, and every framed link in $S^3$ can be represented so. The framing is encoded by the diagram as follows: give an orientation to the link, draw a parallel line to the diagram on the left (or the right) following the orientation getting a new link ``parallel'' to the first one that is the union of all the boundary components of the framing, then remove the orientation. The application of a Reidemester move of the first type changes the framing by adding or removing a twist (see Fig.~\ref{figure:framing_change}).
\begin{figure}[htbp]
$$
\pic{1.8}{0.4}{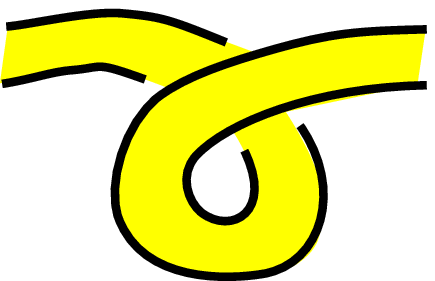} \leftrightarrow \pic{1.8}{0.4}{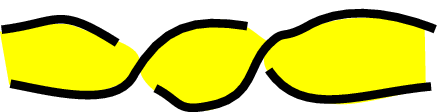} \leftrightarrow \pic{1.8}{0.4}{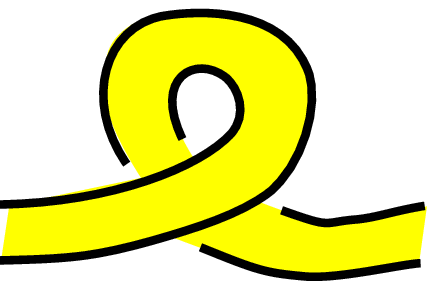} 
$$
\caption{A negative twist. The Reidemeister moves of the first type change the framing adding a twist.}
\label{figure:framing_change}
\end{figure}

If the ambient 3-manifold $M$ is oriented, the operations of adding a positive or a negative twist
to a component of a framed link are well-defined. Every framing can be obtained from an initial one by repeating these moves, namely by adding $n\in \Z$ positive twists, where if $n<0$ we mean adding $-n$ negative twists. These moves correspond to adding curls to a representative diagram. The addiction of a negative twist is described in Fig.~\ref{figure:framing_change}, while adding a positive twist is the opposite of that figure. 

\begin{theo}\label{theorem:framed_Reid}
Two diagrams represent the same framed link (via the black-board framing) in $S^3$ if and only if they are related by a sequence of planar isotopies, Reidemeister moves of the second and third type, and the following changing on curls:
\beq
\pic{1.8}{0.4}{ricciolopos.eps} & \leftrightarrow & \pic{1.8}{0.4}{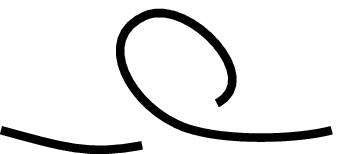} \\
\pic{1.8}{0.4}{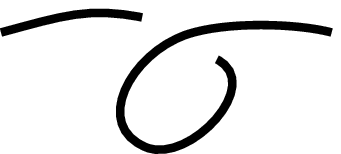} & \leftrightarrow & \pic{1.8}{0.4}{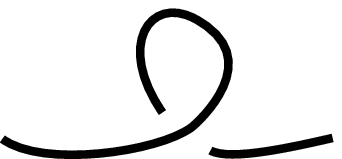} .
\eeq
\end{theo}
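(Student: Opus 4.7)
The plan is to reduce to the classical Reidemeister theorem and then carefully account for the type I moves, which are the only ones that can alter the blackboard framing.

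For the \emph{if} direction I would first check that each allowed move preserves the framed link. Reidemeister moves of the second and third type preserve the blackboard framing because the parallel curve determined by the framing can be transported through the local picture of the move without picking up any twist; this is a direct local check on the two (respectively three) strands involved. The two curl equivalences displayed in the statement preserve the framing as well, since both sides of each equivalence depict a single positive (respectively negative) curl and therefore contribute the same $\pm 1$ twist to the blackboard framing of the component, while of course representing the same underlying link.

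For the \emph{only if} direction, let $D$ and $D'$ be two diagrams of the same framed link. Forgetting framings and applying the ordinary Reidemeister theorem produces a finite sequence of planar isotopies and Reidemeister moves of types I, II, III taking $D$ to $D'$. Moves of type II and III preserve the blackboard framing by the previous paragraph, whereas each move of type I changes the framing of its component by $\pm 1$. Since the blackboard framings of $D$ and $D'$ agree on each component, on each component the type I moves appearing in the sequence must split into an equal number of positive and negative ones. I would then pair each positive type I move with a canceling negative one on the same component and eliminate the pairs.

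To eliminate a paired couple, I would slide the two curls along their component until they lie adjacent on a common strand and then annihilate them by a single Reidemeister II move. The sliding is performed entirely with planar isotopy and Reidemeister moves of types II and III, together with the curl swap moves of the statement: a case analysis on the local picture near a crossing shows that a curl can always be pushed past a crossing using II and III moves, at the price that it may emerge on the opposite side of the strand, at which point the curl swap equivalences are used to restore it to the desired variant. Iterating this procedure removes every type I move from the sequence without introducing new ones, leaving precisely a sequence of planar isotopies, Reidemeister II, III and curl swaps, which is the claim.

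The main obstacle is the sliding procedure and the bookkeeping behind it: one has to verify, by a finite local analysis at each crossing, that a curl can be transported across the crossing using only the moves available in the statement — never an auxiliary type I move that would disturb the framing accounting — and that the two curl swap moves in the statement are exactly what is needed to pass from one local variant of a same-sign curl to the other. This is the step where the specific form of the two curl equivalences in the theorem statement becomes essential.
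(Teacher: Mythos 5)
The paper states Theorem~\ref{theorem:framed_Reid} as background and gives no proof, so there is no in-paper argument to compare yours against; your outline is the standard one (verify the allowed moves preserve the blackboard framing, then start from an unframed Reidemeister sequence and use the per-component writhe count to conclude that the type I moves must cancel in pairs). The forward direction and the writhe bookkeeping are fine, with the usual caveat that you should realize the given \emph{framed} isotopy by the Reidemeister sequence, so that the induced bijection between the components of $D$ and $D'$ is one under which the self-writhes are known to agree.

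The genuine gap is in the elimination step. The two type I moves you propose to pair off occur at different moments of the sequence: one kink is created or deleted at some time $t_1$, its partner only at a later time $t_2$, and in between the diagram has been changed by arbitrarily many other moves. There is therefore in general no single diagram in which ``the two curls'' coexist, so the instruction to slide them together and annihilate them is not yet defined. The standard repair is a deferral argument: fix a base point on each component; replace every type I move that deletes a kink by ``slide that kink to the base point and park it'', and every type I move that creates a kink by ``create a cancelling pair of kinks by an R2 move, use one of them where the original move wanted a kink, and park its partner at the base point''. The writhe count then shows that at the end each base point carries equally many positive and negative kinks, and these cancel in pairs once the curl swaps are used to place the two members of a pair on \emph{opposite} sides of the strand, which is the configuration removable by an R2 move. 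Note that a same-side pair of opposite-sign kinks cannot be cancelled by R2, R3 and planar isotopy alone, since those moves preserve the rotation number of the underlying plane curve while such a cancellation would change it by $2$; this is exactly where the curl-swap moves of the statement are unavoidable, a point your sketch does not isolate. You also still owe the lemma that a kink can be slid along its component past any crossing by R2 and R3 moves (a detour move keeping the little loop entirely over or entirely under what it passes); with that and the deferral trick in place, your argument closes up.
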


Unless specified otherwise, we will always tacitly represent framed links using the black-board framing.

A famous invariant of oriented links and unoriented knots is the \emph{Jones polynomial} \cite{Jones}. An easy way to define uses the notion of ``\emph{Kauffman bracket}'' that is an invariant for framed links \cite{Kauffman}.

\begin{defn}
A \emph{Laurent polynomial} $f\in R[X,X^{-1}]$ is a ``polynomial'' that may have negative exponents.

The \emph{Kauffman bracket}, $\langle L \rangle \in \Z[A,A^{-1}]$, of a framed link $L\subset S^3$ is the Laurent polynomial with integer coefficients and variable $A$ that is completely defined by the \emph{skein relations}:
\beq
\left\langle \pic{1.2}{0.3}{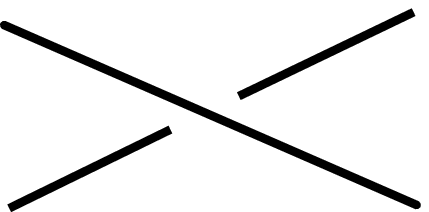} \right\rangle & = & A\left\langle \pic{1.2}{0.3}{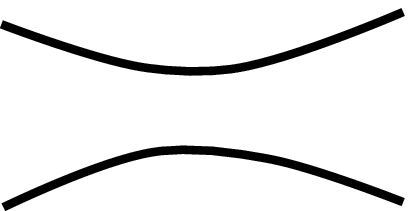} \right\rangle + A^{-1} \left\langle \pic{1.2}{0.3}{Bcanalep.eps} \right\rangle \\
\left\langle D \sqcup \pic{0.8}{0.3}{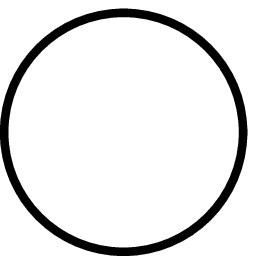} \right\rangle & = & (-A^2 - A^{-2}) \langle D \rangle \\
\left\langle \pic{0.8}{0.3}{banp.eps} \right\rangle & = & 1 
\eeq
where the diagrams in the first equation differ only in the portion drawn, in the second equation the diagrams differ by the addiction or the removal of a disjoint circle. Although this is the original definition, we are going to use a different normalization by imposing
$$
\left\langle \pic{0.8}{0.3}{banp.eps} \right\rangle = -A^2-A^{-2} .
$$
Clearly to get this normalization from the previous one it suffices to multiply $\langle L \rangle$ by $-A^2-A^{-2}$.
\end{defn}
The variables $q=A^2$ or $t=A^{-4}$ are often used instead of $A$. The Jones polynomial with the variable $q$ is still a Laurent polynomial, while if we use the variable $t$ we allow also half-integer exponents $\Z[t^{\frac{1}{2}},t^{-\frac{1}{2}}]$, but the Jones polynomial of links with an odd number of components is always a Laurent polynomial.

It is easy to check the behavior of the Kauffman bracket under the application of a Reidemester move: it is unchanged under Reidemeister moves of the second and third type while it changes by the multiplication of $-A^3$ or $-A^{-3}$ if we apply a move of the first type
\beq
\left\langle \pic{1.2}{0.3}{ricciolopos.eps} \right\rangle & = & -A^3 \left\langle \pic{1.2}{0.3}{riga.eps} \right\rangle \\
\left\langle \pic{1.2}{0.3}{riccioloneg.eps} \right\rangle & = & -A^{-3} \left\langle \pic{1.2}{0.3}{riga.eps} \right\rangle .
\eeq

If the link $L$ is oriented, each crossing of a diagram of $L$ has a sign as shown in Fig.~\ref{figure:crossingsign}.

\begin{figure}[htbp]
\begin{center}
\includegraphics[scale=0.8]{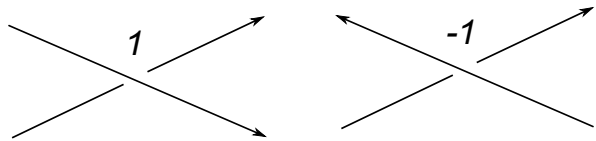}
\end{center}
\caption{A positive and a negative crossing.}
\label{figure:crossingsign}
\end{figure}
If the represented link has only one component (it is a knot) its crossing signs do not depend on the orientation. 
\begin{defn}\label{defn:writhe}
The \emph{writhe number} $w(D)$ of a diagram $D$ of an oriented link is the sum of the crossing signs. 
\end{defn}

The writhe number is an invariant for oriented framed links and unoriented framed knots.

\begin{defn}
The \emph{Jones polynomial} in the Kauffman version $f(L)$ of an oriented link $L\subset S^3$ is
$$
f(L):= (-A^3)^{-w(D)} \langle D\rangle ,
$$
where $D$ is any diagram of $L$.
\end{defn}
The proof of the invariance follows easily by studying the behavior of the Kauffman bracket under the Reidemeister moves.

Every compact surface with boundary in $S^3$ determines a framing on its boundary link just by taking a collar.
\begin{theo}
All the oriented surfaces in $S^3$ bounded by an oriented link $L$ such that the orientation of $L$ is the induced one to the boundary, determine the same framing on $L$. This is called the \emph{Seifert framing} or the 0-\emph{framing}. Furthermore the Jones polynomial of $L$ is equal to the Kauffman bracket of $L$ equipped with the Seifert framing.
\end{theo}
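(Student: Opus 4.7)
The statement splits into two parts: (a) any two oriented surfaces in $S^3$ bounded by $L$, with the induced boundary orientation, determine the same framing; (b) the Kauffman bracket of $L$ equipped with this framing equals $f(L)$. Since a framing on an oriented knot component in an oriented $3$-manifold is classified up to isotopy by a single integer (its self-linking), both parts reduce to tracking self-linking numbers on the components of $L = K_1 \cup \cdots \cup K_n$.

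For (a), fix a Seifert surface $\Sigma$ and a component $K_i$, and let $K_i'$ be the pushoff obtained by moving $K_i$ slightly into $\Sigma$; this parallel copy realizes the framing induced by $\Sigma$ on $K_i$. Excising an open collar of $K_i$ from $\Sigma$ produces an oriented surface in $S^3 \setminus K_i$ whose boundary, with the orientations inherited from $\Sigma$, consists of $K_i'$ together with the remaining components $\bigcup_{j \ne i} K_j$. Consequently $K_i'$ is homologous in $S^3 \setminus K_i$ to a signed sum of the $K_j$ with $j \ne i$, and computing linking numbers with $K_i$ gives
\[
\operatorname{lk}(K_i, K_i') = -\sum_{j \ne i} \operatorname{lk}(K_i, K_j)
\]
(for a sign convention fixed by the pushoff direction), a quantity depending only on $L$. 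So any two Seifert surfaces yield the same self-linking on each component, and hence the same framing.

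For (b), pick a diagram $D$ of $L$. The blackboard self-linking of $K_i$ equals the sum $w_i$ of the signs of the self-crossings of $K_i$ in $D$, and decomposing each crossing as either a self-crossing or an inter-component crossing gives
\[
w(D) = \sum_i w_i + 2 \sum_{i<j} \operatorname{lk}(K_i, K_j).
\]
Converting the framing on $K_i$ from blackboard to Seifert amounts to adding $s_i - w_i$ positive twists, where $s_i$ is the Seifert self-linking from (a); by Theorem~\ref{theorem:framed_Reid} and the Reidemeister~I behavior of the bracket, each positive twist multiplies $\langle \cdot \rangle$ by $-A^3$. Therefore $\langle L_{\mathrm{Seif}} \rangle = (-A^3)^{\sum_i (s_i - w_i)} \langle D \rangle$, and combining the two displayed identities yields $\sum_i (s_i - w_i) = -w(D)$, whence $\langle L_{\mathrm{Seif}} \rangle = (-A^3)^{-w(D)} \langle D \rangle = f(L)$.

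The main obstacle is purely bookkeeping with signs: the pushoff direction, the induced orientations on the boundary of the truncated Seifert surface, and the conventions for linking, self-linking, and crossing signs must all be coordinated so that the identities $\sum_i s_i = -2\sum_{i<j}\operatorname{lk}(K_i,K_j)$ and $\sum_i w_i = w(D) - 2\sum_{i<j}\operatorname{lk}(K_i,K_j)$ cancel exactly. Once these conventions are fixed, the topological content lives entirely in the cobordism argument of (a), and the algebraic content of (b) reduces to a routine exponent count.
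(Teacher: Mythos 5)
Your proof is correct: part (a) is the standard homological argument (capping off $\Sigma$ minus a collar of $K_i$ shows the surface framing on $K_i$ has self-linking $-\sum_{j\ne i}\mathrm{lk}(K_i,K_j)$, which depends only on $L$), and the exponent count in part (b) cancels exactly as you compute, since $\sum_i(s_i-w_i)=-w(D)$. The paper states this theorem without proof, so there is no in-paper argument to compare against; your route is the expected one. The only slight imprecision is the parenthetical claim that framings in an arbitrary oriented 3-manifold are classified by a self-linking integer — that requires the component to be null-homologous and a choice of zero framing — but in $S^3$, which is all that is needed here, the claim is exactly the bijection the paper itself sets up.
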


We use the standard orientation of $S^3$ to get a bijection between the set of framing over a knot in $S^3$ and the integers: the framing $n\in \Z$ is obtained by adding $n$ positive twists to the Seifert framing (if $n$ is negative we add $-n$ negative twists). This number coincides with the writhe number of a diagram representing the framed knot via the black-board framing.

\subsection{A brief overview about quantum invariants}

There are several ways to define the Jones polynomial. Jones defined it in 1984 \cite{Jones} by using von Newman algebras. His
discovery unveiled some unexpected connections between algebra,
topology, and theoretical physics. Following Jones' initial discovery a variety of knots and 3-manifolds invariants came out. These are called \emph{quantum invariants}. It is a fundamental target in modern knot theory to ``understand'' the Jones polynomial, that means finding relations between quantum invariants (in particular the Jones polynomial) and geometric and topological proprieties of manifolds (\emph{e.g.} hyperbolic volume, embedded surfaces, decompositions). There are several types of invariants that generalize, extend or deform the Jones polynomial: complex numbers, Laurent polynomials, rational functions, vector spaces (or $R$-modules, \emph{e.g.} \emph{skein spaces} or \emph{categorifications} of the previous ones like \emph{Khovanov homology}). All of them can be called quantum invariants, but this expression is typically employed just for numbers, polynomials, or more general rational functions, especially the ones coming from \emph{quantum groups}. It is recognized that quantum invariants have a lot of connections with several areas of mathematics and theoretical physics.

Quantum invariants arise from representations of braid groups. The images of the generators are examples of $R$-matrices, which play an important role in solving statistical mechanical models and quantum integrable systems in two dimensions. By the end of the 80's to discover new $R$-matrices Jimbo, Drinfel and others, developed the formalisms of \emph{quantum groups} (or \emph{quantum universal enveloping algebras}), which are deformations $U_q(\mathfrak{g})$ of the universal enveloping algebras $U(\mathfrak{g})$ of a semi-simple complex Lie algebras $\mathfrak{g}$ (see for instance \cite{Chari-Pressley, Kassel}). This theory is a part of a more general and categorical theory, where representations of quantum groups form non trivial examples of \emph{ribbon categories} (see for instance \cite{Turaev}).

The most simple and studied quantum group is $U_q(\mathfrak{sl}_2)$. Although it is the simplest case, it is rather general and complicated. The \emph{indecomposable} representations correspond to the irreducible representations of a field. As for $U(\mathfrak{g})$, the finite-dimensional indecomposable representations of $U_q(\mathfrak{sl}_2)$ are in bijection with the positive integers. The $n^{\rm th}$ indecomposable representation (or the $(n+1)$-dimensional representation) defines the $n^{\rm th}$ \emph{colored Jones polynomial}. The first one is the classical Jones polynomial (up to normalizations and changes of variable).

From this framework of quantum groups, in particular from the fundamental representation of $SU(2)$, one can get the 3-manifold invariants called $SU(2)$-\emph{Reshetikhin-Turaev-Witten invariants} (see \cite{Turaev}). These form the first suggestion of a 3-manifold invariant and were given by Witten \cite{Witten} as a part of his quantum-fields-theoretic explanation of the origin of the Jones polynomial. They were then rigorously constructed by Reshetikhin and Turaev \cite{Reshetikhin-Turaev} via surgery presentations of 3-manifolds and quantum groups. 

\section{Skein theory}\label{sec:skein_theory}

An alternative approach for quantum invariants is provided by \emph{skein theory}. The word ``skein'' and the notion were introduced by Conway in 1970 for his model of the Alexander polynomial. This idea became really useful after the work of Kauffman which, as we saw, redefined the Jones polynomial in a very simple and combinatorial way. These simple combinatorial techniques allow us to reproduce all quantum invariants arising from the representations of $U_q(\mathfrak{sl}_2)$ without any reference to representation theory. This also leads to many interesting and quite easy computations. This skein method was used by Lickorish \cite{Lickorish1, Lickorish2, Lickorish3, Lickorish4}, Blanchet, Habegger, Masbaum and Vogel \cite{BHMV}, and Kauffman and Lins \cite{Kauffman-Lins}, to re-interpret and extend some of the methods above. In this section we give some notions of this theory. One can use as a reference \cite{Lickorish} or \cite{Kauffman-Lins}.

\subsection{Skein spaces}\label{subsec:sk_sp}
\emph{Skein spaces} (or \emph{skein modules}) are vector spaces (or modules over a ring) associated to low-dimensional oriented manifolds, in particular to 3-manifolds. These were introduced independently in 1988 by Turaev \cite{Turaev0} and in 1991 by Hoste-Przytycki \cite{HP0}. We can think of them as an attempt to get an algebraic topology for knots: they can be seen as homology spaces obtained using isotopy classes instead of homotopy or homology classes. In fact they are defined taking a vector space (a free module) generated by sub-objects (framed links) and then quotienting them by some relations. In this framework, the following questions arise naturally and are still open in
general:
\begin{quest}
$\ $
\begin{itemize}
\item{Are skein spaces (modules) computable?}
\item{How powerful are them to distinguish 3-manifolds and links?}
\item{Do the spaces (modules) reflect the topology/geometry of the 3-manifolds (\emph{e.g.} surfaces, geometric decomposition)?}
\item{Does this theory have a functorial aspect? Can it be extended to a functor from a category of cobordisms to the category of vector spaces (modules) and linear maps?}
\end{itemize}
\end{quest}

Skein spaces can also be seen as deformations of the ring of the $SL_2(\mathbb{C})$-character variety of the 3-manifold (see Section~\ref{sec:Bul_char_var}). Moreover they are useful to generalize the Kauffman bracket to manifolds other than $S^3$ and this is the aspect we are mostly interested in. We refer to \cite{Pr:survey} for this theory.

Let $M$ be an oriented 3-manifold, $R$ a commutative ring with unit and $A\in R$ an invertible element of $R$. Let $V$ be the abstract free $R$-module generated by all framed links in $M$ (considered up to isotopies) including the empty set $\varnothing$. 

\begin{defn}\label{defn:sk_space}
The $(R,A)$-\emph{Kauffman bracket skein module} of $M$, or the $R$-\emph{skein module}, or simply the \emph{KBSM}, is sometimes indicated with $KM(M;R,A)$, or $S_{2,\infty}(M;R,A)$, and is the quotient of $V$ by all the possible \emph{skein relations}:
\beq
 \pic{1.2}{0.3}{incrociop.eps}  & = & A \pic{1.2}{0.3}{Acanalep.eps}  + A^{-1}  \pic{1.2}{0.3}{Bcanalep.eps}  \\
 L \sqcup \pic{0.8}{0.3}{banp.eps}  & = & (-A^2 - A^{-2})  D  \\
\pic{0.8}{0.3}{banp.eps}  & = & (-A^2-A^{-2}) \varnothing 
\eeq
These are local relations where the framed links in an equation differ just in the pictured 3-ball that is equipped with a positive trivialization. An element of $KM(M;R,A)$ is called a \emph{skein} or a \emph{skein element}. If $M$ is the oriented $I$-bundle over a surface $S$ (this is $M=S\times [-1,1]$ if $S$ is oriented) we simply write $KM(S;R,A)$ and call it the \emph{skein module} of $S$. 

If the base ring $R$ is the ring $\Z[A,A^{-1}]$ of all the Laurent polynomials with integer coefficients and abstract variable $A$, we set
$$
KM(M) := KM(M;\Z[A,A^{-1},A]) .
$$

If the base ring $R$ is the field $\mathbb{Q}(A)$ of all rational functions with rational coefficients with abstract variable $A$ we call it the \emph{skein vector space} of $M$, or simply the \emph{skein space} of $M$, and we set
$$
K(M) := KM(M;\mathbb{Q}(A),A) .
$$
\end{defn}

We will concretely see just the base rings $\Z[A,A^{-1}]$, $\mathbb{C}$ with a root of unity $A$, and $\mathbb{Q}(A)$. We are more interested in this latest case $R=\mathbb{Q}(A)$.

\begin{rem}
In order to perform the first skein relation we need that the ambient manifold $M$ is oriented (and not just orientable). 
\end{rem}

\begin{rem}
We could modify our construction of skein modules taking a different set of generators (\emph{e.g.} oriented links), a different base ring, and different skein relations, for instance the ones giving the Conway polynomial or the HOMFLY polynomial. The result is another kind of skein module studied in literature \cite{Pr:survey} that we will not investigate here. 
\end{rem}

Here we list the exact structure of the skein module of some manifolds:

\begin{theo}[Bullock, Carrega, Dabkowski, Gilmer, Hoste, March\'e, Mroczkowski, Przytycki, Sikora, Le]\label{theorem:known_sk_mod}
$\ $
\begin{enumerate}

\item{
$$
KM(S^3;R,A) \cong R
$$ 
and it is generated by the empty set $\varnothing$.}

\item{Let $S$ be a surface, then $KM(S;R,A)$ is the free $R$-module generated by all the multicurves of $S$, namely the embedded closed 1-sub-manifolds (up to isotopies) without homotopically trivial components, including the empty set $\varnothing$.}

\item{Let $\mathbb{R}\mathbb{P}^k$ be the $k$-dimensional real projective space. Then
$$
KM(\mathbb{R}\mathbb{P}^3 ; R,A) = KM(\mathbb{R}\mathbb{P}^2 ; R,A) \cong R \oplus R .
$$}

\item{Let $L(p,q)$ be the $(p,q)$-lens space. If $p>1$, $KM(L(p,q);R,A)$ is the free $R$-module of rank $\left\lfloor \frac p q \right\rfloor + 1$, where $\lfloor x\rfloor$ is the integer part of $x$. A surgery presentation in $S^3$ of $L(p,q)$ is the unknot with surgery coefficient $-p/q$. The set $\{ \varnothing, x_1, \ldots , x_{\left\lfloor \frac p q \right\rfloor }\}$ is a basis of $KM(L(p,q))$, where $x_j$ is the link obtained taking $j$ meridians of the tubular neighborhood of the unknot (the boundary of $j$ disjoint 2-disks properly embedded in the solid torus with core the unknot).}

\item{
$$
KM(S^1\times S^2) \cong \Z[A,A^{-1}] \oplus \bigoplus_{k=1}^\infty \Z[A,A^{-1}] / (1-A^{2k+4})
$$
and the empty set $\varnothing$ generates the factor without torsion.}

\item{The $R$-skein module of the complement of the $(k,2)$-torus knot is free.}

\item{Let $M$ be the classical Whitehead manifold, then $KM(M)$ is infinitely generated, torsion free, but not free.}

\item{The $R$-skein module of the complement of a rational 2-bridge knot is free and a basis is described.}

\item{Let $K\subset S^3$ be a torus knot. Then
\begin{itemize}
\item{the $\mathbb{C}[A,A^{-1}]$-module $KM(S^3\setminus K;\mathbb{C}[A,A^{-1}], A)$ is free;}

\item{$\mathbb{C}[A,A^{-1}] \otimes_\mathbb{C} \mathcal{R}(S^3 \setminus K)$ (see Definition~\ref{defn:ring_char}) and $KM(M;\mathbb{C}[A,A^{-1}], A)$ are isomorphic $\mathbb{C}[A,A^{-1}]$-modules, where $\mathcal{R}(S^3 \setminus K)$ is the \emph{ring of characters} of  $\pi_1(S^3\setminus K)$ (see Definition~\ref{defn:ring_char});}
\item{$\mathcal{R}(S^3 \setminus K)$ and $KM(S^3\setminus K; \mathbb{C},-1)$ are isomorphic $\mathbb{C}$-algebras without nilpotent elements (see Theorem~\ref{theorem:Bullock}).}
\end{itemize}}

\item{Let $M$ be an oriented compact 3-manifold with commutative fundamental group (\emph{e.g.} $S^1\times S^2$, $S^1\times S^1\times S^1$, a lens space $L(p,q)$) then $\mathcal{R}(M)$ and $KM(M; \mathbb{C},-1)$ are isomorphic $\mathbb{C}$-algebras without nilpotent elements (see Theorem~\ref{theorem:Bullock}).}

\item{Let $F$ be an oriented compact surface (maybe with non empty boundary), then
\begin{itemize}
\item{$KM(F;R,A)$ is a central $R$-algebra over a ring of polynomials induced by the boundary $\partial F$;}
\item{$KM(F;R,A)$ has no non null zero-divisors;}
\item{$\mathcal{R}(F\times [-1,1])$ and $KM(F; \mathbb{C},-1)$ are isomorphic $\mathbb{C}$-algebras (see Theorem~\ref{theorem:Bullock}).}
\end{itemize}}

\item{Let $S_{(2)}$ be the 2-disk with two holes. Then $KM(S_{(2)}\times S^1; R,A)$ is a free $R$-module with infinite rank and a basis is described.}

\item{$$
KM(\mathbb{R}\mathbb{P}^3 \# \mathbb{R}\mathbb{P}^3) \cong \Z[A,A^{-1}] \oplus \Z[A,A^{-1}] \oplus \frac{\Z[A,A^{-1}][t]}{J}
$$
where $J$ is the sub-module of $\Z[A,A^{-1}][t]$ generated by the elements
\beq
& (A^{n+1} -A^{-n-1})(Q_n -1)- 2(A+A^{-1})\sum_{k=1}^{\frac n 2} A^{n+2-4k} \ \  n\in 2\Z, \ n>0 & \\
& (A^{n+1} -A^{-n-1})(Q_n -t)- 2t(A+A^{-1})\sum_{k=1}^{\frac{n-1}{2} } A^{n+1-4k} \ \ n\in 2\Z+1, \ n>1 &
\eeq
where $Q_0=1$, $Q_1=t$ and $Q_{n+2} = t Q_{n+1} -Q_n$.}

\item{Let $M$ be a prism manifold. Then $KM(M;R,A)$ is a finitely generated free $R$-module with rank bigger than $1$ and a basis is described.}

\item{The skein vector space $K(T^3)$ of the 3-torus $T^3=S^1\times S^1\times S^1$ has dimension $9$ and a basis is described.}

\end{enumerate}
\begin{proof}
See \cite{Pr:survey, Pr1} for $1.$, $2.$ and $3.$, otherwise see below for $3.$. See \cite{HP1} for $4.$. See \cite{HP2} for $5.$. Se \cite{Bullock0} for $6.$. See \cite{HP3} for $7.$. See \cite{ThangLe} for $8.$. See \cite{Marche1} for $9.$. See \cite{PS1, PS2} for $10.$. See \cite{Charles-Marche} for $11.$. See \cite{Mroczkowski-Dabkowski} for $12.$. See \cite{Mroczkowski1} for $13.$. See \cite{Mroczkowski2} for $14.$. See \cite{Carrega_3-torus, Gilmer} and Section~\ref{sec:3-torus} for $15.$.
\end{proof}
\end{theo}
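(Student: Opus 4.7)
The plan is to treat this as a compilation theorem: each item has its own proof in the literature and the overall ``proof'' is a pointer table, so the task is to describe, at least schematically, the common techniques that make each item accessible and to highlight where the main obstacles lie.

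For items~1 and~2 the strategy is the most transparent one. In $S^3$ every framed link has a diagram, and one repeatedly applies the first skein relation to resolve every crossing, and then the second to remove every disjoint trivial loop; this rewrites every generator as a scalar multiple of $\varnothing$, so $KM(S^3;R,A)$ is cyclic, and freeness follows by exhibiting the Kauffman bracket as an $R$-linear evaluation to $R$. For a thickened surface $S\times[-1,1]$, links project to link diagrams on $S$, crossings are resolved by the first skein relation, trivial disks by the second, and the remaining multicurves are $R$-linearly independent because Kauffman-bracket-type evaluations (via, say, choosing a pants decomposition) separate them.

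The lens space and $S^1\times S^2$ computations (items~4 and~5) would be handled through a solid-torus decomposition: first one identifies $KM$ of the solid torus with the polynomial ring $R[\alpha]$ via the core curve, then one implements the Dehn surgery presentation by imposing the handle-slide relation $\alpha^{j}\cdot\omega=(\hbox{coefficient})\cdot\alpha^{j}$ corresponding to the Kirby move across the surgery unknot. For $L(p,q)$ this cuts the polynomial algebra down to rank $\lfloor p/q\rfloor+1$; for $S^1\times S^2$ it yields the torsion relations $1-A^{2k+4}$ in each positive degree. Items~6, 8, 9, 12, 14 follow similar surgery/Seifert-fibered decompositions, while items~10 and~11 reduce to the general Bullock--Przytycki--Sikora isomorphism between $KM(\cdot;\mathbb{C},-1)$ modulo nilpotents and the $SL_2(\mathbb{C})$-character ring, for which the proof strategy is to send a framed knot to minus the trace of its holonomy and check this descends to the skein relations at $A=-1$.

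The one item I would actually try to rederive here is item~15, which is the author's contribution and is treated in Section~\ref{sec:3-torus}. The plan would be: present $T^3$ as $0$-surgery on a specific link in $S^3$ (for instance the Borromean rings with $0$-framings, or a genus-$3$ Heegaard surgery diagram), use items~1 and~2 together with the handle-slide relations induced by each surgery component to cut down a generating set, and then exhibit nine explicit skein elements. The hard part, which I expect to be the genuine obstacle, is linear independence: handle-slide relations on the three surgery components produce infinitely many rewriting relations, and one must organize them (e.g.\ by filtering according to homology classes in $H_1(T^3;\Z_2)$ and by a degree in the core curves) to show that no further identifications occur beyond the nine expected generators. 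Lower bounds for the dimension would come from evaluating Reshetikhin--Turaev--Witten type invariants, or alternatively (as in Gilmer's completion of the argument) from a direct construction of $9$ linearly independent functionals on $K(T^3)$.
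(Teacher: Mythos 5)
Your proposal correctly identifies that the paper's proof of this theorem is nothing more than a citation table, and your schematic sketches for items 1, 2, 4, 5, 10 and 11 match the standard arguments in the cited sources (resolve crossings, remove trivial loops, solid-torus plus handle-slide relations for surgery presentations, Bullock--Przytycki--Sikora for the character-ring items). Two points of divergence are worth flagging. First, the paper does give one in-text argument, for item 3: the complement of an open ball in $\mathbb{R}\mathbb{P}^3$ is the orientable $I$-bundle over $\mathbb{R}\mathbb{P}^2$, so by the 3-handle statement and item 2 the module is free on the only two multicurves of $\mathbb{R}\mathbb{P}^2$ (the empty set and the orientation-reversing curve); you fold this into the citation list without comment, which is a minor omission.

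Second, and more substantively, your plan for item 15 takes a genuinely different route from Section~\ref{sec:3-torus}. You propose presenting $T^3$ as $0$-surgery on the Borromean rings and organizing the handle-slide relations on the skein module of the genus-3 handlebody. The paper instead represents framed links in $T^3$ by diagrams on a Heegaard 2-torus, so that $K(T^3)$ is generated by the images of $K(T^2)$ under standard embeddings $T^2\rightarrow T^3$; it then observes that each such map factors through the abelianization $C(K(T^2))$, computes that abelianization explicitly from the Frohman--Gelca product formula (Lemma~\ref{lem:ab_alg_2-tor}, giving five generators per embedded torus), shows via Lemma~\ref{lem:intersect_2-tor} that the ``two parallel copies'' class $\alpha$ is independent of the embedding, and reduces all $(p,q,r)$-curves to the eight nonzero $\Z_2$-homology classes by $SL_3(\Z)$ changes of basis (Lemma~\ref{lem:pqr-curve}). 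The payoff of that route is precisely that it avoids the step your plan leaves vague: controlling infinitely many slide relations on the free module of multicurves of the three-holed disk is the hard combinatorial problem, and it is not clear your proposed filtration would close up at nine generators. For linear independence the two approaches coincide: the $\Z_2$-homology grading gives the direct-sum decomposition, and Gilmer's argument \emph{is} the evaluation of $SO(3)$-Reshetikhin--Turaev--Witten invariants, so your ``alternatively'' presents one method as two.
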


Theorem~\ref{theorem:known_sk_mod}-(1.) can be associated to the work of Kauffman since it comes out just from the skein relations and Theorem~\ref{theorem:framed_Reid}.

The following are useful and quite easy (except the last one) proprieties of skein modules:

\begin{theo}\label{theorem:sk_mod_prop}
$\ $
\begin{enumerate}

\item{Let $M$ and $N$ be two oriented 3-manifolds. Every orientation preserving embedding $\iota: M \rightarrow N$ induces a linear map of $R$-modules $\iota_* : KM(M;R,A) \rightarrow KM(N;R,A)$.}

\item{If $N$ is obtained attaching a 3-handle to $M$ and $\iota:M \hookrightarrow N$ is the inclusion map, $\iota_*: KM(M;R,A) \rightarrow KM(N;R,A)$ is an isomorphism.}

\item{If $N$ is obtained attaching a 2-handle to $M$ and $\iota:M \hookrightarrow N$ is the inclusion map, $\iota_*: KM(M;R,A) \rightarrow KM(N;R,A)$ is surjective.}

\item{The skein module of the disjoint union is the tensor product of the skein modules
$$
KM(M\sqcup N;R,A) \cong KM(M;R,A) \otimes_R KM(N;R,A) .
$$}

\item{(Universal Coefficient Property) Let $f:\Z[A,A^{-1}] \rightarrow R$ be a homomorphism of rings (commutative with unit). The map $f$ gives to $R$ a structure of $\Z[A,A^{-1}]$ module, and the identity of the set of framed links induces an isomorphism of $\Z[A,A^{-1}]$-modules
$$
KM(M) \otimes_{\Z[A,A^{-1}]} R  \rightarrow KM(M;R, f(A)) .
$$}

\item{Let $N$ be an oriented 3-manifold obtained attaching a 2-handle along a curve $\gamma \subset \partial M$ in the boundary of the 3-manifold $M$. Then
$$
KM(N;R,A) = \frac{KM(M;R,A)}{J} ,
$$
where $J$ is the sub-module of $KM(M;R,A)$ generated by all the skeins of the form $L -{\rm sl}_\gamma(L)$ where $L$ is any framed link of $M$ and ${\rm sl}_\gamma (L)$ is a framed link of $M$ obtained from $L$ with a slide along $\gamma$ (along the 2-handle attached along $\gamma$) (of course there are many ways to slide along a curve).}

\item{The skein vector space of the connected sum is the tensor product of the vector spaces 
$$
K(M\#N) \cong K(M)\otimes_{\mathbb{Q}(A)} K(N) .
$$}
\end{enumerate}
\begin{proof}
See \cite[Proposition {\rm IX}.6.2]{Pr:survey} and \cite{Pr2}.
\end{proof}
\end{theo}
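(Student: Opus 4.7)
My plan is to treat the seven items as a package, since most pieces follow a common transversality-plus-locality pattern, while items (6) and (7) require an additional identification of a kernel or an inverse. I would organise the proof so that the easier formal statements (1), (4), (5) are dispatched first; the geometric heart of the argument is in (2) and (3), and then (6) and (7) are derived from (3).

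For (1), I would define $\iota_*$ on the generating set of framed links by composition with $\iota$, and observe that every skein triple lives in a $3$-ball which $\iota$ carries faithfully to a $3$-ball in $N$, so the skein relations are preserved and $\iota_*$ descends. Item (4) is the standard computation that the free module on framed links in $M \sqcup N$ is canonically the tensor of the two free modules, and the skein relations split because they are local (supported in a ball in one factor). Item (5) is a base change: tensoring the defining presentation of $KM(M)$ over $\mathbb{Z}[A,A^{-1}]$ with $R$ using $f$ recovers the presentation of $KM(M;R,f(A))$.

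The geometric content is in (2), (3), (6). For (2), the $3$-handle is $D^3$ glued along an $S^2$ in $\partial M$, its cocore is a point, and by transversality any framed link in $N$ can be pushed off this point and then into $M$ via the deformation retraction of $N$ minus a point onto $M$; the same transversality lets me realise every isotopy and every local skein replacement already in $M$, giving an inverse to $\iota_*$. For (3), the $2$-handle $D^2\times D^1$ has a $1$-dimensional cocore, and in a $3$-manifold the generic framed link is disjoint from a $1$-dimensional arc, so once more the link can be isotoped into $M$; this gives surjectivity. For (6), I would identify the kernel of this surjection: two framed links in $M$ that become isotopic in $N$ are related by an isotopy that may cross the core disk of the $2$-handle transversally in finitely many points, and crossing the core corresponds exactly to a handle slide along $\gamma$. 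Hence the kernel contains every $L - \mathrm{sl}_\gamma(L)$; to see it is generated by these, I would cut a generic isotopy by the preimage of the core and write the transition across each crossing as a slide, so every relation coming from $N$ reduces to a sum of slide relations.

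For (7), the key point is the invertibility of $-A^2-A^{-2}$ in $\mathbb{Q}(A)$. I would build $M \# N$ from $M \sqcup N$ by removing two $3$-balls and gluing the resulting $S^2$ boundaries, or equivalently by a sequence of $2$- and $3$-handle attachments, so from (4), (2), (3) I already get a surjection $K(M)\otimes K(N) \twoheadrightarrow K(M\# N)$. To construct the inverse, I would fix the separating sphere $\Sigma \subset M\# N$ and, for a framed link $L$ transverse to $\Sigma$ with $|L\cap\Sigma|=2k$, iteratively apply the Kauffman bracket skein relations inside a collar of $\Sigma$ to express $L$ as a $\mathbb{Q}(A)$-linear combination of framed links disjoint from $\Sigma$; normalising by $(-A^2-A^{-2})^{-1}$ each time a resolved component bounds a disk on one side lets me push everything to either side of $\Sigma$, producing a well-defined element of $K(M)\otimes K(N)$, and a check that this inverts the surjection finishes the isomorphism. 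The main obstacle I anticipate is the bookkeeping in this last step, namely showing that the resolution procedure is independent of the chosen transverse position of $L$ relative to $\Sigma$ and independent of the order of resolutions; this is where the invertibility of $-A^2-A^{-2}$ over $\mathbb{Q}(A)$ is essential, whereas parts (2), (3), (6) are purely combinatorial-topological and go through over any base ring.
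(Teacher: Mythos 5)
The paper itself does not prove this theorem: the stated proof is a citation of \cite[Proposition {\rm IX}.6.2]{Pr:survey} and of \cite{Pr2}, so your proposal has to be measured against those published arguments rather than against anything in the text. Your treatments of $(1)$, $(4)$ and $(5)$ are the standard ones and are fine, and the handle-theoretic arguments for $(2)$ and $(3)$ are correct: the cocore of a $3$-handle is a point and the cocore of a $2$-handle is an arc, so links, isotopy tracks and the $3$-balls supporting skein relations can all be made disjoint from them and pushed back into $M$. In $(6)$, however, you say that a generic isotopy crosses the \emph{core disk} of the $2$-handle in finitely many points; the object you need is the \emph{cocore arc} $\{0\}\times D^1$, whose product with $[0,1]$ has complementary dimension to the $2$-dimensional isotopy track in $N\times[0,1]$ and whose complement deformation retracts onto $M$ (the core disk is $2$-dimensional, and a generic track meets it in a $1$-manifold, not in points). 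With that correction the identification of the kernel with the submodule generated by the elements $L-{\rm sl}_\gamma(L)$ is the standard argument and goes through.

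The genuine gap is in $(7)$. First, the surjection $K(M)\otimes K(N)\to K(M\#N)$ does not follow ``from $(4)$, $(2)$, $(3)$'': $M$ and $N$ are closed, so $M\#N$ is not obtained from $M\sqcup N$ by attaching $2$- and $3$-handles; it is obtained by gluing $M\setminus B^3$ to $N\setminus B^3$ along a sphere $\Sigma$, and a link in $M\#N$ may meet $\Sigma$ essentially, so even surjectivity of the juxtaposition map requires showing that a link meeting $\Sigma$ in $2k$ points is a $\mathbb{Q}(A)$-combination of links disjoint from $\Sigma$. Second, your proposed mechanism for this reduction --- applying the Kauffman relations in a collar of $\Sigma$ and dividing by $-A^2-A^{-2}$ whenever a resolved circle bounds a disk --- does not work: the skein relations are local and never decrease the geometric intersection number of a link with $\Sigma$. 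The actual reduction (Remark~\ref{rem:graph_and_sphere} and Fig.~\ref{figure:sphere} in this paper, following \cite{Pr2}) fuses the $2k$ strands crossing $\Sigma$ into a single strand carrying a Jones--Wenzl projector and then uses the twist identity for a strand meeting a sphere once to kill every colour except $0$; this requires inverting the quantities $(-1)^nA^{n(n+2)}-1$ and the theta and loop evaluations, i.e.\ it uses that $A$ is not a root of unity in $\mathbb{Q}(A)$, not merely the invertibility of $-A^2-A^{-2}$. Finally, the well-definedness of the resulting inverse map --- independence of the transverse representative of $L$ and of the order of the fusions --- is exactly the content of \cite{Pr2}, and you have only flagged it as bookkeeping. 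As it stands, item $(7)$ is a plan rather than a proof.
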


\begin{rem}
Every compact 3-manifold $M$ is obtained attaching 2- and 3-handles to an orientable handlebody $H$ (a manifold with a handle-decomposition with just 0- and 1-handles). The handlebody is the thickening of the 2-disk with $g$ holes $S_{(g)}$. Therefore by Theorem~\ref{theorem:sk_mod_prop}-(2.), Theorem~\ref{theorem:sk_mod_prop}-(6.) and Theorem~\ref{theorem:known_sk_mod}-(2.) we have that the skein module $KM(M;R,A)$ of $M$ is generated by the multicurves of the punctured disk $S_{(g)}$ (including the empty set) and is defined by the relations in $KM(S_{(g)};R,A)$ ($=KM(H;R,A)$) $L - {\rm sl}_\gamma (L)$, where $L$ runs over all the framed links of $H$ (or the multicurves of $S_{(g)}$), ${\rm sl}_\gamma (L)$ is a framed link obtained from $L$ with a slide along $\gamma$, and $\gamma$ runs over all the curves that define the 2-handles.
\end{rem}

\begin{proof}[Proof of Theorem~\ref{theorem:known_sk_mod}-(3.)]
The complement of an open 3-ball of $\mathbb{R}\mathbb{P}^3$ is the orientable $I$-bundle over $\mathbb{R}\mathbb{P}^2$. Hence by Theorem~\ref{theorem:sk_mod_prop}-(2.) $KM(\mathbb{R}\mathbb{P}^3; R,A)=KM(\mathbb{R}\mathbb{P}^2;R,A)$. The multicurves of $\mathbb{R}\mathbb{P}^2$ are just the empty set and the orientation reversing curve, therefore by Theorem~\ref{theorem:known_sk_mod}-(2.) $KM(\mathbb{R}\mathbb{P}^3;R,A) = KM(\mathbb{R}\mathbb{P}^2;R,A)$ is free and is generated by those two elements.
\end{proof}

\begin{rem}\label{rem:dec_sk_sp}
The skein relations transform a framed link $L$ into a combination of framed links representing the same $\Z_2$-homology class of $L$ ($\Z_2= \Z / 2\Z$). Therefore the skein module has a decomposition in direct sum where the index $h$ varies over the first homology group $H_1(M;\Z_2)$ with coefficients in $\Z_2$:
$$
KM(M;R,A) = \bigoplus_{h\in H_1(M; \Z_2)} KM_h(M;R,A) ,
$$
where $KM_h(M;R,A)$ is the sub-module generated by all the framed links with $\Z_2$-homology class equal to $h$.
\end{rem}

\begin{rem}\label{rem:sk_sp_S1xS2}
By the universal coefficient property (Theorem~\ref{theorem:sk_mod_prop}-(5.)) with $f:\Z[A,A^{-1}] \rightarrow \mathbb{Q}(A)$ the inclusion map, we have
$$
K(M) := KM(M;\mathbb{Q}(A) , A) \cong KM(M) \otimes_{\Z[A,A^{-1}]} \mathbb{Q}(A) .
$$
Hence if $KM(M)$ is the direct sum of a torsion part and a free part, $K(M)$ is equal to the free part tensored with $\mathbb{Q}(A)$. In particular in that case the rank of $KM(M)$ is equal to the dimension of $K(M)$. Therefore by Theorem~\ref{theorem:known_sk_mod}-(5.) we have that the skein space of $S^1\times S^2$ is isomorphic to the base field $\mathbb{Q}(A)$ and is generated by the empty set. Adding Theorem~\ref{theorem:sk_mod_prop}-(7.) we get the same result for the connected sum $\#_g(S^1\times S^2)$ of $g\geq 0$ copies of $S^1\times S^2$ ($g=0$ means $S^3$ and $g=1$ means $S^1\times S^2$)
$$
K(\#_g(S^1\times S^2)) \cong \mathbb{Q}(A) . 
$$
By using the fact that $K(S^3)=\mathbb{Q}(A)$ is generated by the empty set and Remark~\ref{rem:graph_and_sphere} on a non separating set of $g$ 2-spheres in $\#_g(S^1\times S^2)$ we can easily prove that every skein in $K(\#_g(S^1\times S^2))$ is a multiple of the empty set $\varnothing$ \cite[Proposition 1]{FK1}. Unfortunately the fact that $\varnothing$ is non zero is non trivial.
\end{rem}

Thanks to result of Hoste-Przytycki (Remark~\ref{rem:sk_sp_S1xS2}) and (with different techniques) to Costantino \cite{Costantino2}, now we can define the Kauffman bracket also in connected sums of copies of $S^1\times S^2$:

\begin{defn}\label{defn:Kauf}
Let $M$ be a 3-manifold with skein space isomorphic to $\mathbb{Q}(A)$ and generated by the empty set $\varnothing$. The \emph{Kauffman bracket} $\langle S \rangle \in \mathbb{Q}(A)$ of a skein element $S\in K(M)$ is the unique coefficient that multiply the empty set to get $S$, $S= \langle S \rangle \cdot \varnothing$.
\end{defn}

\begin{quest}
Are there 3-manifolds different from $\#_g(S^1\times S^2)$ where we can define the Kauffman bracket, namely with skein space isomorphic to $\mathbb{Q}(A)$ and generated by the empty set? 
\end{quest}

\begin{prop}\label{prop:diffeo}
Let $\varphi: M \rightarrow N$ be a diffeomorphism of the oriented 3-manifolds $M$ and $N$. Then
\begin{enumerate}
\item[1.]{if $\varphi$ is orientation preserving it induces a natural isomorphism of $\Z[A,A^{-1}]$-modules $\varphi_* : KM(M)\rightarrow KM(N)$ and if we have a relation $\sum_i \lambda_i L_i =0$ in $KM(M)$ ($\lambda_i \in \Z[A,A^{-1}]$ and $L_i$ is a framed link), the relation $\sum_i \lambda_i \varphi(L_i) =0$ holds in $KM(N)$;}
\item[2.]{if $\varphi$ is orientation reversing it induces a natural \emph{anti-linear} isomorphism of $\Z[A,A^{-1}]$-modules $\varphi_* : KM(M)\rightarrow KM(N)$ and if we have a relation $\sum_i \lambda_i L_i =0$ in $KM(M)$ ($\lambda_i \in \Z[A,A^{-1}]$ and $L_i$ is a framed link), the relation $\sum_i \lambda_i|_{A^{-1}} \varphi(L_i) =0$ holds in $KM(N)$.}
\end{enumerate}
By ``\emph{anti-linear isomorphism}'' we mean a bijective map such that for every $S,S'\in KM(M)$ and $\lambda \in \Z[A,A^{-1}]$, $\varphi_*(S + S') = \varphi_*(S) + \varphi_*(S')$ and $\varphi_*(\lambda S) = \bar \lambda \varphi_*(S)$, where $\bar \lambda$ is the image of $\lambda$ under the involution of $\Z[A,A^{-1}]$ that is the isomorphism of $\Z$-modules defined by $A \mapsto A^{-1}$.

The statements holds also for skein vector spaces. Moreover if $K(M)\cong \mathbb{Q}(A)$ and is generated just by the empty set we get 
\begin{enumerate}
\item[3.]{if $\varphi$ is orientation preserving the Kauffman bracket is preserved $\langle \varphi(L) \rangle = \langle L \rangle$;} 
\item[4.]{if $\varphi$ is orientation reversing we have that $\langle \varphi (L) \rangle = \langle L \rangle|_{A^{-1}}$, where $\langle L \rangle|_{A^{-1}}$ is the composition of the Kauffman bracket $\langle L \rangle$ and $A \mapsto A^{-1}$.}
\end{enumerate}
\begin{proof}
The map $\varphi$ defines a linear isomorphism $V(M) \rightarrow V(N)$ between the free modules $V(M)$ and $V(N)$ generated by the framed links in $M$ and $N$. Select a 3-ball $B$ in $M$ with a positive oriented parametrization. Consider a skein relation that modifies $L$ in $B$. If a link $L\subset M$ is in such position, its image $\varphi(L)\subset N$ is either in the same position with respect to $\varphi(B)\subset N$, or in the \emph{mirror image} of that position. By ``\emph{mirror image}'' we mean the image of the map $(x,y,z) \mapsto (x,y,-z)$ in the trivialization of the 3-ball. These cases happen respectively when $\varphi$ is orientation preserving and orientation reversing. Therefore if $\varphi$ is orientation preserving the skein relations are preserved and we get the first statement.

If $\varphi$ is orientation reversing we have
$$
\begin{array}{rcccl}
 \varphi \left( \pic{1.2}{0.3}{incrociop.eps} \right) & = & \pic{1.2}{0.3}{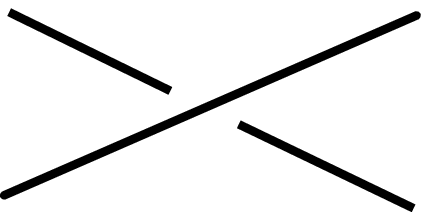}  & = & A^{-1} \pic{1.2}{0.3}{Acanalep.eps}  + A  \pic{1.2}{0.3}{Bcanalep.eps}  \\
\varphi\left( D \sqcup \pic{0.8}{0.3}{banp.eps} \right) & = & \varphi(D) \sqcup \pic{0.8}{0.3}{banp.eps}  & = & (-A^2 - A^{-2})  \varphi(D)  \\
 \varphi\left( \pic{0.8}{0.3}{banp.eps} \right) &=& \pic{0.8}{0.3}{banp.eps} & = & (-A^2-A^{-2}) \varnothing
\end{array} .
$$
Therefore we obtain the skein relations with $A$ replaced by $A^{-1}$. Therefore we get the second statement.

Clearly all these considerations work also for skein vector spaces. The empty set is sent to it self $\varphi_*(\varnothing) = \varnothing$. Therefore we get the last two statements.
\end{proof}
\end{prop}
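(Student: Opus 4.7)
The plan is to start from the fact that $\varphi$, being a diffeomorphism, induces a bijection on isotopy classes of framed links of $M$ (the framing data is carried along canonically), and hence a linear isomorphism $V(M)\to V(N)$ of the free $\Z[A,A^{-1}]$-modules generated by framed links. The whole game is then local: one must check how the defining skein relations, which live inside a small 3-ball $B\subset M$ equipped with a positive trivialization, are transported by $\varphi$. A preliminary remark is needed that the skein relations are well-posed independently of the chosen positive chart, since any two positive charts on a 3-ball differ by an orientation-preserving diffeomorphism isotopic to the identity.

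If $\varphi$ is orientation preserving, then $\varphi(B)\subset N$ inherits a positive trivialization in which $\varphi|_B$ is, up to isotopy, the identity. Consequently every local skein relation in $M$ is sent to a local skein relation of the same form in $N$, the submodule of relations maps into the submodule of relations, and the induced map $\varphi_*:KM(M)\to KM(N)$ is a well-defined $\Z[A,A^{-1}]$-linear isomorphism with inverse $(\varphi^{-1})_*$. Items 1 and 3 follow immediately: the empty set is preserved, so if $S=\langle S\rangle\varnothing$ in $K(M)$, then $\varphi_*(S)=\langle S\rangle\varnothing$ in $K(N)$.

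If $\varphi$ is orientation reversing, then in positive charts $\varphi|_B$ looks, up to isotopy, like $(x,y,z)\mapsto (x,y,-z)$, i.e.\ a mirror reflection. Mirror-reflecting a positive crossing produces a negative crossing, while leaving the two smoothings and the disjoint unknot unchanged. Hence the first skein relation of $M$ is sent to the relation $\pic{1.2}{0.3}{incrociop2.eps} = A\,\pic{1.2}{0.3}{Acanalep.eps} + A^{-1}\,\pic{1.2}{0.3}{Bcanalep.eps}$ in $N$, which when re-expressed using the standard (positive) crossing skein relation of $N$ is precisely that relation with $A$ replaced by $A^{-1}$; the other two relations are mirror-invariant. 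Thus, setting $\varphi_*(\lambda L):=\bar\lambda\,\varphi(L)$ on generators and extending additively, every relation of $KM(M)$ is mapped to a valid relation of $KM(N)$, so $\varphi_*$ descends to a well-defined anti-linear isomorphism. Item 4 then follows by the same computation as before, combined with the anti-linearity: from $S=\langle S\rangle\varnothing$ we get $\varphi_*(S)=\overline{\langle S\rangle}\,\varnothing$, giving $\langle\varphi(L)\rangle=\langle L\rangle|_{A^{-1}}$.

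The only real subtlety I anticipate is the mirror-image analysis of the crossing: one must argue that orientation-reversal of the 3-ball $B$ genuinely turns a positive crossing into a negative crossing and does not also spoil the identification of the two smoothings. The cleanest way is to fix once and for all a model positive 3-ball containing the four standard tangles $\pic{1.2}{0.3}{incrociop.eps}$, $\pic{1.2}{0.3}{incrociop2.eps}$, $\pic{1.2}{0.3}{Acanalep.eps}$, $\pic{1.2}{0.3}{Bcanalep.eps}$, and observe that the reflection $(x,y,z)\mapsto(x,y,-z)$ swaps the first two and fixes the last two; this is the computation I displayed as a diagram in the statement, and it feeds directly into the anti-linear rewriting. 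Once this local picture is justified, everything else is formal bookkeeping on relations and generators.
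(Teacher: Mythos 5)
Your proof is correct and follows essentially the same route as the paper: pass to the free modules on framed links, localize to a positively trivialized 3-ball, observe that the skein relations are preserved verbatim in the orientation-preserving case and turned into their $A\mapsto A^{-1}$ counterparts via the mirror reflection of the crossing in the orientation-reversing case, then deduce items 3 and 4 from $\varphi_*(\varnothing)=\varnothing$. The extra care you take about independence of the positive chart and the explicit check that the reflection swaps the two crossings while fixing the smoothings is a welcome sharpening of the paper's argument, not a departure from it.
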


\begin{rem}
Let $M$ be a 3-manifold with skein space $K(M)\cong \mathbb{Q}(A)$ generated by the empty set. There might be framed links $L, L' \subset M$ that are related by an orientation preserving diffeomorphism of $M$, $L'= \varphi(L)$ but are not isotopic $L\neq L'$ (this can not happen if $M=S^3$). By Proposition~\ref{prop:diffeo}-(3.) we can not distinguish them with the Kauffman bracket.
\end{rem}

\begin{rem}\label{rem:tensor}
There is an obvious canonical linear map $K(M) \to K(M\# N)$ defined by considering a skein in $M$ inside $M\# N$. The linear map $K(\#_g(S^2\times S^1)) \to K(\#_{g+1}(S^2\times S^1))$ sends $\varnothing$ to $\varnothing$ and hence preserves the bracket $\langle S \rangle$ of a skein $S \in K(\#_g(S^2\times S^1))$.

This shows in particular that if $S$ is contained in a ball, the bracket $\langle S \rangle$ is the same we would obtain by considering $S$ inside $S^3$.
\end{rem}

\begin{quest}
Is the skein of the empty set $\varnothing \in KM(M)$ always non null?
\end{quest}

\begin{quest}
Is the skein vector space $K(M)$ of every closed oriented 3-manifold $M$ finite dimensional?
\end{quest}

\begin{quest}
Is the skein module of the complement of any non trivial knot $K \subset S^3$, $K\neq \pic{0.8}{0.3}{banp.eps}$, a finitely generated free module?
\end{quest}

\begin{conj}[Conjecture 4.3 \cite{Othsuki}]
Let $M$ be a compact oriented 3-manifold with non empty boundary. If every closed incombressible surface in $M$ is parallel to the boundary of $M$ then the skein module $KM(M)$ is without torsion.
\end{conj}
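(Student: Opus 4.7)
The plan is to prove the contrapositive: assume $KM(M)$ has nonzero torsion and produce a closed incompressible non-boundary-parallel surface in $M$. The first step is to reduce the problem to a handle calculus. By the remark following Theorem~\ref{theorem:sk_mod_prop}, $M$ is obtained from a handlebody $H$ (the thickening of some $S_{(g)}$) by attaching $2$-handles along curves $\gamma_1,\ldots,\gamma_k \subset \partial H$ (with $3$-handles being harmless), so $KM(M)=KM(H)/J$ where $KM(H)$ is free on multicurves by Theorem~\ref{theorem:known_sk_mod}(2) and $J$ is generated by the slide differences $L - {\rm sl}_{\gamma_i}(L)$. A torsion class then corresponds to a multicurve combination $S \in KM(H)$ with $p(A)\, S \in J$ for some nonzero $p(A)\in\Z[A,A^{-1}]$ but $S \notin J$.

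The second step is to transport this algebraic obstruction to topology through Bullock's theorem (the forthcoming Theorem~\ref{theorem:Bullock}) and Culler--Shalen theory. Specializing at $A=-1$ identifies the reduced $\mathbb{C}$-algebra $KM(M;\mathbb{C},-1)/\sqrt{0}$ with the $SL_2(\mathbb{C})$-character ring $\mathcal{R}(M)$. By the universal coefficient property Theorem~\ref{theorem:sk_mod_prop}(5), torsion in $KM(M)$ obstructs $\Z[A,A^{-1}]$-flatness, and standard commutative algebra applied to the family $A \mapsto \zeta \in \mathbb{C}^*$ should force either a new nilpotent in the fiber at $A=-1$ or a jump of generic dimension along a codimension-one locus. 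Either phenomenon, on the character-variety side, corresponds to an ideal point of a curve component of $X(M) = \mathrm{Spec}\,\mathcal{R}(M)$; by the classical Culler--Shalen construction such an ideal point yields a nontrivial action of $\pi_1(M)$ on a simplicial tree, whose edge-fixing subgroups assemble into an essential surface in $M$ --- that is, a closed incompressible surface not parallel to $\partial M$, contradicting the hypothesis.

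The main obstacle lies precisely in this passage from skein-module torsion to a genuine Culler--Shalen valuation. The dictionary is morally clear but technically uncertain: starting from the pair $(S, p(A))$ one must manufacture a discrete valuation $v$ on a function field of a component of $X(M)$ with $v(\overline S) < 0$ and $v$ non-negative on $\overline J$, and then verify that $v$ is an ideal point of the projective completion. Accomplishing this likely requires tools beyond the skein framework developed here: either the stated or quantum-cluster refinement of skein modules, where torsion can be tracked through boundary strata of cluster $\mathcal{X}$-varieties corresponding to essential surfaces, or an inductive argument along a finite hierarchy of $M$ exploiting smallness, decomposing $KM(M)$ via repeated amalgamation of pieces already known to be torsion-free by Theorem~\ref{theorem:known_sk_mod}. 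Without such additional input the scheme stalls exactly at producing the valuation $v$, which is consistent with the statement remaining an open conjecture.
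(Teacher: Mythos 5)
This statement is presented in the paper as a \emph{conjecture} (Conjecture 4.3 of Ohtsuki's problem list); the paper offers no proof of it, and to date none is known. So there is no ``paper's proof'' to compare against, and your proposal should be judged simply on whether it closes the problem. It does not, and you say so yourself in the final sentence: the scheme stalls at manufacturing the valuation $v$. That admission is correct, but it is worth naming the other places where the argument would break even if that step were granted.

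First, the passage through $A=-1$ is not sound as a detector of torsion. Torsion in $KM(M)$ is a $\Z[A,A^{-1}]$-module phenomenon and can be supported at specializations other than $A=-1$: compare Theorem~\ref{theorem:known_sk_mod}-(5.), where the torsion of $KM(S^1\times S^2)$ is annihilated by polynomials of the form $1-A^{2k+4}$, whose zero loci are roots of unity. A torsion submodule annihilated by a polynomial not vanishing at $A=-1$ has zero fiber there and is completely invisible to $\mathcal{R}(M)$ and to the character variety, so no nilpotent and no dimension jump need appear at that fiber. Second, even if an ideal point of a curve in $X(M)$ were produced, the Culler--Shalen construction yields an \emph{essential surface} that is in general properly embedded with possibly non-empty boundary; the hypothesis of the conjecture only constrains \emph{closed} incompressible surfaces, so a surface with boundary gives no contradiction. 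One would need an additional argument (e.g.\ boundary-slope control, or closing the surface up) to land in the hypothesis. Third, the reduction in your first step identifies $J$ as generated by slide differences, but relating torsion of the quotient $KM(H)/J$ to geometry requires understanding $J$ as a submodule of the free module on multicurves, which is precisely the part of the theory that is not available. In short: the approach is a reasonable heuristic dictionary (torsion $\leftrightarrow$ degeneration of the character scheme $\leftrightarrow$ essential surfaces), and it is the kind of picture that motivates the conjecture, but each arrow in the dictionary is unproven and at least two of them fail as literally stated.
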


\subsection{Temperley-Lieb algebras}

For a non negative integer $n$, the $n^{\rm th}$ \emph{Temperley-Lieb algebra} $TL_n$ is a quite famous finitely generated $\mathbb{Q}(A)$-algebra. We can think of it as a relative version of the skein space of the 3-cube. The cube has $n$ fixed points on the top side and $n$ on the bottom one and the generators are \emph{framed tangles}.

\begin{defn}\label{defn:tangle}
A $n$-\emph{tangle} $T$ is the equivalence class of $n$ disjoint properly embedded arcs and some circles in the closed 3-cube $D^3$ under the relation of isotopy pointwise fixing the boundary $\partial D^3$.

Tangles are represented by diagrams in a square with $n$ points on the top side and $n$ points on the bottom side.

There is a trivial way to get a link in $S^3$ from a tangle: identify the top side of the cube with the bottom one getting a link in the solid torus, then embed the solid torus in $S^3$ in the standard way. The resulting link in $S^3$ is called the \emph{closure} of the tangle. We get a diagram of the closure of a tangle by identifying the top and the bottom of the ambient square and embedding the resulting annulus in the plane.
\end{defn}

\begin{defn}\label{defn:framed_tangle}
There is a notion of ``\emph{framing}'' on tangles, hence a notion of ``\emph{framed tangle}''. These are isotopy classes of disjoint embedded annuli and strips in the cube that collapse onto the components of the tangle, intersect the cube only in the top and the bottom side and the intersection coincides with the union of the top and bottom side of the strips. The admissible framings of the tangles are exactly the ones obtained by the diagrams via the black-board framing. 

There is also a natural way to get a framed link in $S^3$ from a framed tangle that is still called \emph{closure}. We get a diagram of the closure of a framed tangle just by identifying the top and bottom side of the square of a diagram of the framed tangle (that represents the framed link by the black-board framing), and embedding this annulus in the plane.
\end{defn}

\begin{defn}\label{defn:mult_tangle}
Given two (framed) $n$-tangles, $T_1$ and $T_2$, we can define their product $T_1\cdot T_2$ by gluing the bottom side of the cube with $T_1$ to the top side of the one with $T_2$. We get a diagram of $T_1 \cdot T_2$ by gluing the bottom side of the square with a diagram $D_1$ of $T_1$ with the top side of the square with a diagram $D_2$ of $T_2$.
\end{defn}

\begin{defn}
Let $n\geq 0$ and let $V$ be the $\mathbb{Q}(A)$-vector space generated by the framed $n$-tangles in the 3-cube. Quotient $V$ modulo the sub-space generated by the \emph{skein relations}: 
$$
\begin{array}{rcl}
 \pic{1.2}{0.3}{incrociop.eps}  & = & A \pic{1.2}{0.3}{Acanalep.eps}  + A^{-1}  \pic{1.2}{0.3}{Bcanalep.eps}  \\
 D \sqcup \pic{0.8}{0.3}{banp.eps}  & = & (-A^2 - A^{-2})  D  
\end{array} .
$$
The $n^{\rm th}$ \emph{Temperley-Lieb algebra} is the resulting vector space equipped with the multiplication defined by linear extension on the multiplication with $n$-tangles.
\end{defn}

There is a natural injective map $TL_n \rightarrow TL_{n+1}$, obtained by adding a straight line connecting the $(n+1)$'s points. We can identify $TL_n$ with its image. We also have a natural map $TL_n \rightarrow \mathbb{Q}(A)$ called \emph{closure} or \emph{trace}. To get this map we extend by linearity the map defined on tangles: take a $n$-framed tangle, identify the starting points with the corresponding end points, embed the obtained solid torus in $S^3$ in the standard way, we get a framed link in $S^3$ and take its Kauffman bracket. 

More generally, if we have a framed link $L$ in a 3-manifold $M$ and a 3-ball containing $n$ parallel strands of the link, we can change the skein defined by $L$ by putting a fixed element of $TL_n$ into the 3-ball replacing the parallel strands. This defines a linear map $TL_n \rightarrow K(M)$. The closure of an element of $TL_n$ corresponds to the case $L$ is the unlink in $S^3$.

The Temperley-Lieb algebra $TL_n$ is generated as a $\mathbb{Q}(A)$-algebra by the elements $1,e_1,\ldots , e_{n-1}$ shown in Fig.~\ref{figure:TLgen}.

\begin{figure}[htbp]
$$
1 = \pic{1.5}{0.5}{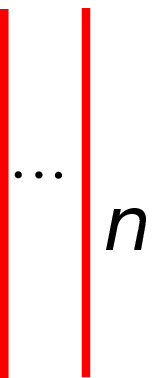} ,\ \ \ e_i = \pic{1.5}{0.5}{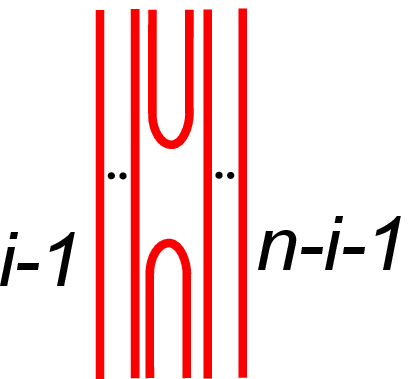}
$$
\caption{Standard generators for the algebra $TL_n$.}
\label{figure:TLgen}
\end{figure}

The generators $e_i$ satisfy the following algebraic relations:
\begin{itemize}
\item{$e_i^2 = (-A^2-A^{-2}) \cdot e_i$;}
\item{$e_i \cdot e_{i\pm1} \cdot e_i = e_i$;}
\item{$e_i \cdot e_j = e_j \cdot e_i$ for $|i-j|>1$.}
\end{itemize}

\subsection{Colors and trivalent graphs}
There is a standard way to color the components of a framed link $L\subset M$ with a natural number and get a skein element of $K(M)$. These colorings are based on particular elements of the Temperley-Lieb algebra called \emph{Jones-Wenzl projectors}. Coloring a component with $0$ is equivalent to remove it, while coloring with $1$ corresponds to consider the standard skein.

\begin{defn}
The \emph{Jones-Wenzl projectors} $f^{(n)}\in TL_n \subset TL_{n+1}$, $n\geq 0$, are particular elements of the Temperley-Lieb algebras. Usually the $n^{\rm th}$ projector is denoted with one or $n$ parallel straight lines covered by a white square box with a $n$ inside. They are defined by recursion:
\beq
f^{(0)} & := & \varnothing \\
f^{(1)} & := & 1_{TL_1} = \pic{1}{0.5}{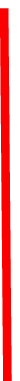} \\
\pic{1.2}{0.5}{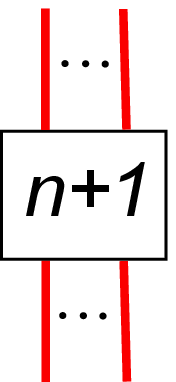} & := & \pic{1.2}{0.5}{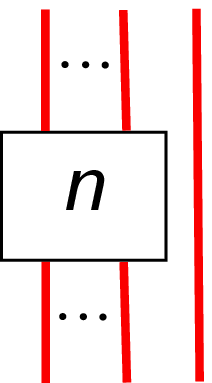} - \frac{\cerchio_{n-1}}{\cerchio_n} \cdot \pic{1.2}{0.5}{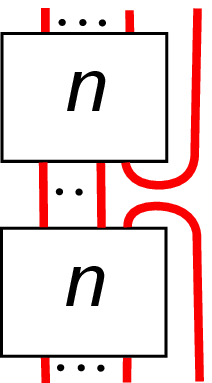}
\eeq
where $\cerchio_n = \Delta_n \in \mathbb{Q}(A)$ is the closure of $f^{(n)}$ and we have
$$
\cerchio_n = (-1)^n \frac{A^{2(n+1)} -A^{-2(n+1)}}{A^2-A^{-2}} .
$$
\end{defn}

The Jones-Wenzl projectors are completely determined by the following important properties:
\begin{itemize}
\item{$f^{(n)} \cdot e_i = e_i \cdot f^{(n)}=0$ for all $1\leq i \leq n-1$;}
\item{$(f^{(n)} -1)$ belongs on the sub-algebra generated by $e_1, \ldots , e_{n-1}$;}
\item{$f^{(n)} \cdot f^{(m)} = f^{(m)}$ for all $n\leq m$.}
\end{itemize}

Sometimes it is better to use the \emph{quantum integers} instead of the symbols $\cerchio_n$ or $\Delta_n$.
\begin{defn}
The $n^{\rm th}$ \emph{quantum integer} $[n]$, with $n>0$, is the rational function so defined:
$$
[n]: \ q \mapsto \frac{q^n -q^{-n}}{q-q^{-1}} = q^{-n+1} + q^{-n+3} + \ldots + q^{n-3} + q^{n-1}.
$$
\end{defn}
Note that the limit for $q\rightarrow 1$ of $[n]$ is the integer $n$.

We have that
$$
\cerchio_n = (-1)^n[n+1]|_{q=A^2} .
$$

\begin{defn}\label{defn:framed_graph}\label{defn:admissible}
A \emph{framed knotted trivalent graph} $G\subset M$ is a knotted trivalent graph equipped with a \emph{framing}, namely an orientable surface thickening the graph considered up to isotopies (see Fig.~\ref{figure:knotted_graph}). We admit also closed edges, namely knot components of $G$. These objects are often called \emph{ribbon graphs}, but we do not use this terminology here to avoid confusion with ribbon surfaces. 

A triple $(a,b,c)$ of natural numbers is \emph{admissible}
if the numbers satisfy the triangle inequalities $a\leq b+c$, $b\leq a+c$, $c\leq a+b$, and their sum $a+b+c$ is even. An \emph{admissible coloring} of $G$ is the assignment of a natural number (a \emph{color}) to each edge of $G$ such that the three numbers $a$, $b$, $c$ coloring the three edges incident to each vertex form an admissible triple. A \emph{colored trivalent graph} is a framed knotted trivalent graph with an admissible coloring. 
\end{defn}
\begin{figure}[htbp]
$$
\includegraphics[scale= 0.45]{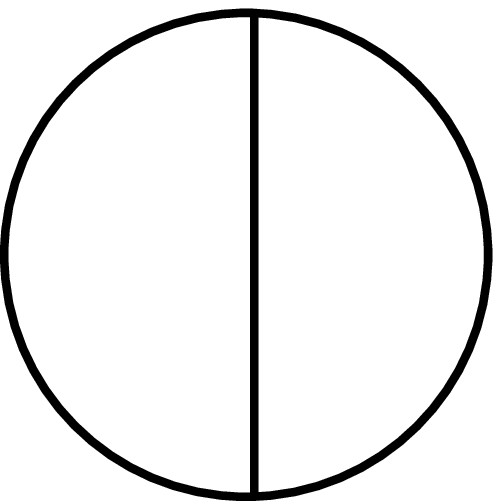} 
\hspace{1.5cm} 
\includegraphics[scale= 0.38]{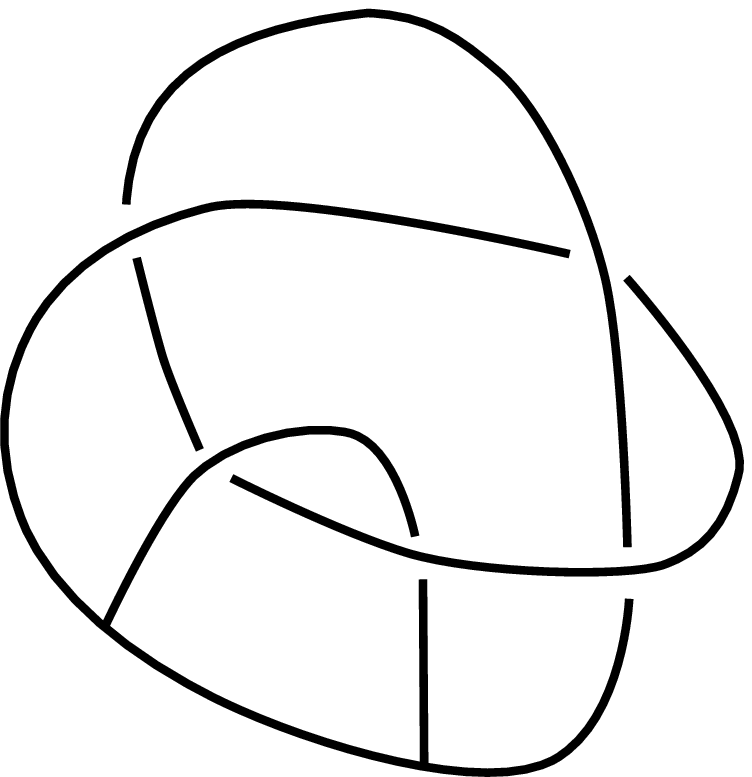}
$$
\caption{Two knotted trivalent graphs in $S^3$: a planar one (left) and one not holding in a plane (right).}
\label{figure:knotted_graph}
\end{figure}
Clearly a framed knotted trivalent graph without vertices is a framed link.

There is a standard way to define a skein element associated to a colored framed knotted trivalent graph. The admissibility requirements on colors allow to associate uniquely to the colored graph a linear combination of framed links by putting the $k^{\rm th}$ Jones-Wenzl projector at each edge colored with $k$ and by substituting vertices with bands as shown in Fig.~\ref{figure:trivalent_vertex}.

\begin{figure}[htbp]
\begin{center}
\includegraphics[width = 7 cm]{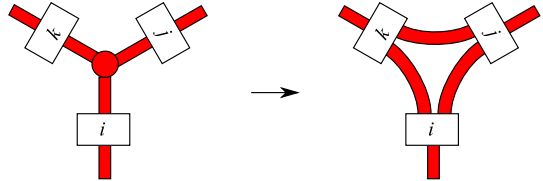}
\end{center}
\caption{A colored framed knotted trivalent graph determines a linear combination of framed links: replace every edge with a Jones-Wenzl projector, and connect them at every vertex via non intersecting strands contained in the depicted bands. For instance there are exactly $\frac{i+j-k}2$ bands connecting the projectors $i$ and $j$.}
\label{figure:trivalent_vertex}
\end{figure}

The identity in Fig.~\ref{figure:Cheb} holds in a solid torus and comes from the recursion formula for the Jones-Wenzl projectors. This is a global relation in the solid torus and works only for parallel (colored) copies of the core with trivial framing (the framing given by a fixed properly embedded annulus in the solid torus). This identity shows that we can define the skein of colored links in the skein module constructed with any base ring, in particular with $\Z[A,A^{-1}]$. 

\begin{figure}
\begin{center}
\includegraphics[width = 5.3 cm]{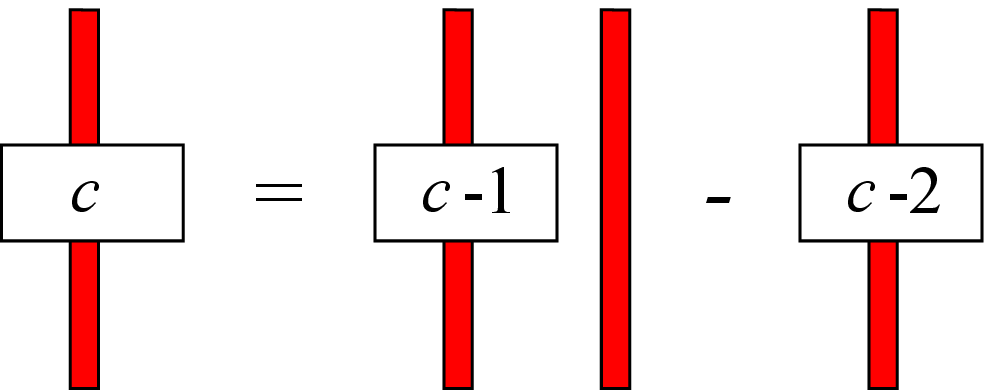}
\caption{A move in the skein space of the solid torus ($c \geq 2$). Only a portion of the solid torus is pictured, the portion is diffeomorphic to $[-1,1]\times D^2$.}
\label{figure:Cheb}
\end{center}
\end{figure}

We get infinitely many new invariants for framed links just by fixing a color, giving it to the link components, and taking the Kauffman bracket of that colored graph. This is (a renormalization of) the \emph{colored Jones polynomial} (see Section~\ref{sec:vol_conj}).

\subsection{Three basic colored trivalent graphs}\label{subsec:three_graphs}

Three basic planar colored framed trivalent graphs $\cerchio$, $\teta$, and $\tetra$ in $S^3$ are shown in Fig.~\ref{figure:graphs}. Their Kauffman brackets are some rational functions in $q=A^2$ that we now describe.

\begin{figure}[htbp]
\begin{center}
\includegraphics[width = 9.5 cm]{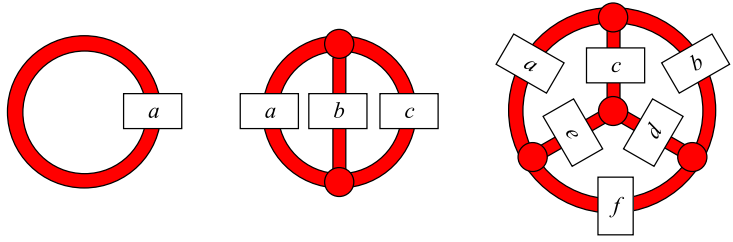}
 \end{center}
\caption{Three important planar colored framed trivalent graphs in $S^3$.}
\label{figure:graphs}
\end{figure}

We use the usual \emph{factorial notation}:
$$
[n]! := [2] \cdot [3]\cdot \ldots \cdot [n] ,
$$
with the convention $[0]! := 1$. Similarly we define the \emph{generalized multinomials}:
$$
\begin{bmatrix}
m_1, \ldots, m_h \\
n_1, \ldots n_k
\end{bmatrix} := \frac{[m_1]!\cdot \ldots \cdot [m_h]!}{[n_1]!\cdot \ldots \cdot [n_k]!} .
$$
When using these generalized multinomials we always suppose that
$$
m_1 + \ldots + m_h = n_1+ \ldots + n_k.
$$
The evaluations of $\cerchio$, $\teta$ and $\tetra$ are:
\beq
\cerchio_a  & = & (-1)^{a} [a+1], \\
\teta_{a,b,c} & = & (-1)^{\frac{a+b+c}2}
\begin{bmatrix} \frac{a+b+c}2+1, \frac{a+b-c}2, \frac{b+c-a}2, \frac{c+a-b}2 \\
a, b, c, 1 \end{bmatrix},  \\
\pic{2}{0.8}{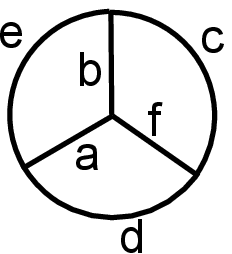} & = &
\begin{bmatrix} \Box_i-\triangle_j \\
a, b, c, d, e, f \end{bmatrix} \cdot \sum_{z = \max \triangle_j }^{\min \Box_i}\!\!\! (-1)^z
\begin{bmatrix} z+1 \\
z-\triangle_j, \Box_i-z, 1 \end{bmatrix}.
\eeq

Triangles and squares in the latter equality are defined as follows:
\beq
& \triangle_1 = \frac{a+b+c}{2},\ \triangle_2 = \frac{a+e+f}{2},\ \triangle_3 =\frac{ d+b+f}{2},\ \triangle_4 = \frac{d+e+c}{2}, & \\
& \Box_1 = \frac{a+b+d+e}{2},\ \Box_2 = \frac{a+c+d+f}{2},\ \Box_3 = \frac{b+c+e+f}{2}. &
\eeq

The indices in the formula vary as $1\leq i \leq 3$ and $1\leq j \leq 4$, so the term $\Box_i - \triangle_j$ indicates $3\cdot 4 = 12$ numbers.
The formula for $\tetra$ was first proved by Masbaum and Vogel \cite{MaVo}. These formulas are all rational functions in $q=A^2$.

\subsection{Important identities}\label{subsec:identities}

Here we introduce some important identities in skein spaces that we need. We can find their proofs in \cite{Lickorish}. The first identity is the following

\begin{equation}\label{eqn:first_id}
\pic{2}{0.7}{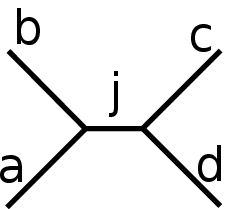} = \sum_i \left\{\begin{matrix} a & b & i \\ c & d & j \end{matrix}\right\} \pic{2}{0.7}{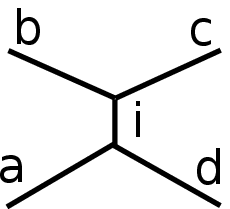}    \quad \quad \quad \text{\cite[Page 155]{Lickorish}},
\end{equation}
where the sum is taken over all $i$ such that the shown colored graphs are admissible. The coefficients between brackets are called $6j$-\emph{symbols}. We have
\begin{equation}\label{eqn:second_id}
\left\{\begin{matrix} a & b & i \\ c & d & j \end{matrix}\right\} = \frac{ \cerchio_i}{\teta_{a,d,i} \teta_{c,b,i}} \pic{2}{0.7}{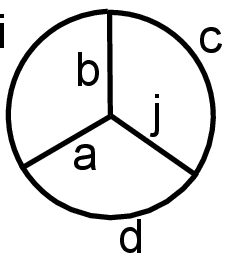} .
\end{equation}

The effect of a positive full twist is shown in Fig.~\ref{figure:framingchange} (see \cite[Lemma 14.1]{Lickorish} or Proposition~\ref{prop:half-framingchange}).
\begin{figure}[htbp]
$$
\pic{2}{0.9}{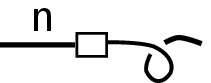} = (-1)^n A^{n^2 + 2n} \ \pic{2}{0.9}{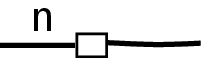}
$$
\caption{A positive full twist.}
\label{figure:framingchange}
\end{figure}

The \emph{fusion rule} is shown in Fig.~\ref{figure:fusion}, it takes place inside a 3-ball. This comes out from (\ref{eqn:first_id}) and (\ref{eqn:second_id}) with $j=0$ since the skein of a graph $G$ is equal to the skein of a graph obtained by adding to $G$ a strand colored with $0$. 

\begin{figure}
\begin{center}
\includegraphics[width = 8 cm]{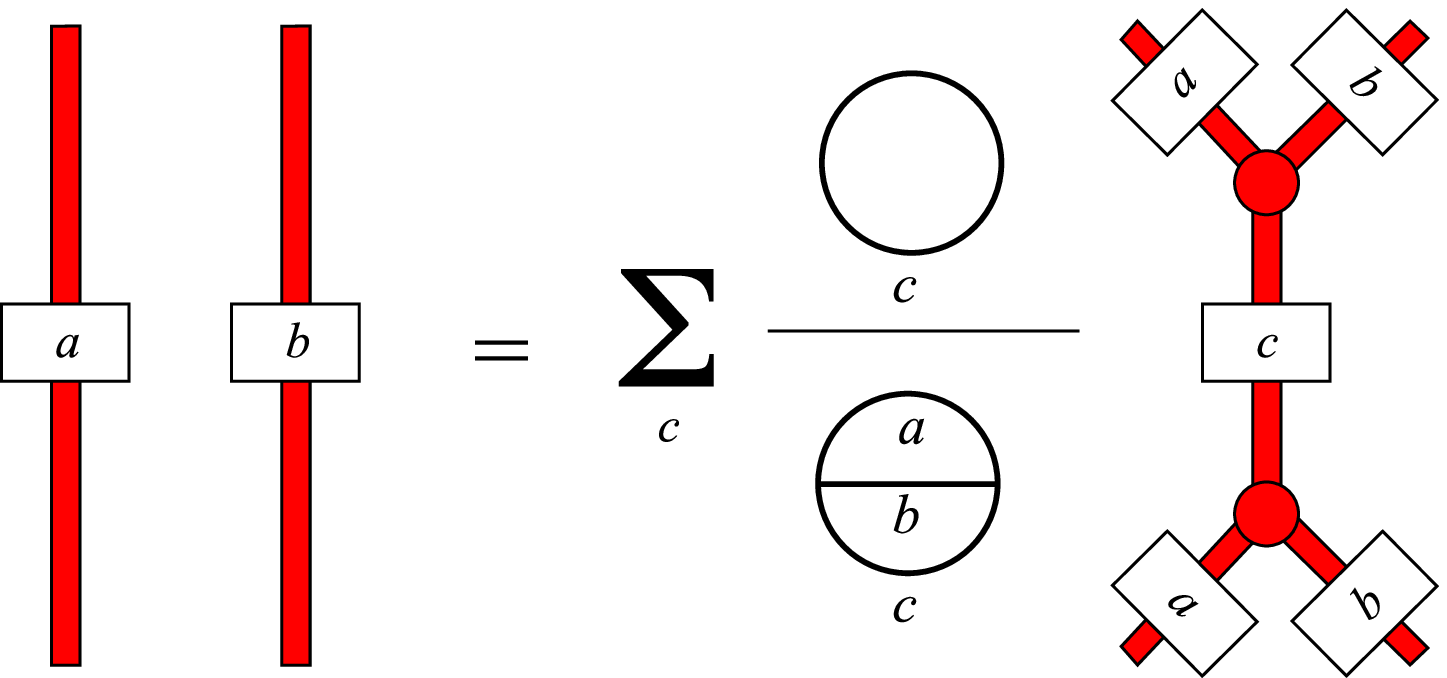}
\caption{The fusion rule. Recall that all framings are orientable. We suppose here that the two bands in the left are oriented coherently, so that the right knotted trivalent graph is also orientable.}
\label{figure:fusion}
\end{center}
\end{figure}

Let $G$ be a framed trivalent graph that intersects a 2-sphere only in a point contained in a framed strand $T\subset G$. We can apply an isotopy (a slide along the 2-sphere) and discover that $G$ is isotopic to the framed graph obtained adding two full twists to $T$. Since $A\neq 1$, using the equality in Fig.~\ref{figure:framingchange} we get that the skein of a framed colored graph that intersects once a 2-sphere with a strand with a non null color is $0$. This is the identity in Fig.~\ref{figure:sphere}-(left). 

Let $G$ be a colored framed trivalent graph that intersects a 2-sphere $S$ exactly in two points contained in two strands colored with $a$ and $b$. We apply the fusion rule of Fig.~\ref{figure:fusion}. We get a linear combination of framed colored graphs that intersect $S$ once. By the previous identity all the summands are null except the one whose strand that intersects $S$ is colored with $0$ (we can remove that strand). We get that color if and only if $a=b$. This is the identity in Fig.~\ref{figure:sphere}-(right).

\begin{figure}
\begin{center}
\includegraphics[width = 10 cm]{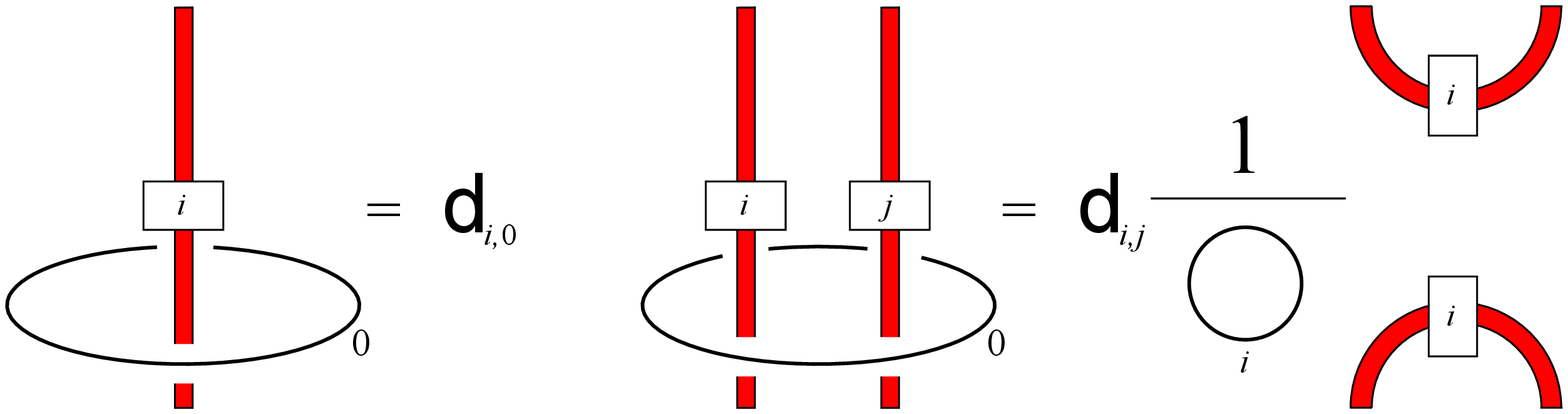}
\caption{Sphere intersection. The skein of a framed colored trivalent graph intersecting once or twice a 2-sphere. The symbol $d_{i,j}$ is the Kronecker delta $d_{i,i}=1$, $d_{i,j}=0$ if $i \neq j$.}
\label{figure:sphere}
\end{center}
\end{figure}

\begin{rem}\label{rem:graph_and_sphere}
Let $G$ be a colored framed trivalent graph and $S$ be an embedded 2-sphere that is transverse to $G$ ($G$ and $S$ intersect in a finite number of points). Using the fusion rule several times we get that the skein of $G$ is a linear combination of colored graphs intersecting $S$ once. Therefore the skein of $G$ is a linear combination of skeins of graphs not intersecting $S$. Furthermore the colors of those strands have the parity of the sum of the colors of the strands of $G$ that intersect $S$ counted with the multiplicity, namely if an edge intersects $n$ times $S$ its color must be counted $n$ times. Therefore if that sum is odd the skein of $G$ is $0$.
\end{rem}

By (\ref{eqn:first_id}) and the one in Fig.~\ref{figure:sphere}-(left) we get the following:
$$
\pic{1.4}{0.8}{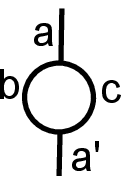} = \left\{\begin{array}{cl}
\frac{\teta_{a,b,c}}{\cerchio_a} \ \pic{1.4}{0.8}{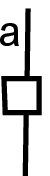} & \text{if } a=a' \\
0 & \text{if } a\neq a'
\end{array}\right. .
$$

\subsection{Another basis for the skein space of a handlebody}

Identify the solid torus $V$ with the product $V=A\times [-1,1]$ of an annulus $A\cong S^1\times [-1,1]$ and an interval. We can define a multiplication on framed links of $V$ by overlapping two framed links according to the factor $[-1,1]$. Let $\alpha \subset V$ be the core $S^1\times \{0\} \times \{0\} \subset V$ of a solid torus $V$ with trivial framing (the one given by the annulus $A$). By Theorem~\ref{theorem:known_sk_mod}-(2.) the set of powers $\alpha^n$ with $n\geq 0$, of $\alpha$, is a basis for the free module $KM(V)$ ($\alpha^n=\alpha \cdot \ldots \cdot \alpha$ is the link consisting of $n$ disjoint copies of $\alpha$ and $\alpha^0=\varnothing$), moreover with this multiplication $KM(V)$ is isomorphic to the algebra of polynomials with coefficients in $\Z[A,A^{-1}]$ where $\alpha$ corresponds to the variable of the polynomials. Of course, the analogous facts hold also for any base ring, in particular for $\mathbb{Q}(A)$.

\begin{prop}
For every integral domain $R$, the set $(y_n)_{n\geq 0}$, $y_n\in R[x]$, of \emph{Chebyshev polynomials} (of the second kind) is a basis for the $R$-module $R[x]$. These are defined as follows: 
\beq
y_0 & := & 1 \\
y_1 & := & x \\
y_{n+1} & := & x\cdot y_n - y_{n-1} .
\eeq
\begin{proof}
The polinomial $y_n$ is of the form $y_n = x^n + f_n$ where $f_n\in R[x]$ is a polynomial whose degree is smaller than $n$. Hence by induction we get $x^n$ as a linear combination of $y_0, y_1, \ldots , y_n$. Let $\lambda_0 y_0 + \lambda_1y_1 + \ldots +\lambda_n y_n =0$ be a null linear combination ($\lambda_i\in R$ for $i=0,\ldots n$), then $\lambda_n y_n =  - \sum_{i=0}^{n-1} \lambda_i y_i$. The degree of $\sum_{i=0}^{n-1} \lambda_i y_i$ is at most $n-1$, hence $\lambda_n y_n =0$. Since $R$ is a domain and $y_n\neq 0$, we get $\lambda_n=0$. Therefore the Chebyshev polynomials are linearly independent.
\end{proof}
\end{prop}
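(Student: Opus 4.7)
The plan is to exploit the triangular structure of the recursion with respect to the monomial basis of $R[x]$. First I would prove by induction on $n$ that $y_n$ is monic of degree $n$: the base cases $y_0=1$ and $y_1=x$ are immediate, and if $y_n$ is monic of degree $n$ and $y_{n-1}$ has degree $n-1$, then the recursion $y_{n+1}=x\cdot y_n-y_{n-1}$ produces a polynomial $y_{n+1}=x^{n+1}+(\text{lower degree terms})$, which is monic of degree $n+1$.

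Once this is established, the transition matrix expressing $(y_0,y_1,\ldots,y_n)$ in terms of $(1,x,\ldots,x^n)$ is upper triangular with ones on the diagonal, hence invertible over any commutative ring. This simultaneously yields spanning and linear independence and therefore the basis property. Unwinding this into two separate arguments: for spanning, write $x^n = y_n - (y_n-x^n)$ and observe $y_n-x^n$ has degree less than $n$, so by induction on $n$ the monomials $x^n$ lie in the $R$-span of $y_0,\ldots,y_n$. For linear independence, take any finite relation $\sum_{i=0}^n \lambda_i y_i=0$ with $\lambda_n\neq 0$; comparing the coefficient of $x^n$ on both sides (the right-hand terms $\lambda_i y_i$ for $i<n$ contribute nothing in degree $n$) yields $\lambda_n=0$, a contradiction, so by descending induction all coefficients vanish.

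I do not anticipate any genuine obstacle: the integral-domain hypothesis on $R$ is not even really needed, since monic-ness lets us read off leading coefficients without worrying about zero divisors. The only content is the observation that the recursion preserves the monic property and bumps the degree by one, which is mechanical. If one wished to avoid the inductive argument for spanning, one could alternatively invert the recursion as $y_{n-1} = x\cdot y_n - y_{n+1}$ to express monomials in terms of Chebyshev polynomials, but this is essentially the same computation repackaged.
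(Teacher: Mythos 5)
Your proof is correct and follows essentially the same route as the paper: establish that $y_n$ is monic of degree $n$, deduce spanning by induction from $x^n = y_n - (y_n - x^n)$, and deduce independence by looking at the top-degree term of a putative relation. Your observation that the integral-domain hypothesis is superfluous is a valid small sharpening — since $y_n$ is monic, comparing the coefficient of $x^n$ gives $\lambda_n = 0$ directly over any commutative ring, whereas the paper instead argues $\lambda_n y_n = 0$ and then invokes that $R$ is a domain.
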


Let $\alpha_{(n)}\subset V$ be the core $\alpha$ equipped with color $n$. Fig.~\ref{figure:Cheb} shows that in the correspondence between $KM(V)$ and the algebra of polynomials, $\alpha_{(n)}$ corresponds to the $n^{\rm th}$ Chebyshev polynomial $y_n$. 

Let $H_g$ be the 3-dimensional handlebody of genus $g$ (the orientable compact 3-manifold with a handle-decomposition with just $k$ 0-handles and $k+g-1$ 1-handles) and let $\Gamma \subset H_g$ be a framed trivalent graph onto which $H_g$ collapses. 
\begin{prop}
The set consisting of $\Gamma$ equipped with all the admissible colorings is a basis for the skein space $K(H_g)$.
\begin{proof}
See \cite[Proposition 6.3]{Costantino-Martelli}.
\end{proof}
\end{prop}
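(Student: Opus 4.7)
The plan is to establish both spanning and linear independence of the colored graphs $\{\Gamma_c\}$ in $K(H_g)$, using as starting point Theorem~\ref{theorem:known_sk_mod}-(2.), which describes $K(H_g)=K(S_{(g)})$ as the free $\mathbb{Q}(A)$-module generated by multicurves in the $g$-punctured disk $S_{(g)}$ (since $H_g\cong S_{(g)}\times[-1,1]$).

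For \emph{spanning}, I would realize $H_g$ as a small regular neighborhood of the spine $\Gamma$, so that each edge contributes a tube $I\times D^2$ and each vertex contributes a 3-ball in which three tubes meet. Any multicurve in $S_{(g)}$ can be isotoped so that inside each tube it consists of $n_e$ parallel color-$1$ strands, and inside each vertex-ball it is a tangle joining the three incoming bundles. The fusion rule of Fig.~\ref{figure:fusion} applied repeatedly inside each vertex-ball fuses each bundle into a single colored strand and combines the three resulting strands at an admissible trivalent vertex; this rewrites every multicurve, hence every element of $K(H_g)$, as a $\mathbb{Q}(A)$-linear combination of the colored graphs $\Gamma_c$.

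For \emph{linear independence} I would use a Hopf-type pairing. Glue a second copy $\bar H_g$ of $H_g$ to $H_g$ along $\partial H_g$ via the identity map to form $M:=\#_g(S^1\times S^2)$; by Remark~\ref{rem:sk_sp_S1xS2} we have $K(M)=\mathbb{Q}(A)\cdot\varnothing$, and hence a well-defined bilinear pairing $K(H_g)\otimes K(\bar H_g)\to\mathbb{Q}(A)$, $(\sigma,\tau)\mapsto\langle\sigma\cup\tau\rangle$. Embed inside $\bar H_g$ a trivalent spine $\bar\Gamma$ combinatorially isomorphic to $\Gamma$ in such a way that, for each edge $e$, the edge-transverse disk $D_e\subset H_g$ and its mirror disk $\bar D_e\subset\bar H_g$ share the same boundary curve on $\partial H_g$; this is possible because the identity gluing sends curves bounding in $H_g$ to curves bounding in $\bar H_g$. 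The union $D_e\cup\bar D_e$ is then an embedded 2-sphere $S_e\subset M$ meeting $\Gamma\cup\bar\Gamma$ transversely in exactly two points, one on the $e$-th edge of $\Gamma$ and one on the $e$-th edge of $\bar\Gamma$. Applying the identity of Fig.~\ref{figure:sphere}(right) at each $S_e$ forces $\langle\Gamma_c\cup\bar\Gamma_{c'}\rangle=0$ unless $c(e)=c'(e)$ for every edge $e$, and when $c=c'$ the evaluation collapses to a nonzero product of $\teta$- and $\cerchio$-symbols, by the formulas of Section~\ref{subsec:three_graphs} and the identity at the end of Section~\ref{subsec:identities}. Hence the matrix $(\langle\Gamma_c\cup\bar\Gamma_{c'}\rangle)_{c,c'}$ is diagonal with nonzero entries, so the $\Gamma_c$ are linearly independent.

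\emph{Main obstacle.} The delicate step is the dual-graph construction: one must select $\bar\Gamma\subset\bar H_g$ and the disks $D_e,\bar D_e$ so that each $S_e$ is a genuinely embedded 2-sphere in $\#_g(S^1\times S^2)$ meeting only the correct pair of dual edges, so that the sphere-intersection identity applies edge-by-edge without cross-contamination. Once this combinatorial setup is in place, the iterated diagonalization and the final non-vanishing of the diagonal factor follow mechanically from the explicit formulas of Section~\ref{subsec:three_graphs}.
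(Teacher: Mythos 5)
The paper does not actually prove this proposition --- it defers entirely to \cite[Proposition 6.3]{Costantino-Martelli} --- so there is no internal argument to compare yours against; judged on its own, your proof is correct and is essentially the standard one. The spanning step is fine: isotope a multicurve into a regular neighbourhood of $\Gamma$, expand the $n_e$ parallel colour-$1$ strands in each edge-tube through Jones--Wenzl projectors via repeated fusion (Fig.~\ref{figure:fusion}), and use that the relative skein module of a vertex ball with three projector-coloured boundary bundles is spanned by the trivalent vertex when the triple is admissible and vanishes otherwise; together with Theorem~\ref{theorem:known_sk_mod}-(2.) this reduces every skein to a combination of the $\Gamma_c$. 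The independence step via the pairing with the double is also sound: the meridian disks $D_e$ of distinct edges are disjoint in $H_g$, so their doubles $S_e=D_e\cup\bar D_e$ are disjoint embedded spheres, each meeting $\Gamma_c\cup\bar\Gamma_{c'}$ in exactly two points, and Fig.~\ref{figure:sphere}-(right) applied sphere by sphere kills every off-diagonal entry; when $c=c'$, cutting along all the $S_e$ leaves one $\teta$ per vertex and one $\cerchio$ per closed edge, so the diagonal entry is a product of the explicit rational functions of Subsection~\ref{subsec:three_graphs} divided by nonzero $\cerchio$'s, hence nonzero in $\mathbb{Q}(A)$. Two inputs you use implicitly and should cite explicitly: the well-definedness of the pairing rests on $K(\#_g(S^1\times S^2))\cong\mathbb{Q}(A)$ with $\varnothing\neq 0$ (Remark~\ref{rem:sk_sp_S1xS2}; the nonvanishing of $\varnothing$ is the genuinely nontrivial ingredient), and the diagonal entries are nonzero only because the base ring is the field $\mathbb{Q}(A)$ with $A$ transcendental --- at roots of unity some $\teta_{a,b,c}$ vanish and the argument degenerates, which is consistent with the proposition being stated only for $K(H_g)$. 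A variant worth knowing, which may well be what the cited reference does: since isotopy classes of multicurves in $S_{(g)}$ correspond bijectively to admissible edge-weight systems on $\Gamma$, the change of basis from multicurves to coloured graphs is unitriangular for the ordering by total edge weight, which yields spanning and independence simultaneously without invoking the double.
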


\section{The skein space of the 3-torus}\label{sec:3-torus}

In this section we talk about the skein vector space $K(T^3)$ of the 3-torus $T^3=S^1\times S^1\times S^1$. We prove that the vector space is finitely generated and we show a set of $9$ generators. Then we cite \cite{Gilmer} to get that this set of generators is actually a basis. We note that each result works also for $KM(T^3; \mathbb{C}, A)$ where $A^n\neq 1$ for every $n>0$. Our main tool is the algebraic work of Frohman-Gelca (Theorem~\ref{theorem:F-G}). We follow \cite{Carrega_3-torus}.

\subsection{The skein algebra of the 2-torus}

\begin{defn}
Let $S$ be an orientable surface. The skein module $KM(S;R,A)$ has a natural structure of $R$-algebra. This structure is given by the linear extension of a multiplication defined on framed links of $S\times [-1,1]$. Given two framed links $L_1,L_2 \subset S\times [-1,1]$, the product $L_1\cdot L_2 \subset S\times [-1,1]$ is obtained by putting $L_1$ above $L_2$, $L_1\cdot L_2 \cap S\times [0,1] = L_1$ and $L_1\cdot L_2 \cap S\times [-1,0] = L_2$.
\end{defn}

Look at the 2-torus $T^2$ as the quotient of $\mathbb{R}^2$ modulo the standard lattice of translations generated by $(1,0)$ and $(0,1)$, hence for any non null pair $(p,q)$ of integers we have the notion of $(p,q)$-\emph{curve}: the simple closed curve in the 2-torus that is the quotient of the line passing trough $(0,0)$ and $(p,q)$.

\begin{defn}
Let $p$ and $q$ be two co-prime integers, hence $(p,q)\neq (0,0)$. We denote by $(p,q)_T$ the $(p,q)$-curve in the 2-torus $T^2$ equipped with the black-board framing. Given a framed knot $\gamma$ in an oriented 3-manifold $M$ and an integer $n\geq 0$, we denote by $T_n(\gamma)$ the skein element defined by induction as follows:
\beq
T_0(\gamma) & := & 2\cdot \varnothing  \\
T_1(\gamma) & := & \gamma \\
T_{n+1}(\gamma) & := & \gamma \cdot T_n(\gamma) - T_{n-1}(\gamma)
\eeq
where $\gamma \cdot T_n(\gamma)$ is the skein element obtained adding a copy of $\gamma$ to all the framed links that compose the skein $T_n(\gamma)$. For $p,q\in \Z$ such that $(p,q)\neq (0,0)$, we denote by $(p,q)_T$ the skein element 
$$
(p,q)_T := T_{{\rm MCD}(p,q)} \left( \left( \frac{p}{ {\rm MCD}(p,q) } , \frac{q}{ {\rm MCD}(p,q) } \right)_T \right) ,
$$
where ${\rm MCD}(p,q)$ is the maximum common divisor of $p$ and $q$. Finally we set 
$$
(0,0)_T := 2 \cdot \varnothing .
$$
\end{defn}

It is easy to show that the set of all the skein elements $(p,q)_T$ with $p,q\in\Z$ generates $KM(T^2;R,A)$ as $R$-module.

This is not the standard way to color framed links in a skein module. The colorings $JW_n(\gamma)$, $n\geq 0$, with the Jones-Wenzl projectors are defined in the same way as $T_n(\gamma)$ but at the $0$-level we have $JW_0(\gamma)=\varnothing$.

\begin{theo}[Frohman-Gelca]\label{theorem:F-G}
For any $p,q,r,s\in \Z$ the following holds in the skein module $KM(T^2;R,A)$ of the 2-torus $T^2$:
$$
(p,q)_T \cdot (r,s)_T = A^{\left| \begin{matrix} p & q \\ r & s \end{matrix} \right|} (p+r,q+s)_T + A^{-\left| \begin{matrix} p & q \\ r & s \end{matrix} \right|} (p-r,q-s)_T ,
$$
where $\left| \begin{matrix} p & q \\ r & s \end{matrix} \right|$ is the determinant $ps - qr$.
\begin{proof}
See \cite{Frohman-Gelca}.
\end{proof}
\end{theo}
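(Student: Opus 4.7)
The plan is to first establish the formula when $(p,q)$ and $(r,s)$ are both coprime pairs --- so that $(p,q)_T$ and $(r,s)_T$ are genuine simple closed curves on $T^2$ --- and then to extend to arbitrary pairs via the Chebyshev recursion defining $T_n$.

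For the coprime case, the idea is to reduce to a standard position using the action of $SL(2,\mathbb{Z})$ on $T^2$ by orientation-preserving diffeomorphisms. Since $\gcd(p,q)=1$, some $N\in SL(2,\mathbb{Z})$ carries the $(p,q)$-curve to the $(1,0)$-curve and sends $(r,s)$ to a pair $(a,e)$ with $e=ps-qr$ (determinants are preserved). By Proposition~\ref{prop:diffeo}, the induced diffeomorphism of $T^2\times[-1,1]$ gives an $R$-algebra automorphism of $KM(T^2;R,A)$ under which both sides of the claimed identity transform equivariantly, so it suffices to prove
$$
(1,0)_T\cdot (a,e)_T \;=\; A^{e}\,(1+a,e)_T + A^{-e}\,(1-a,-e)_T
$$
for every coprime $(a,e)$. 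To do this, put the two curves in minimal position on $T^2$ so they cross transversely in exactly $|e|$ points, with the $(1,0)$-curve placed in the upper layer. Applying the first skein relation at each crossing produces $2^{|e|}$ resolutions weighted by powers of $A$; a direct analysis (most transparent after lifting to the universal cover $\mathbb{R}^2$, where the slopes of the resolved arcs can be read off directly) identifies the all-$A$ and all-$A^{-1}$ resolutions with $(1+a,e)_T$ and $(1-a,-e)_T$ respectively, while every mixed resolution contains a contractible disk on $T^2$. An inductive argument on $|e|$, resolving one crossing at a time and grouping the bigons that appear between consecutive crossings, shows that the mixed terms cancel in pairs.

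To pass from the coprime case to arbitrary pairs, set $d_1=\gcd(p,q)$, $d_2=\gcd(r,s)$, and write $(p,q)_T=T_{d_1}(\alpha)$ and $(r,s)_T=T_{d_2}(\beta)$ with $\alpha=(p_0,q_0)_T$ and $\beta=(r_0,s_0)_T$ the associated coprime generators. Use a double induction on $(d_1,d_2)$ driven by $T_{n+1}(x)=xT_n(x)-T_{n-1}(x)$, applying the coprime formula or a previous inductive instance of the claim at each reduction step; note that when the coprime pairs are parallel (so that $\alpha=\beta$), the usual commutative Chebyshev product identity $T_m T_n=T_{m+n}+T_{|m-n|}$ applies and gives the formula immediately. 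The main obstacle is the bookkeeping of the $A$-exponents in the non-parallel inductive case: because $\alpha$ and $\beta$ do not commute, a naive iteration of the coprime formula does not immediately produce a clean answer, and one must verify that the powers of $A$ collected at each step telescope to precisely $\pm(ps-qr)=\pm d_1d_2(p_0s_0-q_0r_0)$, the determinant dictated by the final formula.
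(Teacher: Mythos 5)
The paper itself gives no proof of this theorem (it only cites Frohman--Gelca), so the comparison to make is with your own argument, and there is a genuine gap at its core. In the primitive case $(1,0)_T\cdot(a,e)_T$ you assert that the all-$A$ and all-$A^{-1}$ resolutions are the skeins $(1+a,e)_T$ and $(1-a,-e)_T$ and that all mixed resolutions cancel in pairs. Both claims fail, and they fail in a way that cannot be patched by more careful pair-matching. The two coherent resolutions of the curves in minimal position are honest multicurves: $\gcd(1+a,e)$ parallel copies of the primitive curve in class $(1+a,e)$, i.e.\ the \emph{power} $\gamma^{d}$ of the primitive curve, whereas $(1+a,e)_T$ is by definition the Chebyshev element $T_d(\gamma)=\gamma^d-d\gamma^{d-2}+\cdots$, which is a nontrivial linear combination whenever $d=\gcd(1+a,e)\geq 2$ (and $1+a,e$ need not be coprime even when $a,e$ are: take $(a,e)=(1,2)$). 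Consequently the mixed states must survive and supply exactly the lower-order Chebyshev corrections. Concretely, for $(1,0)_T\cdot(1,2)_T$ the formula demands $A^2\bigl((1,1)_T^2-2\varnothing\bigr)+A^{-2}\bigl((0,1)_T^2-2\varnothing\bigr)$; the extreme states give $A^2(1,1)_T^2+A^{-2}(0,1)_T^2$ and the two mixed states (each a single contractible circle, weight $A\cdot A^{-1}=1$) contribute $2(-A^2-A^{-2})\varnothing$, which is precisely the correction term and visibly does not cancel. For larger $|e|$ the mixed states even produce essential curves, only some of them accompanied by trivial circles, and organizing this into the Chebyshev combination is the actual content of the Frohman--Gelca computation. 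Your sketch dismisses exactly the step that makes the theorem true and that motivates the $T_n$-normalization in the first place.

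The surrounding architecture is sound and close in spirit to the original: the $SL_2(\Z)$ reduction to $(1,0)_T\cdot(a,e)_T$ is legitimate (determinants and the basis $\{(p,q)_T\}$ are preserved by orientation-preserving diffeomorphisms, as in Proposition~\ref{prop:diffeo}), the parallel base case via $T_mT_n=T_{m+n}+T_{|m-n|}$ is correct and consistent with $(0,0)_T=2\varnothing$, and the Chebyshev recursion does in principle carry the primitive case to general pairs. But you explicitly leave the exponent bookkeeping in the non-parallel inductive step unverified, and that, together with the broken state-sum analysis above, means the proof as written does not go through. To repair it you would need to redo the primitive-times-arbitrary computation as a genuine induction on $|e|$ that tracks, state by state, which resolutions acquire trivial circles and which essential classes the remaining states represent --- this is where the identity is actually proved.
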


\subsection{The abelianization}

\begin{defn}
Let $B$ be a $R$-algebra for a commutative ring with unity $R$. We denote by $C(B)$ the $R$-module defined as the following quotient: 
$$
C(B) := \frac{B}{ [ B , B ] }  
$$
where $[B , B ]$ is the sub-module of $B$ generated by all the elements of the form $ab- ba$ for $a,b\in B$. We call $C(B)$ the \emph{abelianization} of $B$.
\end{defn}

\begin{rem}
Usually in non-commutative algebra the \emph{abelianization} is the $R$-algebra defined as the quotient of $B$ modulo the sub-algebra (sub-module and ideal) generated by all the elements of the form $ab-ba$. In our definition the denominator is just a sub-module and we only get a $R$-module. We use the word ``abelianization'' anyway. 
\end{rem}

Now we work with $C(K(T^2))$ and we still use $(p,q)_T$ and $(p,q)_T \cdot (r,s)_T$ to denote the class of $(p,q)_T\in K(T^2)$ and $(p,q)_T\cdot (r,s)_T\in K(T^2)$ in $C(K(T^2))$.

\begin{lem}[\cite{Carrega_3-torus}]\label{lem:ab_alg_2-tor}
Let $(p,q)$ be a pair of integers different from $(0,0)$. Then in the abelianization $C(K(T^2))$ of the skein algebra $K(T^2)$ of the 2-torus $T^2$ we have
$$
(p,q)_T = \begin{cases}
(1,0)_T & \text{if } p\in 2\Z+1 , \ q \in 2\Z \\
(0,1)_T & \text{if } p\in 2\Z , \ q \in 2\Z+1 \\
(1,1)_T & \text{if } p,q\in 2\Z+1 \\
(2,0)_T & \text{if } p,q \in 2\Z
\end{cases} .
$$
Hence $C(K(T^2))$ is generated as a $\mathbb{Q}(A)$-vector space by the empty set $\varnothing$, the framed knots $(1,0)_T$, $(0,1)_T$, $(1,1)_T$, and a framed link consisting of two parallel copies of $(1,0)_T$.
\begin{proof}
By Theorem~\ref{theorem:F-G} for every $p,q\in \Z$ we have
\beq
A^{-q} (p+2,q)_T + A^q (p,q)_T & = & (p+1,q)_T \cdot (1,0)_T \\
 & = & (1,0)_T \cdot (p+1,q)_T \\
 & = & A^q (p+2,q)_T + A^{-q} (-p,-q)_T .
\eeq
Since $(p,q)_T=(-p,-q)_T$ we have $ (A^q - A^{-q}) (p,q)_T = (A^q -A^{-q}) (p+2,q)_T $. Hence if $q\neq 0$ we get $(p,q)_T = (p+2,q)_T $ (here we use the fact that the base ring is a field and $A^n \neq 1$ for every $n>0$). Thus
$$
(p,q)_T = \begin{cases}
(0,q)_T & \text{if } p\in 2\Z, \ \ q\neq 0 \\ 
(1,q)_T & \text{if } p\in 2\Z +1, \ \ q\neq 0
\end{cases} .
$$
Analogously by using $(0,1)_T$ instead of $(1,0)_T$ for $p\neq 0$ we get
$$
(p,q)_T = \begin{cases}
(p,0)_T & \text{if } q\in 2\Z, \ \ q\neq 0 \\ 
(p,1)_T & \text{if } q\in 2\Z +1, \ \ q\neq 0
\end{cases} .
$$
Therefore if $p,q\in 2\Z+1$, $(p,q)_T= (1,1)_T$. If $p\neq 0$ we get
$$
(p,0)_T=(p,2)_T = \begin{cases}
(0,2)_T & \text{if } p\in 2\Z \\
(1,2)_T & \text{if } p\in 2\Z +1
\end{cases} 
= \begin{cases}
(0,2)_T & \text{if } p\in 2\Z \\
(1,0)_T & \text{if } p\in 2\Z +1
\end{cases} .
$$
In the same way for $q\neq 0$ we get
$$
(0,q)_T = (2,q)_T = \begin{cases}
(2,0)_T & \text{if } p\in 2\Z \\
(2,1)_T & \text{if } p\in 2\Z +1
\end{cases} 
= \begin{cases}
(2,0)_T & \text{if } p\in 2\Z \\
(0,1)_T & \text{if } p\in 2\Z +1
\end{cases} .
$$
In particular we have
$$
(2,0)_T = (2,2)_T = (2,-2)_T = (0,2)_T = (p,q)_T  \text{ for } (p,q)\neq (0,0), \ p,q \in 2\Z .
$$
\end{proof}
\end{lem}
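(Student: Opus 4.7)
The plan is to exploit commutativity in the abelianization together with the Frohman--Gelca product formula (Theorem~\ref{theorem:F-G}) to derive shift relations $(p+2,q)_T=(p,q)_T$ when $q\neq 0$ and $(p,q+2)_T=(p,q)_T$ when $p\neq 0$, and then do a small case analysis on the parities of $p,q$.

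The key computation is this. Since $C(K(T^2))$ is the abelianization, the class of $(1,0)_T\cdot(p+1,q)_T$ equals that of $(p+1,q)_T\cdot(1,0)_T$. Applying Theorem~\ref{theorem:F-G} to both sides (the relevant determinants are $1\cdot q-0\cdot(p+1)=q$ and $(p+1)\cdot 0-q\cdot 1=-q$), and using $(-p,-q)_T=(p,q)_T$, one gets
\[
A^q(p+2,q)_T+A^{-q}(p,q)_T \;=\; A^{-q}(p+2,q)_T+A^q(p,q)_T,
\]
whence $(A^q-A^{-q})\bigl[(p+2,q)_T-(p,q)_T\bigr]=0$. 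Since we work over $\mathbb{Q}(A)$ (where $A^{2q}\neq 1$ for $q\neq 0$), this forces $(p+2,q)_T=(p,q)_T$ whenever $q\neq 0$. Commuting with $(0,1)_T$ instead gives the analogous relation $(p,q+2)_T=(p,q)_T$ when $p\neq 0$.

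Now I would finish by cases on the parities of $p$ and $q$, using these two shift relations to reduce $(p,q)$ to a canonical representative. If $p$ is odd and $q$ is even, iterate the first shift (valid as long as $q\neq 0$) to bring $p$ to $1$, then, if $q\neq 0$, iterate the second shift (valid since now $p=1\neq 0$) to bring $q$ to $0$, landing on $(1,0)_T$; the case $q=0$ is handled directly. The symmetric case $p$ even, $q$ odd reduces to $(0,1)_T$. The case $p,q$ both odd reduces to $(1,1)_T$ by the same kind of argument.

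The only mildly subtle case, and what I would flag as the main obstacle, is $p,q$ both even with $(p,q)\neq(0,0)$: neither shift relation can be iterated to change $q$ (resp.\ $p$) to $0$ while the other coordinate is also $0$. The trick is to pass through a nonzero intermediate: say $p\neq 0$, then use the second relation to write $(p,q)_T=(p,2)_T$ (reducing the even $q$ modulo $2$ to $0$ is fine as long as $p\neq 0$; if $q$ was already $0$ we first use it to reach $(p,2)_T$). Since now $q=2\neq 0$, the first relation applies and reduces $p$ modulo $2$, giving $(0,2)_T$. An analogous chain through $(2,q)_T$ shows $(0,q)_T=(2,0)_T$ for any nonzero even $q$. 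The identifications $(2,0)_T=(0,2)_T=(2,2)_T$ are then consistent, and all such $(p,q)_T$ collapse to the single class $(2,0)_T$. This gives the four representatives $(1,0)_T,(0,1)_T,(1,1)_T,(2,0)_T$ together with $\varnothing$, proving the lemma.
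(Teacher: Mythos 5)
Your proposal is correct and follows essentially the same route as the paper's proof: the identical commutator computation with $(1,0)_T$ and $(0,1)_T$ via the Frohman--Gelca formula yields the two shift relations $(p+2,q)_T=(p,q)_T$ (for $q\neq 0$) and $(p,q+2)_T=(p,q)_T$ (for $p\neq 0$), and the parity case analysis, including the detour through a nonzero intermediate coordinate such as $(p,2)_T$ to handle the cases where one coordinate vanishes, is exactly the argument in the paper.
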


\subsection{The $(p,q,r)$-type curves}

As for the 2-torus $T^2$, we look at the 3-torus $T^3$ as the quotient of $\mathbb{R}^3$ modulo the standard lattice of translations generated by $(1,0,0)$, $(0,1,0)$ and $(0,0,1)$.

\begin{defn}
Let $(p,q,r)$ be a triple of co-prime integers, that means ${\rm MCD}(p,q,r)=1$, where ${\rm MCD}(p,q,r)$ is the maximum common divisor of $p$, $q$ and $r$, in particular we have $(p,q,r)\neq (0,0,0)$. The $(p,q,r)$-\emph{curve} is the simple closed curve in the 3-torus that is the quotient (under the standard lattice) of the line passing through $(0,0,0)$ and $(p,q,r)$. We denote by $[p,q,r]$ the $(p,q,r)$-curve equipped with the framing that is the collar of the curve in the quotient of any plane containing $(0,0,0)$ and $(p,q,r)$. The framing does not depend on the choice of the plane.
\end{defn}

\begin{defn}
An embedding $e:T^2\rightarrow T^3$ of the 2-torus in the 3-torus is \emph{standard} if it is the quotient (under the standard lattice) of a plane in $\mathbb{R}^3$ that is the image of the plane generated by $(1,0,0)$ and $(0,1,0)$ under a linear map defined by a matrix of $SL_3(\Z)$ (a $3\times 3$ matrix with integer entries and determinant $1$).
\end{defn}

\begin{rem}\label{rem:std_emb}
There are infinitely many standard embeddings even up to isotopies. A standard embedding of $T^2$ in $T^3$ is the quotient under the standard lattice of the plane generated by two columns of a matrix of $SL_3(\Z)$.
\end{rem}

\begin{lem}[\cite{Carrega_3-torus}]\label{lem:pqr-curve}
Let $(p,q,r)$ be a triple of co-prime integers. Then the skein element $[p,q,r]\in K(T^3)$ is equal to $[x,y,z]$, where $x,y,z\in \{0,1\}$ and have respectively the same parity of $p$, $q$ and $r$.
\begin{proof}
Every embedding $e:T^2 \rightarrow T^3$ of the 2-torus $T^2$ in $T^3$ defines a linear map between the skein spaces
$$
e_* : K(T^2) \longrightarrow K(T^3) .
$$
The map $e_*$ factorizes with the quotient map $K(T^2) \rightarrow C(K(T^2))$. In fact we can slide the framed links in $e(T^2\times [-1,1])$ from above to below getting $e_*(L_1\cdot L_2) = e_*(L_2\cdot L_1)$ for every two framed links, $L_1$ and $L_2$, in $T^2\times [-1,1]$. As said in Remark~\ref{rem:std_emb}, a standard embedding $e:T^2 \rightarrow T^3$ corresponds to the plane generated by two columns $(p_1,q_1,r_1) , (p_2,q_2,r_2) \in \Z^3$ of a matrix in $SL_3(\Z)$. In this correspondence $e_*(( a,b)_T)= [ap_1 + bp_2, aq_1 +b q_2, ar_1 + br_2]$ for every co-prime $a,b \in \Z$. Therefore by Lemma~\ref{lem:ab_alg_2-tor} we get 
\beq
[a'p_1 + b'p_2, a'q_1 +b' q_2, a'r_1 + b'r_2] & = & e_*(( a',b')_T) \\
& = & e_*(( a,b)_T) \\
& = & [ap_1 + bp_2, aq_1 +b q_2, ar_1 + br_2]
\eeq
for every two pairs $(a,b),(a',b')\in \Z^2$ of co-prime integers such that $a+a', b+b'\in 2\Z$.

Let $(p,q,r)$ be a tripe of co-prime integers. By permuting $p,q,r$ we get either $(p,q,r)=(1,0,0)$ or $p,q\neq 0$. Consider the case $p,q\neq 0$. Let $d$ be the maximum common divisor of $p$ and $q$, and let $\lambda,\mu \in \Z$ such that $\lambda p + \mu q = d$. The following matrix belongs in $SL_3(\Z)$:
$$
M_1:= \left( \begin{matrix}
\frac p d & -\mu & 0 \\
\frac q d & \lambda & 0 \\
0 & 0 & 1
\end{matrix} \right) .
$$
Let $v^{(1)}_1$ and $v^{(1)}_3$ be the first and the third columns of $M_1$. We have $(p,q,r)= dv^{(1)}_1 + r v^{(1)}_3$. Hence
$$
[p,q,r] = \begin{cases}
[\frac p d , \frac q d, 0]  & \text{if } d\in 2\Z+1 ,\ r\in 2\Z \\
[0 , 0 , 1]  & \text{if } d \in 2\Z, \  r\in 2\Z+1 \\
[\frac p d , \frac q d, 1] & \text{if } d,r\in 2\Z+1 
\end{cases} .
$$
The integers $p,q,r$ can not be all even because they are co-prime, hence $d$ and $r$ can not be both even. Therefore we just need to study the cases where $r\in \{0,1\}$. 

If $r=0$ we consider the trivial embedding of $T^2$ in $T^3$. The corresponding matrix of $SL_3(\Z)$ is the identity. We have $(p/d,q/d,0)=p/d(1,0,0) + q/d(0,1,0)$, hence
$$
[p,q,0] = [\frac p d , \frac q d ,0] = \begin{cases}
[1,0,0] & \text{ if } \frac p d \in 2\Z+1, \ \frac q d \in 2\Z \\
[0,1,0] & \text{ if } \frac p d \in 2\Z, \ \frac q d \in 2\Z+1 \\
[1,1,0] & \text{ if } \frac p d , \frac q d \in 2\Z+1 
\end{cases} .
$$

If $r=1$ we take the following matrix of $SL_3(\Z)$:
$$
M_2:= \left( \begin{matrix}
0 & 0 & 1 \\
q & -1 & 0 \\
1 & 0 & 0
\end{matrix} \right) .
$$
Let $v^{(2)}_1$ and $v^{(2)}_3$ be the first and the third columns of $M_2$. We have $(p,q,1)= pv^{(2)}_3 + v^{(2)}_1$, hence
$$
[p,q,1] = \begin{cases}
[1,q,1] & \text{if } p\in 2\Z+1 \\
[0,q,1] & \text{if } p\in 2\Z 
\end{cases} .
$$

By permuting $p,q,r$ we reduce the case $(p,q,r)=(0,q,1)$ to the case $p,q\neq 0$, $r=0$ that we studied before.
 
It remains to consider the case $p=r=1$. We consider the following matrix of $SL_3(\Z)$:
$$
M_3:= \left( \begin{matrix}
1 & 0 & 0 \\
0 & 1 & 0 \\
1 & 0 & 1
\end{matrix} \right) .
$$
Let $v^{(3)}_1$ and $v^{(3)}_2$ be the first and the second columns of $M_3$. We have $(1,q,1)= v^{(3)}_1 + qv^{(3)}_2$. Hence
$$
[1,q,1] = \begin{cases}
[1,0,1] & \text{if } q\in 2\Z \\
[1,1,1] & \text{if } q\in 2\Z+1 
\end{cases} .
$$
\end{proof}
\end{lem}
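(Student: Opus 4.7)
The plan is to reduce the statement to the two-dimensional result proved in Lemma~\ref{lem:ab_alg_2-tor}. The key bridge is the following observation: every standard embedding $e : T^2 \hookrightarrow T^3$ induces a linear map $e_* : K(T^2) \to K(T^3)$ that factors through the abelianization $C(K(T^2))$. This is because two framed links $L_1, L_2 \subset T^2 \times [-1,1]$ sitting in a collar of $e(T^2)$ can be slid past one another through the remaining $S^1$-direction of $T^3$, so $e_*(L_1 \cdot L_2) = e_*(L_2 \cdot L_1)$ in $K(T^3)$.

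Using Remark~\ref{rem:std_emb}, a standard embedding $e$ is specified by choosing the first two columns $v_1 = (p_1,q_1,r_1)$, $v_2 = (p_2,q_2,r_2)$ of a matrix in $SL_3(\mathbb{Z})$, and under such an $e$ the co-prime $(a,b)$-curve on $T^2$ with its canonical framing is sent to $[a p_1 + b p_2, a q_1 + b q_2, a r_1 + b r_2]$ in $T^3$. Combining this with Lemma~\ref{lem:ab_alg_2-tor}, I obtain the rule that whenever $(a,b)$ and $(a',b')$ are co-prime pairs with the same parity componentwise, the two associated $[\,\cdot\,,\,\cdot\,,\,\cdot\,]$ skeins coincide in $K(T^3)$. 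So the proof strategy is: for a given co-prime triple $(p,q,r)$, pick a matrix in $SL_3(\mathbb{Z})$ two of whose columns span $(p,q,r)$, and apply this parity-reduction rule.

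I would organize the case analysis as follows. First, by permuting the coordinates (which is realized by an element of $SL_3(\mathbb{Z})$ up to sign), I reduce to either $(p,q,r) = (1,0,0)$ (where nothing is to prove) or $p,q \neq 0$. In the latter case, let $d = \gcd(p,q)$ and choose $\lambda, \mu$ with $\lambda p + \mu q = d$; the matrix with first column $(p/d, q/d, 0)$, second column $(-\mu, \lambda, 0)$, and third column $(0,0,1)$ lies in $SL_3(\mathbb{Z})$ and writes $(p,q,r) = d \cdot v_1 + r \cdot v_3$. Using the parity-reduction rule this lets me replace $d$ and $r$ by their residues mod $2$ (since $d$ and $r$ cannot both be even), cutting the problem down to the cases $r \in \{0,1\}$. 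The case $r = 0$ is handled by the identity matrix, and $r = 1$ is handled with the matrix whose columns are $(0,q,1)$, $(0,-1,0)$, $(1,0,0)$, reducing $p$ modulo $2$; applying a coordinate permutation then lets me assume $p = r = 1$, at which point a final matrix with columns $(1,0,1), (0,1,0), (0,0,1)$ reduces $q$ modulo $2$.

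The main obstacle is keeping the bookkeeping clean while ensuring at each step that (i) the matrix chosen really is in $SL_3(\mathbb{Z})$ (so the induced $T^2 \hookrightarrow T^3$ is genuinely standard), (ii) the pair $(a,b)$ I use for the $T^2$-curve is co-prime (so the formula $e_*((a,b)_T) = [\ldots]$ actually describes the framed knot $[\,\cdot\,,\,\cdot\,,\,\cdot\,]$), and (iii) the parity of each entry I want to reduce is actually changed, while the other two entries keep their parity. The co-primality constraint in particular forces the split into $r = 0$ and $r = 1$ subcases rather than allowing a single clean reduction, and after each parity reduction one must verify that the new triple is still co-prime before the next matrix may be applied.
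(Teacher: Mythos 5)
Your proposal is correct and follows essentially the same route as the paper's own proof: the same factorization of $e_*$ through $C(K(T^2))$, the same parity-reduction rule from Lemma~\ref{lem:ab_alg_2-tor}, and the same sequence of $SL_3(\Z)$ matrices ($M_1$ built from $\gcd(p,q)$, the identity for $r=0$, and the matrices with columns $(0,q,1),(0,-1,0),(1,0,0)$ and $(1,0,1),(0,1,0),(0,0,1)$ for the remaining cases). Your extra attention to verifying co-primality after each reduction is a point the paper passes over more quickly, but the argument is the same.
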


\begin{lem}[\cite{Carrega_3-torus}]\label{lem:intersect_2-tor}
The intersection of any two different standard embedded 2-tori in $T^3$ contains a $(p,q,r)$-type curve.
\begin{proof}
Let $T_1$ and $T_2$ be two standard embedded tori in the 3-torus, and let $\pi_1$ and $\pi_2$ be two planes in $\mathbb{R}^3$ whose projection under the standard lattice is respectively $T_1$ and $T_2$. The intersection $T_1\cap T_2$ contains the projection of $\pi_1\cap \pi_2$. We just need to prove that in $\pi_1\cap \pi_2$ there is a point $(p,q,r)\neq (0,0,0)$ with integer coordinates $p,q,r\in \Z$. Every plane defining a standard embedded torus is generated by two vectors with integer coordinates, and hence it is described by an equation $ax+by+cz=0$ with integer coefficients $a,b,c\in \Z$. Applying a linear map described by a matrix of $SL_3(\Z)$ we can suppose that $\pi_1$ is the trivial plane $\{ z=0 \}$. Let $a,b,c\in \Z$ such that $\pi_2= \{ax+by+cz=0 \}$. The vector $(-b,a,0)$ is non null and lies on $\pi_1\cap \pi_2$.
\end{proof}
\end{lem}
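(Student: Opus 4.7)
The plan is to lift everything to the universal cover $\mathbb{R}^3$ and reduce the question to a statement about rational linear planes through the origin.

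First I would lift each standard embedded 2-torus $T_i \subset T^3$ to a linear 2-plane $\pi_i \subset \mathbb{R}^3$ through the origin whose projection under the standard lattice $\mathbb{Z}^3$ equals $T_i$. By Remark~\ref{rem:std_emb}, each $\pi_i$ is spanned by two columns of an $SL_3(\mathbb{Z})$ matrix, hence in particular is spanned by integer vectors. Consequently $\pi_i$ is cut out in $\mathbb{R}^3$ by a single linear equation $a_i x + b_i y + c_i z = 0$ with integer coefficients $(a_i,b_i,c_i) \in \mathbb{Z}^3 \setminus \{0\}$.

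Next I would use $T_1 \neq T_2$ to conclude $\pi_1 \neq \pi_2$: indeed, if the planes coincided upstairs, their quotients would coincide downstairs. Two distinct planes through the origin in $\mathbb{R}^3$ meet in a line through the origin. Since the defining equations have integer coefficients, the direction vector of this line can be chosen to be the cross product $(a_1,b_1,c_1) \times (a_2,b_2,c_2) \in \mathbb{Z}^3 \setminus \{0\}$, which is an integer vector. After dividing by the $\gcd$ of its coordinates I obtain a primitive integer vector $(p,q,r)$ with $\gcd(p,q,r)=1$. The line $\mathbb{R}\cdot(p,q,r)$ lies in $\pi_1 \cap \pi_2$ and projects to the $(p,q,r)$-curve, which therefore lies in $T_1 \cap T_2$.

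I do not expect serious obstacles: the only point to be careful about is verifying that the intersection of the \emph{tori} downstairs actually contains the projection of the line upstairs (and not merely some quotient of it after further identifications). Since the projection $\mathbb{R}^3 \to T^3$ is a covering map, the image of $\mathbb{R}\cdot(p,q,r)$ is contained in the image of $\pi_i$, namely $T_i$, for $i=1,2$, so it lies in $T_1\cap T_2$. The resulting curve is exactly the $(p,q,r)$-curve of the preceding definition, completing the argument. The framing is not discussed in the statement (only the underlying curve is required), so no additional work is needed there.
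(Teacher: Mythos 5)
Your proof is correct and follows essentially the same route as the paper: both reduce to finding a nonzero integer point on the intersection line of two integer-defined planes through the origin, the only difference being that you take the cross product of the two normal vectors directly, whereas the paper first normalizes $\pi_1$ to $\{z=0\}$ by an $SL_3(\Z)$ change of coordinates and then exhibits $(-b,a,0)$ — which is exactly that cross product in the normalized coordinates. Your version is a mild streamlining of the same argument, and all the supporting details (distinctness of the planes, integrality, passing to a primitive vector, descending to the quotient) check out.
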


\subsection{Diagrams}

Framed links in $T^3$ can be represented by diagrams in the 2-torus $T^2$. These diagrams are like the usual link diagrams but with further oriented signs on the edges (see Fig.~\ref{figure:diag_3-tor}-(left)). Fix a standard embedded 2-torus $T$ in $T^3$. After a cut along a parallel copy $T'$ of $T$, the 3-torus becomes diffeomorphic to $T\times [-1,1]$ and framed links in $T^3$ correspond to framed tangles of $T\times [-1,1]$. These diagrams are generic projections on $T$ of the framed tangles in $T\times [-1,1]$ via the natural projection $(x,t)\mapsto x$. The further signs on the diagrams represent the intersection of the framed links with the boundary $T\times \{-1,1\}$, in other words they represent the passages of the links along the $(p,q,r)$-type curve that in the Euclidean metric is orthogonal to $T$ (see Fig.~\ref{figure:diag_3-tor}-(right)). If $T$ is the trivial torus $S^1\times S^1 \times \{x\}$, the further signs represent the passages through the third $S^1$-factor. We use the proper notion of black-board framing.

\begin{figure}[htbp]
\begin{center}
\end{center}
\includegraphics[width = 9cm]{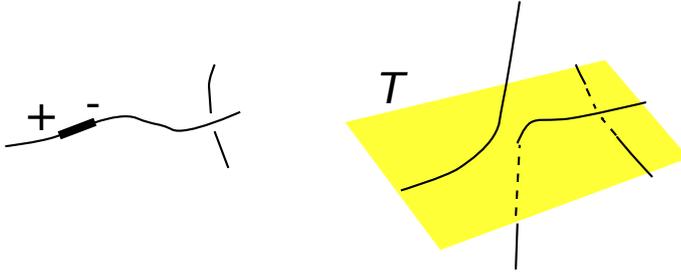}
\caption{Diagrams of framed links in $T^3$. The yellow plane is a part of the standard embedded torus $T\subset T^3$ where the links project. If we look at the framed links in $T^3$ as framed tangles in $T\times [-1,1]$, the two strands that get out vertically from the yellow plane end in the boundary points $(x,1)$ and $(x,-1)$ for some $x\in T$.}
\label{figure:diag_3-tor}
\end{figure}

\subsection{Generators for the 3-torus}

In the following theorem we use all the previous lemmas to get a set of $9$ generators of $K(T^3)$.

\begin{theo}[\cite{Carrega_3-torus}]\label{theorem:sk_3-tor}
The skein space $K(T^3)$ of the 3-torus $T^3$ is generated by the empty set $\varnothing$, $[1,0,0]$, $[0,1,0]$, $[0,0,1]$, $[1,1,0]$, $[1,0,1]$, $[0,1,1]$, $[1,1,1]$ and a skein $\alpha$ that is equal to the framed link consisting of two parallel copies of any $(p,q,r)$-type curve.
\begin{proof}
Let $T$ be the trivial embedded 2-torus: the one containing the $(p,q,r)$-type curves with $r=0$. Use $T$ to project the framed links and make diagrams. By using the first skein relation on these diagrams we can see that $K(T^3)$ is generated by the framed links described by diagrams on $T$ without crossings. These diagrams are union of simple closed curves on $T$ equipped with some signs as the one with $+$ and $-$ in Fig.~\ref{figure:diag_3-tor}. These simple closed curves are either parallel to a $(p,q)$-curve or homotopically trivial. The framed links described by these diagrams lie in the standard embedded tori that are the projection (under the standard lattice) of the planes generated by $(0,0,1)$ and $(p,q,0)$ for some $p$ and $q$. Therefore $K(T^3)$ is generated by the images of $K(T^2)$ under the linear maps induced by the standard embeddings of $T^2$ in $T^3$.

As said in the proof of Lemma~\ref{lem:pqr-curve}, the linear map $e_*$ induced by any standard embedding $e:T^2\rightarrow T^3$ factorizes with the quotient map $K(T^2) \rightarrow C( K(T^2) )$. Lemma~\ref{lem:ab_alg_2-tor} applied on the standard embedding $e$ shows that the image $e_*(K(T^2))$ is generated by $\varnothing$, three $(p,q,r)$-type curves lying on $e(T^2)$, and the skein $\alpha_e$ that is equal to the framed link consisting of two parallel copies of any $(p,q,r)$-type curve lying on $e(T^2)$.

Let $e_1,e_2:T^2 \rightarrow T^3$ be two standard embeddings. By Lemma~\ref{lem:intersect_2-tor} $e_1(T^2)\cap e_2(T^2)$ contains a $(p,q,r)$-type curve $\gamma$, hence $\alpha_{e_1}$ and $\alpha_{e_2}$ coincide with the framed link that is two parallel copies of $\gamma$. Therefore the skein element $\alpha_e$ does not depend on the embedding $e$.

We conclude by using Lemma~\ref{lem:pqr-curve} that says that the skein of any $(p,q,r)$-type curve is equal to the one of a standard representative of a non null element of the first homology group $H_1(T^3;\Z_2)$ with coefficient in $\Z_2$, namely a $(p,q,r)$-type curve with $p,q,r\in \{0,1\}$.
\end{proof}
\end{theo}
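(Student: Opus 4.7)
My plan is to build the argument in three stages: first reduce to links drawn on standard embedded $2$-tori, then reduce each of those $2$-torus pieces via the abelianization lemma, and finally glue the results using the intersection lemma.

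First I would fix a standard embedded $2$-torus $T\subset T^3$ (say the trivial one, containing the $(p,q,0)$-curves) and represent every framed link $L\subset T^3$ by a diagram on $T$ with $\pm$ signs recording the passages through the transverse $(0,0,1)$-direction, as in Fig.~\ref{figure:diag_3-tor}. Applying the first skein relation at every crossing, I would express the skein class of $L$ as a $\mathbb{Q}(A)$-linear combination of skeins of framed links whose diagrams on $T$ have no crossings. Each such crossingless diagram is a disjoint union of decorated simple closed curves on $T$; each essential component is parallel to a $(p,q)$-curve, and the signs tell us how often to traverse the $(0,0,1)$-direction. Such a framed link sits inside the image of a standard embedding $e:T^2\hookrightarrow T^3$ whose image is the torus generated (in $\mathbb{R}^3$) by $(0,0,1)$ and $(p,q,0)$. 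Hence $K(T^3)$ is generated by the union of the images $e_*(K(T^2))$ as $e$ runs over the standard embeddings.

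Next I would use the observation, already recorded in the proof of Lemma~\ref{lem:pqr-curve}, that for every standard embedding $e:T^2\to T^3$ the induced linear map $e_*:K(T^2)\to K(T^3)$ factors through the quotient $K(T^2)\twoheadrightarrow C(K(T^2))$, because two framed links contained in $e(T^2\times[-1,1])$ can be freely slid past one another in the normal direction of $e(T^2)$ inside $T^3$. Applying Lemma~\ref{lem:ab_alg_2-tor} to the abelianization, the image $e_*(K(T^2))$ is spanned by $\varnothing$, the three $(p,q,r)$-type curves on $e(T^2)$ corresponding to the three nontrivial $\mathbb{Z}_2$-homology classes of $e(T^2)$, and a skein $\alpha_e$ consisting of two parallel copies of any one $(p,q,r)$-type curve on $e(T^2)$.

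The remaining issue, which is the main obstacle, is showing that the double-copy skein $\alpha_e$ does not depend on the embedding $e$, so that all of these generators collapse to a single skein $\alpha$. Here I would invoke Lemma~\ref{lem:intersect_2-tor}: any two standard embedded $2$-tori $e_1(T^2)$ and $e_2(T^2)$ intersect in at least one $(p,q,r)$-type curve $\gamma$. Since $\alpha_{e_i}$ is, by Lemma~\ref{lem:ab_alg_2-tor}, the skein of two parallel copies of \emph{any} $(p,q,r)$-type curve on $e_i(T^2)$, I may take that curve to be $\gamma$ in each case, so $\alpha_{e_1}=\alpha_{e_2}=:\alpha$. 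Finally, Lemma~\ref{lem:pqr-curve} reduces every single $(p,q,r)$-type curve occurring in the lists above to one of $[1,0,0],[0,1,0],[0,0,1],[1,1,0],[1,0,1],[0,1,1],[1,1,1]$ according to the parity of $(p,q,r)$. Combining everything yields the desired list of nine generators.
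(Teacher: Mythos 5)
Your proposal is correct and follows essentially the same route as the paper's own proof: reduce to crossingless diagrams on the trivial torus, pass to the abelianization via Lemma~\ref{lem:ab_alg_2-tor}, use Lemma~\ref{lem:intersect_2-tor} to identify the doubled skeins $\alpha_e$ across different standard embeddings, and finish with Lemma~\ref{lem:pqr-curve}. No gaps to report.
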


\subsection{Linear independence}

Here we talk about the linear independence of generators of $K(T^2)$ we have shown. Lemma~\ref{lem:dec_dir_sum} shows a decomposition in direct sum of $K(T^3)$, while Lemma~\ref{lem:Gilmer} says that the shown generators of the summands of the decomposition actually form a basis.

\begin{lem}[\cite{Carrega_3-torus}]\label{lem:dec_dir_sum}
The skein space $K(T^3)$ is the direct sum of $8$ sub-spaces
$$
K(T^3) = V_0 \oplus V_1 \oplus \ldots \oplus V_7 
$$
such that:
\begin{enumerate}
\item{$V_0$ is generated by the empty set $\varnothing$ and the skein $\alpha$ (see Theorem~\ref{theorem:sk_3-tor});}
\item{every $(p,q,r)$-type curve generates a $V_j$ with $j>0$ and
every $V_j$ with $j>0$ is generated by one such curve.}
\end{enumerate}
\begin{proof}
The skein relations relates framed links holding in the same $\Z_2$-homology class. Hence for every oriented 3-manifold $M$ we have a decomposition in direct sum 
$$
KM(M;R,A) = \bigoplus_{h \in H_1(M;\Z_2)} V_h ,
$$
where $V_h$ is generated by the framed links whose $\Z_2$-homology class is $h$. The statement follow by this observation and the fact that if $[p,q,r]$ and $[p',q',r']$ represent the same $\Z_2$-homology class, then $[p,q,r]=[p',q',r']\in K(T^3)$.
\end{proof}
\end{lem}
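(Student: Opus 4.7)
The plan is to invoke the general decomposition of the Kauffman bracket skein module by $\Z_2$-homology classes (Remark~\ref{rem:dec_sk_sp}), specialize to $M = T^3$, and then match each summand against the list of generators produced by Theorem~\ref{theorem:sk_3-tor}. Since this is essentially a bookkeeping argument, the proof will be short.

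First I would recall that for any oriented 3-manifold $M$ the skein relations relate framed links in the same $\Z_2$-homology class, so that
$$
K(M) = \bigoplus_{h \in H_1(M;\Z_2)} K_h(M),
$$
where $K_h(M)$ is the sub-space generated by framed links whose $\Z_2$-homology class equals $h$. Specializing to $M=T^3$, we have $H_1(T^3;\Z_2) \cong (\Z_2)^3$, which has exactly $8$ elements, so we obtain a decomposition $K(T^3) = V_0 \oplus V_1 \oplus \cdots \oplus V_7$, where I let $V_0$ correspond to the trivial class.

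Next I would distribute the $9$ generators of Theorem~\ref{theorem:sk_3-tor} among the eight summands. The empty set $\varnothing$ is trivial in $\Z_2$-homology, and the skein $\alpha$ (two parallel copies of a $(p,q,r)$-curve) represents $2\cdot[\gamma] = 0$ in $H_1(T^3;\Z_2)$; hence both lie in $V_0$. Each of the seven curves $[1,0,0]$, $[0,1,0]$, $[0,0,1]$, $[1,1,0]$, $[1,0,1]$, $[0,1,1]$, $[1,1,1]$ represents one of the seven nontrivial elements of $(\Z_2)^3$, and different curves represent different classes, so each lies in a distinct $V_j$ with $j>0$. Combined with the fact that these nine skeins generate $K(T^3)$, this forces $V_0$ to be generated by $\{\varnothing,\alpha\}$ and each $V_j$ ($j>0$) to be generated by the corresponding standard curve.

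Finally, to see that every $(p,q,r)$-type curve generates one of the $V_j$ with $j>0$, I would appeal to Lemma~\ref{lem:pqr-curve}, which tells us that an arbitrary $(p,q,r)$-curve is equal in $K(T^3)$ to the standard representative $[x,y,z]$ with $x,y,z\in\{0,1\}$ having the same parity as $p,q,r$ respectively; this standard representative is precisely the generator of the $V_j$ whose index corresponds to the $\Z_2$-homology class $(p,q,r) \bmod 2$. The only subtlety worth checking is that $\alpha$ genuinely lies in $V_0$ rather than in some nontrivial summand, which is immediate from the fact that $\alpha$ is a two-component link whose class in $H_1(T^3;\Z_2)$ is twice a single-curve class. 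No real obstacle arises: the argument is a direct application of Remark~\ref{rem:dec_sk_sp} and Theorem~\ref{theorem:sk_3-tor}.
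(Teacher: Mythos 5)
Your proposal is correct and follows essentially the same route as the paper: decompose $K(T^3)$ by $\Z_2$-homology classes via Remark~\ref{rem:dec_sk_sp}, then use Lemma~\ref{lem:pqr-curve} (any $(p,q,r)$-type curve equals the standard representative with the same parities) together with the generating set from Theorem~\ref{theorem:sk_3-tor} to identify the generators of each summand. You merely spell out the bookkeeping (counting the $8$ elements of $H_1(T^3;\Z_2)$ and placing $\varnothing$ and $\alpha$ in the trivial summand) that the paper leaves implicit.
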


\begin{rem}
Given a triple of integers $(x,y,z)\neq (0,0,0)$ such that $x,y,z\in \{0,1\}$, we can easily find an orientation preserving diffeomorphism of the 3-torus $T^3$ sending $[x,y,z]$ to $[1,0,0]$. Hence if the skein of one such curve $[x,y,z]$ is null then also all the other skein elements of such curves are null. Therefore by Lemma~\ref{lem:dec_dir_sum} the possible dimensions of the skein space $K(T^3)$ are $0$, $1$, $2$, $7$, $8$ and $9$.
\end{rem}

\begin{lem}[Gilmer]\label{lem:Gilmer}
$\ $
\begin{enumerate}
\item{The skein element $[1,0,0] \in K(T^3)$ is non null.}
\item{The empty set and the skein $\alpha$ (see Theorem~\ref{theorem:sk_3-tor}) are linear independent in $K(T^3)$.}
\end{enumerate}
\begin{proof}
See \cite{Gilmer}. We just sketch the main ideas. 

Consider the $SO(3)$-\emph{Reshetikhin-Turaev-Witten} invariants. These are invariants of pairs $(M,L)$ where $M$ is a closed oriented 3-manifold and $L$ is a framed link of $M$. These invariants are constructed with skein theory, associates to each pair a complex number, and are based on the choice of a root of unity $A\in \mathbb{C}$ and a surgery presentation of the manifold $M$ in $S^3$ (see Subsection~\ref{subsec:surgery}). The construction is very similar to the one of the $SU(2)$-\emph{Reshetikhin-Turaev-Witten} invariants (see Section~\ref{sec:RTW}). 

A surgery presentation of $T^3$ are the 0-framed \emph{Borromean rings}. Set $\Gamma:= \{ e^{\pm \frac {\pi i}{2d+1}} \ | \ d\in \Z , \ d>0 \}$. Then construct a $\mathbb{Q}(A)$-linear map $I: K(T^3) \rightarrow \mathbb{C}^\Gamma $, where $\mathbb{C}^\Gamma$ is the space of the functions with values in $\mathbb{C}$ and are defined in all but a finite number of elements of $\Gamma$ and two such functions are considered equal if they agree in all but a finite number of elements of $\Gamma$. The image $I(L)$ of a framed link $L\subset T^3$ is the $SO(3)$-Reshetikhin-Turaev-Witten invariant of $(T^3,L)$ built with the elements of $\Gamma$. Clearly if the image $I(S)$ of a skein element $S\in K(T^3)$ is not zero, the skein is not null. 

To show that $\varnothing$ and $\alpha$ are linear independent, suppose $\alpha = \lambda \cdot \varnothing$ for some $\lambda \in \mathbb{Q}(A)$. Get $\lambda(\gamma)$ for each $\gamma \in \Gamma$ as a consequence of the computation of $I(\varnothing)$ and $I(\alpha)$. Also get $\lambda(1)= \lim_{d\rightarrow \infty} \lambda (e^{\pm \frac{\pi i}{2d+1} }) =1$. Show a holomorphic function $f: U \rightarrow \mathbb{C}$ defined on an open set $U\subset \mathbb{C}$ that is not a rational function and such that $\Gamma \subset U$, $f(e^{\frac{\pi i}{2d+1} }) = \lambda(e^{\frac{\pi i}{2d+1} })$, $f(1)=\lambda(1)= 1$ and $f(e^{- \frac{\pi i}{2d+1} }) \neq \lambda (e^{ - \frac{\pi i}{2d+1} })$ for every $d>0$. The functions $f$ and $\lambda$ are both holomorphic functions defined in open sets $U,V \subset \mathbb{C}$ and coincide on an infinite set of points with a limit point. Hence $f=\lambda$ on $U\cap V$. But $f$ and $\lambda$ also disagree in an infinite set of points. Therefore once such $\lambda$ can not exists.
\end{proof}
\end{lem}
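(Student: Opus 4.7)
The plan is to build a linear functional on $K(T^3)$ out of quantum $3$-manifold invariants and then test it on the skein elements in question. Concretely, I would use the $SO(3)$-Reshetikhin-Turaev-Witten invariants: for each odd integer $2d+1$ and each choice of root of unity $A = e^{\pm \pi i/(2d+1)}$ one obtains, via skein theory and a surgery presentation, a complex number $\tau_{2d+1}^{SO(3)}(T^3,L)$ associated to every framed link $L \subset T^3$. Linearity and invariance under the skein relations would package these numbers into a $\mathbb{Q}(A)$-linear map
\[
I : K(T^3) \longrightarrow \mathbb{C}^{\Gamma}, \qquad \Gamma := \{e^{\pm \pi i/(2d+1)} : d \in \mathbb{Z}_{>0}\},
\]
where the codomain consists of partial functions on $\Gamma$ identified when they agree off a finite set. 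If $I(S)$ is nonzero as an element of $\mathbb{C}^\Gamma$, then $S \neq 0$ in $K(T^3)$.

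For both parts I would fix the standard surgery presentation of $T^3$: the Borromean rings with all three components $0$-framed. To compute $I(\varnothing)$, $I([1,0,0])$ and $I(\alpha)$, I would decorate one component of the Borromean rings in the manner required by the Reshetikhin-Turaev construction (encircling the surgery link with the Kirby color) and then cable the additional components needed to represent $[1,0,0]$ (a parallel copy of one Borromean component) and $\alpha$ (two parallel copies of one Borromean component). The Kauffman bracket of the resulting colored link in $S^3$ can, in principle, be reduced by repeated use of the fusion rule and the $6j$-symbol identities (\ref{eqn:first_id})-(\ref{eqn:second_id}) and the sphere intersection identities of Subsection~\ref{subsec:identities}, all of which evaluate to explicit expressions in the quantum integers $[n]|_{q=A^2}$. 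This gives closed formulas $I(\varnothing)(A)$, $I([1,0,0])(A)$, $I(\alpha)(A)$ at every $A \in \Gamma$.

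For part $(1)$ I would just need to exhibit one $A \in \Gamma$ at which $I([1,0,0])(A) \neq 0$; this is a finite explicit computation in quantum integers. For part $(2)$, arguing by contradiction, suppose $\alpha = \lambda \cdot \varnothing$ for some $\lambda \in \mathbb{Q}(A)$. Applying $I$ one obtains the numerical identity $I(\alpha)(A) = \lambda(A)\cdot I(\varnothing)(A)$ at every $A \in \Gamma$ for which $I(\varnothing)(A) \neq 0$, hence $\lambda(A)$ is forced to equal an explicit expression $g(A)$ in the quantum integers. One then studies the limits $A \to e^{\pm \pi i/(2d+1)}$ and $A \to 1$ and constructs a holomorphic function $f$ on an open set $U \supset \Gamma$ that agrees with $g$ at each $e^{\pi i/(2d+1)}$ but differs from it at each $e^{-\pi i/(2d+1)}$, and such that $f$ is demonstrably not a rational function. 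Since $\lambda$ is rational and $f$ is holomorphic, and the two agree on an infinite set with a limit point (namely at $1$), analytic continuation would force $f = \lambda$ and hence $f$ would be rational, a contradiction.

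The main obstacle will be the second part: producing an explicit candidate $f$ with the prescribed values on $\Gamma$ and the prescribed behaviour at the $A \to 1$ limit, and arguing rigorously that $f$ fails to be rational. The explicit closed forms for $I(\varnothing)$ and $I(\alpha)$ at $A = e^{\pi i/(2d+1)}$, together with a careful asymptotic expansion as $d \to \infty$ (so that $A \to 1$), should suggest the natural candidate for $f$ (typically built from $q$-logarithms or modular-type functions that arise from the $SO(3)$ invariants of the Borromean rings), but verifying non-rationality and the discrepancy at $e^{-\pi i/(2d+1)}$ for every $d$ is the delicate analytic step, and this is precisely where I would invoke the detailed calculations of Gilmer~\cite{Gilmer}.
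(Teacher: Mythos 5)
Your overall strategy coincides with the paper's (and Gilmer's): package the $SO(3)$-Reshetikhin--Turaev--Witten invariants at the roots of unity $e^{\pm\pi i/(2d+1)}$ into a $\mathbb{Q}(A)$-linear map $I\colon K(T^3)\to\mathbb{C}^\Gamma$, use nonvanishing of $I$ to certify nonvanishing of skeins, and for part (2) run the analytic-continuation contradiction against a non-rational holomorphic $f$ agreeing with the putative $\lambda$ at $e^{\pi i/(2d+1)}$ and disagreeing at $e^{-\pi i/(2d+1)}$. That architecture is fine, and like the paper you defer the hard closed-form computations and the construction of $f$ to Gilmer.

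There is, however, one concrete step that would fail as written: your identification of the links representing $[1,0,0]$ and $\alpha$ in the surgery presentation. Under $0$-surgery on the Borromean rings, the three generators of $H_1(T^3)\cong\mathbb{Z}^3$ are represented by \emph{meridians} of the three surgery components (equivalently, by the cores of the glued-in solid tori), not by parallel copies of the components. Since the Borromean rings have pairwise linking number zero, the $0$-framed pushoff of a component is null-homologous in the link complement and, after the surgery, bounds the meridian disk of the glued solid torus: it is an unknot in $T^3$. So the link you propose for $[1,0,0]$ is an unknot, and the one you propose for $\alpha$ is a two-component unlink, whose skein equals $(-A^2-A^{-2})^2\cdot\varnothing$ --- a $\mathbb{Q}(A)$-multiple of $\varnothing$, so no invariant whatsoever could certify its linear independence from $\varnothing$. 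Replacing the parallel copies by meridians (one meridian for $[1,0,0]$, two parallel meridians for $\alpha$) repairs this and recovers Gilmer's computation. A smaller imprecision: because elements of $\mathbb{C}^\Gamma$ are identified when they agree off a finite set, verifying $I([1,0,0])\neq 0$ requires nonvanishing at infinitely many points of $\Gamma$, not at a single $A$; the explicit formula is nonzero for all large $d$, so this is harmless once stated correctly.
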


\begin{theo}
The $9$ generators showed in Theorem~\ref{theorem:sk_3-tor} form a basis of the skein vector space $K(T^3)$ of the 3-torus.
\begin{proof}
It follows from Lemma~\ref{lem:dec_dir_sum} and Lemma~\ref{lem:Gilmer}.
\end{proof}
\end{theo}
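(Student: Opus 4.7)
The plan is to assemble the two preceding lemmas into a dimension count, one summand at a time, using the direct sum decomposition of $K(T^3)$ indexed by $H_1(T^3;\Z_2)$.

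First, invoke Lemma~\ref{lem:dec_dir_sum} to write
\[
K(T^3) = V_0 \oplus V_1 \oplus \cdots \oplus V_7,
\]
where $V_0$ is generated by $\varnothing$ and $\alpha$, and each $V_j$ with $j > 0$ is generated by a single $(p,q,r)$-type curve whose $\Z_2$-homology class equals $j$. By Theorem~\ref{theorem:sk_3-tor} the $9$ elements $\varnothing, \alpha, [1,0,0], [0,1,0], [0,0,1], [1,1,0], [1,0,1], [0,1,1], [1,1,1]$ generate $K(T^3)$, one lying in $V_0$ (for $\varnothing$ and $\alpha$) and one in each $V_j$ with $j>0$. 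So it suffices to prove that each listed generator contributes a linearly independent element in its summand, i.e.\ that $\dim V_0 = 2$ and $\dim V_j = 1$ for each $j>0$.

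For $V_0$: since $V_0$ is generated by at most the two vectors $\varnothing$ and $\alpha$, it is enough to show these are linearly independent in $K(T^3)$, which is exactly Lemma~\ref{lem:Gilmer}(2). Hence $\dim V_0 = 2$ and $\{\varnothing,\alpha\}$ is a basis of $V_0$.

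For $V_j$ with $j > 0$: each such $V_j$ is generated by one $(p,q,r)$-curve with $p,q,r \in \{0,1\}$, so $\dim V_j \leq 1$, and it suffices to show that the generating curve is nonzero. By Lemma~\ref{lem:Gilmer}(1), $[1,0,0]\neq 0$ in $K(T^3)$. Now I would use the remark following Lemma~\ref{lem:dec_dir_sum}: for any non-zero triple $(x,y,z) \in \{0,1\}^3$, there is an orientation-preserving diffeomorphism of $T^3$ sending $[x,y,z]$ to $[1,0,0]$, and by Proposition~\ref{prop:diffeo}(1) such a diffeomorphism induces a linear isomorphism of $K(T^3)$. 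Therefore $[x,y,z]\neq 0$ as well, so $\dim V_j = 1$ for every $j > 0$.

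Summing, $\dim K(T^3) = 2 + 7 = 9$, and the $9$ generators are linearly independent because they lie in distinct summands (except $\varnothing$ and $\alpha$, which are independent within $V_0$), so they form a basis. The only delicate input is Lemma~\ref{lem:Gilmer}, where non-triviality of the generators is obtained from $SO(3)$-Reshetikhin--Turaev--Witten values on the Borromean surgery presentation of $T^3$; given that lemma, the present theorem is a purely formal consequence of the homological splitting and the diffeomorphism symmetry of $T^3$.
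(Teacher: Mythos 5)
Your proof is correct and follows exactly the route the paper intends: the paper's one-line proof simply cites Lemma~\ref{lem:dec_dir_sum} and Lemma~\ref{lem:Gilmer}, and your dimension count summand-by-summand (using the remark on diffeomorphisms of $T^3$ to propagate the non-vanishing of $[1,0,0]$ to the other $(p,q,r)$-type curves) is precisely the intended unpacking of that citation.
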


\begin{rem}
All the results in this section work for every base pair $(R,A)$ such that $A^n-1$ is an invertible element of $R$ for any $n>0$. In particular they work for $(\mathbb{C}, A)$, where $A^n\neq 1$ for any $n>0$.
\end{rem}

\section{The Kauffman bracket in $\#_g(S^1\times S^2)$}\label{sec:Kauf_g}

In Remark~\ref{rem:sk_sp_S1xS2} we saw that the Kauffman bracket is defined in the connected sum $\#_g(S^1\times S^2)$ of $g\geq 0$ copies of $S^1\times S^2$ ($g=0$ means $S^3$, and $g=1$ means $S^1\times S^2$) (see Definition~\ref{defn:Kauf}). In this section we talk about the Kauffman bracket in $\#_g(S^1\times S^2)$, we introduce some tools to compute it and we express some proprieties that we are going to need in other chapters or are just interesting, for instance there are phenomena that do not happen in $S^3$. We list some examples of links in $\#_g(S^1\times S^2)$ together with their Kauffman bracket. We can find more examples of links in $S^1\times S^2$ in Chapter~\ref{chapter:table}.

\subsection{Diagrams and moves}\label{subsec:diag}

The manifold $\#_g(S^1\times S^2)$ is the double of the 3-dimensional handlebody $H_g$ of genus $g$ (the compact orientable 3-manifold with a handle-decomposition with just $k$ 0-handles and $k+g-1$ 1-handles). We call one such decomposition a \emph{H-decomposition}.

By a theorem of Thom we know that every two embeddings of a 3-disk in a fixed manifold are isotopic. Hence up to isotopies there is a unique Heegaard decomposition of $S^3$ that splits it into two 3-balls.

\begin{theo}[Waldhausen-Carvalho]\label{theorem:Heegaard_split_S1xS2}
Every two embeddings of the closed surface $\partial H_g$ of genus $g$ in $\#_g(S^1\times S^2)$ that split it into two copies of the the handlebody $H_g$ are isotopic.
\begin{proof}
In \cite[Remark 4.1]{Schultens} it is showed that if we glue two copies of $H_g$ along the boundary to get $\#_g(S^1\times S^2)$, the gluing map must be isotopic to the identity (\cite{Schultens} is an updated and illustrated translation of \cite{Waldhausen}). In \cite[Theorem 1.4]{Carvalho} is shown that two embeddings of $\partial H_g$ that split $\#_g(S^1\times S^2)$ in two copies of $H_g$ and define the identity map $\partial H_g$ (once identified the two embedded handlebodies with $H_g$), are isotopic. For the case $g=1$ we can also see the proof of \cite[Theorem 2.5]{Hatcher}.
\end{proof}
\end{theo}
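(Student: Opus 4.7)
The plan is to reduce the theorem to the two results cited after the statement: Waldhausen's classification of Heegaard splittings of $\#_g(S^1\times S^2)$ (in Schultens's updated version) and Carvalho's isotopy result. Fix two embeddings $e_1, e_2 : \partial H_g \hookrightarrow M := \#_g(S^1\times S^2)$ whose images $\Sigma_i = e_i(\partial H_g)$ split $M$ as $H^-_i \cup_{\Sigma_i} H^+_i$ with each $H^\pm_i$ diffeomorphic to $H_g$. I would first want to alter $e_2$ by an ambient isotopy of $M$ so that it agrees with $e_1$ as an embedding, and then upgrade equality of embedded surfaces to equality (up to isotopy) of parametrizations.

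First I would use Waldhausen's theorem in the form stated in Schultens's remark: once one chooses identifications $H^\pm_i \cong H_g$, the gluing homeomorphism of $\partial H_g$ needed to reconstruct $M = \#_g(S^1\times S^2)$ from two copies of $H_g$ is forced to be isotopic to the identity of $\partial H_g$. In particular, any two splittings of $M$ into two handlebodies of genus $g$ are \emph{abstractly} equivalent: there is a homeomorphism $\Phi : M \to M$ carrying $\Sigma_1$ to $\Sigma_2$ and $H^\pm_1$ to $H^\pm_2$. Since $M$ is orientable and we may arrange $\Phi$ to preserve orientation, a standard argument (composing with a reflection if necessary, and invoking the fact that any orientation-preserving self-diffeomorphism of $\#_g(S^1\times S^2)$ is isotopic to one coming from handle slides and Dehn twists on the Heegaard surface) lets me assume $\Phi$ is isotopic to $\mathrm{id}_M$. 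Thus there is an ambient isotopy of $M$ taking $\Sigma_1$ to $\Sigma_2$ as subsets, and after composing $e_1$ with this isotopy we may assume $e_1(\partial H_g) = e_2(\partial H_g) =: \Sigma$.

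At this point both $e_1$ and $e_2$ are diffeomorphisms $\partial H_g \to \Sigma$, so $e_2^{-1}\circ e_1$ is a self-diffeomorphism of $\partial H_g$ which, once each side of $\Sigma$ is identified with $H_g$, must extend over both handlebodies (because $e_1$ and $e_2$ both realise $\Sigma$ as a Heegaard surface of the same splitting of $M$). This is precisely the situation handled by Carvalho's Theorem~1.4: two embeddings of $\partial H_g$ in $\#_g(S^1\times S^2)$ whose images bound handlebodies on each side, and which induce the identity on $\partial H_g$ after identifying the two embedded handlebodies with $H_g$, are isotopic. Applying this to the (possibly modified) pair $(e_1, e_2)$ produces the desired isotopy. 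For $g = 1$ one can bypass Carvalho and appeal directly to Hatcher's Theorem~2.5, which gives the same conclusion for the genus-one Heegaard torus in $S^1\times S^2$.

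The main obstacle is the passage from ``the two splittings are abstractly equivalent'' to ``$\Phi$ can be chosen isotopic to $\mathrm{id}_M$''. Waldhausen's theorem only tells us that the gluing homeomorphism of $\partial H_g$ is isotopic to the identity \emph{after} a free choice of identifications $H^\pm_i \cong H_g$, and different choices differ by self-diffeomorphisms of $H_g$ that can be nontrivial on $\partial H_g$. Controlling this ambiguity and showing that it does not obstruct isotoping $\Phi$ to the identity of $M$ — equivalently, showing that the mapping class group of $M$ acts as expected on the set of Heegaard splittings — is exactly what Carvalho's theorem packages, and is why invoking it (rather than only Waldhausen--Schultens) is essential to the argument.
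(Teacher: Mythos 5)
Your proposal is correct and follows essentially the same route as the paper: both reduce the statement to the combination of Waldhausen's theorem in Schultens's form (the gluing map of any genus-$g$ splitting of $\#_g(S^1\times S^2)$ is isotopic to the identity) and Carvalho's Theorem~1.4 (embeddings inducing the identity gluing data are isotopic), with Hatcher covering $g=1$. The intermediate ambient-isotopy step you insert is the part you correctly identify as being packaged by Carvalho's result, so the argument matches the paper's.
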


Since $H_g$ collapses onto a graph, every link in $\#_g(S^1\times S^2)$ can be isotoped in a fixed handlebody of the H-decomposition. The handlebody is the natural 3-dimensional thickening of the disk with $g$ holes $S_{(g)}$. 

\begin{defn}\label{defn:e-shadow}
We call \emph{e-shadow} a proper embedding of the disk with $g$ holes $S_g$ (it is a compact surface) in a handlebody $H_g$ of the H-decomposition of $\#_g(S^1\times S^2)$ such that $H_g$ collapses on it.
\end{defn}

There are many e-shadows even up to isotopies. If $g=1$, $S_{(g)}$ is an annulus and two e-shadows differ by twists. Once an e-shadow is fixed, every link in $\#_g(S^1\times S^2)$ can be represented by a link diagram in $S_{(g)}$. Of course one such diagram is a generic projection of the link in the embedded disk with $g$ holes.

Clearly Reidemeister moves still do not change the represented link, but they are not sufficient to connect all the diagrams representing the same link.

Now we describe a new move (see Fig.~\ref{figure:new_move} for the case $g=1$) that will be used later. The move is essentially the second Kirby move. Given a diagram $D$ in the punctured disk $S_{(g)}$ and given an e-shadow, we get a position (an embedding not up to isotopies) of the link $L\subset H_g \subset \#_g(S^1\times S^2)$ described by $D$. We embed the handlebody in $\mathbb{R}^3$ in the standard way so that the image of the embedded punctured disk $S_{(g)}$ lies on $\mathbb{R}^2\subset \mathbb{R}^3$. Then we add a \emph{system of 0-framed meridians} of the handlebody, where a \emph{system of meridians} of a handlebody $H_g$ is the boundary of a non separating sub-manifold $N$ (namely $H_g\setminus N$ is connected) consisting  of $g$ disjoint properly embedded disks. We have obtained a surgery presentation of the pair $(L,\#_g(S^1\times S^2))$ in $\mathbb{R}^3\subset S^3$ (Definition~\ref{defn:surgery_pres}) that is in regular position with respect to $\mathbb{R}^2\subset \mathbb{R}^3$. We apply the second Kirby move to a component of $L$ along one of the 0-framed meridians along an obvious band. This gives another surgery presentation of $(L,\#_g(S^1\times S^2))$ consisting of a link $L'$ in the handlebody encircled by the 0-framed meridians. The link $L'$ is again in regular position and hence gives another diagram of $L\subset \#_g(S^1\times S^2)$ in the punctured disk.

\begin{figure}[htbp]
$$
\picw{4.8}{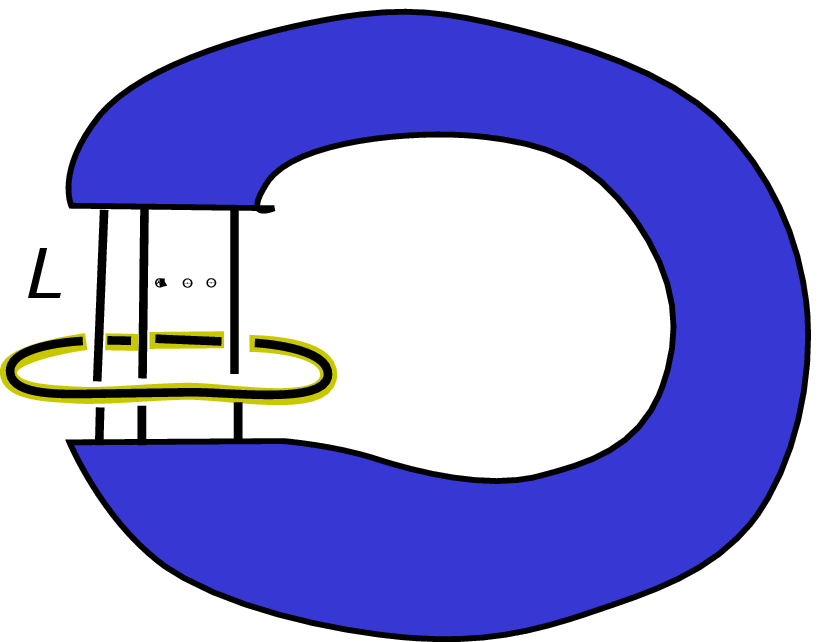} \leftrightarrow \picw{4.8}{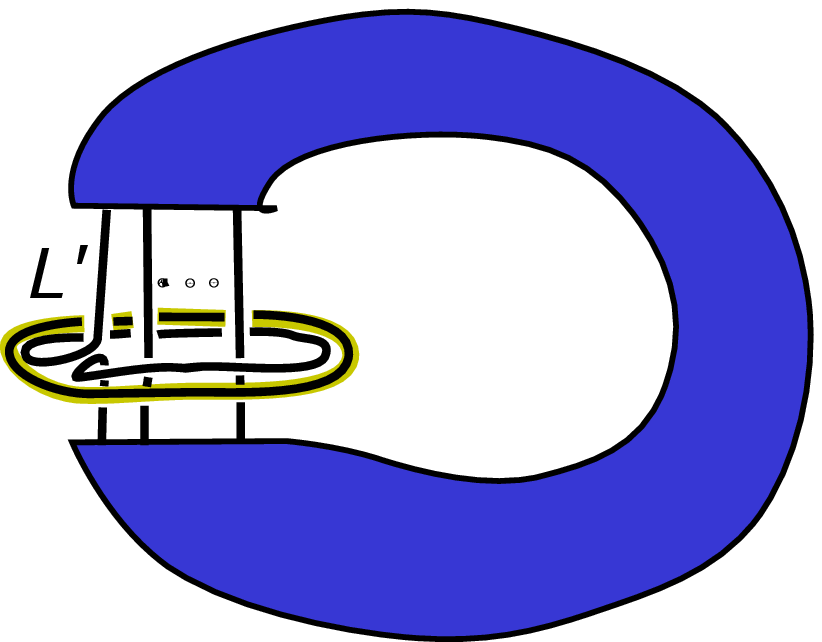}
$$
\caption{A new move on diagrams in the annulus ($g=1$). The links $L$ and $L'$ represent two different links in the solid torus but they represent the same in $S^1\times S^2$. The highlighted circle is the 0-framed meridian needed to get a surgery presentation of $(L,S^1\times S^2)$. The highlighted circle is not contained in the solid torus, hence the diagrams in the annulus do not contain its projection.}
\label{figure:new_move}
\end{figure}

\begin{prop}
Once an e-shadow is fixed, Reidemeister moves together with the move described above are sufficient to connect all the diagrams in the disk with $g$ holes representing the same link in $\#_g(S^1\times S^2)$.
\begin{proof}
Let $H$ be the 3-dimensional handlebody that is the thickening of the e-shadow, let $\Gamma $ be a core graph of the other handlebody $H'$ ($H\cup H' = \#_g(S^1\times S^2)$). Consider any isotopy $\varphi : G\times[-1,1] \rightarrow \#_g(S^1\times S^2)$ of a graph $G \subset H$. We can suppose that for each $t$, $\varphi(G\times \{t\})$ intersects $\Gamma$ transversely and in finitely many point, and $\varphi(G\times \{\pm 1\}) \subset H$. Most of the time, the isotopy will be standard within $H$, except at distinct times when it intersects $\Gamma$. At those moments, one strand of $G$ will perform exactly the described encircling move.
\end{proof}
\end{prop}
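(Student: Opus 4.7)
The plan is to adapt the classical proof of Reidemeister's theorem to this setting, by taking an ambient isotopy realizing the equivalence of $L$ and putting it in general position with respect to a spine of the complementary handlebody. Fix the handlebody $H$ containing the e-shadow $S_{(g)}$, let $H'$ be the other handlebody of the Heegaard decomposition, and fix a trivalent graph $\Gamma\subset H'$ onto which $H'$ deformation retracts. Given two diagrams $D_1,D_2$ of the same framed link, they yield two positions $L_{-1},L_{+1}\subset H$ that are ambiently isotopic in $\#_g(S^1\times S^2)$; by Thom's theorem this equivalence is realized by an ambient isotopy $\Phi_t$ of the whole 3-manifold.

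After a generic perturbation, the family $L_t=\Phi_t(L_{-1})$ meets $\Gamma$ only at finitely many critical times $t_1<\dots<t_N$, and at each such time a single strand of $L_{t_i}$ crosses one edge of $\Gamma$ transversally in its interior. Between consecutive critical times the link $L_t$ lies in the complement $\#_g(S^1\times S^2)\setminus \Gamma$, which deformation retracts onto $H$ because $H'\setminus \Gamma$ retracts onto $\partial H'=\partial H$. Applying this retraction modifies $\Phi$ to an isotopy that stays inside $H$ on each interval $(t_i,t_{i+1})$; since $H$ embeds in $\mathbb{R}^3\subset S^3$, the classical Reidemeister theorem applied to projections onto $S_{(g)}$ turns this sub-isotopy into a sequence of planar isotopies and ordinary Reidemeister moves.

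It then remains to handle the passage across each critical time $t_i$. Near the crossing point the link meets exactly one edge $e$ of $\Gamma$ in one transverse point. The boundary $\mu$ of a cocore disk of the 1-handle of $H'$ associated with $e$ is an unknotted circle in $H$ encircling the hole of $S_{(g)}$ dual to $e$, with the $0$-framing coming from that bounding disk. Pushing $L_t$ into $H$ on the two sides of $t_i$ yields two diagrams in $S_{(g)}$ that differ by a band-sum of one strand with $\mu$, i.e.\ by a second Kirby move along $\mu$, which is precisely the encircling move of Fig.~\ref{figure:new_move}. Concatenating these local moves with the Reidemeister moves from the intermediate intervals produces the desired sequence from $D_1$ to $D_2$.

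The main obstacle is this last step: identifying the local geometric event \emph{``a strand crosses one edge of $\Gamma$''} with the encircling move, with correct framings. The key is the duality between the edges of $\Gamma$ (the 1-handles of $H'$) and the holes of $S_{(g)}$ (the 1-handles of $H$), which is exactly what makes the $0$-framed meridian $\mu$ the natural object to slide over. Checking that framings match requires recalling that $\mu$ bounds a disk in $H'$ and running through the picture of a second Kirby move on the surgery presentation of $(\#_g(S^1\times S^2),L)$ in $S^3$; any ambiguity in how $L_t$ is pushed into $H$ on the two sides of $t_i$ is absorbed by further Reidemeister moves, so a single encircling move per critical crossing suffices.
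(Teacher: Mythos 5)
Your argument is correct and follows essentially the same route as the paper's proof: put an ambient isotopy realizing the equivalence in general position with respect to the core graph $\Gamma$ of the complementary handlebody, handle the intervals between crossings of $\Gamma$ by the classical Reidemeister theorem inside $H$, and identify each transverse crossing of an edge of $\Gamma$ with one encircling move. Your version is in fact more detailed than the paper's (which only sketches these steps), in particular in making explicit the duality between edges of $\Gamma$ and the $0$-framed meridians used in the move.
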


All the considerations above work also for framed trivalent colored graphs in $\#_g(S^1\times S^2)$, where the diagrams are graphs in $S_{(g)}$ whose vertices are either 3-valent or 4-valent: the 3-valent vertices correspond to the vertices of the embedded graph and the 4-valent ones are equipped with the further information of the over/underpass.

For $g=1$, $S_{(g)}$ is an annulus. Once an e-shadow is fixed and given a diagram $D\subset S^1\times [-1,1] = S_{(1)}$ of a link $L\subset S^1\times S^2$, we can get a diagram $D'\subset S^1\times [-1,1]$ that represents $L$ with the embedding of the annulus obtained from the previous one by adding a twist following the move described in Fig.~\ref{figure:twist_diagr}.

\begin{figure}[htbp]
\beq
\picw{4.8}{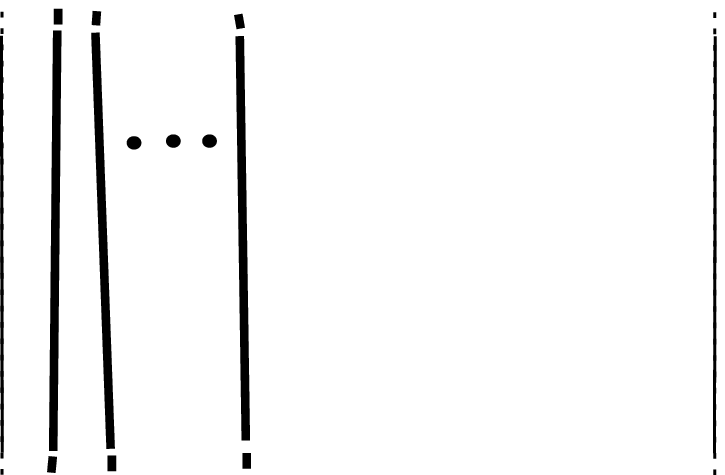} & \longrightarrow & \picw{4.8}{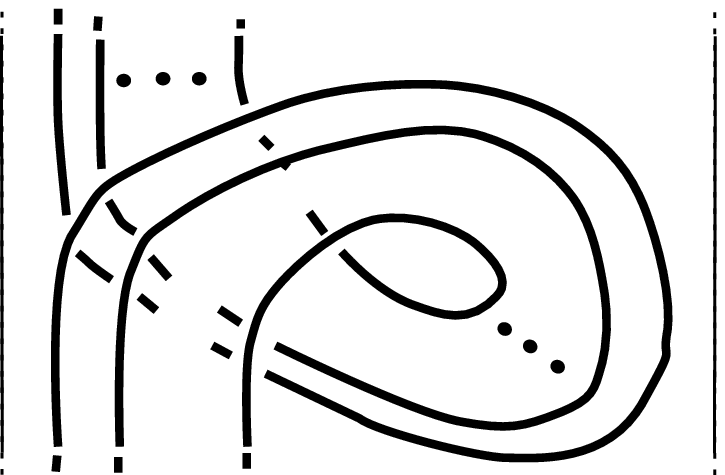} \\
D & & D' 
\eeq
\caption{Two diagrams of the same link in $S^1\times S^2$, the embedding of the annulus $S_{(1)}$ for $D'$ differs from the one of $D$ by the application of a positive twist. The diagrams differ just in the pictured portion that is diffeomorphic to $[-1,1]\times (-1,1)$.}
\label{figure:twist_diagr}
\end{figure}

\begin{defn}\label{defn:alt_cr_num}
A link diagram $D\subset S_{(g)}$ is \emph{alternating} if the parametrization of its components $S^1\rightarrow D\subset S_{(g)}$ meets overpasses and underpasses alternately.

Let $L$ be a link in $\#_g(S^1\times S^2)$. The link $L$ is \emph{alternating} if there is an alternating diagram $D\subset S_{(g)}$ that represents $L$ for some e-shadow. 

The \emph{crossing number} of $L\subset \#_g(S^1\times S^2)$ is the minimal number of crossings that a link diagram $D\subset S_{(g)}$ must have to represent $L$ for some e-shadow.
\end{defn}

In Chapter~\ref{chapter:Tait} we will get some criteria to detect if a link in $\#_g(S^1\times S^2)$ is alternating (Corollary~\ref{cor:conj_Tait_Jones_g}) and we will give some examples of non alternating links and knots (Example~\ref{ex:no_alt_g}).

\begin{rem}
Let $\varphi: \#_g(S^1\times S^2) \rightarrow \#_g(S^1\times S^2)$ be a diffeomorphism and let $L\subset \#_g(S^1\times S^2)$ be a link with a fixed position ($L$ is just a sub-manifold, it is not up to isotopies). Suppose that $L$ is in regular position for a properly embedded disk with $g$ holes $S\subset H_{(1)} \subset \#_g(S^1\times S^2)$ in a handlebody of the H-decomposition $\#_g(S^1\times S^2)= H_{(1)}\cup H_{(2)}$, $H_{(1)} \cong H_{(2)} \cong H_g$, such that $H_{(1)}$ is a thickening of $S$. Hence the pair $(L,S)$ defines a link diagram $D\subset S_{(g)}$. Then the link $\varphi(L)$ is in regular position for the punctured disk $\varphi(S)$ that is properly embedded in $\varphi(H_{(1)})$. By Theorem~\ref{theorem:Heegaard_split_S1xS2} $\varphi(H_{(1)}) = H_{(j)}$ (up to isotopies) for some $j=1,2$. The pair $(\varphi(L), \varphi(S))$ defines a diagram $D_\varphi \subset S_{(g)}$ that is obtained from $D$ by the application of a diffeomorpihsm of $S_{(g)}$. Therefore the crossing number and the condition of being alternating are invariant under diffeomorphisms of $\#_g(S^1\times S^2)$.
\end{rem}

\subsection{Kauffman states}

Diagrams form an extremely useful tool to study links, and in particular to compute the Kauffman bracket.

\begin{defn}
We just use $S^1\times [-1,1]$ instead of $S_{(1)}$ if we focus on $g=1$. Let $D\subset S_{(g)}$ be a link diagram in the disk with $g$ holes (it is a compact surface). A \emph{Kauffman state}, or just a \emph{state}, of $D$ is a function $s$ from the set of crossings of $D$ to $\{1,-1\}$. The assignment of $\pm 1$ to a crossing determines a unique way to remove that crossing as described in Fig.~\ref{figure:splitting}. Hence a state removes all the crossings producing a finite collection of non intersecting circles in the surface. This collection  of circles is called the \emph{splitting}, or the \emph{resolution}, of $D$ with $s$. We denote by $sD$ the number of homotopically trivial circles of the splitting of $D$ with $s$, with $D_s$ the diagram without crossings obtained removing all the homotopically trivial components from the splitting of $D$ with $s$, and with $\sum_i s(i)$ the sum of all the signs associated to the crossings by $s$.
\end{defn}

\begin{figure}[htbp]
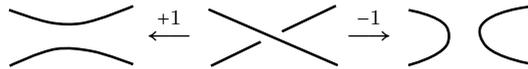

$$
\pic{1.6}{0.4}{Acanalep.eps} \ \stackrel{+1}{\longleftarrow} \ \pic{1.6}{0.4}{incrociop.eps} \ \stackrel{-1}{\longrightarrow} \ \pic{1.6}{0.4}{Bcanalep.eps}
$$
\caption{The splitting of a crossing.}
\label{figure:splitting}
\end{figure}

Proceeding by induction, splitting the crossings and using the skein relations we get the following:
\begin{prop}\label{prop:state_sum}
Fix an e-shadow of $\#_g(S^1\times S^2)$. Let $L$ be a framed link in $\#_g(S^1\times S^2)$ and $D\subset S_{(g)}$ be a diagram of $L$. Then
$$
\langle L \rangle = \sum_s \langle D \ |\ s \rangle  ,
$$
where the sum is taken over all the Kauffman states of $D$ and
$$
\langle D \ | \ s \rangle := A^{\sum_i s(i)} (-A^2 -A^{-2})^{sD} \langle D_s\rangle .
$$
\end{prop}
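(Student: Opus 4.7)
The plan is to prove Proposition~\ref{prop:state_sum} by induction on the number $n$ of crossings of the diagram $D\subset S_{(g)}$. The strategy is essentially identical to the classical state sum for $S^3$, the only new feature being that the non-trivial components of $D_s$ need not be unknots in $\#_g(S^1\times S^2)$, so $\langle D_s\rangle$ must remain in the formula rather than being evaluated further.

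For the base case $n=0$, the diagram $D$ has a unique Kauffman state $s$ (the empty function), so $\sum_i s(i)=0$ and $D_s$ is obtained from $D$ by discarding the $sD$ homotopically trivial components. Each such component bounds a disk in $S_{(g)}$ and hence is a split unknot in $\#_g(S^1\times S^2)$, which embeds in a 3-ball; by Remark~\ref{rem:tensor} its bracket is the same as in $S^3$. Applying the second skein relation of Definition~\ref{defn:sk_space} once for every trivial component gives $\langle D\rangle=(-A^2-A^{-2})^{sD}\langle D_s\rangle=\langle D\mid s\rangle$, which is the desired identity.

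For the inductive step, suppose the formula holds for every diagram with fewer than $n$ crossings, and let $D$ have $n\geq 1$ crossings. Pick any crossing, say the $i_0$-th, and apply the first skein relation at it (this relation is local in a 3-ball embedded in the handlebody, so it is legitimate in $\#_g(S^1\times S^2)$). This yields
\begin{equation*}
\langle D\rangle = A\,\langle D_+\rangle + A^{-1}\,\langle D_-\rangle,
\end{equation*}
where $D_+$ and $D_-$ are the two diagrams obtained by resolving the crossing with $+1$ and $-1$ respectively. Both $D_\pm$ have $n-1$ crossings, so by the inductive hypothesis
\begin{equation*}
\langle D_\pm\rangle = \sum_{s'} A^{\sum_i s'(i)} (-A^2-A^{-2})^{s'D_\pm}\langle (D_\pm)_{s'}\rangle,
\end{equation*}
where the sum is taken over Kauffman states $s'$ of $D_\pm$.

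The final step is to notice the bijection between Kauffman states of $D$ and pairs (a choice of sign $\varepsilon\in\{+,-\}$ at the crossing $i_0$, a Kauffman state $s'$ of $D_\varepsilon$): given $s$ on $D$ set $\varepsilon=s(i_0)$ and let $s'$ be the restriction of $s$ to the remaining crossings. Under this correspondence $(D_\varepsilon)_{s'}=D_s$, $s'D_\varepsilon=sD$, and $\sum_i s(i)=\varepsilon + \sum_i s'(i)$, so the prefactors $A$ and $A^{-1}$ above are exactly what is needed to convert $A^{\sum_i s'(i)}$ into $A^{\sum_i s(i)}$. Summing the two contributions gives $\langle D\rangle=\sum_{s}\langle D\mid s\rangle$, completing the induction.

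The only point requiring a little care is the base case: one must check that homotopically trivial circles in $S_{(g)}$ really do count as split unknots in $\#_g(S^1\times S^2)$ so that the skein relation $L\sqcup\bigcirc=(-A^2-A^{-2})L$ applies. This follows because such a circle bounds a disk already in $S_{(g)}$, and that disk sits in a 3-ball of $\#_g(S^1\times S^2)$ disjoint from the rest of $D_s$; the inductive step itself is then a routine bookkeeping of exponents.
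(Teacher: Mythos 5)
Your proof is correct and follows essentially the same route the paper indicates (the paper itself only says ``proceeding by induction, splitting the crossings and using the skein relations''): induction on the crossing number, the first skein relation at one crossing, the bijection between states of $D$ and states of the two resolutions, and removal of the homotopically trivial circles via the second skein relation. The only detail worth making explicit in the base case is that one removes \emph{innermost} trivial circles first, so that each bounds a disk in $S_{(g)}$ disjoint from the remaining circles and the local skein relation genuinely applies.
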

Therefore the computation of the Kauffman bracket is reduced to the one of diagrams without crossings and without homotopically trivial components. If the splitting of the diagram $D$ with the state $s$ has only homotopically trivial components, $D_s$ is empty and $\langle D_s \rangle =1$. An easy way to conclude the computation is given by the shadow formula (see Remark~\ref{rem:sh_for_br}). 

\subsection{Regularity and triviality}

In this subsection we provide some results and examples about the form of the Kauffman bracket in $\#_g(S^1\times S^2)$.

In the case of links in $S^3$ ($g=0$), the diagrams $D_s$ are all empty. Hence, as we already knew, the Kauffman bracket of a framed link in $S^3$ is an integer Laurent polynomial, $\langle L \rangle \in \Z[A,A^{-1}]$. In $\#_g(S^1\times S^2)$ the Kauffman bracket is a rational function and it may not be a Laurent polynomial (see Example~\ref{ex:Kauff}).

\begin{ex}\label{ex:Kauff}
We show in the table below some links in $\#_g(S^1\times S^2)$ together with their Kauffman bracket. They are all $\Z_2$-homologically trivial (Definition~\ref{defn:Z_2_tr}). In the list there are knots and links with a varying number of components. There are \emph{alternating} and \emph{non alternating} knots and links (Definition~\ref{defn:alt_cr_num}), Corollary~\ref{cor:conj_Tait_Jones_g} ensures us that example $(5)$, example $(6)$ and example $(7)$ are actually non alternating, unfortunately we are not able to say if example $(11)$ is alternating (see Example~\ref{ex:no_alt_g}). Some of them are \emph{H-split} (Definition~\ref{defn:split_homotopic_genus}) and some are not.

In example $(2)$, $\binom g k$ is the binomial coefficient, and the Kauffman bracket can be written as $f/h$, where $f$ and $h$ are the following Laurent polynomials: $f=\sum_{k=0}^g \binom{g}{k} (-A^2-A^{-2})^k$, $h=(-A^2-A^{-2})^{1-g}$. We have $f|_{A^2=i} = 1$, $h|_{A^2=i} =0$, hence $\langle D \rangle$ can not be a Laurent polynomial. 

In all the examples except $(8)$, $(10)$ and $(11)$, the Kauffman bracket is of the form $f/\cerchio_1^n$ for some $n\geq 0$ and $f\in \Z[A,A^{-1}]$ ($\cerchio_1= -A^2 -A^{-2}$). Examples $(8)$, $(10)$ and $(11)$ are not of that form. Example $(8)$ and $(11)$ are of the form $f/\cerchio_2$, while $(10)$ is of the form $f/(\cerchio_2\cerchio_3)$ for some $f\in \Z[A,A^{-1}]$ not divided by $\cerchio_2$ or $\cerchio_3$, where $\cerchio_n$ is the Kauffman bracket of the unknot colored with $n$. Note that $\cerchio_n\in \Z[A,A^{-1}]$ and that the roots of $\cerchio_n$ are all roots of unity. Hence these Kauffman brackets have poles in roots of unity different from $q=A^2=i$.

$$
\begin{array}{|c|c|c|}
\hline
 & \text{Diagram} & \langle D \rangle \\
\hline
\hline
(1) & \picw{6.3}{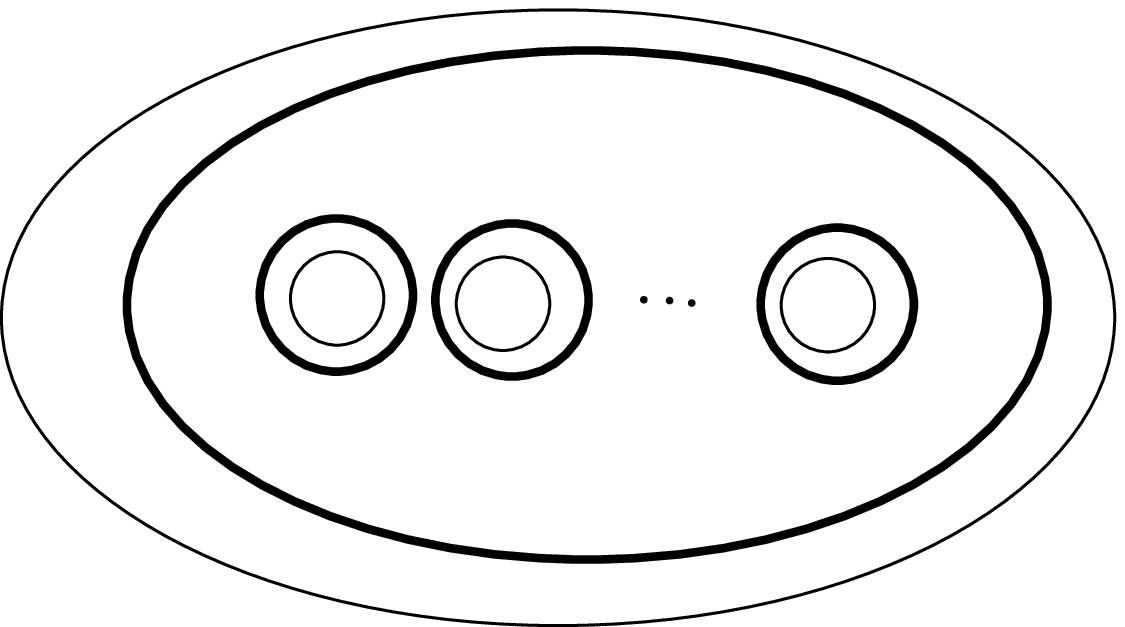} & \parbox[c]{4.5cm}{ 
\begin{center}$
(-A^2-A^{-2})^{1-g} 
$\end{center}
}\\
\hline
(2) & \picw{6.3}{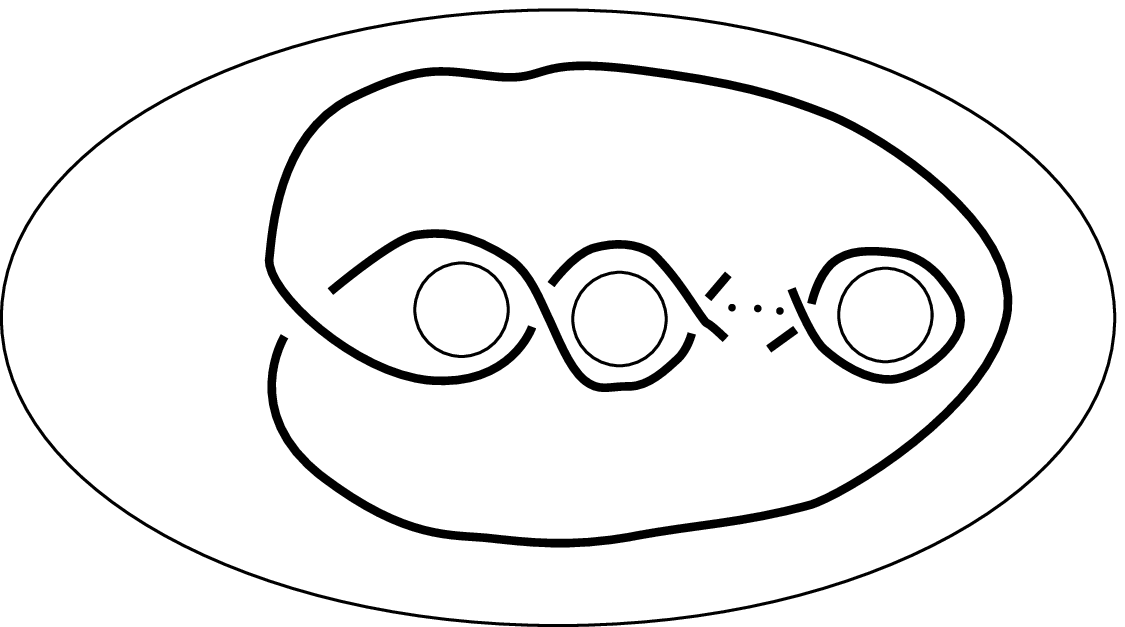} &  \parbox[c]{4.5cm}{
\begin{center}$
\sum_{k=0}^g \binom{g}{k} (-A^2-A^{-2})^{1-g+k} 
$\end{center}
}\\
\hline
(3) & \picw{6.3}{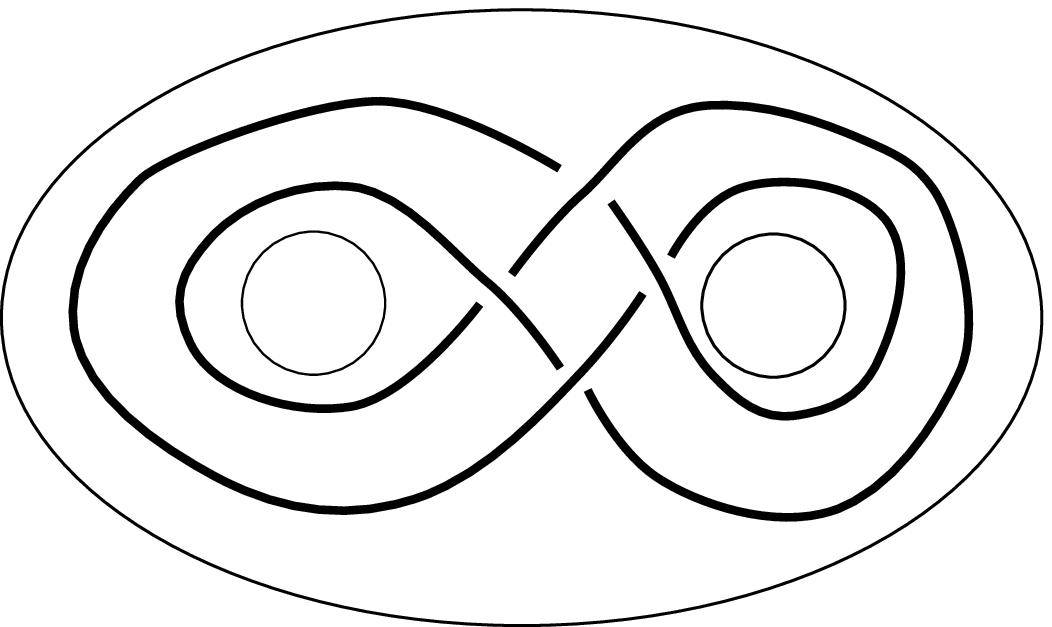} &  \parbox[c]{4.5cm}{
\begin{center}$
\frac{ A^{16} -A^{12} +A^8 +1 }{ A^8 +A^4} 
$\end{center}
}\\
\hline
(4) & \picw{6.3}{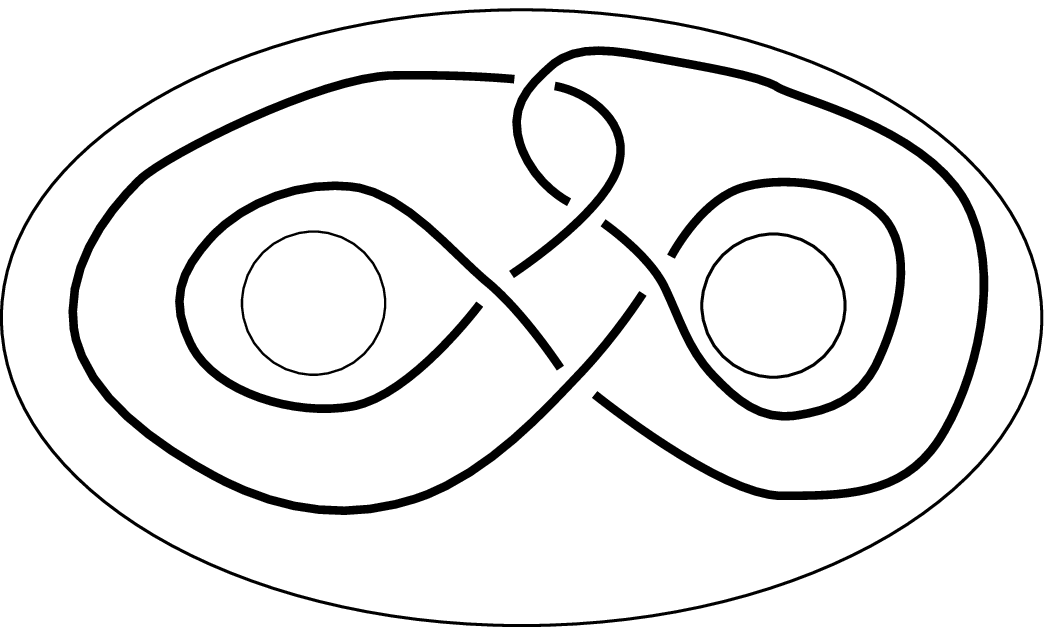} & \parbox[c]{4.5cm}{
\begin{center}$
\frac{ A^{20} -A^{16} +2A^{12} -A^8 +A^4 -1 }{ A^{11} +A^7 }
$\end{center}
}\\
\hline
(5) & \picw{6.3}{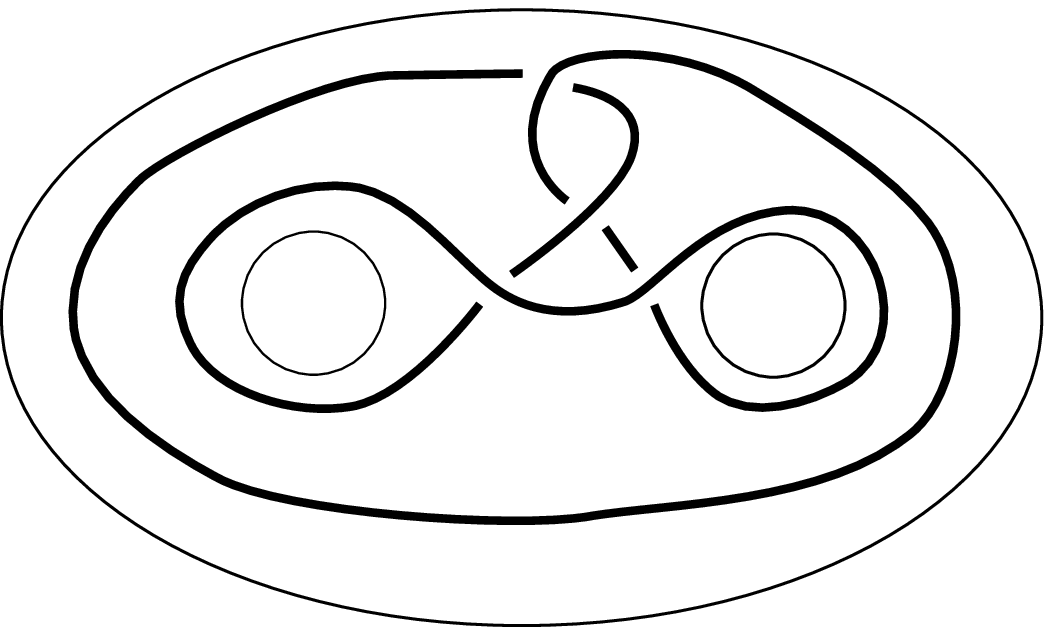} & \parbox[c]{4.5cm}{
\begin{center}$
\frac{ A^{12} -A^{10} +2A^8 +3A^4 +A^2 -1 }{ A^8 +A^4 } 
$\end{center}
}\\
\hline
\end{array}
$$

$$
\begin{array}{|c|c|c|}
\hline
(6) & \picw{6.3}{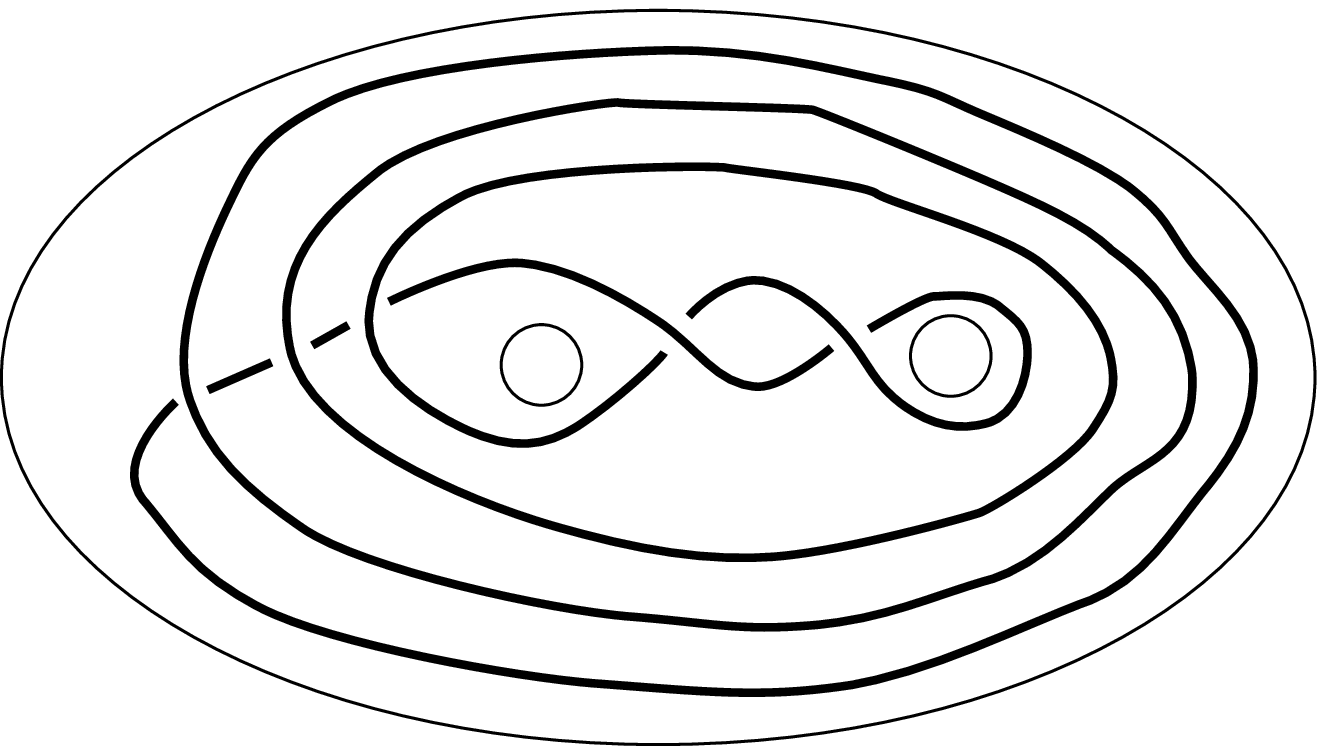} & \parbox[c]{4.5cm}{
\begin{center}$
\frac{-2A^6 -3A^4 -3A^2 -1}{ A^7 +A^3 } 
$\end{center}
}\\
\hline
(7) & \picw{6.3}{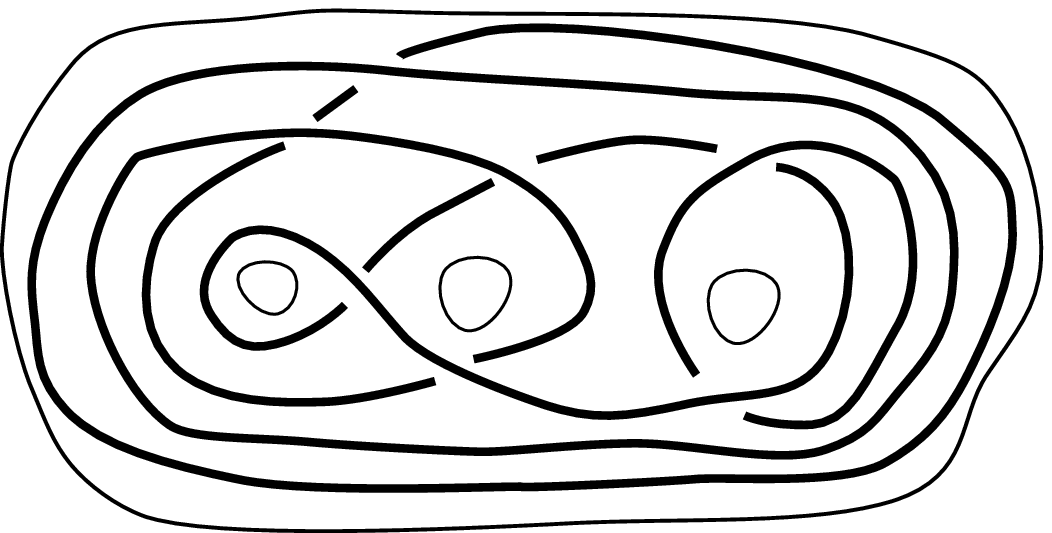} & \parbox[c]{4.5cm}{
\begin{center}$
( -A^{22} -A^{20} -A^{18} +2A^{14} +5A^{12} +3A^{10} +5A^8 +A^6 +4A^4 +1 )/( A^{15} +2A^{11} +A^7) 
$\end{center}
}\\
\hline
(8) & \picw{6.3}{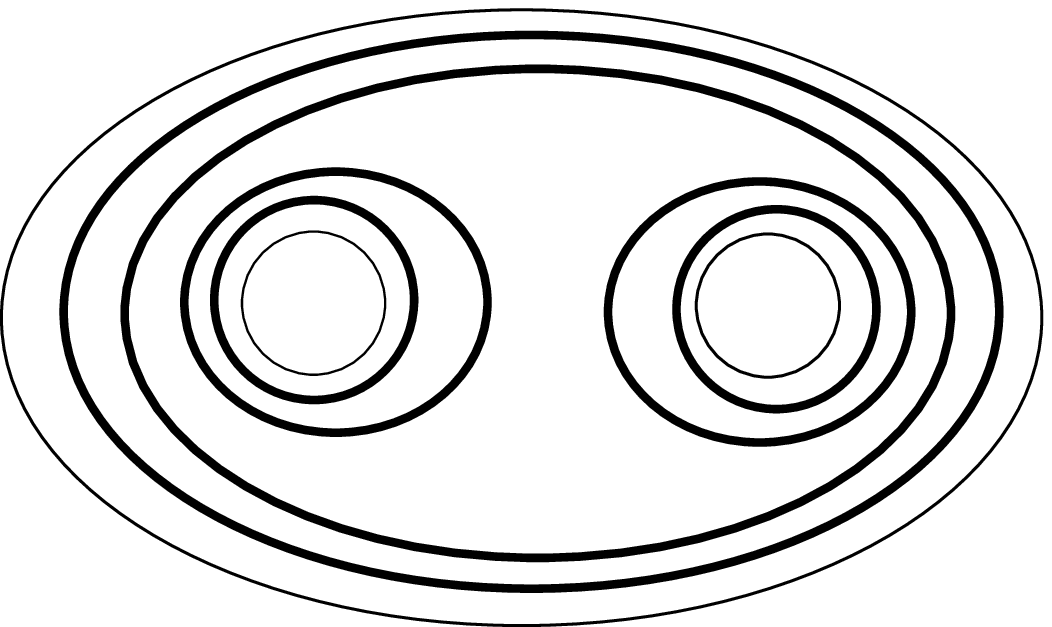} & \parbox[c]{4.5cm}{
\begin{center}$
\frac{ A^8 +2A^4 +1 }{ A^8 +A^4 +1 } 
$\end{center}
}\\
\hline
(9) & \picw{6.3}{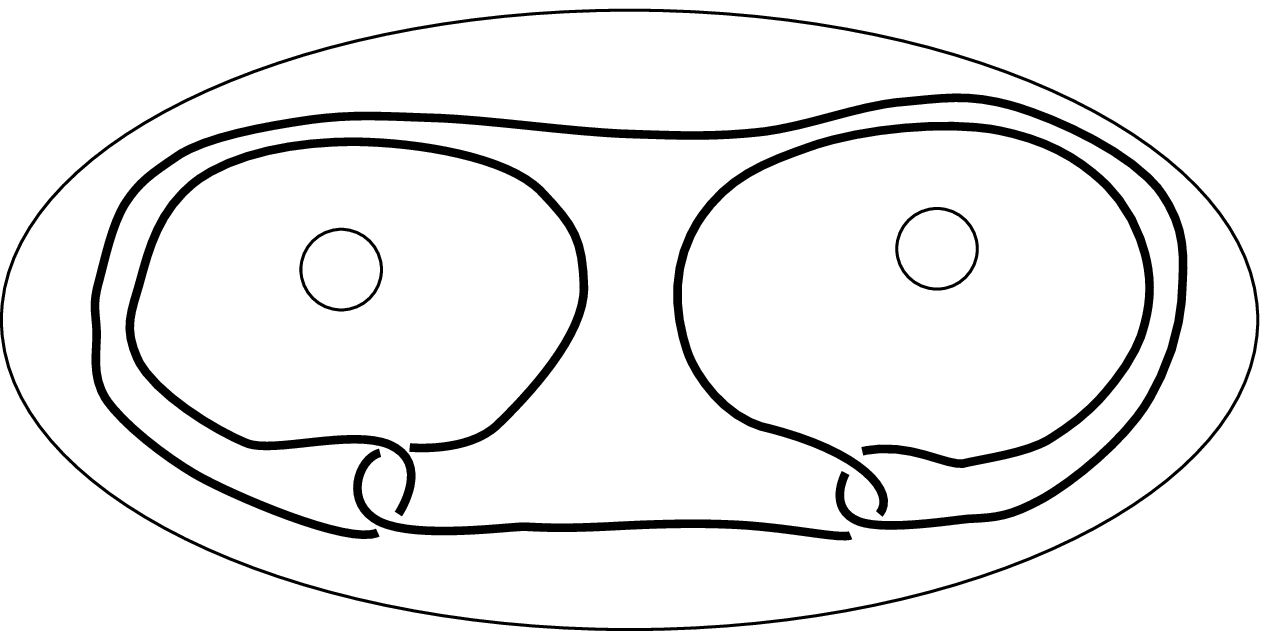} & \parbox[c]{4.5cm}{
\begin{center}$
\frac{-A{16} -2A^8 -1}{ A^{10} +A^6 }
$\end{center}
}\\
\hline
(10) & \picw{6.3}{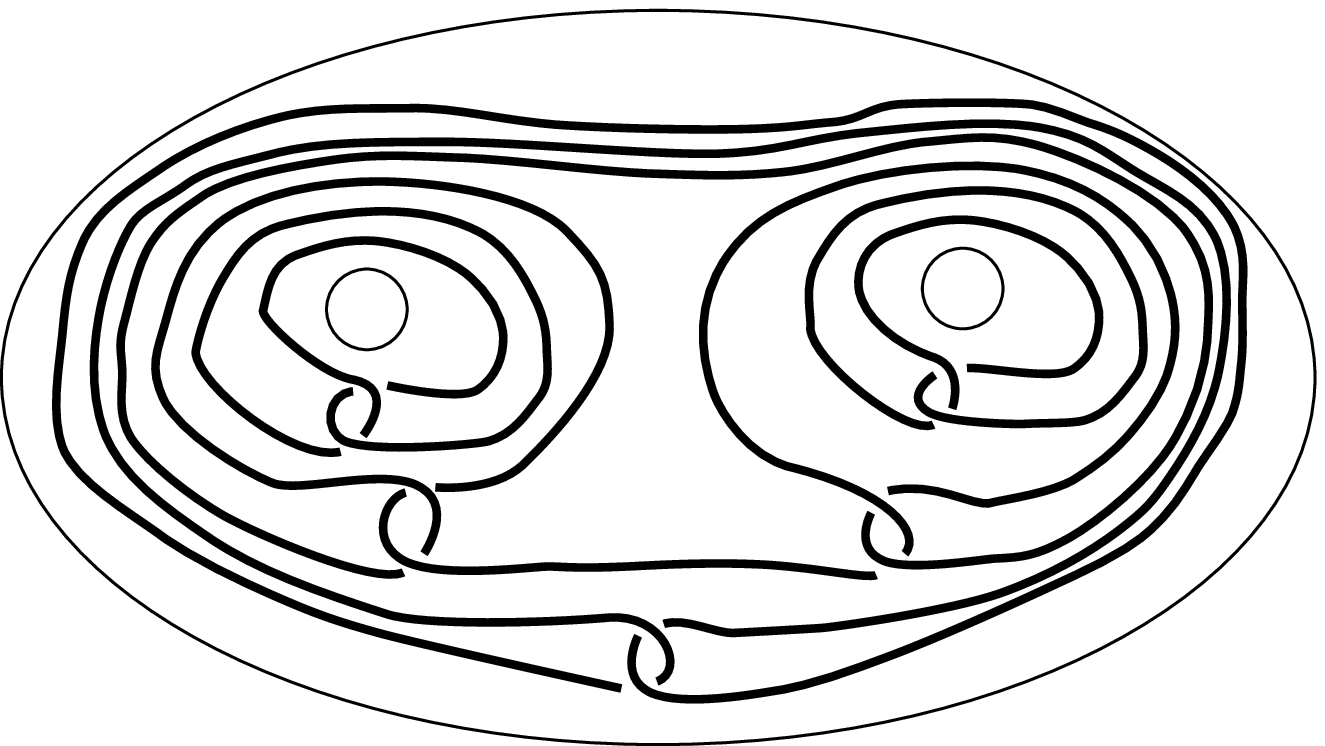} & \parbox[c]{4.5cm}{
\begin{center}$
( 8A^{56} - 15A^{52} + 46A^{48} - 45A^{44} + 106A^{40} -472A^{36} + 97A^{32} - 24A^{28} + 47A^{24} - 14 A^{20} + 22A^{16} - A^{12} + 9A^8 + 2A^4 +1 )/( A^{38} +2A^{34} + 3A^{30} + 3A^{26} + 2A^{22} +A^{18} )
$\end{center}
}\\
\hline
\end{array}
$$

$$
\begin{array}{|c|c|c|}
\hline
(11) & \picw{6.3}{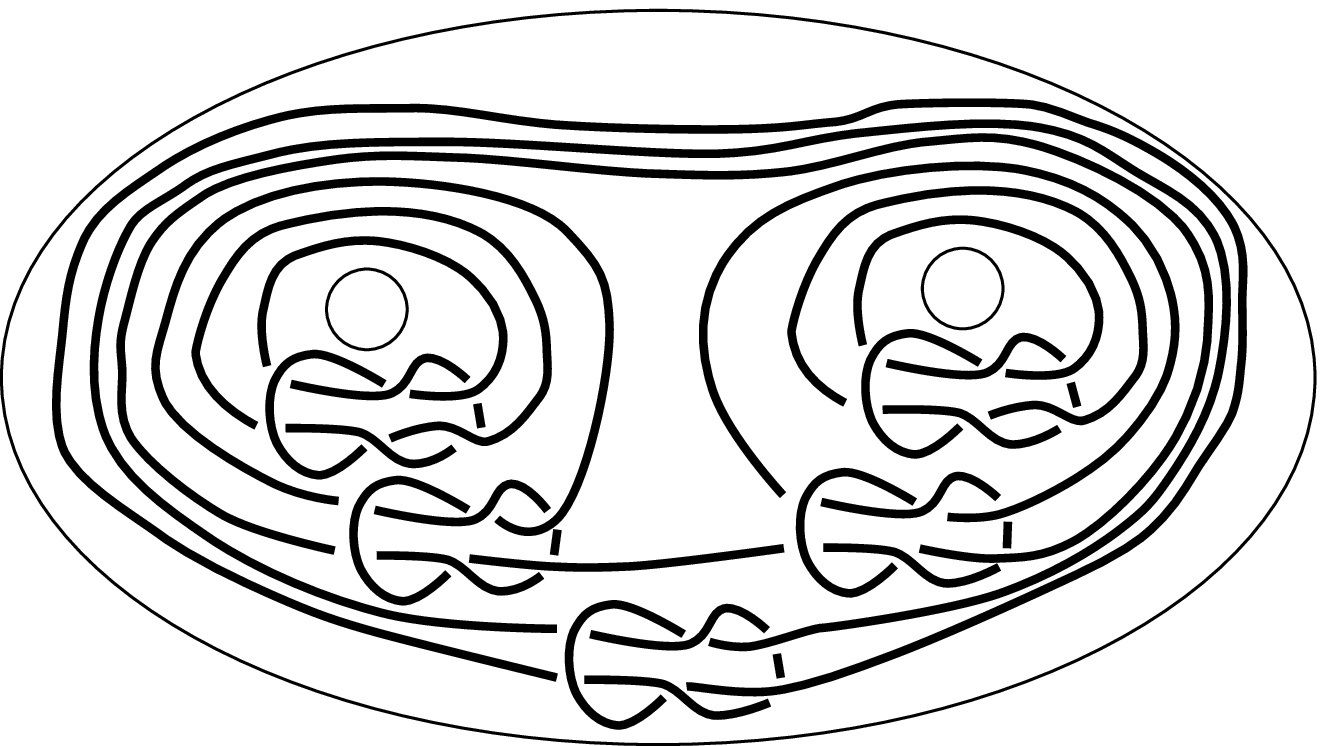} & \parbox[c]{4.5cm}{
\begin{center}$
( -A^{104} + 7A^{100} - 20 A^{96} + 42 A^{92} - 69 A^{88} + 61 A^{84} + 11 A^{80} - 124 A^{76} + 260 A^{72} - 317 A^{68} + 154 A^{64} + 87 A^{60} - 323 A^{56} + 512 A^{52} - 323 A^{48} + 87 A^{44} + 154 A^{40} - 317 A^{36} +260 A^{32} - 124 A^{28} + 112 A^{24} + 61 A^{20} - 69 A^{16} + 42 A^{12} - 20 A^8 + 7A^4 -1 )/( A^{56} + A^{52} + A^{48} )
$\end{center}
}\\
\hline
\end{array}
$$
\end{ex}

\begin{defn}\label{defn:Z_2_tr}
We say that a link $L\subset M$ is $\Z_2$-\emph{homologically trivial} if its homology class with coefficients in $\Z_2=\Z/2\Z$ is null:
$$
0 = [L] \in H_1(M;\Z_2) .
$$
\end{defn}

\begin{prop}\label{prop:Z_2_tr}
Let $D\subset S_{(g)}$ be a diagram of the link $L\subset \#_g(S^1\times S^2)$ for a fixed e-shadow. The following facts are equivalent:
\begin{enumerate}
\item{the link $L$ is $\Z_2$-homologically trivial;}
\item{the link $L$ bounds an embedded (maybe not orientable) surface in $\#_g(S^1\times S^2)$;}
\item{every generic embedded 2-sphere in $\#_g(S^1\times S^2)$ intersects $L$ in an even number of points (maybe $0$);}
\item{every generic properly embedded arc in $S_{(g)}$ intersects $D$ in an even number of points (maybe $0$);}
\item{the splitting of $D$ with any state $s$ bounds an embedded surface in $S_{(g)}$.}
\end{enumerate}
\begin{proof}
($1.\Leftrightarrow 2.$) The equivalence between $1.$ and $2.$ is a well known fact in low-dimensional topology. 

($2.\Rightarrow 3.$) Suppose we have $2.$. Let $S$ be a generic embedded 2-sphere ($S$ intersects transversely $L$ in a finite number of points) and let $S_L$ be an embedded surface bounded by $L$. We can suppose that $S$ and $S_L$ are transverse, hence $S\cap S_L$ is the union of disjoint arcs and circles properly embedded in $S_L$. Hence $S\cap L$ consists of a pair of points for each arc of $S\cap S_L$.

($3.\Rightarrow 4.$) Suppose we have $3.$. A properly embedded arc in $S_{(g)}$ is generic if it intersects transversely $D$ in a finite number of points that are not crossings. Every properly embedded arc in $S_{(g)}$ gives a generic properly embedded disk in the 3-dimensional handlebody of genus $g$, hence gives a generic 2-sphere in $\#_g(S^1\times S^2)$. The intersections of $D$ with the arc correspond to the intersections of $L$ with the sphere, hence this intersection is an even number of points.

($4.\Rightarrow 5.$) Suppose we have $4.$. The splitting of a crossing does not change the homology class with coefficients in $\Z_2$ of the represented link. Hence both $D$ and its splitting with a state $s$ represent a $\Z_2$-homologically trivial link in $\#_g(S^1\times S^2)$. Hence the splitting of $D$ with $s$ is a $\Z_2$-homologically trivial 1-sub-manifold even in the handlebody and in $S_{(g)}$, hence it bounds an embedded surface in $S_{(g)}$.

($5. \Rightarrow 2.$) Suppose we have $5.$. Let $s$ be a state of $D$ and $S_s$ an embedded surface in $S_{(g)}$ whose boundary is the splitting of $D$ with $s$. We get a surface in the handlebody of genus $g$ that is bounded by $L$ simply by attaching a half-twisted band to $S_s$ for each crossing.
\end{proof}
\end{prop}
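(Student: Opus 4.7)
The plan is to establish the five equivalences via the cyclic chain $(1)\Leftrightarrow (2)\Rightarrow (3)\Rightarrow (4)\Rightarrow (5)\Rightarrow (2)$, so that each implication is either a clean intersection-theoretic argument or a surface-construction argument. I would cite $(1)\Leftrightarrow(2)$ as the classical fact that in a $3$-manifold a class in $H_1(M;\mathbb{Z}_2)$ vanishes iff its Poincar\'e dual in $H^2(M;\mathbb{Z}_2)$ is represented by a (possibly non-orientable) embedded surface, realized as the transverse preimage of a point under a map $M\to \mathbb{RP}^\infty$ classifying a real line bundle; this is standard in low-dimensional topology and is where the ``non necessarily orientable'' caveat enters.

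For $(2)\Rightarrow(3)$ I would take an embedded surface $\Sigma$ with $\partial\Sigma=L$, make it transverse to the generic $2$-sphere $S$, and observe that $\Sigma\cap S$ is a compact $1$-manifold properly embedded in $\Sigma$ whose boundary is $L\cap S\subset \partial\Sigma$; since every compact $1$-manifold has an even number of boundary points, $|L\cap S|$ is even. For $(3)\Rightarrow(4)$ I would use the fact that the embedded disk with $g$ holes $S_{(g)}\subset H_g$ has a product neighborhood $S_{(g)}\times[-1,1]$ inside the handlebody $H_g$; then a generic properly embedded arc $\alpha\subset S_{(g)}$ gives a properly embedded disk $\alpha\times[-1,1]\subset H_g$, which caps off with a disk in the complementary handlebody of the H-decomposition to an embedded $2$-sphere $S_\alpha\subset\#_g(S^1\times S^2)$. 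Because $D$ is the projection of $L$ to $S_{(g)}$, the points of $\alpha\cap D$ are in bijection with $S_\alpha\cap L$, so parity is preserved.

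For $(4)\Rightarrow(5)$ I would fix a state $s$ and observe that each resolution $\pic{1.2}{0.3}{Acanalep.eps}$ or $\pic{1.2}{0.3}{Bcanalep.eps}$ is the restriction of $D$ outside the crossings together with two non-crossing arcs, so any generic properly embedded arc $\alpha$ meets the splitting $D_s\cup\{\text{trivial circles}\}$ in the same parity as it meets $D$, namely even. Thus the splitting, viewed as a closed $1$-submanifold of $S_{(g)}$, pairs trivially with every class in $H_1(S_{(g)},\partial S_{(g)};\mathbb{Z}_2)$, hence is null in $H_1(S_{(g)};\mathbb{Z}_2)$ by Poincar\'e--Lefschetz duality for surfaces, and therefore bounds an embedded subsurface of $S_{(g)}$. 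For $(5)\Rightarrow(2)$ I would pick any state $s$ and a surface $\Sigma_s\subset S_{(g)}$ bounded by the splitting of $D$ with $s$, push it slightly into $H_g$, and reattach a small half-twisted band at each crossing (whose sign is dictated by $s$) to obtain a compact embedded surface in $H_g\subset\#_g(S^1\times S^2)$ with boundary $L$.

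The main obstacle is the $(1)\Leftrightarrow(2)$ step, because the ``bounds a possibly non-orientable surface'' direction is not purely formal: one has to know that every mod-$2$ homology class in a $3$-manifold is represented by an embedded surface, which is not just a consequence of $H_2(M;\mathbb{Z}_2)\cong H^1(M;\mathbb{Z}_2)$ but also requires the transversality realization of a cohomology class by the zero locus of a section of a line bundle. Everything downstream is essentially bookkeeping about how the Heegaard structure turns $3$-dimensional intersection parities into $2$-dimensional ones on $S_{(g)}$, and how band attachments at crossings convert resolutions back into $L$.
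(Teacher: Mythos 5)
Your proof is correct and follows essentially the same route as the paper: the identical cyclic chain $(1)\Leftrightarrow(2)\Rightarrow(3)\Rightarrow(4)\Rightarrow(5)\Rightarrow(2)$, with the same intersection-parity arguments for $(2)\Rightarrow(3)$ and $(3)\Rightarrow(4)$, the same observation that resolving a crossing preserves the mod-$2$ class for $(4)\Rightarrow(5)$, and the same half-twisted-band construction for $(5)\Rightarrow(2)$. The only difference is presentational: you keep the $(4)\Rightarrow(5)$ step entirely inside $S_{(g)}$ via duality on the surface, whereas the paper briefly passes through the ambient $3$-manifold, but the content is the same.
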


\begin{rem}\label{rem:p(s)}
Let $D\subset S_{(g)}$ be a link diagram, let $s$ be a state of $D$, and $p(s)$ the number of homotopically non trivial components of the splitting of $D$ with $s$. If $g=1$ the components of $D_s$ are parallel and by Proposition~\ref{prop:Z_2_tr}-(5.) we have that $p(s)\in 2\Z$ if and only if the link is $\Z_2$-homologically trivial. For $g\geq 2$ this is not true.
\end{rem}

The identities in Fig.~\ref{figure:Cheb1} and Fig.~\ref{figure:Cheb} are clearly the same, we present both in order to easily explain the next lemma. The identity is a global relation in a solid torus and works only for parallel (colored) copies of the core with trivial framing. 

\begin{figure}
\begin{center}
\includegraphics[width = 5.3 cm]{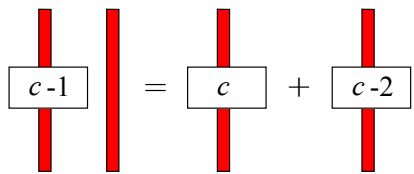}
\caption{A move in the skein space of the solid torus ($c \geq 2$). Only a portion of the solid torus is pictured, the portion is diffeomorphic to $[-1,1]\times D^2$.}
\label{figure:Cheb1}
\end{center}
\end{figure}

\begin{lem}[\cite{Carrega_Tait1}]\label{lem:parallel_cores}
Let $L\subset S^1\times S^2$ be the framed link consisting of $k\geq 0$ parallel copies of the core $S^1\times \{x\}$ with the trivial framing (the one given by a fixed e-shadow). Then $\langle L \rangle =0$ if $k\in 2\Z+1$, otherwise it is a positive integer.
\begin{proof}
This proof is based on the description of a computation. As an example the case $k=3$ is shown in Fig.~\ref{figure:ex_lem}. Let $K$ be the core of $S^1\times S^2$ with the trivial framing. In each step we get a linear combination with positive integers of framed links consisting of colored copies of $K$: in each framed link there is one copy with a non negative color, while the others are colored with $1$. Applying the equality of Fig.~\ref{figure:Cheb1} to each summand we fuse two components, one of them has color $1$ and the other one has the maximal color of that framed link. We apply this equality until we get a linear combination with positive integer coefficients of links consisting just of one colored copy of $K$. The colors of the final summands are all odd if $k\in 2\Z+1$, otherwise they are all even and the coefficient that multiplies the empty set (the copy colored with $0$) is non null. The equality of Fig.~\ref{figure:sphere}-(left) says that all the summands except the one with color $0$ are null.
\end{proof}
\end{lem}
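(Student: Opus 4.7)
The plan is to separate the argument by the parity of $k$. For odd $k$ I would appeal directly to Remark~\ref{rem:graph_and_sphere}. Consider the generic 2-sphere $S = \{x_0\}\times S^2$ for some $x_0\in S^1$; each of the $k$ parallel cores meets $S$ transversely in exactly one point, so $L$ meets $S$ in $k$ points, each colored $1$. Since $k$ is odd, the sum of the colors counted with multiplicity is odd, and Remark~\ref{rem:graph_and_sphere} gives $\langle L\rangle=0$ immediately.

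For even $k=2m$, my strategy is to expand $L$ into colored cores via the Chebyshev identity of Fig.~\ref{figure:Cheb}, and then use the vanishing from Fig.~\ref{figure:sphere}-(left). First I would work inside a tubular solid torus $V$ of the core. There the skein algebra structure identifies $L$ with $\alpha^k$, where $\alpha$ is the core with trivial framing and color $1$; under the isomorphism of $KM(V)$ with $\Z[A,A^{-1}][x]$, the colored core $\alpha_{(n)}$ corresponds to the $n^{\rm th}$ Chebyshev polynomial $y_n$. Writing the unique expansion $x^k = \sum_n c_n^{(k)} y_n$ in $\Z[x]$ translates to
\[
\alpha^k \;=\; \sum_{n=0}^{k} c_n^{(k)}\,\alpha_{(n)} \qquad \text{in } K(V).
\]

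Next I would push this relation into $K(S^1\times S^2)$ via the inclusion $V\hookrightarrow S^1\times S^2$, which is legitimate by Theorem~\ref{theorem:sk_mod_prop}-(1.). For each $n\geq 1$ the colored core $\alpha_{(n)}$ meets the sphere $S$ once with non-zero color, so Fig.~\ref{figure:sphere}-(left) forces $\alpha_{(n)}=0$ in $K(S^1\times S^2)$. Only the $n=0$ term survives and $L = c_0^{(k)}\cdot\varnothing$, giving $\langle L\rangle = c_0^{(k)}$.

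The final step is to check that $c_0^{(2m)}$ is a positive integer. The recursion $xy_n = y_{n+1}+y_{n-1}$ (with $y_{-1}:=0$) yields $c_n^{(k+1)} = c_{n-1}^{(k)} + c_{n+1}^{(k)}$, and induction from $c_n^{(0)}=\delta_{n,0}$ shows every $c_n^{(k)}$ is a non-negative integer. In particular $c_0^{(k+2)} = c_1^{(k+1)} = c_0^{(k)} + c_2^{(k)} \geq c_0^{(k)}$, so $c_0^{(2m)} \geq c_0^{(0)} = 1$. The only delicate point is that the Chebyshev identity of Fig.~\ref{figure:Cheb} is a local relation in a solid torus whose interpretation must be transported unchanged to $K(S^1\times S^2)$; this is immediate from functoriality of the skein module under the inclusion $V\hookrightarrow S^1\times S^2$, and is the one place one should be careful since the global identity valid in $V$ would fail if one tried to invoke it outside of a solid torus neighborhood of the core.
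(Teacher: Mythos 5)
Your proof is correct, and its core mechanism is the same as the paper's: expand the $k$ parallel cores into a linear combination of single colored copies of the core, kill every summand of non-zero color with the sphere identity of Fig.~\ref{figure:sphere}-(left), and check that the coefficient of the color-$0$ term is a positive integer. The differences are in the packaging. The paper performs the expansion by iterated fusion: at each step it applies Fig.~\ref{figure:Cheb1} to merge the maximally colored copy with a color-$1$ copy, and asserts along the way that all coefficients stay positive integers. You instead invoke the isomorphism $KM(V)\cong \Z[A,A^{-1}][x]$ and the Chebyshev basis, so the whole computation collapses to the change of basis $x^k=\sum_n c_n^{(k)} y_n$; the recursion $c_n^{(k+1)}=c_{n-1}^{(k)}+c_{n+1}^{(k)}$ is exactly the paper's fusion step, but your version makes the non-negativity of all coefficients and the bound $c_0^{(k+2)}=c_0^{(k)}+c_2^{(k)}\geq c_0^{(k)}\geq 1$ completely explicit, which is cleaner than the paper's descriptive argument. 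Your treatment of the odd case is genuinely different: you dispose of it at the outset via Remark~\ref{rem:graph_and_sphere} (odd total intersection with the non-separating sphere $\{x_0\}\times S^2$ forces vanishing), whereas the paper deduces it from the parity of the colors appearing in the final expansion; both are valid, and yours is shorter. You are also right to flag that the Chebyshev identity is a global relation valid only for trivially framed parallel copies of the core of a solid torus, transported to $S^1\times S^2$ by functoriality of the skein module under $V\hookrightarrow S^1\times S^2$ (Theorem~\ref{theorem:sk_mod_prop}-(1.)); this is precisely the hypothesis the lemma's trivial framing guarantees.
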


\begin{figure}
\begin{center}
\includegraphics[width = 5.6 cm]{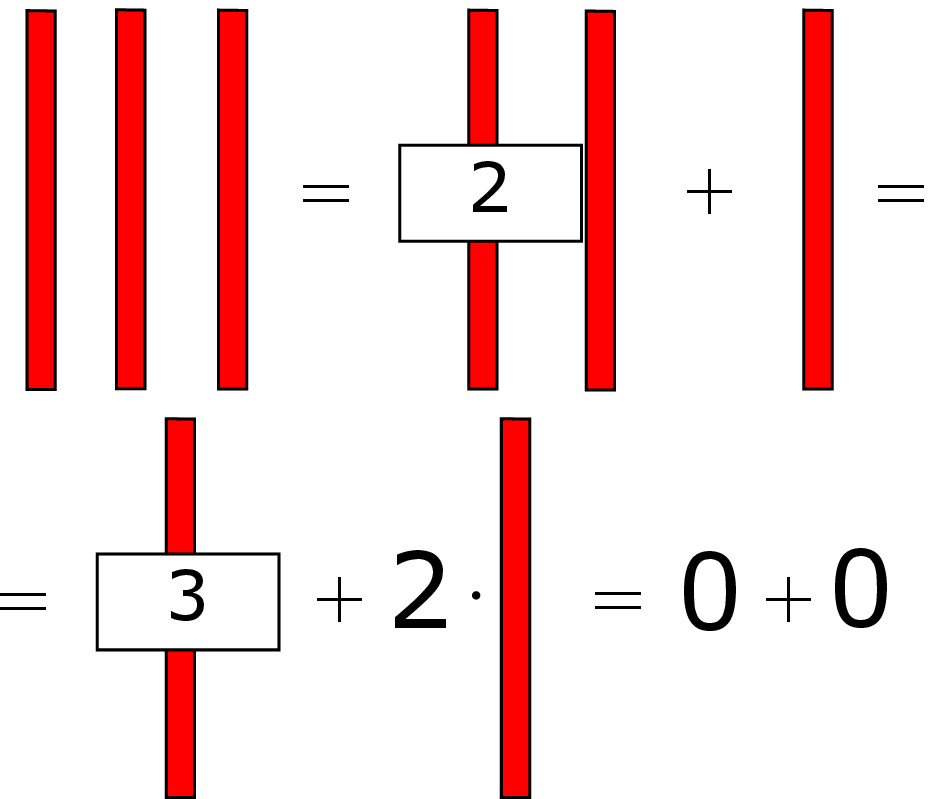}
\caption{The computation of the Kauffman bracket of three parallel copies of the core of $S^1\times S^2$.}
\label{figure:ex_lem}
\end{center}
\end{figure}

\begin{prop}
Let $L\subset S^1\times S^2$ be the framed link consisting of $2n\geq 0$ parallel copies of the core $S^1\times \{x\}$ with the trivial framing (the one given by a fixed e-shadow). Then 
$$
\langle L \rangle = \frac{1}{n+1} \binom{2n}{n} .
$$
\begin{proof}
See \cite[Corollary 5]{HP2}.
\end{proof}
\end{prop}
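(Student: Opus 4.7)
The plan is to identify $L$ with an explicit element of the skein module of a solid torus embedded in $S^1\times S^2$, expand it in the Chebyshev/Jones--Wenzl basis of that module, and then use the sphere intersection identity of Fig.~\ref{figure:sphere}-(left) to kill all but one term.

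First, fix a solid torus $V=S^1\times D^2\subset S^1\times S^2$ whose core is $S^1\times \{x\}$ and choose the e-shadow so that the trivial framing on the core is realized by an annular e-shadow of $V$. Under the identification of $KM(V)$ with $\Z[A,A^{\pm 1}][\alpha]$ where $\alpha$ denotes the core with the trivial framing, the framed link $L$ is simply $\alpha^{2n}$. By the identity of Fig.~\ref{figure:Cheb1}, the $k$-colored core $\alpha_{(k)}$ corresponds to the Chebyshev polynomial $y_k$, and so the family $\{\alpha_{(k)}\}_{k\geq 0}$ is a basis of $KM(V)$.

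Second, I would expand
\[
\alpha^{2n}\;=\;\sum_{k=0}^{2n} c_{n,k}\,\alpha_{(k)}
\]
in this basis, where the coefficients $c_{n,k}\in \Z$ are determined by $x^{2n}=\sum_k c_{n,k}\,y_k$. The recursion $y_{k+1}=xy_k-y_{k-1}$ gives $x y_k=y_{k+1}+y_{k-1}$ for $k\geq 1$ and $xy_0=y_1$, so setting $y_{-1}:=0$ we obtain $c_{n+1,k}=c_{n,k-1}+c_{n,k+1}$ with $c_{0,k}=\delta_{k,0}$. This is precisely the recursion for the number of length-$2n$ walks on $\mathbb{Z}_{\geq 0}$ starting and ending at $0$, so $c_{n,0}$ equals the Catalan number $C_n=\frac{1}{n+1}\binom{2n}{n}$. (The standard reflection principle / ballot argument gives the closed form; I would not grind through it, but simply cite it.)

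Third, I push $\alpha_{(k)}$ forward along the inclusion $V\hookrightarrow S^1\times S^2$. A generic meridional 2-sphere $\{pt\}\times S^2\subset S^1\times S^2$ meets $\alpha_{(k)}$ transversely in a single point on a strand colored $k$. By the identity of Fig.~\ref{figure:sphere}-(left), $\langle \alpha_{(k)}\rangle =0$ for every $k\geq 1$, while obviously $\langle \alpha_{(0)}\rangle=\langle \varnothing\rangle=1$. Combining,
\[
\langle L\rangle \;=\;\bigl\langle \alpha^{2n}\bigr\rangle\;=\;\sum_{k=0}^{2n} c_{n,k}\,\langle \alpha_{(k)}\rangle\;=\;c_{n,0}\;=\;\frac{1}{n+1}\binom{2n}{n}.
\]

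The only real obstacle is the combinatorial identification of $c_{n,0}$ with the Catalan number; everything else is a direct application of two skein facts already proved in the excerpt (the Chebyshev basis of the solid torus and the sphere intersection rule), together with the Temperley--Lieb push-forward $TL_{2n}\to K(S^1\times S^2)$ implicit in Lemma~\ref{lem:parallel_cores}.
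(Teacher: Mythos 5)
Your argument is correct, and it is genuinely different from what the paper does: the paper gives no proof at all here, deferring entirely to Hoste--Przytycki \cite[Corollary 5]{HP2}, whereas you produce a self-contained derivation from tools already established in the text. In effect you upgrade the paper's own Lemma~\ref{lem:parallel_cores} (which fuses the parallel cores one pair at a time via Fig.~\ref{figure:Cheb1} and only concludes that the answer is a positive integer) to a closed formula, by expanding $\alpha^{2n}$ in the Chebyshev basis all at once, recognizing the coefficient of $y_0$ as the number of length-$2n$ nonnegative walks returning to the origin (the Catalan number $C_n$), and killing every $\alpha_{(k)}$ with $k\geq 1$ by the sphere-intersection identity of Fig.~\ref{figure:sphere}-(left). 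What your route buys is transparency and independence from the external reference; what the citation buys is brevity. One cosmetic slip: with your convention that $c_{n,k}$ is the coefficient of $y_k$ in $x^{2n}$, the recursion $c_{n+1,k}=c_{n,k-1}+c_{n,k+1}$ is actually the one-step recursion for the coefficients of $x^{m}$ as $m$ increases by $1$, so you should either index by the exponent $m$ of $x^m$ and take $m=2n$ at the end, or apply the displayed recursion twice per increment of $n$. The combinatorial identification of the constant term of $x^{2n}$ with $C_n=\frac{1}{n+1}\binom{2n}{n}$ is unaffected and is verified by the first few cases ($x^2=y_2+y_0$, $x^4=y_4+3y_2+2y_0$, $x^6=y_6+5y_4+9y_2+5y_0$).
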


\begin{prop}[\cite{Carrega_Tait1}]\label{prop:Laurent_pol}
The Kauffman bracket of a link $L$ in $S^1\times S^2$ is a Laurent polynomial
$$
\langle L \rangle \in \Z[A,A^{-1}] .
$$
\begin{proof}
If follows from Proposition~\ref{prop:state_sum} and Lemma~\ref{lem:parallel_cores}.
\end{proof}
\end{prop}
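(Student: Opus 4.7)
The plan is to combine the state sum expansion of Proposition~\ref{prop:state_sum} with Lemma~\ref{lem:parallel_cores}, together with the simple observation that any crossingless diagram in the annulus $S_{(1)}=S^1\times[-1,1]$ with no homotopically trivial components is precisely a disjoint union of parallel copies of the core circle equipped with the trivial framing.

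First I would fix an e-shadow for $S^1\times S^2$, so that $L$ is represented by a diagram $D\subset S_{(1)}$. Proposition~\ref{prop:state_sum} then gives
\[
\langle L\rangle \;=\; \sum_{s} \langle D\mid s\rangle
\;=\; \sum_{s} A^{\sum_i s(i)}\,(-A^2-A^{-2})^{sD}\,\langle D_s\rangle,
\]
the sum being over all Kauffman states of $D$, a finite set. The factors $A^{\sum_i s(i)}$ and $(-A^2-A^{-2})^{sD}$ already lie in $\Z[A,A^{-1}]$, so the whole question reduces to showing that each $\langle D_s\rangle$ lies in $\Z$.

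Next I would analyze $D_s$. By construction $D_s$ is a collection of pairwise disjoint simple closed curves in the annulus $S_{(1)}$, with all homotopically trivial circles already discarded. Every homotopically non-trivial simple closed curve in the annulus is isotopic to the core $S^1\times\{0\}$, and two disjoint such curves are parallel to one another; moreover the black-board framing induced by the annulus is exactly the trivial framing coming from the fixed e-shadow. Therefore $D_s$ represents some number $k=k(s)\geq 0$ of parallel copies of the core of $S^1\times S^2$ with the trivial framing.

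Finally, Lemma~\ref{lem:parallel_cores} tells us that $\langle D_s\rangle=0$ when $k(s)$ is odd and is a non-negative integer when $k(s)$ is even; in either case $\langle D_s\rangle\in\Z$. Plugging this back into the state sum expresses $\langle L\rangle$ as a finite $\Z$-linear combination of elements of $\Z[A,A^{-1}]$, so $\langle L\rangle\in\Z[A,A^{-1}]$, as required. There is no real obstacle here: the only step that requires thought is identifying $D_s$ with parallel copies of the core with trivial framing, which is forced by the topology of the annulus and the choice of black-board framing.
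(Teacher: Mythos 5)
Your proof is correct and follows exactly the route the paper intends: expand via the state sum of Proposition~\ref{prop:state_sum}, observe that each $D_s$ is a union of parallel trivially framed copies of the core of $S^1\times S^2$, and apply Lemma~\ref{lem:parallel_cores} to conclude each $\langle D_s\rangle$ is an integer. The paper's one-line proof is just a compressed version of the same argument.
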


\begin{cor}\label{cor:col_link_int_pol}
The Kauffman bracket of a colored link $L$ in $S^1\times S^2$ (or in $S^3$) is a Laurent polynomial
$$
\langle L \rangle \in \Z[A,A^{-1}] .
$$
\begin{proof}
It follows by induction on the biggest color of the components of $L$ using the identity in Fig.~\ref{figure:Cheb} and Proposition~\ref{prop:Laurent_pol} as base for the induction.
\end{proof}
\end{cor}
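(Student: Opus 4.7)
The plan is a straightforward induction on the maximum color $c$ appearing on any component of the colored link $L \subset M$, where $M = S^3$ or $M = S^1\times S^2$.

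For the base case, if $c \leq 1$ then every component of $L$ is colored either $0$ or $1$. Components colored $0$ may be deleted, and components colored $1$ carry the Jones--Wenzl projector $f^{(1)} = \mathrm{id}_{TL_1}$, so the colored link collapses to an ordinary framed link. For $M = S^3$ the Kauffman bracket of a framed link is a Laurent polynomial in $\Z[A,A^{-1}]$ directly from the skein relations (equivalently from Theorem~\ref{theorem:known_sk_mod}-(1.)), while for $M = S^1 \times S^2$ this is precisely Proposition~\ref{prop:Laurent_pol}.

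For the inductive step, I will suppose the statement for all colored links of maximum color at most $c-1$, with $c \geq 2$. Pick a component $K$ of $L$ colored $c$. Inside a tubular neighborhood of $K$, which is a solid torus in which $K$ sits as the core with its chosen framing, the identity of Fig.~\ref{figure:Cheb} expresses the skein $\alpha_{(c)}$ as $\alpha \cdot \alpha_{(c-1)} - \alpha_{(c-2)}$ — that is, as an integer linear combination (with coefficients $+1$ and $-1$) of skeins in which $K$ is replaced either by two parallel copies colored $1$ and $c-1$, or by a single copy colored $c-2$. Applying this rewriting inside the skein space $K(M)$ gives $\langle L \rangle = \langle L' \rangle - \langle L'' \rangle$, where $L'$ and $L''$ are colored links in $M$ whose colors on the replaced component(s) are all at most $c-1$, while the colors of the other components of $L$ are unchanged. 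Performing this reduction, in turn, to each of the (finitely many) components originally colored $c$, I obtain an integer linear combination of Kauffman brackets of colored links whose maximum color is at most $c-1$. The inductive hypothesis gives each such bracket in $\Z[A,A^{-1}]$, and $\Z[A,A^{-1}]$ is closed under $\Z$-linear combinations, so $\langle L \rangle \in \Z[A,A^{-1}]$.

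There is really no obstacle: the only point one must watch is that the coefficients in the identity of Fig.~\ref{figure:Cheb} are genuine integers and not merely elements of $\mathbb{Q}(A)$. This is guaranteed by the Chebyshev recursion $y_{n+1} = x \cdot y_n - y_{n-1}$ with $y_0 = 1$, $y_1 = x$, which has integer coefficients by construction and under the identification of $KM(V)$ with the polynomial algebra sends $\alpha_{(n)}$ to $y_n(\alpha)$. Thus the reduction used in the induction step produces integer linear combinations, and Proposition~\ref{prop:Laurent_pol} applied at the bottom of the induction carries through the integrality to the general colored case.
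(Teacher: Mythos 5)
Your proof is correct and follows exactly the argument the paper intends: induction on the largest color, with Proposition~\ref{prop:Laurent_pol} (and the $S^3$ case from the skein relations) as the base, and the Chebyshev identity of Fig.~\ref{figure:Cheb} with its integer coefficients $\pm 1$ driving the inductive step. Your added care about reducing all components of maximal color and about the integrality of the recursion coefficients only makes explicit what the paper leaves implicit.
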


Let $D\subset S_{(g)}$ be a link diagram and $s$ a state of $D$. We know that if $g=0$ the diagram $D_s$ is empty. By Lemma~\ref{lem:parallel_cores} $\langle D_s \rangle$ is a positive integer. More in general in Remark~\ref{rem:sh_for_br} we will prove that $\langle D_s \rangle$ is a symmetric function of $A^2$, namely there are two polynomials $f,h\in \Z[q]$ such that
$$
\langle D_s \rangle = \frac{f|_{q=A^2}}{h|_{q=A^2}} , \ \ \langle D_s \rangle |_A = \langle D_s \rangle |_{A^{-1}} .
$$
In particular using Proposition~\ref{prop:state_sum} we get the following:
\begin{prop}[\cite{Carrega_Taitg}]\label{prop:func_of_A^2}
Let $D\subset S_{(g)}$ be a $n$-crossing link diagram. Then there are two polynomials $f,h\in \Z[q]$ such that
$$
\langle D \rangle = \begin{cases}
\frac{f|_{q=A^2}}{h|_{q=A^2}} & \text{if } n \in 2\Z \\
A\cdot \frac{f|_{q=A^2}}{h|_{q=A^2}} & \text{if } n \in 2\Z+1 \\
\end{cases} .
$$
\end{prop}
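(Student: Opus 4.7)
The plan is to derive the result directly from the state sum formula in Proposition~\ref{prop:state_sum} by tracking the parity of $A$-exponents in each factor.

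First I would apply Proposition~\ref{prop:state_sum} to express
\[
\langle D \rangle \;=\; \sum_{s} A^{\sum_i s(i)} \,(-A^2-A^{-2})^{sD} \,\langle D_s\rangle,
\]
where the sum runs over all $2^n$ Kauffman states $s\colon \{\text{crossings}\}\to\{\pm 1\}$. The aim is to show that each summand, separately, is $A^n$ times a rational function of $A^2$; once this is done, summing gives $\langle D\rangle = A^n \cdot R(A^2)$ for some rational function $R$ with integer coefficients, and writing $R = f/h$ with $f,h\in\Z[q]$ yields the two cases ($A^n=(A^2)^{n/2}$ when $n$ is even, $A^n=A\cdot (A^2)^{(n-1)/2}$ when $n$ is odd).

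Next I would analyze each factor. If $k$ denotes the number of crossings receiving the sign $-1$, then $\sum_i s(i)=n-2k$, so $A^{\sum_i s(i)} = A^n\cdot (A^2)^{-k}$, which is $A^n$ times a (Laurent) monomial in $A^2$. The factor $(-A^2-A^{-2})^{sD}$ is visibly a Laurent polynomial in $A^2$. The only nontrivial point is the factor $\langle D_s\rangle$: the splitting diagram $D_s$ consists of homotopically nontrivial curves in $S_{(g)}$, representing a framed link in $\#_g(S^1\times S^2)$ with no crossings, and one needs that $\langle D_s\rangle$ is a rational function of $A^2$ alone. This is exactly the content promised by Remark~\ref{rem:sh_for_br}, which I am allowed to invoke; it asserts the symmetry $\langle D_s\rangle|_A=\langle D_s\rangle|_{A^{-1}}$, equivalently $\langle D_s\rangle\in\Z(A^2)$.

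The main (and essentially only) obstacle is the appeal to the symmetry of $\langle D_s\rangle$ in $A^2$: one must know that bracketless, crossing-free diagrams in $S_{(g)}$ evaluate to functions of $q=A^2$. For $g=0,1$ this can be checked by hand using the base case Lemma~\ref{lem:parallel_cores} (which gives integer, hence automatically $A^2$-symmetric, values), but for general $g$ the cleanest justification is via the shadow formula, which is the route Remark~\ref{rem:sh_for_br} will take. Granting this, assembling the three factors gives the desired form for each summand, and summing over $s$ produces a single rational function of $A^2$ (multiplied by $A$ when $n$ is odd), which can be written as $f|_{q=A^2}/h|_{q=A^2}$ after clearing denominators to obtain $f,h\in\Z[q]$.
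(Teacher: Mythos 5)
Your proof is correct and follows essentially the same route as the paper: the paper also deduces the statement from the state sum of Proposition~\ref{prop:state_sum}, using that $A^{\sum_i s(i)}$ is $A^n$ times a power of $A^2$, that $(-A^2-A^{-2})^{sD}$ is a Laurent polynomial in $A^2$, and that $\langle D_s\rangle$ is a rational function of $q=A^2$ as established via the shadow formula in Remark~\ref{rem:sh_for_br}. No gaps.
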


We are going to get some more information about $\langle D_s \rangle$, and hence about $\langle D\rangle$, in Chapter~\ref{chapter:Tait} (see the study of the quantity $\psi(s)$).

\begin{prop}[\cite{Carrega_Tait1}]\label{prop:0Kauff}
Let $L$ be a framed link in $S^1\times S^2$. Suppose that the $\Z_2$-homology class of $L$ is non trivial
$$
0 \neq [L] \in H_1(S^1\times S^2; \Z_2) .
$$
Then
$$
\langle L \rangle = 0 .
$$
\begin{proof}
Let $D\subset S^1\times [-1,1]$ be a diagram of $L$ for an e-shadow. By Remark~\ref{rem:p(s)} for every state $s$ of $D$ the number $p(s)$ of homotopically non trivial components of the splitting of $D$ with $s$ is odd. Therefore by Lemma~\ref{lem:parallel_cores} all the summands of $\langle L \rangle$ in Proposition~\ref{prop:state_sum} are null.
\end{proof}
\end{prop}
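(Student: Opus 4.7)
The plan is to reduce $\langle L \rangle$ to a state sum and argue that every single term in the sum vanishes by a parity argument. First I would fix an e-shadow and represent $L$ by a diagram $D \subset S^1 \times [-1,1]$ in the annulus. By Proposition~\ref{prop:state_sum},
$$
\langle L \rangle = \sum_s \langle D \mid s \rangle = \sum_s A^{\sum_i s(i)} (-A^2 - A^{-2})^{sD} \langle D_s \rangle,
$$
where $s$ runs over all Kauffman states of $D$ and $D_s$ is a diagram in the annulus with no crossings and no homotopically trivial components. Because the only non-null-homotopic simple closed curves in the annulus are parallel copies of the core, each $D_s$ represents a framed link consisting of some $p(s)$ parallel copies of the core $S^1 \times \{x\}$ equipped with the e-shadow framing.

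The key step is the parity claim that $p(s)$ is odd for every state $s$. Splitting a crossing does not alter the $\Z_2$-homology class of the underlying $1$-submanifold; hence the splitting of $D$ with any $s$ represents the same non-trivial class $[L] \in H_1(S^1 \times S^2; \Z_2)$ as $L$ itself. In the genus $g = 1$ case the non-trivial components of $D_s$ are all mutually parallel copies of the core, so their total number in $H_1(S^1 \times S^2; \Z_2)$ is $p(s) \cdot [\text{core}]$; the non-triviality of $[L]$ then forces $p(s)$ to be odd. This is exactly what Remark~\ref{rem:p(s)} packages for us in genus one.

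Finally, Lemma~\ref{lem:parallel_cores} asserts that the Kauffman bracket of an odd number of parallel copies of the core of $S^1 \times S^2$ vanishes. Consequently $\langle D_s \rangle = 0$ for every state $s$, every summand of the state sum is zero, and $\langle L \rangle = 0$. The only substantive content is the parity argument connecting the $\Z_2$-class of $L$ to the parity of $p(s)$, and this is precisely where the genus-one hypothesis is used (Remark~\ref{rem:p(s)} explicitly fails for $g \geq 2$, which is why the proposition is stated only for $S^1 \times S^2$); beyond that, the proof is a one-line assembly of the state sum and Lemma~\ref{lem:parallel_cores}, so I do not anticipate any real obstacle.
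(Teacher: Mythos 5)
Your proposal is correct and follows exactly the same route as the paper's proof: reduce to the state sum of Proposition~\ref{prop:state_sum}, invoke Remark~\ref{rem:p(s)} to see that $p(s)$ is odd for every state, and apply Lemma~\ref{lem:parallel_cores} to kill every summand. The only difference is that you unpack the parity argument behind Remark~\ref{rem:p(s)}, which the paper leaves as a citation.
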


Proposition~\ref{prop:0Kauff} is not true for colored links. In fact the knot in Fig.~\ref{figure:ex_knot_Z2_non_tr} is $\Z_2$-homologically non trivial but if we assign it the color $2$, its bracket becomes $A^4 + 3 + A^{-8} +A^{-10} +A^{-12}$.

\begin{figure}[htbp]
\begin{center}
\includegraphics[scale=0.55]{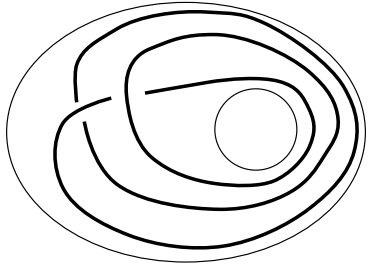}
\end{center}
\caption{A $\Z_2$-homologically non trivial knot in $S^1\times S^2$ such that the Kauffman bracket of it colored with $2$ is $A^4 + 3 + A^{-8} +A^{-10} +A^{-12}$.}
\label{figure:ex_knot_Z2_non_tr}
\end{figure}

In Proposition~\ref{prop:0Kauff} we focused just on links in $S^1\times S^2$ and we used very simple tools. The following is the analogous in $\#_g(S^1\times S^2)$ and considers also colored graphs. We present two different proofs. The first one is based on the manipulation of colored graphs and the identities seen in Subsection~\ref{subsec:identities}. The second one is based on Turaev's \emph{shadows} and the \emph{shadow formula} that we will introduce in Chapter~\ref{chapter:shadows}.
\begin{prop}[\cite{Carrega_Taitg}]\label{prop:0Kauff_gr}
Let $G$ be a knotted framed trivalent colored graph in $\#_g(S^1\times S^2)$ (\emph{e.g.} a framed link) and let $L$ be the sub-link of $G$ obtained joining the odd edges of $G$ (if $G$ is an uncolored framed link $L=G$). Suppose that $L$ is $\Z_2$-homologically non trivial
$$
0 \neq [L] \in H_1(\#_g(S^1\times S^2); \Z_2) .
$$
Then
$$
\langle G \rangle = 0 .
$$
\begin{proof}[Proof 1]
Let $S_1,\ldots, S_g$ be $g$ disjoint embedded 2-spheres that intersect $G$ in a finite number of points and such that the complement $\#_g(S^1\times S^2) \setminus (S_1, \ldots , S_g)$ is a connected contractible manifold. Let $c_j$ be the sum of the colors of the edges of $G$ intersecting $S_j$ counted with the multliplicity (\emph{e.g.} if the edge $e$ intersects $S_j$ twice its color must be counted twice). The parity of $c_j$ is equal to the number of odd edges of $G$ that intersect $S_j$ counted with the multiplicity. The link $L$ is $\Z_2$-homologically non trivial if and only if that number is odd for at least one $j$. Therefore by Remark~\ref{rem:graph_and_sphere} the skein of $G$ is null.
\end{proof}
\begin{proof}[Proof 2]
Let $X$ be a shadow of $(G, \#_g(S^1\times S^2))$ collapsing onto a graph. The 4-dimensional thickening of $X$ is the 4-dimensional handlebody of genus $g$, $W_g$ (the oriented compact 4-manifold with a handle-decomposition with just $k$ 0-handles and $k+g-1$ 1-handles). There is a graph $\Gamma \subset W_g$ such that $W_g\setminus \Gamma$ is a collar of the boundary, namely it is diffeomorphic to $\#_g(S^1\times S^2) \times [0,1) $. The homology class of $L$ in $H_1(W_g ;\Z_2)$ is $0$ if and only if $L$ bounds a surface $S\subset W_g$. By transversality we can suppose that $S$ does not intersect $\Gamma$. Hence $L$ is $\Z_2$-homologically trivial in $W_g$ if and only if it is so in $\#_g(S^1\times S^2) \times [0,1)$, thus in $\#_g(S^1\times S^2)$. 

Given an admissible coloring $\xi$ of $X$ that extends the one of $G$, the regions having
odd colors form a surface $S_\xi \subset W_g$ bounded by $L$. 

By hypothesis $L$ is $\Z_2$-homologically non trivial in $\#_g(S^1\times S^2)$. Hence for what said above a surface like $S_\xi$ can not exists. Therefore there are no admissible colorings of $X$ that extend the one of $G$. Hence by the shadow formula $\langle G \rangle = 0$.
\end{proof}
\end{prop}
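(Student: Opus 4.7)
The plan is to invoke Remark~\ref{rem:graph_and_sphere}, which tells us that if $G$ is transverse to an embedded $2$-sphere and the sum of the colors of the edges meeting that sphere (counted with multiplicity) is odd, then the skein of $G$ vanishes. The whole work is to extract such a sphere from the hypothesis that the odd sub-link $L$ is $\mathbb{Z}_2$-homologically non-trivial.

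Concretely, I would first fix a standard system of $g$ disjoint embedded $2$-spheres $S_1,\ldots,S_g \subset \#_g(S^1\times S^2)$ whose complement is a connected ball-with-holes; these arise directly from the connect-sum decomposition, and their $\mathbb{Z}_2$-dual classes form a basis of the cohomology $H^1(\#_g(S^1\times S^2);\mathbb{Z}_2)$. After a small ambient isotopy I may assume that $G$ meets each $S_i$ transversely, in finitely many points lying in the interiors of edges.

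Next, for each $i$, I would define
$$
c_i := \sum_{e \text{ edge of } G} n_{e,i}\, c_e \pmod{2},
$$
where $n_{e,i}$ is the number of transverse intersections of the edge $e$ with $S_i$ and $c_e$ is its color. Since any edge with even color contributes $0$ mod $2$, this parity equals $\#(L \cap S_i)\pmod 2$, which is the $\mathbb{Z}_2$-intersection number $[L]\cdot[S_i]$. The assumption $0\neq [L]\in H_1(\#_g(S^1\times S^2);\mathbb{Z}_2)$, combined with the non-degeneracy of the intersection pairing against the chosen basis $[S_1],\ldots,[S_g]$, forces $c_i$ to be odd for at least one index $i$. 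Applying Remark~\ref{rem:graph_and_sphere} to that particular $S_i$ yields $\langle G\rangle = 0$.

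The main subtlety — and really the ``hard part,'' such as it is — is the identification $c_i \equiv [L]\cdot[S_i]\pmod 2$. The even-colored edges trivially drop out of the count because they are weighted by an even number; the non-trivial point is that the trivalent vertices introduce no extra defect, which uses the admissibility condition $a+b+c \in 2\mathbb{Z}$ at each vertex. This condition ensures that the formal sum of odd-colored edges is a genuine $1$-cycle modulo $2$ (its boundary at every vertex is a sum of an even number of odd contributions), so that $L$ honestly represents the asserted class in $H_1(\#_g(S^1\times S^2);\mathbb{Z}_2)$, and the parity computation above is compatible with the homological intersection.
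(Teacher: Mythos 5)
Your argument is correct and is essentially the paper's own first proof: the same system of $g$ disjoint spheres from the connected-sum decomposition, the same parity count $c_i$ reduced to the $\Z_2$-intersection number of the odd sub-link $L$ with $S_i$, and the same appeal to Remark~\ref{rem:graph_and_sphere}. The extra detail you supply (non-degeneracy of the intersection pairing, and admissibility forcing the odd edges to form a genuine $\Z_2$-cycle) is implicit in the paper's version and correctly filled in.
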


\begin{prop}\label{prop:no_0Kauff}
Let $L\subset \#_g(S^1\times S^2)$ be a $k$-component homotopically trivial link. Then the evaluation in $A=-1$ of the Kauffman bracket is
$$
\langle L\rangle|_{A=- 1} = (-2)^k .
$$
Hence $\langle L \rangle \neq 0$. In particular the Kauffman bracket of every link in $S^3$ is non null.
\begin{proof}
Evaluate the Kauffman bracket at $A= -1$. The result is a (possibly infinite) number which does not distinguish the over/underpasses of the crossings:
$$
\left\langle \pic{1.2}{0.3}{incrociop2.eps} \right\rangle|_{A=- 1} =  -\left\langle \pic{1.2}{0.3}{Acanalep.eps} \right\rangle|_{A=- 1} - \left\langle \pic{1.2}{0.3}{Bcanalep.eps} \right\rangle|_{A=- 1}  = \left\langle \pic{1.2}{0.3}{incrociop.eps} \right\rangle|_{A=- 1} .
$$
Furthermore it does not change with a modification of the framing. Since $L$ is homotopically trivial, after a suitable change of the over/underpasses of the crossings, we can modify it to a link composed of $k$ trivial components in a 3-ball. Hence $\langle L \rangle|_{A=- 1} = (-A^3)^\alpha(-A^2-A^{-2})^k|_{A=- 1} = (-2)^k$, for some $\alpha\in \Z$ given by the framing.
\end{proof}
\end{prop}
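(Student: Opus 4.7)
The plan is to evaluate at $A = -1$, observe that the value is insensitive to both crossing changes and framing, and then reduce $L$ to a standard unlink.

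First, I would specialise the first skein relation at $A = -1$:
$$\left\langle \pic{1.2}{0.3}{incrociop.eps} \right\rangle\bigg|_{A=-1} = -\left\langle \pic{1.2}{0.3}{Acanalep.eps} \right\rangle\bigg|_{A=-1} -\left\langle \pic{1.2}{0.3}{Bcanalep.eps} \right\rangle\bigg|_{A=-1}.$$
The right-hand side is symmetric in the two smoothings, so it coincides with the corresponding evaluation for the opposite crossing. Hence $\langle\,\cdot\,\rangle|_{A=-1}$ is invariant under crossing changes. Moreover the framing-correction factor $-A^{\pm 3}$ of Theorem~\ref{theorem:framed_Reid} specialises to $-(-1)^{\pm 3} = 1$ at $A = -1$, so the evaluation also does not depend on the framing.

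Next, I would exploit the hypothesis that $L$ is homotopically trivial to turn it into a standard unlink. Every component of $L$ is null-homotopic in $\#_g(S^1 \times S^2)$; by general position a null-homotopy of an embedded circle can be realised as an ambient isotopy together with finitely many self-crossing changes, so after such moves, done componentwise, $L$ sits inside a single $3$-ball. Once inside a ball, the classical fact that every link in $S^3$ becomes the unlink after finitely many further crossing changes lets me transform $L$ into the trivial $k$-component unlink $U_k$. By the first step none of these crossing changes or framing adjustments alter $\langle L\rangle|_{A=-1}$.

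Finally, $k$ applications of the skein relation $L' \sqcup \bigcirc = (-A^2 - A^{-2})\, L'$ give $\langle U_k\rangle = (-A^2 - A^{-2})^k$, which evaluates to $(-2)^k$ at $A = -1$. This establishes the formula; since $(-2)^k \neq 0$ we conclude that $\langle L\rangle \in \mathbb{Q}(A)$ is a nonzero rational function. The final sentence of the proposition is then immediate, because every link in $S^3$ is tautologically homotopically trivial. The only point requiring care is the bookkeeping at $A = -1$ (checking that both the skein relation and the framing factor become symmetric/trivial there); the topological reduction to a ball and to the unlink via crossing changes is a standard general-position argument.
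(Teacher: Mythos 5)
Your proposal is correct and follows essentially the same route as the paper's own proof: specialise at $A=-1$ so that the skein relation becomes symmetric in the two smoothings (hence blind to crossing changes) and the framing factor $-A^{\pm3}$ becomes $1$, then use homotopy triviality to convert $L$ by crossing changes into a $k$-component unlink in a ball and read off $(-2)^k$. The extra detail you supply on realising the null-homotopy by isotopies plus self-crossing changes is a welcome elaboration of a step the paper leaves implicit, but it is the same argument.
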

 
\begin{rem}
By Proposition~\ref{prop:0Kauff_gr} and Proposition~\ref{prop:no_0Kauff} it is natural to ask if the Kauffman bracket of a framed link in $\#_g(S^1\times S^2)$ is null if and only if the link is $\Z_2$-homologically non trivial. The answer to this question is no. A $\Z_2$-homologically trivial knot in $S^1\times S^2$ whose Kauffman bracket is $0$ is shown in Fig.~\ref{figure:ex_0Kauf}. By Corollary~\ref{cor:conj_Tait_Jones_g}, such $\Z_2$-homologically trivial links can not have a connected, simple and alternating diagram in the disk with $g$ holes.
\end{rem}

\begin{figure}
\begin{center}
\includegraphics[scale=0.55]{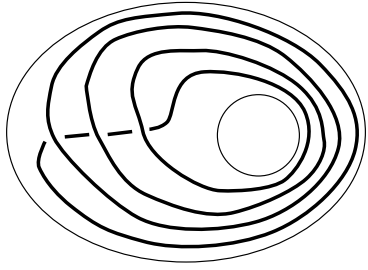} 
\end{center}
\caption{A $\Z_2$-homologically trivial knot in $S^1\times S^2$ whose Kauffman bracket is $0$.}
\label{figure:ex_0Kauf}
\end{figure}

\subsection{Links with the same bracket}

We know that there are knots in $S^3$ with the same Jones polynomial, for instance the ones in Fig.~\ref{figure:same_br_S3}. Hence it is natural to ask if there are links in $S^1\times S^2$ with the same Kauffman bracket that are not contained in a 3-ball.

\begin{figure}[htbp]
\begin{center}
\includegraphics[scale=0.5]{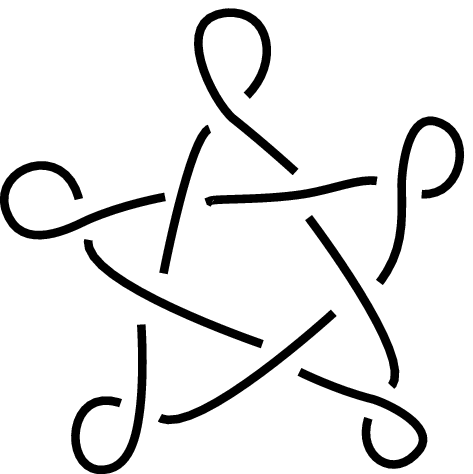}
\hspace{1cm}
\includegraphics[scale=0.5]{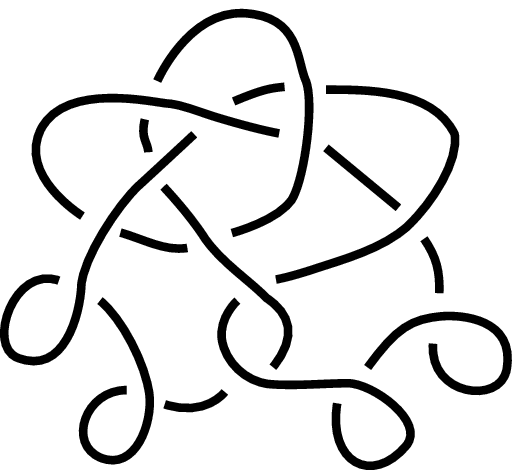}
\end{center}
\caption{Two different framed knots in $S^3$ with the same Kauffman bracket: $-A^{28} + A^{24} - A^{20} + A^{16} + A^8$. They are the knots $5_1$ and $10_{132}$.}
\label{figure:same_br_S3}
\end{figure}

The links $L_1$, $L_2$ and $L_3$ in Fig.~\ref{figure:ex_4cr} have very interesting proprieties: they have the same number of components, they are $\Z_2$-homologically trivial, they have the same Kauffman bracket, the rank of the first homology group of the complement is bigger than the number of components, they have very similar presentations of the fundamental group. The crossing number is $4$ because the \emph{breadth} of the Kauffman bracket is $16$ (Definition~\ref{defn:breadth}) and by Theorem~\ref{theorem:Tait_conj_Jones_g} if they had a lower crossing number their breadth would not be bigger than $12$.

We can distinguish $L_1$ and $L_2$ from $L_3$ noting that the components of $L_1$ and $L_2$ are all contained in a 3-ball, while there is a component of $L_3$ that is not. 

\begin{defn}
Let $K_1$ and $K_2$ be two components of a link in a orientable 3-manifold $M$. Suppose that $K_1$ bounds an orientable surface in $M$. Once an orientation to $M$, $K_1$ and $K_2$ is given, we define the \emph{linking number} $\textrm{lk}(K_1, K_2)$ as follows: take an oriented surface $S \subset M$ bounded by $K_1$ and inducing the proper orientation to the boundary and take the (algebraic) intersection number between $S$ and $K_2$.
\end{defn}

Note that the above notion of linking number is a natural generalization of the one in $S^3$. In particular we have that if $M = \#_g(S^1\times S^2)$ we can compute it by diagrams: give the proper orientation to the diagram, sum the signs of the crossings formed by strands both of projections of $K_1$ and $K_2$, then divide the result by $2$. This implies that if $M = \#_g(S^1\times S^2)$ the linking number is commutative $\textrm{lk}(K_1,K_2) = \textrm{lk}(K_2, K_1)$ if both components bound an orientable surface.

We can distinguish $L_1$ from $L_2$ by looking at the linking number. In fact the linking number of $L_1$ (of its components) is $\pm 1$, while the one of $K_2$ is $0$.

Here are the characteristics of $L_1$, $L_2$ and $L_3$:

\begin{itemize}

\item{$\Z_2$-homologically trivial: yes.}

\item{Homotopically trivial: yes.}

\item{Linking number: 
\begin{center}
\begin{tabular}{ccc}
$L_1$ & $L_2$ & $L_3$ \\
$\pm 1$ & $0$ & $\pm 1$
\end{tabular}.
\end{center}
}

\item{Alternating: 
\begin{center}
\begin{tabular}{ccc}
$L_1$ & $L_2$ & $L_3$ \\
yes & unknown & yes
\end{tabular}.
\end{center}
}

\item{Kauffman bracket: $\langle D \rangle = (-A^4-A^{-4})^2 $.}

\item{$\ord_{q=A^2=i} \langle D \rangle = 0$.}

\item{Hyperbolic: no.}

\item{Homology group: $H_1(S^1\times S^2 \setminus L; \Z) = \Z^3 $.}

\item{Fundamental group, generators: $a,b,c$, relators:
\begin{center}
\begin{tabular}{ccc}
$L_1$ & $L_2$ & $L_3$ \\
$aba^{-1}b^{-1}$ & $aba^{-1}b^{-1}$ & $aba^{-1}b^{-1}$ \\
$ac^{-1}bca^{-1}c^{-1}bc$ & $ac^{-1}bca^{-1}c^{-1}b^{-1}c$ & $ac^{-1}a^{-1}ca^{-1}c^{-1}ac$
\end{tabular}.
\end{center}
}

\end{itemize}

\begin{figure}[htbp]
$$
\begin{matrix}
\includegraphics[scale=0.45]{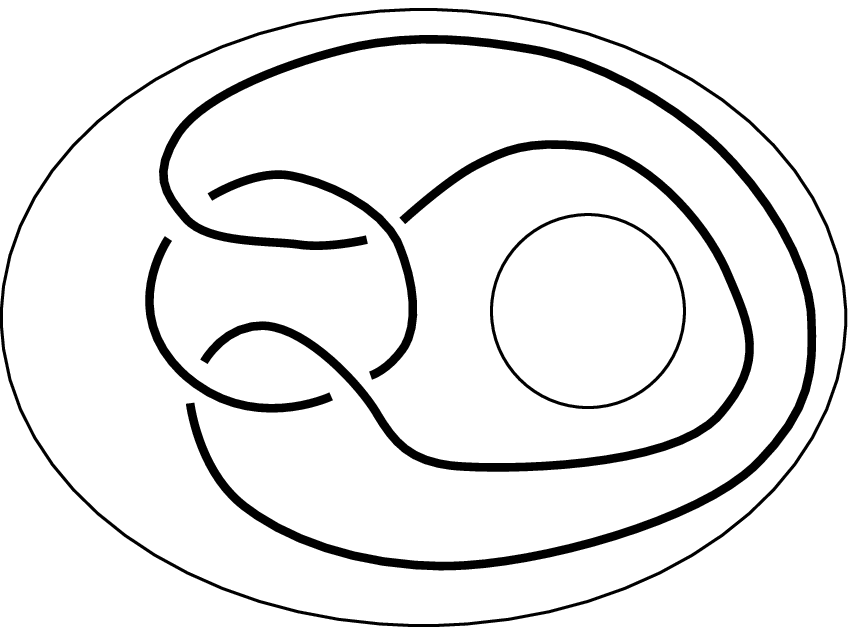}
&
\includegraphics[scale=0.45]{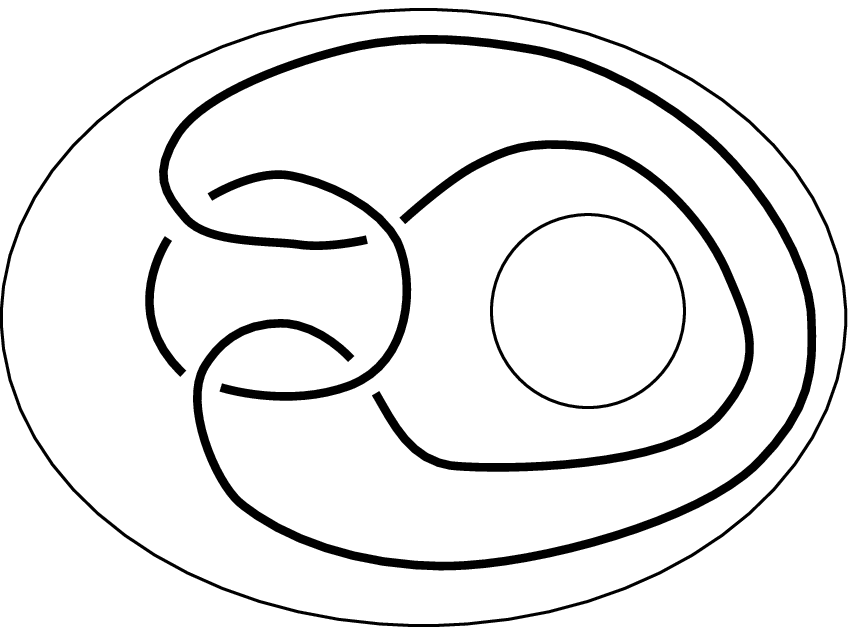}
&
\includegraphics[scale=0.45]{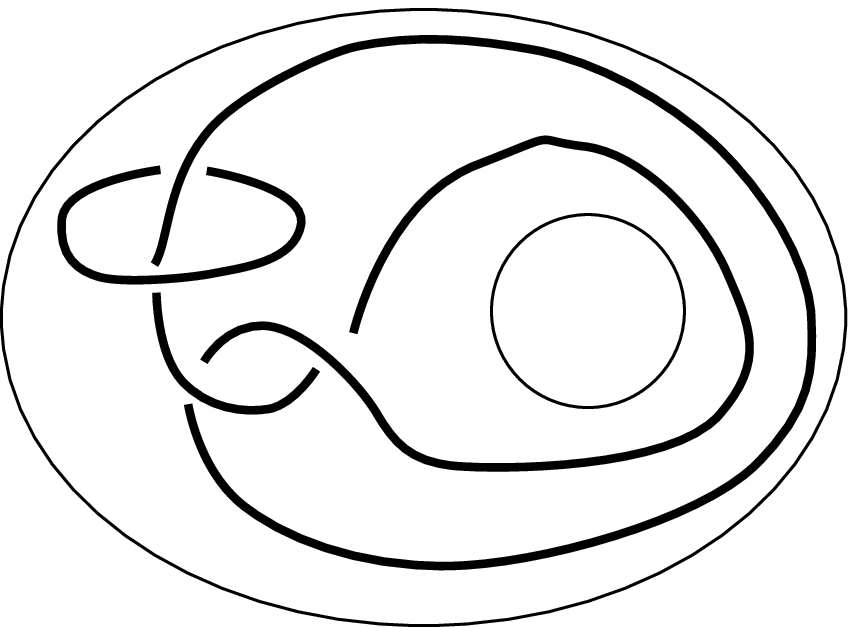}
\\
L_1 & L_2 & L_3
\end{matrix}
$$
\caption{Three links in $S^1\times S^2$ with crossing number $4$, not contained in a 3-ball and with the same Kauffman bracket: $(-A^4-A^{-4})^2$.}
\label{figure:ex_4cr}
\end{figure}

\section{$SU(2)$-Reshetikhin-Turaev-Witten invariants}\label{sec:RTW}

In this section we introduce another family of quantum invariants: the $SU(2)$-\emph{Reshetikhin-Turaev-Witten invariants}. We use again skein theory, this time we need an evaluation of the theory seen in Section~\ref{sec:skein_theory} at a root of unity. The letter $A$ is now a fixed complex number. Again the main references are \cite{Lickorish, Kauffman-Lins}.

\subsection{Skein theory}\label{subsec:quantum_skein_th}

Now we fix an integer $r\geq 3$ and a primitive $(4r)^{\rm th}$ root of unity $A\in \mathbb{C}$. We mean that $A^{4r}=1$ and $A^n\neq 1$ for each $0< n < 4r$, for instance we might take $A= e^{\pi i /2r}$. Actually we do not need that $A^n\neq 1$ for all $0< n <4r$, but just $A^{4r}=1$ and $A^{4n}\neq 1$ for all $0<n<r$. Although the constant $A$ is often omitted when defining quantum invariants, it is important to note that everything we will say depends on the choice of $A$ and not just on $r$. Furthermore we fix a square root of $A$ and of $-1$ that we denote respectively by $\sqrt A$ and $i$ (or $A^{\frac 1 2}$ and $\sqrt{-1}$ or $(-1)^{\frac 1 2}$). These two choices almost do not affect the result, it suffices to remember the initial choice and to be coherent.

For technical reasons in this skein theory we need to extend the notion of ``framed link'' and ``framed graphs'' (Subsection~\ref{subsec:Jones_pol} and Definition~\ref{defn:framed_graph}) considering also non orientable surfaces as framing for the link or the trivalent graph. If the framing is orientable we specify it. 

\begin{defn}\label{defn:skein_space}
Let $M$ be an oriented 3-manifold. Let $V$ be the abstract $\mathbb{C}$-vector space generated by all the (maybe not orientable) framed links in $M$ considered up to isotopies, including the empty set $\varnothing$. The $A$-\emph{skein vector space} $K_A(M)$, or $\mathbb{C}$-\emph{skein vector space}, is the quotient of $V$ by the following \emph{skein relations}:
$$
\begin{array}{rcl}
 \pic{1.2}{0.3}{incrociop.eps}  & = & A \pic{1.2}{0.3}{Acanalep.eps}  + A^{-1}  \pic{1.2}{0.3}{Bcanalep.eps}  \\
 D \sqcup \pic{0.8}{0.3}{banp.eps}  & = & (-A^2 - A^{-2})  D  \\
K & = & i A^{\frac 3 2} K_{-\frac 1 2}
\end{array}
$$
In all relations the links differ only in an oriented 3-ball (we need the orientation of $M$ here). Since we consider non orientable framings too we need to include the third relation that usually is not in the list. In the third relation $K$ is any framed knot and $K_{-\frac 12}$ is $K$ with its framing decreased by $-\frac 12$. The relation thus says that making a positive half-twists on any component of a link has the effect of multiplying everything by $i A^{\frac 3 2}$.

The elements of $K_A(M)$ are called \emph{skeins} or \emph{skein elements}.
\end{defn}

We can easily deduce that
$$
\pic{0.8}{0.3}{banp.eps} = (-A^2 -A^{-2}) \varnothing.
$$

\begin{rem}
By the third skein relation $K_A(M)$ is generated by the orientable framed links and we have
$$
K_A(M) := KM(M;\mathbb{C}, A) ,
$$
Let $f:\Z[A,A^{-1}] \rightarrow \mathbb{C}$ be the homomorphism of commutative rings defined by $f(A)=A\in \mathbb{C}$ (the $A$ in $\Z[A,A^{-1}]$ is the abstract variable of the Laurent polynomials, while if $A\in\mathbb{C}$ it is the fixed root of unity). Therefore by the universal coefficient property (Theorem~\ref{theorem:sk_mod_prop}-(5.)) we have an isomorphism of $\Z[A,A^{-1}]$-modules
$$
K_A(M) \cong KM(M) \otimes_{\Z[A,A^{-1}]} \mathbb{C} .
$$
where $K_A(M)$ has the structure of $\Z[A,A^{-1}]$-module induced by $f$.
\end{rem}

\begin{rem}
The homomorphism $f:\Z[A,A^{-1}] \rightarrow \mathbb{C}$ extends to a homomorphism of $\Z[A,A^{-1}]$-modules
$$
KM(M) \rightarrow K_A(M) .
$$
Therefore we can get a natural version in this theory of all the objects defined in the skein theory with integral Laurent polynomial, for instance the Kauffman bracket of a colored link in $S^3$ or in $S^1\times S^2$ (Corollary~\ref{cor:col_link_int_pol}). There are objects in $K(A)$ ($=KM(M, \mathbb{Q}(A),A)$ with $A$ abstract variable) that must have poles in the root of unity $A\in\mathbb{C}$, hence we can not define an analogue of these objects in $K_A(M)$, for instance the skein of $\teta\subset S^3$ with edges colored with the admissible triple $(r,r+2,2r-4)$ is well defined in $K(S^3)$ ($\teta_{(r,r+2,2r-4)} = (-1)^{2r-3} A^{12-6r} \frac{A^{16r-20} - A^{8r-12} -A^{8r-4} +1}{ A^{4r+4} -A^{4r} -A^4 +1 } $) but not in $K_A(S^3)$. For the elements of $K(M)$ that may have a well defined evaluation in the root of unity $A\in \mathbb{C}$ we have a natural vesion in $K_A(M)$.
\end{rem}

\begin{rem}
As before $\cerchio_n\in K_A(S^3) = \mathbb{C}$ is the closure of the $n^{\rm th}$ Jones-Wenzl projector $f^{(n)}$. Since $A$ is a $4r$-primitive root of unity, we have that $\cerchio_n \neq 0$ for $0\leq n \leq r-2$, and $\cerchio_{r-1}=0$. If $A= e^{\frac{\pi i}{2r}}$ then
$$
\cerchio_n = \frac{\sin\left( \frac{n\pi}{r}\right) }{\sin\left(\frac{\pi}{r}\right)}.
$$
\end{rem}

\begin{defn}\label{defn:q-admissible}
A triple of non negative inters $(a,b,c)$ is $q$-\emph{admissible} if it is admissible (Definition~\ref{defn:admissible}) and $a+b+c\leq 2(r-2)$. In particular it means that each color $a$, $b$, $c$, is at most $r-2$. A coloring of a trivalent graph $G$ is $q$-\emph{admissible} if the three numbers $a$, $b$, $c$ coloring the three edges incident to any vertex form a $q$-admissible triple.

A \emph{colored trivalent graph} is a framed knotted trivalent graph with a $q$-admissible coloring.
\end{defn}

\subsection{Important objects and identities}\label{subsec:q-imp_ob}

Here we introduce some important objects and identities of $A$-skein spaces that we need. We can find their proofs in \cite{Lickorish}. All the identities shown in Subsection~\ref{subsec:identities} work in this theory if we substitute the word ``admissible'' with ``$q$-admissible''. For instance the following holds:
$$
\pic{2}{0.7}{6j-symb1.eps} = \sum_i \left\{\begin{matrix} a & b & i \\ c & d & j \end{matrix}\right\}_q \pic{2}{0.7}{6j-symb2.eps}
$$
where the sum is taken over all $i$ such that the colored graphs shown are $q$-admissible. The coefficients between brackets are called \emph{quantum} $6j$-\emph{symbols} and we have
$$
\left\{\begin{matrix} a & b & i \\ c & d & j \end{matrix}\right\}_q = \frac{ \cerchio_i}{\teta_{a,d,i} \teta_{c,b,i}} \pic{2}{0.7}{tetra_color_ij.eps}  \quad \quad \quad \text{\cite[Page 155]{Lickorish}},
$$

The effect of a full twist is shown in Fig.~\ref{figure:framingchange}. More generally, if we change the framing of a framed trivalent graph by adding $k\in \frac 1 2 \Z$ positive twists on an edge colored with $n$, we get the skein of the previous graph times $(-1)^{nk} A^{nk(n+2)}$. To get this result we do not need hypothesis on $A$:
\begin{prop}[\cite{Carrega_RTW}]\label{prop:half-framingchange}
$$
\pic{2}{0.9}{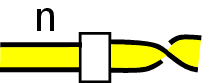} = i^n A^{\frac{n^2 + 2n}{2}} \ \pic{2}{0.9}{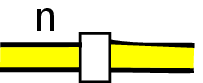}
$$
\begin{proof}
The case for $n=1$ is true for the third skein relation (see Definition~\ref{defn:skein_space}). Then we proceed by induction.
\beq
\pic{2}{0.9}{half-framingchange1.eps} & = & \pic{2.2}{0.9}{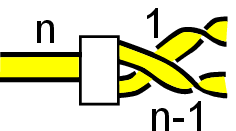} \\
 & = & i A^{\frac 3 2} \ \pic{2.2}{0.9}{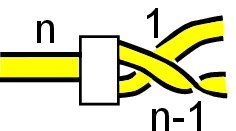} \\
 & = & i A^{n+ \frac 1 2} \ \pic{2.2}{0.9}{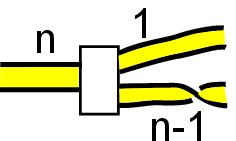} \\
 & = & i A^{n+ \frac 1 2} \ \pic{2}{0.9}{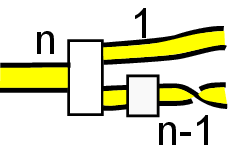} \\
 & = & i^n A^{\frac{n^2 + 2n}{2}} \ \pic{2}{0.9}{half-framingchange2.eps}
\eeq
We get the third equality by using $n-1$ times the first skein relation and the fact that the multiplication of a Jones-Wenzl projector with an element of the standard set of generators of $TL_n$ is $0$: $f^{(n)} \cdot e_i = 0$ for $1\leq i \leq n-1$. In fact this property allows us not to consider all the terms multiplied by $A^{-1}$ coming from the application of the first skein relation.
\end{proof}
\end{prop}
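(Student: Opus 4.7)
The plan is to induct on the color $n$ of the edge carrying the positive half-twist. For the base case $n=1$, the statement is exactly the third skein relation of Definition~\ref{defn:skein_space}: a positive half-twist on a single strand multiplies the skein by $iA^{3/2}$, which matches $i^1 A^{(1+2)/2} = iA^{3/2}$.

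For the inductive step, I would decompose the positive half-twist on the $n$-cable carrying $f^{(n)}$ by isolating one strand: the half-twist on $n$ parallel strands can be realized as (i) a positive half-twist on $n-1$ of the strands, (ii) followed by $n-1$ crossings in which the isolated $n$-th strand passes over each of the other strands, (iii) followed by a positive half-twist on that single isolated strand. This is the standard way a half-twist on a cable braids into smaller pieces; drawn in the plane, it corresponds to pulling one strand out to the side and sliding it across the others.

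Next I would apply the inductive hypothesis to the half-twist on the $n-1$ strands, producing the factor $i^{n-1}A^{((n-1)^2+2(n-1))/2}$. The half-twist on the single isolated strand contributes the factor $iA^{3/2}$ by the base case. Finally, each of the $n-1$ crossings is resolved using the first skein relation, producing an $A$-term (parallel strands) and an $A^{-1}$-term (a cap--cup, which is an $e_i$ inserted against the projector $f^{(n)}$). Because $f^{(n)}\cdot e_i = 0$ for every $1\leq i\leq n-1$, only the $A$-term survives at each crossing, contributing a total of $A^{n-1}$. Multiplying everything out gives
\[
i^{n-1}A^{\frac{(n-1)^2+2(n-1)}{2}} \cdot iA^{3/2} \cdot A^{n-1} \;=\; i^n A^{\frac{n^2+2n}{2}},
\]
which is the desired factor.

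The main obstacle will be in step~(i)--(iii): writing the decomposition of the $n$-strand half-twist into a $(n-1)$-strand half-twist plus the single-strand half-twist with $n-1$ intermediate crossings cleanly enough to be rigorous, and making sure the orientation of the half-twist (positive vs.\ negative) and the over/under convention at each crossing are consistent throughout, so that the surviving skein term in each application of the first skein relation is indeed the $A$-term and not the $A^{-1}$-term. Once the decomposition is set up correctly, the projector identity $f^{(n)}\cdot e_i = 0$ does the rest automatically and no hypothesis on $A$ (e.g.\ being a root of unity) is needed.
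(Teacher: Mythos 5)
Your proposal is correct and follows essentially the same route as the paper: pull one strand off the $n$-cable, use the third skein relation on its half-twist to get $iA^{3/2}$, resolve the $n-1$ crossings with the first skein relation where $f^{(n)}\cdot e_i=0$ kills all the $A^{-1}$-terms and leaves $A^{n-1}$, and apply the inductive hypothesis to the remaining $(n-1)$-strand half-twist. The bookkeeping $i\cdot i^{n-1}A^{3/2+(n-1)+\frac{(n-1)^2+2(n-1)}{2}}=i^nA^{\frac{n^2+2n}{2}}$ matches the paper's computation exactly.
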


\begin{defn}
Consider the $A$-skein vector space $K_A(S^1 \times D^2)$ of the solid torus. Let $\phi_n\in K_A(S^1\times D^2)$ be the core of the solid torus, with trivial framing and color $n$. We now construct a particular element of $K_A(S^1\times D^2)$:
$$
\Omega := \eta\sum_{n=0}^{r-2} \cerchio_n \phi_n ,
$$
where
$$
\eta := \left( \sum_{n=0}^{r-2} \cerchio_n^2 \right)^{-\frac{1}{2}} = \frac{A^2-A^{-2}}{\sqrt{-2r}}  \quad \quad \quad \quad \text{\cite[Page 141]{Lickorish}}.
$$
If $A= e^{\frac{\pi i}{2r}}$ then
$$
\eta = \sqrt{\frac{2}{r}} \sin\left(\frac{\pi}{r} \right).
$$

If $K$ is a framed knot, we denote by $\Omega K$ the skein obtained by substituting $K$ with $\Omega$. Let $U$, $U_+$, and $U_-$ be the unknot in $S^3$ with framing respectively $0$, $1$, and $-1$. We have

$$
\Omega U = \eta^{-1} \quad \quad \quad \quad \text{\cite[Page 141]{Lickorish}}
$$
and we define
$$
\kappa := \Omega U_+ = \frac{\sum_{n=1}^{4r} A^{n^2}}{2r\sqrt{-2} A^{3+r^2}} \quad \quad \quad \quad \text{\cite[Lemma 14.3]{Lickorish}}.
$$
The latest equality holds if $A$ is a primitive $4r^{\rm th}$ root of unity. We do not know if it holds for all $A$ such that $A^{4r}=1$, $A^{4n}\neq 1$ for all $n<r$. If $A= e^{\frac{\pi i}{2r}}$ then we get
$$
\sum_{n=1}^{4r} A^{n^2} = 2\sqrt{2r} e^{\frac{\pi i}{4} } , \ \kappa = \frac{- i}{\sqrt{r}} e^{\frac{-\pi i}{4r}(2r^2-r +6) }  \quad \quad \text{\cite[Page 148]{Lickorish}}.
$$

Moreover it turns out that
$$
\kappa^{-1} = \Omega U_- \quad \quad \quad \quad \text{\cite[Lemma 13.7]{Lickorish}}.
$$
\end{defn}

There is a fundamental relation about $\Omega$ that is called the \emph{handleslide property} for pairs of $\Omega$'s and is shown in Fig.~\ref{figure:handleslideOmega} \cite[Lemma 13.5]{Lickorish}. This says that a Kirby move of the second type does not change the skein, provided that all components are colored with $\Omega$. This move substitutes a component with its bend sum with another component.

\begin{figure}[htbp]
$$
\pic{3}{0.8}{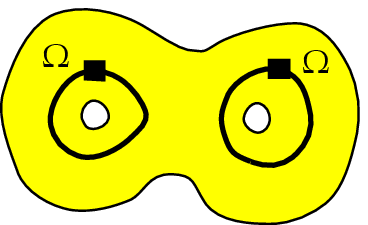} \ = \ \pic{3}{0.8}{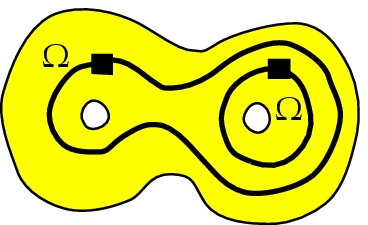}
$$
\caption{Handleslide property for pairs of $\Omega$'s.}
\label{figure:handleslideOmega}
\end{figure}

Two more properties are the 2- and 3-\emph{strand fusion identity} shown in Fig.~\ref{figure:2fusion} and Fig.~\ref{figure:3fusion} \cite[Page 159]{Lickorish}. In the left-hand side of the identities we have 2 or 3 strands colored with the $a^{\rm th}$, $b^{\rm th}$ and $c^{\rm th}$ projector. These strands are encircled by a 0-framed unknot colored with $\Omega$. They are a version with 2 and 3 strands of the identities of Fig.~\ref{figure:sphere}. Like Fig.~\ref{figure:sphere}, we get these identities using the fusion rule (Fig.~\ref{figure:fusion}) and the fact that the skein of a colored graph with a strand with a non null color that is encircled by a 0-framed unknot is 0 (Fig.~\ref{figure:sphere}-(left)). We can see this latest fact using the second Kirby move on the strand over the unknot (Subsection~\ref{subsec:surgery}) and the handleslide property, we get that the same strand with two more curls gives the same skein of the previous one, and then using the identity in Fig.~\ref{figure:framingchange}.

The 2-strand identity says that if $a=b$ then the left skein is equivalent to $\eta^{-1}\cerchio_a^{-1}$ times the skein obtained by removing the circle, breaking the strands and connecting them in the other way. Otherwise it is equivalent to $0$. The 3-strand identity says that if the triple $(a,b,c)$ is $q$-admissible then the left skein is equivalent to $\eta^{-1}\teta_{a,b,c}^{-1}$ times the skein obtained by removing the circle, breaking the strands and connecting the 3 parts on the same side in a vertex. Otherwise it is equivalent to $0$.

\begin{figure}[htbp]
$$
\pic{1.9}{0.8}{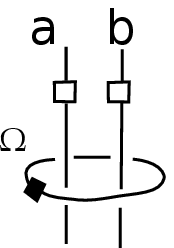} = \left\{\begin{array}{cl}
\frac{\eta^{-1}}{\cerchio_a} \ \pic{1.9}{0.8}{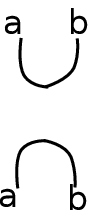} & \text{if }a=b \\
 0 & \text{if }a\neq b
\end{array}\right.
$$
\caption{The 2-strand fusion identity.}
\label{figure:2fusion}
\end{figure}

\begin{figure}[htbp]
$$
\pic{1.9}{0.8}{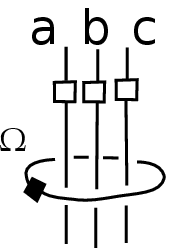} = \left\{\begin{array}{cl}
\frac{\eta^{-1}}{\teta_{a,b,c}} \ \pic{1.9}{0.8}{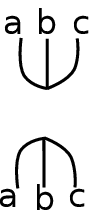} & \text{if }(a,b,c)\text{ is $q$-admissible} \\
 0 & \text{if }(a,b,c)\text{ is not $q$-admissible}
\end{array}\right.
$$
\caption{The 3-strand fusion identity.}
\label{figure:3fusion}
\end{figure}

\subsection{Surgery presentations}\label{subsec:surgery}

Given an orientable framed link $L$ in a closed oriented 3-manifold $M$ we can construct another closed oriented 3-manifold called the \emph{Dehn surgery} on $L$, in the following way. For each component $L_i$ of $L$ we remove the interior of a closed tubular neighborhood $N_i \cong D^2\times S^1$ such that the framing of $L_i$ corresponds to $\{(1,0)\} \times S^1$, and then we glue a solid torus $V_i\cong S^1\times D^2$ to the boundary of $N_i$ via a diffeomorphism of the boundaries that sends a meridian  $ \{y\} \times S^1$ of $V_i$, to the framing of $L_i$.

\begin{defn}\label{defn:surgery_pres}
Let $M$ and $N$ be two closed 3-manifolds. A \emph{surgery presentation} of $M$ in $N$ is an orientable framed link $L \subset N$ such that $M$ is obtained from $N$ by Dehn surgery on $L$.
\end{defn}

\begin{rem}
Once two simple closed curves in the torus $T$ that generate $\pi_1(T)$ are fixed, the isotopy classes of homotopically non trivial simple closed curves in the torus are in bijection with the extended rational numbers $\mathbb{Q} \cup \{\infty\}$. The boundary of a tubular neighborhood of a component of a link in $S^3$ has two natural generators of the fundamental group: the boundary of a properly embedded disk in the solid torus that is not homotopically trivial, and the Seifert framing. The one in Definition~\ref{defn:surgery_pres} is the notion of ``\emph{surgery presentation with integer coefficients}''. In fact if we change the framing of a component adding $n$ full twists, we pass from the curve $p/q$ to the curve $p/q +n$. We could give the definition of surgery presentation for any rational number extending the one we have seen. Only the surgery presentation with integer coefficients has a 4-dimensional intepretation.
\end{rem}

\begin{rem}
Each surgery presentation in $S^3$ has a 4-dimensional interpretation. In fact with a Dehn surgery we can build not only a 3-manifold $M_L$, but also a 4-manifold $W_L$ whose boundary is $M_L = \partial W_L$. It suffices to see $S^3$ as the boundary of the 4-ball, and then attach a 4-dimensional 2-handle $B_i \cong D^2\times D^2$ along the boundary of a tubular neighborhood $N_i \cong D^2\times S^1$ of each component $L_i$ of the link in the way described above. In fact the boundary of $B_i$ is the union of $N_i$ and $V_i$.
\end{rem}

\begin{theo}[Lickorish, Wallace]\label{theorem:Lickorish-Wallace}
Every orientable closed 3-manifold has a (integer) surgery presentation in $S^3$.
\end{theo}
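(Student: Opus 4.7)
The plan is to proceed via Heegaard splittings and Dehn twists, using tools already developed in the thesis. By a classical result (Moise's triangulation theorem), every closed oriented 3-manifold $M$ admits a Heegaard splitting $M = H_g \cup_\phi H_g$, where $\phi \colon \partial H_g \to \partial H_g$ is a gluing diffeomorphism. By Theorem~\ref{theorem:Heegaard_split_S1xS2}, the manifold corresponding to the \emph{standard} gluing (the one where two copies of $H_g$ are glued by the canonical orientation-reversing identification of their boundaries) is $\#_g(S^1 \times S^2)$. Moreover, $\#_g(S^1 \times S^2)$ itself has a well-known surgery presentation in $S^3$: it is obtained by $0$-framed surgery on the $g$-component unlink, since each $0$-framed unknot surgery replaces a 3-ball in $S^3$ with $S^1\times D^2\cup_{\mathrm{id}} S^1\times D^2 \setminus B^3 \cong S^1\times S^2 \setminus B^3$.

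Next I would invoke Lickorish's theorem that the mapping class group of the closed orientable surface $\Sigma_g$ is generated by Dehn twists along a finite collection of simple closed curves (in fact, $3g-1$ curves suffice). Consequently, the gluing map $\phi$ differs from the standard one by a finite composition $T_{\gamma_1}^{\epsilon_1} \circ \cdots \circ T_{\gamma_k}^{\epsilon_k}$ of Dehn twists along curves $\gamma_i \subset \Sigma_g$, with signs $\epsilon_i \in \{\pm 1\}$. The crucial geometric step is then: composing the gluing with a Dehn twist $T_\gamma^{\pm 1}$ along a simple closed curve $\gamma$ in the Heegaard surface has the same effect on the resulting 3-manifold as performing $(\mp 1)$-surgery on the knot $\tilde\gamma$ obtained by pushing $\gamma$ slightly into one of the handlebodies, with the framing induced by the Heegaard surface. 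This is verified by noting that cutting along an annular neighborhood of $\gamma$ in $\Sigma_g \times [-\varepsilon,\varepsilon]$, twisting, and regluing is precisely the same operation as removing a tubular neighborhood of $\tilde\gamma$ and regluing a solid torus by the appropriate diffeomorphism.

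Combining these ingredients: start with $S^3$, perform $0$-framed surgery on the $g$-component unlink to obtain $\#_g(S^1\times S^2)$, and then perform $\mp\epsilon_i$-framed surgery on parallel pushoffs $\tilde\gamma_i$ of the Lickorish generators needed to realize $\phi$. By Theorem~\ref{theorem:Heegaard_split_S1xS2} and the above, the resulting 3-manifold is diffeomorphic to $M$, and the union of all these framed knots is the desired surgery link $L\subset S^3$. The main obstacle in fleshing out this plan is the careful bookkeeping of framings and orientations in the Dehn-twist-equals-surgery step, together with an honest appeal to Lickorish's generation theorem for $\mathrm{MCG}(\Sigma_g)$; an alternative route would be to use the vanishing of the oriented cobordism group $\Omega_3^{SO}=0$ (Thom--Rokhlin) to fill $M$ with a compact oriented $4$-manifold $W$, put a handle decomposition on $W$, and cancel the $1$-handles against $0$-framed unknotted $2$-handles (Kirby moves), but the Heegaard approach is more direct given the machinery already present in the thesis.
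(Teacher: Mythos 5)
The paper does not prove this theorem; it is stated as a classical result (attributed to Lickorish and Wallace) and used as a black box, so there is no in-text argument to compare against. Your sketch is the standard Lickorish proof and is sound in outline: Heegaard splitting, Lickorish's generation of the mapping class group of $\Sigma_g$ by Dehn twists, and the identification of a twist in the gluing map with an integer ($\pm 1$ relative to the surface framing) surgery on a pushoff of the twisting curve. Two points deserve explicit care beyond the framing bookkeeping you already flag. First, to realize the whole composition $T_{\gamma_1}^{\epsilon_1}\circ\cdots\circ T_{\gamma_k}^{\epsilon_k}$ as a single surgery link, the pushoffs $\tilde\gamma_i$ must be placed at distinct levels $\Sigma_g\times\{t_i\}$ of a collar of the Heegaard surface, with the order of the levels matching the order of composition; only then do all $k$ surgeries commute into one simultaneous surgery on a link sitting in the original ambient manifold. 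Second, your starting point $\#_g(S^1\times S^2)$ (the double of $H_g$, i.e.\ $0$-surgery on the $g$-component unlink) is fine, but one must check that the handlebody containing the $\tilde\gamma_i$ can be isotoped off the surgery solid tori of the unlink, so that the total link $U\cup\tilde\gamma_1\cup\cdots\cup\tilde\gamma_k$ genuinely lives in $S^3$; alternatively one can avoid this by comparing $\phi$ directly to the gluing map of the standard genus-$g$ Heegaard splitting of $S^3$, which makes every $\tilde\gamma_i$ a curve in $S^3$ from the start. The cobordism-theoretic alternative you mention ($\Omega_3^{SO}=0$ plus handle trading) is the Wallace-style route and is equally valid.
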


There are two important moves on framed links called  \emph{Kirby moves}. The first one consists in adding a new separated component $U_{\pm}$, that is an unknot with framing $\pm 1$. This corresponds to the connected sum with $S^3$ or (in the 4-dimensional interpretation) to the connected sum with $\mathbb{CP}^2$. The second one is exactly the handleslide that we described above in Fig.~\ref{figure:handleslideOmega}. In the 4-dimensional interpretation, it corresponds to sliding a 2-handle over another one. Both Kirby moves do not change the presented 3-manifold.

\begin{theo}[Kirby]
Two (integer) surgery presentations $L$ and $L'$ in $S^3$ of the same 3-manifold $M$, are related by isotopies and Kirby moves.
\end{theo}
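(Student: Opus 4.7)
The plan is to lift the 3-dimensional data into four dimensions and apply handle calculus. Each surgery presentation $L, L' \subset S^3$ determines a compact smooth oriented 4-manifold $W_L, W_{L'}$, obtained by attaching 2-handles to $B^4$ along the framed components of the link, with $\partial W_L \cong \partial W_{L'} \cong M$ by hypothesis. Fix an orientation-reversing diffeomorphism $\phi : \partial W_L \to \partial W_{L'}$ and form the closed oriented 4-manifold $X := W_L \cup_\phi (-W_{L'})$. Along the embedded copy of $M$, this $X$ carries a natural handle decomposition: a 0-handle, the 2-handles coming from $L$ on one side, and, reading $-W_{L'}$ upside-down, the 2-handles dual to those of $L'$ together with some 3-handles and a single 4-handle.

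I would then invoke Cerf's theorem (equivalently, the 4-dimensional handle calculus developed by Kirby): any two smooth handle decompositions of a compact smooth manifold differ by a finite sequence of isotopies of attaching maps, handle slides within a fixed index, and births/deaths of canceling pairs of handles of consecutive index. Applied to two handle decompositions of $X$ built from different surgery presentations of $M$, the only moves that matter for the 3-dimensional picture are those that act on the 2-handles attached to the $B^4$ sitting below $M$. These translate as follows:
\begin{itemize}
\item a handle slide of one 2-handle over another, on the $W_L$-side, is precisely a Kirby move of the second type applied to $L$;
\item a birth or death of a canceling $(1,2)$- or $(2,3)$-pair, when pushed across $M$, introduces or removes a $\pm 1$-framed unknot disjoint from the rest of the link, which is exactly a Kirby move of the first type;
\item isotopies of attaching circles translate into isotopies of the surgery diagram.
\end{itemize}

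The main obstacle is to make sure that the 4-dimensional moves, which a priori do not respect the splitting $X = W_L \cup_\phi (-W_{L'})$, can be arranged to act only on one side of $M$ at a time, so that they descend to well-defined operations on the surgery diagrams. The key auxiliary input is the theorem of Laudenbach--Po\'enaru, asserting that once the 2-handle attachment of a simply-connected 4-manifold is fixed, the 3- and 4-handles are attached in an essentially unique way up to isotopy. This rigidity concentrates all ambiguity in the 2-handle layer, which is exactly where Kirby moves operate. Combining Cerf theory with Laudenbach--Po\'enaru, the homotopy between the two handle structures on $X$ descends to a finite sequence of isotopies and Kirby moves relating $L$ and $L'$ inside $S^3$, as required.
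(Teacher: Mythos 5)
The paper does not actually prove this theorem: it is stated as a citation of Kirby's result, with no proof given. Judged on its own merits, your sketch follows the broad outline of Kirby's original 4-dimensional argument (handle calculus, Cerf theory, Laudenbach--Po\'enaru), but it has a genuine gap at the point where the first Kirby move is supposed to appear. The 4-manifolds $W_L$ and $W_{L'}$ need not be diffeomorphic --- for example the empty link and the $+1$-framed unknot both present $S^3$, yet $W_\varnothing = B^4$ and $W_{U_+} = \mathbb{CP}^2 \setminus \mathrm{int}\, B^4$ are not diffeomorphic --- so there is no single smooth manifold on which Cerf theory can compare ``two handle decompositions''. Your construction of $X = W_L \cup_\phi (-W_{L'})$ does not repair this: it produces one closed manifold with one handle decomposition built from the pair $(L,L')$, not two decompositions of the same manifold to be connected by a path of Morse functions. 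The missing ingredient is the stabilization step: one must first blow up, i.e.\ perform first Kirby moves, to make $W_L$ and $W_{L'}$ diffeomorphic (rel boundary), and the input that guarantees this is Wall's theorem on stable diffeomorphism of simply connected 4-manifolds together with a matching of intersection forms. This is precisely where the first Kirby move earns its place in the statement, and it is absent from your argument.

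Relatedly, your dictionary entry for births and deaths is wrong: a canceling $(1,2)$- or $(2,3)$-pair does \emph{not} change the diffeomorphism type of the 4-manifold and does \emph{not} introduce a $\pm 1$-framed unknot; a blow-up changes the 4-manifold by connected sum with $\pm\mathbb{CP}^2$, which is a different operation. What the intermediate 1- and 3-handles actually force is a separate elimination argument: 1-handles are traded for $0$-framed unknotted 2-handles (which changes $W$ but not $\partial W$, so is harmless for the surgery presentation), and 3-handles are removed using Laudenbach--Po\'enaru, which you correctly cite but for the wrong purpose. With the stabilization step inserted at the start and the role of canceling pairs corrected, the remainder of your outline (handle slides $=$ second Kirby moves, isotopies of attaching circles $=$ isotopies of the diagram) is the standard and correct conclusion.
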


\subsection{Definition}

\begin{defn}\label{defn:linking_matrix}
Let $L\subset S^3$ be an oriented framed link. Let $K_1$ and $K_2$ be two different components of $L$. The \emph{linking number} of $K_1$ and $K_2$, ${\rm lk}(K_1,K_2)$, can be defined in several ways, one of them consists in counting the signs of their intersections in a diagram:
$$
{\rm lk}(K_1,K_2) := \frac 1 2 \sum_{x \text{ in } K_1\cap K_2} {\rm sgn}(x),
$$
where $x$ runs over the the crossings of a diagram $D$ of $L$ that are composed both of strands of $K_1$ and of strands of $K_2$, and ${\rm sgn}(x)$ is the sign of that crossing (Fig.~\ref{figure:crossingsign}). The \emph{writhe number} of $K_1$, $w(K_1)$, is
$$
w(K_1) := \sum_{x \text{ in } K_1} {\rm sgn}(x) ,
$$
where $x$ runs over all the crossings of the diagram $D$ whose strands are all of $K_1$.

Let $h$ be the number of components of $L$ and let $K_1,\ldots , K_h$ be the components of $L$. The \emph{linking matrix} $M(L)$ is the $h\times h$ matrix whose $(i,j)$ entry is
$$
(M(L))_{i,j} := \begin{cases}
{\rm lk}(K_i,K_j) & \text{if } i\neq j \\
w(K_i) & \text{if } i= j
\end{cases} .
$$ 
The linking matrix is an invariant of oriented and framed links in $S^3$ and its signature $\sigma(L)$ does not depend neither on the orientation nor on the framing.
\end{defn}

\begin{defn}\label{defn:RTW}
Let $L$ be a surgery presentation in $S^3$ of the closed orientable 3-manifold $M$. The \emph{Reshetikhin-Turaev-Witten} invariant of $M$ (as defined by Lickorish \cite{Lickorish}) is:
$$
I_r(M) := \eta \kappa^{-\sigma(L)} \Omega L,
$$
where $\Omega L$ is the skein element obtained by attaching $\Omega$ to each component of $L$, and $\sigma(L)$ is the signature of the linking matrix of $L$.
\end{defn}

The quantity $\sigma(L)$ is equal to the signature of the 4-manifold obtained attaching to $D^4$ a 2-handle along each component of $L$. The complex number $I_r(M)$ is a topological invariant. In fact it is clearly invariant by isotopies of links, from the handleslide property we get the invariance under the second Kirby move, and the factor $\kappa^{-\sigma(L)}$ ensures the invariance under the first Kirby move.

\begin{ex}
The connected sum $\#_g(S^1\times S^2)$ of $g$ copies of $S^1\times S^2$ is presented in $S^3$ by the unlink with $g$ components, each one with the 0-framing. Hence $\sigma(L)=0$ and $\Omega L = \eta^{-g}$. Therefore
$$
I_r(S^3)= \eta , \ I_r(S^1\times S^2)= 1 , \ I_r(\#_g(S^1\times S^2)) = \eta^{1-g} .
$$
\end{ex}
\begin{ex}
The unknot with framing $n\in\Z$, $U_n$,  presents the lens space $L(n,1)$. The linking matrix is the $1\times1$ matrix $(n)$. Hence $\sigma(U_n)={\rm sgn}(n)$. Using equality in Fig.~\ref{figure:framingchange} $n$ times we get
$$
I_r(L(n,1)) = \eta^2 \kappa^{-{\rm sgn}(n)} \sum_{a=0}^{r-2} \cerchio_a^2 (-1)^{an} A^{an(a+2)} .
$$
\end{ex}

\begin{prop}\label{prop:RTW_conn_sum}
Let $M$ and $N$ be two closed oriented 3-manifolds. Then the invariant of the connected sum is
$$
I_r(M \# N) = \eta^{-1} I_r(M) I_r(N) .
$$
\begin{proof}
Let $L_M \subset S^3$ and $L_N \subset S^3$ be surgery presentations of $M$ and $N$. The link $L_M \sqcup L_N$ that is obtained gluing the boundaries of two 3-balls, one containing $L_M$ and the other one containing $L_N$, is a surgery presentation of the connected sum $M \# N$ in $S^3$. This link has a diagram in the plane that is the disjoint union of a diagram of $L_M$ and a diagram of $L_N$. Hence the $\Omega (L_M\sqcup L_N) = \Omega L_M \Omega L_N$ and $\sigma(L_M \sqcup L_N) = \sigma(L_M) + \sigma(L_N) $. By definition follows that $I_r(M \# N)= \eta^{-1} I_r(M) I_r(N)$. 
\end{proof}
\end{prop}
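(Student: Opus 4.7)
My plan is to use the definition of $I_r$ directly by picking convenient surgery presentations. By Theorem~\ref{theorem:Lickorish-Wallace}, fix surgery presentations $L_M, L_N \subset S^3$ of $M$ and $N$. The first key observation I would establish is that if we embed $L_M$ in one $3$-ball and $L_N$ in another disjoint $3$-ball inside $S^3$, so that a $2$-sphere $\Sigma$ separates them, then the resulting split link $L = L_M \sqcup L_N$ is a surgery presentation of the connected sum $M \# N$. This is essentially because performing Dehn surgery inside each $3$-ball produces a punctured copy of $M$ on one side of $\Sigma$ and a punctured copy of $N$ on the other, and regluing along $\Sigma$ is the connected sum operation. (In the $4$-dimensional picture, attaching the $2$-handles of $L_M$ and $L_N$ to disjoint regions of $\partial D^4 = S^3$ produces a boundary connected sum of the two $4$-manifolds.)

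Next I would compute the two ingredients of $I_r(M \# N) = \eta \, \kappa^{-\sigma(L)} \, \Omega L$ separately. For the signature, since $L_M$ and $L_N$ sit in disjoint $3$-balls the linking number between any component of $L_M$ and any component of $L_N$ is zero, so the linking matrix $M(L)$ is block-diagonal with blocks $M(L_M)$ and $M(L_N)$; hence $\sigma(L) = \sigma(L_M) + \sigma(L_N)$. For the skein element $\Omega L$, I would choose a planar diagram for $L$ that is literally the disjoint union of a diagram of $L_M$ and a diagram of $L_N$ (possible because they can be separated by a circle in the plane). Every Kauffman state of this diagram splits as a pair of states on the two sub-diagrams, and since the sub-diagrams never share a crossing the state-sum factors as a product. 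Using that $\langle \varnothing \rangle = 1$ in our normalization, this gives $\Omega L = \Omega L_M \cdot \Omega L_N$ as elements of $K_A(S^3) = \mathbb{C}$.

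Putting the two computations together,
$$
I_r(M \# N) = \eta \, \kappa^{-\sigma(L_M)-\sigma(L_N)} \, \Omega L_M \cdot \Omega L_N,
$$
while by definition
$$
I_r(M) \, I_r(N) = \bigl(\eta \kappa^{-\sigma(L_M)} \Omega L_M\bigr)\bigl(\eta \kappa^{-\sigma(L_N)} \Omega L_N\bigr) = \eta^{2} \kappa^{-\sigma(L_M)-\sigma(L_N)} \Omega L_M \Omega L_N,
$$
so dividing yields exactly $I_r(M \# N) = \eta^{-1} I_r(M) I_r(N)$, as required.

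The only point requiring care is the multiplicativity $\Omega L = \Omega L_M \cdot \Omega L_N$ in the skein vector space of $S^3$, and the fact that the $L_M \sqcup L_N$ gluing really presents $M \# N$; both should follow from standard facts about the Kauffman bracket on split links and about Dehn surgery localized in disjoint balls, so I do not expect a genuine obstacle. The extra factor of $\eta^{-1}$ in the final identity is simply the ``accounting'' for the normalization constant $\eta$ that appears once in each of $I_r(M)$, $I_r(N)$, and $I_r(M \# N)$.
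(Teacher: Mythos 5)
Your proposal is correct and follows essentially the same route as the paper: present $M\#N$ by the split link $L_M\sqcup L_N$ in disjoint $3$-balls, observe that the linking matrix is block-diagonal so $\sigma(L_M\sqcup L_N)=\sigma(L_M)+\sigma(L_N)$, use multiplicativity of the bracket on split diagrams to get $\Omega(L_M\sqcup L_N)=\Omega L_M\cdot\Omega L_N$, and compare with the definition to extract the factor $\eta^{-1}$. Your added care about why the split link presents the connected sum and why the state sum factors is a sound elaboration of the paper's terser argument.
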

Later we will prove that the invariant of the 3-manifold obtained changing the orientation is the conjugate of the previous number (Proposition~\ref{prop:RTW_conj}).

\begin{defn}\label{defn:RTW_G}
Let $G$ be a $q$-admissible colored framed trivalent graph in the closed oriented 3-manifold $M$, and let $L$ be a surgery presentation of $M$ in $S^3$. Let $G'$ be the colored framed trivalent graph of $S^3$ that corresponds to $G$ by the surgery on $L$. If $r$ is bigger equal than the biggest color of the edges of $G$, we define
$$
I_r(M,G) := \eta \kappa^{-\sigma(L)} \Omega (L,G') ,
$$
where $\Omega(L,G')$ is the skein element obtained by coloring the components of $L$ with $\Omega$ and the edges of $G'$ with the projectors corresponding to the colors.
\end{defn}

\begin{rem}
Clearly $I_r$ is an invariant for $q$-admissible colored framed trivalent graphs and if $G\subset M$ is empty $I_r(M,G)= I_r(M)$. If $G$ has no vertices it is a colored framed link. We get a family of invariants of non colored framed links just by varying $r$ and coloring all the link components with $r-2$. If the surgery link $L$ is empty, we have $M=S^3$ and we can copy Kauffman's construction of the Jones polynomial to obtain also a family of invariants for oriented links. It suffices to multiply our invariants for framed links in $S^3$ by $((-1)^{r-2}A^{(r-2)^2+2(r-2)})^{-w(L')}$, where $w(L')$ is the sum of the signs of a diagrammatic representation of $L'$ ($= G'$) (the \emph{writhe number}). In this case, $L=\varnothing$, $M=S^3$, we get an evaluation of the Kauffman bracket or the colored Jones polynomial for each $r$.
\end{rem}

\begin{rem}\label{rem:Turaev-Viro}
Another famous family of quantum invariants for 3-manifolds consists of the \emph{Turaev-Viro invariants} $TV_r(M)$ \cite{Turaev-Viro} (Definition~\ref{defn:Turaev-Viro}). They are still based on the choice of a $4r^{th}$ primitive root of unity $A$. If the 3-manifold $M$ is closed we can compare the two families and get that the Reshetikhin-Turaev-Witten invariants are sharper:
$$
TV_r(M) = |I_r(M)|^2  \ \ \ \ \text{(Theorem~\ref{theorem:Turaev-Viro})} ,
$$
where $|x|^2$ is the square of the absolute value of the complex number $x$.
\end{rem}

\chapter{Shadows}\label{chapter:shadows}

In this section we introduce Turaev's \emph{shadows} and the \emph{shadow formula} for the Kauffman bracket and the $SU(2)$-Reshetikhin-Turaev-Witten invariants. Shadows are 2-dimensional polyhedral objects related to smooth 4-manifolds: these are the 4-dimensional analogue of \emph{spines} of 3-manifolds. They were defined by Turaev \cite{Turaev:preprint, Turaev} and then considered by various authors, see for instance \cite{Burri, Carrega_RTW, Carrega_Taitg, Carrega-Martelli, Costantino1, Costantino2, Costantino-Thurston, Costantino-Thurston:preprint, Goussarov, IK, Martelli, Shu, Thurston, Turaev1}. Shadow formulas are ways to compute quantum invariants via shadows. They look like Euler characteristics: they are composed by elementary bricks associated to the maximal connected pieces of dimension $0$ (vertices), $1$ (edges) and $2$ (regions) of the shadow, and they are combined together with a ``sign'' depending on the parity of the dimension. We follow \cite{Turaev}, \cite{Costantino0}, \cite{Carrega-Martelli} and \cite{Carrega_RTW}. Throughout this chapter we consider also non orientable surfaces as framings for links and knotted trivalent graphs.

In \cite{Turaev} Turaev defined shadows and showed how to get the quantum invariants from them through a formula that works in a general context: for any ribbon category, and hence for any quantum group. In this chapter we focus on the quantum invariants that can be obtained with skein theory: the Kauffman bracket and the $SU(2)$-Reshetikhin-Turaev-Witten invariants (see Chapter~\ref{chapter:quantum_inv}), and we reprove the shadow formula using skein theory.

\section{Generalities}

\begin{defn}
A \emph{polyhedron} $P$ is the topological space obtained by the union of all the simplices of a simplicial complex. The simplicial structure yielding P is called a \emph{triangulation}, and the polyhedron $P$ is also equipped by a
PL-structure, that is some equivalence class of triangulations. A \emph{sub-polyhedron} $Q$ of $P$ is a polyhedron such that there is a triangulation $T_P$ of $P$ and a triangulation $T_Q$ of $Q$ such that $T_Q$ is a sub-complex of $T_P$. 

Let $T$ be a simplicial complex and $\sigma$ a simplex of $T$ that is in the boundary of exactly one simplex $\sigma'$ of $T$. An \emph{elementary collapse} of $T$ is the removal of two simplexes like $\sigma$ and $\sigma'$ from $T$. A polyhedron $P$ \emph{collapses} onto a sub-polyhedron $Q\subset P$ if there is a triangulation $T_P$ of $P$ and a triangulation $T_Q$ of $Q$ such that $T_Q$ is a sub-complex of $T_P$ and after some elementary collapses $T_P$ becomes $T_Q$. Sometimes it is denoted $P \searrow Q$.
\end{defn}

We can switch from the smooth to the PL-category in low
dimensions, since they are equivalent. Namely each compact smooth manifold $M$ with dimension at most $4$ has a unique PL-structure.

\begin{defn}
A \emph{simple polyhedron} $X$ is a connected 2-dimensional compact polyhedron where every point has a neighborhood homeomorphic to one of the five types (1-5) shown in Fig.~\ref{figure:models}. The five types form sub-sets of $X$ whose connected components are called \emph{vertices} (1), \emph{interior edges} (2), \emph{regions} (3), \emph{boundary edges} (4) (or \emph{external edges}), and \emph{boundary vertices} (5) (or \emph{external vertices}). The points (4) and (5) altogether form the \emph{boundary} $\partial X$ of $X$. An edge is either an open segment or a circle. A region is a (possibly non-orientable) connected surface without boundary. The 1-\emph{skeleton} of $X$ is the union of the vertices and the edges. 
\end{defn}

\begin{figure}[htbp]
\begin{center}
\includegraphics[width = 12 cm]{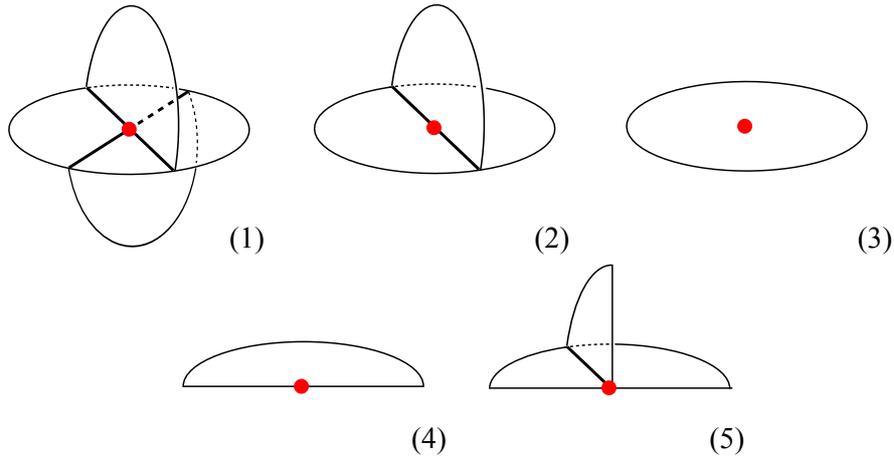}
\caption{Neighborhoods of points in a simple polyhedron.}
\label{figure:models}
\end{center}
\end{figure}

\begin{defn}\label{defn:spine}
Let $M$ be a compact 3-manifold with non empty boundary. A \emph{spine} $X$ of $M$ is a simple polyhedron without boundary that is embedded in the interior of $M$ and such that $M$ collapses onto $X$. If $M$ is closed a \emph{spine} of $M$ is a spine of $M$ minus a 3-ball.
\end{defn}

\begin{theo}
$\ $
\begin{enumerate}
\item{Every compact 3-manifold has a spine.}
\item{A simple polyhedron with only disk regions may be a spine of only one 3-manifold with non empty boundary.}
\end{enumerate}
\begin{proof}
See \cite{Fomenko-Matveev}.
\end{proof}
\end{theo}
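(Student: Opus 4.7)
The plan for (1) is the classical dual cell construction. I would start from a PL triangulation $T$ of $M$, first removing the interior of a 3-simplex if $\partial M = \varnothing$ so that the manifold acquires non-empty boundary. Inside the second barycentric subdivision I would let $X$ be the union of the dual cells of all simplices of $T$ of positive dimension. Local inspection at a point dual to a $k$-simplex produces exactly one of the five models of the definition: a vertex of $X$ arises as the dual of a 3-simplex (type 1), an interior edge as the dual of a 2-simplex (type 2), a region as the dual of an edge (type 3), while boundary edges and vertices (types 4 and 5) come from the intersection with $\partial M$ and with the removed ball. This shows $X$ is a simple polyhedron. A coherent collapse $M \searrow X$ is then obtained by collapsing each 3-simplex of $T$ starting from its barycenter, which is the unique dual cell not lying in $X$, and performing the standard free-face collapses of its barycentric subdivision; these local collapses are compatible because they are supported in the disjoint open stars of the barycenters.

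For (2), the plan is to show that $X$ determines its thickening uniquely. The key observation is that for each of the five local models there is a canonical regular neighborhood in a PL 3-manifold: types (1), (2), (3) give closed 3-balls, while types (4), (5) give half-balls, and in each case the combinatorial attaching behavior is prescribed by the model itself. Since every region of $X$ is a disk and in particular orientable, the $I$-bundle thickening of each region is canonically trivial, so each region thickens uniquely to $D^2 \times [-1,1]$ without any twist ambiguity.

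The main obstacle is to verify that the local thickenings glue consistently along the 1-skeleton of $X$. At an interior edge the three incident region-thickenings are glued according to the cyclic ordering of the three regions around the edge, an ordering which is intrinsic to $X$; at an interior vertex the four germs of edges and six germs of regions are arranged combinatorially as in the cone on the 1-skeleton of a tetrahedron, forcing a unique local 3-ball thickening. By gluing these pieces over the cell structure of $X$ one constructs a PL 3-manifold $N(X)$ which depends only on $X$. Any 3-manifold $M$ with non-empty boundary admitting $X$ as a spine is homeomorphic to a regular neighborhood of $X$ in $M$, which by PL regular neighborhood theory is unique up to PL homeomorphism and therefore coincides with $N(X)$; hence $M$ is determined by $X$. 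The disk hypothesis enters decisively at this stage: without it, non-trivial $I$-bundles over non-disk regions would produce genuine ambiguity in the thickening, and uniqueness would fail.
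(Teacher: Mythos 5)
The paper offers no proof of its own here: it simply cites Fomenko--Matveev. So your sketch has to be measured against the standard arguments in that reference, namely the dual-spine construction for existence and Casler's reconstruction theorem for uniqueness. Your part (2) is a correct outline of Casler's argument: the local models have canonical thickenings, disk regions carry only the trivial $I$-bundle, and the gluing data along the singular graph is intrinsic to the polyhedron, so the regular neighborhood --- hence the manifold --- is determined.

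Part (1), however, has a genuine gap, in two places. First, the collapse is described backwards. The dual cell of a $k$-simplex has dimension $3-k$, so the barycenter of a 3-simplex is the dual cell of that 3-simplex and is a \emph{vertex of $X$}, not a point of the complement. The complement of your $X$ is the union of the open dual 3-cells of the \emph{vertices} of $T$, one open ball per vertex. Consequently $M$ does not collapse onto $X$: what collapses onto $X$ is $M$ with an open ball deleted around every vertex of the triangulation. Removing a single 3-simplex does not create the free faces needed to collapse away the other dual 3-cells, so for a closed $M$ your $X$ is a spine of $M$ minus $v$ balls ($v$ the number of vertices), not of $M$ minus one ball as the paper's definition requires. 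The standard repairs are either to start from a handle decomposition with a single 0-handle (always achievable after cancelling handles) and take the spine dual to it, or to merge the punctures afterwards by drilling through 2-components and inserting arches; some such step must appear. Second, the paper's definition demands that a spine be a simple polyhedron \emph{without boundary} embedded in the \emph{interior} of $M$, whereas the dual complex of a triangulation of a manifold with boundary meets $\partial M$ and acquires boundary edges and vertices of types (4) and (5); your local analysis therefore produces an object that is not a spine in the sense used here. This too is resolved by working with a handle decomposition (or by truncating along a collar), but as written the construction does not deliver the stated object.
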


\begin{rem}
It is not ture in general that two compact 3-manifolds with non empty boundary have homeomorphic spines if and only if they are homeomorphic.
\end{rem}

\begin{defn}\label{defn:shadow}
Let $W$ be a compact 4-manifold with boundary. A \emph{shadow} of $W$ is a simple polyhedron $X\subset W$ such that the following hold:
\begin{itemize}
\item{$X$ is properly embedded in $W$ ($X\cap \partial W = \partial X$);}
\item{$X$ is \emph{locally flat}, namely every point $p\in X$ has a neighborhood $U$ in $W$ diffeomorphic to $B^3 \times (-1,1)$, or $\{x\in\mathbb{R}^3\ |\ x_3\geq0 \} \times (-1,1)$, such that $U\cap X$ is contained in $B^3 \times 0$, or in $\{x\in\mathbb{R}^3\ |\ x_3\geq0 \} \times \{0\}$, as in Fig.~\ref{figure:models};}
\item{$W$ collapses onto $X$.}
\end{itemize}

Let $M$ be a closed 3-manifold and $G$ be a framed knotted trivalent graph in $M$. The polyhedron $X$ is a \emph{shadow} of $(M,G)$ if it is a shadow of a 4-manifold $W$ bounded by $M$ and its boundary coincide with the graph: $\partial W= M$, $\partial X = G$.

A shadow is said to be \emph{standard} if all the regions that are not adjacent to boundary edges are disks and the boundary edges are adjacent either to annuli or to disks.
\end{defn}

\begin{ex} 
The trivial ribbon disk $D\subset D^4$ is the disk that is properly embedded in the 4-disk $D^4$, is in Morse position with respect to the radiant map $D^4\rightarrow \mathbb{R}$ with one minimum, no saddles and no maxima. The boundary of $D$ is the unknot in $S^3$. The disk $D$ is itself a shadow of $D^4$. However, a non trivial properly embedded disk $D\subset D^4$ is not a shadow: $D^4$ does not collapse onto $D$ (see Proposition~\ref{prop:trivial}).
\end{ex}

\begin{prop} \label{prop:trivial}
A properly embedded disk $D\subset D^4$ is a shadow if and only if $D$ is isotopic to the trivial ribbon disk (and hence $\partial D$ is the unknot).
\begin{proof}
The disk $D$ is a shadow of $D^4$ if and only if $D^4$ collapses onto it. This holds if and only if $D^4$ is a regular neighborhood of $D$. A regular neighborhood of $D$ is a product bundle $D\times D^2$, hence $D^4$ collapses onto $D$ if and only if $D^4 =D\times D^2$. This holds precisely when $D$ is trivial.
\end{proof}
\end{prop}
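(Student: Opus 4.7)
The plan is to identify the three conditions in Definition~\ref{defn:shadow} specialized to a single disk $D$ and reduce them to the single condition that $D^4$ is a regular neighborhood of $D$. The backward direction will then be a direct construction, and the forward direction will use the triviality of disk bundles over disks.

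First I would treat the backward direction. If $D$ is the trivial ribbon disk, there is a diffeomorphism of pairs $(D^4,D)\cong (D^2\times D^2,\,D^2\times\{0\})$. The linear straight-line collapse of each fiber $\{p\}\times D^2$ onto $\{p\}\times\{0\}$ can be performed simultaneously, giving an explicit PL collapse of $D^4$ onto $D$; local flatness and properness are obvious from the product structure. So the trivial ribbon disk is indeed a shadow.

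Next I would handle the forward direction. Assume $D$ is a shadow, so $D^4$ collapses onto $D$. By Whitehead's regular neighborhood theorem (in the PL category, which in dimension $4$ is equivalent to the smooth category), a compact manifold that collapses onto a locally flat sub-polyhedron is a regular neighborhood of that sub-polyhedron. Hence $D^4$ is a regular neighborhood of $D$. Since $D$ is locally flat, a regular neighborhood of $D$ is a $D^2$-bundle over $D$; since $D\cong D^2$ is contractible, this bundle is trivial. Thus $(D^4,D)$ is diffeomorphic as a pair to $(D^2\times D^2,\,D^2\times\{0\})$.

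Finally, I would argue that any properly embedded locally flat disk $D\subset D^4$ whose pair $(D^4,D)$ is diffeomorphic to the standard product pair is isotopic to the trivial ribbon disk. This follows from the fact that the trivial ribbon disk realizes exactly the same product model, combined with the standard observation that a diffeomorphism of $D^4$ carrying $D$ to $D_0$ is isotopic (rel boundary after a preliminary adjustment on $S^3$) to the identity, since orientation-preserving self-diffeomorphisms of $D^4$ are isotopic to the identity. The main obstacle is the last step: one has to be careful that the pair-diffeomorphism can be converted into an ambient isotopy between the two embeddings, which requires invoking uniqueness of the normal disk-bundle structure and the isotopy extension theorem for locally flat proper submanifolds; in the smooth/PL setting these are standard, so the argument is routine once set up correctly.
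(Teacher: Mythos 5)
Your proposal is correct and follows essentially the same chain of equivalences as the paper: shadow $\Leftrightarrow$ $D^4$ collapses onto $D$ $\Leftrightarrow$ $D^4$ is a regular neighborhood of $D$ $\Leftrightarrow$ $(D^4,D)$ is the product pair $\Leftrightarrow$ $D$ is trivial. The only difference is that you spell out the final step (upgrading the pair-diffeomorphism to an isotopy with the trivial ribbon disk), which the paper dispatches in one sentence.
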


Let $F$ be an oriented $D^2$-bundle (more precisely $F$ is oriented and is the total space of a $D^2$-bundle) over a closed (maybe not orientable) surface $S$. The oriented bundle $F$ is completely determined by $S$ and an integer number called the \emph{Euler number} of $F$. For every $g\in\Z$ there is a (unique) orientable $D^2$-bundle over $S$ with Euler number $g$. To get the Euler number of $F$ we take a properly embedded copy $S'$ of $S$ in $F$ that is transverse to $S$. The surfaces $S$ and $S'$ intersect in a finite number of points. Each point $p\in S\cap S'$ has a sign $\pm 1$ that is positive if given two positive basis, $(v_1,v_2)$ and $(v'_1,v'_2)$, of the tangent spaces $T_p(S)\subset T_p(F)$ and $T_p(S') \subset T_p(F)$ we have that $(v_1,v_2,v'_1,v'_2)$ is a positive basis of $T_p(W)$, otherwise it is negative. We do not need the hypothesis that the surface is oriented. In fact we can think of $S'$ as a perturbation of $S$ and we can suppose that the points of $S\cap S'$ are fixed points of the perturbation, then given a basis $(v_1,v_2)$ of $T_p(S)\subset T_p(W)$ we have a natural basis $(v'_1,v'_2)$ of $T_p(S')\subset T_p(W)$ and we use them to compute the sign. We can define the Euler number even if the surface $S$ is a compact surface with non empty boundary provided that the boundary $\partial S \subset \partial F$ has a framing. In that case one takes a perturbation $S'$ that is properly embedded and $\partial S'$ lies on the framing of $\partial S$. If we consider just closed surfaces, two different pairs of surface and Euler number determines two different orientation preserving diffeomorphism classes of $D^2$-bundles (the total space). This is not true for surfaces with non empty boundary. 

\begin{defn}
Let $X$ be a shadow of a compact 4-manifold $W$ and let $R$ be a region of $W$. If $R$ is not closed we define the \emph{boundary} $\partial R$ of $R$ as the boundary of a compact surface $\bar R \subset R$ whose interior is homeomorphic to $R$. Often in literature $\bar R$ is confused with $R$.
\end{defn}

The boundary of a region $R$ is an embedded union of circles, the union of the vertices and the edges adjacent to $R$ is just an immersion of $\partial R$.

Suppose that $\partial X \subset \partial W$ has a framing, then the shadow $X$ provides an interval sub-bundle of the normal bundle of $\partial R$ in $W$ as follows:

Let $(\partial R)_i \subset \partial R \subset  R$ be a connected component of $\partial R$. Let $A_i\subset X$ be the annulus that is immersed in $X$, its interior lies in $R$, one of its boundary component coincides with $(\partial R)_i$ and the other one is the union of some edges and vertices adjacent to $R$. Every point $p\in A_i$ has a neighborhood $V$ in $W$ with a chart $\varphi: V \rightarrow \mathbb{R}^4$ such that $\varphi(V \cap X) \subset \{x \in \mathbb{R}^4 \ | \ x_4=0\}$ and $(V\cap A_i, A_i) \cong ( (-1 , 1) \times [-1,1] , S^1\times [-1,1])$. Let $e$ be an edge of $X$ adjacent to $A_i$ and $p \in e$ a point of $e$. Let $V$ be a neighborhood of $p$ like the one described above such that with that trivialization $\varphi(V)\cap \{x_4=0\}$ is as in Fig.~\ref{figure:models}-(2). We treat in the same way the case in which $e$ is an external edge. In fact adding the framing we have a local trivialization of $e$ as in Fig.~\ref{figure:models}-(2) where two of the half-planes adjacent to $e$ are part of the framing of $\partial X$ and the other one is a part of $X$. The same will happen for the vertices. Thus $(X\setminus A_i) \cap V$ is an $I$-bundle over $e\cap V\cong (-1, 1) \times \{-1\}$ and we can naturally transport it to a part of $A_i$, the one corresponding to $\cong (-1, 1) \times \{1\}$. We naturally glue all these interval bundles associated to points of the same edge. If there are no vertices adjacent to $R$ (a boundary component of $A_i$ is a closed edge of $X$) we have constructed the $I$-bundle over $(\partial R)_i$. Let $v$ be a vertex adjacent to $R$ and let $e_1$ and $e_2$ be the two vertices adjacent to $v$ and to $R$ (maybe $e_1=e_2$). Let $V$ be a neighborhood of $v$ as before such that $\varphi(V)\cap \{x_4=0\}$ is as Fig.~\ref{figure:models}-(1), where $\varphi(V \cap A_i)$ is the half-plane lying in $\{x_3\geq 0, x_4=0\}$. Hence in this trivialization the $I$-bundles constructed using $e_1$ and $e_2$ lies in a plane with constant $x_3$ and $x_4=0$, thus they naturally glue together completing the $I$-bundle over $(\partial R)_i$. 

\begin{defn}
Let $W$ be a compact oriented 4-manifold. Let $R$ be a region of a shadow $X\subset W$ such that $\partial X\subset \partial W$ has a framing. The region $R$ is equipped with a half-integer called \emph{gleam}. This generalizes the Euler number of closed surfaces embedded in oriented 4-manifolds. The gleam is defined as follows:

Let $R'$ be a generic small perturbation of $R$ with $\partial R'$ lying in the interval bundle at $\partial R$. The surfaces $R$ and $R'$ intersect only in isolated points, and we count them with signs:
$$
\gl(R) := \frac 1 2 \# (\partial R \cap \partial R') + \#(R\cap R') \in \frac 1 2 \Z
$$
The half-integer $\gl(R)$ is the \emph{gleam} of $R$ and does not depend on the chosen $R'$. Note that the contribution of $\partial R\cap \partial R'$ on a component of $\partial R$ is even or odd depending on whether the interval bundle above it is an annulus or a M\"obius strip.

We say that a (abstract) simple polyhedron $X$ is a \emph{shadow} if $\partial X$ is equipped with a surface that collapses on it that we call \emph{framing}, and each region $R$ of $X$ is equipped with a half-integer $\gl(R)$, called \emph{gleam}, such that $\gl(R) \in \Z$ if and only if the interval bundle over $\partial R$ described by $X$ has an even number of non-orientable components.
\end{defn}

\begin{ex}
A surface $S$ with gleam $g$ is a shadow of the oriented $D^2$-bundle over $S$ with Euler number $g$.
\end{ex}

\begin{ex}
The disk with the 0-framing and gleam $n$ is a shadow of the unknot in $S^3$ with framing $-n$. 
\end{ex}

\begin{ex}
Let $X$ be an annulus $S^1\times [-1,1]$ with a disk $D$ attached along the core $S^1\times \{0\}$. If we give to $D$ gleam $g$ we get a shadow of a link $L\subset S^3$ with two components $K_1,K_2$ that are unknots and have linking number $g$. If $g=\pm 1$ it is the Hopf link.
\end{ex}

\begin{rem}\label{rem:reconstruction}
Given an abstract shadow $X$ we can ask ourselves how many 4-manifolds $W$ there are such that $X$ is a shadow of $W$ and the abstract gleams coincide with the ones induced by the embedding of $X$ in $W$. The reconstruction theorem below (Theorem~\ref{theorem:reconstruction}) ensures the existence of such 4-manifold. Moreover by the following Proposition~\ref{prop:shadow_uniqueness}, one such manifold is unique up to diffeomorphism. The proof of the reconstruction theorem consists of an explicit construction of the 4-manifold that is based on the division of the shadow in regions and 1-skeleton like in this paragraph.
\end{rem}

\begin{prop}\label{prop:shadow_uniqueness}
Let $X$ be an abstract shadow and $W$ and $W'$ two compact 4-manifolds such that $X$ is a shadow of both $W$ and $W'$, and the abstract gleams coincide with the ones induced by the embeddings of $X$ in $W$ and $W'$. Then $W$ and $W'$ are diffeomorphic.
\begin{proof}
Since one such 4-manifold $W$ collapses onto $X$ we have that $W$ is diffeomorphic to the regular neighborhood of $X$ in $W$. The regular neighborhood of the 1-skeleton of $X$ in $W$ collapses onto a graph, hence it is an orientable 4-dimensional handlebody (a manifold with a handle-decomposition with just 0- and 1-handles). The regular neighborhood of a region $R$ of $X$ in $W$ is a $D^2$-bundle with Eluer number the gleam of $R$. Therefore by the classification of $D^2$-bundles the diffeomorphism class of the regular neighborhoods of the various pieces of $X$ is fixed. Moreover there is only one possible way to glue together all these 4-dimensional pieces. Therefore if one such 4-manifold $W$ exists it is unique.
\end{proof}
\end{prop}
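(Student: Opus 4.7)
The plan is to show that any $4$-manifold $W$ satisfying the hypotheses must be diffeomorphic to a canonical model built from $X$ alone, so in particular $W$ and $W'$ agree. Since $W$ collapses onto $X$, the inclusion $X \hookrightarrow W$ extends to a diffeomorphism between $W$ and a closed regular neighborhood $N(X)$ of $X$ inside $W$. Thus it suffices to prove that the diffeomorphism type of $N(X)$ depends only on the abstract shadow datum (the polyhedron, the boundary framing, and the gleams), not on the particular embedding.

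First I would stratify $X$ into its 1-skeleton $X^{(1)}$ (the union of vertices and interior/boundary edges) and the regions $R_1,\dots,R_k$, and correspondingly stratify $N(X)$ into $N(X^{(1)})$ and the closed regular neighborhoods $N(\bar R_i)$, glued along their common boundaries. Since $X^{(1)}$ collapses onto a graph and $W$ is oriented, $N(X^{(1)})$ is an orientable $4$-dimensional handlebody (only $0$- and $1$-handles). The combinatorial type of $X^{(1)}$ together with the local models at vertices (type $(1)$) and interior/boundary edges (types $(2)$, $(4)$, $(5)$) of Fig.~\ref{figure:models} determines this handlebody up to diffeomorphism, because an oriented $4$-dimensional handlebody is classified by the rank of its free fundamental group, and the attaching of each $1$-handle at a vertex is dictated by the standard local model.

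Next, for each region $R_i$, the closure $\bar R_i$ is a compact surface whose boundary $\partial\bar R_i$ inherits from $X$ the interval sub-bundle of its normal bundle constructed in the paragraph preceding the definition of gleam. Hence $N(\bar R_i)$ is a $D^2$-bundle over $\bar R_i$ with a specified interval framing along $\partial \bar R_i$; the classification of such bundles shows it is determined up to bundle isomorphism by a single half-integer invariant, which is exactly $\gl(R_i)$. So each $N(\bar R_i)$ is determined by the abstract data.

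Finally one must check that the gluing of the pieces $N(\bar R_i)$ to $N(X^{(1)})$ along their shared boundary portions is canonical. The main obstacle, and the only point requiring care, is this gluing: along each interior edge three regions meet and their $D^2$-bundle restrictions must be assembled according to the local model of Fig.~\ref{figure:models}-(2), while at each vertex four edges and six region-germs meet according to Fig.~\ref{figure:models}-(1). Because both local models are rigid (unique up to orientation-preserving diffeomorphism of a $4$-ball) and the interval bundle along $\partial \bar R_i$ is precisely the data that the gleam was defined to record, the identification of $\partial N(\bar R_i)$ with the appropriate portion of $\partial N(X^{(1)})$ is forced up to isotopy. Assembling these local uniqueness statements gives a diffeomorphism $N(X)_W \cong N(X)_{W'}$ and hence $W \cong W'$.
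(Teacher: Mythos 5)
Your proposal is correct and follows essentially the same route as the paper: identify $W$ with a regular neighborhood of $X$, observe that the neighborhood of the 1-skeleton is an orientable 4-dimensional handlebody determined by the combinatorics, that each region's neighborhood is a $D^2$-bundle classified by its gleam, and that the gluing is forced. You simply spell out the gluing step (via rigidity of the local models) in more detail than the paper, which asserts it in one line.
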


\begin{theo}[Reconstruction theorem]\label{theorem:reconstruction}
From a (abstract) shadow $X$ we can construct a compact oriented 4-manifold $W_X$ such that $X$ is a shadow of $W_X$ and the gleams of $X$, as abstract objects, coincide with the ones given by the embedding in $W_X$.
\begin{proof}
We just show how to get $W_X$ from a connected shadow $X$ with orientable regions. We consider the framing of $\partial X$ as a part of $X$ so that each point has a neighborhood homeomorphic to Fig.~\ref{figure:models}-(1), Fig.~\ref{figure:models}-(2) or Fig.~\ref{figure:models}-(3), but we do not add further points to $X$. If $X$ is a surface (closed or not) we take as $W_X$ the oriented $D^2$-bundle over $X$ with the gleam of the region as Euler number. 

Suppose that $X$ is not just a surface. There is a natural 3-dimensional thickening of the regular neighborhood in $X$ of vertices and edges of $X$ (they can be both internal or external, it does not matter). Following the combinatorics of $X$ we find a natural gluing of these 3-dimensional thickenings. Thus we get a possibly non orientable handlebody of dimension 3, $H^{(3)}$, that collapses onto the 1-skeleton of $X$. On the boundary of $H^{(3)}$ there is a a set of disjoint simple closed curves $L$ that is the boundary of the regular neighborhood of the 1-skeleton of $X$ in $X$. As a 4-dimensional thickening of the 1-skeleton of $X$ we take the orientable $I$-bundle $H^{(4)}$ over $H^{(3)}$ (it is unique) and we give it an arbitrary orientation. Hence $H^{(4)}$ is an oriented handlebody of dimension 4 that collapses onto the 1-skeleton of $X$. 

In the boundary of $H^{(4)}$ there is the link $L$ and it has a natural (maybe not orientable) framing $f_1$ that is the one given by the inclusion $L\subset \partial H^{(3)} \subset \partial H^{(4)}$. The components of $L$ are in bijection with the set of all the boundary components of the regions of $X$. Let $R$ be a region of $X$. The framing $f_1$ on the components of $L$ that correspond to the boundary components of $R$, coincides with the interval bundle given by $X$ that we used to define gleams. 

If $f_1$ on the component $(\partial R)_i$ is an annulus (is orientable), $f_1$ determines, up to isotopies, a parametrized solid torus $V_i \subset \partial H^{(4)}$ that is a regular neighborhood of the corresponding component of $L$. If $f_1$ is a M\"obius strip (is non orientable) on the component $(\partial R)_i$ we add a positive half-twist following the induced orientation of the boundary of $H^{(4)}$. In this way we get an orientable $I$-bundle over all the boundary components of $R$. Hence if we do it for each region we get an orientable framing $f_2$ of $L$. 

Let $k(R)$ be the number of components of the boundary of $R$ that had a non orientable $I$-bundle. We define $g(R) := \gl(R) - \frac 1 2 k(R)$, it is an integer number. We fix a decomposition $g(R)= \sum_i g_i(R)$, where $i$ runs over all the boundary components of $R$ and $g_i(R)\in \Z$. For each $i$ we modify the framing given by the $I$-bundle on it, and hence also the parametrization of the solid torus, by adding $-g_i(R)$ positive twists. If we do it for each region we get an orientable framing $f$ of $L$.

We attach $\bar R \times D^2$ to $H^{(4)}$ identifying each component of $\partial R \times D^2$, $(\partial R)_i \times D^2$, with the corresponding solid torus $V_i$ following the chosen parametrization, namely via the map (up to isotopies) that sends the curve $(\partial R)_i \times \{(0,1)\} \subset (\partial R)_i \times  D^2$ to the the $I$-bundle of $(\partial R)_i$ in $V_i$. We repeat this procedure for each region $R$. The construction does not depend on the choice of the decompositions $\gl(R) - k(R) = \sum_i g_i(R)$.
\end{proof}
\end{theo}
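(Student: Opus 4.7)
The plan is to build $W_X$ piece by piece, first thickening the 1-skeleton of $X$ to a 4-dimensional handlebody, then attaching a 2-handle along each region with an appropriate framing tuned so that the induced gleam equals the prescribed one. For simplicity I would first treat the case in which all regions are orientable and $X$ is connected, and indicate at the end the modifications needed for non-orientable regions.

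First I would form the ``singular graph'' $\Sigma \subset X$ consisting of vertices, interior edges, external edges and external vertices (together with the framing annuli/M\"obius strips of $\partial X$, treated as part of the local 2-dimensional structure at external edges). Locally near each point of $\Sigma$ there is a canonical 3-dimensional thickening given by the models of Fig.~\ref{figure:models}, and the combinatorics of $X$ specifies unambiguously how to glue these local pieces along their common 2-dimensional faces. The result is a compact, possibly non-orientable, 3-dimensional handlebody $H^{(3)}$ that collapses onto $\Sigma$. Let $L \subset \partial H^{(3)}$ be the link consisting of the boundary curves of $\partial(N(\Sigma)\cap X)$; its components are in natural bijection with the boundary circles of the regions of $X$.

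Next I would take the unique (up to isomorphism) orientable $I$-bundle $H^{(4)} \to H^{(3)}$ and fix an orientation on $H^{(4)}$. Then $H^{(4)}$ is an oriented 4-dimensional handlebody collapsing onto $\Sigma$, containing $L$ in its boundary with a natural framing $f_1$ inherited from $L \subset \partial H^{(3)} \subset \partial H^{(4)}$. For each region $R$ and each boundary component $(\partial R)_i$ of $R$ on which $f_1$ is a M\"obius band, I add one positive half-twist (with respect to the induced boundary orientation) to obtain an orientable framing $f_2$; let $k(R)$ be the number of such components of $\partial R$. Set $g(R) := \gl(R) - k(R)/2$, which by the definition of admissible gleam is an integer, and fix a decomposition $g(R) = \sum_i g_i(R)$ with $g_i(R) \in \Z$. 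For each $i$ I modify $f_2$ on $(\partial R)_i$ by $-g_i(R)$ full twists to obtain the final orientable framing $f$. Finally I attach a 2-handle $\bar R \times D^2$ to $H^{(4)}$ along each region $R$, gluing $\partial\bar R \times D^2$ to the tubular neighborhood of $L$ determined by $f$, and call the result $W_X$, orienting it compatibly with $H^{(4)}$.

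The verification splits into three points. (a) $X$ sits inside $W_X$ as a properly embedded, locally flat simple polyhedron, because the local model at each vertex, edge, and boundary edge is the one of Fig.~\ref{figure:models}, and each region $R$ is embedded as the core $\bar R \times \{0\}$ of its 2-handle. (b) $W_X$ collapses onto $X$: $H^{(4)}$ collapses onto $\Sigma$, and each attached 2-handle collapses onto the corresponding region. (c) The gleam of $R$ induced by the embedding equals the prescribed $\gl(R)$: the contribution $\frac12\#(\partial R \cap \partial R')$ from the non-orientable boundary components is exactly $k(R)/2$, and the half-twist plus $-g_i(R)$ integer twists prescription makes the transverse intersection $\#(R\cap R')$ of a pushed-off copy equal to $g(R)$, summing to $\gl(R)$. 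Independence from the chosen decomposition $g(R)=\sum_i g_i(R)$ follows because sliding a twist from one boundary component of $R$ to another changes $f$ by an isotopy in the tubular neighborhood of $L$ and does not change the attaching map up to diffeomorphism.

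The main obstacle I expect is the bookkeeping with non-orientable framings: one must verify carefully that the parity condition on $\gl(R)$ is exactly what makes $g(R)$ an integer, so that the half-twist correction produces an orientable framing compatible with an actual 2-handle attachment. The case of non-orientable regions requires a slightly more elaborate 2-handle attachment using an orientable twisted $D^2$-bundle over $\bar R$, but the same gleam-matching argument applies; the uniqueness of $W_X$ up to orientation-preserving diffeomorphism, already established in Proposition~\ref{prop:shadow_uniqueness}, then shows the construction does not depend on the choices made.
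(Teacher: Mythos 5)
Your proposal is correct and follows essentially the same route as the paper: thicken the 1-skeleton to a 3-dimensional handlebody, pass to its orientable $I$-bundle to get an oriented 4-dimensional handlebody, correct the induced framing on the link of region-boundaries by half-twists on the M\"obius components and by $-g_i(R)$ integer twists distributed over a decomposition of $g(R)=\gl(R)-\frac12 k(R)$, and then attach $\bar R\times D^2$ along each region. Your explicit verification of the local flatness, the collapse, and the gleam computation is a welcome addition to what the paper leaves implicit, and your closing remark on non-orientable regions addresses a case the paper's proof openly omits.
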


\begin{lem}\label{lem:spines}
$\ $
\begin{enumerate}
\item{Let $X$ be a spine (Definition~\ref{defn:spine}) of a compact, with non empty boundary, orientable 3-manifold $M$. Then the shadow obtained by giving to each region of $X$ gleam $0$ is a shadow of $M\times [-1,1]$ and hence it is a shadow of the double of $M$.} 
\item{If $X$ is a spine of a closed orientable 3-manifold $M$, then giving gleam $0$ to all regions of $X$ we get a shadow of the connected sum $M\# \bar M$, where $\bar M$ is $M$ with the opposite orientation.}
\item{A shadow $X$ without boundary and with all gleams $0$ is a shadow of a manifold $M\times [-1,1]$ where $M$ is a compact orientable 3-manifold with boundary $M$ such that $X$ is a spine of $M$.}
\end{enumerate}
\begin{proof}
$1.$ The manifold $M\times [-1,1]$ collapses onto $X\subset M\times\{0\}$. In order to compute the inherited gleams on a region $R$ of $X$ we have to take a perturbation $R'$ of $R$ whose boundary lies in the interval bundles over $\partial R$ given by $X$. Since $X$ is a spine of an oriented 3-manifold each connected component of the interval bundle is orientable. We can easily find one such $R'$ lying in $M\times\{0\}$ that does not intersect $R$. 

$2.$ Since $M$ is closed by $1.$ the shadow $X$ is a shadow of $W=(M\setminus B )\times [-1,1]$, where $B$ is the interior of an embedded compact 3-disk $\bar B \subset M$. We have $\partial W = (M\setminus B) \cup (\bar M \setminus B) \cup \partial B \times [-1,1] $, where $\partial B$ is the boundary of $\bar B$, hence $\partial W=M \#\bar M$.

$3.$ There is a natural 3-dimensional thickening of the regular neighborhood of vertices, edges and regions of $X$. Since the gleam of each region is an integer, we can naturally glue together these 3-dimensional pieces forming a compact orientable 3-manifold with boundary $M$ such that $X$ is a spine of $M$. Then the 4-manifold $M\times [-1,1]$ collapses onto $X$ and computing the gleams as in $1.$ we find the right numbers.
\end{proof}
\end{lem}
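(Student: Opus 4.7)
The plan is to prove the three parts in sequence, deriving (2) and (3) from (1) and from the reconstruction theorem.

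For part (1), I would exhibit an explicit collapse $M \times [-1,1] \searrow M \times \{0\} \searrow X$: the first collapse is the standard deformation retraction of a product onto a slice, and the second is the spine collapse assumed by hypothesis. This shows $X$ is a (locally flat) shadow of $M \times [-1,1]$, whose boundary $\partial(M \times [-1,1]) = (M \times \{-1\}) \cup (\partial M \times [-1,1]) \cup (M \times \{1\})$ is exactly the double of $M$. To verify that the induced gleam on every region $R$ is $0$, I would choose a perturbation $R'$ of $R$ inside the slice $M \times \{0\}$. Since $R$ has codimension one in that slice, I can push $R'$ off $R$ entirely, so $\#(R \cap R') = 0$. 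Moreover, because $M$ is orientable and $X$ is a spine of $M$, each component of the interval bundle over $\partial R$ determined by $X$ is orientable (an annulus, not a M\"obius band), so $\partial R'$ can be taken disjoint from $\partial R$ in those annuli, giving $\#(\partial R \cap \partial R') = 0$.

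For part (2) the idea is to reduce to (1). By Definition~\ref{defn:spine}, if $M$ is closed and $X$ is a spine of $M$ then $X$ is a spine of $M \setminus B$ for some open 3-ball $B \subset M$; the manifold $M \setminus B$ has boundary $S^2$. Applying (1) yields that $X$ with all gleams $0$ is a shadow of $(M \setminus B) \times [-1,1]$. The boundary of the latter is the double of $M \setminus B$, that is, two copies of $M \setminus B$ glued along $S^2$ by an orientation-reversing diffeomorphism. Capping each copy off with a 3-ball gives $M$ on one side and $\bar M$ on the other, so this double is diffeomorphic to $M \# \bar M$.

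For part (3), the strategy is to reconstruct a 3-manifold $M$ from $X$ and then invoke uniqueness. I would run the construction from the proof of Theorem~\ref{theorem:reconstruction}, observing that since all gleams of $X$ are integers no half-twist corrections are needed to orient the interval bundles over boundaries of regions, and since all gleams actually vanish no further integer twists are required when attaching the region thickenings. The attachments can therefore be performed consistently in a single slice, producing a compact orientable 3-manifold $M$ that collapses onto $X$ (so $X$ is a spine of $M$), together with the obvious product structure $M \times [-1,1]$ on a neighborhood. Then part (1) says that $M \times [-1,1]$ realizes $X$ as a shadow with gleams $0$, and Proposition~\ref{prop:shadow_uniqueness} identifies any 4-manifold having $X$ as a shadow with gleams $0$ with $M \times [-1,1]$.

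The main obstacle is the gleam computation in (1): one must package together orientability of $M$, the spine condition, and the freedom in choosing $R'$ so that both the interior intersection $R \cap R'$ and the boundary intersection $\partial R \cap \partial R'$ vanish simultaneously. Once this is in hand, (2) is a direct application and (3) is essentially a uniqueness argument modelled on the reconstruction theorem.
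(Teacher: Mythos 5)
Your proof is correct and follows essentially the same route as the paper's: the product collapse $M\times[-1,1]\searrow M\times\{0\}\searrow X$ with a disjoint perturbation $R'$ for part (1), removing a ball and identifying the double of $M\setminus B$ with $M\#\bar M$ for part (2), and rebuilding a 3-dimensional thickening from the integer/zero gleams and reapplying part (1) for part (3). The only cosmetic difference is that you explicitly invoke Proposition~\ref{prop:shadow_uniqueness} at the end of part (3), which the paper leaves implicit.
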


In Fig.~\ref{figure:Bing_house} is shown the \emph{Bing's house with two rooms}. This is a spine of the 3-ball, hence by Lemma~\ref{lem:spines} if we give to each region gleam $0$ we get a shadow of the 4-ball. It is composed of $4$ disk regions, $2$ edges and $2$ vertices.

\begin{figure}[htbp]
\begin{center}
\includegraphics[scale=0.6]{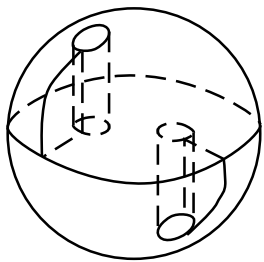}
\end{center}
\caption{The Bing's house with two rooms: a spine of the 3-ball, hence a shadow of the 4-ball. This has $7$ disk regions, $12$ edges and $6$ vertices.}
\label{figure:Bing_house}
\end{figure}

Let $M$ be an orientable compact 3-manifold with non empty boundary, $X$ a spine of $M$ and $G$ a framed knotted trivalent graph in $M$. Orient the regions of $X$ and take a generic projection $D\subset X$ of $G$ in $X$ given by the retraction of $M$ onto $X$ with the further information of the over/under passes on the 4-valent vertices (since both $M$ and the regions of $X$ are oriennted, we can define an oriented $I$-bundle over the region, hence get the over/under passes information). The projection $D$ is a graph whose vertices are 4-valent or trivalent and they lie in regions of $X$. For each component $G_1$ of $G$ attach $G_1\times [0,1]$ to $X$ along $G_1\times \{0\}$ following a parametrization of the projection of that component $G_1\rightarrow D \subset X$. We get a simple polyhedron with non empty boundary $X_G$ that has a framing on the boundary $\partial X_G =G$. Starting from $0$ modify the gleams of the regions in $X_G$ following the local rules in Fig.~\ref{figure:gleam_rule}.

\begin{figure}[htbp]
\begin{center}
\includegraphics[scale=0.6]{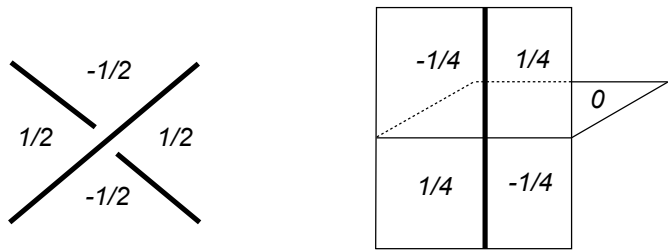}
\end{center}
\caption{The gleam rule. The bold lines are parts of the projection $D$ of the link into the spine.}
\label{figure:gleam_rule}
\end{figure}

\begin{lem}\label{lem:shadow0}
The shadow constructed above is a shadow of the graph $G$ in the double of $M$. If $G$ is a link and we use disks instead of annuli ($G_1\times [0,1]$) we get a shadow of the Dehn surgery along $G$ as a framed link in the double of $M$. If we look at dimension 4 we get a shadow of $M\times [-1,1]$ in the first case, while in the second case we get a shadow of $M\times [-1,1]$ with a 2-handle attached for each component of $G$, where the 2-handles are attached following the framing.
\begin{proof}
See \cite[Section IX.3]{Turaev}.
\end{proof}
\end{lem}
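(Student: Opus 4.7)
The plan is to verify explicitly that $X_G$, as constructed, is embedded as a locally flat shadow of $M\times[-1,1]$, and then interpret the two cases. First I would realize $X_G$ concretely inside $M\times[-1,1]$: place $X$ at level $M\times\{0\}$, isotope $G$ to $M\times\{1/2\}$, and take as the strip $G_1\times[0,1/2]$ the trace of a vertical isotopy whose projection to $M\times\{0\}$ is the retraction $G_1\to D\subset X$. At a crossing of $D$ the two local pre-images of $G$ lie on opposite sides of $X$ (above versus below), realising the over/under pass. With this concrete picture the local models of Fig.~\ref{figure:models} are immediate: type (3) in the interior of regions, type (2) along smooth points of the attaching curves of strips, type (1) at crossings of $D$ and at images of trivalent vertices of $G$, and types (4)--(5) along the free boundary $\partial X_G = G \subset M\times\{1/2\}$, which sits inside the boundary double of $M$.

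Next I would show that $M\times[-1,1]$ collapses onto $X_G$. By Lemma~\ref{lem:spines}, $M\times[-1,1]$ collapses onto $X$. The polyhedron $X_G$ is obtained from $X$ by a sequence of elementary expansions, one for each strip $G_1\times[0,1/2]$ attached along an arc that has a free face in $X$. Elementary expansions are the reverse of elementary collapses, so after performing them the 4-manifold $M\times[-1,1]$ still collapses onto the enlarged spine $X_G$. Combined with the previous paragraph, this shows $X_G$ is a shadow of $M\times[-1,1]$ and therefore a shadow of $G\subset \text{double of }M$.

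The delicate step is checking that the gleams prescribed by Fig.~\ref{figure:gleam_rule} coincide with the intrinsic gleams inherited from the embedding $X_G\hookrightarrow M\times[-1,1]$. I would verify this region by region using $\gl(R)=\tfrac{1}{2}\#(\partial R\cap\partial R')+\#(R\cap R')$ for a generic perturbation $R'$: for a region $R$ untouched by $D$ one can take $R'\subset M\times\{0\}$ disjoint from $R$, hence $\gl(R)=0$. Near a crossing of $D$ the two strands pushed to opposite sides of $X$ force the normal $I$-bundle of $\partial R$ induced by $X_G$ to twist by $\pm 1/2$, contributing $\pm 1/4$ per adjacent quadrant; summed over the four surrounding sectors this reproduces the crossing rule. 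Near a curl of $D$, a positive/negative half-twist of the strip changes the gleam of the enclosing region by $\pm 1/2$. Adding these local contributions across each region reproduces exactly Fig.~\ref{figure:gleam_rule}.

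Finally, to handle the link case with disks in place of annuli, observe that capping the free boundary $G_1\times\{1/2\}$ by a disk amounts to attaching a 4-dimensional 2-handle to $M\times[-1,1]$ along $G_1\subset\partial(M\times[-1,1])$ using precisely the framing read off the original strip; the enlarged 4-manifold still collapses onto the enlarged polyhedron because each 2-handle collapses onto the added disk. Its boundary is the double of $M$ modified by Dehn surgery on $G$, which is the claim. The main obstacle is the local gleam bookkeeping at crossings and curls: the sign conventions depend on coherent orientation choices for $M\times[-1,1]$, for each region, and for the perturbing section defining $\gl$, and one must check that these choices match those implicit in the integer framing of $G$ and in the over/under sign convention of the projection $D$. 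Once a model neighborhood of a crossing and of a curl is analyzed explicitly, the general case follows by additivity of local contributions.
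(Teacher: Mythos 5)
The paper does not actually prove this lemma: it cites \cite[Section IX.3]{Turaev} and moves on, so your proposal is supplying the argument that the paper outsources. What you sketch --- explicit local models as in Fig.~\ref{figure:models}, a collapsing argument, and a region-by-region verification of the gleams, followed by the 2-handle interpretation of the disks --- is essentially the proof in the cited reference, so the overall route is the right one.

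Two points need repair. First, placing $G$ at the interior level $M\times\{1/2\}$ is wrong as written: $M\times\{1/2\}$ is not contained in $\partial(M\times[-1,1])$ (which is the double of $M$), whereas a shadow of the pair $(\text{double of }M,\,G)$ must be properly embedded with $\partial X_G=G\subset\partial W$. The strips $G_1\times[0,1]$ have to run all the way from the projection $D\subset X\subset M\times\{0\}$ up to a copy of $G$ sitting in $M\times\{1\}\subset\partial(M\times[-1,1])$; with that correction the boundary models (4)--(5) of Fig.~\ref{figure:models} do appear along $\partial X_G$ and the rest of your first paragraph goes through. Second, the gleam bookkeeping ``$\pm 1/4$ per adjacent quadrant, summed over the four sectors'' cannot be right: gleams of individual regions are half-integers by definition, so a single region meeting one quadrant of a crossing cannot receive a contribution of $\pm 1/4$, and the local rule of Fig.~\ref{figure:gleam_rule} assigns half-integer contributions to specific local regions at a crossing rather than spreading a total of $\pm 1$ evenly over all four. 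Since the gleams are exactly what makes the reconstructed $4$-manifold (and hence the surgered boundary in the link case) come out correctly, this step has to be done in the honest local model of a crossing, with the two strands at different heights over $X$ and the induced interval bundle over $\partial R$ tracked explicitly. Your collapsing argument (the strips are attached with a free face, so $M\times[-1,1]$ still collapses onto the enlarged polyhedron) and your reading of the disk case as attaching a $4$-dimensional $2$-handle along each component with the framing carried by the strip are fine and are exactly how the cited reference proceeds.
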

 
\begin{theo}\label{theorem:admit_sh}
An oriented compact 4-manifold admits a shadow (with or without boundary) if and only if it admits a handle-decomposition with just 0-, 1- and 2-handles.
\begin{proof}
Let $X$ be a shadow of a compact 4-manifold $W$. Fix a triangulation $X^t$ of the closure in $X$ of each region $R$ of $X$ (closure as sub-spaces, not $\bar R$). Since $W$ collapses onto $X$, $W$ is diffeomorphic to the regular neighborhood of $X$ in $W$. Hence $X^t$ defines a handle-decomposition of $W$ where there is a 0-handle surrounding each vertex of $X$ or of the triangulation of a region, a 1-hangle surrounding each edge of the triangulations, and a 2-handle surronding each triangle of the triangulations. 

Let $D$ be a handle-decomposition of a compact 4-manifold $W$ with just 0-, 1- and 2-handles. We can suppose that every 1-handle is attached to two or one 0-handles. The union of the 0- and 1-handles is a 4-dimensional oriented handlebody $H^{(4)}$. We have $H^{(4)}=H^{(3)}\times [-1,1]$, where $H^{(3)}$ is a 3-dimensional oriented handlebody decomposed in 0- and 1-handles corresponding to the ones of $D$. Insert a Bing's hose with two rooms (Fig.~\ref{figure:Bing_house}) inside each 0-handle of $H^{(3)}$. Let $h^{(1)}$ be a 1-handle of $D$ attached to the 0-handles $h^{(0)}_1$ and $h^{(0)}_2$ (maybe $h^{(0)}_1 = h^{(0)}_2$). Identify two closed disks embedded in the regions of the Bing's houses, one in the Bing's house in $h^{(0)}_1$ and one in the one in $h^{(0)}_2$. If $h^{(0)}_1 = h^{(0)}_2$ the disks are in the same Bing's house and are disjoint. Repeating this procedure for all 1-handles we get a spine of $H^{(3)}$, hence by Lemma~\ref{lem:spines}-(1.) we get a shadow of $H^{(4)}$. The attaching curves of the 2-handles lie in the boundary of $H^{(4)}$. We apply Lemma~\ref{lem:shadow0} to end. Note that this is a shadow without boundary.
\end{proof}
\end{theo}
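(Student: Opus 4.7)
The plan is to prove the two implications separately. The first direction is essentially a statement about regular neighborhoods of $2$-dimensional sub-polyhedra, and the second requires an explicit construction exploiting the Bing house as a spine of the $3$-ball.

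For the ``only if'' direction, suppose $W$ admits a shadow $X$. Since $W$ collapses onto $X$ by definition, $W$ is diffeomorphic to a regular neighborhood of $X$. The idea is to choose a cell structure of $X$ (for instance, a triangulation refining the natural decomposition into vertices, edges and regions) and thicken each open cell to a handle of the same index. Concretely, a neighborhood of each $0$-cell is a $4$-ball (a $0$-handle), a neighborhood of each open $1$-cell is a $1$-handle $D^1\times D^3$, and a neighborhood of each open $2$-cell is a $2$-handle $D^2\times D^2$. The local compatibility of these neighborhoods is guaranteed by the local-flatness condition in Definition~\ref{defn:shadow}. Assembling them yields a handle-decomposition of $W$ using only $0$-, $1$- and $2$-handles.

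For the ``if'' direction, suppose $W$ admits a handle-decomposition $\mathcal{D}$ with only $0$-, $1$- and $2$-handles. The strategy is to first produce a shadow of the union $H^{(4)}$ of the $0$- and $1$-handles, and then handle the $2$-handles via Lemma~\ref{lem:shadow0}. The key observation is that $H^{(4)}$, being an oriented $4$-dimensional handlebody, splits as $H^{(3)}\times[-1,1]$ for some oriented $3$-dimensional handlebody $H^{(3)}$ whose $0$- and $1$-handles mirror those of $\mathcal{D}$. I would then construct a spine of $H^{(3)}$ by placing a Bing house with two rooms (Fig.~\ref{figure:Bing_house}) in each $0$-handle and connecting Bing houses across each $1$-handle by identifying a pair of small disks, one chosen inside a region of each adjacent Bing house. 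Since the Bing house collapses the $3$-ball, this amalgamation is a spine of $H^{(3)}$. Applying Lemma~\ref{lem:spines}-(1.) with all gleams set to $0$ produces a shadow of $H^{(4)}$ without boundary. Finally, the attaching circles of the $2$-handles of $\mathcal{D}$ form a framed link $G\subset \partial H^{(4)}$; after a small isotopy putting $G$ in generic position with respect to the retraction of $H^{(4)}$ onto the spine, Lemma~\ref{lem:shadow0} delivers a shadow of the $4$-manifold obtained from $H^{(4)}$ by attaching $2$-handles along $G$, which is exactly $W$.

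The main obstacle I expect is in the second direction, and specifically in verifying that the union of Bing houses, glued through pairs of identified disks across each $1$-handle, is actually a spine of $H^{(3)}$: one needs to check that $H^{(3)}$ collapses onto it, which requires a careful collapsing argument through the $1$-handles compatible with the inward collapsing of the Bing houses. A related subtlety is that Lemma~\ref{lem:shadow0} is formulated in terms of a framed knotted trivalent graph projected generically into the spine, so to apply it I need to isotope the attaching circles of the $2$-handles to be in generic position and to use the correct gleam-modification rule of Fig.~\ref{figure:gleam_rule}; this is a routine but not entirely cosmetic step.
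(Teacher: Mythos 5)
Your proposal is correct and follows essentially the same route as the paper: the forward direction thickens a triangulation of the shadow into handles of matching index, and the converse inserts a Bing house in each $0$-handle of $H^{(3)}$, joins them across $1$-handles to obtain a spine, and then invokes Lemma~\ref{lem:spines}-(1.) and Lemma~\ref{lem:shadow0} for the $2$-handles. The subtleties you flag (the collapse of $H^{(3)}$ onto the amalgamated Bing houses and the generic-position hypothesis in Lemma~\ref{lem:shadow0}) are real but are handled implicitly in the paper as well.
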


\begin{lem}\label{lem:shadow1}
Every closed oriented 3-manifold has a shadow that is homotopically equivalent to a bouquet of 2-spheres (in particular it is simply connected) and has just disk regions.
\begin{proof}
Let $L\subset S^3$ be a surgery presentation of $M$ in $S^3$ (Definition~\ref{defn:surgery_pres} and Theorem~\ref{theorem:Lickorish-Wallace}). Put a Bing's house (Fig.~\ref{figure:Bing_house}) inside the 4-disk $D^4$ ($\partial D^4 =S^3$), then we apply Lemma~\ref{lem:shadow0}. The shadow is homotopically equivalent to the 4-manifold that is homotopically equivalent to a bouquet of $k$ spheres, where $k$ is the number of components of $L$.
\end{proof}
\end{lem}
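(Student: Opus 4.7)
The plan is to follow the surgery-theoretic route already pioneered in Lemma~\ref{lem:shadow0}, using Bing's house with two rooms as the building block. First I would apply the Lickorish--Wallace theorem (Theorem~\ref{theorem:Lickorish-Wallace}) to obtain an integer surgery presentation $L \subset S^3$ of $M$, with $k$ components. The 4-manifold $W$ constructed from $D^4$ by attaching a 2-handle along each component of $L$ following the surgery framing satisfies $\partial W = M$, and it suffices to exhibit a shadow of $W$ whose boundary is empty (i.e.\ a shadow of $M$ itself in the sense of Definition~\ref{defn:shadow} with $G = \varnothing$) enjoying the stated properties.

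Second, I would insert a Bing's house with two rooms (Fig.~\ref{figure:Bing_house}) inside $D^4$. Since the Bing's house is a spine of the 3-ball, Lemma~\ref{lem:spines}-(1.) (applied after observing that $D^4 = D^3 \times [-1,1]$) tells us that the Bing's house equipped with gleam $0$ on every region is a shadow of $D^4$, with all regions disks and no boundary. Now I would apply the construction of Lemma~\ref{lem:shadow0} to the link $L$, using disks (rather than annuli) attached along the components of $L$ after projecting them generically onto the Bing's house and adjusting the gleams according to the local rule of Fig.~\ref{figure:gleam_rule}. By Lemma~\ref{lem:shadow0}, the resulting simple polyhedron $X$ is a shadow of the 4-manifold obtained from $D^4$ by attaching a 2-handle along each component of $L$ following its framing; this is precisely $W$, and $\partial W = M$. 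Since we attached disks (not annuli), the shadow $X$ has empty boundary, and every newly attached region is a disk; combined with the fact that all Bing's house regions are disks, $X$ has only disk regions.

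For the homotopy type: $X$ is a shadow of $W$, so $W$ collapses onto $X$ and in particular the inclusion $X \hookrightarrow W$ is a homotopy equivalence. But $W$ is built from a single 0-handle (the $D^4$) by attaching $k$ 2-handles, hence is homotopy equivalent to a wedge of $k$ copies of $S^2$. Therefore $X$ is homotopy equivalent to $\bigvee_{i=1}^{k} S^2$, which is simply connected, concluding the proof.

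The only delicate point is bookkeeping: one must check that after projecting $L$ onto the Bing's house and applying the Lemma~\ref{lem:shadow0} construction with disks, every region of $X$ really is a disk (not an annulus or a more complicated surface). This is immediate because the Bing's house itself has only disk regions, and adding disks along the projected components only subdivides existing disk regions into smaller disk regions, so no non-disk region is ever created. With this observation the proof is complete.
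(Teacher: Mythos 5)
Your proposal is correct and follows essentially the same route as the paper: a Lickorish--Wallace surgery presentation, a Bing's house inserted in $D^4$, the construction of Lemma~\ref{lem:shadow0} with disks attached along the components, and the observation that the shadow is homotopy equivalent to the 2-handlebody $W$, hence to a bouquet of $k$ spheres. Your extra remarks (invoking Lemma~\ref{lem:spines}-(1.) for the Bing's house and checking that all regions stay disks) only spell out details the paper leaves implicit.
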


\begin{prop}\label{prop:shadow1}
Every knotted trivalent graph $G$ in a closed 3-manifold $M$ has a shadow that is homotopically equivalent to a bouquet of 2-spheres.
\begin{proof}
Let $X$ be the shadow of $M$ constructed in Lemma~\ref{lem:shadow1} and $W$ the 4-dimensional thickening of $X$ ($\partial W =M$). To get the simple polyhedron $X_G$ we proceed in the same way as Lemma~\ref{lem:shadow0}, then we compute the gleams using the embedding of $X_G$ in $W$. 
\end{proof}
\end{prop}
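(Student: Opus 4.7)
The plan is to obtain $X_G$ by extending the shadow $X$ produced in Lemma~\ref{lem:shadow1} using the same projection-and-attachment procedure that was used in Lemma~\ref{lem:shadow0} to build shadows of graphs in a double. The key observation will be that the extra pieces added to $X$ to encode $G$ are all deformation retracts of their attaching loci, so the enlarged polyhedron will have the same homotopy type as $X$ itself, and the latter is already a wedge of $2$-spheres.

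Concretely, let $X$ be the shadow of $M$ given by Lemma~\ref{lem:shadow1} and let $W$ be its 4-dimensional thickening; by construction $W$ collapses onto $X$, so there is a retraction $r\colon W\to X$, and $\partial W = M$ contains $G$. Choose a generic orientation of the regions of $X$ and look at the projection $D:= r(G)\subset X$: after a small perturbation, $D$ is a graph whose vertices are either trivalent (coming from vertices of $G$) or 4-valent with a recorded over/underpass (coming from crossings of the projection), exactly as in the setup of Lemma~\ref{lem:shadow0}. For each component $G_1$ of $G$, attach $G_1\times[0,1]$ to $X$ along $G_1\times\{0\}$, following a parametrization of the projection $G_1\to D\subset X$ dictated by the framing of $G_1$. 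This produces a simple polyhedron $X_G$ with $\partial X_G = G$, and the gleams on its regions are then read off from the embedding of $X_G$ in $W$ using the same local rule (Fig.~\ref{figure:gleam_rule}) employed in Lemma~\ref{lem:shadow0}. The verification that this is indeed a shadow of $(M,G)$ — i.e.~that $W$ collapses onto $X_G$ and that the induced framing on $\partial X_G = G$ matches the framing of $G$ — is local and follows from the construction just as in Lemma~\ref{lem:shadow0}.

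For the homotopy type, the decisive remark is that every piece glued to $X$ is an $I$-bundle over a graph, hence collapses back onto its attaching locus inside $X$. More precisely, each $G_1\times[0,1]$ deformation retracts onto $G_1\times\{0\}$, and the latter is embedded in $X$ through the parametrized projection; performing these retractions simultaneously yields a strong deformation retraction of $X_G$ onto $X$. Therefore $X_G\simeq X$, and by Lemma~\ref{lem:shadow1}, $X$ is homotopy equivalent to a bouquet of $2$-spheres, whence so is $X_G$.

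The main technical point will be confirming that the recipe of Lemma~\ref{lem:shadow0} still produces a shadow of $(M,G)$ in the present setting, where $X$ is not a spine of $M$ itself but only a shadow of a 4-manifold $W$ with $\partial W = M$; once that is granted the homotopy-type statement is essentially formal, because the construction only glues annular/strip pieces which by their very product structure contract onto their attaching circles.
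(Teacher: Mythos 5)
Your proposal is correct and follows essentially the same route as the paper: take the shadow $X$ of $M$ from Lemma~\ref{lem:shadow1}, attach the annuli $G_1\times[0,1]$ along the projection of $G$ as in Lemma~\ref{lem:shadow0}, and read off the gleams from the embedding in $W$. The extra paragraph you include, observing that the attached pieces deformation retract onto their attaching loci so that $X_G\simeq X$, makes explicit a point the paper leaves implicit, but it is the same argument.
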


In \cite[Theorem 3.14]{Costantino-Thurston} and \cite[Proposition 2.2]{Carrega-Martelli} there is a more constructive proof of Proposition~\ref{prop:shadow1} for the case of graphs in the connected sum $\#_g(S^1\times S^2)$ of $g$ copies of $S^1\times S^2$. Following that method we find a shadow homotopically equivalent to a bouquet of circles:

\begin{prop}[\cite{Carrega-Martelli}]\label{prop:shadow2}
Every knotted trivalent framed graph in $\#_g(S^1\times S^2)$ has a shadow that is homotopically equivalent to a graph of genus $g$.
\begin{proof}
The connected sum of $g$ copies of $S^1\times S^2$ is the boundary of the orientable 4-dimensional handlebody of genus $g$, $W_g$. Pick a diagram $D$ of $G$ in the disk with $g$ holes $S_{(g)}$ (see Subsection~\ref{subsec:diag}). We suppose that there is a smallest closed disk with $g$ holes $S$ that contains the diagram $D$. This is equivalent to ask that the diagram is connected, not contained in a disk with $g'<g$ holes, and no vertex of the diagram disconnects it: these conditions can be easily achieved using Reidemeister moves. We push the punctured disk $S$ entirely inside $W_g$, obviously $W_g$ collapses onto $S$. We enlarge $S$ by adding an immersed cylinder $C_i\cong G_i\times [0,1]$ for each connected component $G_i$ of $G$, such that the part $G_i\times (0,1]$ is properly embedded in $W_g\setminus S$, $C_i \cap \partial W_g = G_i $, and the part $G_i\times \{0\}$ is attached following a parametrization $G_i \rightarrow D \subset S$ of the projection of $G_i$ in $S\subset S_{(g)}$. We do it for every component $G_i$ and we get a simple polyhedron $X_G$ that is properly embedded in $W_g$, $\partial X_G =G$, and $W_g$ collapses onto $X_G$. To get the gleams we proceed as in Lemma~\ref{lem:shadow0}.
\end{proof}
\end{prop}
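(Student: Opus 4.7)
The plan is to construct the shadow directly from a diagram of $G$, building on the machinery already developed for $\#_g(S^1\times S^2)$. First I recall that $\#_g(S^1\times S^2)$ is naturally the boundary of the oriented $4$-dimensional handlebody $W_g$ of genus $g$, which collapses onto any of its $1$-dimensional spines (a graph of genus $g$). So if I can produce a shadow $X_G$ of $(G,\#_g(S^1\times S^2))$ sitting inside $W_g$ which deformation retracts to such a graph, I am done.

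First I would pick a diagram $D$ of $G$ in the disk with $g$ holes $S_{(g)}$ via a fixed e-shadow, as introduced in Subsection~\ref{subsec:diag}. Using Reidemeister moves, I can arrange that $D$ is contained in a smallest closed sub-surface $S\subset S_{(g)}$ homeomorphic to a disk with $g$ holes; concretely, I would make sure $D$ is connected, is not contained in any disk with fewer than $g$ holes, and has no separating vertex. Then I would push $S$ slightly into the interior of $W_g$ so that $W_g$ still collapses onto $S$ (this is easy because the $3$-dimensional handlebody that is the thickening of $S$ is itself isotopic to half of a Heegaard decomposition of $\partial W_g$).

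Next I would enlarge $S$ into a simple polyhedron $X_G$ by attaching, for every component $G_i$ of $G$, an immersed cylinder $C_i\cong G_i\times[0,1]$ with $G_i\times\{0\}$ glued to $S$ along a parametrization of the projection $G_i\to D\subset S$ and with $G_i\times(0,1]$ properly embedded in $W_g\setminus S$, so that $C_i\cap\partial W_g=G_i\times\{1\}=G_i$. The resulting $X_G$ is a simple polyhedron properly embedded in $W_g$ with $\partial X_G=G$. The $4$-manifold $W_g$ collapses onto $X_G$: indeed, a regular neighborhood of each cylinder $C_i$ in $W_g$ is a $D^2$-bundle over $C_i$ which collapses back onto its bottom boundary circle $G_i\times\{0\}\subset S$, so the full collapse of $W_g$ onto $S$ extends to a collapse onto $X_G$. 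Gleams on the regions of $X_G$ are then computed from the embedding $X_G\hookrightarrow W_g$, which by the local computation already encoded in Fig.~\ref{figure:gleam_rule} agrees with the standard diagrammatic rule.

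Finally I would check the homotopy type. The punctured disk $S_{(g)}$ deformation retracts onto a wedge of $g$ circles, i.e.\ a graph of genus $g$; and each cylinder $C_i$ attached along a circle of $S$ deformation retracts onto that circle. Hence $X_G$ deformation retracts onto $S$, and thus onto a graph of genus $g$. The main technical obstacle I expect is the delicate verification that the collapse of $W_g$ onto $S$ can be performed compatibly with the cylinders $C_i$ and that the gleams induced on the regions of $X_G$ really are the ones prescribed by the gleam rule; this requires some care with the local models of simple polyhedra near crossings, vertices of $G$, and the attaching circles of the cylinders.
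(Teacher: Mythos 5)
Your proposal is correct and follows essentially the same route as the paper's proof: choose a diagram contained in a smallest disk with $g$ holes, push that punctured disk into $W_g$, attach an immersed cylinder over each component of $G$, and compute gleams via the gleam rule. The only addition is your explicit verification of the homotopy type at the end, which the paper leaves implicit.
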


In Proposition~\ref{prop:standard_sh} we improve Theorem~\ref{theorem:admit_sh}, Lemma~\ref{lem:shadow1}, Proposition~\ref{prop:shadow1} and Proposition~\ref{prop:shadow2} by
showing that we can obtain a standard shadow in all cases.

By Theorem~\ref{theorem:admit_sh} it seems that only a small class of 4-manifolds can be studied by shadows. The following results say that shadows can be used to study all the closed 4-manifolds.

We remind that the connected sum $\#_g(S^1\times S^2)$ of $g$ copies of $S^1\times S^2$ is the boundary of the 4-dimensional orientable handlebody of genus $g$, $W_g$.
\begin{theo}[Laudenbach-Poenaru]\label{theorem:Laudenbach-Poenaru}
Every diffeomorphism of the connected sum $\#_g(S^1\times S^2)$ in itself extends to a diffeomorphism of $W_g$ in itself.
\begin{proof}
See \cite{Laudenbach-Poenaru}.
\end{proof}
\end{theo}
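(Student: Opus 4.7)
The plan is to proceed by handle extension along the standard decomposition of $W_g$. Recall that $W_g$ admits a handle decomposition with a single 0-handle $D^4$ and $g$ 1-handles $h_1, \ldots, h_g$, each of the form $D^1 \times D^3$. The co-core of $h_i$ is a properly embedded 3-disk $D_i \subset W_g$ whose boundary is a 2-sphere $\Sigma_i \subset M := \#_g(S^1 \times S^2) = \partial W_g$. The collection $\Sigma = \Sigma_1 \sqcup \cdots \sqcup \Sigma_g$ is a system of pairwise disjoint, essential 2-spheres whose complement in $M$ is $S^3$ with $2g$ disjoint open 3-balls removed. Given a self-diffeomorphism $\phi$ of $M$, the strategy is to isotope $\phi$ so that $\phi(\Sigma) = \Sigma$ (up to a permutation of the components), then extend $\phi$ over the co-cores of the 1-handles, and finally over the 0-handle.

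The first step is the crucial and most delicate one. I would prove that, up to isotopy, $\phi$ permutes the spheres of $\Sigma$, which is equivalent to the statement that any two maximal systems of pairwise disjoint, non-parallel, essential 2-spheres in $M$ are equivalent via an ambient diffeomorphism isotopic to the identity. The argument uses Papakyriakopoulos' sphere theorem to put $\phi(\Sigma)$ in general position with respect to $\Sigma$, followed by iterated surgery along innermost disks of intersection: each surgery either strictly reduces the number of intersection circles, or splits off a 2-sphere bounding a 3-ball, which is then removed by isotopy. After finitely many steps, $\phi(\Sigma)$ becomes disjoint from $\Sigma$, and a combinatorial argument based on the prime decomposition of $M$ (every summand is $S^1 \times S^2$, whose essential sphere is unique up to isotopy) shows that any two disjoint maximal sphere systems are ambient-isotopic.

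Once $\phi$ is arranged so that $\phi(\Sigma_i) = \Sigma_{\sigma(i)}$ for some permutation $\sigma$, a further isotopy makes $\phi$ agree on a tubular neighborhood of $\Sigma$ with a standard model. Extending $\phi$ over each 1-handle $h_i \cong D^1 \times D^3$ amounts to extending the induced diffeomorphism $\phi|_{\Sigma_i}$ of $S^2$ over the 3-disk $D_i$; by Smale's theorem $\mathrm{Diff}(S^2) \simeq O(3)$, any such diffeomorphism extends radially to $D^3$, and this in turn extends to $h_i$ by taking the product with the identity on $D^1$. After extending over all 1-handles, one is left with a diffeomorphism of $S^3 = \partial D^4$ to extend over the 0-handle $D^4$; by Cerf's theorem $\pi_0\, \mathrm{Diff}^+(S^3) = 0$, every such diffeomorphism is smoothly isotopic to the identity, and the isotopy extends radially over $D^4$. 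The main obstacle throughout is the sphere-system uniqueness step, which requires genuine 3-manifold machinery (sphere theorem plus surgery in reducible manifolds); the handle extensions in the final step are standard consequences of Smale's and Cerf's theorems.
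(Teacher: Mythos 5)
There is a genuine gap at the step you yourself identify as crucial. The claim that any two maximal systems of disjoint, non\nobreakdash-parallel, essential 2-spheres in $M=\#_g(S^1\times S^2)$ are related by an ambient diffeomorphism \emph{isotopic to the identity} is false for $g\geq 2$, and consequently a general $\phi$ cannot be isotoped so that $\phi(\Sigma)=\Sigma$ up to permutation. A system $\Sigma'$ whose complement is a $2g$-punctured $S^3$ determines (up to sign) a basis $[\Sigma'_1],\dots,[\Sigma'_g]$ of $H_2(M;\Z)\cong\Z^g$, and an ambient isotopy cannot change this basis. But a handle-slide diffeomorphism (realizing, say, the automorphism $a\mapsto ab$, $b\mapsto b$ of $\pi_1(M)\cong F_g$) carries the standard system to one representing a different basis, e.g.\ containing the class $[\Sigma_1]+[\Sigma_2]$. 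So $\phi(\Sigma)$ need not be isotopic to $\Sigma$; this is exactly why the set of isotopy classes of such ``simple'' sphere systems is large (it is the vertex set of the sphere complex used to study ${\rm Out}(F_g)$). Your innermost-disk surgery does make $\phi(\Sigma)$ disjoint from $\Sigma$, but disjointness is far from isotopy: inside the punctured $S^3$ obtained by cutting along $\Sigma$, the spheres of $\phi(\Sigma)$ are separating spheres that may partition the boundary components in many inequivalent ways, and the uniqueness of the essential sphere in a single $S^1\times S^2$ does not control this. Once this step fails, the handle-by-handle extension has nothing to extend along.

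The actual route of Laudenbach--Poenaru is different: one first invokes Laudenbach's computation of $\pi_0\,{\rm Diff}(\#_g(S^1\times S^2))$ (resting on his ``homotopy implies isotopy'' theorems for 2-spheres in these manifolds), which exhibits an explicit generating set --- permutations of the summands, diffeomorphisms supported in a single summand, twists along embedded 2-spheres, and handle slides --- and then checks that each generator extends over $W_g$. The point is that the generators that do \emph{not} preserve $\Sigma$ up to isotopy, namely the handle slides, still extend, because they are the boundary restrictions of slides of one 4-dimensional 1-handle over another; the sphere twists extend since $\pi_1(SO(3))\to\pi_1(SO(4))$ is onto, and the remaining generators extend by the radial/Cerf--Smale arguments you describe. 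So your final extension step is fine, but it must be applied generator by generator after the mapping class group computation, not after a (false) normalization of $\phi(\Sigma)$.
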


\begin{cor}
Let $W$ be a compact 4-manifold with a handle-decomposition with just 0-,1- and 2-handle. Then up to diffeomorphisms there is at most one closed 4-manifold $W'$ that is obtained adding 3- and 4-handles to $W$. 
\begin{proof}
Let $W_1$ and $W_2$ be two closed 4-manifolds obtained adding 3- and 4-handles to $W$. The union of the 3- and 4-handles of $W_1$ and $W_2$ is an orientable handlebody $W_g$. Therefore $W_1= W \cup_{\varphi_1} W_g$ and $W_2= W \cup_{\varphi_2} W_g$, where $\phi_1,\varphi_2 : \partial W_g \rightarrow \partial W$ are two diffeomorphisms of $\#_g(S^1\times S^2)$ in itself. By Theorem~\ref{theorem:Laudenbach-Poenaru} $\varphi_2^{-1}\circ \varphi_1: \partial W_g \rightarrow \partial W_g$ is the restriction to the boundary of a diffeomorphism $\Phi: W_g \rightarrow W_g$. We can define a diffeomorphism between $W_1$ and $W_2$ as follows:
\beq
&W \cup_{\varphi_1} W_g \longrightarrow  W \cup_{\varphi_2} W_g & \\
&x  \mapsto  x  \ \ \text{for } x\in W & \\
&y  \mapsto  \Phi(y)  \ \ \text{for } y\in W_g  .& 
\eeq
\end{proof}
\end{cor}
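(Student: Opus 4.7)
The plan is to turn the cobordism upside down and apply the Laudenbach-Poenaru theorem. Suppose $W_1$ and $W_2$ are two closed $4$-manifolds obtained from $W$ by attaching $3$- and $4$-handles, and view each $W_i$ as the union of $W$ with a ``cap'' $C_i = W_i \setminus \mathrm{int}(W)$. Reading the handle decomposition of $C_i$ from the top down, the attached $4$-handles become $0$-handles and the $3$-handles become $1$-handles, so each $C_i$ is a $4$-dimensional orientable handlebody with only $0$- and $1$-handles. Hence $C_i \cong W_{g_i}$ for some genus $g_i$, and consequently $\partial C_i = \partial W \cong \#_{g_i}(S^1\times S^2)$. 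Since both caps share the same attaching boundary $\partial W$, and the genus of a handlebody is determined by its boundary connect-sum decomposition, one has $g_1 = g_2 =: g$.

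Next I would write $W_i = W \cup_{\varphi_i} W_g$ for gluing diffeomorphisms $\varphi_i : \partial W_g \to \partial W$, where we have identified each cap $C_i$ with the standard $4$-dimensional genus-$g$ handlebody $W_g$. The potential failure of $W_1 \cong W_2$ is then encoded entirely in the self-diffeomorphism
\[
\psi := \varphi_2^{-1} \circ \varphi_1 \ : \ \partial W_g \longrightarrow \partial W_g
\]
of $\#_g(S^1 \times S^2)$. Here is the key input: by Theorem~\ref{theorem:Laudenbach-Poenaru} (Laudenbach-Poenaru), every self-diffeomorphism of $\#_g(S^1 \times S^2)$ extends to a self-diffeomorphism $\Phi : W_g \to W_g$ of the $4$-dimensional handlebody it bounds. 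Take such an extension for the $\psi$ above.

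Finally I would assemble the desired diffeomorphism by gluing: define $F : W_1 \to W_2$ to be the identity on the subset $W \subset W_i$ and to equal $\Phi$ on the cap $W_g \subset W_i$. By construction $\Phi|_{\partial W_g} = \psi$ identifies the gluing map $\varphi_1$ with $\varphi_2$, so the two pieces match along $\partial W$ and produce a well-defined homeomorphism; after standard smoothing along the collar it is a diffeomorphism. This shows $W_1 \cong W_2$, completing the argument.

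The only real obstacle is verifying that the genus $g$ of the upper handlebody is intrinsic (i.e., the same for $W_1$ and $W_2$), which amounts to noting that $g$ is forced by $\partial W$ via $\partial W_g \cong \#_g(S^1\times S^2)$. All the hard content is imported from Laudenbach-Poenaru; without that theorem one could glue only topologically and would lose control of the smooth structure on the $4$-handlebody part.
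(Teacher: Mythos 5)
Your proof is correct and follows essentially the same route as the paper: turn the 3- and 4-handles upside down to identify the cap with a 4-dimensional handlebody $W_g$, use Laudenbach--Poenaru to extend $\varphi_2^{-1}\circ\varphi_1$ to a self-diffeomorphism $\Phi$ of $W_g$, and glue the identity on $W$ with $\Phi$ on the cap. The only difference is that you spell out why the genus of the cap is the same for $W_1$ and $W_2$, a point the paper leaves implicit.
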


\section{Moves}

Usually in a representation theory there are moves relating the representing objects. For instance we can think about Reidemeister moves for link diagrams in the disk or sliding and birth/death of handles for handle-decompositions of manifolds. We have moves for shadows too, and  we introduce them in this section describing their behavior.

\begin{defn}\label{defn:sh_moves}
There are three particular local moves on shadows called $0 \rightarrow 2$ or \emph{lune} (Fig.~\ref{figure:shadowmoves}-(2)), $2 \rightarrow 3$ (Fig.~\ref{figure:shadowmoves}-(1)) and $1 \rightarrow 2$ (Fig.~\ref{figure:shadowmoves}-(4)). They modify a shadow just in a little contractible portion and leave the rest unchanged. They consist of a sliding of a region. In Fig.~\ref{figure:shadowmoves} just the attaching curve of the sliding region is pictured in red. The moves $2\rightarrow 3$ and $1\rightarrow 2$ create a new region and the gleam of this region is specified in the figure. The gleams of the other regions change as specified in figure. We look just at a part of the whole shadow and more than one of the pictured parts of region may belong to the same global region; if it is so, the gleam of such region is the sum of the local gleams. Differently from $0\rightarrow 2$ and $2\rightarrow 3$ we can not completely draw $1\rightarrow 2$ in $\mathbb{R}^3$. The move in Fig.~\ref{figure:shadowmoves}-(3) is interesting and useful and can be obtained as a composition of the other moves in figure and their inverses.

These moves together with their inverses are called \emph{shadow moves}. Shadows related by shadow moves are said to be \emph{equivalent}.
\end{defn}

\begin{figure}[htbp]
\begin{center}
\includegraphics{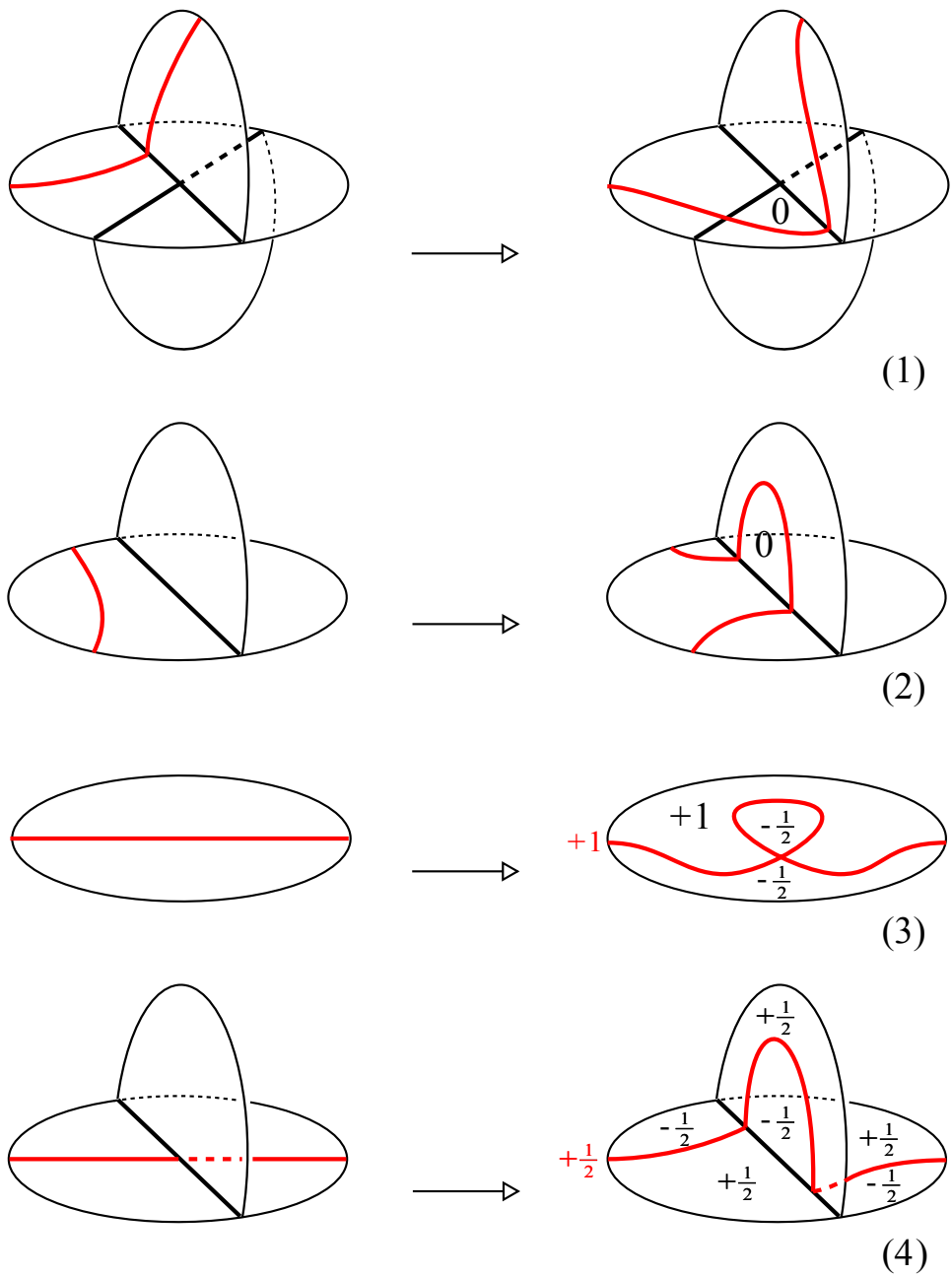}
\end{center}
\caption{Shadow moves.}
\label{figure:shadowmoves}
\end{figure}

It is easy to get the following:
\begin{prop}
Two equivalent shadows give the same 4-manifold.
\end{prop}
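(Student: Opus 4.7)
The plan is to apply the reconstruction theorem (Theorem~\ref{theorem:reconstruction}) together with the uniqueness statement of Proposition~\ref{prop:shadow_uniqueness}: to prove that two equivalent shadows $X$ and $X'$ give the same 4-manifold, it suffices to exhibit a single compact oriented 4-manifold $W$ into which both embed as shadows with the prescribed gleams. Since every shadow move is local (it modifies $X$ only inside a small contractible neighborhood $U \subset X$ and leaves everything outside $U$ unchanged), the problem reduces to the following local claim: the two local shadows $X\cap U$ and $X'\cap U$ are shadows of a common compact oriented 4-manifold $B$ (in each case a 4-ball), via embeddings agreeing on a collar of the boundary. Granting this, one glues the local diffeomorphism $W_{X\cap U}\to W_{X'\cap U}$ of 4-balls to the identity on $W\setminus U$ to build a global diffeomorphism $W_X \to W_{X'}$.

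I would then verify the local claim for each of the three moves in Definition~\ref{defn:sh_moves}. For the $2\to 3$ move, both sides are (pieces of) spines of $B^3$, so by Lemma~\ref{lem:spines}-(1) each piece, with all gleams zero, embeds as a shadow of $B^3 \times [-1,1] = B^4$; the explicit gleam adjustments $\pm \tfrac12$ prescribed in Fig.~\ref{figure:shadowmoves}-(1) are chosen precisely to compensate the self-intersection (and boundary $I$-bundle twisting) introduced when the central disk is pushed across the opposite edge. The $0\to 2$ (lune) move is analogous: the ``bubble'' created by a finger isotopy of one sheet of $X$ across another produces two new regions whose gleams, computed as in the definition, equal exactly those prescribed in the figure, so the new simple polyhedron still embeds in the same local $B^4$ and collapses onto a point through the old one. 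For the $1\to 2$ move one does the same bookkeeping, this time involving a half-integer gleam that reflects the non-orientability of the new interval bundle over the newly created edge.

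The main concrete task — and the only non-trivial content — is checking that the numerical gleam assignments displayed in Fig.~\ref{figure:shadowmoves} are the correct ones: one writes down a perturbation $R'$ of each affected region $R$ before and after the move, counts the signed intersections $\#(R\cap R')$ and $\tfrac12 \#(\partial R \cap \partial R')$ as in the definition of gleam, and verifies that the difference matches the label in the figure. This is a direct but slightly delicate calculation; the expected obstacle is simply the combinatorial accounting of signs and of orientability of the interval bundles around the new edges.

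Once these local computations are recorded, the proof assembles as above: for each shadow move the local thickenings match, the match extends by the identity outside $U$, and the reconstruction theorem then identifies $W_X$ with $W_{X'}$. In particular, a whole sequence of shadow moves yields a chain of diffeomorphisms whose composition shows that equivalent shadows represent the same oriented 4-manifold.
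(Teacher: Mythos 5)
Your outline is correct and is the standard argument; in fact the paper states this proposition without proof (``It is easy to get the following''), and your localization strategy --- reduce each move to a contractible neighborhood, check via the gleam bookkeeping that both local pieces thicken to a $4$-ball compatibly with a collar of the boundary, then glue the local diffeomorphism to the identity outside and invoke the reconstruction/uniqueness results --- is exactly the intended one. The only content you defer, the verification that the gleam labels in Fig.~\ref{figure:shadowmoves} match the signed intersection counts, is correctly identified as the crux and is a routine (if fiddly) computation.
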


We remind that a shadow is standard if all the regions that are not adjacent to boundary edges are disks and the boundary edges are adjacent either to annuli or to disks (Definition~\ref{defn:shadow}).

\begin{prop}\label{prop:standard_sh}
Every shadow without closed regions is equivalent to a standard shadow.
\begin{proof}
We apply the $0 \rightarrow 2$ move to the boundary of a region $R$ of genus bigger than $1$. This creates new disk regions and reduces the genus of $R$. We repeat this procedure for each region of genus bigger than $1$.
\end{proof}
\end{prop}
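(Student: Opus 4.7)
The plan is to reduce every non-standard region to a disk (or, for regions adjacent to $\partial X$, to a disk or annulus) by cutting the region along properly embedded arcs, where the cut is implemented by a combination of the $0\to 2$ (lune) move and $2\to 3$ moves. This is an inductive argument on a suitable complexity of the regions.

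The main tool is a "cutting gadget": given a region $R$ and a properly embedded arc $\alpha\subset R$ whose endpoints lie on $\partial R$ (in the 1-skeleton, or on $\partial X$), I would argue that a sequence of shadow moves has the effect of replacing $R$ by the (possibly disconnected) surface $R\setminus\alpha$. One performs an initial $0\to 2$ move at one endpoint of $\alpha$, which inserts a bigon whose interior edge runs along a short initial segment of $\alpha$; one then propagates this bigon along $\alpha$ by iterated $2\to 3$ moves, sliding it across consecutive edges of the 1-skeleton dual to $\alpha$, until it reaches the other endpoint. The gleam on each newly created region is dictated by the rules of Definition~\ref{defn:sh_moves}, and the half-integrality/parity constraint for the new regions is automatic because the lune move inserts an integral gleam while the $2\to 3$ moves redistribute gleams consistently with the interval bundle structure along $\partial R$.

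With the cutting gadget in hand, assign to every region $R$ the complexity $c(R) := 2g(R) + b(R)$, where $g(R)$ and $b(R)$ are the genus and number of boundary circles of $R$ (counting non-orientable genus as usual). Since the shadow has no closed regions, each region has $b(R)\geq 1$, so $c(R)\geq 1$, with equality exactly for disks and $c(R)=2$ exactly for annuli and M\"obius bands. For an interior region $R$ with $c(R)\geq 2$, pick either a non-separating arc $\alpha\subset R$ (available if $g(R)\geq 1$) or an arc joining two distinct boundary circles (available if $b(R)\geq 2$); cutting along $\alpha$ via the gadget either drops $g(R)$ by one or splits $R$ into two regions $R_1,R_2$ with $c(R_1)+c(R_2)<c(R)+1$. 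In either case the quantity $\sum_{R\text{ interior}}(c(R)-1)$ strictly decreases, so after finitely many steps every interior region is a disk. For a region adjacent to a boundary edge of $X$ that is neither a disk nor an annulus, one applies the same cutting procedure using arcs chosen so that after the cut each boundary edge of $X$ ends up in a region of complexity at most $2$; the same complexity argument gives termination.

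The principal obstacle is the rigorous justification of the cutting gadget, i.e.\ showing that the lune move combined with $2\to 3$ moves genuinely realizes "cutting along a properly embedded arc" with the gleams behaving correctly on both sides. The topological part of this is straightforward once one picks a triangulation of $R$ containing $\alpha$ as a subcomplex, but the bookkeeping of gleams requires a careful local check at every crossing of $\alpha$ with an edge of the 1-skeleton, using the gleam prescription in Figure~\ref{figure:shadowmoves}. The complexity reduction and termination are then routine.
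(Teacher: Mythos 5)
Your overall strategy is the same as the paper's: use the $0\to 2$ move to cut non-disk regions along properly embedded arcs until the shadow becomes standard. The extra structure you supply --- the complexity $c(R)=2g(R)+b(R)$, the distinction between non-separating arcs and arcs joining distinct boundary circles, and the termination argument --- is more careful than the paper's one-line proof, which only speaks of reducing genus and never addresses regions with several boundary circles.

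There is, however, a genuine gap in your ``cutting gadget''. A properly embedded arc $\alpha\subset R$ has its interior entirely inside the single region $R$, which is disjoint from the 1-skeleton; there are no ``edges of the 1-skeleton dual to $\alpha$'', so the proposed propagation by $2\to 3$ moves is vacuous and cannot carry the bigon anywhere. What a single $0\to 2$ (lune) move actually realizes is a cut of $R$ along an arc \emph{both of whose endpoints lie on the same edge of $\partial R$}: the finger of an adjacent region enters and exits $R$ across that one edge. This matters because same-edge cuts do not suffice. Once the genus of a piece is zero, every arc with both endpoints on one boundary circle of a planar piece is separating and replaces a piece with $b$ boundary circles by two planar pieces with $b_1+b_2=b+1$ circles, so the total of $(b-1)$ over the pieces never decreases; an internal annulus, in particular, can never be turned into a disk by such cuts. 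The step your induction really depends on --- cutting along an arc joining two \emph{distinct} boundary circles of $R$, which is what makes $c(R)$ drop when $g(R)=0$ and $b(R)\geq 2$ --- therefore needs a different mechanism: a composite of the basic moves in which the finger of an adjacent region enters $R$ across one edge and exits across another (compare the derived move of Fig.~\ref{figure:shadowmoves}-(3)). You should exhibit or cite that composite explicitly in place of the $2\to 3$ propagation. Once it is in place your complexity argument goes through; note only that in the separating case you mention one has $c(R_1)+c(R_2)=c(R)+1$ rather than a strict inequality, but that case never arises for the arcs you actually select.
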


By Proposition~\ref{prop:standard_sh} we can improve Theorem~\ref{theorem:admit_sh}, Lemma~\ref{lem:shadow1}, Proposition~\ref{prop:shadow1} and Proposition~\ref{prop:shadow2} saying that those shadows are standard too.

Let $X$ and $Y$ be two shadows. Identify two embedded closed disks $D_X$ and $D_Y$ respectively lying in the interior of two regions $R_X\subset X$ and $R_Y\subset Y$. Give to the identification of $D_X$ and $D_Y$ gleam $k\in \Z$. Give to the complement of the disk in $R_X$ and $R_Y$ respectively gleam $\gl(R_X) -k$ and $\gl(R_Y) -k$. Assign to the other regions the gleam that they have in $X$ and $Y$ as separated shadows. Let $Z$ be the resulting shadow.
\begin{prop}\label{prop:intersect_shadows}
$Z$ is a shadow of an oriented 4-manifold $W$ containing $X$ and $Y$ with intersection number $k$. Namely $X,Y\subset W$ intersect transversely in a finite number of points ($R_X \cap R_Y$) and counting them with sign we get $k\in \Z$. 
\begin{proof}
We can suppose that in the construction of the 4-dimensional thickenings $W_X$ and $W_Y$ of $X$ and $Y$, $D_X$ and $D_Y$ are thickened to a $D^2$-bundle $E_X$ and $E_Y$ with Euler number $k$ while $R_X\setminus D_X$ and $R_Y\setminus D_Y$ are thickened to $D^2$-bundle with Euler number $\gl(R_X) - k$ and $\gl(R_Y) -k$. The identification of $D_X$ and $D_Y$ extends to an orientation reversing diffeomorfism $\varphi : E_X \rightarrow E_Y$. We get $W$ by taking the disjoint union of $W_X$ and $W_Y$ and identifying $E_X \subset W_X$ with $E_Y \subset W_Y$ via $\varphi$.
\end{proof}
\end{prop}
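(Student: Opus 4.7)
The plan is to apply the Reconstruction Theorem (Theorem~\ref{theorem:reconstruction}) to build $W_X$ and $W_Y$ separately, then glue them together along trivial neighborhoods of the identified disks. First, I would note that in the proof of the Reconstruction Theorem the thickening of a region $R$ is built as a $D^2$-bundle over $R$ whose Euler number equals $\gl(R)$, and the specific Euler number is distributed over the components of $\partial R$ via an arbitrary decomposition. Using this freedom, I would subdivide the region $R_X$ along the circle $\partial D_X$ so that $D_X$ receives a trivial $D^2$-bundle $E_X$ with Euler number $k$, while $R_X\setminus D_X$ receives a $D^2$-bundle of Euler number $\gl(R_X)-k$; these glue along $\partial D_X\times D^2$ to reproduce the required thickening over $R_X$. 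I would do the same on the $Y$ side, obtaining a trivial $D^2$-bundle $E_Y$ with Euler number $k$ over $D_Y$.

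Next, since $D_X$ and $D_Y$ are disks, $E_X\cong D_X\times D^2$ and $E_Y\cong D_Y\times D^2$ as oriented bundles. The given homeomorphism $D_X\cong D_Y$ extends (in many ways, all isotopic) to a diffeomorphism $\varphi:E_X\to E_Y$ of their total spaces; among these extensions I would pick one that is orientation-reversing as a map of oriented $4$-manifolds. I then form $W:=W_X\cup_\varphi W_Y$. Because $\varphi$ reverses orientation, the orientations of $W_X$ and $W_Y$ combine to give $W$ a consistent orientation. The polyhedra $X\subset W_X$ and $Y\subset W_Y$ descend to a subpolyhedron $Z\subset W$; the attaching produces a new triple edge along $\partial D_X=\partial D_Y$ where the three ``half-regions'' $D_X,\ R_X\setminus D_X,\ R_Y\setminus D_Y$ meet, exactly as in the model of Fig.~\ref{figure:models}-(2). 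The collapses $W_X\searrow X$ and $W_Y\searrow Y$ restrict to the standard collapse of $E_X=E_Y$ onto the identified disk, so they combine into a collapse $W\searrow Z$. The gleams of all regions not equal to $D_X$ are unchanged from their values in $W_X$ or $W_Y$, while the gleam of the new region $D_X$ is read off as $k$, matching the prescription defining $Z$.

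Finally, I would verify the intersection number statement. By construction $X\cap Y$ is the disk $D_X=D_Y$, not transverse, so I would compute $X\cdot Y$ by a generic perturbation. Near the gluing the surface $R_Y$ is embedded in the normal bundle $E_X$ of $R_X$ as the zero section over $D_X$, and outside a neighborhood of the gluing it agrees with its unperturbed image. Replace $R_Y|_{D_Y}$ by a generic smooth section $s:D_X\to E_X$ that equals the zero section on $\partial D_X$; the number of transverse zeros of $s$, counted with signs, is the relative Euler number of $E_X$ rel boundary, namely $k$. Outside the disk the perturbed surface no longer meets $R_X$, so the total transverse intersection of $X$ and $Y$ in $W$ is $k$, as claimed.

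The principal subtlety is the sign bookkeeping in the last step: one must check that the orientation-reversing gluing, combined with the induced orientations on $R_X$ and $R_Y$ that make $W$ collapse correctly onto $Z$, produces the Euler number $+k$ rather than $-k$. This is exactly the reason for requiring $\varphi$ to be orientation-reversing: with the orientation-preserving choice, the same section computation would yield $-k$, and the two conventions used to attach ``gleam $k$'' to the identified disk in the definition of $Z$ would be inconsistent.
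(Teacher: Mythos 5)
Your proposal is correct and follows essentially the same route as the paper: split each of $R_X$, $R_Y$ along the attaching disk so that $D_X$ and $D_Y$ are thickened to $D^2$-bundles $E_X$, $E_Y$ of Euler number $k$, then glue $W_X$ to $W_Y$ along an orientation-reversing diffeomorphism $E_X\to E_Y$ extending the identification of the disks. The paper's proof stops there, so your additional verification of the collapse, of the gleams, and of the intersection number via the relative Euler number of $E_X$ is welcome extra detail rather than a departure.
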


\begin{rem}\label{rem:con_sum}
Note that we can transport a region with gleam $0$ whose closure in $X$ is an embedded disk through each region just by the $0\rightarrow 2$ moves and their inverses. Hence we can perform the construction of Theorem~\ref{prop:intersect_shadows} on each pair of regions $R_X$ and $R_Y$ without changing the equivalence class of $Z$, and hence without changing the resulting 4-manifold, provided that the gleam of the attaching disk is $0$.
\end{rem}

\begin{defn}
Given two shadows $X$ and $Y$ we denote by $X\# Y$ the shadow  $Z$ constructed in Theorem~\ref{prop:intersect_shadows} giving to the attaching disk gleam $0$ and we call it the \emph{connected sum} of $X$ and $Y$. By Remark~\ref{rem:con_sum} it is well defined up to $0\rightarrow 2$ moves and their inverses.
\end{defn}

\begin{cor}\label{cor:connected_sum}
Let $X$ and $Y$ be two shadows with 4-dimensional thickening respectively $W_X$ and $W_Y$. Then $X\#Y$ is a shadow of the boundary connected sum of $W_X$ and $W_Y$
$$
W_{X\#Y} = W_X \#_\partial W_Y .
$$
\begin{proof}
Note that the boundary connected sum of $W_X$ and $W_Y$ contains $X$ and $Y$ in a way that they do not intersect. The boundary connected sum is equal to attach a 1-handle connecting the components $W_X$ and $W_Y$. We can suppose that the attaching 3-balls of the 1-handle lie in the intersection with the boundary of the 4-dimensional thickening of $R_X$ and $R_Y$. The boundary connected sum of $W_X$ and $W_Y$ collapses onto the simple polyhedron obtained gluing $D_X$ and $D_Y$, where the connecting 1-handle collapses onto the boundary of such disk. The natural inclusions of $X$ and $Y$ into that simple polyhedron are exactly the embeddings of $X$ and $Y$ in the 4-manifold. With a little perturbation we can see that $X$ and $Y$ are disjoint. Since the 4-manifold differs from the disjoint union of $W_X$ and $W_Y$ only for that 1-handle we have that the induced gleams on the regions different from $R_X \setminus D_X$, $D_X$, $R_Y\setminus D_Y$ and $D_Y$ are unchanged.
\end{proof}
\end{cor}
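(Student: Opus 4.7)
The plan is to identify $W_X \#_\partial W_Y$ with the 4-manifold produced by the reconstruction theorem from the shadow $X \# Y$, and then invoke the uniqueness of the 4-dimensional thickening (Proposition~\ref{prop:shadow_uniqueness}). Concretely, I will exhibit a collapse of $W_X \#_\partial W_Y$ onto the polyhedron underlying $X \# Y$ and verify that the inherited gleams agree with those prescribed in the definition of the shadow connected sum when the attaching gleam is $k = 0$.

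First I would realise $W_X \#_\partial W_Y$ by attaching a 1-handle $D^3 \times [0,1]$ to $W_X \sqcup W_Y$ along two 3-balls $B_X \subset \partial W_X$ and $B_Y \subset \partial W_Y$. Since $D_X$ lies in the interior of the region $R_X$, its normal $D^2$-fibre in $W_X$ reaches $\partial W_X$; I would choose $B_X$ to be the image of a small interior subdisk of $D_X$ pushed along the normal fibre out to the boundary, and $B_Y$ analogously. This choice is legitimate because the diffeomorphism type of a boundary connected sum does not depend on the location of the attaching balls in a given boundary component.

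Next, the 1-handle deformation retracts onto its core arc, whose endpoints lie over subdisks of $D_X$ and $D_Y$. Composing with the collapses $W_X \searrow X$ and $W_Y \searrow Y$ yields a collapse of $W_X \#_\partial W_Y$ onto a simple polyhedron $Z'$ obtained by identifying a subdisk of $D_X$ with one of $D_Y$; up to $0 \to 2$ moves enlarging the identification disk, $Z'$ coincides with the polyhedron underlying $X \# Y$. The gleams are then checked locally: the identifying disk sits in the boundary of the attached 1-handle and inherits a product normal $D^2$-bundle, so its gleam is $0$; the regions $R_X \setminus D_X$ and $R_Y \setminus D_Y$ retain the normal structure coming from $W_X$ and $W_Y$, so their gleams remain $\gl(R_X)$ and $\gl(R_Y)$; all other regions are untouched. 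These are precisely the gleams prescribed by the definition of $X \# Y$ with $k = 0$, and Proposition~\ref{prop:shadow_uniqueness} closes the argument.

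The main obstacle I foresee is the gleam computation on the identifying disk: one has to verify that the canonical framing inherited from the 1-handle really does give self-intersection number zero. This amounts to a careful unpacking of the definition of the gleam for a region with boundary, exhibiting a perturbation $D'$ of the identifying disk supported entirely in the 1-handle collar, so that $D \cap D' = \varnothing$ and the boundary contribution $\tfrac{1}{2}\#(\partial D \cap \partial D')$ also vanishes.
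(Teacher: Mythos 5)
Your overall strategy is the one the paper uses: realise $W_X \#_\partial W_Y$ by attaching a 1-handle over the $D^2$-bundle thickenings of $D_X$ and $D_Y$, exhibit a collapse of the result onto the glued polyhedron, check the gleams, and close with the uniqueness of the 4-dimensional thickening. Your choice of attaching balls, your gleam bookkeeping (identification disk gets $0$, the complements keep $\gl(R_X)$ and $\gl(R_Y)$, all other regions untouched --- exactly the definition of $X\#Y$ with $k=0$), the remark that shrinking the identification disk is harmless up to $0\rightarrow 2$ moves, and the appeal to Proposition~\ref{prop:shadow_uniqueness} are all sound.

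The gap is in the central step. Collapsing the 1-handle onto its core arc and then composing with $W_X\searrow X$ and $W_Y\searrow Y$ would produce $X$ and $Y$ joined by an arc, not the polyhedron obtained by identifying $D_X$ with $D_Y$: an arc is 1-dimensional, so it cannot account for the identified disk $D$, nor for the new closed triple edge $\partial D$ along which the three regions $D$, $R_X\setminus D_X$ and $R_Y\setminus D_Y$ must meet. (The object $X\cup\alpha\cup Y$ is not even a simple polyhedron, and the three collapses cannot simply be composed, since the endpoints of $\alpha$ sit on boundary points of $W_X$ and $W_Y$ that the collapses $W_X\searrow X$ and $W_Y\searrow Y$ sweep away.) What is needed instead is to realise $X\#Y$ two-dimensionally inside the boundary connected sum: keep $X\subset W_X$ in place and isotope a collar of $D_Y$ in $Y$ through the 1-handle and down a normal fibre of $R_X$ so that $D_Y$ lands exactly on $D_X$; the union of $X$ with the isotoped $Y$ is then a copy of $X\#Y$ meeting itself precisely in the identified disk, and the 1-handle together with the swept-out normal material collapses onto the extra sheet near $\partial D$ --- this is what the paper's phrase about the 1-handle collapsing onto the boundary of that disk is encoding. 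Once the embedding is set up this way, your gleam verification and the uniqueness argument go through as written.
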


\begin{defn}
Two more important moves on shadows are the $0$-\emph{bubble} and the $\pm 1$-\emph{bubble} moves, see Fig.~\ref{figure:bubblemoves}. Two shadows related by a sequence of shadow moves, $0$-bubble moves and their inverses are said to be \emph{stably equivalent}. The result of such moves to a shadow $X$ is respectively $X\#S^2_{(0)}$ and $X\# S^2_{(\pm 1)}$ where $S^2_{(k)}$ is the 2-sphere $S^2$ equipped with gleam $k\in \Z$. 
\end{defn}

We note that $S^2_{(0)}$ is a shadow of $S^2\times D^2$, while if we equip the sphere with gleam $\pm 1$ we get $\mathbb{CP}^2$ minus an open 4-ball, $\mathbb{CP}^2$ has the standard orientation in the case of $k = -1$, and the opposite one in the case of $k = 1$. Therefore by Corollary~\ref{cor:connected_sum} we have that applying a $0$-bubble move the 4-manifold changes by the boundary connected sum with $S^2\times D^2$, hence its boundary changes by the connected sum with $S^2\times S^1$. On the other hand if we apply a $\pm 1$-bubble move we get the boundary connected sum with a punctured $\mathbb{CP}^2$ with the proper orientation that is equivalent to the connected sum with $\mathbb{CP}^2$ (or $\overline{\mathbb{CP}^2}$), hence its boundary does not change.

\begin{figure}[htbp]
\begin{center}
\includegraphics[width=11.4cm]{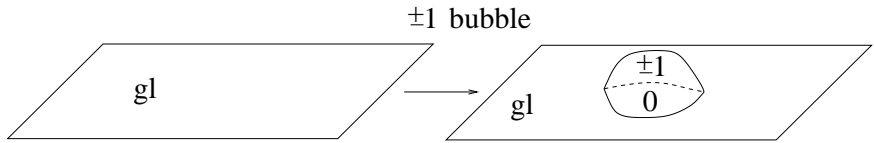}
\end{center}
\caption{The $\pm 1$-bubble moves: the gleam of the region over which the move is applied is unchanged. A new $0$-gleam disk is created and a disk with gleam $\pm 1$ is attached to its boundary. The $0$-bubble move, creates a $0$ gleam disk in instead of the $\pm 1$-gleam one.}
\label{figure:bubblemoves}
\end{figure}

\begin{theo}
Let $X$ and $Y$ be two shadows without closed regions of the same compact oriented 4-manifold $W$ and with the same boundary $\partial X = \partial Y \subset \partial W$. Then $X$ and $Y$ are stably equivalent.
\begin{proof}
See \cite[Theorem IX.1.7, Section IX.7, Theorem IX.8.1.2]{Turaev}.
\end{proof}
\end{theo}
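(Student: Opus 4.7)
The strategy is to translate the statement into the language of handle decompositions of $W$, where uniqueness up to standard handle moves is classical, and then verify that each handle move corresponds to a sequence of shadow moves and $0$-bubble moves.

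First I would apply Proposition~\ref{prop:standard_sh} to reduce to the case in which both $X$ and $Y$ are standard shadows of $W$. Following the construction in the proof of Theorem~\ref{theorem:admit_sh}, each standard shadow $Z$ of $W$ naturally determines a handle decomposition $\mathcal{H}_Z$ of $W$ with only 0-, 1- and 2-handles: a regular neighborhood of the 1-skeleton of $Z$ provides the 0- and 1-handles, and each disk region of $Z$ contributes a 2-handle whose attaching circle sits on the boundary of this handlebody and whose framing is read off from the gleam.

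Next, the plan is to identify each shadow move and the $0$-bubble move with an elementary operation on $\mathcal{H}_Z$. Concretely, the $2\to 3$ move corresponds to a handle slide between two 2-handles (or between a 2-handle and a 1-handle); the lune move $0\to 2$ corresponds to the birth or death of a cancelling $(1,2)$-pair realized inside $W$; and the $0$-bubble move corresponds to the addition of a trivially attached 2-handle, producing the boundary connected sum of $W$ with $S^2\times D^2$. The $\pm 1$-bubble moves, which correspond to connect-summing with $\pm\mathbb{CP}^2$, are not needed, since $W$ is kept fixed throughout the argument. I would then invoke the standard uniqueness theorem for handle decompositions of a fixed smooth 4-manifold (a consequence of Cerf theory applied to generic one-parameter families of Morse functions with critical points of index at most $2$): any two handle decompositions of $W$ with only 0-, 1- and 2-handles are related, after stabilization by trivial $(1,2)$-handle pairs, by a finite sequence of handle slides, isotopies and births/deaths of cancelling pairs. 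Using the correspondence above in reverse produces the desired sequence of shadow moves and $0$-bubble moves connecting $X$ and $Y$.

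The main obstacle is the faithful bookkeeping of gleams through each elementary move. Gleams are not encoded purely combinatorially in the underlying polyhedron but involve the interval bundles over region boundaries and therefore depend on the global combinatorics of $Z$; the specific gleam changes displayed in Figure~\ref{figure:shadowmoves} must be verified to be exactly those that preserve the diffeomorphism type of the 4-dimensional thickening reconstructed via Theorem~\ref{theorem:reconstruction}. In particular, one has to check that the $0$-bubble move alone suffices to effect every birth or death of a cancelling handle pair that cannot be realized internally through the $0\to 2$ and $2\to 3$ moves, and that the interior region gleams always adjust coherently with the half-integer contributions coming from non-orientable interval bundles near the $1$-skeleton.
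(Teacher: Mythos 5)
The paper does not give a proof of this statement: it defers entirely to \cite[Theorem IX.1.7, Section IX.7, Theorem IX.8.1.2]{Turaev}, so there is no in-paper argument to compare yours with. Judged on its own, your outline points in a reasonable direction (shadows induce handle decompositions with handles of index at most $2$, and one hopes to import the uniqueness of such decompositions), but the step that carries all the weight is asserted rather than proved. Going from a handle decomposition back to a shadow, a handle slide is an \emph{arbitrary isotopy} of an attaching circle inside the boundary of the $1$-handlebody, and realizing such an isotopy by the purely local moves of Definition~\ref{defn:sh_moves} requires a Reidemeister-type calculus for the projections of the attaching link onto the shadow of the $1$-handlebody (crossing changes of the projected diagram, passage of strands over vertices and edges of the polyhedron, framing/gleam corrections). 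Establishing that calculus is essentially the content of Turaev's Section IX.7; your proposal treats it as bookkeeping. A second omission is the boundary: the theorem fixes a graph $\partial X=\partial Y=G\subset\partial W$, and Cerf-theoretic uniqueness of handle decompositions of $W$ says nothing about an embedded graph in $\partial W$, so the claimed reduction does not even see this hypothesis.

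There is also a concrete misidentification of which moves play the role of stabilization. The lune move $0\to2$ does not create or cancel a $(1,2)$-handle pair: like all shadow moves it leaves the induced $4$-manifold and, in the correspondence of Theorem~\ref{theorem:admit_sh}, the stable type of the decomposition unchanged, whereas the birth of a $(1,2)$-pair changes the genus of the underlying $1$-handlebody. Conversely the $0$-bubble move is not an internal stabilization at all: it changes $W$ by boundary connected sum with $S^2\times D^2$ (a $2$-handle on a $0$-framed unknot with no cancelling $1$-handle). Pinning down exactly which external stabilization is needed is the delicate point: as the paper notes after Question~\ref{quest:eq_shadows}, without bubble moves the equivalence is expected to fail for Andrews--Curtis-type reasons, so one cannot hope that the internal moves (your lune and $2\to3$ identifications) absorb the births and deaths of cancelling pairs. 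Finally, Cerf theory connects two decompositions through a path of decompositions that may pass through index-$3$ handles; keeping every intermediate stage within index $\le 2$ (so that it still corresponds to a shadow) is an additional argument you would have to supply. As it stands the proposal is a plausible plan, not a proof.
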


\begin{quest}\label{quest:eq_shadows}
Is it true that two shadows of the same compact oriented 4-manifold are equivalent?
\end{quest}

Experts think that the answer to Question~\ref{quest:eq_shadows} is no. In fact this problem is related to the \emph{Andrews-Curtis conjecture} that is probably false.

The application of the $0$-bubble move increases by $1$ the rank of the second homology group of the 4-manifold $H_2(W_X)$. Therefore the number of $0$-bubble moves in a sequence relating two stably equivalent shadows representing the same 4-manifold must be equal to the number of their inverses in that sequence.

\begin{conj}\label{conj:2}
Two shadows related by shadow moves, $0$-bubble moves and their inverses represent the same compact oriented 4-manifold if and only if the number of $0$-bubble moves in the sequence is equal to the one of their inverses.
\end{conj}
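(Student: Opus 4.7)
The plan is to handle the two directions separately. For the forward direction---that if two shadows represent the same $4$-manifold then the number of $0$-bubble moves equals the number of their inverses---I would directly apply the observation made in the paragraph preceding the statement: a $0$-bubble move takes a shadow $X$ of $W$ to a shadow of $W \#_\partial (S^2 \times D^2)$, so the rank of $H_2$ of the represented $4$-manifold increases by one (generated by the newly introduced $2$-sphere with trivial normal bundle). Shadow moves preserve the represented $4$-manifold and hence preserve $H_2$, while an inverse $0$-bubble move decreases the rank of $H_2$ by one. Since in the forward direction the initial and final $4$-manifolds coincide, the net change in the rank of $H_2$ is zero, forcing the number of $0$-bubbles to equal the number of their inverses.

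For the reverse direction I would try the following strategy. First I would attempt to show that a $0$-bubble move commutes, up to a sequence of auxiliary shadow moves, with any of the local moves from Definition~\ref{defn:sh_moves}: intuitively the bubble sits in a small $0$-gleam disk attached to a $0$-gleam sphere in a region of the shadow disjoint from where a given shadow move is taking place, and one should be able to push the shadow move past the bubble by sliding the attaching disk of the bubble through the relevant regions using $0\to 2$ moves freely (as in Remark~\ref{rem:con_sum}). This would let me rearrange the given sequence into a canonical form: first $k$ $0$-bubble moves, then a block of shadow moves, then $k$ inverse $0$-bubble moves. After the initial $k$ bubbles the represented $4$-manifold is $W_0 \#_\partial k(S^2 \times D^2)$, by Corollary~\ref{cor:connected_sum}, and it stays so throughout the intermediate block of shadow moves.

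The hardest step will be the final cancellation of the $k$ inverse $0$-bubbles. An inverse $0$-bubble is rigid: it requires an embedded sphere with gleam $0$ attached to a $0$-gleam disk, and the $S^2 \times D^2$ summand it strips off is the one determined by the reconstruction theorem (Theorem~\ref{theorem:reconstruction} together with Proposition~\ref{prop:shadow_uniqueness}), not a priori the same summand that an earlier bubble introduced---the intermediate shadow moves may have shuffled the geometry so that the sphere we now see represents a genuinely different second-homology class. In general, $S^2 \times D^2$ boundary summands of $4$-manifolds do not cancel freely, and controlling when such a cancellation succeeds is precisely the kind of question tightly connected to the Andrews--Curtis conjecture alluded to just before the statement (see the discussion after Question~\ref{quest:eq_shadows}). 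I therefore expect that a complete reverse-direction argument would require either a delicate geometric procedure controlling how bubble/inverse-bubble pairs nest through sequences of shadow moves, or a genuine advance on the associated Andrews--Curtis-type problem---which is exactly why the statement is presented as a conjecture rather than a theorem.
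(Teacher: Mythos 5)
Your treatment matches the paper's exactly: the forward direction is precisely the $H_2$-rank-counting observation the paper records in the paragraph preceding the statement, and the reverse direction is correctly identified as the genuinely open content---the paper offers no proof of it, presenting the statement as a conjecture and recording only Costantino's resolution of the simply connected case (Theorem 2.2.7 of his thesis). Your diagnosis of why the reverse direction resists proof (non-cancellation of $S^2\times D^2$ summands and the link to Andrews--Curtis-type problems) is consistent with the paper's own remarks following Question~\ref{quest:eq_shadows}.
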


Costantino proved Conjecture~\ref{conj:2} in the simply connected case:
\begin{theo}[Costantino]
Let $X$ and $Y$ be two shadows related by a sequence of  shadow moves, $0$-bubble moves and their inverses such that the number of $0$-bubble moves is equal to the number of their inverses. If $X$ is simply connected (and hence also $Y$) and $\partial X = \partial Y$ then $W_X=W_Y$.
\begin{proof}
See \cite[Theorem 1.9.1]{CostantinoPhD}.
\end{proof}
\end{theo}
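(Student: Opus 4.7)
The strategy is to track how each move in the given sequence $X = X_0 \to X_1 \to \cdots \to X_N = Y$ affects the diffeomorphism type of the 4-dimensional thickening. First, as already observed after Definition~\ref{defn:sh_moves}, every shadow move and its inverse preserves the 4-manifold up to diffeomorphism. Next, by Corollary~\ref{cor:connected_sum} together with the identification $S^2_{(0)} \leftrightarrow S^2 \times D^2$, each $0$-bubble move replaces the thickening $W$ by $W \natural (S^2 \times D^2)$, the boundary connected sum with a copy of $S^2 \times D^2$, while the inverse $0$-bubble move, when applicable, undoes this operation. The boundary $\partial W$ is therefore unchanged by shadow moves and changes by connected sum with $S^2 \times S^1$ under $0$-bubble moves; since $\partial X = \partial Y$, the count of $0$-bubbles matches the count of inverses, as required.

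The heart of the proof is to upgrade this boundary-level compatibility to a diffeomorphism of the 4-manifolds themselves, using the simple connectivity hypothesis. The plan is to define a running invariant $n_i$, the signed count of $0$-bubbles in the first $i$ moves, and establish by induction that $W_{X_i}$ is diffeomorphic to $W_X \natural \bigl(\natural^{n_i}(S^2 \times D^2)\bigr)$ when $n_i \geq 0$, and symmetrically when $n_i < 0$ that $W_X \cong W_{X_i} \natural \bigl(\natural^{-n_i}(S^2 \times D^2)\bigr)$. Shadow moves keep $n_i$ constant and preserve the thickening, so the inductive step holds for them trivially. A $0$-bubble move increases $n_i$ by one and performs the correct boundary connected sum. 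The key step is the inverse $0$-bubble case: one must verify that the embedded 2-sphere $S$ (the newly created closed disk region after the move is reversed — together with the region it was attached to — forming a sphere in $W_{X_i}$ with self-intersection $0$) actually splits off an $S^2 \times D^2$ summand of $W_{X_i}$.

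This splitting is exactly where simple connectivity is decisive. By hypothesis $\pi_1(X_i) = \pi_1(X) = 1$, hence $\pi_1(W_{X_i}) = 1$ as well, and the 2-sphere $S$ has trivial normal bundle (gleam $0$). One then invokes a Wall-type handle cancellation argument (as in \cite[Theorem 1.9.1]{CostantinoPhD}): in a simply connected 4-manifold, an embedded 2-sphere with trivial normal bundle bounding a $4$-dimensional regular neighborhood diffeomorphic to $S^2 \times D^2$ can always be split off as a boundary connected summand, since $\pi_1$ obstructions to isotoping a cancelling disk into position vanish. This gives the induction step for inverse $0$-bubbles. Combining all steps, since $n_N = 0$ by hypothesis, we conclude $W_Y = W_{X_N} \cong W_X$.

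The main obstacle is precisely the last point: controlling the inverse $0$-bubble move when the bubble being removed is not the one most recently created, but has been moved around and possibly linked with other pieces by intermediate shadow moves. Without simple connectivity, the 2-sphere $S$ need not bound a split $S^2 \times D^2$ — there could be nontrivial loops threading through it that obstruct the splitting, and one would only obtain stable diffeomorphism. The simple connectivity of $X$ (and hence of every intermediate $X_i$, which one verifies since shadow moves and bubble moves do not change the homotopy type in a way that introduces new $\pi_1$) rules out these obstructions, reducing the problem to a handle calculus argument in the spirit of the simply connected $h$-cobordism/Wall theorems for 4-manifolds.
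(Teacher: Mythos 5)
The paper offers no proof of this statement beyond the citation to Costantino's thesis, so your proposal can only be judged on its own merits, and it has a genuine gap at exactly the step you identify as "the heart of the proof". The general principle you invoke --- that in a simply connected 4-manifold an embedded 2-sphere with trivial normal bundle (hence with regular neighborhood $S^2\times D^2$) can always be split off as a boundary connected summand --- is false. Take $W = (S^2\times S^2)\setminus \mathrm{int}(D^4)$ and $S = S^2\times\{pt\}$: the ambient manifold is simply connected, the sphere has trivial normal bundle, yet $W \cong W' \natural (S^2\times D^2)$ is impossible because it would force $\partial W = S^3$ to contain an $S^1\times S^2$ prime summand. Splitting off $S^2\times D^2$ requires a properly embedded separating $D^3$, which is a condition on the embedding, not something that $\pi_1(W)=1$ supplies. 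Moreover, you have misplaced the difficulty: the inverse $0$-bubble move is a \emph{local} move that can only be applied to a configuration of the form $X_{i+1}\# S^2_{(0)}$, so the decomposition $W_{X_i}\cong W_{X_{i+1}}\natural(S^2\times D^2)$ is immediate from Corollary~\ref{cor:connected_sum} --- no splitting theorem is needed there.

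The real obstruction, which your induction silently assumes, is the cancellation step: from $W_{X_i}\cong W_X\natural\bigl(\natural^{n_i}(S^2\times D^2)\bigr)$ and $W_{X_i}\cong W_{X_{i+1}}\natural(S^2\times D^2)$ you must deduce $W_{X_{i+1}}\cong W_X\natural\bigl(\natural^{n_i-1}(S^2\times D^2)\bigr)$, i.e.\ you need the implication $V\natural(S^2\times D^2)\cong V'\natural(S^2\times D^2)\Rightarrow V\cong V'$. This is precisely the stable-versus-unstable diffeomorphism problem in dimension 4, and it fails in general even for simply connected manifolds (this is the content of exotic phenomena such as pairs that become diffeomorphic after a single $S^2\times S^2$ stabilization); it is certainly not a consequence of a $\pi_1$-vanishing or Wall-type handle argument as you assert. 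The sphere removed by the inverse bubble at step $i$ need not correspond, under the inductively constructed diffeomorphism, to one of the standard split spheres, because the intervening shadow moves can change how the bubble sits in $W_{X_i}$. Closing this gap is the actual substance of Costantino's theorem: his proof must control the bubbles geometrically through the whole sequence of moves rather than appeal to an abstract splitting statement, and your proposal does not reproduce that control.
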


There is one more complicated move on shadows called \emph{trading move} or \emph{surgery along a curve} in the shadow. This represents surgery along a closed curve in the interior of the 4-manifold.

\begin{theo}
Let $X$ and $Y$ be two shadows of the same pair $(M,G)$, where $M$ is an oriented closed 3-manifold and $G$ is a knotted framed trivalent graph in $M$. Then $X$ and $Y$ are related by shadow moves, $\pm 1$-bubble moves, trading moves and their inverses.
\begin{proof}
See \cite[Lemma IX.7.8]{Turaev}.
\end{proof}
\end{theo}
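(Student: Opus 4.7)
By the reconstruction theorem (Theorem~\ref{theorem:reconstruction}) and the uniqueness statement (Proposition~\ref{prop:shadow_uniqueness}), the shadows $X$ and $Y$ are shadows of compact oriented 4-manifolds $W_X$ and $W_Y$ with $\partial W_X = \partial W_Y = M$ and $\partial X = \partial Y = G$, the graph $G$ having the same framing in both boundaries. Different $W_X$ and $W_Y$ may bound the same $M$, so the moves we have to allow must be rich enough both to change the ambient 4-manifold and to change the shadow inside a fixed 4-manifold. Accordingly, my plan splits in two stages: first use only $\pm 1$-bubble moves and trading moves to reach shadows of a common 4-manifold, and then appeal to the stable equivalence theorem inside that fixed 4-manifold.

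\textbf{Stage 1: matching the 4-manifolds.} The effect of a $\pm 1$-bubble move on the underlying 4-manifold is to take boundary connected sum with $\mathbb{CP}^2\setminus B^4$ or $\overline{\mathbb{CP}^2}\setminus B^4$ (see Corollary~\ref{cor:connected_sum} and the discussion following it); the 3-manifold boundary and the graph are unaffected. A trading move on the shadow, by construction, realizes a framed surgery along a simple closed curve lying in the interior of the 4-manifold, and again leaves $(M,G)$ fixed. The classical cobordism-theoretic fact I would invoke is that any two compact oriented 4-manifolds $W_X$ and $W_Y$ that share the same boundary $M$ become diffeomorphic rel boundary after finitely many operations of each of the following two kinds: boundary connected sum with $\mathbb{CP}^2$ or $\overline{\mathbb{CP}^2}$, and surgery along a framed circle in the interior. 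Concretely, form a 5-dimensional cobordism $V$ between $W_X$ and $W_Y$ relative to $M$, put it in a handle position with only $2$- and $3$-handles on top of the collar $M\times[0,1]$, and read off each attached $2$-handle as a surgery on a framed circle (hence a trading move on the shadow) and each index-change obstruction as a $\pm 1$ summand that can be introduced by bubbles and slid away. Performing the corresponding trading and $\pm 1$-bubble moves on $X$ and $Y$ separately, we arrive at shadows $X'$ and $Y'$ of the same 4-manifold $W$, with boundary the original pair $(M,G)$.

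\textbf{Stage 2: matching the shadows inside a fixed 4-manifold.} Now $X'$ and $Y'$ are two shadows of the same compact oriented 4-manifold $W$, with the same boundary. If either shadow has a closed region, apply $0\to 2$ moves as in Proposition~\ref{prop:standard_sh} to eliminate it; then invoke the theorem cited in the paper (from Turaev's book) asserting that any two shadows of the same 4-manifold with the same boundary and without closed regions are stably equivalent, i.e.\ related by shadow moves together with $0$-bubble moves and their inverses. All of these are among the moves in the statement.

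\textbf{Main obstacle.} The crux of the argument is Stage~1: making the correspondence between trading moves on the shadow and internal surgeries on the 4-manifold precise, and simultaneously controlling the intersection form so that the $5$-cobordism can be built out of exactly these two kinds of elementary modifications. One has to check that a trading move along a specified curve in a region of the shadow, with the correct framing and gleam bookkeeping, indeed yields a shadow of the surgered 4-manifold, and conversely that every internal surgery can be realized by such a move (possibly after auxiliary $\pm 1$-bubbles to adjust framings and orientations). Once this translation dictionary is in place, the rest of the proof reduces to the stable equivalence theorem for a fixed 4-manifold, which has already been established.
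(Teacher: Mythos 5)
The decisive problem is in Stage 2. The stable equivalence theorem you invoke concludes that $X'$ and $Y'$ are related by shadow moves together with $0$-bubble moves and their inverses, and you then assert that ``all of these are among the moves in the statement''. They are not: the $0$-bubble move does not appear in the list of allowed moves (shadow moves, $\pm 1$-bubble moves, trading moves), and it cannot be simulated by them. Every allowed move preserves the boundary $3$-manifold --- shadow moves preserve the $4$-dimensional thickening, a $\pm 1$-bubble changes it by connected sum with $\pm\mathbb{CP}^2$, and a trading move is a surgery in the interior --- whereas a $0$-bubble move replaces $W$ by $W\natural (S^2\times D^2)$ and hence $M$ by $M\#(S^1\times S^2)$. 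So no composition of the allowed moves equals a $0$-bubble move. Nor can you simply cancel each $0$-bubble against its inverse inside the stable-equivalence sequence: whether the $0$-bubbles can be eliminated when the two shadows thicken to the same $4$-manifold is precisely the content of Question~\ref{quest:eq_shadows} and Conjecture~\ref{conj:2}, which is open in general, related to the (probably false) Andrews--Curtis conjecture, and known only in the simply connected case. As written, Stage~2 proves the statement only after adding $0$-bubble moves to the list, which is a strictly weaker theorem.

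Stage 1 also needs more than you give: you must verify that the $2$-handles of the $5$-dimensional cobordism correspond exactly to trading moves on the shadow (with the correct framings and gleam bookkeeping), read the $3$-handles upside down as circle surgeries on the $Y$-side, and control where the $\pm 1$-bubbles actually enter; ``slid away'' is not yet an argument. More to the point, the architecture of the proof this theorem admits is the reverse of yours: one first uses trading moves to kill the fundamental group of each shadow (equivalently, the $1$-handles of its thickening), so that each shadow encodes a surgery presentation of $M$ in $S^3$; one then applies Kirby's theorem and translates the first Kirby move into a $\pm 1$-bubble move and the second into a sequence of shadow moves. This division is exactly what Theorem~\ref{theorem:3-manifods} reflects: once the shadows are simply connected, trading moves are no longer needed. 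If you want to keep your two-stage plan, you must replace the appeal to stable equivalence in Stage~2 by an argument that avoids $0$-bubble moves entirely.
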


The following result shows that for simply connected shadows the trading move is useless. 

\begin{theo}[Costantino-Thurston]\label{theorem:3-manifods}
Let $X$ and $Y$ be two simply connected shadows of the same pair $(M,G)$, where $M$ is an oriented  closed 3-manifold and $G$ a knotted framed trivalent graph in $M$. Then $X$ and $Y$ are related by shadow moves, $\pm 1$-bubble moves and their inverses.
\begin{proof}
See \cite{Costantino-Thurston:preprint} or \cite[Theorem 2.2.7]{CostantinoPhD}.
\end{proof}
\end{theo}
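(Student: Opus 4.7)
The starting point is the preceding Lemma IX.7.8 of Turaev, which guarantees that $X$ and $Y$ are connected by a sequence $\Sigma$ of shadow moves, $\pm 1$-bubble moves, trading moves, and their inverses. The plan is to show that, under the simple connectivity hypothesis, each occurrence of a trading move in $\Sigma$ can be replaced by a finite sub-sequence of shadow moves and $\pm 1$-bubble moves, so that the pair $(M,G)$ is preserved throughout. In particular, only the trading moves need to be eliminated, not the bubble moves.

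A trading move performed along a simple closed curve $\gamma \subset X$ corresponds, in the 4-dimensional thickening $W_X$, to surgery along $\gamma$ viewed as a curve in the interior of $W_X$. Since $\pi_1(W_X)=\pi_1(X)=1$ by hypothesis, the curve $\gamma$ is null-homotopic, and standard 4-dimensional surgery theory tells us that the resulting manifold $W_X'$ differs from $W_X$ by a connected sum with either $S^2\times S^2$ or the twisted bundle $S^2 \mathbin{\tilde{\times}} S^2$, depending on the framing of $\gamma$; the boundary $(M,G)$ is unchanged because $\gamma$ lies in the interior. The shadow of $S^2\mathbin{\tilde{\times}}S^2$ is a pair of $2$-spheres meeting transversely in a single point with one of them carrying gleam $\pm 1$, and this can be produced locally inside a $0$-gleam disk region of the ambient shadow by a $\pm 1$-bubble move combined with the connected-sum construction of Proposition~\ref{prop:intersect_shadows} (realized by shadow moves via Remark~\ref{rem:con_sum}). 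Similarly, $S^2\times S^2$ becomes $\mathbb{CP}^2\#\overline{\mathbb{CP}^2}$ after one further $\pm 1$-bubble stabilization, so again only moves of the permitted type are needed. In this way, each trading move in $\Sigma$ admits a local replacement.

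The main obstacle is global rather than local: a priori the intermediate shadows appearing along $\Sigma$ need not be simply connected, so the curve $\gamma$ being traded at an intermediate step might fail to bound a disk in the ambient shadow at that step, and the replacement argument just sketched would be unavailable. I would address this by first stabilizing $X$ and $Y$ by a large number of $\pm 1$-bubble moves to embed the whole argument inside a sufficiently rich simply connected thickening, and then reordering $\Sigma$ so that each trading move is preceded and followed by bubble moves that keep the relevant local shadow simply connected throughout. A cleaner global variant is to argue that simply connected $4$-manifolds with fixed boundary $M$ become diffeomorphic after finitely many connected sums with $\pm\mathbb{CP}^2$, which is precisely the freedom provided by $\pm 1$-bubble moves, and then invoke the stable equivalence theorem of Turaev (applicable once the two 4-manifolds agree) to conclude that $X$ and $Y$ are related by the permitted moves alone. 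Carrying out this reduction rigorously at the shadow level, while controlling $\pi_1$ of the intermediate shadows, is the most delicate ingredient.
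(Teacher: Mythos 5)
The paper offers no proof of this theorem: it defers entirely to \cite{Costantino-Thurston:preprint} and to Costantino's thesis, so there is no argument here to compare yours against, and your proposal has to stand on its own. It does not: there is a genuine gap, which you partly flag yourself but do not close, and your proposed ``cleaner global variant'' does not close it either.

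The first problem is the one you name: Turaev's Lemma IX.7.8 only produces a sequence whose intermediate shadows need not be simply connected, so the curve along which a given trading move is performed need not be null-homotopic in the 4-manifold at that stage, and your local replacement is unavailable exactly where you need it. ``Stabilize and reorder the sequence'' is where all the work lies, and it is not carried out. The second, more serious problem is that the global variant rests on the wrong theorem. Turaev's stable equivalence theorem relates two shadows of the same 4-manifold by shadow moves and $0$-bubble moves, and a $0$-bubble move changes the boundary 3-manifold by connected sum with $S^1\times S^2$; these moves are not among those allowed in the statement you are proving. Upgrading stable equivalence to equivalence through shadow moves and $\pm 1$-bubble moves only is essentially the content of the theorem itself --- compare Conjecture~\ref{conj:2} and the discussion following Question~\ref{quest:eq_shadows}, where the paper points out that the corresponding unstabilized question is expected to fail in general for Andrews--Curtis-type reasons. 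In particular, knowing that the two 4-dimensional thickenings become diffeomorphic after connected sums with $\pm\mathbb{CP}^2$ tells you nothing, by itself, about how their shadows are related. A smaller but related defect: even the local substitution is not a drop-in replacement, since a trading move may turn $W$ into $W\#(S^2\times S^2)$ while the bubble-move surrogate produces $W\#\mathbb{CP}^2\#\overline{\mathbb{CP}^2}$, and these differ when $W$ is spin; so replacing one trading move forces you to modify every subsequent step of the sequence. As it stands the proposal is a plausible heuristic for why the theorem should be true, not a proof.
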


Theorem~\ref{theorem:3-manifods} could seem quite restrictive but by Lemma~\ref{lem:shadow1} it is not so if we are interested just on colosed 3-manifolds.

\section{Bilinear form and signature}\label{sec:bil_form} 
Here we show how to get the signature and the intersection form of an oriented 4-manifold from a shadow. 

Let $X$ be a shadow without boundary and with all regions orientable. Let $R$ be a region of $X$ and let $h\in H_2(X,\Z)$. We denote by $\langle h | R\rangle \in\Z$ the image of $h$ under the map $H_2(X,\Z) \rightarrow H_2(X/(X\setminus R), \Z) \cong \Z$ induced by the quotient identifying the complement of $R$ to a point. The group $H_2(X/(X\setminus R) , \Z)$ is identified with $\Z$ once an orientation to $R$ is given. The map $\mathcal{Q}_X:H_2(X,\Z) \times H_2(X,\Z) \rightarrow \frac 1 2 \Z$ is the bilinear form so defined:
$$
\mathcal{Q}_X(h_1,h_2):= \sum_R \langle h_1|
 R \rangle \cdot \langle h_2 | R\rangle  \cdot \gl(R) ,
$$
where the sum runs over all the regions of $X$. This does not depend on the choice of an orientation of the regions. We call $\mathcal{Q}_X$ the \emph{bilinear form} of $X$. We denote by $\sigma(X)$ the signature of $\mathbb{R} \otimes_{\frac 1 2 \Z} \mathcal{Q}_X$. We call it the \emph{signature} of $X$.

\begin{theo}\label{theorem:bil_and_sign}
Let $X$ be a shadow of the oriented 4-manifold $W$, and let $f_*:H_2(X,\Z) \rightarrow H_2(W,\Z)$ be the isomorphism induced by the inclusion. Then for any $h_1,h_2\in H_2(X,\Z)$
$$
f_*(h_1) \cdot f_*(h_2) = \mathcal{Q}_X(h_1,h_2) ,
$$
where the product on the left-hand side is the intersection product in $H_2(W,\Z)$. Hence 
$$
\sigma(X) = \sigma(W) .
$$
\begin{proof}
See \cite[Section {\rm IX}.5.1]{Turaev}.
\end{proof}
\end{theo}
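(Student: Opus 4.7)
The plan is to reduce both sides to a signed count of transverse intersection points in $W$, chosen so that the counts line up region by region. The first easy observation is that $f_*$ is indeed an isomorphism: since $W\searrow X$, the inclusion $X\hookrightarrow W$ is a homotopy equivalence, so it induces isomorphisms on all homology, and in particular on $H_2$. Hence both $f_*(h_1)\cdot f_*(h_2)$ and $\mathcal{Q}_X(h_1,h_2)$ live naturally on $H_2(X,\Z)$, and it suffices to verify the equality on a convenient set of representatives.

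Next, I would describe any class $h\in H_2(X,\Z)$ by its cellular representative with respect to the decomposition of $X$ into regions, edges and vertices. Concretely, after orienting the regions, a $2$-chain $c=\sum_R n_R\,R$ represents a cycle iff, at every interior edge $e$, the algebraic sum of the $n_R$'s over regions adjacent to $e$ (weighted by the induced orientations on $e$) vanishes; for such a cycle the coefficients are exactly $n_R=\langle h|R\rangle$. Then $f_*(h)\cdot f_*(h')$ can be computed by pushing one representative slightly off the other inside $W$ and counting signed transverse intersection points. On each region $R$, the thickening of $R$ inside $W$ is a $D^2$-bundle, and a generic parallel copy $R'$ of $R$ whose boundary lies in the interval bundle over $\partial R$ determined by $X$ meets $R$ in exactly $\gl(R)$ signed points, counting the half-integer contributions coming from non-orientable boundary interval bundles. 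This is exactly the definition of the gleam.

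The core of the argument is then bilinearity plus cancellation along the $1$-skeleton. If one perturbs $c=\sum n_R R$ to a nearby cycle $c'=\sum n_R R'$ transverse to $c$, then interior transverse intersections of $R$ with its own push-off $R'$ contribute $n_R^2\gl(R)$, giving the diagonal term of $\mathcal{Q}_X$. Intersections of two distinct perturbed regions $R_1'$, $R_2'$ occur only near the edges shared by $R_1$ and $R_2$ in $X$; near such an edge $e$, the local intersection count between the pushed sheets is governed by the combinatorics of the interval bundle of $e$, and summing over all regions incident to $e$ produces the algebraic sum $\sum_R n_R$ (with edge-orientation signs), which vanishes by the cycle condition on $c$. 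The same computation with two different classes $h_1,h_2$ and representatives $c_1,c_2$ yields $f_*(h_1)\cdot f_*(h_2)=\sum_R \langle h_1|R\rangle\langle h_2|R\rangle\gl(R)=\mathcal{Q}_X(h_1,h_2)$. The signature statement $\sigma(X)=\sigma(W)$ is then immediate, since $f_*$ is an isomorphism of abelian groups that intertwines the two symmetric bilinear forms after tensoring with $\mathbb{R}$.

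The main obstacle I expect is the careful orientation-and-sign bookkeeping along the $1$-skeleton, especially at points where the interval bundle over $\partial R$ is non-orientable (so that $\gl(R)\in\frac12+\Z$) and at true vertices of $X$ where three regions meet (models (1) and (2) in Fig.~\ref{figure:models}). One must verify that the local contribution of the perturbed sheets crossing near an edge or vertex is exactly the one needed for the cycle-condition cancellation to kick in, and that the half-integer defect in each gleam is matched by the M\"obius half-twist contribution to $\partial R\cap\partial R'$, so that all sums land in $\Z$ as required for the intersection form on $H_2(W,\Z)$.
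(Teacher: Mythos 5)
The paper does not actually prove this theorem: its ``proof'' is a citation to \cite[Section IX.5.1]{Turaev}. Your sketch is therefore not comparable to an argument in the paper itself; rather, it is a reconstruction of Turaev's argument, and its overall architecture is right. Representing a class by a cellular $2$-cycle $c=\sum_R n_R\,R$ with $n_R=\langle h|R\rangle$, computing $f_*(h_1)\cdot f_*(h_2)$ by a transverse push-off, and observing that the diagonal terms reproduce $n_R^2\,\gl(R)$ by the very definition of the gleam is exactly how the identity is established; the signature statement then follows formally since $f_*$ is an isomorphism intertwining the two forms.

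The one place where your sketch asserts rather than proves is the cancellation of the off-diagonal contributions near the $1$-skeleton, and as stated the mechanism is not quite right. The near-edge contribution to $c\cdot c'$ is \emph{quadratic} in the coefficients: it is a sum of terms $n_{R_i}n_{R_j}\epsilon_{ij}$ over pairs of sheets meeting along $e$, together with the half-integer boundary parts $\tfrac12 n_R^2\,\#(\partial R\cap\partial R')$ already absorbed into the gleams. Saying that ``summing over all regions incident to $e$ produces the algebraic sum $\sum_R n_R$, which vanishes'' conflates this quadratic quantity with a linear one; note that if all $\epsilon_{ij}$ were equal, $\sum_{i\neq j}n_in_j=(\sum_i n_i)^2-\sum_i n_i^2$ does \emph{not} vanish when $\sum_i n_i=0$. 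What actually makes the computation close is the precise local model of the push-off along an edge: the boundary of the gleam-defining push-off $R'$ lies in the interval bundle over $\partial R$ determined by the \emph{other} sheets, and one must check that, with this normalization, the pushed-off cycle meets the original one only in the interiors of the regions (equivalently, one can argue via the decomposition of $W$ used in Proposition~\ref{prop:shadow_uniqueness}: a $4$-dimensional handlebody neighborhood of the $1$-skeleton, which carries no $H_2$, with $D^2$-bundles over the regions attached, so that the intersection form diagonalizes over the regions with the gleams as Euler numbers). You correctly flag this bookkeeping as the main obstacle, but it is not a routine verification --- it is the entire content of the theorem, and is precisely what \cite{Turaev} carries out in Section IX.5.
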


\begin{ex}\label{ex:signature}
Let $S$ be the closed orientable surface of genus $g$. Let $M$ be the boundary of the oriented $D^2$-bundle $F$ over $S$ with Euler number $n\in\Z$. If $S$ is the 2-sphere $S^2$, $M$ is the lens space $L(n,1)$. The surface $S$ equipped with gleam $n$ is a shadow of $F$. 
$$
\mathcal{Q}_X: \Z \times \Z \rightarrow \Z, \ \ \mathcal{Q}_X(k,h)= k h n . 
$$
By Theorem~\ref{theorem:bil_and_sign} 
$$
\sigma(F) ={\rm sgn}(n) .
$$
\end{ex}

Now we repeat all for the case with boundary.

Let $X$ be a shadow (maybe with boundary) with only orientable regions. For any $h\in H_2(X,\partial X;\Z)$, $\langle h|R \rangle \in \Z$ is the image of $h$ under the map $H_2(X,\partial X;\Z) \rightarrow H_2(X/(X\setminus R);\Z) \cong \Z$ given by the homomorphism $X/ \partial X \rightarrow X/(X\setminus R)$. To define it we need to fix an orientation of $R$. The map $\tilde{\mathcal{Q}}_X : H_2(X,\partial X;\Z)\times H_2(X,\partial X;\Z) \rightarrow \frac 1 2 \Z$ is the bilinear form so defined: 
$$
\tilde{\mathcal{Q}}_X(h_1,h_2) := \sum_R \langle h_1| R\rangle \cdot \langle h_2| R\rangle \cdot \gl(R) .
$$
Here $R$ runs over all the regions and we do not need to fix an orientation of $R$. The group $H_2(X;\Z)$ is contained in $H_2(X,\partial X;\Z)$ and we call $\mathcal{Q}_X$ the restriction of $\tilde{\mathcal{Q}}_X$ to $H_2(X,\Z)$. This is the \emph{bilinear form} of $X$. The \emph{signature} of $X$, $\sigma(X)$, is defined as the signature of $\mathbb{R} \otimes_{\frac 1 2 \Z} \mathcal{Q}_X$. Theorem~\ref{theorem:bil_and_sign} works also for this case with boundary (always with $\mathcal{Q}_X$).

\section{Shadow formula for the Kauffman bracket}\label{sec:sh_for_br}

In this section we show how to compute the Kauffman bracket of graphs in the connected sum $\#_g(S^1\times S^2)$ of $g$ copies of $S^1\times S^2$ (see Definition~\ref{defn:Kauf} and Section~\ref{sec:Kauf_g}) via shadows collapsing onto a graph (maybe a point). Hence we work in skein theory with coefficients in the field $\mathbb{Q}(A)$ of rational functions with rational coefficients.

\subsection{Statement}

\begin{defn}\label{defn:Eul_char}
Let $R$ and $e$ be respectively a region and an edge of a shadow $X$ ($e$ may be external). The \emph{Euler characteristic}, $\chi(R)$ and $\chi(e)$, of $R$ and $e$ is the Euler characteristic of the closure of $R$ and $e$ in $X$ as sub-spaces. We must not confuse them with the Euler characteristic of $R$ and $e$ as topological spaces. We have $\chi(e)=0$ either if $e$ is a circle or if $e$ is adjacent twice to the same vertex, otherwise $\chi(e)=1$. For a vertex $v$ we can define $\chi(v):=1$ ($v$ may be external).
\end{defn}

Let $X$ be a shadow with colored boundary. An \emph{admissible coloring} $\xi$ of $X$ that extends the coloring of $\partial X$ is the assignment of a color to each region of $X$ (a non negative integer), such that for every edge of $X$ the colors of the three incident regions (maybe two of the three regions are the same) form an admissible triple $(a,b,c)$ (see Definition~\ref{defn:admissible}), and the color of a region $R$ of $X$ touching the boundary $R\cap \partial X = e \partial \neq \varnothing$ is equal to the color of the boundary edge $e_\partial$.

The \emph{evaluation} of the coloring $\xi$ is the following function:
$$
\langle X \rangle_\xi = \frac{\prod_f \cerchio_f^{\chi(f)}A_f \prod_v \tetra_v \prod_{v_\partial} \teta_{v_\partial}}
{\prod_e \teta_e^{\chi(e)} \prod_{e_\partial} \cerchio_{e_\partial}^{\chi(e_\partial)} } .
$$
Here the product is taken over all regions $f$, inner edges $e$, inner vertices $v$, boundary edges $e_\partial$, boundary vertices $v_\partial$. The symbols
$$
\cerchio_f,\ \cerchio_{e \partial},\ \teta_e,\ \teta_{v_\partial}, \  \tetra_v
$$
denote the skein element of these graphs in $K(S^3)=\mathbb{Q}(A)$ (see Subsection~\ref{subsec:three_graphs}), colored respectively as $f$, $e_\partial$, or as the regions incident to $e$, $v_\partial$, or $v$.

The \emph{phase} $A_f$ is the following value:
$$
A_f = (-1)^{gc}A^{-gc(c+2)} ,
$$
where $g$ and $c$ are respectively the gleam and the color of $f$.

A shadow that collapses onto a graph of genus $g$ is a shadow of a 4-dimensional handlebody of genus $g$, $W_g$. We remind that the boundary of $W_g$ is $\#_g(S^1\times S^2)$.

It might be non obvious in general to determine whether a polyhedron collapses onto a graph. Luckily, on simple polyhedra there is a nice criterion:
\begin{prop}[Costantino] \label{prop:sh_collapse}
Let $X$ be a connected simple polyhedron. The following facts are equivalent:
\begin{itemize}
\item{$X$ collapses onto a graph (maybe a point);}
\item{$X$ does not contain a simple polyhedron without boundary;}
\item{every coloring of $\partial X$ extends to finitely many admissible colorings of $X$ (maybe no one).}
\end{itemize}
\begin{proof}
See \cite[Lemma 3.6]{Costantino2}.
\end{proof}
\end{prop}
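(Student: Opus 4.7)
My plan is to prove the three-way equivalence by establishing $(1)\Leftrightarrow(2)$ directly and then $(1)\Rightarrow(3)\Rightarrow(2)$, which closes the cycle.

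First, for $(1)\Leftrightarrow(2)$: A closed simple polyhedron $Y$ has no free faces, since every interior edge of a simple polyhedron without boundary is adjacent to exactly three sheets and every vertex to exactly four edges. Hence if $Y\subset X$, no elementary collapse of $X$ can remove a cell of $Y$, so $Y$ is present in the final polyhedron of any collapse of $X$; in particular the final polyhedron is at least $2$-dimensional and cannot be a graph. This gives $(1)\Rightarrow(2)$. Conversely, assuming $(2)$, collapse $X$ elementarily until no free face remains, obtaining a sub-polyhedron $X'$. If $X'$ still contained a $2$-dimensional region $R$, then the union of $R$ with its adjacent edges and vertices in $X'$ would be a simple polyhedron without boundary (absence of free faces precludes boundary edges), yielding a closed simple sub-polyhedron of $X$ and contradicting $(2)$. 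Hence $X'$ is $1$-dimensional.

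For $(1)\Rightarrow(3)$, I would induct on the number of regions, keeping in mind that $(1)\Rightarrow(2)$ already excludes closed sub-polyhedra throughout the induction. If $X$ is a graph the statement is vacuous. Otherwise choose an elementary collapse removing a region $R$ along a free edge $e$, and set $X'=X\setminus(R^\circ\cup e^\circ)$. Then $X'$ inherits the property of collapsing onto a graph and has strictly fewer regions. The color $c$ of $R$ is constrained at every edge on $\partial R$ distinct from $e$: such an edge is an interior edge of $X'$ with two of its three sheets determined by regions of $X'$ whose colors are fixed by the inductive extension, and the admissible triple condition forces $c$ to satisfy a triangle inequality with two fixed colors, bounding $c$ above by their sum. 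Thus only finitely many values of $c$ are admissible, and the total count of extensions is finite. The degenerate case where $\partial R=e$ (so $e$ is a loop and $R$ a disk attached to $e$ with some covering degree) would make $R\cup e$ a closed simple sub-polyhedron of $X$, violating $(2)$; since $(1)\Rightarrow(2)$, this case is excluded.

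Finally, for $(3)\Rightarrow(2)$ I argue the contrapositive. Suppose $Y\subset X$ is a closed simple polyhedron. Color the boundary $\partial X$ with zero. This extends by the all-zero assignment on $X$, which is admissible since $(0,0,0)$ is an admissible triple. I then produce infinitely many admissible extensions by modifying the coloring on $Y$: $Y$ supports infinitely many admissible colorings (the simplest case being a region of $Y$ whose boundary curve lies entirely in $Y$, so its color can be chosen to be any even non-negative integer while keeping the remaining regions of $Y$ at $0$; the parity constraint comes from the admissibility triples at the edges internal to $Y$). Each such modification extends the trivial boundary coloring of $\partial X$, giving infinitely many extensions and violating $(3)$. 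The main obstacle is the last step — verifying rigorously that any closed simple polyhedron $Y$ admits an infinite family of admissible colorings compatible with the zero coloring of $X\setminus Y$; this reduces to showing that the system of triangle inequalities and parity constraints defined on $Y$'s regions has an unbounded set of non-negative integer solutions, which can be argued by choosing a particular region of $Y$ and inflating its color in steps compatible with the local admissibility conditions, leveraging again that $Y$ has no boundary so no external constraint breaks the inflation.
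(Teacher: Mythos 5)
The paper itself gives no argument here (it cites Costantino's Lemma 3.6), so there is nothing to compare against; judged on its own, your architecture is the standard one and the equivalence $(1)\Leftrightarrow(2)$ is essentially fine. But two of your steps, as written, do not go through.

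First, in $(1)\Rightarrow(3)$ you set $X'=X\setminus(R^\circ\cup e^\circ)$ and treat the edges of $\partial R\setminus e$ as ``interior edges of $X'$ with two of its three sheets'' in $X'$. They are not: an interior edge of $X$ adjacent to $R$ loses one of its three sheets when $R$ is deleted, so in $X'$ it is a locally flat point and the two surviving regions of $X$ merge into a single region of $X'$. Consequently admissible colorings of $X$ restricted to $X\setminus R$ are not admissible colorings of the simple polyhedron $X'$, and the induction does not typecheck. The repair is to induct not on sub-polyhedra but on the collapse order of the regions of $X$ itself: order the regions by the time their first $2$-simplex is removed in a fixed collapse of $X$ onto a graph; the first free face used for $R_i$ lies either on $\partial X$ (so the color of $R_i$ is prescribed by the boundary coloring) or on an interior edge of $X$ whose other two sheets belong to regions already begun, so the color of $R_i$ is bounded by the sum of two previously constrained colors. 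This gives finiteness.

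Second, in $(3)\Rightarrow(2)$ the explicit family you propose --- one region of $Y$ colored $2k$ and the remaining regions of $Y$ colored $0$ --- is \emph{not} admissible: at an edge of $Y$ the triple would be $(2k,0,0)$, violating $2k\leq 0+0$. The correct (and simpler) construction is to color \emph{every} region of $X$ contained in $Y$ with $2k$ and everything else with $0$. This works because $Y$, being a compact sub-polyhedron without boundary, is disjoint from $\partial X$, contains each region of $X$ either entirely or not at all, and meets each interior edge of $X$ in either $0$, $2$, or $3$ of its sheets (one sheet is impossible, as $Y$ would then have a boundary edge there); hence every triple arising is $(0,0,0)$, $(2k,2k,0)$ or $(2k,2k,2k)$, all admissible. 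Your closing paragraph correctly identifies this as the weak point, but the ``inflate one region step by step'' fix you gesture at is not the right mechanism --- the inflation must be uniform over all of $Y$ at once.
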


\begin{cor}\label{cor:sh_collapse}
Let $X$ be a simple polyhedron that collapses onto a graph. Every connected simple  sub-polyhedron $X'\subset X$ also collapses onto a graph.
\begin{proof}
The polyhedron $X$ does not contain any simple sub-polyhedron without boundary, hence $X'$ also does not.
\end{proof}
\end{cor}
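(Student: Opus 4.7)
The plan is to reduce everything to Costantino's criterion (Proposition~\ref{prop:sh_collapse}), which is the natural tool here because it characterizes collapsibility onto a graph intrinsically, in terms of the absence of closed simple sub-polyhedra. Since both $X$ and $X'$ are assumed connected and simple, the criterion applies to each of them and will give an immediate transitivity-type argument.

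First I would invoke Proposition~\ref{prop:sh_collapse} on $X$: the hypothesis that $X$ collapses onto a graph is equivalent to the statement that $X$ contains no simple sub-polyhedron without boundary. Next, I would observe that being a ``simple sub-polyhedron without boundary'' is an intrinsic property of the embedded PL object, so any such sub-polyhedron $Y \subset X'$ is automatically a simple sub-polyhedron of $X$ without boundary via the inclusion $X' \hookrightarrow X$. Hence the nonexistence property is inherited by $X'$.

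Finally, since $X'$ is connected and simple by hypothesis, I can apply the reverse direction of Proposition~\ref{prop:sh_collapse} to $X'$: it contains no simple sub-polyhedron without boundary, so it collapses onto a graph.

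I do not expect any real obstacle: the only thing worth being careful about is the assertion that a closed simple sub-polyhedron of $X'$ is also a closed simple sub-polyhedron of $X$, but this is built into the definition of sub-polyhedron (both conditions, being simple and having empty boundary, depend only on the local PL structure at points of $Y$, which is the same whether $Y$ is viewed inside $X'$ or inside $X$). The corollary is essentially a one-line consequence of the equivalence in Proposition~\ref{prop:sh_collapse}.
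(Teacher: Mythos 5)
Your proposal is correct and is exactly the paper's argument: apply Proposition~\ref{prop:sh_collapse} to $X$ to get the nonexistence of closed simple sub-polyhedra, observe this property passes to $X'$, and apply the proposition in reverse to $X'$. The paper's one-line proof is just a compressed version of the same reasoning.
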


\begin{theo}[Shadow formula]\label{theorem:sh_for_br}
Let $X$ be a shadow of the colored knotted trivalent graph $G \subset \#_g(S^1\times S^2)$. If $X$ collapses onto a graph
$$
\langle G \rangle = \sum_\xi \langle X \rangle_\xi ,
$$
where the sum is taken over all the admissible colorings $\xi$ of $X$ that extend the coloring of $\partial X=G$.
\end{theo}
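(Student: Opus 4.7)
My plan is to prove the formula by combining two ingredients: first, the right-hand side $\sum_\xi \langle X \rangle_\xi$ is invariant under the shadow moves of Definition~\ref{defn:sh_moves}; second, any shadow of $(G,\#_g(S^1\times S^2))$ that collapses onto a graph can be reduced, via such moves, to the standard shadow $X_G$ produced from a link diagram of $G$ in the disk with $g$ holes (Proposition~\ref{prop:shadow2}). Since $\langle G\rangle$ depends only on the pair $(G,\#_g(S^1\times S^2))$, once invariance is established it is enough to verify the formula on this one representative; finiteness of the sum is guaranteed by Proposition~\ref{prop:sh_collapse}.

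For move-invariance, the $2\to 3$ move of Figure~\ref{figure:shadowmoves}-(1) reduces to the $6j$-symbol identity~(\ref{eqn:first_id}): the color of the new interior edge plays the role of the summation index $i$, and the decomposition~(\ref{eqn:second_id}) of the $6j$-symbol as $\cerchio_i/(\teta_{a,d,i}\teta_{c,b,i})$ times a tetrahedral evaluation matches exactly the new $\cerchio$, $\teta$, and $\tetra$ factors appearing in $\langle X\rangle_\xi$ after the move. The $0\to 2$ lune move is handled by the fusion/orthogonality identity of Figure~\ref{figure:sphere}: the new bigon adds a summation over its color weighted by a delta that forces the two adjacent regions to carry equal colors, so the extra $\cerchio$, $\teta$, and $\chi$-exponent factors cancel back to the original expression. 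The $1\to 2$ move is a combination of fusion (Figure~\ref{figure:fusion}) and the framing-change identity (Figure~\ref{figure:framingchange}), together with Proposition~\ref{prop:half-framingchange}. The latter is also the input used to verify that the gleam phase $A_f=(-1)^{gc}A^{-gc(c+2)}$ transforms correctly under the gleam shifts prescribed by the moves, since a positive full twist on a strand of color $c$ multiplies its skein by exactly $(-1)^{c}A^{c(c+2)}$.

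Once invariance is in hand, I specialize to the standard shadow $X_G$ associated to a diagram $D\subset S_{(g)}$ and match $\sum_\xi \langle X_G\rangle_\xi$ with the state-sum formula for the Kauffman bracket of Proposition~\ref{prop:state_sum}. At each crossing of $D$, the shadow $X_G$ has a distinguished square region whose gleam encodes the over/under data via Figure~\ref{figure:gleam_rule}; expanding the $6j$-identity at this square against its two admissible ``diagonals'' decomposes its local contribution into the two Kauffman resolutions weighted by $A^{\pm 1}$. Iterating over all crossings converts the admissible-coloring sum into the state sum $\sum_s A^{\sum_i s(i)}\langle D_s\rangle$, where the remaining summation over colors evaluates the closed curves $D_s$ via products of $\cerchio_n$, $\teta$, and $\tetra$, and non-trivial loops around the holes of $S_{(g)}$ are evaluated by Lemma~\ref{lem:parallel_cores}.

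The principal obstacle is the move-invariance step: the $2\to 3$ case is essentially formal, but the $0\to 2$ and $1\to 2$ moves require a careful tracking of how the phases $A_f$ interact with the shifts in gleams, and there are many boundary sub-cases to handle since the boundary factors $\teta_{v_\partial}$ and $\cerchio_{e_\partial}^{\chi(e_\partial)}$ participate in the identities when a move touches $\partial X$. A secondary subtlety is ensuring that the finite-support result of Proposition~\ref{prop:sh_collapse} survives all intermediate moves, so that every step in the argument deals with honest finite sums rather than formal ones.
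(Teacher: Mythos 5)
Your move-invariance computations (the $2\to 3$ move via the $6j$-identity~(\ref{eqn:first_id})--(\ref{eqn:second_id}), the lune move via orthogonality, the gleam phases via Proposition~\ref{prop:half-framingchange}) are all sound in themselves, and your verification on the standard shadow $X_G$ is essentially Remark~\ref{rem:sh_for_br} read backwards. The gap is in the reduction step: you assert that any shadow of $(G,\#_g(S^1\times S^2))$ collapsing onto a graph can be carried to $X_G$ by the three shadow moves of Definition~\ref{defn:sh_moves}. No such result is available. The known calculus says that two shadows of the same 4-manifold with the same boundary are only \emph{stably} equivalent (shadow moves plus $0$-bubble moves and inverses), and two shadows of the same pair $(M,G)$ are related only after also allowing $\pm 1$-bubble and trading moves. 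Whether shadow moves alone suffice is precisely Question~\ref{quest:eq_shadows}, which is open and which experts expect to be \emph{false} because of its link to the Andrews--Curtis conjecture. Worse, the auxiliary moves you would be forced to use take you out of the class where the formula even makes sense: a $0$-bubble creates a closed sphere region, so by Proposition~\ref{prop:sh_collapse} the polyhedron no longer collapses onto a graph and the coloring sum ceases to be finite, while the $\pm 1$-bubble changes the 4-manifold by $\pm\mathbb{CP}^2$ and would require a signature correction factor that the Kauffman-bracket formula does not carry (unlike the $\kappa^{-\sigma(W)}$ in Theorem~\ref{theorem:sh_for_RTW}).

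The paper circumvents this entirely by using a different set of moves: the ``cut'' moves of Fig.~\ref{figure:cut}, which decompose any shadow collapsing onto a graph into finitely many atomic pieces (cones over $\cerchio$, $\teta$, $\tetra$). These moves do \emph{not} preserve the pair $(M,G)$ --- each one surgers the 3-manifold along an embedded 2-sphere and modifies $G$ as in Fig.~\ref{figure:sphere} or by a fusion --- but both $\langle G\rangle$ and $\sum_\xi\langle X\rangle_\xi$ transform by the same explicit factor ($1/\cerchio_f$, $1/\teta_e$, or the fusion coefficients), so the formula follows by induction on the number of cuts, with the atomic shadows as base case. If you want to salvage your plan, you would have to either prove the needed move-equivalence for shadows of handlebodies (which you should not expect to be able to do) or replace your reduction step with a decomposition argument of this kind.
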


\begin{rem}\label{rem:sh_for_br}
In Section~\ref{sec:Kauf_g} we saw that we can represent every link in $\#_g(S^1\times S^2)$ with a diagram in the disk with $g$ holes $S_{(g)}$. Using this diagrammatic representation and the skein relations we got Proposition~\ref{prop:state_sum} that reduces the computation of the Kauffman bracket of a link to the one of diagrams $D_s\subset S_{(g)}$ without crossings and without homotopically trivial components. Here we explain how to easily end the computation using the shadow formula. In Proposition~\ref{prop:shadow2} we already saw a method to get a shadow of any link in $\#_g(S^1\times S^2)$, but the method we present now is better than compute directly the shadow formula on that shadow. In fact we reduce the computation to using the shadow formula just to shadows with null gleams, without vertices and whose edges are closed circles. This means that in the shadow formula we do not have graphs of the form $\teta$ and $\tetra$ and there are no phases $A_f$.

Let $D\subset S_{(g)}$ be a link diagram without crossings and without homotopically trivial components. Let $X$ be the shadow obtained attaching to $S_{(g)}$ an annulus along each component of $D$ and giving to each region gleam $0$. The boundary $\partial X$ of $X$ consists of the components of $D$ plus the $g+1$ boundary components of $S_{(g)}$. Color the boundary components of $X$ corresponding to the components of $D$ with $1$ (the remaining boundary components of the annuli attached to the components of $D \subset S_{(g)}$), and the other ones with $0$. The polyhedron $X$ is a shadow of a framed link in $\#_g(S^1\times S^2)$ that is the union of the link described by $D$ and a link colored with $0$. Therefore the shadow formula applied to $X$ gives $\langle D\rangle$. Hence
$$
\langle D \rangle = \sum_\xi \prod_R \cerchio^{\chi(R)}_{\xi(R)} ,
$$
where $\xi$ runs over all the admissible colorings of $X$ that extend the coloring of the boundary, $R$ runs over all the regions of $X$ (the external regions do not matter because either they are annuli or their color is $0$), $\chi(R)$ is the Euler characteristic of $R$, and $\xi(R)$ is the color of $R$ given by $\xi$. Therefore by Subsection~\ref{subsec:three_graphs} $\langle D \rangle$ is a symmetric function of $A^2$, namely there are two polynomials $f,h\in \Z[q]$ such that 
$$
\langle D_s \rangle = \frac{f|_{q=A^2}}{h|_{q=A^2}} , \ \ \langle D \rangle|_A = \langle D \rangle|_{A^{-1}}  .
$$

If the diagram is one of the diagrams $D_s$'s, the constructed shadow will be denoted $X_s$. 
\end{rem}

\subsection{Proof}\label{subsec:proof_sh_for_Kauf_br}

Now we prove the shadow formula (Theorem~\ref{theorem:sh_for_br}).

We recall the skein equalities in Fig.~\ref{figure:sphere}. They take place in the neighborhood of a 2-sphere $S$, drawn as a $0$-framed circle in the picture. If $G$ intersects $S$ transversely in exactly one point, then Fig.~\ref{figure:sphere}-(left) applies, while if $G$ intersects the sphere twice Fig.~\ref{figure:sphere}-(right) applies. Note that after applying the move we can surger along the sphere without affecting $\langle G \rangle$ (see Remark~\ref{rem:tensor}).

\begin{figure}
\begin{center}
\includegraphics[width = 9 cm]{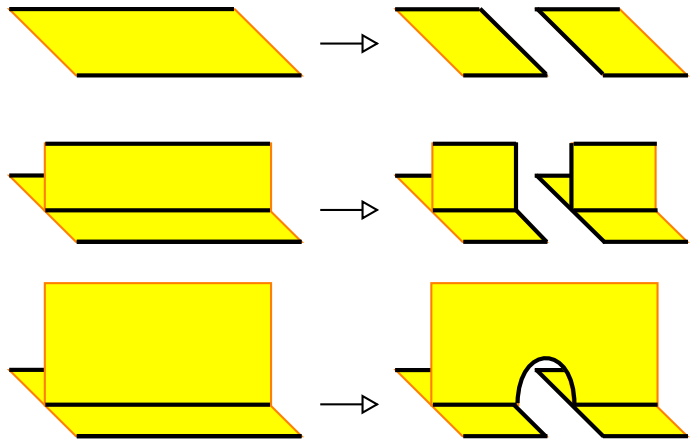}
\caption{A simple polyhedron $X$ that collapses onto a graph reduces to a finite union of atomic polyhedra after finitely many moves of this type. The bold exterior lines are portions of $\partial X$.}
\label{figure:cut}
\end{center}
\end{figure}

\begin{rem}
By Corollary~\ref{cor:sh_collapse} each move in Fig.~\ref{figure:cut} transforms a simple polyhedron that collapses onto a graph into one or two simple polyhedra that collapse onto a graph.
\end{rem}

A simple polyhedron $X$ is \emph{atomic} if it is the cone over $\cerchio, \teta$, or $\tetra$, namely $X$ is as in Fig.~\ref{figure:models}-(3,2,1). We will use the following:
\begin{prop}[\cite{Carrega-Martelli}]
Let $X$ be a simple polyhedron that collapses onto a graph. The polyhedron reduces to a finite union of atomic polyhedra after a finite combination of moves as in Fig.~\ref{figure:cut}.
\begin{proof}
We say that a region of $X$ is \emph{external} if it is incident to $\partial X$, and \emph{internal} otherwise. Suppose $X$ contains some internal regions. There is an edge $e$ that is adjacent to one internal region and to two external regions: if not, the interior regions would form a simple sub-polyhedron contradicting Proposition~\ref{prop:sh_collapse}. The move in Fig.~\ref{figure:cut}-(bottom) applied to $e$ transforms the interior region into an exterior one. After finitely many such moves we kill all the internal regions.

Now we can use Fig.~\ref{figure:cut}-(center) to cut every interior edge in two halves, and then Fig.~\ref{figure:cut}-(top) to cut every region into disks that are incident to $\partial X$ only in one arc or circle. We are left with atomic pieces.
\end{proof}
\end{prop}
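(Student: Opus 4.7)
The plan is to reduce $X$ to atomic pieces via three successive cleanup stages, each employing one of the three moves in Fig.~\ref{figure:cut}, and to verify termination at each stage by exhibiting a strictly decreasing complexity. The crucial structural input is Proposition~\ref{prop:sh_collapse}: at any intermediate stage the resulting simple polyhedra still collapse onto graphs (by Corollary~\ref{cor:sh_collapse}, since the moves produce subpolyhedra of larger simple polyhedra with boundary), equivalently they contain no closed simple subpolyhedron, which is what guarantees that a place to apply the next move always exists.

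First I would kill all \emph{internal} regions, where a region is \emph{internal} if it is disjoint from $\partial X$ and \emph{external} otherwise. If internal regions exist, I claim there must be an interior edge $e$ adjacent to exactly one internal region and two external regions. For otherwise every edge adjacent to an internal region would be flanked by internal regions on all three sides, and the closure of the union of all internal regions together with their incident edges and vertices would be a simple subpolyhedron of $X$ without boundary, contradicting Proposition~\ref{prop:sh_collapse}. Applying the bottom move of Fig.~\ref{figure:cut} along such an $e$ converts the internal region into an external one, strictly decreasing the number of internal regions; iterating terminates with every region external. Next, I would apply the center move of Fig.~\ref{figure:cut} to each remaining interior edge, splitting it in two; this strictly decreases the interior edge count and creates no new internal regions. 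Finally, I would use the top move repeatedly, slicing each remaining region along a properly embedded arc chosen to lower the Euler characteristic defect of that region, until every piece is a disk meeting $\partial X$ in a single arc or a single circle. The resulting pieces are precisely the cones on $\cerchio$, $\teta$, and $\tetra$, that is, atomic polyhedra.

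The main obstacle I expect is the existence claim in the first stage. A priori, knowing that some internal region exists only tells us that some edge borders it, not that some edge borders exactly one internal region together with two external ones. This is the only place where the hypothesis that $X$ collapses onto a graph is used in an essential way, and the argument must invoke Proposition~\ref{prop:sh_collapse} with some care: the candidate sub-object built from a hypothetical absence of good edges — namely the closure of the union of internal regions together with its $1$-skeleton — must be verified to be an honest simple subpolyhedron of $X$ with no boundary. By contrast, the second and third stages amount to routine combinatorial bookkeeping once the first is accomplished.
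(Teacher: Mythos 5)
Your proposal follows the paper's proof essentially verbatim: the same three-stage scheme (kill internal regions via the bottom move, using Proposition~\ref{prop:sh_collapse} to find an edge adjacent to one internal and two external regions; then halve interior edges with the center move; then slice regions into atomic disks with the top move), with the same key contradiction argument for the existence of the good edge. The extra remarks on termination and on verifying that the candidate sub-object is a genuine closed simple subpolyhedron are sensible elaborations of the same argument rather than a different route.
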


If we apply one of the moves of Fig.~\ref{figure:cut} to a shadow $X$ of some graph $G\subset M$, we get a new simple polyhedron $X'$ that can be interpreted as a shadow of some graph $G'$ in some manifold $M'$. We show this fact for each move.

We start by examining Fig.~\ref{figure:cut}-(top). The yellow strip thickens to a $D^3 \times [-1,1]$, with boundary $S^2\times [-1,1]$. The 2-sphere $S = S^2\times \{0\}$ intersects $G$ transversely in two points. Topologically, the move corresponds to surgerying $M$ along the 2-sphere $S^2\times \{0\}$ and modifying $G$ as in Fig.~\ref{figure:sphere}-(right). We get a new graph $G'$ inside a new manifold $M'$, with a new shadow $X'$. If $S$ is separating, these objects actually split in two components.

The move in Fig.~\ref{figure:cut}-(center) is analogous, the only difference being that now $S$ intersects $G$ in three points. The move in Fig.~\ref{figure:cut}-(bottom) is the fusion shown in Fig.~\ref{figure:fusion}.

\begin{rem} \label{rem:cut}
In the moves of Fig.~\ref{figure:cut}, some region $R\subset X$ is cut into two regions $R_1,R_2 \subset X'$. The gleams $g_1$ and $g_2$ of these new regions sum to give the gleam $g=g_1+g_2$ of $R$. The gleams of all the other regions of $X$ do not change.
\end{rem}

\begin{proof}[Proof of Theorem~\ref{theorem:sh_for_br}, \cite{Carrega-Martelli}]
The formula holds when $X$ is atomic with zero gleams: there is a single coloring $\xi$ on $X$ extending that of $G$, and we get $\langle X \rangle_\xi = \langle G \rangle$. To prove that, note that the contribution of every non closed boundary edge $e_\partial$ or boundary vertex $v_\partial$ cancels with the contribution of the incident region $f$ or edge $e$. Therefore:
\begin{itemize}
\item{if $G=\cerchio$ we get obviously $\cerchio$;}
\item{if $G=\teta$ everything cancels except $\teta_{v_\partial}^2 / \teta_e = \teta_e$;}
\item{if $G=\tetra$ everything cancels except $\tetra_v$.}
\end{itemize}
Suppose now that $X$ is atomic with arbitrary gleams. We modify the gleams using the following moves:
\begin{enumerate}
\item{add a gleam $\pm 1$ on a region: this corresponds to a $\mp 1$ full twist of the corresponding framed edge of $G$;}
\item{add a gleam $\pm \frac 1 2$ to the three regions incident to an interior edge of $X$: this corresponds to a $\mp \frac 1 2 $ half-twist to each of the three edges of $G$ incident to a vertex of $G$.}
\end{enumerate}
Fig.~\ref{figure:framingchange_vertex} shows that even the second move does not change the orientability of the framing.

\begin{figure}[htbp]
$$
\pic{4}{0.7}{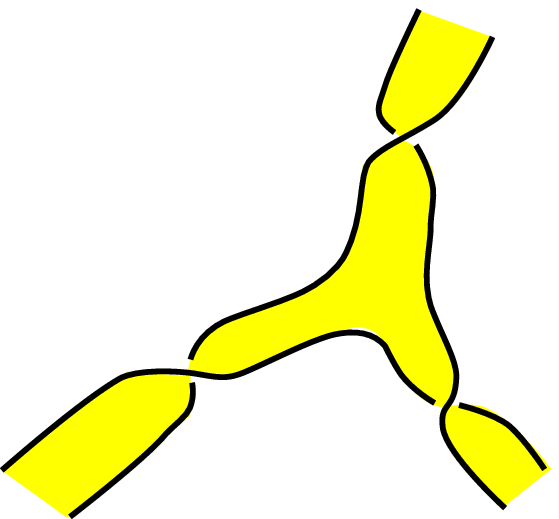} \ \ \ \longleftrightarrow \ \ \ \pic{3.5}{0.7}{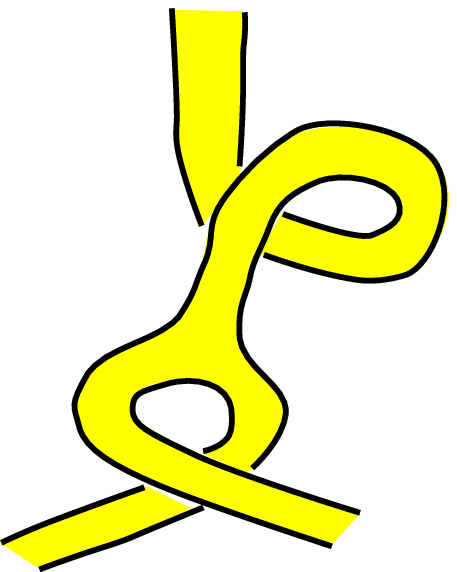}  
$$
\caption{A trivalent framed vertex with a positive half-twist in each strand.}
\label{figure:framingchange_vertex}
\end{figure}

Using finitely many such moves we can reduce all gleams to zero. To show that,
color in green the regions having a half-integer (but non-integer) gleam. Recall that the framing of $G$ is orientable: this implies that every sub-circle $C\subset G$ intersects an even number of green faces, and it is easy to check that with moves $2.$ we can transform all gleams into integers. Then we reduce them to zero using $1.$.

Let $G'$ be obtained from $G$ by $1.$ or $2.$. By Fig.~\ref{figure:framingchange} we have
\beq
\langle G' \rangle & = & (-1)^c A^{\mp\frac c(c+2)} \langle G \rangle\\
\langle G' \rangle & = & (-1)^{\frac {a+b+c}2} A^{\mp\frac a2(a+2) \mp \frac b2(b+2) \mp \frac c2(c+2)} \langle G \rangle
\eeq
corresponding respectively to moves $1.$ and $2.$. In the formula the contribution of the phases $A_f = (-1)^{gc} A^{- 
gc(c+2)}$ changes exactly in the same way: this proves the theorem for any atomic shadow $X$.

A more general $X$ decomposes into atoms via finitely many moves as in Fig.~\ref{figure:cut}. Let $n(X)$ be the number of moves necessary to atomize $X$: we prove the theorem by induction on $n(X)$. 

Pick a move transforming $X$ into a $X'$ with $n(X')<n(X)$. 
The polyhedron $X'$ is a shadow of some graph $G'$ in some manifold $M'$. The objects $X'$ and $M'$ may have two components, but the following arguments work anyway. We suppose by induction that the theorem holds for $X'$ and $G'$, and we prove it for $X$ and $G$.

Consider the move in Fig.~\ref{figure:cut}-(top). The pair $(M',G')$ is obtained from $(M,G)$ via the move shown in Fig.~\ref{figure:sphere}-(right), with $G'$ inheriting the coloring of $G$. Therefore
$$
\langle G \rangle = \frac{1}{\cerchio_f} \langle G' \rangle ,
$$
where $f$ is the yellow colored region that we have cut. There is an obvious correspondence between colorings of $X$ and $X'$, and the formula says that for each coloring $\xi$ we have
$$
\langle X \rangle_\xi  = \frac 1{\cerchio_f} \langle X' \rangle_\xi 
$$
(We use here Remark~\ref{rem:cut} to show that the phases of $X$ and $X'$ with $\xi$ are the same). The theorem holds for the pair $(X', G')$, and hence holds also for $(X,G)$.

The move in Fig.~\ref{figure:cut}-(center) is treated analogously. Using a fusion and Fig.~\ref{figure:sphere} we find easily that
$$
\langle G \rangle = \frac{1}{\teta_e} \langle G' \rangle ,
$$
where $e$ is the colored edge cut in Fig.~\ref{figure:cut}-(center). There is an obvious correspondence between colorings of $X$ and $X'$, and for  each such coloring $\xi$ we have
$$
\langle X \rangle_\xi  = \frac 1{\teta_e} \langle X' \rangle_\xi .
$$

Finally, the move in Fig.~\ref{figure:cut}-(bottom) is a fusion. The fusion formula says
$$
\langle G \rangle = \sum_c \frac{\cerchio_c}{\teta_{a,b,c}} \langle G'_c \rangle ,
$$
where the coloring $G'_c$ on $G'$ varies on the new colored edge $c$. Every coloring of $X$ induces one of $X'$ and we get
$$
\langle X \rangle_\xi  = \frac {\cerchio_c}{\teta_{a,b,c}} \langle X' \rangle_\xi .
$$
This proves the theorem.
\end{proof}

\section{Shadow formula for the $SU(2)$-Reshetikhin-Turaev-Witten invariants}

In this section we show how to compute the $SU(2)$-Reshetikhin-Turaev-Witten invariants (see Definition~\ref{defn:RTW}, Definition~\ref{defn:RTW_G} and Section~\ref{sec:RTW}) via shadows. Hence we fix an integer $r\geq 3$ and a primitive $4r^{\rm th}$ root of unity $A$. We remind that we need just that $A^{4r}=1$ and $A^{4n}\neq 1$ for $0<n<r$.

\subsection{Statement}

We introduced the definition of Euler characteristic of a region, an edge and a vertex of a shadow (Definition~\ref{defn:Eul_char}).

Let $X$ be a shadow with colored boundary. A $q$-\emph{admissible coloring} $\xi$ of $X$ that extends the coloring of $\partial X$ is the assignment of a color to each region of $X$ (a non negative integer), such that for every edge of $X$ the colors of the three incident regions (maybe two of the three regions are the same) form a $q$-admissible triple $(a,b,c)$ (see Definition~\ref{defn:q-admissible} and Definition~\ref{defn:admissible}), and the color of a region $R$ of $X$ touching the boundary $R\cap \partial X = e \partial \neq \varnothing$ is equal to the color of the boundary edge $e_\partial$.

The \emph{evaluation} of the coloring $\xi$ is the following function:
$$
|X|_\xi = \frac{\prod_f \cerchio_f^{\chi(f)}A_f \prod_v \tetra_v \prod_{v_\partial} \teta_{v_\partial}}
{\prod_e \teta_e^{\chi(e)} \prod_{e_\partial} \cerchio_{e_\partial}^{\chi(e_\partial)} } .
$$
Here the product is taken over all regions $f$, inner edges $e$, inner vertices $v$, boundary edges $e_\partial$, boundary vertices $v_\partial$. The symbols
$$
\cerchio_f,\ \cerchio_{e \partial},\ \teta_e,\ \teta_{v_\partial}, \  \tetra_v
$$
denote the skein element of these graphs in $K_A(S^3)=\mathbb{C}$ (see Subsection~\ref{subsec:three_graphs} and Subsection~\ref{subsec:quantum_skein_th}), colored respectively as $f$, $e_\partial$, or as the regions incident to $e$, $v_\partial$, or $v$.

The \emph{phase} $A_f$ is the following value:
$$
A_f = (-1)^{gc}A^{-gc(c+2)} ,
$$
where $g$ and $c$ are respectively the gleam and the color of $f$.
$$
|X|^r := \sum_{\xi} |X|_\xi
$$
where the sum is taken over all the $q$-admissible colorings of $X$. Note that since $X$ is compact and each color of the region is at most $r-2$, the sum is finite.

We denote by $\sigma(W)$ the signature of a oriented 4-manifold $W$ (see Section~\ref{sec:bil_form}) and with $\chi(X)$ the Euler characteristic of $X$. If $X$ is a shadow of $W$ we have
$$
\chi(X) = \sum_v 1 - \sum_e \chi(e) + \sum_f \chi(f) = \chi(W)
$$
where the sum is taken over all the regions $f$, edges $e$ and vertices $v$ of $X$. Moreover we introduced the notion of ``$\kappa$'' (Subsection~\ref{subsec:q-imp_ob}).

\begin{theo}[Shadow formula, Turaev]\label{theorem:sh_for_RTW}
Let $X$ be a shadow of a oriented 4-manifold $W$, $G=\partial X$ and $M=\partial W$. If $G$ is a $q$-admissible colored knotted graph then
$$
I_r(G,M) = \kappa^{-\sigma(W)} \eta^{\chi(X)} |X|^r .
$$
\end{theo}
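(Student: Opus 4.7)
The plan is to reduce Theorem~\ref{theorem:sh_for_RTW} to the Kauffman bracket shadow formula (Theorem~\ref{theorem:sh_for_br}) via the surgery definition of $I_r$. First I will verify the identity on a single carefully chosen shadow of $(M,G)$ built from a surgery presentation, and then extend to arbitrary shadows using invariance under the moves of Definition~\ref{defn:sh_moves} and the bubble and trading moves.

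For the first step I would fix a surgery presentation $L\subset S^3$ of $M$ with $k$ components and a $q$-admissibly colored knotted trivalent graph $G'\subset S^3$ representing $G$, so that by definition $I_r(M,G)=\eta\,\kappa^{-\sigma(L)}\,\Omega(L,G')$. Expanding $\Omega=\eta\sum_{c=0}^{r-2}\cerchio_c\phi_c$ on each component of $L$ gives
\[
I_r(M,G)=\eta^{k+1}\kappa^{-\sigma(L)}\sum_{(c_1,\dots,c_k)}\Big(\prod_i\cerchio_{c_i}\Big)\bigl\langle L^{(c_1,\dots,c_k)}\cup G'\bigr\rangle.
\]
By Lemma~\ref{lem:shadow0} I build a shadow $X_0$ of $(M,G)$ by starting from a Bing's house spine of $D^4$ (Fig.~\ref{figure:Bing_house}), attaching a disk along each component of $L$ with gleam recording the surgery framing, and attaching an annulus along each edge of $G'$ matching its framing. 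Then $W_{X_0}$ has $\sigma(W_{X_0})=\sigma(L)$ and $\chi(X_0)=k+1$. Applying Theorem~\ref{theorem:sh_for_br} to each Kauffman bracket $\langle L^{(c)}\cup G'\rangle$, computed on the sub-shadow of $X_0$ obtained by removing the attaching disks of $L$, identifies the double sum with $\sum_\xi |X_0|_\xi$ over $q$-admissible colorings of $X_0$: the region factor $\cerchio_{c_i}^{\chi(\text{disk})}=\cerchio_{c_i}$ for each $L$-disk matches the $\cerchio_{c_i}$ in the expansion of $\Omega$, and the phase $A_f=(-1)^{gc}A^{-gc(c+2)}$ carried by the disk gleam $g$ reproduces the framing-twist identities of Fig.~\ref{figure:framingchange}. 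Matching the overall prefactor $\eta^{k+1}\kappa^{-\sigma(L)}=\eta^{\chi(X_0)}\kappa^{-\sigma(W_{X_0})}$ yields the formula for $X_0$.

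The second step is to prove that $\Psi(X):=\kappa^{-\sigma(W_X)}\eta^{\chi(X)}|X|^r$ is invariant under all moves that connect shadows of the same $(M,G)$. For the shadow moves of Fig.~\ref{figure:shadowmoves}, $W_X$ and $\sigma$ are unchanged, and I verify that the change in $|X|^r$ coming from the summation over colors of the newly created regions cancels $\eta^{\Delta\chi}$: for the $0\to 2$ and $1\to 2$ moves this reduces to the orthogonality of Jones-Wenzl projectors and the 2/3-strand fusion identities of Fig.~\ref{figure:2fusion}--\ref{figure:3fusion}, while the $2\to 3$ move is exactly the associativity relation encoded by the quantum 6j-symbols of equation~\eqref{eqn:second_id} combined with Fig.~\ref{figure:framingchange}. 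For a $\pm 1$-bubble move, $W_X$ becomes $W_X\#_\partial(\pm\mathbb{CP}^2)$, shifting $\sigma$ appropriately and $\chi$ by $+1$; the new sphere region with gleam $\pm 1$ contributes $\sum_{a=0}^{r-2}\cerchio_a^2(-1)^{\pm a}A^{\mp a(a+2)}=\eta^{-1}\Omega U_\pm=\eta^{-1}\kappa^{\pm 1}$ to $|X|^r$, which together with $\eta^{\Delta\chi}$ and $\kappa^{-\Delta\sigma}$ gives $1$. For a $0$-bubble move the analogous computation produces the factor $\eta^{-1}$, matching Proposition~\ref{prop:RTW_conn_sum} for the connected sum with $S^1\times S^2$. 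For the trading move I invoke Proposition~\ref{prop:shadow1} together with Theorem~\ref{theorem:3-manifods} to restrict to simply connected shadows, where trading is unnecessary; combined with the fact that $X_0$ and $X$ can both be arranged simply connected, this completes the reduction.

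The main obstacle will be the delicate bookkeeping in the bubble-move computation, where the product $\eta\cdot\eta^{-1}\kappa^{\pm 1}\cdot\kappa^{\mp 1}$ must telescope to $1$; this relies crucially on the identity $\kappa=\Omega U_+$ from Subsection~\ref{subsec:q-imp_ob}. The $2\to 3$ move, which is the quantum associativity relation at the root of unity, requires unpacking the explicit form of the quantum 6j-symbol \eqref{eqn:second_id} together with the phase accounting for the new interior region and its adjacent edge, and is the most combinatorially intricate verification.
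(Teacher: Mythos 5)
Your strategy --- verify the formula on one model shadow built from a surgery presentation and then propagate it to all shadows by invariance under moves --- is genuinely different from the proof in the paper, which never uses shadow moves at all: there the surgery presentation is constructed \emph{from the given arbitrary shadow} $X$ (embedding its 1-skeleton, placing tetrahedra at vertices, strips along edges, triangulating the regions, and encircling non-tree edges with $0$-framed meridians), and $\Omega(L,f)$ is then evaluated step by step with the framing-change and 2-/3-strand fusion identities until the shadow formula appears term by term. That route works uniformly for every shadow and sidesteps the move calculus entirely. Your route, however, has a genuine gap at the trading move. The theorem must hold for an arbitrary shadow $X$ of an arbitrary compact oriented $W$ with $\partial W=M$; in particular for non-simply-connected shadows such as those collapsing onto a graph of positive genus (the shadows of $W_g$ that the rest of the thesis relies on). A shadow is homotopy equivalent to its $4$-dimensional thickening, so no sequence of shadow moves can turn a non-simply-connected $X$ into a simply connected one, and Theorem~\ref{theorem:3-manifods} is therefore unavailable: the general equivalence theorem for shadows of the same pair $(M,G)$ requires trading moves, and you give no argument that $\kappa^{-\sigma(W_X)}\eta^{\chi(X)}|X|^r$ is invariant under them. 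Either you verify trading-move invariance directly (a nontrivial computation the paper deliberately avoids), or the reduction only proves the theorem for simply connected shadows.

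There is a second, more local, problem in your base case. The sub-shadow of $X_0$ obtained by deleting the surgery disks still contains the Bing's house, which is a closed simple sub-polyhedron; by Proposition~\ref{prop:sh_collapse} such a polyhedron does \emph{not} collapse onto a graph, so the hypothesis of Theorem~\ref{theorem:sh_for_br} fails and you cannot invoke it there. This is repairable --- replace the Bing's house by the pushed-in disk of Proposition~\ref{prop:shadow2} with $g=0$, which does collapse to a point --- but as written the step is invalid. You should also note that Theorem~\ref{theorem:sh_for_br} is a statement over $\mathbb{Q}(A)$ with ordinary admissible colorings, whereas at the primitive $4r^{\rm th}$ root of unity the quantities $\teta_e$ in the denominators can vanish for admissible but non-$q$-admissible colorings; passing from one setting to the other requires an argument (this is precisely why the paper reproves the formula from scratch in the root-of-unity skein theory rather than specializing the generic one). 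The remaining verifications you list (lune and $2\to 3$ moves via orthogonality and the Biedenharn--Elliott identity for the quantum $6j$-symbols, and the $\pm1$-bubble bookkeeping with $\kappa^{\pm1}=\Omega U_\pm$) are standard and would go through, but they do not close the two gaps above.
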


\begin{ex}
Let $S$ be the closed orientable surface of genus $g$. Let $M$ be the boundary of the oriented $D^2$-bundle $F$ over $S$ with Euler number $n\in\Z$. If $S=S^2$, $M$ is the lens space $L(n,1)$. The surface $S$ equipped with gleam $n$ is a shadow of $F$. By Example~\ref{ex:signature} $\sigma(F) = \sigma(X) = {\rm sgn}(n)$. Therefore by the shadow formula (Theorem~\ref{theorem:sh_for_RTW}) we get
$$
I_r(M) = \kappa^{-{\rm sgn}(n)} \eta^{2-2g} \sum_{k=0}^{r-2} (-1)^{nk} A^{-nk(k+2)} \cerchio_k^{2-2g} .
$$
\end{ex}

\begin{prop}\label{prop:RTW_conj}
Let $M$ be an oriented compact 3-manifold and let $\bar M$ be $M$ with the opposite orientation. Then
$$
I_r(\bar M) = \overline{I_r(M)} ,
$$
where $\overline{x}$ is the conjugate of the complex number $x$.
\begin{proof}
Let $X$ be a shadow of $M$ and let $W$ be the 4-dimensional thickening of $X$. If we change the orientation of $W$ we get an oriented 4-manifold $\bar W$ whose boundary is $\bar M$. If we change the gleam of the regions of $X$ with its opposite we get a shadow $\bar X$ of $\bar W$. The signature of the 4-manifold is the opposite of the previous one, $\sigma(\bar W) = -\sigma(W)$, hence by the shadow formula (Theorem~\ref{theorem:sh_for_RTW}) we get
$$
I_r(\bar M) = \kappa^{\sigma(W)} \eta^{\chi(X)} |\bar X|^r .
$$
Since $A$ is a root of unity its conjugate is $A^{-1}$. The quantum integers valued in $A$ are real numbers, hence $\cerchio_f$, $\cerchio_{e_\partial}$, $\teta_e$, $\teta_{v_\partial}$ and $\tetra_v$ are real numbers too. The phases $A_f$ of $\bar X$ are the inverses of the ones of $X$ and are roots of unity. Hence $|\bar X|^r$ is the conjugate of $|X|^r$. The skein $\kappa= \Omega U_+$ is the unknot with framing $+1$ and color $\Omega$. This framed knot has a diagram that is a circle with a curl. Since the conjugate of $A$ is $A^{-1}$ and we need just the smallest filed containing $\mathbb{Q}$ and $A$, the conjugate of the skein of a colored link is obtained substituting $A$ with $A^{-1}$. If we substitute $A$ with $A^{-1}$ we get the skein of the mirror image of the diagram with the same color, namely the skein of the unknot with framing $-1$ and color $\Omega$ that is $\Omega U_- = \kappa^{-1}$. The same topics apply to $\eta$ but it is based on the 0-framed unknot and substituting $A$ with $A^{-1}$ we still get $\eta$, $\eta= \overline{\eta}$. Therefore we have $I_r(\bar M) = \overline{\kappa}^{-\sigma(W)} \overline{\eta}^{\chi(X)} \overline{| X|^r }= \overline{I_r(M)}$. 
\end{proof}
\end{prop}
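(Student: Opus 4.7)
The plan is to combine the shadow formula (Theorem~\ref{theorem:sh_for_RTW}) with the observation that orientation reversal of the 4-dimensional thickening negates all gleams of a shadow, and then to show that the resulting evaluation is literally the complex conjugate of the original evaluation term by term.

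First I would fix a shadow $X$ of $(M,\varnothing)$ with 4-dimensional thickening $W$, and form $\bar X$ from $X$ by replacing every gleam $\gl(R)$ by $-\gl(R)$. Using the reconstruction theorem (Theorem~\ref{theorem:reconstruction}) and the classification of $D^2$-bundles, one checks that $\bar X$ is a shadow of $\bar W$, so that $\partial \bar W = \bar M$. The Euler characteristic is unchanged, $\chi(\bar X)=\chi(X)$, while the bilinear form $\mathcal{Q}_{\bar X} = -\mathcal{Q}_X$, hence $\sigma(\bar W) = \sigma(\bar X) = -\sigma(X) = -\sigma(W)$ by Theorem~\ref{theorem:bil_and_sign}. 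Applying the shadow formula to $\bar X$ gives
\[
I_r(\bar M) = \kappa^{\sigma(W)}\,\eta^{\chi(X)}\,|\bar X|^r.
\]

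Next I would check termwise that $|\bar X|^r = \overline{|X|^r}$. There is an obvious bijection between $q$-admissible colorings $\xi$ of $X$ and of $\bar X$ (the combinatorics of regions, edges, vertices is identical). For each such $\xi$ the factors $\cerchio_f$, $\cerchio_{e_\partial}$, $\teta_e$, $\teta_{v_\partial}$, $\tetra_v$ are values in $\mathbb{Q}(q)$ evaluated at the root of unity $q=A^2$; because quantum integers $[n]=\frac{q^n-q^{-n}}{q-q^{-1}}$ are invariant under $q\mapsto q^{-1}$, these factors are real numbers, and in particular equal to their complex conjugates. The only factors that differ between $|X|_\xi$ and $|\bar X|_\xi$ are the phases $A_f = (-1)^{gc}A^{-gc(c+2)}$, which for $\bar X$ become $(-1)^{-gc}A^{gc(c+2)} = \overline{A_f}$ since $A$ is a root of unity. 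Hence $|\bar X|_\xi = \overline{|X|_\xi}$ for every $\xi$, and summing gives $|\bar X|^r = \overline{|X|^r}$.

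Finally I need $\kappa^{\sigma(W)} = \overline{\kappa^{-\sigma(W)}}$ and $\eta \in \mathbb{R}$. Both assertions follow from the skein-theoretic definitions $\eta = \Omega U$ (up to the chosen square root normalization) and $\kappa = \Omega U_+$, $\kappa^{-1} = \Omega U_-$, combined with the fact that complex conjugation on $\mathbb{Q}(A)$ is realized, at the root of unity $A$, by $A \mapsto A^{-1}$; this operation sends the skein of a framed link diagram to the skein of its mirror image with the same coloring. The diagram for $\eta$ is a plain circle, which is its own mirror image, so $\overline{\eta}=\eta$. The diagram for $\kappa$ is a $+1$-curled unknot whose mirror image is the $-1$-curled one, giving $\overline{\kappa} = \kappa^{-1}$, hence $\overline{\kappa^{-\sigma(W)}} = \kappa^{\sigma(W)}$. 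Substituting into the displayed formula above yields
\[
I_r(\bar M) = \overline{\kappa^{-\sigma(W)}}\,\overline{\eta^{\chi(X)}}\,\overline{|X|^r} = \overline{I_r(M)}.
\]
The only subtle point — which I expect to be the main bookkeeping obstacle — is verifying that complex conjugation really is implemented by $A\mapsto A^{-1}$ on every ingredient (in particular that the chosen square root $\sqrt{A}$ and the factor $\sqrt{-2r}$ in $\eta$ do not spoil reality), and that the gleam-negation of $\bar X$ genuinely produces a shadow of the oppositely oriented $W$; once these are in place the rest of the argument is direct termwise comparison.
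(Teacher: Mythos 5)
Your proposal is correct and follows essentially the same route as the paper's proof: negate the gleams to obtain a shadow $\bar X$ of $\bar W$ with $\sigma(\bar W)=-\sigma(W)$, apply the shadow formula, and conjugate termwise using the reality of the quantum integers at the root of unity, the inversion of the phases, $\overline{\eta}=\eta$, and $\overline{\kappa}=\kappa^{-1}$ via the mirror-image interpretation of $A\mapsto A^{-1}$. The only difference is cosmetic — you justify $\sigma(\bar W)=-\sigma(W)$ explicitly through the bilinear form $\mathcal{Q}_{\bar X}=-\mathcal{Q}_X$, which the paper leaves implicit.
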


The following is another famous quantum invariant \cite{Turaev-Viro}.
\begin{defn}\label{defn:Turaev-Viro}
Let $M$ be a compact oriented 3-manifold and let $X$ be a spine of $M$ minus some 3-balls. With the notations above the \emph{Turaev-Viro invariant} $TV_r(M)$ of $M$ is
$$
TV_r(M) := \eta^{\chi(X)} \sum_{\xi} \frac{\prod_f \cerchio_f^{\chi(f)} \prod_v \tetra_v }
{\prod_e \teta_e^{\chi(e)} } ,
$$
where $\xi$ runs over all the $q$-admissible colorings of $X$.
\end{defn}
If $M$ is closed and $X$ is the dual of a triangulation of $X$ with $n$ vetices $\chi(X)=n$.

The following result has been proved by Walker \cite{Walker:preprint} and Turaev \cite{Turaev:preprint} with a long and complicated method. Later Roberts \cite{RobertsPhD} proved it in an easy way without using shadows. The shadow formula provides a very easy way to get the result.
\begin{theo}[Walker-Turaev]\label{theorem:Turaev-Viro}
Let $M$ be a closed oriented 3-manifold. Then
$$
TV_r(M) = |I_r(M)|^2 ,
$$
where $|x|^2$ is the square of the absolute value of the complex number $x$.
\begin{proof}
Let $X$ be a spine of $M$. Equip every region of $X$ with gleam $0$. By Lemma~\ref{lem:spines}-(2.) $X$ is a shadow of the connected sum $M\# \bar M$, where $\bar M$ is $M$ with the opposite orientation. Therefore by the shadow formula (Theorem~\ref{theorem:sh_for_RTW}) we get
$$
TV_r(M) = \kappa^{\sigma(W)}  I_r(M \# \bar M),
$$
where $W$ is the 4-dimensional thickening of $X$. By Lemma~\ref{lem:spines} $W=(M\setminus B^3 )\times [-1,1]$ where $B$ is an open 3-ball, hence $\sigma(W)=0$. We conclude noting that by Proposition~\ref{prop:RTW_conn_sum} and Proposition~\ref{prop:RTW_conj} $I_r(M \# \bar M) = I_r(M) \overline{I_r (M) }= |I_r(M)|^2$.
\end{proof}
\end{theo}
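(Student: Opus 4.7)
The plan is to reduce the identity $TV_r(M)=|I_r(M)|^2$ to the shadow formula (Theorem~\ref{theorem:sh_for_RTW}) evaluated on a particularly simple shadow associated to $M$, and then to invoke the multiplicativity of $I_r$ under connected sum together with its conjugation behavior under orientation reversal. The key observation is that a spine of $M$ can be promoted to a shadow of $M\#\bar M$ in a way that makes the shadow formula collapse onto the Turaev--Viro sum.

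First I would pick a spine $X$ of $M$ (in the sense of Definition~\ref{defn:spine}, i.e.\ a spine of $M$ minus an open $3$-ball) and equip every region with gleam $0$. By Lemma~\ref{lem:spines}-(2.) this choice makes $X$ a shadow of the connected sum $M\#\bar M$, with $4$-dimensional thickening diffeomorphic to $(M\setminus B^3)\times[-1,1]$. Because this thickening is a product of a $3$-manifold with an interval, its intersection form is trivial, so $\sigma(W)=0$, and the factor $\kappa^{-\sigma(W)}$ in the shadow formula is $1$. Moreover, since every gleam vanishes, every phase $A_f=(-1)^{0\cdot c}A^{-0\cdot c(c+2)}$ equals $1$.

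Substituting these simplifications into Theorem~\ref{theorem:sh_for_RTW} yields
$$
I_r(M\#\bar M)\,=\,\eta^{\chi(X)}\sum_\xi \frac{\prod_f \cerchio_f^{\chi(f)}\,\prod_v \tetra_v}{\prod_e \teta_e^{\chi(e)}},
$$
where $\xi$ runs over the $q$-admissible colorings of $X$. By Definition~\ref{defn:Turaev-Viro} the right-hand side is exactly $TV_r(M)$, so $I_r(M\#\bar M)=TV_r(M)$.

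Finally I would finish by combining Proposition~\ref{prop:RTW_conn_sum}, which splits $I_r(M\#\bar M)$ as a product of $I_r(M)$ and $I_r(\bar M)$, with Proposition~\ref{prop:RTW_conj}, which identifies $I_r(\bar M)$ with $\overline{I_r(M)}$; together they give $I_r(M\#\bar M)=I_r(M)\,\overline{I_r(M)}=|I_r(M)|^2$, and composing with the previous identity yields the theorem. The only potentially subtle point is verifying the two simultaneous simplifications used in the shadow formula, namely the vanishing of the signature and the triviality of every phase, both of which follow automatically from the $3$-dimensional product structure of the thickening and the gleam choice; once those are in place the argument is essentially bookkeeping against the standard TQFT-style properties of $I_r$.
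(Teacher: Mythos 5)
Your proof is correct and follows essentially the same route as the paper's: equip a spine of $M$ with zero gleams to obtain a shadow of $M\#\bar M$ via Lemma~\ref{lem:spines}-(2.), apply the shadow formula with $\sigma(W)=0$ and trivial phases so that it reduces to the defining sum for $TV_r(M)$, and conclude with Proposition~\ref{prop:RTW_conn_sum} and Proposition~\ref{prop:RTW_conj}. The only difference is that you spell out explicitly why the phases $A_f$ are trivial, which the paper leaves implicit.
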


\subsection{Proof}\label{subsec:proof_sh_for_RTW}

Now we prove the shadow formula (Theorem~\ref{theorem:sh_for_RTW}). First we focus on the case without boundary, then on the general case.

\begin{proof}[Proof of Theorem~\ref{theorem:sh_for_RTW}, case $G=\varnothing$, \cite{Carrega_RTW}]
We follow the following steps:

\begin{enumerate}
\item{from a triangulation of the regions of $X$ we construct a surgery presentation $(L,f)$  of $M=\partial W$ in $S^3$, where $f$ is the framing and $L$ is the underlying link;}
\item{we change the framing of the surgery presentation using the identity in Proposition~\ref{prop:half-framingchange};}
\item{we apply the 2-strand fusion identity (Fig.~\ref{figure:2fusion}) to remove the triangulation of the regions;}
\item{we apply the 3-strand fusion identity (Fig.~\ref{figure:3fusion}) to reduce ourselves to a trivalent graph in a tubular neighborhood of a tree (a union of trees);}
\item{we reduce the trivalent graph to isolated tetrahedra;}
\item{we note that the contributions of all our moves give the shadow formula.}
\end{enumerate}

\subsubsection{Step 1}

We can suppose that $W$, and hence also $X$, is connected. Let $\Gamma \subset S^3$ be an embedding of the 1-skeleton of $X$. Note that we may choose $\Gamma$ so that it has only unknotted edges. 

Select a small 3-ball $B_v$ centered in every vertex $v$ of $\Gamma$. The intersection of one of such 3-balls with $\Gamma$ consists of four unknotted and unlinked strands with one end in $v$ and one in the boundary $\partial B_v$ of the ball. For each vertex $v$ of $X$, put a tetrahedron in $\partial B_v$ whose vertices coincide with $\partial B_v \cap \Gamma$, and give it the framing of the 2-sphere $\partial B_v$. Every non closed edge $e$ of $\Gamma$ connects two different vertices $p_{e,1}$, $p_{e,2}$ of the set of framed tetrahedra (the vertices hold in the same tetrahedron if and only if $\chi(e)=0$). We connect the local framed strands incident to $p_{e,1}$ to the ones of $p_{e,2}$ via three strips running in a regular neighborhood $H_e$ of $e$ as follows. 

\begin{rem}\label{rem:bridge}
Usually a framed graph in $S^3$ with orientable framing, is represented by a diagram in $D^2$ (or $S^2$) giving it the black-board framing. Our pictures are made thinking to that method. Note that the regular neighborhoods $H_e$'s are bridges maybe passing over a part of the previous diagram (Fig.~\ref{figure:bridge1}). We will describe the strips using their projection into a rectangle inside $H_e$. This rectangle is exactly the planar one in Fig.~\ref{figure:bridge1}. In this way we can easily construct a diagram of the final framed link. 
\end{rem}

\begin{figure}[htbp]
\begin{center}
\includegraphics[width = 9 cm]{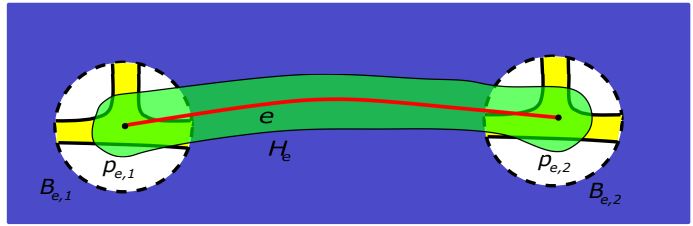}
\caption{The regular neighborhood $H_e$ of a non closed edge $e$ of $\Gamma$. The tube $H_e$ is colored with green, $e$ with red, The framed tetrahedra with yellow, and the blue part covers the rest.}
\label{figure:bridge1}
\end{center}
\end{figure}

The points $p_{e,1}$ and $p_{e,2}$ are trivalent vertices of the set of framed tetrahedra. Let $B_{e,1}$ and $B_{e,2}$ be two small ball neighborhoods of them. We can positively parametrize $B_{e,1}$ and $B_{e,2}$ as in Fig.~\ref{figure:attaching_an_edge1}: the vertex is in the origin of $\mathbb{R}^3$, the framing lies on the plane $\{ (x,y,0) \ | \ x,y\in \mathbb{R} \} $, the first strand lies on the first positive semi-axis ($x\geq 0$, $y=z=0$), the second strand lies on the second positive semi-axis ($y\geq 0$, $x=z=0$), and the third one lies on the first negative semi-axis ($x\leq 0,$ $ y=z=0$).
\begin{figure}[htbp]
\begin{center}
\includegraphics[width = 9 cm]{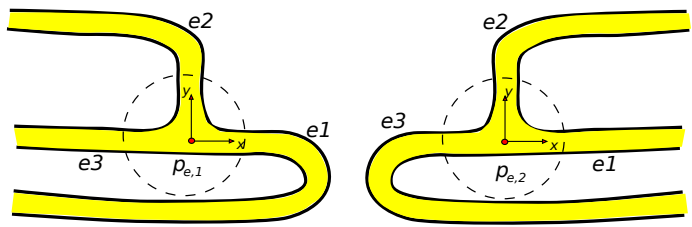}
\caption{Parametrized neighborhood of the vertices.}
\label{figure:attaching_an_edge1}
\end{center}
\end{figure}

The regular neighborhood of $e$ in $X$ describes a bijection between the framed strands incident to $p_{e,1}$ and the ones incident to $p_{e,2}$. Up to enumerations of the incident strands, and up to isotopies, we have just two possible bijections:
\begin{itemize}
\item{the second edge is fixed while the first and the third ones are exchanged; }
\item{all the enumerated edges are fixed.}
\end{itemize}
In the first case we connect those strands with three strips passing through $H_e$ and running around $e$ as described in Fig.~\ref{figure:attaching_an_edge_b}-(left): the strips lie in a rectangle whose intersection with $B_{e,1}$ and $B_{e,2}$ is the plane $\{z=0\}$ (see Remark~\ref{rem:bridge}). In the second case we connect those strands with three strips passing through $H_e$ and running around $e$ as described in Fig.~\ref{figure:attaching_an_edge_b}-(right): the strips can be drawn in that way in a rectangle whose intersection with $B_{e,1}$ and $B_{e,2}$ is the plane $\{z=0\}$ (see Remark~\ref{rem:bridge}). 
\begin{figure}[htbp]
\begin{center}
\includegraphics[width = 4.5 cm]{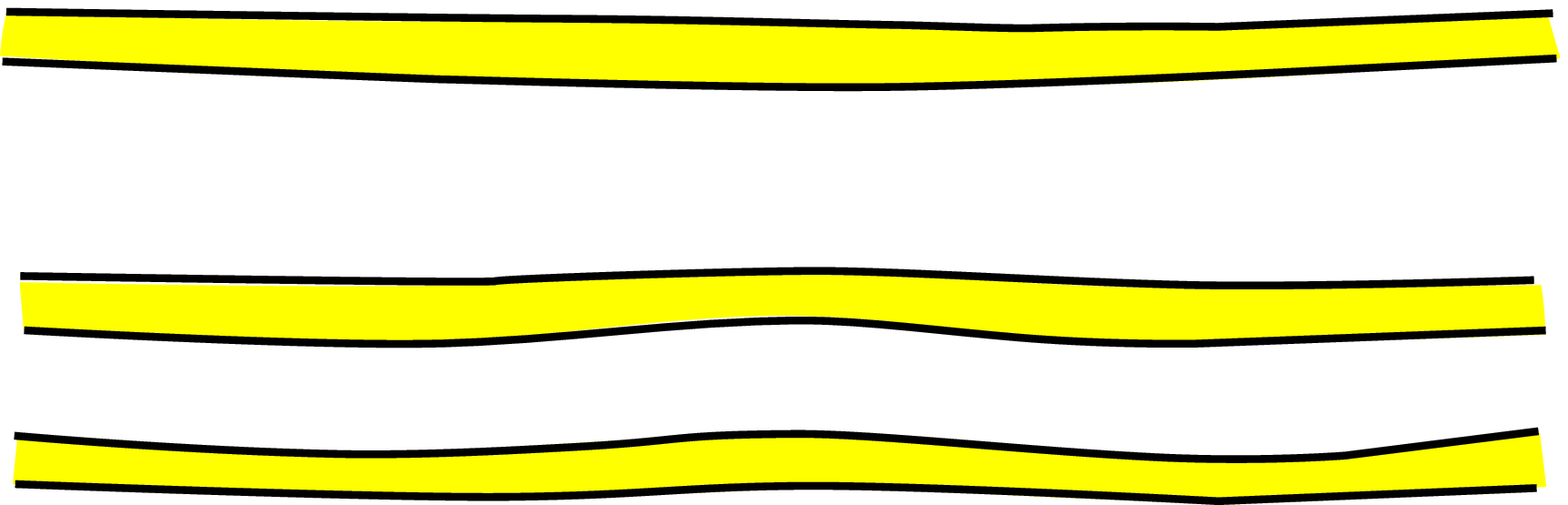} \hspace{1cm} \includegraphics[width = 4.5 cm]{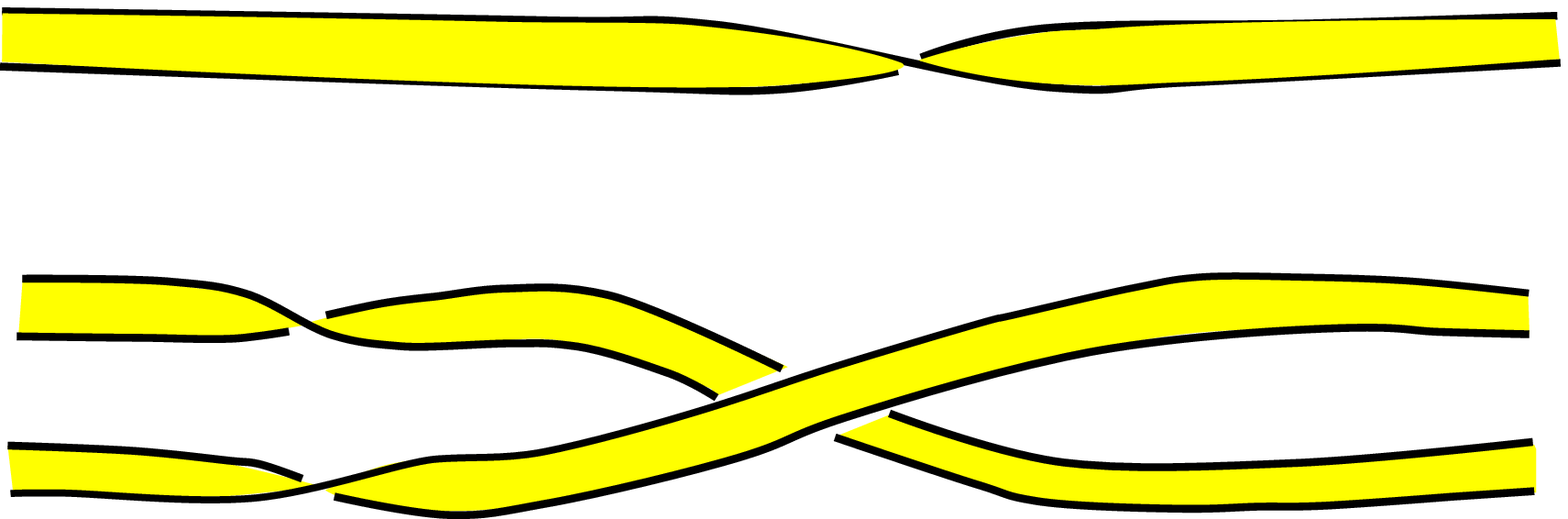}
\caption{Strips connecting the framed strands around the end points of a non closed edge of $\Gamma$.}
\label{figure:attaching_an_edge_b}
\end{center}
\end{figure}

Every closed edge $e$ of $\Gamma$ has a solid torus $V_e$ as regular neighborhood. We divide it in a 3-ball $B_p$ centered in a point $p\in e$ and in a 1-handle $H_e$, $V_e=B_p \cup H_e$. The graph $\Gamma$ intersects $B_p$ in a unknotted properly embedded arc. Put a $\teta$-graph in $\partial B_p$ whose vertices coincides with $\partial B_p \cap \Gamma$, and give it the framing of the 2-sphere $\partial B_p$. Now we have the same situation as before, and we apply the method above to connect the framed strands incident to $\partial B_p \cap \Gamma$ with strips running around the core of $H_e$.

Now we have obtained a framed link $L'\subset S^3$ (maybe with non orientable framing) lying in a regular neighborhood of $\Gamma$. Let $X^t$ be the simple polyhedron $X$ equipped with the further structure of a triangulation of each region. There is a natural bijectction between the components of $L'$ and the set of connected components of the boundary of the regions of $X$ (every component has the information of an ambient region, thus a component is taken twice if it is in the boundary of two different regions), or, alternatively, with the set of connected components of the boundary of the regular neighborhood of the 1-skeleton of $X$. Let $R$ be a region of $X$. Let $L'_R$ be the sub-link of $L'$ whose components are in bijection with the connected components of $R$ ($L' = \cup_R L'_R$, $R_1\neq R_2$ implies $L'_{R_1} \cap L'_{R_2} = \varnothing$). Let $\Gamma_R$ be an embedding of the 1-skeleton of the triangulation of $R$ whose restriction to $\partial R$ is $L'_R$ (without framing). With \emph{internal vertex} of $\Gamma_R$ we mean the image under the embedding of a vertex of the triangulation that lies in the interior of $R$. Select a small 3-ball neighborhood $B_v$ of every internal vertex $v$ of $\Gamma_R$. The set $B_v \cap \Gamma_R$ is a finite number of unknotted and unlinked strands with one end in $v$ and one in $\partial B_v$. Put a 0-framed unknot in $\partial B_v$ containing $\partial B_v \cap \Gamma_R$. Every internal edge $e$ of $\Gamma_R$ (an edge adjacent to an internal vertex) connects two distinct points, $p_{e,1}$ and $p_{e,2}$, lying in framed strands: either in $L'_R$ or in a 0-framed unknot around an internal vertex. Let $H_e$ be a regular neighborhood of $e$ (see Fig.~\ref{figure:bridge2}). As before we connect the local framed strands incident to $p_{e,1}$ and $p_{e,2}$ with two strips running through $H_e$. 

\begin{figure}[htbp]
\begin{center}
\includegraphics[width = 9 cm]{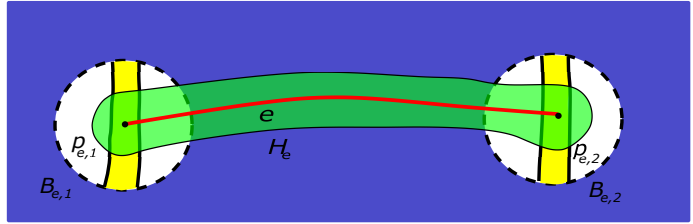}
\caption{The regular neighborhood $H_e$ of an edge $e$ of $\Gamma_R$. The tube $H_e$ is colored with green, $e$ with red, the framed tetrahedra with yellow, and the blue part covers the rest.}
\label{figure:bridge2}
\end{center}
\end{figure} 

Let $B_{e,1}$ and $B_{e,2}$ be two small ball neighborhoods of $p_{e,1}$ and $p_{e,2}$. We can positively parametrize $B_{e,1}$ and $B_{e,2}$ as in Fig.~\ref{figure:attaching_an_edge4}: the point is in the origin of $\mathbb{R}^3$, the framing lies on the plane $\{ (x,y,0) \ | \ x,y\in \mathbb{R} \} $, the first strand lies on the second positive semi-axis ($y\geq 0$, $x=z=0$) and the second one lines on the second negative semi-axis ($y\leq 0$, $x=z=0$).
\begin{figure}[htbp]
\begin{center}
\includegraphics[width = 9 cm]{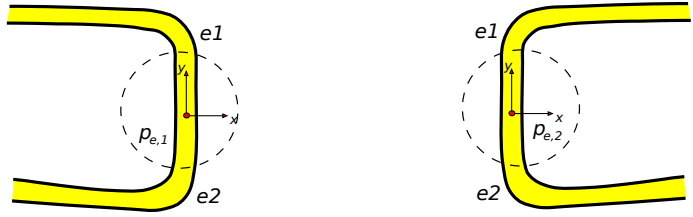}
\caption{Parametrized neighborhood of the non singular points.}
\label{figure:attaching_an_edge4}
\end{center}
\end{figure}

The regular neighborhood of $e$ in $X$ describes a bijection between the edges incident to $p_{e,1}$ and the ones incident to $p_{e,2}$. Up to enumerations of the incident edges, and up to isotopies, we have just one possible bijection: the enumerated edges are fixed. Thus we connect these strands with two strips passing through $H_e$ and running around $e$ as described in Fig.~\ref{figure:attaching_an_edge_c}: the strips lie in a rectangle whose intersection with $B_{e,1}$ and $B_{e,2}$ is the plane $\{z=0\}$ (see Remark~\ref{rem:bridge} and Fig.~\ref{figure:bridge2}).
\begin{figure}[htbp]
\begin{center}
\includegraphics[width = 4.5 cm]{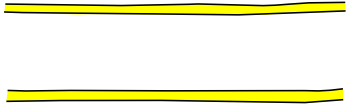}
\end{center}
\caption{Strips connecting the framed strands around the end points of an internal edge of $\Gamma_R$.}
\label{figure:attaching_an_edge_c}
\end{figure} 

Repeat this procedure for every region $R$ of $X$. Now we have obtained a framed link lying in a regular neighborhood of a connected graph $\Gamma^t \subset S^3$ containing $\Gamma$. The graph $\Gamma^t$ is an embedding of the 1-skeleton of $X^t$ extending the one of $X$ and the ones of the regions, $\Gamma, \Gamma_R \subset \Gamma^t$. If $\Gamma^t$ is a circle, add a 0-framed meridian unknot that encircles the framed link. By ``\emph{maximal tree}'' we mean a collapsible sub-graph of an ambient graph whose set of vertices is the same as the ambient one. If $\Gamma^t$ has a vertex (is not a circle), take a maximal tree $T$ of $\Gamma^t$. For each edge of $\Gamma^t$ not lying in $T$ add a 0-framed unknot that encircles the strips running around its regular neighborhood.

We get a framed link $(L,f_1)$ in $S^3$ (the framing $f_1$ may not be orientable) with some components, $\epsilon_1,\ldots, \epsilon_k$, corresponding to the boundary components of the tubular neighborhood of the 1-skeleton of $X^t$, and the other ones, $\delta_1,\ldots,\delta_g$, are the added 0-framed unknots. For each region $R$ of $X$ take the curves $\epsilon_{i(R,1)}, \ldots , \epsilon_{i(R,k_R)}$ that lie in the regular neighborhood of $\Gamma_R$. Add to their framings some half-twists so that the sum (with sign) of the added twists is equal to $-\gl(R)\in \frac 1 2 \Z$. So we get the framing $f$.

\begin{rem}
The framing $f$ clearly depends on the choice made when we modified $f_1$. If $X$ is standard we can take the trivial triangulation: just one triangle for each region. In that case there is only one choice to modify $f_1$ and we can easily see that it makes $f$ orientable:

The gleam$\gl(R)$ is an integer if and only if there is an even number of edges of $X$ that form $\partial R$ and define the second bijection in the list given when we described the connecting strips. Only in that case we applied a half-twist to each strip (see Fig.~\ref{figure:attaching_an_edge_b}-(right)). Hence $L'_R$ is a knot with an orientable framing plus an even number of half-twists. Namely $f$ is orientable.
\end{rem}

\begin{theo}
One of the choices above makes $(L,f)$ a surgery presentation of $M$ in $S^3$.
\end{theo}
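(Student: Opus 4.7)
The plan is to show that $(L,f)$ is a Kirby diagram presenting the 4-manifold $W$ itself, so that the 3-manifold obtained by Dehn surgery on $(L,f)$ in $S^3$ is automatically $\partial W = M$. By Theorem~\ref{theorem:admit_sh}, $W$ admits a handle decomposition with only 0-, 1- and 2-handles, and the chosen triangulation $X^t$ of $X$ induces such a decomposition in a natural way: the 0-handles correspond to the vertices of the 1-skeleton $\Gamma^t$ of $X^t$, the 1-handles to the edges of $\Gamma^t$, and the 2-handles to the triangles in the region triangulations (together with the strips between triangles across the edges of $\Gamma$).

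The maximal tree $T \subset \Gamma^t$ collapses the corresponding 0- and 1-handles to a single 4-ball, which we identify with the 0-handle $D^4$ bounded by $S^3$. The remaining edges of $\Gamma^t \setminus T$ each contribute a 1-handle, represented in Kirby calculus by a 0-framed meridional unknot: these are precisely our components $\delta_1, \ldots, \delta_g$. The boundary of each 2-cell of $X^t$ is a union of circles in the regular neighborhood of $\Gamma^t$ that enters each incident vertex, edge, or internal vertex of a region triangulation in a specific way dictated by the combinatorics of $X^t$. The tetrahedra/theta-graphs of Step~1 and the 0-framed unknots around internal vertices of $\Gamma_R$ reproduce exactly the boundary of the regular neighborhood of $\Gamma^t$ inside each region, while the strip constructions of Figures~\ref{figure:attaching_an_edge_b} and~\ref{figure:attaching_an_edge_c} reproduce the way these local circles are joined across each edge. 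Consequently, the components $\epsilon_1, \ldots, \epsilon_k$ are in natural bijection with the connected components of $\partial R$ as $R$ ranges over the 2-cells of $X^t$, and each $\epsilon_i$ traces its assigned boundary circle along $\Gamma^t$.

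It remains to show that one can choose the half-twist corrections in Step~$1$ of the construction so that the framing $f$ matches the framing associated, via the Reconstruction Theorem~\ref{theorem:reconstruction}, to the regular neighborhood of $R$ in $W$. The default framing $f_1$ is the black-board framing of the ambient $\mathbb{R}^3$ together with the half-twists forced by the ``second-bijection'' case of Figure~\ref{figure:attaching_an_edge_b}; its self-linking contribution to any single component $\epsilon_i$ can be computed as a local sum over the vertices, edges, and strips through which $\epsilon_i$ passes, independently of the gleam data. We then adjust the framing of the family $\epsilon_{i(R,1)}, \ldots, \epsilon_{i(R,k_R)}$ by adding half-twists whose signed sum equals $-\gl(R)$, which amounts to prescribing the Euler number of the $D^2$-bundle over $R$ in the 4-dim thickening.

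The main obstacle is this framing bookkeeping: one must verify that after correction, the self-intersection of the core disk of each 2-handle attached along $\epsilon_i$, computed inside the 4-manifold built from $(L,f)$, equals the gleam $\gl(R)$ of the corresponding region, and one must check that the half-twists can be distributed so that $f$ is realizable (in particular orientable when required, as is automatic for standard shadows). Once the framing on each $\epsilon_i$ is so identified, the 4-manifold obtained by attaching 2-handles to $D^4$ along $(L,f)$ admits $X$ as a shadow with the prescribed gleams; by Proposition~\ref{prop:shadow_uniqueness} it is therefore diffeomorphic to $W$, and surgery on $(L,f)$ yields $\partial W = M$.
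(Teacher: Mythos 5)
You should first be aware that the paper does not actually prove this theorem: it is stated without proof, and the remark immediately following it defers to Costantino--Thurston (Propositions 3.35 and 3.36 of \cite{Costantino-Thurston}) for the closely related statement about universal links. Your overall strategy --- read $(L,f)$ as the Kirby diagram of the handle decomposition of $W$ induced by the triangulation $X^t$, with the $\delta_j$'s standing for the 1-handles outside the maximal tree and the $\epsilon_i$'s as attaching circles of 2-handles, then match framings to gleams --- is the natural one and is consistent with what Step 6 of the paper's argument silently relies on.

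Two points, however, need repair. First, your final step is wrong as stated: the 4-manifold obtained by attaching 2-handles to $D^4$ along \emph{all} of $(L,f)$ is not diffeomorphic to $W$, so Proposition~\ref{prop:shadow_uniqueness} does not apply to it. Attaching a 2-handle along a 0-framed unknot $\delta_j$ produces a $D^2\times S^2$ boundary-summand, whereas the corresponding piece of $W$ is a 1-handle $S^1\times D^3$; these are different 4-manifolds with the same boundary. The paper makes exactly this distinction in Step 6, where it contrasts $W=W'\cup W_g$ with $W_1=W'\cup\#_{\partial,g}(D^2\times S^2)$. The correct conclusion is: the dotted-circle diagram presents $W$ (there shadow uniqueness is legitimately invoked), trading each dotted circle for a 0-framed 2-handle does not change the boundary, and hence surgery on $(L,f)$ yields $\partial W=M$. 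Second, the ``framing bookkeeping'' that you flag as the main obstacle is in fact the entire content of the theorem, and you only assert it: one must check that the strip constructions of Figures~\ref{figure:attaching_an_edge_b} and~\ref{figure:attaching_an_edge_c} reproduce exactly the interval bundle over $\partial R$ that the Reconstruction Theorem~\ref{theorem:reconstruction} uses to define the gleam, so that $f_1$ corresponds to gleam $0$ and the correction by half-twists summing to $-\gl(R)$ realizes the Euler number $\gl(R)$ of the $D^2$-bundle over $R$. Without that verification (which the paper also omits, hence its citation of \cite{Costantino-Thurston}), the proposal remains an outline rather than a proof.
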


\begin{rem}
Links in $\#_g(S^1\times S^2)$ like $\epsilon_1 \cup \ldots \cup \epsilon_k$, are called \emph{universal links}. The word ``universal'' is due precisely to the fact that we can get any orientable connected closed 3-manifold by surgering on them. See \cite{Costantino-Thurston} and \cite{Costantino2}, in particular \cite[Proposition 3.35]{Costantino-Thurston} and \cite[Proposition 3.36]{Costantino-Thurston}.
\end{rem}

\subsubsection{Step 2}

Let $(L,f)$ be the surgery presentation in $S^3$ of $\partial W$ described above. We remind that
$$
I_r(\partial W) = \eta \kappa^{-\sigma((L,f))} \Omega (L,f) ,
$$
where $\Omega (L,f)$ is the skein element got by coloring each component of $(L,f)$ with $\Omega= \eta \sum_{n=0}^{r-2}\cerchio_n \phi_n$. The skein $\Omega (L,f)$ is equal to the sum over $0 \leq n_1, \ldots n_k \leq r-2$ of $\eta^k \prod_i \cerchio_{n_i}$ times the skein element obtained by giving to every $\delta_j$ the color $\Omega$ and to each $\epsilon_i$ the $n_i^{\rm th}$ projector. Fix one of these colorings.

We change again the framing of the $\epsilon_{i(R)}$'s by adding $\gl(R)$ positive twists, namely we return to $f_1$. For Proposition~\ref{prop:half-framingchange} each of these framing changes produces a multiplication by $A_R$ ($ = (-1)^{\gl(R)n_{i(R)}} A^{-\gl(R) n_{i(R)}(n_{i(R)}+2)}$), the phase of the colored region $R$.

\begin{rem}
$f$ depends on a made choice and not all gives a surgery presentation. However we can easily check with this use of Proposition~\ref{prop:half-framingchange} that the skein elements given by all these $f$'s are the same.
\end{rem}

\subsubsection{Step 3}

Near each $\delta_j$ encircling two strands, we have the situation of the 2-strand fusion (see Fig.~\ref{figure:2fusion}). Apply the identity to each such $\delta_j$. The identity splits $L$ to a new link, and multiplies it by a coefficient. Furthermore the identity says that the colors of the $\epsilon_i$'s related to the same region must be equal, otherwise the summand is null. The new colored framed link consists of the curves $\delta_{j,1},\ldots , \delta_{j,g'}$ colored with $\Omega$ and the colored curves $\epsilon'_1,\ldots , \epsilon'_{k'}$ encircled by the $\delta_{j,l}$'s, all with the framing induced by $f_1$. Applying an isotopy we can see that the framed link $\epsilon'_1 \cup \ldots \cup \epsilon'_{k'}$ is equal to the framed link $L'$ we constructed before introducing the triangulations of the regions.

\subsubsection{Step 4}

Near each $\delta_{j,l}$ we have the situation of the 3-strand fusion (see Fig.~\ref{figure:3fusion}). Hence we apply it for each $l=1,\ldots,g'$. Thus if there is a non $q$-admissible triple $(n_{i1}, n_{i2}, n_{i3})$ of colors of $\epsilon'_i$'s encircled by a $\delta_{j,l}$, then the summand is null. After the application of all the 3-strand fusions it remains an unknotted 0-framed trivalent graph $G$ in the regular neighborhood a tree. 

\subsubsection{Step 5}

$G$ has two vertices for each edge of $X^t$ which does not lie in $T$ and three parallel edges for each edge of $T$. A tree graph has vertices connected by an edge with only another vertex, the \emph{leaves}, and the other ones are connected with two different vertices. Near a leaf of $T$, $G$ has the form of the left-hand side of the equality in the following Lemma~\ref{lem:a}. We apply the equality to the parts of $G$ near a fixed leaf of $T$. We get a multiple of another trivalent graph $G_1$ that is made in the same way as $G$ but encircling the embedding of the sub-graph $T_1$ of $T$ obtained removing the fixed leaf and the adjacent edge. We repeat this procedure until we finish the edges of $T$. The lemma says also that if there is an edge of $T$ with the three strands along it that are colored with a non $q$-admissible triple, then the summand is null. Therefore our summation is taken over all the $q$-admissible colorings of $X$.

\subsubsection{Step 6}

Fix one of these $q$-admissible colorings $\xi$. By the  applications of the 2-strand fusion identities and the framing change, we get for each region $R$ a contribute of $A_R \cdot \cerchio_{n_i}^{\chi(R)-1}
$ where $A_R$ is the phase of $R$ colored with $n_i$ (we added also the contribution $\cerchio_{n_i}$ times the considered skein element). 

By the applications of Lemma~\ref{lem:a} we get a contribute $\tetra_v$ for each colored vertex of $X$, and a $\teta_e$ for each colored edge of $X$ lying on the maximal tree of the 1-skeleton of $X^t$.

By the applications of the 3-strand fusion we get a contribute of $\teta_e$ for each colored edge of $X$ that does not lie on the maximal tree.

$\sigma((L,f))=\sigma(W_1)$, where $W_1$ is the 4-manifold obtained attaching to $D^4$ a 2-handle along each component of $(L,f)$. Let $W$ be the 4-manifold obtained giving a dot to the $\delta_j$'s. Let $W_g$ be the 4-dimensional orientable handlebody of genus $g$ (the compact 4-manifold with a handle-decomposition with just $k$ 0-handles and $k-g+1$ 1-handles for some $k>0$), and let $\#_{\partial,g}(D^2\times S^2)$ be the boundary connected sum of $g$ copies of $D^2\times S^2$. They have the same boundary: $\#_g(S^1\times S^2)$. The framed link $(L,f)$ has a corresponding framed link $L'$ in $\#_g(S^1\times S^2)$. Let $W'$ be the 4-manifold obtained attaching to $\#_g(S^1\times S^2)\times [-1,1]$ a 2-handle along each component of $L'\times\{1\}$. We have $W= W'\cup W_g$ and $W_1= W' \cup \#_{\partial,g}(D^2\times S^2)$. The signature is additive and $\sigma(W_g)=\sigma(\#_{\partial,g}(D^2\times S^2)) =0$. Therefore $\sigma((L,f))=\sigma(W_1)= \sigma(W') + \sigma(W_g) = \sigma(W') + \sigma(\#_{\partial,g}(D^2\times S^2)) = \sigma(W)$. 

Moreover the 2- and 3-strand fusions give a contribute of $\eta^{-g}$. Therefore we get that the summand of $\Omega (L,f)$ is equivalent to
$$
\eta^{k-g}|X|_\xi  .
$$
$k$ is the number of triagles of $X^t$ and $g$ is the genus of its 1-skeleton. Hence $1-g+k=\chi(X)=\chi(W)$. Therefore
$$
I_r(\partial W) = \eta^{\chi(W)} \kappa^{-\sigma(L)} |X|^r .
$$
\end{proof}

\begin{lem}\label{lem:a}
$$
\pic{1.7}{0.7}{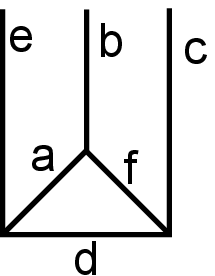} = \left\{\begin{array}{cl}
\frac{\pic{1.7}{0.5}{tetra_color.eps}}{\teta_{e,b,c}} \ \ \pic{1.7}{0.7}{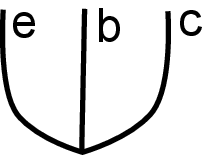} & \text{if }(e,b,c)\text{ is $q$-admissible}\\
0 & \text{if }(e,b,c)\text{ is not $q$-admissible}
\end{array}\right.
$$
\begin{proof}
\beq
\pic{1.7}{0.7}{lemma1.eps} & = & \sum_i \left\{\begin{matrix} a & b & i \\ c & d & f \end{matrix}\right\} \ \pic{1.7}{0.7}{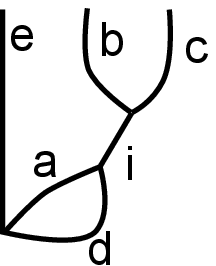} \\
 & = & \sum_{i,j} \left\{\begin{matrix} a & b & i \\ c & d & f \end{matrix}\right\} \left\{\begin{matrix} d & e & j \\ i & d & a \end{matrix}\right\} \ \pic{1.7}{0.7}{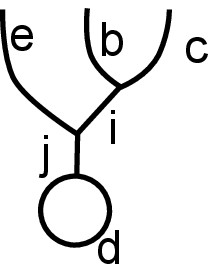}
\eeq
\beq
 & = & \left\{\begin{array}{cl}
 \left\{\begin{matrix} a & b & e \\ c & d & f \end{matrix}\right\} \left\{\begin{matrix} d & e & 0 \\ e & d & a \end{matrix}\right\} \ \pic{1.7}{0.7}{lemma4.eps} & \text{if }(e,b,c)\text{ is $q$-admissible} \\
 0 & \text{if }(e,b,c)\text{ is not $q$-admissible}
 \end{array}\right.\\
 & = & \left\{\begin{array}{cl}
 \frac{\pic{1.7}{0.5}{tetra_color.eps} }{\teta_{e,b,c}} & \text{if }(e,b,c)\text{ is $q$-admissible} \ \pic{1.7}{0.7}{lemma4.eps} \\
 0 & \text{if }(e,b,c)\text{ is not $q$-admissible}
 \end{array}\right.
\eeq
\end{proof}
\end{lem}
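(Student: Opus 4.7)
The plan is to apply the recoupling (6j-symbol) identity in equation~(\ref{eqn:first_id}) twice, so as to redistribute the strands and isolate a configuration where the sphere intersection identity of Fig.~\ref{figure:sphere} can be invoked. First I would apply (\ref{eqn:first_id}) to the subgraph involving the edges colored $a,b,c,d,f$, which rewrites the graph on the left as a sum over an index $i$ of tetrahedral coefficients $\left\{\begin{matrix} a & b & i \\ c & d & f \end{matrix}\right\}$ multiplying a simpler graph (the one denoted lemma2.eps in the author's proof). Next I would apply (\ref{eqn:first_id}) a second time, now to the subgraph that contains the edge $e$ together with two of the newly-produced $i$-colored strands, yielding a double sum over $i$ and $j$ of products of 6j-symbols multiplying the graph lemma3.eps.

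The second step is where the hypothesis on admissibility must appear. The resulting graph lemma3.eps contains a closed loop whose color is $j$; by the first identity in Fig.~\ref{figure:sphere} (or equivalently by recognizing that this loop is the skein of a strand encircled once by the coloring construction), only the summand with $j=0$ is non-zero. Forcing $j=0$ collapses the second 6j-symbol to $\left\{\begin{matrix} d & e & 0 \\ e & d & a \end{matrix}\right\}$, and simultaneously forces the index $i$ to equal $e$; the requirement that the corresponding vertices be admissible is precisely that $(e,b,c)$ is $q$-admissible. If $(e,b,c)$ is not $q$-admissible, every surviving summand is zero and we obtain the second case of the statement.

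In the admissible case it remains to compute the coefficient. Using the explicit formula (\ref{eqn:second_id}) expressing 6j-symbols as tetrahedra divided by $\teta$'s and $\cerchio$'s, I would simplify
$$\left\{\begin{matrix} a & b & e \\ c & d & f \end{matrix}\right\} \left\{\begin{matrix} d & e & 0 \\ e & d & a \end{matrix}\right\}$$
noting that the $j=0$ symbol reduces (by direct evaluation of a tetrahedron with a zero edge, which degenerates to a $\teta$-ratio) to a purely normalizing factor. After the cancellations one is left with $\pic{1.7}{0.5}{tetra_color.eps}/\teta_{e,b,c}$, as required. The main obstacle in this plan is bookkeeping: one needs to verify that the second application of~(\ref{eqn:first_id}) is carried out on the correct pair of strands so that the closed loop with color $j$ really appears, and that the degenerate $j=0$ tetrahedron produces exactly the claimed factor; everything else is a routine manipulation with the identities reviewed in Subsection~\ref{subsec:identities} and Subsection~\ref{subsec:q-imp_ob}.
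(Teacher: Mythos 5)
Your proposal is correct and follows essentially the same route as the paper's own proof: two applications of the recoupling identity~(\ref{eqn:first_id}) producing the double sum over $i$ and $j$, the collapse to $j=0$ (which forces $i=e$ and makes $(e,b,c)$ $q$-admissibility the surviving condition), and the final simplification of $\left\{\begin{matrix} a & b & e \\ c & d & f \end{matrix}\right\}\left\{\begin{matrix} d & e & 0 \\ e & d & a \end{matrix}\right\}$ via~(\ref{eqn:second_id}) to $\tetra/\teta_{e,b,c}$. No gaps.
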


Now we discuss the general case.
\begin{proof}[Proof of Theorem~\ref{theorem:sh_for_RTW}, general case, \cite{Carrega_RTW}]
We proceed in the same way as the case without boundary:
\begin{enumerate}
\item{from a triangulation of the regions of $X$ we construct a surgery presentation of $M$ in $S^3$ (a union of copies of $S^3$) together with the colored trivalent graph $G'$ corresponding to $G$;}
\item{we change the framing of the surgery presentation by using the equality in Proposition~\ref{prop:half-framingchange};}
\item{we apply the 2-strand fusion identity Fig.~\ref{figure:2fusion} to remove the triangulation of the regions;}
\item{we apply the 3-strand fusion identity Fig.~\ref{figure:2fusion} to reduce ourselves to a trivalent graph in a tubular neighborhood of a tree (a union of trees);}
\item{we apply Lemma~\ref{lem:a} to reduce the trivalent graph to isolated tetrahedra;}
\item{we note that the contributions of all our moves give the shadow formula.}
\end{enumerate}
Here we give some clarifications in order to do the first step. Take as $\Gamma \subset S^3$ an embedding of the 1-skeleton of $X$ minus the edges adjacent to (or contained in) the boundary $\partial X$. After the procedure for the edges of $\Gamma$, not all the vertices of the isolated tetrahedra are going to be removed. In fact the ones corresponding to internal edges adjacent to $\partial X$ remain. This happens if and only if $G\subset \#_g(S^1\times S^2)$ has vetices. There is still a bijection between the edges (closed or not) of the resulting framed graph $L'\subset S^3$ and a set consisting of all the boundary components of the internal regions and all the components of $G$. Fix a triangulation $X^t$ for each internal region of $X$. As before we use the embeddings $\Gamma_R$'s, but this time $R$ runs just over all the internal regions. The graph $\Gamma^t$ is an embedding of the 1-skeleton of $X^t$ minus the edges (of $X$) adjacent to $\partial X$. The graph $T$ is still a maximal tree of such connected graph $\Gamma^t$. The framed graph $G'\subset S^3$ corresponding to $G\subset \#_g(S^1\times S^2)$ is the sub-graph of $L'$ whose components correspond to the components of $G$.

To end we note that in the shadow formula the contribute of the external regions together with the one of the external edges has no effects, and the same happens for the contribute of the edges adjacent to external vertices together with the external vertices.
\end{proof}

\chapter{Some topological applications}

Thirty years after its discovery, we know only a few relations between the Jones polynomial and its topological properties and most of them are just conjectures. In this chapter we explain some notable applications of quantum invariants and just recall some other ones. As we will see, several topological applications of quantum invariants concern their behavior near a fixed complex point. We will say more about: 
\begin{itemize}
\item{the \emph{Bullock's theorem} about the \emph{character variety} (Section~\ref{sec:Bul_char_var});}
\item{the \emph{volume conjecture} (Section~\ref{sec:vol_conj});}
\item{the \emph{Chen-Yang's volume conjecture} (Section~\ref{sec:CY_vol_conj});}
\item{the \emph{Tait conjecture} (Section~\ref{sec:Tait_conj0});}
\item{the \emph{Eisermann's theorem} (Section~\ref{sec:Eisermann});}
\item{the classification of \emph{rational 2-tangles} (Section~\ref{sec:rational_tangles});}
\item{a criterion for non sliceness of Montesinos links (Section~\ref{sec:Montesinos}).}
\end{itemize}
The character variety is another very studied object in 3-dimensional topology. The Bullock's theorem concerns the evaluation in $A=- 1$ of the entire $\mathbb{C}[A,A^{-1}]$-skein module. 

The volume conjecture and the Chen-Yang's volume conjecture are about a limit of evaluations respectively of the \emph{colored Jones polynomial}, and the Turaev-Viro invariants and the Reshetikhin-Turaev-Witten invariants, where the evaluation points converge to $1$. 

The Tait conjecture is a proved theorem about the \emph{crossing number} of \emph{alternating links}, it concerns the \emph{breadth} of the Jones polynomial that is something like the degree, it concerns the behavior near $\infty$ and near $0$. 

Eisermann's theorem connects the Jones polynomial to 4-dimensional smooth topology, in particular to \emph{ribbon surfaces}. This theorem, the classification of 2-tangles and the criterion for Montesinos links concern the behavior of the Kauffman bracket in the imaginary unit $q=A^2= i$.


Section~\ref{sec:slice-ribbon} introduces notions needed in Section~\ref{sec:Eisermann}, Section~\ref{sec:Montesinos} and Section~\ref{sec:sh_rib_handle}. Section~\ref{sec:sh_rib_handle} is about an application of shadows that goes in favour of the slice-ribbon conjecture.

The Tait conjecture (as a result, not just as a conjecture) and Eisermann's theorem have been extended by the author and Martelli \cite{Carrega_Tait1, Carrega_Taitg, Carrega-Martelli} in several directions by using the technology of Turaev's shadows (see Chapter~\ref{chapter:shadows}). We are going to discuss these generalizations in Chapter~\ref{chapter:Tait} and Chapter~\ref{chapter:Eisermann}.

A nice result of Frohman and Kania-Bartoszynska \cite{FK2} connects quantum invariants near $q=0$ to normal surfaces theory. In particular it studies the \emph{order} at $q=0$ (Definition~\ref{defn:order}) of the Turaev-Viro invariant (Definition~\ref{defn:Turaev-Viro}). This result was used extensively for instance in \cite{Costantino-Martelli}.

Another important conjecture is the \emph{slope conjecture} \cite{Garoufalidis}. This relates the degree of the colored Jones polynomial of a knot in $S^3$ with the slope of the incompressible surfaces of the complement. A little more precisely, it says that for every $n\geq 0$ the quantity $\frac{4}{n^2} {\rm deg} J_n(K)$ is the slope of a properly embedded, incompressible, $\partial$-incompressible surface in the complement of $K$, where ${\rm deg}J_n(K)$ is the degree of the $n^{\rm th}$ Jones polynomial of the knot $K \subset S^3$. This has been proved for a large class of knots.

The \emph{AJ-conjecture} (see for instance \cite{ThangLe-Tran, Marche2}) is another important conjecture. This relates the colored Jones polynomial to the $A$-\emph{polynomial}, an invariant related to the character variety of the complement of a knot. This concerns some more complex algebraic properties of the colored Jones polynomial, like generators of principal ideals related to it.

\section{Character variety and Bullock's theorem}\label{sec:Bul_char_var}

Let $M$ be a compact oriented 3-manifold. Let us consider the $\mathbb{C}[A,A^{-1}]$-skein module of $M$ and evaluate it in $A=-1$ (or $A=1$). With the notations of Subsection~\ref{subsec:sk_sp} we are considering $KM(M;\mathbb{C}, -1)$. We remind that the universal coefficient property (Theorem~\ref{theorem:sk_mod_prop}-(5.)) gives us an isomorphism of $\Z[A,A^{-1}]$-modules connecting $KM(M)$ and $KM(M;\mathbb{C}, -1)$, where $KM(M; \mathbb{C}, -1)$ has the structure of $\Z[A,A^{-1}]$-module given by the map $\Z[A,A^{-1}] \rightarrow \mathbb{C}$ defined by $A \mapsto -1$. 

In $KM(M; \mathbb{C},-1)$ the over/underpasses are not recognized:
$$
\pic{3}{0.6}{incrociop.eps} = \pic{3}{0.6}{incrociop2.eps} .
$$
The disjoint union of links gives a multiplication that makes $KM(M; \mathbb{C},-1)$ a commutative algebra with $\varnothing$ as identity.

\begin{theo}[Bullock]
There is a finite set of knots of $M$ that generate $KM(M; \mathbb{C},-1)$ as a $\mathbb{C}$-algebra.
\begin{proof}
See \cite{Bullock1}.
\end{proof}
\end{theo}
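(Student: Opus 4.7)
The plan is to exploit the fact that at $A=-1$ the Kauffman bracket skein relation degenerates enough that knots in $M$ behave like trace functions on the $SL_2(\mathbb{C})$-character variety, and then to use finite generation of $\pi_1(M)$. Throughout, we use that at $A=-1$ the algebra $KM(M;\mathbb{C},-1)$ is commutative with multiplication given by disjoint union, so that the class of a link $L_1\sqcup L_2$ is the product of the classes of $L_1$ and $L_2$. In particular it is enough to produce a finite collection of \emph{knots} of $M$ whose classes, together with products, generate the whole algebra: every link is already a product of its knot components in this algebra.

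First I would reduce the problem to a purely group-theoretic statement. Since $M$ is a compact 3-manifold, $\pi_1(M)$ is finitely generated; fix loops $\gamma_1,\dots,\gamma_n$ based at a point and representing a generating set. Any framed knot $K\subset M$ is freely homotopic to a loop $w(\gamma_1,\dots,\gamma_n)$, where $w$ is a cyclic word in the $\gamma_i^{\pm1}$. I would define the \emph{length} of such a knot class to be the minimum word length of a cyclic representative. The goal becomes: show that every knot class of length $\ell$ lies in the $\mathbb{C}$-subalgebra generated by knot classes of length at most some fixed bound depending only on $n$.

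The key step is the skein-theoretic analogue of the Fricke identity $\mathrm{tr}(AB)+\mathrm{tr}(AB^{-1})=\mathrm{tr}(A)\mathrm{tr}(B)$. Given two oriented arcs meeting at a transverse intersection point $p$ in $M$, one can arrange locally a crossing of the two strands; the skein relation at $A=-1$ reads
\[
\pic{1.2}{0.3}{incrociop.eps}\;=\;-\,\pic{1.2}{0.3}{Acanalep.eps}\;-\;\pic{1.2}{0.3}{Bcanalep.eps}.
\]
Applying this at $p$ to a knot representing a concatenation $\alpha\beta$ of two loops: the two resolutions give, on one hand, the knot representing $\alpha\beta$ \emph{compatibly smoothed} into a single knot (or into a pair of knots representing $\alpha$ and $\beta$ separately) and on the other hand the knot representing $\alpha\beta^{-1}$ (or, symmetrically, the other pairing). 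A careful case analysis—orienting the strands and matching the two resolutions with the geometric operations on loops—yields the identity
\[
[\alpha\beta]+[\alpha\beta^{-1}]\;=\;-\,[\alpha]\cdot[\beta]
\]
in $KM(M;\mathbb{C},-1)$, where $[\,\cdot\,]$ denotes the class of the corresponding knot with any chosen framing (independence from the framing must also be checked at $A=-1$, using that the writhe phase $-A^3$ becomes $1$).

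From this identity one concludes by induction on word length. Indeed, if $[w]$ denotes the knot class of a cyclic word $w$ of length $\ell\geq N$ for a suitable $N$ depending only on the number of generators $n$, one can split $w=\alpha\beta$ with $|\alpha|,|\beta|<\ell$ and express $[w]=-[\alpha]\cdot[\beta]-[\alpha\beta^{-1}]$; a standard argument, analogous to Procesi's bound for generators of the trace ring of $n$ matrices, shows that a finite collection of short words suffices (and in fact one can take $N$ of order $2^n$ or less by adapting classical bounds). Thus the finitely many knot classes of length $\leq N$ generate $KM(M;\mathbb{C},-1)$ as a $\mathbb{C}$-algebra, together with products which are realized as links.

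The main obstacle is the verification of the Fricke-type identity directly inside the skein module, since it must be proved intrinsically rather than transported from the character variety; one must be careful with orientations of strands, with the role of framings at $A=-1$, and with the fact that the two resolutions of a crossing between arcs belonging to possibly different knot components can produce either a single knot or a pair of knots depending on the local orientation. Once this local identity is in place, together with the reduction of links to products of knots and the finite generation of $\pi_1(M)$, the finite generation of $KM(M;\mathbb{C},-1)$ follows.
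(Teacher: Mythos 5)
The paper offers no proof of its own here --- it simply cites Bullock --- and your sketch reproduces the argument of that reference: commutativity with disjoint-union multiplication reduces links to products of knots, invariance under crossing changes at $A=-1$ reduces knots to conjugacy classes in the finitely generated group $\pi_1(M)$, and the skein-theoretic Fricke identity $[\alpha\beta]+[\alpha\beta^{-1}]=-[\alpha]\cdot[\beta]$ drives a Vogt--Horowitz-type induction. The only genuinely thin spot in your sketch is that final induction: since $\alpha\beta^{-1}$ generally has the same length as $\alpha\beta$, naive induction on word length does not close, and the ``standard argument'' you invoke (reducing to the $2^n-1$ knots given by products of distinct generators taken in increasing order) is where the real combinatorial work lies.
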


\begin{defn}
By a \emph{representation} we mean a homomorphism from the fundamental group of $M$ to the group of special complex matrices of rank 2:
$$
\rho : \pi_1(M) \rightarrow SL_2(\mathbb{C}) .
$$
The \emph{character} of the representation is the composition with the trace:
$$
\chi_\rho := \text{trace} \circ \rho : \pi_1(M) \rightarrow \mathbb{C} ,
$$
and $X(M)$ denotes the set of all characters.
\end{defn}

\begin{rem}
The trace is invariant under conjugation of matrices, namely if two representations, $\rho_1$ and $\rho_2$, are conjugate (there is a matrix $B\in SL_2(\mathbb{C})$ such that for every $g\in \pi_1(M)$, $\rho_1(\gamma) = B^{-1} \rho_2(\gamma) B$) their characters are the same $\chi_{\rho_1} = \chi_{\rho_2}$. It holds that two representations have the same character if and only if they are conjugated.

A \emph{faithful} (injective) representation of a 3-manifold $M$ with discrete image is equivalent to a complete hyperbolic structure on $M$.
\end{rem}

For every $\gamma \in \pi_1(M)$ there is a function $t_\gamma :
X(M) \rightarrow \mathbb{C}$ given by $\chi_\rho \mapsto \chi_\rho(\gamma)$. The following theorem has been discovered independently
by Vogt \cite{Vogt} and Fricke \cite{Fricke}, first proved by Horowitz
\cite{Hor}, and then rediscovered by Culler and Shalen \cite{CS}.

\begin{theo}[Vogt, Fricke, Horowitz, Culler-Shalen]
There exists a finite set of elements $\{\gamma_1,\ldots,\gamma_m\}$ in $\pi_1(M)$ such that every $t_\gamma$ is an element of the polynomial ring $\mathbb{C}[t_{\gamma_1},\ldots,t_{\gamma_m}]$. Moreover $X(M)$ is a closed algebraic sub-set of $\mathbb{C}^m$.
\end{theo}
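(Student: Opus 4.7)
The plan is to prove the theorem by reducing to a finite number of traces via repeated application of the fundamental Fricke trace identity for $SL_2(\mathbb{C})$. Since $M$ is a compact 3-manifold, its fundamental group $\pi_1(M)$ is finitely generated; fix a finite set of generators $g_1, \ldots, g_n$. The key algebraic tool is the identity
\[
\text{tr}(XY) + \text{tr}(XY^{-1}) = \text{tr}(X)\cdot \text{tr}(Y), \qquad X,Y \in SL_2(\mathbb{C}),
\]
together with $\text{tr}(X^{-1}) = \text{tr}(X)$, which both follow directly from the Cayley–Hamilton relation $X^2 - \text{tr}(X)X + I = 0$ for matrices of determinant one.

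First I would establish, by induction on word length, that every trace function $t_w$ for a word $w$ in the generators $g_1^{\pm 1}, \ldots, g_n^{\pm 1}$ can be written as a polynomial with integer coefficients in the traces $t_\gamma$ where $\gamma$ ranges over the non-empty ordered products $g_{i_1}g_{i_2}\cdots g_{i_k}$ with $i_1 < i_2 < \cdots < i_k$. The inductive step uses the Fricke identity to lower the word length whenever a generator appears twice or in non-increasing order, and uses $\text{tr}(X^{-1}) = \text{tr}(X)$ to eliminate inverses. This gives a candidate finite list $\gamma_1, \ldots, \gamma_m$ with $m \le 2^n - 1$, and shows that $t_\gamma \in \mathbb{C}[t_{\gamma_1}, \ldots, t_{\gamma_m}]$ for every $\gamma \in \pi_1(M)$.

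Next I would construct the embedding $X(M) \hookrightarrow \mathbb{C}^m$ by $\chi_\rho \mapsto (t_{\gamma_1}(\chi_\rho), \ldots, t_{\gamma_m}(\chi_\rho))$. The previous step shows this map is well defined and determined by the generators; injectivity is a standard fact, since two semisimple representations of a finitely generated group into $SL_2(\mathbb{C})$ with the same character are conjugate, and the trace of any element of $\pi_1(M)$ is recovered as a polynomial in the coordinates $t_{\gamma_j}$. To identify the image as a closed algebraic subset, I would describe it as the zero locus of two families of polynomial equations in $\mathbb{C}^m$: the compatibility relations forcing the $t_{\gamma_j}$ to come from an actual representation (these are the polynomial identities between traces of $SL_2(\mathbb{C})$ matrices, such as the Cayley–Hamilton / Fricke ones), and for each relator $r$ of $\pi_1(M)$ the equation $t_r = t_1 = 2$, expressed polynomially in the $t_{\gamma_j}$ via the reduction above.

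The main obstacle I expect is not the generation statement, which is a straightforward induction once the Fricke identity is set up, but rather the precise verification that the image in $\mathbb{C}^m$ is cut out by polynomial equations. One must check that every constraint on the coordinates $(t_{\gamma_1}(\rho),\ldots,t_{\gamma_m}(\rho))$ coming from the fact that these arise from a genuine $SL_2(\mathbb{C})$-representation can be phrased polynomially; this is the content of the classical Procesi–Fricke theory on invariants of $SL_2$, and handling the non-finitely-presented case (if $\pi_1(M)$ is not known a priori to be finitely presented) requires the Hilbert basis theorem to reduce infinitely many relator-type equations to finitely many.
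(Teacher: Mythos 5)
The paper does not actually prove this theorem: it is stated as a classical result and the proof is delegated to the cited references (Horowitz for the finite generation of the trace ring, and Culler--Shalen for the closedness of $X(M)$ in $\mathbb{C}^m$), so your attempt can only be measured against those standard arguments. Your first half matches them and is correct: the induction on word length using $\mathrm{tr}(XY)+\mathrm{tr}(XY^{-1})=\mathrm{tr}(X)\,\mathrm{tr}(Y)$ and $\mathrm{tr}(X^{-1})=\mathrm{tr}(X)$ is exactly Horowitz's proof, and the spanning set of at most $2^n-1$ ascending products $g_{i_1}\cdots g_{i_k}$ is the right one. (Injectivity of $X(M)\to\mathbb{C}^m$, by the way, is immediate from this: a character is a function on $\pi_1(M)$ and its values are polynomials in the coordinates; the semisimplicity discussion is not needed there.)

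The gap is in the closedness statement. First, a concrete error: imposing $t_r=2$ for each relator $r$ does not cut $X(M)$ out of $X(F_n)$, because $\mathrm{tr}(\rho(r))=2$ also holds when $\rho(r)$ is a nontrivial parabolic. For $\Gamma=\langle a,b\mid a\rangle$ one has $X(\Gamma)=\{(2,y,y)\}$ in the coordinates $(t_a,t_b,t_{ab})$ on $X(F_2)=\mathbb{C}^3$, whereas $t_a=2$ describes the strictly larger plane $\{(2,y,z)\}$, every point of which is realized by sending $a$ to a parabolic matrix. The correct condition is $t_{rw}=t_w$ for all $w$ (still an intersection of closed conditions, hence closed; for irreducible $\rho$ it forces $\rho(r)=I$ via Burnside and the nondegeneracy of the trace pairing, and for reducible $\rho$ one replaces $\rho$ by the diagonal representation with the same character). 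Second, and more fundamentally, the assertion that the set of characters of the free group $F_n$ is a closed algebraic subset of $\mathbb{C}^m$ is precisely the hard point of the theorem: the image of an affine variety under a polynomial map is in general only constructible, and Culler--Shalen devote a separate nontrivial argument (splitting into irreducible and reducible characters and checking that no limit points are missing) to this; you defer it entirely to ``Procesi--Fricke theory''. As written, the proposal establishes the first sentence of the theorem but only outlines, with one incorrect step, the second.
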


\begin{defn}\label{defn:ring_char}
Recall that a \emph{closed algebraic set} $X$ in $\mathbb{C}^m$ is the common zero set of an ideal of polynomials in $\mathbb{C}[x_1,\ldots,x_m]$. The elements of  $\mathbb{C}[x_1,\ldots,x_m]$ are \emph{polynomial functions} on $X$, and the functions $x_i$ are \emph{coordinates} on $X$. The quotient of $\mathbb{C}[x_1,\ldots,x_m]$ by the ideal of polynomials vanishing on $X$ is called the \emph{coordinate ring} of $X$. Different choices of coordinates would clearly lead to different parameterizations of $X$, but it follows from \cite{CS} that any two parametrizations of
$X(M)$ are equivalent via polynomial maps. Hence their coordinate rings are isomorphic and we may identify them as one
object: the \emph{ring of characters} of  $\pi_1(M)$, which we denote
by $\mathcal{R}(M)$.
\end{defn}

Each knot $K$ in $M$ determines a unique $t_\gamma$ by taking $\gamma$ as the free homotopy class of $K$. We can define $\chi_\rho(K) := \chi_\rho(\gamma)$. Hence $K$ determines the map $t_\gamma$. Conversely, any $t_\gamma$ is
determined by some (non-unique) knot $K$.

\begin{theo}[Bullock]\label{theorem:Bullock}
Let $M$ be a compact orientable 3-manifold. The map
$$
\Phi : KM(M; \mathbb{C},-1) \rightarrow \mathcal{R}(M)
$$
given by
$$
\Phi(K)(\chi_\rho) = -\chi_\rho(K)
$$
is a well defined surjective map of $\mathbb{C}$-algebras. If $KM(M; \mathbb{C},-1)$ is generated
by the  knots $K_1,\ldots,K_m$ then $-\Phi(K_1),\ldots,-\Phi(K_m)$
are coordinates on $X(M)$. Furthermore the kernel of $\Phi$ is generated by the set of nilpotent elements of $KM(M; \mathbb{C},-1)$.
\begin{proof}
See \cite{Bullock2}.
\end{proof}
\end{theo}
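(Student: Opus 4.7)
The plan is to define $\Phi$ first on the free $\mathbb{C}$-module generated by framed links in $M$ by the formula
$$\Phi(L)(\chi_\rho) := \prod_{i=1}^n \bigl(-\chi_\rho(K_i)\bigr)$$
for $L = K_1 \cup \ldots \cup K_n$, then verify it descends to the skein algebra, that it is an algebra map, that it is surjective, and finally that its kernel is exactly the nilradical of $KM(M;\mathbb{C},-1)$.

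First I would check well-definedness. The three skein relations evaluated at $A=-1$ read (i) crossing $= -\,\text{A-smoothing}\,-\,\text{B-smoothing}$, (ii) $L\sqcup \cerchio = -2\, L$, and (iii) $\cerchio = -2\,\varnothing$. Relations (ii) and (iii) translate into the trivial identity $\mathrm{tr}(I)=2$ for $\rho(1)\in SL_2(\mathbb{C})$. Relation (i) is the content of the fundamental $SL_2$-trace identity $\mathrm{tr}(AB)+\mathrm{tr}(AB^{-1})=\mathrm{tr}(A)\mathrm{tr}(B)$: near the crossing, base a loop so that it reads $\alpha_1\alpha_2$; the two smoothings give either $\alpha_1\cdot\alpha_2^{-1}$ (a single knot) or $\alpha_1\sqcup\alpha_2$ (two knots), and the trace identity rearranges to exactly the skein relation after multiplying by $-1$ and accounting for the $(-1)^n$ sign coming from the number of components. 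Invariance under framing changes is automatic because the Reidemeister-I factor $-A^{3}$ becomes $+1$ at $A=-1$.

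Next I would verify that $\Phi$ is a ring homomorphism. At $A=-1$ the crossings are indistinguishable, so the disjoint-union product is well defined and commutative with unit $\varnothing$. The identity $\Phi(L\sqcup L')(\chi_\rho)=\Phi(L)(\chi_\rho)\Phi(L')(\chi_\rho)$ is immediate from the definition. Surjectivity then follows because $\mathcal{R}(M)$ is generated as a $\mathbb{C}$-algebra by the trace functions $t_\gamma$, and for each $\gamma$ we can pick a knot $K$ representing the free homotopy class $\gamma$ so that $\Phi(K)=-t_\gamma$; the statement about generators $K_1,\dots,K_m$ is then a formal consequence, giving an embedding $X(M)\hookrightarrow\mathbb{C}^m$ via $\chi_\rho\mapsto (-\Phi(K_1)(\chi_\rho),\dots,-\Phi(K_m)(\chi_\rho))$ whose image is cut out by the kernel of $\Phi$ modulo the nilradical.

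The main obstacle is the kernel statement. The easy inclusion is that $\mathcal{R}(M)$ is the coordinate ring of an affine algebraic set, hence reduced, so every nilpotent of $KM(M;\mathbb{C},-1)$ must lie in $\ker\Phi$. The hard inclusion asserts that $\ker\Phi$ coincides with the nilradical; equivalently, that the induced map $KM(M;\mathbb{C},-1)_{\mathrm{red}}\to\mathcal{R}(M)$ is injective. The argument I would follow is the one of Bullock, Przytycki and Sikora: build an inverse by producing, for every character $\chi_\rho\in X(M)$, a $\mathbb{C}$-algebra homomorphism $\mathrm{ev}_\rho\colon KM(M;\mathbb{C},-1)\to\mathbb{C}$ sending a knot $K$ to $-\mathrm{tr}(\rho(K))$, which is well-defined by the same trace identity used in the first step. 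If $x\in\ker\Phi$ then $\mathrm{ev}_\rho(x)=0$ for every $\rho$, so $x$ lies in the intersection of all prime ideals that arise as kernels of such evaluations. The point is then to invoke a theorem of Procesi on the ring of $SL_2$-invariants (or the older result of Vogt--Fricke--Horowitz--Culler--Shalen combined with the fact that representations with the same character are conjugate): the universal character ring can be presented by generators corresponding to knots and relations coming exactly from the Cayley--Hamilton identity $M+M^{-1}=\mathrm{tr}(M)\cdot I$ in $SL_2(\mathbb{C})$, which are precisely the skein relations at $A=-1$. Consequently the intersection of all evaluation kernels coincides with the nilradical, completing the identification of $\ker\Phi$ with the set of nilpotents. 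I would simply cite \cite{Bullock2} for this last comparison, as it is the technical heart of the theorem.
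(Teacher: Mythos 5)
Your proposal is correct and follows exactly the argument of the reference the paper cites: the paper's own ``proof'' is just the citation to Bullock, and your sketch (well-definedness via the $SL_2$ trace identity $\mathrm{tr}(AB)+\mathrm{tr}(AB^{-1})=\mathrm{tr}(A)\mathrm{tr}(B)$ with the sign $(-1)^n$ absorbed by the convention $\Phi(K)=-\chi_\rho(K)$, surjectivity from the Vogt--Fricke generators, and the identification of $\ker\Phi$ with the nilradical via the universal character ring) is precisely what that reference contains. Like the paper, you defer the one genuinely hard step --- that an element killed by every evaluation $\mathrm{ev}_\rho$ is nilpotent --- to the same citation, so there is nothing to fault.
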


In Theorem~\ref{theorem:known_sk_mod}-(9.), Theorem~\ref{theorem:known_sk_mod}-(10.) and Theorem~\ref{theorem:known_sk_mod}-(11.) (respectively results of \cite{Marche1}, \cite{PS1} and \cite{PS2}) we saw that if $M$ 
\begin{itemize}
\item{is the complement of a torus knot in $S^3$;} 
\item{has abelian fundamental group (\emph{e.g} $S^1\times S^2$, $T^3 = S^1\times S^1\times S^1$, a lens space $L(p,q)$);}
\item{is an orientable compact surface (maybe with non empty boundary);}
\end{itemize}
we have that the morphism $\Phi$ is an isomorphism.

\begin{conj}
For every compact 3-manifold $M$, the algebra $KM(M; \mathbb{C},-1)$ has no nilpotent elements. Hence $\Phi$ is an isomorphism.
\end{conj}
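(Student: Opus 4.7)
Since Bullock's theorem (Theorem~\ref{theorem:Bullock}) already identifies $\ker\Phi$ with the nilradical of $KM(M;\mathbb{C},-1)$, the conjecture is equivalent to reducedness of this algebra. My plan is to attack it by handle decomposition. Using Theorem~\ref{theorem:sk_mod_prop}-(2) and (6), one presents $M$ by attaching 2-handles to a handlebody $H_g$ along curves $\gamma_1,\dots,\gamma_n\subset\partial H_g$ (3-handles are absorbed harmlessly), which yields
\[
KM(M;\mathbb{C},-1)\ \cong\ KM(H_g;\mathbb{C},-1)\,/\,J,
\]
where $J$ is the commutative ideal generated by the slide differences $L-\mathrm{sl}_{\gamma_i}(L)$ (it really is an ideal, not just a sub-module, because at $A=-1$ the algebra is commutative). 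The handlebody case is settled by Theorem~\ref{theorem:known_sk_mod}-(11): $KM(H_g;\mathbb{C},-1)\cong\mathcal{R}(H_g)$ is already the \emph{reduced} coordinate ring of the $SL_2(\mathbb{C})$-character variety of the free group of rank $g$. The entire problem therefore becomes whether the slide ideal $J$ is radical in $\mathcal{R}(H_g)$.

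The natural approach is geometric: under $\Phi$, a slide along $\gamma_i$ imposes the relation $\gamma_i=1$ in $\pi_1$, so set-theoretically $V(J)=X(M)\subset X(H_g)$. The conjecture is thus equivalent to the assertion that the GIT character scheme of $\pi_1(M)$ into $SL_2(\mathbb{C})$ is reduced. At an irreducible character $\chi_\rho$ this is controlled by the deformation cohomology $H^1(\pi_1(M);\mathrm{Ad}\,\rho)$ together with its obstruction map to $H^2$: reducedness corresponds to unobstructedness of infinitesimal deformations, which is known in many hyperbolic and fibered settings (Thurston's dimension count, Kapovich--Millson, Heusener--Porti). I would first try to combine these inputs with a geometric decomposition argument, using Theorem~\ref{theorem:sk_mod_prop}-(7) suitably adapted to $A=-1$ to reduce to prime/JSJ pieces, and then treat each piece individually by its geometric type.

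The main obstacle is the global assembly together with the behavior at \emph{reducible} and central characters, where the tangent-obstruction calculus is most delicate and non-reduced scheme structure is most likely to appear; controlling this requires a careful local analysis of the GIT quotient near the reducible locus and of how boundary representations glue along JSJ tori. A secondary approach that sidesteps scheme theory is to produce an explicit $\mathbb{C}$-linear basis of $KM(M;\mathbb{C},-1)$ indexed by $\Phi$-distinct products of trace functions, thereby forcing injectivity of $\Phi$ directly. This amounts to finding a Reidemeister--Schreier-type normal form for knots in $M$ modulo the $SL_2$ trace identity $\mathrm{tr}(xy)+\mathrm{tr}(xy^{-1})=\mathrm{tr}(x)\mathrm{tr}(y)$, and looks feasible for fundamental groups admitting nice normal forms (graph manifolds, surface bundles); combined with the hyperbolic cohomological input, it would give the conjecture in full.
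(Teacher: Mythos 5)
This statement is presented in the paper as an open \emph{conjecture}: no proof is given there, and your submission is likewise not a proof but a research programme. The reductions you carry out at the start are correct and essentially standard: by Theorem~\ref{theorem:Bullock} the conjecture is equivalent to reducedness of $KM(M;\mathbb{C},-1)$; by Theorem~\ref{theorem:sk_mod_prop}-(2.) and (6.) together with Theorem~\ref{theorem:known_sk_mod}-(11.) (which gives $KM(H_g;\mathbb{C},-1)\cong\mathcal{R}(F_g\times[-1,1])$ for the handlebody, a reduced ring), the whole problem becomes whether the slide ideal $J$ is radical. Up to that point you have only restated the conjecture in equivalent algebraic language.

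The genuine gap is everything after that. The assertion that reducedness of the $SL_2(\mathbb{C})$ character scheme of $\pi_1(M)$ follows from unobstructedness results ``in many hyperbolic and fibered settings'' does not cover all compact 3-manifolds, and you yourself flag the reducible and central characters --- exactly where non-reduced scheme structure is known to occur for general finitely presented groups --- as an unresolved obstacle. There is also a subtler issue you pass over: even if the character \emph{variety} ring $\mathcal{R}(M)$ is reduced by construction, the quotient $\mathcal{R}(H_g)/J$ need not a priori equal $\mathcal{R}(M)$; the slide ideal could be strictly smaller than the full ideal of functions vanishing on $X(M)$, and closing that gap is the content of the conjecture, not a consequence of Bullock's theorem. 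Your secondary approach (an explicit basis of trace monomials via the trace identity) faces the same difficulty: producing such a normal form for arbitrary $\pi_1(M)$ is not known to be feasible outside special classes. In short, the proposal correctly frames the problem but supplies no argument for the step on which the conjecture actually turns, so it should be regarded as a plan of attack rather than a proof.
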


\section{The volume conjecture}\label{sec:vol_conj}

Since the work of W. Thurston and Mostow we know that there is a deep interaction between the topological study of 3-manifolds and low-dimensional hyperbolic geometry. One of the most important problems in modern knot theory is the \emph{volume conjecture}. This was introduced by H. Murakami and J. Murakami in 2001 \cite{Murakami} with the formalism of quantum groups and relates quantum invariants to the hyperbolic volume of the complement of a hyperbolic knot. Mostow's rigidity theorem states that if a 3-manifold admits a complete hyperbolic structure with finite volume, then every other hyperbolic structure is related to the previous one by an isometry. Therefore every metric information (volume, length spectrum, \ldots) is also a topological information. A link is said to be \emph{hyperbolic} if its complement admits a complete, finite-volume, hyperbolic structure. Almost all links are hyperbolic. For all the information about hyperbolic geometry we can see \cite{Benedetti-Petronio}. Originally the volume conjecture concerns knots in $S^3$ and a limit of evaluations of the colored Jones polynomial. This conjecture has been generalized in many ways. Now we state it in an extended form.

Let $L$ be a framed link in the connected sum $\#_g(S^1\times S^2)$ of $g\geq 0$ copies of $S^1\times S^2$ and let $\langle L, n \rangle$ be the Kauffman bracket of $L$ with each component colored with the $n^{\rm th}$ projector (normalized so that $\left\langle \pic{0.8}{0.3}{banp.eps} , 1 \right\rangle = -A^2-A^{-2}$). This depends on the framing of $L$, but by the identity in Fig.~\ref{figure:framingchange} the module of an evaluation of $\langle L, n \rangle$ in a root of unity does not.

\begin{conj}[(Extended) Volume conjecture]\label{conj:vol_conj}
Let $K\subset \#_g(S^1\times S^2)$ be a hyperbolic knot. Then
$$
{\rm Vol}(\#_g (S^1\times S^2) \setminus K) = \lim_{n \rightarrow \infty} \frac{2\pi}{n+1} \log \left.\left| \frac{\langle K, n \rangle}{\cerchio_n^{1-g}} \right|\right|_{q=A^2= \exp(\frac{\pi\sqrt{-1}}{n+1}) } ,
$$
where $\cerchio_n$ is the skein element of the unknot colored with the $n^{\rm th}$ projector ($\cerchio_n = (-1)^n [n+1]$, $[n]= (q^n -q^{-n})/(q-q^{-1})$), ${\rm Vol}(\#_g (S^1\times S^2) \setminus K)$ is the hyperbolic volume of the complement of $K$ and the limit is taken over all natural numbers $n$ such that $\frac{\langle K, n \rangle}{\cerchio_n^{1-g}}$ has neither a zero nor a pole in $q=A^2=\exp{\pi\sqrt{-1}}{n+1}$. For instance we know that if $L\subset \#_g(S^1\times S^2)$ is $\Z_2$-homologically non trivial, and $n$ is odd then $\langle L, n\rangle =0$ (Proposition~\ref{prop:0Kauff_gr}), hence in this case the limit is taken only over even numbers. If $K$ is not hyperbolic ${\rm Vol}(\#_g(S^2\times S^1) \setminus K)$ is the sum of the volume of the hyperbolic parts of the complement of $K$ in a JSJ decomposition, that is equal to $v_3 \cdot \| \#_g(S^1\times S^2) \setminus K\|$, where $v_3$ is the volume of the regular ideal hyperbolic tetrahedron and $\| M \|$ is the Gromov norm of $M$.
\end{conj}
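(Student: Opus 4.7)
This is the volume conjecture, famously open in general even for knots in $S^3$, so rather than claiming a full proof I will describe the strategy one would pursue, consistent with the methods developed in the thesis.

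The plan is to start from a shadow-theoretic state-sum expression for $\langle K,n\rangle$ and analyze its asymptotics by the saddle-point method. Concretely, pick a standard shadow $X$ of $(K,\#_g(S^1\times S^2))$ that collapses onto a graph; such an $X$ exists by Proposition~\ref{prop:shadow2}. Applying Theorem~\ref{theorem:sh_for_br} with the boundary edge $\partial X=K$ colored by $n$ expresses
\[
\frac{\langle K,n\rangle}{\cerchio_n^{1-g}}
=\frac{1}{\cerchio_n^{1-g}}\sum_{\xi}\langle X\rangle_\xi,
\]
where $\xi$ runs over admissible colorings of the interior regions of $X$ and each summand is an explicit product of $\cerchio$'s, $\teta$'s, $\tetra$'s, and gleam phases $A_f$. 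After substituting $q=A^2=\exp(\pi\sqrt{-1}/(n+1))$, every elementary brick $\cerchio$, $\teta$, $\tetra$ becomes a quantum factorial-type expression whose logarithm has a well understood integral asymptotic expansion via the quantum dilogarithm $\mathrm{Li}_2(e^{2\pi ix})/(2\pi i)$.

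Next, rescale the coloring as $\xi(R)=n\cdot x_R$ with $x_R\in[0,1]$, so that the sum becomes a Riemann sum approximation to a finite-dimensional integral
\[
\frac{\langle K,n\rangle}{\cerchio_n^{1-g}}\sim
\int_{\Delta(X)} e^{\,n\,\Phi_X(\mathbf{x})}\,\mu(\mathbf{x})\,d\mathbf{x},
\]
where $\Delta(X)$ is the polytope of admissible continuous colorings, $\Phi_X$ is a piecewise-analytic ``volume potential'' built out of dilogarithms (one for each region, edge, and vertex brick), and $\mu$ absorbs the subleading contributions. The factor $\cerchio_n^{1-g}$ provides exactly the Euler-characteristic normalization needed so that the leading exponential scale matches the shadow dimension. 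Apply stationary phase: candidate critical points $\mathbf{x}^*$ of $\Phi_X$ satisfy a system of transcendental equations that, after taking imaginary parts, are precisely the Neumann--Zagier gluing and completeness equations for an ideal triangulation of $(\#_g(S^1\times S^2))\setminus K$ dual to $X$. Identifying the maximizing critical point with the complete hyperbolic structure, one obtains
\[
\lim_{n\to\infty}\frac{2\pi}{n+1}\log\left|\frac{\langle K,n\rangle}{\cerchio_n^{1-g}}\right|
=\operatorname{Im}\Phi_X(\mathbf{x}^*)=\mathrm{Vol}(\#_g(S^1\times S^2)\setminus K),
\]
by the standard dilogarithmic formula of Neumann and Zagier for hyperbolic volume.

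For non-hyperbolic $K$, JSJ decomposition reduces the problem to the union of hyperbolic and Seifert-fibered pieces; the latter contribute zero volume, and one would argue that collapsed regions of the shadow contribute subexponentially, recovering $v_3\|\#_g(S^1\times S^2)\setminus K\|$. The hard part, and the reason the conjecture is open in full generality, is the stationary-phase step: one must (i) locate the dominant critical point among many complex critical points of $\Phi_X$, (ii) justify the interchange of sum and integral including uniform control over subleading terms and the interior edge/vertex contributions in the shadow formula, and (iii) show that the geometric critical point indeed dominates, which requires a convexity/monotonicity input that at present is only known for specific families (torus knots, the figure eight, certain twist and Montesinos knots, and via Ohtsuki's method some small-crossing cases). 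Everything else in the argument above is essentially formal once the shadow formula of Theorem~\ref{theorem:sh_for_br} is in hand; it is this final dominance step that constitutes the main obstacle.
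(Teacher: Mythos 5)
The statement you were asked about is a \emph{conjecture}: the paper states it as the (extended) volume conjecture and offers no proof, only a survey of the special cases established by others (the figure-eight knot, torus knots, Whitehead doubles and chains, the Borromean rings, and Costantino's universal hyperbolic links in $\#_g(S^1\times S^2)$). So there is no proof in the paper to compare yours against, and your decision to present a strategy rather than claim a proof is the correct one.

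Your sketch is a faithful outline of the approach that has worked in the known cases, and it is consistent with the paper's own toolkit: Costantino's proof for universal hyperbolic links, which the paper cites, does exactly what you describe --- it uses the shadow formula of Theorem~\ref{theorem:sh_for_br} to write $\langle K,n\rangle$ as a state sum over colorings, replaces the $6j$-symbol bricks by quantum-dilogarithm asymptotics, and identifies the critical point equations with hyperbolic gluing equations. You also correctly isolate the genuine obstruction (establishing that the geometric critical point dominates). Two small cautions. First, your claim that everything other than the dominance step is ``essentially formal'' is too optimistic: already the passage from the discrete sum to the integral, and the uniform control of the $\teta$ and $\tetra$ factors at $q=\exp(\pi\sqrt{-1}/(n+1))$, where quantum integers can vanish, require real analytic work --- this is precisely why the conjecture restricts the limit to those $n$ for which the normalized bracket has neither a zero nor a pole. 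Second, for a general knot the shadow produced by Proposition~\ref{prop:shadow2} need not yield a potential whose critical points correspond to a geometric ideal triangulation of the complement; that correspondence is only cleanly available for the special shadows underlying the universal links. Neither caution invalidates your answer, since you are not claiming a proof.
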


We note that
$$
\cerchio_n|_{q=A^2= \exp(\frac{\pi\sqrt{-1}}{n+1}) } = 0 .
$$

Until 2008 the conjecture for knots in $S^3$ has been proved only for the \emph{figure-eight knot}, \emph{torus knots}, \emph{Whitehead doubles} of certain torus knots, and connected sums of these knots \cite{Murakami1, KasTir, Zheng}. It is well known that the volume conjecture does not hold for many split links in $S^3$. Hence it is not clear how to extend the conjecture to links. However the conjecture is true for the \emph{Whitehead link} \cite{Zheng} and \emph{Borromean rings} \cite{Garoufalidis-ThangLe}. More in general van der Veen defined the class of \emph{Whitehead chains}: a set of links in $S^3$ that comprehends the Whitehead link and Borromean rings, and proved the volume conjecture for these links \cite{vanderVeen}. Costantino proved the extended volume conjecture for an infinite family of hyperbolic links in $\#_g(S^1\times S^2)$: the \emph{universal hyperbolic links} \cite{Costantino2}. To do that he used the formalism of Turaev's shadows (Chapter~\ref{chapter:shadows}).

Usually the volume conjecture is expressed in terms of the \emph{colored Jones polynomial}. The $n^{\rm th}$ \emph{colored Jones polynomial} of a (framed or oriented) link $L\subset \#_g(S^1\times S^2)$ is defined in several (almost equivalent) ways:
\begin{itemize}
\item{$$
J_n(L, \#_g(S^1\times S^2)) := \langle L, n \rangle ;
$$}
\item{If $L\subset S^3$
$$
J_n(L, S^3) := ((-1)^nA^{n^2+2n})^{-w(L)} \langle L, n \rangle ,
$$
where $w(L)$ is the writhe number of $L$ with an arbitrary framing (the sum of the signs of all the crossings of an oriented diagram of $L$), hence $J_n(L, S^3) := \langle L, n\rangle$ if $L$ has the Seifert framing (in this way we get an invariant of oriented, non framed links and of non oriented, non framed knots in $S^3$);
}
\item{$$
J_n(L, \#_g(S^1\times S^2)) := \frac{\langle L, n \rangle}{(-1)^{gn} \cerchio_n^{1-g}} ,
$$
with this normalization $J_n(L,S^3) =1 $;}
\item{$$
J_n(L, \#_g(S^1\times S^2)) := \langle L, n-1 \rangle ;
$$}
\item{all the combinations of the previous variations of $\langle L, n \rangle$.}
\end{itemize}

The colored Jones polynomial comes out from the finite-dimensional indecomposable representations of the $\mathbb{C}[[h]]$-Hopf algebra $U_q(\mathfrak{sl}_2)$, also called $U_h(\mathfrak{sl}_2)$, where $q=e^h$ ($\mathbb{C}[[h]]$ is the ring of Laurent series over the complex numbers with variable $h$). $U_q(\mathfrak{sl}_2)$ is a non trivial (non co-commutative) quantization (or deformation) of the universal enveloping algebra $U(\mathfrak{sl}_2)$ of $\mathfrak{sl}_2(\mathbb{C})$ and has some good properties like a natural ribbon algebra structure. The indecomposable representations correspond to the irreducible representations of a field. As for $U(\mathfrak{sl}_2)$, the finite-dimensional indecomposable representations of $U_q(\mathfrak{sl}_2)$ are in bijection with the positive integers. The $n^{\rm th}$ representation $V_n$ is the $(n+1)$-dimensional free module over $\mathbb{C}[[h]]$ with a further algebraic structure. If we define the $n^{\rm th}$ Jones polynomial by giving to $L$ the color $n$, we get the one related to the $(n+1)$-dimensional representation, namely the $n^{\rm th}$ representation. Usually people using the capital letter $N$, instead of $n$, associate to the $N^{\rm th}$ Jones polynomial the $N$-dimensional representation.

\begin{itemize}
\item{F. Costantino uses $J_n(L, \#_g(S^1\times S^2)) := \frac{\langle L, n-1 \rangle}{(-1)^{n-1}[n]^{1-g}}$ with variable $t=A^{\frac{1}{2}}$. He uses half-integer colorings instead of integer colorings (\emph{e.g.} \cite{Costantino2}).}
\item{J. March\'e uses $J_n(L, S^3)) := \langle L, n \rangle$ with variable $t=A$ (\emph{e.g.} \cite{Marche2}).}
\item{T.T.Q. Le and T. Tran use $J_n(L, S^3)) := (-1)^{n-1}\langle L, n-1 \rangle$ with variable $t=A$ ($J_n\left( \cerchio \right) = [n]$). They define $J_{-n}(L,S^3) := - J_n(L, S^3)$ (\emph{e.g} \cite{ThangLe-Tran}).}
\item{People using the representation theory of $U_q(\mathfrak{sl}_2)$ usually use the variable $q=-A^2$.}
\end{itemize}

The volume conjecture for a hyperbolic knot $K$ in $S^3$ can be expressed in terms of its \emph{Kashaev invariant} $\langle K \rangle_n$:
$$
{\rm Vol}(S^3 \setminus K) = \lim_{n \rightarrow \infty} \frac{2\pi}{n} \log |\langle K \rangle_n| .
$$

\section{The Chen-Yang's volume conjecture}\label{sec:CY_vol_conj}

This is a recent version of the volume conjecture for the Turaev-Viro invariants $TV_r(M)$ (Definition~\ref{defn:Turaev-Viro}) and the Reshetikhin-Turaev-Witten invariants $I_r(M)$ (Definition~\ref{defn:RTW}). This has been stated by Chen and Yang \cite{Chen-Yang} and has been supported by many numerical evidences. 

To get a Turaev-Viro or a Reshetikhin-Turaev-Witten invariant we need to fix an integer $r\geq 3$ and a $4r^{\rm th}$ root of unity $A$. Usually the root of unity is $A= e^{\frac{\pi i}{2r}}$. It has been proved that with that choice of $A$ for each $r$ the growth of these invariants is polynomial. The main idea is to change the choice of the root of unity and to use the fact that we do not need that $A$ is primitive, but just $A^4$ is so, namely $A^{4n}\neq 1$ for $n<r$. The choice is 
$$
A_r:=e^{\frac{\pi i}{r}} .
$$
Clearly $A_r^{4r}=1$, if $r\in 2\Z+1$, $A_r^4$ is a primitive $r^{\rm th}$ root of unity.

\begin{conj}[Chen-Yang's volume conjecture for T-V]\label{conj:C-Y_vol_conj_T-V}\label{conj:vol_conj_TV}
Let $M$ be a 3-manifold with a complete hyperbolic structure (maybe with cusps or geodesic boundary). Using the root of unity $A_r$ we have
$$
\lim_{r\rightarrow \infty} \frac{2\pi}{r-2} \log( TV_r(M) ) = {\rm Vol}(M) ,
$$
where $r$ runs just over the odd integers and ${\rm Vol}(M)$ is the volume of $M$.
\end{conj}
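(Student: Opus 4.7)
The plan is to reduce the conjecture to a controlled saddle-point analysis of a state sum expressing $TV_r(M)$, where the hyperbolic structure of $M$ singles out the dominant critical point. First I would invoke the Walker--Turaev identity $TV_r(M) = |I_r(M)|^2$ (Theorem~\ref{theorem:Turaev-Viro}) so the problem reduces to the growth rate of $|I_r(M)|$. Equivalently, by Definition~\ref{defn:Turaev-Viro}, I may evaluate
\[
TV_r(M) = \eta^{\chi(X)} \sum_\xi \frac{\prod_f \cerchio_f^{\chi(f)} \prod_v \tetra_v}{\prod_e \teta_e^{\chi(e)}}
\]
on a spine $X$ of $M$ dual to a geometric ideal triangulation $\mathcal T$. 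The advantage is that each ideal tetrahedron of $\mathcal T$ contributes a single quantum $6j$-symbol $\tetra_v$, so the state sum factors over the hyperbolic building blocks of $M$.

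Next I would carry out the large-$r$ asymptotic analysis of one quantum $6j$-symbol evaluated at the non-standard root $A_r = e^{\pi i/r}$. The key input is the Costantino--Murakami type volume conjecture for quantum $6j$-symbols: for colorings that scale as $a_i \sim \alpha_i r/(2\pi)$ with angles $\alpha_i$ matching the dihedral angles of a truncated hyperbolic tetrahedron $T$, one expects
\[
\log \bigl| \tetra_{a,b,c,d,e,f} \bigr| = \frac{r}{2\pi}\,\mathrm{Vol}(T) + O(\log r),
\]
while generic colorings produce a rapidly oscillating phase. After rescaling the summation variables by a factor of $r$ and interpreting the normalized sum as a Riemann integral, I would apply stationary phase: the critical points of the phase are precisely the solutions of the hyperbolic gluing equations, and summing their exponential contributions yields the leading rate $r\,\mathrm{Vol}(M)/(2\pi)$. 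Dividing by $r-2$ and multiplying by $2\pi$ produces the conjectured limit.

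The hardest part will be two-fold. First, one needs \emph{uniform} asymptotics of $\tetra_q$ at $A_r$, including sub-exponential error control that persists when some labels approach the walls of the $q$-admissibility region of Definition~\ref{defn:q-admissible}; existing proofs of the $6j$ volume conjecture give pointwise statements whose uniformization is delicate precisely because $A_r$ is not primitive in the classical sense. Second, one must show that the non-hyperbolic critical points of the phase either cancel by destructive interference or are exponentially dominated, which amounts to a Hessian signature computation at the hyperbolic saddle plus a global combinatorial argument discarding the boundary of the polytope of admissible colorings. A subsidiary but necessary step is to reconcile the conjecture with the topological decomposition of $M$: since $TV_r$ is defined for any 3-manifold, one should verify via JSJ pieces and the additivity of simplicial volume that Seifert-fibered and graph-manifold pieces contribute only polynomially, so that only the hyperbolic pieces drive the exponential growth. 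Only after all three ingredients are in place would the limit in Conjecture~\ref{conj:vol_conj_TV} follow.
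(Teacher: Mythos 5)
This statement is a \emph{conjecture} in the paper (and in the literature): the text explicitly says it was stated by Chen and Yang and is ``supported by many numerical evidences,'' and no proof is given or claimed. Your proposal therefore cannot be checked against a proof in the paper, and, more importantly, it is not a proof: it is the standard expected strategy, and each of its load-bearing steps is itself an open problem. Concretely, you invoke ``the Costantino--Murakami type volume conjecture for quantum $6j$-symbols'' as the key input; at the level of generality you need (all six labels growing linearly in $r$ at the root $A_r=e^{\pi i/r}$, with angles realizing a truncated hyperbolic tetrahedron) this is itself a conjecture, so your argument reduces one open conjecture to another rather than closing the gap. The passage from pointwise asymptotics of a single $6j$-symbol to the asymptotics of the full state sum is the genuinely hard part of every volume conjecture: you must (i) control the exponentially many admissible colorings uniformly, including near the walls of the $q$-admissibility polytope, (ii) prove that the hyperbolic critical point is not cancelled by oscillation --- a \emph{lower} bound on $|TV_r(M)|$, which destructive-interference heuristics do not give --- and (iii) rule out other critical points dominating. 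You name these difficulties yourself, which is honest, but naming them is not resolving them.

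There is also a structural mismatch worth flagging. The Turaev--Viro invariant in Definition~\ref{defn:Turaev-Viro} is computed from a spine of $M$ minus some balls, and the identity $TV_r(M)=|I_r(M)|^2$ (Theorem~\ref{theorem:Turaev-Viro}) is stated for \emph{closed} $M$; the conjecture as stated allows cusps and geodesic boundary, where that identity does not directly apply and where the relation between a geometric ideal triangulation and a spine of the compact core requires care (truncated versus ideal tetrahedra, and the contribution of the boundary annuli/tori to the state sum). Finally, your ``subsidiary step'' about JSJ pieces is not needed for the statement as written, which assumes $M$ hyperbolic; it belongs to the generalized version mentioned afterwards in the paper. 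In short: the outline is a reasonable research program consistent with how the community approaches this conjecture, but it does not constitute a proof, and the paper offers none to compare it with.
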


The numerical computations of the limit of Conjecture~\ref{conj:C-Y_vol_conj_T-V} for complements of knots $K\subset S^3$ shows that these sequences converge to the volume faster than the sequences of the volume conjecture (Conjecture~\ref{conj:vol_conj}).

The following extends Conjecture~\ref{conj:C-Y_vol_conj_T-V} for closed orientable 3-manifolds:
\begin{conj}[Chen-Yang's volume conjecture for R-T-W]\label{conj:vol_conj_RTW}
Let $M$ be a closed oriented hyperbolic 3-manifold. Using the root of unity $A_r$ we have
$$
\lim_{r\rightarrow \infty} \frac{4\pi}{r-2} \log( I_r(M) ) = {\rm Vol}(M) + i\pi^2( 2CS(M) +k) 
$$
for some $k\in \Z$, where $r$ runs just over the odd integers, ${\rm Vol}(M)$ is the volume of $M$ and $CS(M)$ is the \emph{Chern-Simons} invariant of $M$.
\end{conj}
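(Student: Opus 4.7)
The plan is to attack Conjecture~\ref{conj:vol_conj_RTW} by combining the shadow formula (Theorem~\ref{theorem:sh_for_RTW}) with a stationary phase/saddle point analysis, using the known asymptotic behavior of quantum $6j$-symbols as the crucial analytic input. First I would fix a sufficiently economical shadow $X$ of $M$ produced via Lemma~\ref{lem:shadow1} and Proposition~\ref{prop:standard_sh}, so that $X$ is standard, simply connected, collapses onto a wedge of 2-spheres, and has only disk regions. By the shadow formula, writing $A=A_r=e^{\pi i/r}$ with $r$ odd,
\begin{equation*}
I_r(M) \;=\; \kappa^{-\sigma(W_X)}\,\eta^{\chi(X)}\sum_{\xi}|X|_{\xi},
\end{equation*}
where $\xi$ ranges over $q$-admissible colorings of the (finitely many) regions. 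The point of using a standard simply connected shadow is that $|X|_\xi$ factors as a product of $\cerchio$'s, $\teta$'s, quantum tetrahedra $\tetra$ and the gleam phases $A_f=(-1)^{gc}A^{-gc(c+2)}$, each of which has a well-studied asymptotic expansion at the root $A_r$ as $r\to\infty$.

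The second step is to rescale colors. Following the classical heuristic, set $c_f=\lfloor \alpha_f(r-2)\rfloor$ for $\alpha_f\in[0,1]$, turn the sum $\sum_\xi$ into a Riemann sum over a compact polytope $P_X$ of ``continuous'' admissible weightings, and replace $\log\cerchio$, $\log\teta$, $\log\tetra$, and the gleam phases by their leading asymptotics (obtained from Stirling applied to the quantum factorial formulas in Subsection~\ref{subsec:three_graphs}). This produces a representation
\begin{equation*}
I_r(M)\;\sim\;C_r\int_{P_X}\exp\!\Bigl(\tfrac{r-2}{4\pi}\,\bigl(S(\alpha)+i\,T(\alpha)\bigr)\Bigr)\,d\alpha
\end{equation*}
with an explicit prefactor $C_r$ of polynomial growth and with $S,T$ piecewise smooth functions of the color densities $\alpha$ on $P_X$. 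The key geometric input, due to Costantino and Murakami in the closed tetrahedron case, is that the critical points of the complex ``action'' $S+iT$ associated to an individual tetrahedron compute the complex volume $\mathrm{Vol}+i\,\mathrm{CS}$ of an ideal hyperbolic tetrahedron whose dihedral angles are read off from $\alpha$. Assembling the local contributions along the shadow polyhedron, one recognizes the critical equations as gluing/holonomy equations for a hyperbolic structure on $M$, so that a dominant critical point $\alpha^*$ corresponds precisely to the complete hyperbolic structure furnished by Mostow rigidity, with $S(\alpha^*)+iT(\alpha^*)=\mathrm{Vol}(M)+i\pi^2(2\,CS(M)+k)$ for some integer $k$ coming from the framing correction $\kappa^{-\sigma(W_X)}$.

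Third, I would apply a complex saddle point/steepest descent argument to extract the asymptotics: after deforming the contour of integration so that it passes through $\alpha^*$ along the direction of steepest descent, the integral is controlled by a Gaussian of width $\sim r^{-1/2}$, so its logarithm divided by $(r-2)/(4\pi)$ converges to $S(\alpha^*)+iT(\alpha^*)$. The prefactor $C_r$ and the $\eta^{\chi(X)}$ factor contribute only logarithmically, hence vanish after the normalization $4\pi/(r-2)$. Using $TV_r(M)=|I_r(M)|^2$ (Theorem~\ref{theorem:Turaev-Viro}) as a sanity check would recover Conjecture~\ref{conj:vol_conj_TV} from the real part.

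The hard part will be the second and third steps: (i) showing that the non-hyperbolic critical points (degenerate gluings, flat structures, ``fake'' solutions of the saddle equations) are strictly dominated by the genuine hyperbolic critical point $\alpha^*$, which amounts to a global convexity/uniqueness statement about the Gromov norm realized on $P_X$; and (ii) performing the steepest descent in the presence of the boundary of the admissibility polytope $P_X$ and of the non-smooth loci where colors cross the $r-2$ threshold. Experience with analogous conjectures (figure-eight, Whitehead chains, universal hyperbolic links in $\#_g(S^1\times S^2)$ \cite{Costantino2}) suggests that (i) can be handled by relating $T(\alpha)$ to the Bloch-Wigner dilogarithm and invoking rigidity of the discrete faithful representation, while (ii) requires careful uniform estimates on the quantum $6j$-symbol asymptotics up to the boundary of $q$-admissibility. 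Controlling the $i\pi^2 k$ indeterminacy amounts to tracking the framing anomaly $\kappa^{-\sigma(W_X)}$ under shadow moves (Theorem~\ref{theorem:3-manifods}), which is the cleanest part of the argument.
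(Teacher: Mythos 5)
This statement is a \emph{conjecture} (due to Chen--Yang), and the paper offers no proof of it: it is stated as an open problem supported by numerical evidence, with only the remark that the shadow formula could be a tool for attacking it (starting, say, with graph manifolds where ${\rm Vol}(M)=0$). So there is no ``paper's own proof'' to compare against, and your proposal should be judged as a research program rather than as a proof. As such it is the standard and reasonable heuristic, but it is not a proof, and the gaps are not peripheral: they are exactly the open content of the conjecture.

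Concretely: (1) the passage from the discrete state sum $\sum_\xi |X|_\xi$ to a contour integral that can be deformed through a complex saddle requires an analytic continuation of the summand in the colors (quantum dilogarithm technology), and the subsequent interchange of $r\to\infty$ with the sum/integral is precisely where all known attempts at volume conjectures stall --- the terms oscillate and may individually exceed the total, so neither the upper nor the lower bound follows from locating a critical point. (2) The identification of the critical equations of $S+iT$ on the shadow polytope with the gluing equations of the complete hyperbolic structure, and the strict dominance of that critical point over boundary and degenerate critical points of $P_X$, is an unproved global statement; your appeal to Mostow rigidity only gives uniqueness of the complete structure, not that the corresponding saddle dominates the asymptotics. (3) The uniform asymptotics of the quantum $6j$-symbol at $A_r=e^{\pi i/r}$ up to the boundary of $q$-admissibility are themselves only partially established. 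You correctly flag (1)--(3) as ``the hard part,'' but that hard part is the conjecture; what you have written is a plausible strategy consistent with how the paper suggests shadows might be used, not an argument that establishes the statement.
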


These conjectures can be easily generalized to non hyperbolic 3-manifolds where ${\rm Vol}(M)$ is the volume of the hyperbolic part of $M$ in the JSJ decomposition. It holds that ${\rm Vol}(M)= v_3 \|M\|$, where $v_3$ is the volume of the regular ideal hyperbolic tetrahedron and $\|M\|$ is the Gromov norm of $M$.

By Theorem~\ref{theorem:Turaev-Viro} we have $TV_r(M) = |I_r(M)|^2$. Therefore we can compute the Turaev-Viro invariants with the shadow formula for the Reshetikhin-Turaev-Witten invariants (Theorem~\ref{theorem:sh_for_RTW}). In particular the shadow formula can be used to study both Conjecture~\ref{conj:vol_conj_TV} and Conjecture~\ref{conj:vol_conj_RTW}.

An oriented compact 3-manifold $M$ is said to be a \emph{graph manifold} if it can be decomposed by cutting along tori in blocks homeomorphic to solid tori or the product of a pair of pants with $S^1$. The set of graph manifolds coincides with the  set of 3-manifolds whose JSJ decomposition has only Seifert-fibered or torus bundle pieces. Therefore they are the 3-manifolds such that 
$$
{\rm Vol}(M) = 0.
$$
Moreover graph manifolds can also be characterized as the 3-manifolds with zero \emph{shadow complexity}, namely the manifolds which admit a shadow without vertices (see \cite{Costantino-Thurston}). Therefore to get the invariants of graph manifolds we can use the shadow formula on such simple shadows that has a nicer form than usual. Thus graph manifolds form a good candidate to start proving the generalized Conjecture~\ref{conj:vol_conj_TV}.

\section{The Tait conjecture}\label{sec:Tait_conj0}

One of the first invariants of links in $S^3$ is the \emph{crossing number}: the minimal number of crossings that a diagram must have to present that link. In general it is hard to compute. During his attempt to tabulate all knots in $S^3$ in the $19^{\rm th}$ century \cite{Tait}, P.G. Tait stated three conjectures about crossing number, \emph{alternating links} and writhe number (Definition~\ref{defn:writhe}). An \emph{alternating link} is a link that admits an \emph{alternating diagram}: a diagram $D$ such that the parametrization of its components $S^1\rightarrow D\subset D^2$ meets overpasses and underpasses alternately. All the conjectures have been proved before 1991. 

\begin{defn}\label{defn:reduced_0}
A diagram $D$ of a link in $S^3$ is said to be \emph{reduced} if it has no crossings as the ones in Fig.~\ref{figure:reducedDS3} (the blue parts cover the rest of the diagram).
\end{defn} 

\begin{figure}[htbp]
\begin{center}
\includegraphics[scale=0.6]{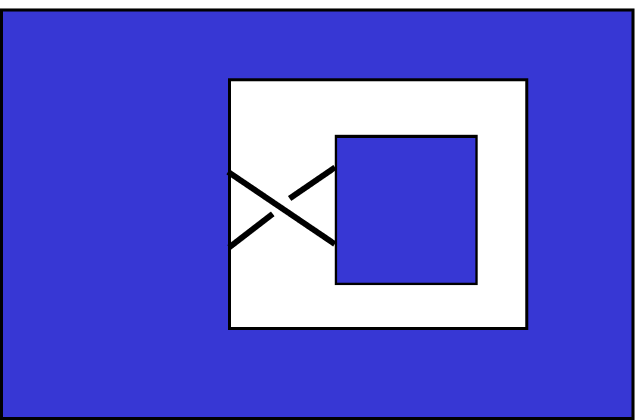}
\hspace{0.5cm}
\includegraphics[scale=0.6]{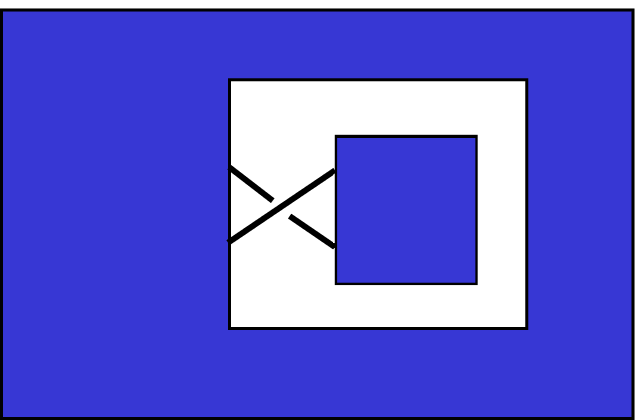}
\end{center}
\caption{Non reduced diagrams of links in $S^3$.}
\label{figure:reducedDS3}
\end{figure}

We focus just on one Tait conjecture, the following one:
\begin{theo}[The Tait conjecture in $S^3$]\label{theorem:Tait_conj_0}
Every reduced alternating diagram of links in $S^3$ has the minimal number of crossings.
\begin{proof}
See for instance \cite[Chapter 5]{Lickorish} or Chapter~\ref{chapter:Tait}.
\end{proof}
\end{theo}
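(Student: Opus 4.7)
The plan is to prove the theorem via the \emph{breadth} (or \emph{span}) of the Kauffman bracket, following the Kauffman--Murasugi--Thistlethwaite strategy. For a Laurent polynomial $p\in\Z[A,A^{-1}]$ define $B(p):=\max\deg(p)-\min\deg(p)$. Although $\langle D\rangle$ depends on the framing, the quantity $B(\langle D\rangle)$ is invariant under all three Reidemeister moves (the type-I move multiplies $\langle D\rangle$ by a monomial $-A^{\pm 3}$, which shifts both extremal degrees equally), so $B(\langle L\rangle)$ is a well-defined invariant of the link $L\subset S^3$. Hence if one can prove that every reduced alternating diagram $D$ with $n$ crossings realizes $B(\langle D\rangle)=4n$, and that every diagram $D'$ with $n'$ crossings satisfies $B(\langle D'\rangle)\le 4n'$, the conjecture follows: any other diagram of $L$ must have at least $n$ crossings.

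First I would prove the universal upper bound. Using the state sum of Proposition~\ref{prop:state_sum}, any state $s$ contributes a monomial of $A$-degree $\sum_i s(i)+2(|s|-1)$, where $|s|$ is the number of loops in the resolution (treating $\langle\varnothing\rangle=1$ or $-A^2-A^{-2}$ as appropriate). The all-$+1$ state $s_+$ and all-$-1$ state $s_-$ give the a priori largest and smallest possible degrees, namely $n+2(|s_+|-1)$ and $-n-2(|s_-|-1)$. An Euler-characteristic argument on the 4-valent shadow of $D$ (regarded as a planar graph) shows that for any connected diagram, $|s_+|+|s_-|\le n+2$. Combining these inequalities yields $B(\langle D\rangle)\le 4n$ for any connected diagram, and then a short discussion of split diagrams extends it to the general case (there the bound is even better, $\le 4n-4$ per extra component, but one only needs $\le 4n$).

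Next I would show that a connected, reduced, alternating diagram $D$ with $n$ crossings attains equality and, crucially, that the extremal coefficients do not cancel. For an alternating diagram one can checkerboard-color the complementary regions; the all-$A$ resolution $s_+$ then corresponds to one shading and $s_-$ to the other, giving $|s_+|+|s_-|=n+2$ \emph{on the nose}. So $s_+$ alone contributes a unique monomial of maximal degree $n+2(|s_+|-1)$ and $s_-$ a unique monomial of minimal degree $-n-2(|s_-|-1)$. The main obstacle is to prove that no other state contributes to these extremal degrees and that these two contributions are not killed by later cancellations. The key lemma is: for a state $s$ obtained from $s_+$ by flipping one crossing, $|s|=|s_+|-1$ unless that crossing is a nugatory one (of the forbidden type in Fig.~\ref{figure:reducedDS3}), and similarly for $s_-$. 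This is precisely where the \emph{reduced} hypothesis enters. An induction on the Hamming distance from $s_\pm$ then shows $|s|\le|s_\pm|-k$ after $k$ flips, so only $s_\pm$ achieves the extremal degree, and the corresponding coefficient is $\pm 1$, hence nonzero.

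Putting the pieces together, for a reduced alternating connected diagram with $n$ crossings we obtain $B(\langle L\rangle)=4n$, while for any other diagram with $n'$ crossings $B(\langle L\rangle)\le 4n'$, giving $n\le n'$. The disconnected (split) case reduces to the connected one by multiplicativity of the bracket under distant union together with Proposition~\ref{prop:no_0Kauff}. The hardest step is the non-cancellation of extremal coefficients, which rests entirely on the combinatorial control of the loop count $|s|$ under single state flips in a reduced alternating diagram; everything else is bookkeeping with Proposition~\ref{prop:state_sum}.
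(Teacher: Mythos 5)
Your proposal is the same Kauffman--Murasugi--Thistlethwaite breadth argument that the paper itself invokes: it defers to \cite[Chapter 5]{Lickorish} and to Chapter~\ref{chapter:Tait}, whose Lemmas~\ref{lem:ineq1_g}, \ref{lem:ineq2_g} and \ref{lem:alter_eq_g} specialize at $g=0$ (diagrams in a disk, where $\psi\equiv 0$) to exactly your three steps: the universal bound $B\le 4n$ via $|s_+|+|s_-|\le n+2$, the equality for reduced alternating connected diagrams via checkerboard coloring, and non-cancellation of the extremal terms via adequacy. One slip to fix: the claim that $|s|\le|s_\pm|-k$ after $k$ flips is false (already $s_-$ violates it, since $|s_-|$ need not be small); the correct and standard statement is that the extremal degree $\sum_i s(i)+2|s|$ is non-increasing under a single $+\to-$ flip and, by reducedness/adequacy, strictly decreases at the first flip away from $s_+$, which is what actually forces $s_\pm$ to be the unique contributors to the top and bottom monomials with coefficient $\pm1$.
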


This conjecture was proved in 1987 by M. Thistlethwaite \cite{Thistlethwaite}, L.H. Kauffman \cite{Kauffman_Tait} and K. Murasugi \cite{Murasugi1, Murasugi2}.

Theorem~\ref{theorem:Tait_conj_0} is one of the most notable applications of the Jones polynomial. In fact the proof of Thistelwaithe-Kauffman-Murasugi is based on the study of the \emph{breadth} of the Jones polynomial:

\begin{defn}\label{defn:breadth}
Let $f\in \Z[A,A^{-1}]$ be a non zero Laurent polynomial. The \emph{breadth} $B(f)\in \Z$ of $f$ is the difference between the biggest and the lowest degree of the non zero monom
ials of $f$. If $f=0$ we define $B(f):=0$.
\end{defn}

The breadth of the Jones polynomial, or of the Kauffman bracket, is independent of the chosen framings or orientations for the link, and we have $B(J(L))=B(\langle L\rangle)$. The theorem of Thistelwaithe-Kauffman-Murasugi shows that we can get information about the crossing number from the breadth of the Jones polynomial (or of the Kauffman bracket). In particular we can easily compute it if the link is alternating:
\begin{theo}
Let $D\subset D^2$ be a $n$-crossing alternating link diagram. Then 
$$
B(f(L)) = B(\langle D \rangle) = 4n+4 
$$
(with the variable $A$ and the normalization $\left\langle \pic{0.8}{0.3}{banp.eps} \right\rangle = -A^2-A^{-2} $).
\begin{proof}
See for instance \cite[Chapter 5]{Lickorish} or Chapter~\ref{chapter:Tait}.
\end{proof}
\end{theo}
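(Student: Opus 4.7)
The plan is to expand $\langle D\rangle$ via the state sum of Proposition~\ref{prop:state_sum}, derive a general upper bound on its breadth, and then show the bound is attained by a non-cancellation argument that genuinely uses the alternating hypothesis (together with reducedness, which we assume is implicit here, since otherwise a nugatory crossing could be untwisted to produce an alternating diagram with fewer crossings and the stated equality would fail).

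For every Kauffman state $s$ we have $\langle D \mid s\rangle = A^{\sigma(s)}(-A^2-A^{-2})^{|sD|}$ with $\sigma(s)=\sum_i s(i)$, so this Laurent monomial has maximum degree $\sigma(s)+2|sD|$ and minimum degree $\sigma(s)-2|sD|$. Let $s_+$ and $s_-$ denote the all-$(+1)$ and all-$(-1)$ states and put $c_\pm=|s_\pm D|$. By induction on the number of signs flipped relative to $s_+$ (respectively $s_-$), I would prove
$$\sigma(s)+2|sD|\leq n+2c_+, \qquad \sigma(s)-2|sD|\geq -n-2c_-,$$
for every state $s$. The induction step rests on the elementary local observation that flipping a single $s(i)$ from $+1$ to $-1$ changes $\sigma(s)$ by $-2$ and $|sD|$ by exactly $\pm 1$, because the $A$- and $B$-smoothings at a single crossing differ locally by either merging two loops into one or splitting one loop into two. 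Summing over states gives $B(\langle D\rangle)\leq 2n+2(c_++c_-)$.

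The next ingredient is the combinatorial identity $c_++c_-=n+2$ for a connected reduced alternating diagram. Viewing $D$ as a connected 4-valent plane graph, choose a checkerboard coloring of its complement: the alternating property is exactly what forces the all-$A$ smoothing to trace out the boundaries of all regions of one color while the all-$B$ smoothing traces out the boundaries of the other color. Hence $c_++c_-$ equals the total number of complementary regions, which by Euler's formula applied to a connected 4-valent plane graph with $n$ vertices and $2n$ edges is $n+2$; reducedness is precisely what prevents a nugatory crossing from forcing two regions to be bounded by a single loop. This sharpens the previous bound to $B(\langle D\rangle)\leq 4n+4$.

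The main obstacle, and the real content of the theorem, is to rule out cancellation at the two extreme degrees so that the bound is attained. The induction above shows that at each single crossing flip the quantity $\sigma(s)+2|sD|$ either stays equal or drops by $4$, with equality only when the flip strictly splits a loop. In a reduced alternating diagram, however, each crossing in the $s_+$ smoothing is incident to two \emph{distinct} loops of the corresponding checkerboard color class, so any single flip off of $s_+$ merges two loops rather than splits one, and strictly decreases $\sigma(s)+2|sD|$; iterating, only $s_+$ itself contributes to the top degree $n+2c_+$. Hence the top coefficient of $\langle D\rangle$ is the nonzero integer $(-1)^{c_+}$, and symmetrically the bottom coefficient is $(-1)^{c_-}\neq 0$. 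This forces $B(\langle D\rangle)=4n+4$. Since $f(L)=(-A^3)^{-w(D)}\langle D\rangle$ differs from $\langle D\rangle$ by a unit multiple, we also obtain $B(f(L))=4n+4$.
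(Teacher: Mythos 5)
Your argument is correct and is essentially the standard Kauffman--Murasugi--Thistlethwaite proof that the paper itself invokes here by citation (to Lickorish and to Chapter~\ref{chapter:Tait}, where Proposition~\ref{prop:reduced_D_g}, Lemma~\ref{lem:ineq1_g} and Lemma~\ref{lem:alter_eq_g} carry out exactly your three steps, specialized to $g=0$: the state-flipping upper bound, the checkerboard count $c_++c_-=n+2$, and adequacy of reduced alternating diagrams to rule out cancellation at the extreme degrees). The only caveat is that, besides reducedness, connectedness of $D$ must also be taken as implicit in the statement: your Euler-characteristic count and the identification of state loops with checkerboard regions both use it, and the equality fails for split diagrams.
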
 

We have a natural representation of links in the connected sum $\#_g(S^1\times S^2)$ of $g\geq 0$ copies of $S^1\times S^2$ with diagrams in the disk with $g$ holes (see Subsection~\ref{subsec:diag}). This representation allows us to talk about crossing number and alternating links even in this general case of links in $\#_g(S^1\times S^2)$. Moreover we know that the Kauffman bracket is also defined on $\#_g(S^1\times S^2)$ (see Remark~\ref{rem:sk_sp_S1xS2} and Definition~\ref{defn:Kauf}). Therefore it is natural to ask if the Tait conjecture holds also in this general case, and if we can follow the same method of Thistelwaithe-Kauffman-Murasugi by improving our knowledge of the Jones polynomial. In Chapter~\ref{chapter:Tait} we are going to discuss this topic and we will prove that the proper version of the Tait conjecture in $\#_g(S^1\times S^2)$ is false for $\Z_2$-homologically non trivial links (Definition~\ref{defn:Z_2_tr}), and if $g=1$ or $g=2$ the proper version of the Tait conjecture is true for $\Z_2$-homologically trivial links. In Chapter~\ref{chapter:Tait} the method of Thistelwaithe-Kauffman-Murasugi will be followed and extended. Note that by Remark~\ref{rem:tensor} studying diagrams in the disk with $g$ holes $S_{(g)}$ that are contained in a disk with $g'\leq g$ holes is equivalent to studying links in $\#_{g'}(S^1\times S^2)$. Therefore we will get Theorem~\ref{theorem:Tait_conj_0} just by focusing on diagrams contained in a disk.

\section{Slice and ribbon}\label{sec:slice-ribbon}

This section introduces the concepts of ``\emph{ribbon surface}'', ``\emph{ribbon knot}'', ``\emph{slice knot}'', \ldots We will need these notions for Section~\ref{sec:Eisermann}, Section~\ref{sec:Montesinos} and Section~\ref{sec:sh_rib_handle}.

Every link in $S^3$ bounds an embedded orientable surface. Given a link we can consider all these surfaces.
\begin{defn}
The \emph{genus} of a knot in $S^3$ is the minimum genus of a connected compact orientable surface embedded in $S^3$ whose boundary coincides with the knot.
\end{defn}

We note that a knot has genus $0$ if and only if it is the \emph{unknot}, namely if it bounds a disk in the 3-dimensional ambient space.

\subsection{Slice}

There is an obvious compact 4-manifold whose boundary is $S^3$: the 4-disk $D^4$. Given a link in $S^3$ we can consider the properly embedded surfaces of $D^4$ whose boundary is $L$ and define another important invariant for knots:
\begin{defn}
The \emph{slice genus} of a knot is the minimum genus of a connected compact orientable (smooth or PL and locally flat) surface properly embedded in $D^4$ whose boundary coincides with the knot.
\end{defn}

\begin{rem}
A \emph{locally flat surface} is a topologically embedded surface $S\subset M$ such that for each point $p \in S$ there is a continuous map $\varphi: V \rightarrow \mathbb{R}^m$, such that $V$ is a neighborhood of $p$ in $M$, $m$ is the dimension of $M$, the restriction $V \rightarrow \varphi(V)$ is a homeomorphism, and $\varphi(V)\subset \{ (x,y,0, \ldots, 0) \in \mathbb{R}^m \}$. As said before we are in the smooth or PL category and we consider only locally flat surfaces. This observation is fundamental for the definition of the slice genus. In fact if we allowed all the topological surfaces, we would have that each knot is the boundary of a properly embedded disk that is just the cone in the 4-ball over the knot, hence the slice genus would be always $0$. If we look at the origin of $D^4$ we can see that this disk admits a smooth structure if and only if the knot is the unknot in $S^3$.
\end{rem}

Given a surface in $S^3$ we can push it in the interior of $D^4$ getting a properly embedded surface. Hence the slice genus is lower-equal than the genus. A knot has slice genus $0$ if and only if it bounds a properly embedded disk in $D^4$.
\begin{defn}
A knot in $S^3$ is said to be \emph{slice} if it has slice genus $0$. More generally a link in $S^3$ is said to be \emph{slice} if it bounds a properly embedded collection of disks in the 4-ball.
\end{defn}
There are non trivial slice knots as the one in Fig.~\ref{figure:exribbon}.

\begin{figure}[htbp]
$$
\pic{2}{0.7}{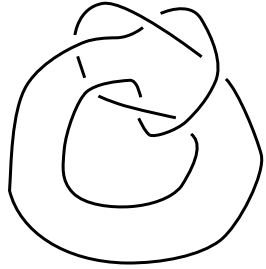}
$$
\caption{A ribbon knot.}
\label{figure:exribbon}
\end{figure}

We can extend the notion of ``slice knot'' to each compact orientable 4-manifold with boundary.

\subsection{Ribbon}

\begin{defn}
An immersed smooth surface $S\rightarrow M$ in a 3-manifold $M$ is \emph{ribbon} if one of the following equivalent conditions holds:
\begin{itemize}
\item{the surface $S$ may be isotoped to an immersed surface in $M$ having only \emph{ribbon singularities} as in Fig.~\ref{figure:ribbon_singularity};}
\item{the surface $S$ may be isotoped in $M\times [-1,1]$ into a properly embedded surface in Morse position, with only minima and saddle points (no maxima) with respect to the height function, $M\times [-1,1]\rightarrow [-1,1]$, $(x,t)\mapsto t$, as in Fig.~\ref{figure:Morse}.}
\end{itemize}
A ribbon surface may be not connected and non orientable.
\end{defn}

\begin{defn}\label{defn:ribbon_link}
The \emph{ribbon genus} of a knot is the minimal genus of a connected orientable ribbon surface bounded by the knot. Clearly the ribbon genus is not lower than the slice genus. A link is said to be \emph{ribbon} if it bounds a ribbon surface that is a collection of immersed disks (as the one in Fig.~\ref{figure:exribbon}).
\end{defn}

\begin{rem}\label{rem:ribbon_g}
In the 4-dimensional orientable handlebody $W_g$ of genus $g\geq 0$ (the compact 4-manifold with a handle-decomposition with $k$ 0-handles and $k-1+g$ 1-handles) there is a graph $\Gamma \subset W_g$ of genus $g$ (if $g=0$ $W_g$ is the 4-disk $D^4$ and $\Gamma$ is a point) whose complement is diffeomorphic to a collar of the boundary $W_g \setminus \Gamma \cong \#_g(S^1\times S^2) \times [0,1)$. By an isotopy we can put every properly embedded surface $S\subset W_g$ in a position such that $S\cap \Gamma = \varnothing$. Therefore an immersed surface $S\rightarrow \#_g(S^1\times S^2)$ is ribbon if and only if may be isotoped in $W_g$ into a properly embedded surface in Morse position, with only minima and saddle points (no maxima) with respect to the distant function from the graph $\Gamma$. 
\end{rem}

\begin{figure}[htbp]
\begin{center}
\includegraphics[width = 4 cm]{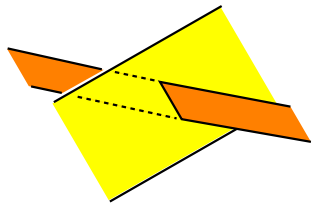}
\caption{A ribbon singularity}
\label{figure:ribbon_singularity}
\end{center}
\end{figure}

\begin{figure}[htbp]
\begin{center}
\includegraphics[width = 8 cm]{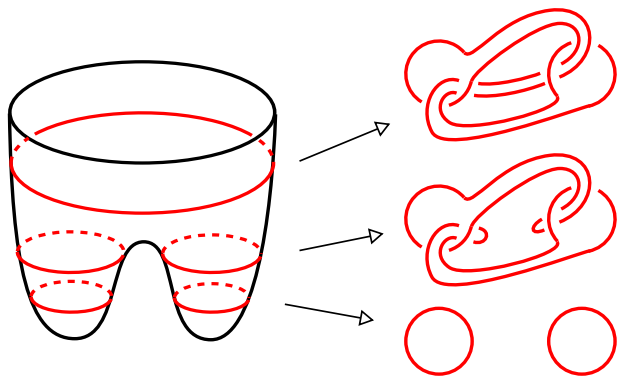}
\caption{A ribbon disk in $D^4$ in Morse position with two minima and one saddle. Each regular level gives a link in $S^3$.}
\label{figure:Morse}
\end{center}
\end{figure}

\begin{prop}
A link $L\subset M$ in a 3-manifold $M$ bounds a ribbon surface if and only if it is $\Z_2$-homologically trivial ($0=[L]\in H_1(M;\Z_2)$). Hence every link in $S^3$ bounds a ribbon surface.
\begin{proof}
It is well known in low-dimensional topology that a link $L\subset M$ is $\Z_2$-homologically trivial if and only if it bounds an embedded (maybe not orientable) surface $S\subset M$, hence a ribbon one. By definition if a link $L\subset M$ is ribbon it bounds a properly embedded surface in $M\times [-1,1]$, hence $0=[L]\in H_1(M\times [-1,1];\Z_2)$, hence $0=[L]\in H_1(M;\Z_2)$. 
\end{proof}
\end{prop}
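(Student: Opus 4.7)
My plan is to prove the two implications separately, then deduce the ``hence'' statement from them.

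First I would handle the easy direction: if $L\subset M$ bounds a ribbon surface, then $[L]=0\in H_1(M;\Z_2)$. For this I would use the second characterization of ribbon surfaces given in the excerpt, namely that $S$ may be isotoped in $M\times[-1,1]$ to a properly embedded surface $\widetilde S$ whose boundary is $L\subset M\times\{-1\}$ (say). Since $L$ bounds the surface $\widetilde S$ inside $M\times[-1,1]$, it is $\Z_2$-null there, i.e.\ $[L]=0\in H_1(M\times[-1,1];\Z_2)$. The inclusion $M\hookrightarrow M\times[-1,1]$ is a homotopy equivalence, so the induced map on $H_1(\cdot;\Z_2)$ is an isomorphism, and $[L]=0$ in $H_1(M;\Z_2)$.

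Next I would handle the converse direction. The key input is the standard low-dimensional topology fact, cited in the statement itself: a $\Z_2$-homologically trivial closed $1$-submanifold $L$ in an oriented $3$-manifold $M$ bounds an embedded (possibly non-orientable) surface $S\subset M$. I would simply invoke this (one standard argument: the Poincar\'e dual of $[L]$ lives in $H^2(M;\Z_2)=[M,K(\Z_2,2)]=[M,\mathbb{RP}^\infty]$, and is represented by a map transverse to $\mathbb{RP}^\infty\setminus\{\mathrm{pt}\}$; the preimage of a generic codimension-$1$ submanifold of $\mathbb{RP}^\infty$ gives the desired surface). Once we have $S\subset M$ with $\partial S=L$, observe that $S$ is \emph{already} a ribbon surface: as an embedded surface in $M$ it is in particular an immersed surface with no singularities, hence trivially satisfies the first clause of the definition (no ribbon singularities at all). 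So $L$ bounds a ribbon surface.

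Finally, for the last sentence, I would just note that $H_1(S^3;\Z_2)=0$, so every link $L\subset S^3$ is $\Z_2$-homologically trivial, and by what was just proved it bounds a ribbon surface.

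The whole argument is essentially a soft formal manipulation; the only non-trivial ingredient is the classical surface-existence fact for $\Z_2$-null $1$-cycles in $3$-manifolds, which the excerpt already signals is meant to be taken as well-known. Hence I would expect the main (and only mild) obstacle to be wording the proof compactly enough that this standard fact is cited cleanly rather than reproved; everything else is formal.
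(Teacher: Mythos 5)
Your proposal is correct and follows essentially the same route as the paper's proof: the forward direction uses the properly-embedded-surface-in-$M\times[-1,1]$ characterization of ribbon surfaces plus the fact that $M\hookrightarrow M\times[-1,1]$ induces an isomorphism on $H_1(\cdot;\Z_2)$, and the converse invokes the standard fact that a $\Z_2$-null link bounds an embedded (possibly non-orientable) surface, which is trivially ribbon. Your extra detail (the Poincar\'e duality argument for the classical fact) is a welcome elaboration but does not change the structure of the argument.
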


\subsection{The slice-ribbon conjecture}

A very important conjecture in knot theory is the \emph{slice-ribbon conjecture}:
\begin{conj}[Slice-ribbon conjecture]
A knot in $S^3$ is ribbon if and only if it is slice.
\end{conj}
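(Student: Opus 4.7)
The forward direction, that ribbon implies slice, is elementary and I would dispatch it first. Given a ribbon immersed collection of disks $S\looparrowright S^3$ bounding a link $L$, push the interior of $S$ slightly into the open $4$-ball using the collar $S^3\times[0,1)\subset D^4$: at each ribbon singularity (as in Fig.~\ref{figure:ribbon_singularity}) the two sheets are transverse in $S^3$ but can be separated in the $t$-direction by pushing the ``through'' sheet to a slightly larger value of $t$ than the ``pierced'' sheet. This converts every ribbon singularity into a disjoint pair of embedded sheets in $D^4$, producing a properly embedded collection of disks with boundary $L$. Hence every ribbon knot is slice, and more generally the ribbon genus bounds the slice genus from above.

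The converse, that every slice knot is ribbon, is the substantive content of the conjecture and is famously open; any genuine proof plan must confront this directly. The natural strategy is to start from a smooth slice disk $D\subset D^4$ bounded by a knot $K\subset S^3$, put it in Morse position with respect to the radial function, and then argue that the maxima can be cancelled against saddles. Concretely, I would consider the handle decomposition of the complement $D^4\setminus\nu(D)$ induced by such a Morse function and try to show that the critical points of index $2$ (which correspond to maxima of $D$) can all be cancelled against critical points of index $1$ (the saddles) via a sequence of handle slides, yielding a Morse presentation of $D$ with only minima and saddles, i.e.\ a ribbon presentation in the sense of Remark~\ref{rem:ribbon_g}.

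The main obstacle is exactly this cancellation step, and it is where the conjecture genuinely lives: in dimension $4$ the smooth $h$-cobordism theorem fails, so even when the algebraic intersection numbers between the attaching spheres of $2$-handles and the belt spheres of $1$-handles are $\pm 1$, one cannot in general realize the cancellation geometrically. Equivalently, the conjecture is closely tied to the failure of the smooth Andrews--Curtis conjecture for the group presentation of $\pi_1(D^4\setminus\nu(D))$ read off from the handle decomposition. I do not see any way to push this through in full generality, and the consensus is that it probably fails for some well-chosen slice knot.

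A more modest goal, and the one I would actually pursue in the spirit of this thesis, is to use Turaev's shadows to obstruct sliceness (and hence ribbonness) for concrete families, as in Section~\ref{sec:Montesinos}, or to exhibit shadow-theoretic evidence in favor of the conjecture in controlled cases, as in Section~\ref{sec:sh_rib_handle}. In particular, given a slice disk $D\subset D^4$ for $K$, one can try to build a shadow of $D^4$ on which $D$ appears as a sub-polyhedron with gleams controlled by the Morse data; the $1\to 2$ and $2\to 3$ moves of Definition~\ref{defn:sh_moves} then correspond combinatorially to handle slides, and ribbonness translates into the vanishing of certain $2$-handle contributions. Whether this combinatorial rewriting can ever be made to succeed unconditionally remains, in my view, the essential difficulty.
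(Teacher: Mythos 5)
The statement you were asked to address is stated in the paper as a \emph{conjecture}, and the paper offers no proof of it; the slice--ribbon conjecture is a famous open problem. Your response is therefore the correct one: you prove the only direction that is actually a theorem and you honestly identify the converse as open rather than manufacturing a fallacious argument.

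Your argument for the easy direction (ribbon implies slice) is correct and standard: pushing the interior of the immersed disks into the collar $S^3\times[0,1)\subset D^4$ and separating the two sheets at each ribbon singularity in the radial direction produces a properly embedded collection of disks, which matches the paper's own characterization of ribbon surfaces as those isotopable into $M\times[-1,1]$ in Morse position with only minima and saddles. Your diagnosis of where the converse lives --- cancelling maxima against saddles, the failure of geometric handle cancellation in dimension $4$, and the link to the Andrews--Curtis problem --- is consistent with the paper's own remarks (the discussion around Question~\ref{quest:eq_shadows} and Section~\ref{sec:sh_rib_handle}, where shadow links and handle links are used to give partial evidence in favor of the conjecture). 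One small caution: in your closing paragraph you suggest that shadow moves ``correspond combinatorially to handle slides'' in a way that might let ribbonness be read off; the paper's Proposition~\ref{prop:eq_sh_and_ribbon} shows ribbonness is preserved under shadow equivalence, but the missing ingredient is precisely whether any two shadows of the same $4$-manifold are equivalent (Question~\ref{quest:eq_shadows}), which experts expect to be false. So that route is conditional in exactly the way the paper says, and you are right not to claim it closes the gap.
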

It is generalized in several ways:
\begin{conj}
$\ $
\begin{itemize}
\item{A link in $S^3$ is ribbon if and only if it is slice.}
\item{In $S^3$ the ribbon genus coincides with the slice genus.}
\item{The previous facts hold also for different ambient 4-manifolds, for instance 4-dimensional orientable handlebodies.}
\end{itemize}
\end{conj}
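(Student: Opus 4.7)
The plan is to attack the slice-ribbon conjecture by attempting to eliminate the index-$2$ critical points of a slice disk via handle theory. Given a slice knot $K\subset S^3$ with a locally flat (or smooth) slice disk $D\subset D^4$, I would first put $D$ in Morse position with respect to the radial function $r:D^4\to [0,1]$, so that $D$ decomposes into minima at small levels, saddle bands in the middle, and maxima (capping disks) near the top. By the definition recalled in Section~\ref{sec:slice-ribbon}, $D$ is ribbon if and only if all its maxima can be removed by an ambient isotopy fixing $S^3=\partial D^4$ setwise, so the whole problem reduces to an index-$2$ cancellation question for surfaces in the $4$-ball.

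First I would try, for each maximum $p$ of $D$, to pair it with an adjacent saddle $q$ and realize a Whitney-type cancellation: find an embedded arc $\gamma\subset D$ from $p$ to $q$, a Whitney disk $W\subset D^4$ bounded by $\gamma$ and an arc in the complement $D^4\setminus D$, and use $W$ to guide an ambient isotopy that cancels the $(p,q)$-pair. Success globally would convert $D$ into a ribbon disk and prove the conjecture for $K$. For links, the same strategy applies component by component once one reduces to a connected slice surface with the right topology; for the generalization to the connected sum $\#_g(S^1\times S^2)$ bounding the $4$-dimensional handlebody $W_g$, I would use Remark~\ref{rem:ribbon_g} to reformulate ribbonness as Morse position with no maxima relative to the distance from the core graph $\Gamma\subset W_g$, so the same handle-cancellation scheme makes sense there.

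In parallel, I would exploit the quantum technology developed in this thesis to produce obstructions to non-ribbon slice disks: the extension of Eisermann's theorem bounding the ribbon genus in $\#_g(S^1\times S^2)$ from below by the order of the Kauffman bracket at $q=A^2=i$ (Chapter~\ref{chapter:Eisermann}), combined with the shadow formula (Theorem~\ref{theorem:sh_for_br}) to compute this order effectively. Whenever a candidate counterexample appears, the shadow formula would give a concrete lower bound on the ribbon genus that could be compared with the slice genus computed from the $4$-dimensional data.

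The hard part, and the reason this is in fact stated as a conjecture, is that Whitney disks for cancelling $1$- and $2$-handles of $D$ live inside $D^4\setminus D$, a $4$-manifold in which generic Whitney disks fail to be embedded; the standard Casson-handle replacement restores topological cancellation but destroys smoothness, and with it the ribbon structure one is trying to produce. Consequently, Morse-cancellation alone is essentially never enough in closed form, which is why all existing partial results (Lisca for $2$-bridge knots, Greene-Jabuka for $3$-stranded pretzels, and many more) pass through heavy $4$-dimensional machinery such as Donaldson's diagonalization theorem applied to the double branched cover. A genuine proof of the full conjecture, and of its extension to $W_g$, would therefore have to combine such lattice/gauge-theoretic obstructions with the Kauffman-bracket obstructions coming from the shadow formula, and presently no such combined framework is known.
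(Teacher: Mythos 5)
This statement is a \emph{conjecture} --- the slice--ribbon conjecture and its generalizations --- and the paper offers no proof of it; it is recorded as an open problem, with the surrounding text only surveying partial results (Lisca for 2-bridge links, Greene--Jubuka, Karimi, Lecuona) and potential counterexamples (Gompf--Scharlemann--Thompson, Abe--Tange). Your submission is therefore not, and cannot be, checked against a proof in the paper. To your credit, you recognize this explicitly in your final paragraph: what you have written is a strategy sketch plus an accurate diagnosis of why the naive handle-cancellation approach fails, namely that the Whitney disks needed to cancel the maxima of a slice disk live in $D^4\setminus D$, where generic Whitney disks are only immersed, and the Casson-handle workaround sacrifices smoothness. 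That diagnosis is correct and is essentially the standard explanation for why the conjecture remains open.

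Two small cautions on the content of your sketch. First, your opening two paragraphs are phrased as if the Whitney-cancellation scheme could be carried out (``Success globally would convert $D$ into a ribbon disk and prove the conjecture''), which risks reading as a claimed proof; since the cancellation step is precisely the unsolved part, the honest formulation is that no such global cancellation is known to exist, not that one merely has to find it. Second, for a slice \emph{link} the slice surface is a disjoint union of disks rather than ``a connected slice surface,'' and the quantum obstructions you invoke (the extension of Eisermann's theorem, Theorem~\ref{theorem:Eisermann_gen}, via the shadow formula) bound the ribbon genus from below but say nothing about the slice genus, so they can at best certify that a given link is \emph{not} ribbon --- they cannot by themselves detect a slice-but-not-ribbon example unless one independently knows the link is slice. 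As a discussion of the problem your text is reasonable; as a proof it has no content to verify, because none exists.
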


There are some results which suggest that the slice-ribbon conjecture is true. Lisca \cite{Lisca} in 2007 proved that the conjecture is true for \emph{2-bridge links} in $S^3$ by a gauge theoretic method. Further developments were done by Greene and Jubuka \cite{Greene-Jubuka}, Karimi \cite{Karimi} and Lecuona \cite{Lecuona1, Lecuona2}.

Gompf, Sharlemann and Thompson \cite{GST} created a potential counter-example both for the \emph{property $2R$ conjecture} and for the slice-ribbon conjecture. The construction is based on non trivial handle-decompositions of the 4-ball with only 0-, 1- and 2-handles. They took the boundary of the co-core of two 2-handles, $K_1$ and $K_2$. Hence $K_1$ and $K_2$ are clearly slice knots in $S^3$. They discovered that both $K_1$ and $K_2$ are ribbon. We can consider the link $L$ given by the union of $K_1$ and $K_2$. The link $L$ is clearly slice and for the moment we see no reason why it should be also ribbon. They connected the bounded slice disks with a knotted band. The result is clearly another slice disk and hence its boundary a slice knot that may not be ribbon. Following this idea Abe and Tange \cite{Abe-Tange} created a new family of slice knots that may not be ribbon. We can find an introduction to these topics in \cite{Abe}. Gompf, Sharlemann and Thompson found also a diagram representing this link in $S^3$.

\section{Eisermann's theorem}\label{sec:Eisermann}
Eisermann's theorem relates the Jones polynomial with ribbon surfaces, hence with 4-dimensional topology. 

\subsection{Statement}

We know that the Jones polynomial (and the Kauffman bracket) for links in $S^3$ is a Laurent polynomial, hence it can not have a pole in points different from $0$ and $\infty$. Moreover $A^4-1$ is the minimal polynomial of a $4^{\rm th}$ root of unity $\sqrt i$, hence for a link $L\subset S^3$ the multiplicity of $\langle L \rangle$ in $\sqrt i$ as a zero is at least $n$ if and only if $\langle L \rangle / (-A^2-A^{-2})^n$ is still a Laurent polynomial.

\begin{theo}[Eisermann]\label{theorem:Eisermann}
If $S\subset D^4$ is a ribbon surface bounded by a link $L=\partial S \subset S^3$, then the multiplicity of the Kauffman bracket (or the Jones polynomial) of $L$ (with variable $A$ and normalization $\left\langle \pic{0.8}{0.3}{banp.eps} \right\rangle = -A^2-A^{-2}$) in a $4^{\rm th}$ root of unity $A=\sqrt i$ as a zero is at least the Euler characteristic $\chi(S)$ of $S$:
$$
\frac{ \langle L \rangle }{ (-A^2-A^{-2})^{\chi(S)} } \in \Z[A,A^{-1}] .
$$
\end{theo}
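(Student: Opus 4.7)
The plan is to leverage the Morse-theoretic description of ribbon surfaces: by Remark~\ref{rem:ribbon_g} (applied with $g=0$), after isotopy the ribbon surface $S\subset D^4$ has no maxima and only $n_0$ minima and $n_1$ saddles with respect to the radial height function, so that $\chi(S)=n_0-n_1$. Intersecting $S$ with regular level spheres produces a sequence of links $\varnothing=L_0,L_1,\dots,L_N=L$ in which consecutive terms differ either (i) by disjoint union with a small unknot (a minimum), or (ii) by a band surgery inside a $3$-ball (a saddle). Set
\[
\nu(K):=\mathrm{ord}_{A^2=i}\langle K\rangle\in\mathbb{Z},
\]
with the convention $\nu(\varnothing)=0$. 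A minimum gives $\nu(L_{j+1})=\nu(L_j)+1$, because $\langle L\sqcup \pic{0.8}{0.3}{banp.eps}\rangle=(-A^2-A^{-2})\langle L\rangle$ and $-A^2-A^{-2}$ vanishes simply at $A^2=i$. Granting the Key Lemma below, an induction yields $\nu(L)\ge n_0-n_1=\chi(S)$, which is exactly the required divisibility $\langle L\rangle/(-A^2-A^{-2})^{\chi(S)}\in\mathbb{Z}[A,A^{-1}]$.

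The heart of the proof is the \emph{saddle inequality}: if $L$ and $L'$ differ by a single band surgery, then $\nu(L')\ge\nu(L)-1$. To attack it, I would insert a crossing in the middle of the band between the two attaching arcs, producing two links $L_+,L_-$ (positive and negative crossings). The Kauffman skein relations applied at this crossing read
\[
\langle L_+\rangle=A\langle L\rangle+A^{-1}\langle L'\rangle,\qquad \langle L_-\rangle=A^{-1}\langle L\rangle+A\langle L'\rangle,
\]
which invert to
\[
(A^2-A^{-2})\langle L\rangle=A\langle L_+\rangle-A^{-1}\langle L_-\rangle,\qquad (A^2-A^{-2})\langle L'\rangle=A\langle L_-\rangle-A^{-1}\langle L_+\rangle.
\]
At $A^2=i$ the factor $A^2-A^{-2}=2i$ is a unit, so both $\nu(L)$ and $\nu(L')$ are bounded below by $\min(\nu(L_+),\nu(L_-))$. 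Combined with the observation that framing changes only multiply the bracket by the unit $-A^{\pm 3}$ and hence do not affect $\nu$, this step reduces the saddle inequality to controlling $\nu(L_\pm)$ in terms of $\nu(L)$ or $\nu(L')$.

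The main obstacle is to upgrade the naive bound $\nu(L),\nu(L')\ge\min(\nu(L_+),\nu(L_-))$ into the sharp inequality $\nu(L')\ge\nu(L)-1$. My plan is a state-sum refinement based on Proposition~\ref{prop:state_sum}: fix diagrams of $L$ and $L'$ which agree outside the band, and match their Kauffman states via the obvious bijection in the complement of the band. I would then group the matched pairs according to whether the band locally creates or destroys a trivial component; at $A^2=i$ the factor $-A^2-A^{-2}$ attached to every trivial disjoint component vanishes, which forces specific pairs of contributions to cancel and produces a uniform shift by exactly one power of $(-A^2-A^{-2})$ between $\langle L\rangle$ and $\langle L'\rangle$. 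Verifying this cancellation and its compatibility across all state resolutions is the combinatorial core of the argument; once it is in hand, the Morse bookkeeping of the first paragraph closes the proof.
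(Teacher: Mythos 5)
Your outline is not the route this thesis takes: here Theorem~\ref{theorem:Eisermann} is recovered as the $g=0$ case of Theorem~\ref{theorem:Eisermann_gen_gr}, which is proved with Turaev shadows --- Theorem~\ref{theorem:main_topol} embeds the ribbon surface in a shadow collapsing onto a graph, and Theorem~\ref{theorem:main_quantum} extracts the order estimate from the shadow formula via the case-by-case analysis of Lemma~\ref{lem:block}. What you describe is instead the Morse-theoretic skein induction of Eisermann's original article. Your bookkeeping is correct as far as it goes: a ribbon surface has only minima and saddles, $\chi(S)=n_0-n_1$, a birth multiplies the bracket by $-A^2-A^{-2}$ and raises the order by one, and the whole theorem reduces to the saddle inequality $\nu(L')\ge\nu(L)-1$ for a single band surgery. (This reduction also shows why the statement is special to ribbon surfaces: a maximum lowers the order by one while raising $\chi$ by one, so the same bookkeeping for a general slice surface only yields $\nu(L)\ge\chi(S)-2n_2$.)

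The gap is that the saddle inequality is asserted rather than proved, and neither mechanism you propose closes it. The skein inversion gives only $\nu(L),\nu(L')\ge\min(\nu(L_+),\nu(L_-))$ and $\nu(L_\pm)\ge\min(\nu(L),\nu(L'))$; these bounds are symmetric in $L$ and $L'$ and are perfectly consistent with $\nu(L')$ dropping by an arbitrary amount, so no recombination of them produces the asymmetric estimate you need. The state-sum refinement also does not work as described: a diagram of $L'$ is not a diagram of $L$ with one local modification, because the band may be knotted and linked and hence introduces many new crossings, so there is no bijection between Kauffman states of the two diagrams; and even where circles are created or destroyed, each individual state contributes a term of order exactly equal to its circle count, which only yields the trivial bound $\nu\ge 1$ --- the divisibility you are after lives entirely in cancellations among exponentially many states, which your sketch does not control. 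This saddle inequality is true, but establishing it is the actual content of Eisermann's paper (he proves it by a careful analysis of the bracket in the Temperley--Lieb algebra at loop value zero), so deferring it to an unverified ``combinatorial core'' leaves the proof open. If you want an argument within this thesis's toolbox, the place where the analogous difficulty is genuinely resolved is the proof of Theorem~\ref{theorem:main_quantum}, in particular the bad case of Lemma~\ref{lem:block} and the cancellation Lemma~\ref{lem:bad_case}; that route has the further advantage of working in $\#_g(S^1\times S^2)$, where the bracket can have poles at $q=i$ and your Laurent-polynomial bookkeeping would not even start.
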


Theorem~\ref{theorem:Eisermann} has been proved by Eisermann \cite{Eisermann} using an easy and combinatorial method. In \cite{Carrega-Martelli} the theorem was extended in several directions: it concerns colored graphs in the connected sum $\#_g(S^1\times S^2)$ of $g$ copies of $S^1\times S^2$. In that ambient the Kauffman bracket may not be a Laurent polynomial and we may have poles in $4^{\rm th}$ roots of unity, hence the notion of ``multiplicity'' has been extended to the one of \emph{order} (Definition~\ref{defn:order}). In that more general case the proof requires more complicated tools, in particular the shadow formula has been used (Section~\ref{sec:sh_for_br}). We are going to discuss this extension in Chapter~\ref{chapter:Eisermann}.

\subsection{Alexander polynomial and 2-component slice links}\label{subsec:Alex_slice}

\begin{quest}
Is there a $k$-component slice link $L\subset S^3$ such that the multiplicity of the Kauffman bracket of $L$ in a $4^{\rm th}$ root of unity (using the variable $A$) is lower than $k$? This wolud imply that $L$ is slice but not ribbon.
\end{quest}

Eisermann showed \cite{Eisermann} that the multiplicity of the Kauffman bracket of a $k$-component link in $S^3$ in a $4^{\rm th}$ root of unity $\sqrt i$ is at most $k$ (see Proposition~\ref{prop:upper_bound}). Since we use the normalization $\left\langle \pic{0.8}{0.3}{banp.eps} \right\rangle = -A^2-A^{-2}$, for every link $L\subset S^3$ we have $\langle L \rangle|_{A=\sqrt i} = 0$. Therefore the multiplicity of the Kauffman bracket of a knot in $S^3$ in $\sqrt i$ is always $1$. Thus this multiplicity can not be used to get information about the ribbon surfaces bounded by a knot in $S^3$. We end this section by proving a remark of
Eisermann \cite{Eisermann}, that says that the Kauffman bracket of a $k$-component slice link $L\subset S^3$ with $k\geq 2$, is divided by $(-A^2-A^{-2})^2$ (still getting a Laurent polynomial) ($\ord_{q=A^2=i} \langle L \rangle \geq 2$), hence if $k=2$ the multiplicity in a $4^{\rm th}$ root of unity is $2$ ($\ord_{q=A^2=i} \langle L \rangle =2$). Therefore it can not be used to say that one such link is not ribbon.

Let $L\subset S^3$ be an oriented link. We denote by $\Delta(L)\in \Z[q,q^{-1}]$ the \emph{Alexander-Conway polynomial} of $L$ (see \cite{Lickorish}[Chapter 6]). We use the variable $q=-t^\frac 1 2$ instead of the classical $t$. This is completely determined by the following skein relations on oriented diagrams:
$$
\begin{array}{rcl}
\Delta\left( \pic{1.2}{0.3}{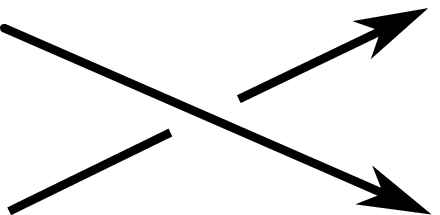} \right) - \Delta\left( \pic{1.2}{0.3}{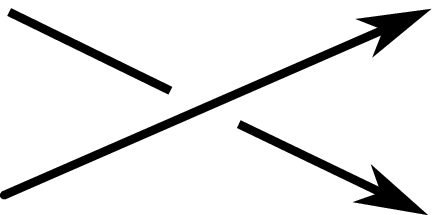} \right)  & = & (q-q^{-1})  \Delta\left( \pic{1.2}{0.3}{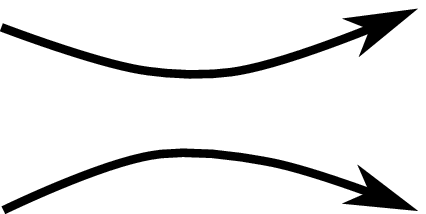} \right) \\
 \Delta\left(\pic{0.8}{0.3}{banp.eps} \right)  & = & 1
\end{array} .
$$
The skein relations imply that $\Delta\left(L \sqcup \pic{0.8}{0.3}{banp.eps} \right) = 0$.

As an invariant of oriented links in $S^3$ the Jones polynomial with variable $q=A^2$, $V(L)\in \Z[q,q^{-1}]$, is determined by the following skein relations:
$$
\begin{array}{rcl}
q^2 V \left( \pic{1.2}{0.3}{incrociopos.eps} \right) - q^{-2} V\left( \pic{1.2}{0.3}{incrocioneg.eps} \right)  & = & (q^{-1} -q)  V\left( \pic{1.2}{0.3}{Acanaleor.eps} \right) \\
 V\left(\pic{0.8}{0.3}{banp.eps} \right)  & = & (-q-q^{-1})
\end{array} .
$$

It follows that for every oriented link $L$
$$
\left.\frac{V(L)}{q+q^{-1}}\right|_{q=i} = \Delta(L)|_{q=i} .
$$

\begin{theo}[Kawauchi]
Let $L\subset S^3$ be a slice link with at least two components. Then
$$
\Delta(L) = 0.
$$
\begin{proof}
See \cite{Kawauchi}.
\end{proof}
\end{theo}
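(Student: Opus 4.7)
The plan is to use the classical Seifert-matrix formula for the Conway polynomial together with the isotropic subspace in $H_1$ of a Seifert surface produced by a system of slice disks. Fix a connected oriented Seifert surface $F\subset S^3$ for $L$ of some genus $g$; then $H_1(F;\mathbb{Z}) \cong \mathbb{Z}^{2g+n-1}$, and up to units one has $\Delta(L)\doteq \det(qV - q^{-1}V^{T})$, where $V$ is an associated Seifert matrix of size $2g+n-1$. My aim is to exhibit a subgroup of $H_1(F;\mathbb{Z})$ of rank $g+n-1$ on which the Seifert form vanishes, so that this determinant has a zero block whose size exceeds half of the total dimension and is therefore forced to vanish as soon as $n\ge 2$.

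By the slice hypothesis, $L$ bounds $n$ disjoint, properly embedded, locally flat disks $D_1,\ldots, D_n\subset D^4$ with $\partial D_i = K_i$. Gluing these to $F\subset S^3=\partial D^4$ yields a closed, connected, oriented surface $\Sigma = F\cup \bigsqcup_i D_i$ of genus $g$ in $D^4$. Since $D^4$ is contractible, $\Sigma$ is null-homologous and hence bounds a compact oriented $3$-submanifold $W\subset D^4$; the standard half-lives-half-dies argument (Poincar\'e--Lefschetz duality applied to $(W,\Sigma)$) shows that $L_\Sigma := \ker\bigl(H_1(\Sigma;\mathbb{Z}) \to H_1(W;\mathbb{Z})\bigr)$ is a Lagrangian of rank $g$ in $H_1(\Sigma;\mathbb{Z})\cong \mathbb{Z}^{2g}$. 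From the long exact sequence of the pair $(\Sigma,F)$, together with the identification $\Sigma/F \simeq \bigvee_{i=1}^{n}S^{2}$, one sees that $H_1(F)\to H_1(\Sigma)$ is surjective with kernel of rank $n-1$, spanned by the classes $[K_i]\in H_1(F)$ modulo the single relation $[K_1]+\cdots+[K_n]=0$. Pulling $L_\Sigma$ back therefore yields a subgroup $L_F\subset H_1(F)$ of rank $g+n-1$.

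The central geometric claim is that the Seifert form vanishes identically on $L_F$. Given $[x],[y]\in L_F$ realized by disjoint oriented loops on $F$, the image $[y]\in L_\Sigma$ bounds a $2$-chain $\Sigma_y\subset W$. Pushing $\Sigma_y$ slightly off $W$ into a positive collar produces a $2$-chain $\Sigma_y^+\subset D^4$ whose boundary is a positive pushoff $y^+$ of $y$ in $S^3$, and which by the collar construction can be arranged to be disjoint from $F$. The Seifert pairing $V(x,y)=\mathrm{lk}_{S^3}(x,y^+)$ is the intersection $x\cdot \Sigma_y^+$ computed in $D^4$, and this vanishes because $x\subset F$ while $\Sigma_y^+$ avoids $F$. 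The vanishing on the boundary classes $[K_i]\in L_F$ is checked directly from the fact that parallel pushoffs of boundary curves on $F$ have zero linking number with one another.

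Choosing a basis of $H_1(F;\mathbb{Z})$ whose first $g+n-1$ vectors span $L_F$, the matrix $V$ takes the block form $V=\bigl(\begin{smallmatrix} 0 & B \\ C & D\end{smallmatrix}\bigr)$ with the zero block of size $(g+n-1)\times (g+n-1)$ and $D$ of size $g\times g$. This block structure persists under $V\mapsto qV-q^{-1}V^{T}$: its upper block-row has at most $g$ nonzero columns and hence rank at most $g$, and its lower block-row has rank at most $g$ as well, so the total rank is at most $2g$. For $n\ge 2$ this is strictly less than $2g+n-1$, so $\det(qV-q^{-1}V^{T})=0$ and therefore $\Delta(L)=0$. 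I expect the main obstacle to be the geometric lemma about the vanishing of the Seifert form on $L_F$: making the pushoff construction precise, especially near the boundary circles $K_i$ and the singular gluing locus $L\subset \Sigma$, and ensuring the bounding $2$-chain $\Sigma_y^+$ in $D^4$ can indeed be chosen disjoint from $F$, is the delicate point, although it is entirely standard in link-cobordism theory.
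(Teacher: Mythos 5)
The paper offers no proof of this statement (it simply cites Kawauchi), so your proposal has to stand on its own. Its skeleton is the standard modern route and is sound: a connected Seifert surface $F$ of genus $g$ with $n$ boundary components, the closed surface $\Sigma=F\cup\bigsqcup_i D_i$ bounding a $3$-manifold $W\subset D^4$, half-lives-half-dies giving a rank-$g$ Lagrangian in $H_1(\Sigma;\mathbb{Q})$, the pullback $L_F$ of rank $g+n-1$, and the block-rank count showing $\det(qV-q^{-1}V^T)$ has rank at most $2g<2g+n-1$ once $n\ge 2$. All of that is correct.

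The genuine gap is in the central lemma, exactly where you anticipated trouble. First, the identity you invoke is dimensionally wrong: for disjoint $1$-cycles $x,y^+\subset S^3=\partial D^4$, the linking number is \emph{not} ``$x\cdot\Sigma_y^+$ computed in $D^4$'' --- a $1$-chain and a $2$-chain in a $4$-manifold generically do not meet at all. The correct statement is $\mathrm{lk}_{S^3}(x,y^+)=B_x\cdot\Sigma_y^+$ for any $2$-chain $B_x\subset D^4$ with $\partial B_x=x$. Consequently, disjointness of $\Sigma_y^+$ from $F$ proves nothing; you need $\Sigma_y^+$ disjoint from a suitable $B_x$. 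The fix is the uniform argument: \emph{every} class in $L_F$ (including each $[K_i]$, which already dies in $H_1(\Sigma)$ because $K_i$ bounds $D_i$) bounds a $2$-chain inside $W$; taking $B_x\subset W$ and pushing $\Sigma_y\subset W$ off $W$ in the positive normal direction gives $B_x\cdot\Sigma_y^+=0$ since $B_x\subset W$ and $\Sigma_y^+\cap W=\varnothing$. Second, your separate treatment of the boundary classes is both incomplete and based on a false general fact: you never address the cross-terms $V([K_i],y)$ for $y$ in the Lagrangian part, and ``parallel pushoffs of boundary curves on $F$ have zero linking number with one another'' fails already for the Hopf band, where the boundary-parallel core has Seifert self-pairing $\pm1$. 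What is true is that $V([K_i],[K_j])=\mathrm{lk}(K_i,K_j)$ for $i\ne j$ and $V([K_i],[K_i])=-\sum_{j\ne i}\mathrm{lk}(K_j,K_i)$, both of which vanish only because the slice disks force all pairwise linking numbers to be zero; but once you run the uniform $W$-argument above, this case distinction becomes unnecessary. With that lemma repaired, the rest of your proof goes through.
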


Hence for every oriented slice link $L\subset S^3$ with more than one component $\Delta(L)|_{q=i} =0$, that implies the following:

\begin{prop}\label{prop:Alex} 
Let $L\subset S^3$ be a $k$-component slice link with $k\geq 2$. Then
$$
\frac{\langle L \rangle}{ (-A^2-A^{-2})^2} \in \Z[A,A^{-1}] 
$$
($\ord_{q=A^2=i} \langle L \rangle \geq 2$). Therefore if $k=2$ the multiplicity of $\langle L \rangle$ in $q=A^2=i$ is $2$
$$
\ord_{q=A^2=i} \langle L \rangle = 2.
$$
\end{prop}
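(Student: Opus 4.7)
The plan is to string together the two facts stated just before the proposition: Kawauchi's vanishing of the Alexander--Conway polynomial for slice links with at least two components, and the identity
\[
\left.\frac{V(L)}{q+q^{-1}}\right|_{q=i} \;=\; \Delta(L)\big|_{q=i}.
\]
First I would observe that for any non-empty link $L \subset S^3$, every Kauffman state of a diagram of $L$ produces at least one loop, so $\langle L \rangle$ is divisible by $-A^2-A^{-2}$ in $\Z[A,A^{-1}]$; translated to the $q$-variable this says $V(L)$ is divisible by $q+q^{-1}$ in $\Z[q,q^{-1}]$, so $V(L)/(q+q^{-1})$ is an honest Laurent polynomial with integer coefficients. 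Since $L$ is slice with $k \geq 2$ components, Kawauchi's theorem gives $\Delta(L) = 0$, and the displayed identity then forces $V(L)/(q+q^{-1})$ to vanish at $q=i$.

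A Laurent polynomial in $\Z[q,q^{-1}]$ that vanishes at $q=i$ also vanishes at the complex conjugate root $q=-i$, hence is divisible by $(q-i)(q+i) = q^2+1 = q(q+q^{-1})$. Therefore $(q+q^{-1})^2$ divides $V(L)$ in $\Z[q,q^{-1}]$. Because $\langle L \rangle$ and $V(L)$ differ only by the monomial unit $(-A^3)^{w(L)} \in \Z[A,A^{-1}]$, this divisibility transports to $(-A^2-A^{-2})^2 \mid \langle L \rangle$ in $\Z[A,A^{-1}]$, which is exactly $\ord_{q=A^2=i}\langle L \rangle \geq 2$, the first assertion.

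For the equality when $k=2$, I would combine this lower bound with the Eisermann upper bound of Proposition~\ref{prop:upper_bound}, which says that for a $k$-component link in $S^3$ the order at $q=A^2=i$ is at most $k$. With $k=2$ both bounds equal $2$, so $\ord_{q=A^2=i}\langle L\rangle = 2$. The only delicate point in the argument is the bookkeeping showing that $V(L)$ is a genuine Laurent polynomial in $q=A^2$ (via a parity check on the $A$-exponents of $\langle L \rangle$ together with the $-3w(L)$ exponent from the writhe normalization), so that orders of vanishing at $q=i$ and at $A=\sqrt{i}$ coincide; once this is checked, the proof is a direct algebraic consequence of the two inputs quoted from the text.
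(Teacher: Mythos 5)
Your proposal is correct and follows essentially the same route as the paper: it combines Kawauchi's vanishing theorem with the identity $\left.\frac{V(L)}{q+q^{-1}}\right|_{q=i}=\Delta(L)|_{q=i}$ to get the lower bound, and then invokes the Eisermann upper bound of Proposition~\ref{prop:upper_bound} (which applies since every link in $S^3$ is homotopically trivial) to obtain equality for $k=2$. The intermediate steps you supply — divisibility of $\langle L\rangle$ by $-A^2-A^{-2}$, promotion of the single vanishing at $q=i$ to divisibility by $(q+q^{-1})^2$ via the conjugate root, and the transport between the $A$- and $q$-variables — are all sound and are exactly the bookkeeping the paper leaves implicit.
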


\section{The classification of rational tangles}\label{sec:rational_tangles}

We introduced the notion of ``$n$-tangle'' and ``diagram of a tangle'' (Definition~\ref{defn:tangle}).

\begin{defn}\label{defn:rational_tangle}
A tangle is said to be \emph{rational} if there is an isotopy of the ambient 3-cube $D^3$, maybe not fixing the boundary, that sends it to the trivial one: $n$ straight (unkontted and unlinked) strands.
\end{defn}

\begin{theo}[Conway]
Rational 2-tangles are in bijection with the extended rational numbers 
$$
T \mapsto C(T) \in \mathbb{Q}\cup \{\infty \} .
$$
\begin{proof}
See \cite{Conway} or \cite{Goldman-Kauffman}.
\end{proof}
\end{theo}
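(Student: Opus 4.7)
The plan is to construct the bijection via the Kauffman bracket, using the natural 2-dimensional structure of $TL_2$. First I would observe that the Temperley-Lieb algebra $TL_2$ has two generators as a $\mathbb{Q}(A)$-vector space, namely the trivial horizontal tangle $T_0$ and the trivial vertical tangle $T_\infty$, so that every 2-tangle $T$, viewed as a skein element in a 3-ball with four marked boundary points, can be uniquely written as $[T] = \alpha(T) \cdot T_0 + \beta(T) \cdot T_\infty$ with $\alpha(T), \beta(T) \in \mathbb{Q}(A)$. The \emph{Kauffman fraction} is then
\[
C(T) := \left. \frac{\beta(T)}{\alpha(T)} \right|_{A = e^{i\pi/4}} \in \mathbb{Q} \cup \{\infty\}.
\]
At this specific $A$ one checks that the two elementary ``twist'' operators on tangles (adding a crossing on the right, adding a crossing on the bottom) act on the ratio $\beta/\alpha$ by M\"obius transformations with integer coefficients of determinant $\pm 1$, and so preserve $\mathbb{Q} \cup \{\infty\}$.

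Next I would verify well-definedness and surjectivity simultaneously. A rational tangle, by Definition~\ref{defn:rational_tangle}, is obtained from the trivial 0-tangle by a sequence of ambient isotopies of $D^3$; classically, any such tangle admits a description as an alternating sequence of horizontal and vertical integer twists $[a_1, a_2, \ldots, a_n]$ starting from $T_0$ or $T_\infty$. The computation above shows $C(T) = a_n + \cfrac{1}{a_{n-1} + \cfrac{1}{\ddots + \cfrac{1}{a_1}}}$, which realizes every extended rational by the standard continued-fraction expansion, giving surjectivity. Well-definedness follows because isotopies of tangles respect the skein element, hence the pair $(\alpha,\beta)$ up to simultaneous scaling, hence the ratio $C(T)$.

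For injectivity — which is the main obstacle — I would follow the strategy of bringing every rational tangle to a canonical form determined by its continued-fraction expansion. Given two rational tangles $T$ and $T'$ with $C(T) = C(T')$, pick continued-fraction expansions realizing them, and use the uniqueness of the canonical (e.g., nonnegative or all-positive) continued-fraction expansion of a given rational number together with the standard ``flype'' and twist-cancellation moves on tangle words to reduce both $T$ and $T'$ to the same standard rational tangle. The delicate point is that two different continued-fraction presentations of the same rational may produce visibly different tangle diagrams; to handle this, I would prove a small collection of local moves on continued-fraction words (for instance $[\ldots, a, 0, b, \ldots] = [\ldots, a+b, \ldots]$ and $[\ldots, a, 1, 1, b, \ldots] = [\ldots, a+1, -(b+1), \ldots]$ after sign normalization) and verify both that these moves preserve the represented tangle up to ambient isotopy and that they suffice to connect any two expansions of the same rational. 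This is essentially the content of Conway's original argument, and it is where the bulk of the work lies; the skein-theoretic setup above merely guarantees that the map $T \mapsto C(T)$ is a well-defined invariant through which injectivity must factor.
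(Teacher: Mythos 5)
The paper does not actually prove this theorem: its ``proof'' is a pointer to \cite{Conway} and \cite{Goldman-Kauffman}, and your sketch is in effect an amalgam of those two sources (the Kauffman-bracket fraction for well-definedness, Conway's canonical forms for injectivity), so there is no in-paper argument to compare against. Two points in your write-up nevertheless need attention. First, your definition of the fraction as $\beta(T)/\alpha(T)$ evaluated at $A=e^{i\pi/4}$ is missing a normalization: for the single-crossing tangle one has $[T]=A\,T_0+A^{-1}T_\infty$ (or with the roles of $T_0,T_\infty$ exchanged), so $\beta/\alpha=A^{\pm2}=\pm i\notin\mathbb{Q}$, and the elementary twist acts on the unnormalized ratio by $r\mapsto r\mp i$, which is not an integer M\"obius transformation. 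You need the factor $-i$ that the paper itself records in Theorem~\ref{theorem:GoKauf}; with $f=-i\,\beta/\alpha$ the horizontal and vertical twists do act by integer M\"obius transformations of determinant $\pm1$ and the rest of your first paragraph goes through.

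Second, and more substantively, the injectivity paragraph is a program rather than a proof. The skein-theoretic setup only yields that $C$ is an isotopy invariant, hence well defined; the content of the theorem is the converse, that equal fractions force isotopic tangles. For that you must (i) prove that every rational tangle in the sense of Definition~\ref{defn:rational_tangle} is isotopic rel boundary to a standard twist form $[a_1,\dots,a_n]$ --- you assert this ``classically'' inside the surjectivity paragraph, but it is a nontrivial topological step --- and (ii) prove that your proposed list of continued-fraction word moves both preserves the tangle up to ambient isotopy and suffices to connect any two twist presentations of the same extended rational. You correctly identify (ii) as where the bulk of the work lies, but neither (i) nor (ii) is carried out, so as written the proposal establishes well-definedness and surjectivity of $C$ but not the bijection itself.
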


Given a diagram $D$ of a 2-tangle we can reduce it to an unique linear combination of the form
$$
\pic{1}{0.3}{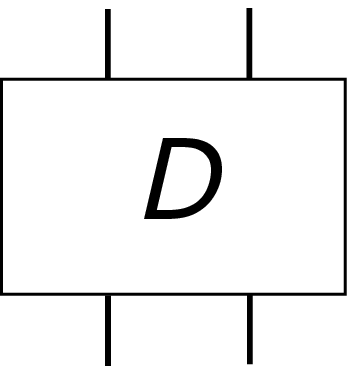} = a(D) \ \pic{0.7}{0.3}{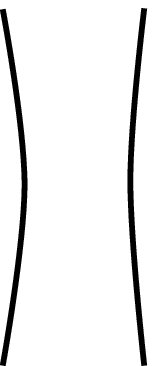} + b(D) \ \pic{0.7}{0.3}{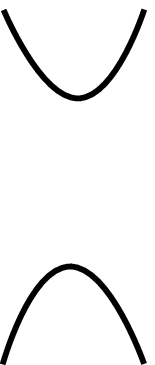} ,
$$
using the \emph{skein relations}:
$$
\begin{array}{rcl}
 \pic{1.2}{0.3}{incrociop.eps}  & = & A \pic{1.2}{0.3}{Acanalep.eps}  + A^{-1}  \pic{1.2}{0.3}{Bcanalep.eps}  \\
 D \sqcup \pic{0.8}{0.3}{banp.eps}  & = & (-A^2 - A^{-2})  D 
\end{array}
$$

\begin{theo}[Goldman-Kauffman]\label{theorem:GoKauf}
Let $T$ be a rational 2-tangle. Then for any diagram $D$ of $T$
$$
C(T) = -i \left. \frac{b(D)}{a(D)} \right|_{A=\sqrt{i}} ,
$$
where $i\in \mathbb{C}$ is the imaginary unit and $\sqrt{i}$ is a square root of $i$.
\begin{proof}
See \cite{Goldman-Kauffman}.
\end{proof}
\end{theo}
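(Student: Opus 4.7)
The plan is to show that the right-hand side of the asserted identity defines a genuine tangle invariant $F(T)$, that $F$ agrees with Conway's fraction $C$ on the two trivial rational tangles ($0$ and $\infty$), and that $F$ and $C$ transform identically under the two elementary twist operations. Since by Conway every rational tangle is obtained from a trivial one by a finite sequence of such twists, this will identify $F$ with $C$ on all of rational tangles.

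First I would set up the invariant. Applying the skein relations to a diagram $D$ of any $2$-tangle reduces it in $TL_2$ to the normal form
$$
D \;=\; a(D)\, D_0 \,+\, b(D)\, D_\infty ,
$$
with $a(D), b(D) \in \mathbb{Z}[A^{\pm 1}]$. Because $TL_2$ is free of rank $2$ on $\{D_0, D_\infty\}$, the pair $(a(D), b(D))$ is well defined up to an overall unit $(-A^3)^{\pm 1}$ coming from Reidemeister~I moves on the interior of the tangle; hence the ratio $b(D)/a(D)$ is a genuine invariant of the tangle, and (defining it as $\infty$ when $a$ vanishes) so is its evaluation at $A=\sqrt{i}$. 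Observe that at $A=\sqrt{i}$ one has $-A^2-A^{-2}=0$, i.e.\ the trivial-circle factor dies; this makes the evaluation particularly rigid.

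Next I would verify the formula on the two trivial tangles. The $0$-tangle gives $(a,b)=(1,0)$, so $F=0=C(0)$, and the $\infty$-tangle gives $(a,b)=(0,1)$, so $F=\infty=C(\infty)$. Then I would compute the effect of the two elementary generators of rational tangles on $(a,b)$. A single horizontal twist corresponds, via the first skein relation, to the linear map $(a,b)\mapsto(Aa, A^{-1}b + A a)$ (or its mirror), and a vertical twist to an analogous swap-and-add map; these are integer matrix actions on the column $(a,b)^t$ with entries in $\mathbb{Z}[A^{\pm 1}]$. Passing to the quotient $b/a$ and specialising to $A=\sqrt{i}$ yields a M\"obius transformation of $-ib/a|_{A=\sqrt{i}}$; a direct check will show that these are exactly $x\mapsto x\pm 1$ for a horizontal twist and $x\mapsto x/(1\pm x)$ for a vertical twist, matching the standard continued-fraction recipe by which Conway's fraction $C$ is built up from $0$ or $\infty$.

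The main obstacle is precisely this bookkeeping: verifying that the M\"obius action produced on $-ib/a$ at $A=\sqrt{i}$ agrees on the nose, including the factor $-i$ and the signs, with the action that defines the Conway fraction. This is where the specific value $A=\sqrt{i}$ is essential: the phase factors $A, A^{-1}$ conspire with the $-i$ normalization so that the quotient transforms by an integer M\"obius map, while at generic $A$ the corresponding transformation is merely projective. Once agreement on twists is verified, an induction on the depth of the continued-fraction expansion of $C(T)$, starting from the base cases $T=0$ and $T=\infty$, together with Conway's classification theorem identifying rational tangles by their fraction, yields $F(T)=C(T)$ and thus the formula.
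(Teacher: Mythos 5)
The paper offers no proof of this theorem---it simply cites Goldman--Kauffman---so there is nothing internal to compare against; your sketch is essentially the argument of the cited source, and it is correct in outline: $b/a$ is a tangle invariant because Reidemeister II and III leave the pair $(a,b)$ unchanged while Reidemeister I multiplies both entries by the same unit $(-A^3)^{\pm1}$; the base cases match; and the induction over a twist presentation of a rational tangle, using the Conway continued-fraction recursion, closes the argument. One computational slip worth flagging: the displayed twist map $(a,b)\mapsto(Aa,\,A^{-1}b+Aa)$ is not the correct action. Composing $aD_0+bD_\infty$ horizontally with a crossing $AD_0+A^{-1}D_\infty$ produces a closed loop from the term $D_\infty * D_\infty=(-A^2-A^{-2})D_\infty$, so the correct map is $(a,b)\mapsto\bigl(Aa,\;A^{-1}a+(A+A^{-1}(-A^2-A^{-2}))b\bigr)=(Aa,\;A^{-1}a-A^{-3}b)$, giving $b'/a'=A^{-2}-A^{-4}\,(b/a)$. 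This is an affine map for every $A$ (not ``merely projective'' at generic $A$); the role of $A=\sqrt{i}$ is that $A^4=-1$ turns it into the exact translation $-i\,b'/a'=-i\,b/a-1$ demanded by the Conway recursion, and similarly the vertical twist becomes $x\mapsto x/(1\pm x)$ after the $90^\circ$ rotation that swaps $D_0$ with $D_\infty$ and inverts the fraction. With that correction the verification you defer to a ``direct check'' does go through, and the proof is complete.
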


Using the skein relations every $n$-tangle diagram $D$ can be written in a unique way as a linear combination of the diagrams without crossings and without closed components, let $(a_j(D))_j$ be the coefficients of that linear combination. 

In \cite{Kwon2} Kwon ``categorified'' Theorem~\ref{theorem:GoKauf} showing that two diagrams, $D_1$ and $D_2$, represent the same rational 2-tangle if and only if there is an integer $n$ such that 
$$
a(D_1) = (-A^3)^n a(D_2) \ \ \text{and}\ \ b(D_1)=(-A^3)^n b(D_2) .
$$
The result seems to be useless since the invariant was already injective, but this formulation can be easily generalized to rational $n$-tangles using the $(a_j(D))_j$'s and it comes out to be true for alternating rational 3-tangles:
\begin{theo}[Kwon]
Let $D_1$ and $D_2$ be two diagrams of alternating rational 3-tangles. Then they represent the same tangle if and only if there is an integer $n$ such that for all $j$
$$
a_j(D_1) = (-A^3)^n a_j(D_2)  .
$$
\begin{proof}
See \cite{Kwon2}.
\end{proof}
\end{theo}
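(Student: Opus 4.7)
The plan is to split the theorem into its two implications and handle each separately.

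For the ``only if'' direction, the argument is the standard Kauffman-bracket framing analysis applied tangle-by-tangle. If $D_1$ and $D_2$ represent the same 3-tangle they are related by a sequence of planar isotopies (fixing the six boundary points of the square) together with Reidemeister moves of types I, II and III. The skein decomposition $D = \sum_j a_j(D)\cdot D_j$, where the $D_j$ range over the five crossingless, closed-componentless 3-tangle diagrams that form the standard basis of $TL_3$, is by construction invariant under R2 and R3, while a single R1 move multiplies the whole expansion---and hence every coefficient $a_j(D)$ simultaneously---by $-A^{\pm 3}$. Collecting the algebraic count of curls one obtains an integer $n$ with $a_j(D_1) = (-A^3)^n a_j(D_2)$ for every $j$. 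This direction uses neither the alternating nor the rational hypothesis.

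For the ``if'' direction, after absorbing the factor $(-A^3)^n$ into $n$ Reidemeister~I curls on $D_2$ one may assume $a_j(D_1) = a_j(D_2')$ for every $j$, where $D_1$ and $D_2'$ are now diagrams of \emph{framed} alternating rational 3-tangles. The task reduces to showing that on this class the vector $(a_j(D))_j \in \mathbb{Z}[A,A^{-1}]^5$ is a complete framed-tangle invariant. The natural route is the analogue of the Goldman--Kauffman argument behind Theorem~\ref{theorem:GoKauf}: evaluate at $A=\sqrt{i}$ to obtain a point of $\mathbb{P}^4(\mathbb{C})$, and argue that this projective vector already encodes a Conway-type normal form of the rational 3-tangle---a tuple of continued fractions generalizing the single rational number $C(T)$ of the 2-tangle case---after which one invokes the (appropriate analogue of Conway's) classification of rational 3-tangles up to isotopy.

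The main obstacle is precisely this completeness step. One must extract, from the five skein coefficients $a_j(D)$, enough numerical data to recover the Conway-type invariants of the tangle, and the crucial input for this is the alternating hypothesis. The natural tools are those from the proof of the Tait conjecture in $S^3$ (Theorem~\ref{theorem:Tait_conj_0}): the extremal coefficients and the breadth of the Laurent polynomials $a_j(D)$ of an alternating diagram are controlled directly by the combinatorics of $D$, so one can hope to read the continued-fraction parameters of the tangle off the dominant terms of the $a_j(D)$, or equivalently off the evaluation at $A=\sqrt{i}$ together with a single higher-order correction. Implementing this carefully---and in particular ruling out the possibility that two non-isotopic alternating rational 3-tangles share the same skein vector up to a factor $(-A^3)^n$---is where the real work lies.
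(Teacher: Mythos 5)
Your ``only if'' direction is fine: it is the standard observation that the skein expansion of a tangle diagram in the five crossingless basis diagrams of $TL_3$ is unchanged by Reidemeister moves of the second and third type and is multiplied by $(-A^3)^{\pm 1}$ by a move of the first type, so the coefficient vector of a fixed tangle is well defined up to a global factor $(-A^3)^n$. Note that the paper itself gives no argument for this theorem --- it simply defers to \cite{Kwon2} --- so there is no internal proof to compare yours against; the question is whether your sketch of the converse could be completed.

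There the proposal has a genuine gap, and not only the one you acknowledge. Your plan rests on two premises that are not available. First, you assume rational 3-tangles carry ``a Conway-type normal form --- a tuple of continued fractions generalizing the single rational number $C(T)$.'' No such continued-fraction classification of rational 3-tangles exists in the literature; the classification that Kwon actually uses (his algorithm in \cite{Kwon1}, which the paper describes explicitly) parametrizes rational 3-tangles via a modified version of Dehn's method for simple closed curves and arc systems on the 6-punctured sphere, which is a very different and considerably less algebraic normal form. Without that input, the step ``read the continued-fraction parameters off the dominant terms of the $a_j(D)$'' has no target to aim at. Second, you propose to recover the tangle from the evaluation of $(a_j(D))_j$ at $A=\sqrt{i}$ as a point of $\mathbb{P}^4(\mathbb{C})$, in analogy with Theorem~\ref{theorem:GoKauf}. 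If that single evaluation (even with ``one higher-order correction'') were a complete invariant of alternating rational 3-tangles, the theorem would be a far stronger statement than the one claimed, which needs the full Laurent-polynomial vector up to $(-A^3)^n$; there is no analogue of the Goldman--Kauffman formula in the 3-tangle setting, and the failure of such an analogue is precisely why the alternating hypothesis and the entire polynomials enter. The Tait-conjecture machinery you invoke controls breadths and extremal coefficients of the bracket of an alternating \emph{link} diagram, but extracting from the five coefficients $a_j(D)$ the Dehn-type parameters of the underlying arc system is the substance of Kwon's proof and is not supplied by any of the tools in this paper. As written, the converse direction is a research plan rather than a proof.
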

In \cite{Kwon1} Kwon described an algorithm to distinguish any two rational 3-tangles. It uses a modified version of Dehn's method for classifying simple closed curves on surfaces.

\section{An infinite family of non slice Montesinos links}\label{sec:Montesinos}

In this section we show a criterion to check if a Montesinos link is not slice that is based on Eisermann's theorem (Theorem~\ref{theorem:Eisermann}) and we show an infinite family of non slice 2-components rational links that come out from this criterion and have null linking number.

We introduced the notion of ``framed $n$-tangle'', ``closure'' of a framed tangle (Definition~\ref{defn:framed_tangle}) and ``product'' of tangles (Definition~\ref{defn:mult_tangle}).

\begin{defn}
The closure of a rational tangle is called \emph{rational link}.
\end{defn}

A well known class of links is the one of 2-\emph{bridge links} (see \cite{Bu-Zie}).

\begin{prop}
The closure of a rational 2-tangle is a 2-bridge link.
\begin{proof}
See \cite{Bu-Zie}.
\end{proof}
\end{prop}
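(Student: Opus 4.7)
The plan is to unpack the definitions on both sides and identify the closure of a rational 2-tangle with a splitting of $S^3$ as a union of two trivial 2-tangles, which is precisely the characterization of a 2-bridge link. Recall that a link $L \subset S^3$ is a 2-bridge link if and only if there is an embedded 2-sphere $S \subset S^3$ transverse to $L$ that splits $(S^3, L)$ as $(B_1, T_1) \cup_S (B_2, T_2)$, where each $(B_i, T_i)$ is a \emph{trivial} 2-tangle, i.e.\ the pair consisting of a 3-ball and two unknotted arcs which can be simultaneously pushed into $\partial B_i$ (equivalently, which bound disjoint embedded disks in $B_i$ with arcs on $\partial B_i$).

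First I would fix the standard model for the closure: view $S^3 = D^3 \cup_{\partial D^3} D'^3$ as the union of two 3-balls glued along their common boundary sphere, where $D^3$ is the ambient cube of the tangle $T$. The closure of $T$ (Definition~\ref{defn:tangle}) amounts to choosing two disjoint embedded arcs in $D'^3$ connecting the four boundary points of $T$ in the prescribed pattern, and so that these arcs lie in $\partial D'^3$ (up to an arbitrarily small isotopy into the interior). By construction the pair $(D'^3, \text{closure arcs})$ is already a trivial 2-tangle, since its two arcs lie on the boundary sphere and therefore bound disjoint boundary-parallel disks.

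Next I would use Definition~\ref{defn:rational_tangle}: if $T$ is a rational 2-tangle, then there is an ambient isotopy of $D^3$ (not necessarily fixing $\partial D^3$) that carries $T$ to the trivial 2-tangle. Such an isotopy is in particular a homeomorphism of pairs $(D^3, T) \to (D^3, \text{trivial tangle})$, so $(D^3, T)$ is itself a trivial 2-tangle in the sense above: after pulling back the two disjoint boundary-parallel disks of the trivial tangle through this homeomorphism, we obtain two disjoint disks in $D^3$ exhibiting the arcs of $T$ as simultaneously boundary-parallel.

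Combining the two, we realize the closure of $T$ as $(S^3, \mathrm{closure}(T)) = (D^3, T) \cup_{\partial D^3} (D'^3, \text{closure arcs})$, which exhibits $S^3$ split by the 2-sphere $\partial D^3$ into two trivial 2-tangles. By the characterization of 2-bridge links recalled above, $\mathrm{closure}(T)$ is a 2-bridge link, completing the proof. The only subtlety to keep straight is the distinction between the two notions of ``trivial'' for 2-tangles --- isotopy with fixed boundary versus the weaker homeomorphism of pairs --- but the 2-bridge definition only requires the weaker notion, which is exactly what Definition~\ref{defn:rational_tangle} provides.
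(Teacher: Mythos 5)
Your argument is correct and complete. The paper gives no proof here at all --- it simply defers to Burde--Zieschang --- and what you have written is precisely the standard argument from that reference: the defining homeomorphism of pairs makes a rational 2-tangle a trivial 2-tangle in the bridge-position sense, the closure arcs form another trivial 2-tangle in the complementary ball, and the splitting sphere $\partial D^3$ then exhibits the 2-bridge decomposition. Your closing remark correctly isolates the one point needing care, namely that the 2-bridge characterization only requires the arcs to be boundary-parallel (a condition preserved by homeomorphisms of pairs), not an isotopy fixing $\partial D^3$, which is exactly what Definition~\ref{defn:rational_tangle} supplies.
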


In Section~\ref{sec:Eisermann} we saw some criteria to show that a link is not ribbon or not slice. The following is a more famous one:
\begin{theo}
The linking number (Definition~\ref{defn:linking_matrix}) of any two components of a slice link $L\subset S^3$ is zero.
\begin{proof}
See \cite{Bu-Zie}.
\end{proof}
\end{theo}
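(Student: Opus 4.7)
The plan is to realize the linking number as a homological obstruction inside $D^4$ and then kill it using the disjoint slice disks. Write $L = K_1 \sqcup \cdots \sqcup K_n$, and by definition of slice there exist pairwise disjoint, properly embedded, locally flat disks $D_1, \ldots, D_n \subset D^4$ with $\partial D_\alpha = K_\alpha$. Fix two components $K_i, K_j$ with corresponding slice disks $D_i, D_j$, and let $\mu \subset D^4 \setminus D_i$ be a small meridian of $D_i$ (the boundary of a normal fiber of $D_i$).

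First I would establish the homological computation $H_1(D^4 \setminus D_i; \mathbb{Z}) \cong \mathbb{Z}$, generated by $[\mu]$. Since we are in the smooth/PL category the locally flat disk $D_i$ admits a tubular neighborhood $\nu(D_i) \cong D^2 \times D^2$ (the normal bundle is trivial because $D_i$ is contractible). Applying Mayer--Vietoris to the decomposition $D^4 = \nu(D_i) \cup (D^4 \setminus \mathring{\nu}(D_i))$, whose overlap is $\partial D_i \times D^2 \simeq S^1$ and whose pieces have trivial and (to be computed) $H_1$ respectively, the vanishing of $H_*(D^4)$ forces $H_1(D^4 \setminus D_i) \cong H_1(S^1) \cong \mathbb{Z}$, generated by $[\mu]$.

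Next I would compare this with the classical picture on the boundary. Regarding $S^3 = \partial D^4$, consider the inclusion $\iota : S^3 \setminus K_i \hookrightarrow D^4 \setminus D_i$. A Seifert-meridian of $K_i$ in $S^3$ is isotopic inside $D^4 \setminus D_i$ to a normal meridian $\mu$ of $D_i$, so $\iota_*$ sends the generator of $H_1(S^3 \setminus K_i) \cong \mathbb{Z}$ to $[\mu]$ and is therefore an isomorphism. On the other hand, by the standard homological definition of linking number (equivalent to the one via intersection with a Seifert surface used in this paper), the class of $K_j$ in $H_1(S^3 \setminus K_i)$ equals $\mathrm{lk}(K_i, K_j)$ times the meridian class. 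Pushing into $D^4 \setminus D_i$:
\[
\mathrm{lk}(K_i, K_j) \cdot [\mu] = \iota_*[K_j] \in H_1(D^4 \setminus D_i; \mathbb{Z}).
\]
Finally, since the slice disks are disjoint, $D_j \subset D^4 \setminus D_i$ is a properly embedded disk with $\partial D_j = K_j$, so $[K_j] = 0$ in $H_1(D^4 \setminus D_i)$. Comparing the two expressions and using that $[\mu]$ generates $\mathbb{Z}$ yields $\mathrm{lk}(K_i, K_j) = 0$.

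The only delicate point is the tubular neighborhood / Mayer--Vietoris step, since slice disks are only required to be locally flat; but in the smooth or PL setting adopted throughout this thesis, existence of a trivial tubular neighborhood is automatic, and the computation is routine. Everything else is a direct application of the defining property of the linking number as an abelianized winding number around a meridian.
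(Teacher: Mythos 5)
Your proof is correct and is essentially the standard argument: the paper gives no proof of its own here, deferring to \cite{Bu-Zie}, and the homological argument you supply (meridian generates $H_1(D^4\setminus D_i)\cong\Z$, while $K_j$ bounds the disjoint slice disk $D_j$ and is therefore nullhomologous there) is exactly the classical one. One small slip: the overlap in your Mayer--Vietoris decomposition is the frontier $D_i\times\partial D^2$ of the tubular neighborhood (whose core circle is the meridian $\mu$), not $\partial D_i\times D^2$ (which lies in $S^3$ and whose core circle is $K_i$); both are homotopy equivalent to $S^1$, but only with the correct identification is the generator $[\mu]$ as you claim.
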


\begin{rem}\label{rem:closure_tangle}
Let $T$ be a framed 2-tangle, $D$ a diagram of $T$, and $L$ the closure of $T$. The Kauffman bracket of $L$ is equal to 
$$
\langle L \rangle = a(D) (-A^2-A^{-2})^2 + b(D) (-A^2-A^{-2}) .
$$
Hence if $b(D)|_{A=\sqrt{i}} \in \mathbb{C}\setminus \{0\}$, the multiplicity of $\langle L \rangle$ in $A=\sqrt{i}$ as a zero is $1$ ($\ord_{q=A^2=i} \langle L \rangle = 1$). Therefore by Proposition~\ref{prop:Alex} if $L$ has more than one component, it is not slice (and not ribbon).
\end{rem}

\begin{ex}
A link $L_n$ depending on an integer $n\in\Z$ is shown in Fig.~\ref{figure:ex_no_slice}. If $n\in 2\Z$ the link is a knot, otherwise it has two components. For $n\in 2\Z +1$, $L_n$ has linking number $0$. The link is the closure of a rational 2-tangle with number $C(L_n)$ different from $0$ or $\infty$ (see Section~\ref{sec:rational_tangles} and Theorem~\ref{theorem:GoKauf}). Hence $b(L_n)\neq 0$. Therefore by Remark~\ref{rem:closure_tangle} if $n\in 2\Z+1$, $L_n$ is not slice.
\end{ex}

\begin{figure}[htbp]
$$
\includegraphics[width=8 cm]{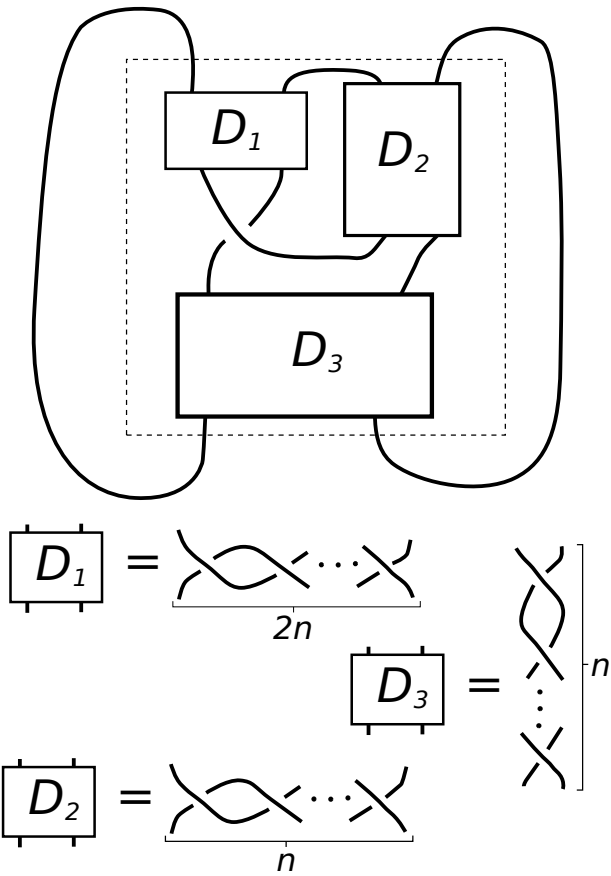}
$$
\caption{A link $L_n$ depending on $n\in\Z$. If $n\in 2\Z+1$ it is a non slice link with two components and linking number $0$.}
\label{figure:ex_no_slice}
\end{figure}

\emph{Montesinos links} form another famous class of links in $S^3$ (see \cite{Bu-Zie}). They comprehend 2-bridge links and \emph{pretzel links}. We can get all of them with diagrams described by integers $e$, $m$, and rational numbers $\frac{\alpha_1}{\beta_1}, \ldots, \frac{\alpha_m}{\beta_m} \in \mathbb{Q}\cup \{\infty\}$ ($\frac \alpha 0 = \infty$) as in Fig.~\ref{figure:Montesinos}: the box decorated with $\frac{\alpha_j}{\beta_j} $ is the rational 2-tangle $T$ such that $C(T)=\frac{\alpha_j}{\beta_j}$. Clearly they comprehend the closure of a rational tangles ($e=0$, $m=1$).

In \cite{Williams} is shown that no member of a five parameter family of Montsinos knots is slice. In \cite{Lecuona1} is shown that the slice-ribbon conjecture is true for a large family of Montesinos knots. In \cite{Lecuona2} a necessary, and in some cases sufficient, condition for sliceness inside the family of pretzel knots $P(p_1, \ldots,p_n)$ with one $p_i$ even is given.

\begin{figure}[htbp]
$$
\includegraphics[width=8 cm]{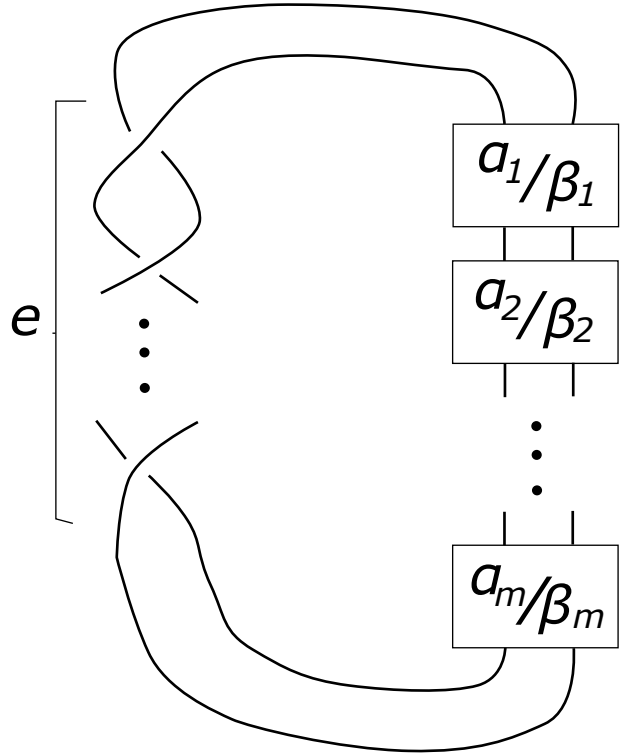}
$$
\caption{The Montesinos link $K(e, \frac{\alpha_1}{\beta_1}, \ldots, \frac{\alpha_m}{\beta_m})$.}
\label{figure:Montesinos}
\end{figure}

\begin{lem}\label{lem:sum_tangle}
Let $T$ be a 2-tangle that is the product of $T_1$ and $T_2$. Then $C(T)$ is the sum of $C(T_1)$ and $C(T_2)$.
\begin{proof}
Let $D_1$ and $D_2$ be diagrams of $T_1$ and $T_2$. A diagram $D$ of $T$ is obtained putting $D_2$ over $D_1$.
$$
\pic{1}{0.3}{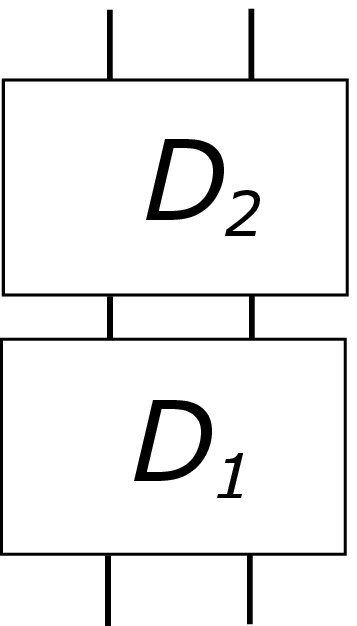} = a(D) \ \pic{0.7}{0.3}{D_tangle_0.eps} + b(D) \ \pic{0.7}{0.3}{D_tangle_infty.eps} ,
$$
\beq
\pic{1}{0.3}{D_diagr.eps}  & = & a(D_1) \ \pic{0.7}{0.3}{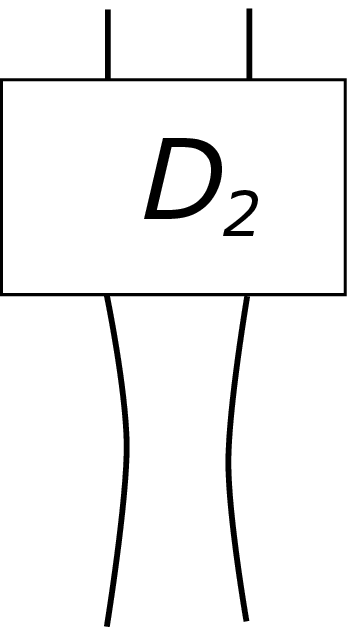} + b(D_1) \ \pic{0.7}{0.3}{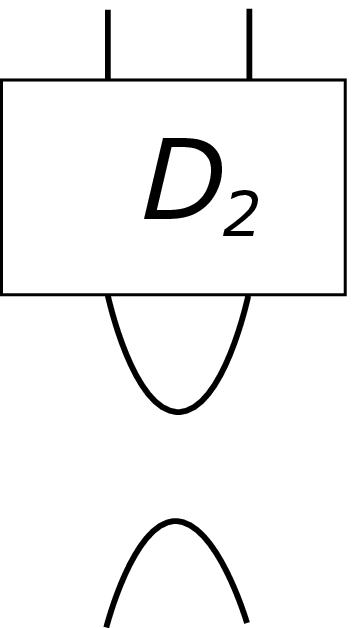} \\
& = & a(D_1)a(D_2) \ \pic{0.7}{0.3}{D_tangle_0.eps} + (b(D_1)a(D_2) +a(D_1)b(D_2) + (-A^2-A^{-2})b(D_1)b(D_2)) \ \pic{0.7}{0.3}{D_tangle_infty.eps} .
\eeq
Hence 
\beq
C(T) & = & -i \left. \frac{b(D)}{a(D)} \right|_{A=\sqrt{i}} \\
& = & -i \left. \frac{a(D_1)b(D_2) + a(D_2)b(D_1)}{a(D_1)a(D_2)} \right|_{A=\sqrt{i}} \\
& = & C(T_1) + C(T_2) .
\eeq
\end{proof} 
\end{lem}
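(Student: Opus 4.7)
The plan is to pick diagrams $D_1$ of $T_1$ and $D_2$ of $T_2$, and read off the Conway numbers from their expansions $D_i = a(D_i)\,D_0 + b(D_i)\,D_\infty$ in the standard basis $\{D_0, D_\infty\}$ provided by iterating the Kauffman skein relations. Since the product tangle $T = T_1\cdot T_2$ has the diagram $D$ obtained by stacking $D_1$ on top of $D_2$, the first thing to do is compute the four products $D_0\cdot D_0$, $D_0\cdot D_\infty$, $D_\infty\cdot D_0$, $D_\infty\cdot D_\infty$ of the basis diagrams under stacking. With the convention that $D_0$ is the crossingless diagram which is the identity for stacking and $D_\infty$ is the other basic tangle, three of these products reproduce a basis element directly, while $D_\infty\cdot D_\infty$ produces a basis element together with a disjoint closed circle, contributing the coefficient $-A^2-A^{-2}$.

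Once this small multiplication table is established, bilinearity in the skein module gives immediately
\[
a(D) = a(D_1)a(D_2), \qquad b(D) = a(D_1)b(D_2) + b(D_1)a(D_2) + (-A^2-A^{-2})\,b(D_1)b(D_2).
\]
The final step is to substitute into the Goldman–Kauffman formula $C(T) = -i\,(b(D)/a(D))|_{A=\sqrt i}$ from Theorem~\ref{theorem:GoKauf}. The key observation is that $(-A^2-A^{-2})|_{A=\sqrt i} = 0$, so the quadratic cross term $b(D_1)b(D_2)$ drops out, and the quotient factors as the sum $b(D_1)/a(D_1) + b(D_2)/a(D_2)$, evaluated at $A=\sqrt i$. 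Applying $-i$ then yields exactly $C(T_1) + C(T_2)$.

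There is no real obstacle here: the argument is essentially the observation that in the basis $\{D_0,D_\infty\}$ the stacking operation induces an algebra structure whose only nontrivial relation is $D_\infty\cdot D_\infty = (-A^2-A^{-2})\,D_\infty$, and specialising at $A=\sqrt i$ kills precisely this nonlinear relation, turning the multiplicative law on tangles into an additive law on Conway numbers. The only thing to be careful about is consistency of conventions (which of $D_0$, $D_\infty$ is the stacking identity), which one verifies directly from the pictures.
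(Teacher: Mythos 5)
Your proposal is correct and follows essentially the same route as the paper: expand $D_1$ and $D_2$ in the basis $\{D_0,D_\infty\}$, use the multiplication table for stacking (with $D_\infty\cdot D_\infty = (-A^2-A^{-2})D_\infty$) to get $a(D)=a(D_1)a(D_2)$ and $b(D)=a(D_1)b(D_2)+b(D_1)a(D_2)+(-A^2-A^{-2})b(D_1)b(D_2)$, and then observe that the cross term dies at $A=\sqrt{i}$ so the Goldman--Kauffman fraction becomes additive. Nothing is missing.
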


\begin{prop}
Let $L\subset S^3$ be a Montesinos link with at least two components. If its diagram $K(e, \frac{\alpha_1}{\beta_1} , \ldots , \frac{\alpha_m}{\beta_m})$ satisfies 
$$
 \sum_{j=1}^m \frac{\alpha_j}{\beta_j} -e \neq 0 ,
$$
then $L$ is not slice.
\begin{proof}
The link $L$ is the closure of a 2-tangle $T$. The tangle $T$ is the product of $e$ rational 2-tangles $T'$ with number $C(T')$ equal to $-1$, and the rational 2-tangles with number $\frac{\alpha_j}{\beta_j}$ for $1\leq j\leq m$. Hence by Lemma~\ref{lem:sum_tangle}
$$
C(T)= \sum_{j=1}^m \frac{\alpha_j}{\beta_j} -e .
$$
By Remark~\ref{rem:closure_tangle}, if $b(D)|_{A=\sqrt{i}} \neq 0$, $L$ is not slice. The number $b(D)|_{A=\sqrt{i}} $ is non null if and only if $C(T)\neq 0$.
\end{proof}
\end{prop}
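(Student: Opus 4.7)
The plan is to exhibit the Montesinos link $L$ as the closure of a rational $2$-tangle whose Conway number is precisely $\sum_j \alpha_j/\beta_j - e$, and then apply the sliceness obstruction coming from Eisermann's theorem packaged in Remark~\ref{rem:closure_tangle}.

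First I would read off from the diagram $K(e,\alpha_1/\beta_1,\dots,\alpha_m/\beta_m)$ that $L$ is the closure of a single $2$-tangle $T$ built as the product (in the sense of Definition~\ref{defn:mult_tangle}) of $e$ copies of the elementary tangle $T'$ consisting of one negative crossing, followed by the $m$ rational tangles $T_1,\dots,T_m$ whose Conway numbers are $C(T_j)=\alpha_j/\beta_j$. Since $C(T')=-1$ (a single negative crossing realizes the rational tangle $-1$, as is immediate from Theorem~\ref{theorem:GoKauf} applied to its one-crossing diagram), iterating Lemma~\ref{lem:sum_tangle} gives
\[
C(T) \;=\; \sum_{j=1}^m \frac{\alpha_j}{\beta_j} \;-\; e.
\]

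Next, I would invoke Remark~\ref{rem:closure_tangle}: for the closure of a framed $2$-tangle $T$ with diagram $D$, the Kauffman bracket of $L$ factors as
\[
\langle L\rangle \;=\; a(D)(-A^2-A^{-2})^2 + b(D)(-A^2-A^{-2}),
\]
so whenever $b(D)|_{A=\sqrt{i}}\neq 0$, the order of $\langle L\rangle$ at $q=A^2=i$ is exactly $1$. Combined with Proposition~\ref{prop:Alex}, which asserts that any slice link with at least two components has $\ord_{q=A^2=i}\langle L\rangle\geq 2$, this forces $L$ to be non-slice as soon as $b(D)|_{A=\sqrt{i}}\neq 0$.

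Finally, I would close the loop via Theorem~\ref{theorem:GoKauf}: since $C(T)=-i\,b(D)/a(D)|_{A=\sqrt{i}}$ is a well-defined element of $\mathbb{Q}\cup\{\infty\}$ and $a(D)|_{A=\sqrt{i}}$ is finite (or the quotient would be $\infty$, i.e.\ $C(T)=\infty$, which is excluded by our rational hypothesis), the condition $C(T)\neq 0$ is equivalent to $b(D)|_{A=\sqrt{i}}\neq 0$. The hypothesis $\sum_j \alpha_j/\beta_j - e\neq 0$ therefore yields $b(D)|_{A=\sqrt{i}}\neq 0$ and completes the argument. The only real subtlety, which I would make sure to address carefully, is the bookkeeping of framings and of the $\infty$ case (so that we are genuinely comparing $b/a$ rather than dividing by zero); everything else is a direct assembly of the previously established lemma, remark and proposition.
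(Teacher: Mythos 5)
Your proposal is correct and follows essentially the same route as the paper: decompose $L$ as the closure of the product tangle, apply Lemma~\ref{lem:sum_tangle} to get $C(T)=\sum_j \alpha_j/\beta_j - e$, and conclude via Remark~\ref{rem:closure_tangle} together with Proposition~\ref{prop:Alex}. Your extra attention to the $\infty$ case and to why $C(T)\neq 0$ is equivalent to $b(D)|_{A=\sqrt{i}}\neq 0$ is a welcome clarification of a point the paper's proof states without comment, but it does not change the argument.
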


\section{An application of shadows to the slice-ribbon conjecture}\label{sec:sh_rib_handle}

Although the aim of the chapter is to present topological applications of quantum invariants, in this section we talk about something a little different. We present an application of Turaev's shadows that goes in favor of the slice-ribbon conjecture.

\subsection{Shadow links and handle links}

\begin{defn}
We say that a link $L$ in a closed 3-manifold $M$ is \emph{slice with respect to} $W$ if $W$ is a compact 4-manifold bounded by $M$ and $L$ bounds a set of properly embedded disjoint (smooth) disks in $W$.
\end{defn}

Let $X$ be a shadow of a closed orientable 3-manifold $M$. The manifold $M$ is the boundary of the 4-dimensional thickening $W$ of $X$. Every region $R$ is thickened to a $D^2$-bundle over $R$, $F(R)$. The \emph{core} of this thickening is $R\subset F(R)$, while the \emph{co-core} is a fiber-disk of $F(R)$ over a point $p\in R$. The boundary of a co-core of a region is a knot in $M$.

\begin{defn}
We say that a link $L$ in a closed orientable 3-manifold $M$ is a \emph{shadow link} with respect to the 4-manifold $W$ if $\partial W = M$ and there is a shadow $X$ of $W$ without closed regions such that each component $K$ of $L$ is the boundary of the co-core of a region $R_K$ of $X$. In order not to make confusion with the notion of ``shadow of a link'', we say that $X$ is a \emph{support} of $L$.
\end{defn}

\begin{rem}
By definition each shadow link is slice with respect to the 4-dimensional thickening of the shadow. In fact each component bounds a properly embedded disk in $W$ that is the co-core of a region, and all these disks are disjoint because either they lie in different regions or are parallel.
\end{rem}

\begin{rem}
By Proposition~\ref{prop:intersect_shadows} we can easily find a shadow of a shadow link $L$ (not a support). It suffices to take a support $X$ of $L$, for each component $K$ of $L$ attach an annulus to $X$ along a map that identifies a boundary component of the annulus to the boundary of an embedded closed disk $D_K$ in the corresponding region $R_K$, give to $D_K$ gleam $\pm 1$, to $R_K \setminus D_K$ gleam $\gl(R_K) \mp 1$, and leave the other gleams unchanged. In fact every component $K$ of $L$ bounds a properly embedded disk (the co-core of $R_K$) that intersects the region (the core of $R_K$) in just one point.
\end{rem}

\begin{defn}
A link $L$ in the connected sum $\#_g(S^1\times S^2)$ of $g\geq 0$ copies of $S^1\times S^2$ is said to be \emph{symmetric} if given a description of $\#_g(S^1\times S^2)$ as the double of a 3-dimensional orientable handlebody $H_g$ of genus $g$, we have that the following hold up to isotopies:
\begin{itemize}
\item{each component of $L$ is divided in two arcs by the boundary of $H_g$;}
\item{one of such arcs lies in $H_g$ and the other one lies in the other copy of $H_g$;}
\item{the link is symmetric with respect to this decomposition, namely doing the double of $H_g$ equipped with one of these arcs for each pair, we get the whole link $L\subset \#_g(S^1\times S^2)$.}
\end{itemize}
\end{defn}

We introduced the definition of ribbon link (Definition~\ref{defn:ribbon_link}) and Remark~\ref{rem:ribbon_g}.

\begin{lem}\label{lem:sym_and_ribbon}
Every symmetric link in $\#_g(S^1\times S^2)$ is ribbon.
\begin{proof}
Let $H_1$ and $H_2$ be two handlebodies of genus $g$ such that $H_1\cup H_2 = \#_g(S^1\times S^2)$. Put the link $L$ in a symmetric position with respect to $H_1$ and $H_2$. For $j=1,2$ there is a projection $\pi_j: L\cap H_j \rightarrow S$ such that the image $\pi_j(L)$ is a graph $D\subset S$ not depending on $j$, where $S:=\partial H_1 = \partial H_2$. The vertices of $D$ are either 4-valent or 1-valent (leaves). The 4-valent vertices of $D$ may be equipped with the further intormation of the over/underpasses. The 1-valent vertices of $D$ are the intersection points of $L$ with the surface $S$. The projection $\pi_j$ is injective everywhere except in the inverse image of the 4-valent vertices of $D$ where it is $2$ to $1$, moreover the inverse image of the 1-valent vertices are the fixed points of $\pi_j$. 

Let $N_j(S)\cong S  \times [0,1)$ be a collar of $S$ in $H_j$ that contains $L\cap H_j$. The projection $\pi_j$ extends to a projection $\pi_{j,N} : N_j(S) \rightarrow S$ such that the inverse image of the points of $D$ that satisfy: $\pi_{j,N}^{-1}(p)= p$ if $p$ is a 1-valent vertex, while $\pi_{j,N}^{-1}(p)$ is an arc if $p$ is not a 1-valent vertex. The union $\pi_{1,N}^{-1}(D) \cup \pi_{2,N}^{-1}(D) \subset \#_g(S^1\times S^2)$ is a ribbon surface consisting of disks and bounded by $L$. The ribbon singularities are contained in the union of the inverse images of the 4-valent vertices of $D$.
\end{proof}
\end{lem}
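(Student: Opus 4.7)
The plan is to construct an explicit immersed disk system in $\#_g(S^1\times S^2)$ bounded by $L$ whose only self-intersections are ribbon singularities. Write $\#_g(S^1\times S^2) = H_1 \cup_S H_2$ where $S = \partial H_1 = \partial H_2$ is the Heegaard surface, put $L$ in symmetric position with respect to this decomposition, and let $\sigma$ denote the involution of $\#_g(S^1\times S^2)$ that exchanges $H_1$ and $H_2$, fixes $S$ pointwise, and preserves $L$ setwise.

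The first step is to project the two halves of $L$ onto $S$ so that they yield a common combinatorial picture. I would choose a collar $N_j(S) \cong S \times [0,1)$ of $S$ inside $H_j$ containing $L \cap H_j$, and then use the natural projection $\pi_j : L \cap H_j \to S$. The image is a graph $D \subset S$ whose $4$-valent vertices record the crossings (with over/underpass information inherited from the order along the arcs) and whose $1$-valent vertices are precisely the points $L \cap S$. Because $\sigma|_S = \mathrm{id}_S$ and $\sigma(L \cap H_1) = L \cap H_2$, after an isotopy symmetric with respect to $\sigma$ I can arrange $\pi_1(L\cap H_1) = \pi_2(L\cap H_2) = D$ as unoriented graphs on $S$, so that both halves of $L$ share a single diagrammatic shadow on $S$.

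The second step is to thicken $D$ into each handlebody. I extend $\pi_j$ to a projection $\pi_{j,N} : N_j(S) \to S$ on the full collar, and I take as candidate ribbon surface the union $\Sigma := \pi_{1,N}^{-1}(D) \cup \pi_{2,N}^{-1}(D)$. Over a smooth point of $D$ the preimage is an embedded band in $H_j$; over a $1$-valent vertex the two bands meet along an arc in $S$, gluing the two halves together across $L \cap S$; and over a $4$-valent vertex two sheets of $\pi_{j,N}^{-1}(D)$ cross transversely inside $H_j$ along an arc — exactly a ribbon singularity in the sense of Figure~\ref{figure:ribbon_singularity}. By construction $\partial \Sigma = L$ and the self-intersections of $\Sigma$ are ribbon.

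The main step left is the combinatorial verification that every connected component of $\Sigma$ is a disk, so that $\Sigma$ is a genuine ribbon disk system and $L$ is ribbon in the sense of Definition~\ref{defn:ribbon_link}. I would do this by counting Euler characteristics using the cell structure induced on $\Sigma$ by the vertices, edges and faces of the graph $D \subset S$: the symmetric gluing across $S$ identifies two copies of each region of $S \setminus D$ that is used, and a short Euler-characteristic count (vertices and bands contribute $+1$ and $0$ respectively, and doubled regions close up into $2$-cells) forces each component to have $\chi = 1$. This last bookkeeping is where the symmetric hypothesis is essential, and it is the only delicate part of the argument.
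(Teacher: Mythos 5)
Your construction is exactly the one in the paper: decompose $\#_g(S^1\times S^2)$ as the double of a handlebody, project both halves of $L$ to a common graph $D$ on the Heegaard surface $S$, and take the union of the two mapping-cylinder surfaces $\pi_{1,N}^{-1}(D)\cup\pi_{2,N}^{-1}(D)$, with ribbon singularities arising over the $4$-valent vertices. The one place where your write-up goes astray is the final bookkeeping: the regions of $S\setminus D$ are not cells of $\Sigma$ at all (the surface lies over $D$, not over its complement), so an Euler-characteristic count that ``closes up doubled regions into $2$-cells'' is counting the wrong objects. No such count is needed: by the symmetry hypothesis each component of $L$ is $\alpha_1\cup\alpha_2$ with $\alpha_j\subset H_j$ an arc projecting to the same path $\gamma\subset D$, so the corresponding piece of $\Sigma$ is two mapping-cylinder disks glued along $\gamma$, hence a disk with boundary $\alpha_1\cup\alpha_2$. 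The symmetry is therefore used to make the two shadows coincide and to split each component into one arc per handlebody, after which disk-ness is immediate; this is what the paper's proof leaves implicit.
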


\begin{lem}\label{lem:sh_and_sym}
Every shadow link in $\#_g(S^1\times S^2)$ with support a shadow of the 4-dimensional handlebody with only $0$ gleams is symmetric, hence by Lemma~\ref{lem:sym_and_ribbon} it is ribbon.
\begin{proof}
Let $X$ be such a support. By Lemma~\ref{lem:spines} the polyhedron $X$ is a spine of the 3-dimensional handlebody of genus $g$, $H_g$, and the 4-dimensional thickening $W$ is $H_g\times[-1,1]$. The 4-dimensional thickening of a region is the product with an interval of its 3-dimensional one in $H_g$. Hence the co-core of a region $R$ is the product of a properly embedded arc $\alpha_R$ in $H_g$ with $[-1,1]$. The boundary of the co-core of a region $R$ is exactly the double of $\alpha_R$ in the double of $H_g$, $\#_g(S^1\times S^2)$. Therefore the link is symmetric and by Lemma~\ref{lem:sym_and_ribbon} the link is ribbon.
\end{proof}
\end{lem}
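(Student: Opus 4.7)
The plan is to exploit the very rigid product structure that arises when the shadow lives inside a $4$-manifold whose thickening is itself a product. First I would invoke Lemma~\ref{lem:spines}, which tells us that a boundaryless shadow $X$ with all regions assigned gleam $0$ is a spine of some compact orientable $3$-manifold $M_X$, and its $4$-dimensional thickening $W$ is diffeomorphic to $M_X \times [-1,1]$. Since by hypothesis $W$ is the $4$-dimensional handlebody of genus $g$, the manifold $M_X$ must be the $3$-dimensional handlebody $H_g$, and $\partial W = \#_g(S^1\times S^2)$ identifies with the double of $H_g$ along $\partial H_g = \Sigma_g$.

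Next I would analyze the shape of a co-core in this product setting. A region $R\subset X\subset H_g\times\{0\}$ has, as a $3$-dimensional thickening inside $H_g$, a product neighborhood $R\times(-\varepsilon,\varepsilon)$, and its $4$-dimensional thickening $F(R)\subset W$ is then $R\times(-\varepsilon,\varepsilon)\times[-1,1]$; the co-core is $\{p\}\times(-\varepsilon,\varepsilon)\times[-1,1]$ for some $p\in R$, and after straightening it is of the form $\alpha_R\times[-1,1]$, where $\alpha_R\subset H_g$ is a properly embedded arc transverse to $R$ at $p$. This is the only technical point of the argument, and it is the step I expect to need the most care: one needs to verify that the product structure $W=H_g\times[-1,1]$ provided by Lemma~\ref{lem:spines} actually restricts correctly on the $D^2$-bundle neighborhoods of the various regions so that the co-core (a fiber of the $D^2$-bundle) really has the form $\alpha_R\times[-1,1]$. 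This compatibility is essentially by construction in the proof of Lemma~\ref{lem:spines}, but I would double-check it by unwinding the collapse of $H_g\times[-1,1]$ onto $X$.

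Once this is in hand, the boundary of the co-core of $R$ is
$$
\partial(\alpha_R\times[-1,1])=(\partial\alpha_R\times[-1,1])\cup(\alpha_R\times\{-1,1\}),
$$
which is precisely the double of the arc $\alpha_R$ inside the double $\#_g(S^1\times S^2)$ of $H_g$. Thus every component $K$ of the shadow link $L$ is cut by $\Sigma_g\subset\#_g(S^1\times S^2)$ into two arcs, one lying in each copy of $H_g$, and these two arcs are exchanged by the involution defining the doubling. This is exactly the definition of a symmetric link, so $L$ is symmetric. Finally, applying Lemma~\ref{lem:sym_and_ribbon} concludes that $L$ is ribbon, finishing the proof.
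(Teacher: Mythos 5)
Your proposal is correct and follows essentially the same route as the paper's proof: invoke Lemma~\ref{lem:spines} to identify $W$ with $H_g\times[-1,1]$, observe that the co-core of each region is $\alpha_R\times[-1,1]$ for a properly embedded arc $\alpha_R\subset H_g$, so its boundary is the double of $\alpha_R$, whence the link is symmetric and Lemma~\ref{lem:sym_and_ribbon} applies. The extra care you flag about the product structure restricting to the $D^2$-bundle neighborhoods of the regions is exactly the point the paper settles with the sentence that the 4-dimensional thickening of a region is the product with an interval of its 3-dimensional thickening in $H_g$.
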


In Proposition~\ref{prop:eq_sh_and_ribbon} we show that if the shadow link given by all the regions of a shadow $X$ is ribbon, then also the one given by all the regions of any shadow $X'$ equivalent to $X$ is ribbon. To prove this, we apply the following lemma to embedded pants coming from the thickening of points of edges of the shadow.

\begin{lem}\label{lem:ribbon_pants}
Let $\iota_P: P\rightarrow M$ be an embedding of a pair of pants in a 3-manifold $M$ and let $K_1$, $K_2$ and $K_3$ be the boundary components of $\iota_P( P)$. If the link $K_1\cup K_2$ bounds a ribbon surface $\iota_{1,2}: D_1 \cup D_2 \rightarrow M$ that is the union of two disks and consists of two connected components of a bigger ribbon surface $\iota_S: S \rightarrow M$. Then the link $K_1\cup K_2 \cup K_3$ bounds a ribbon surface $\iota_{1,2,3}:D_1\cup D_2\cup D_3 \rightarrow M$ such that
\begin{itemize}
\item{$D_1$, $D_2$ and $D_3$ are disks;}
\item{the restriction of $\iota_{1,2,3}$ to the disks $D_1 \cup D_2$ bounded by $K_1\cup K_2$ is $\iota_{1,2}$, $\iota_{1,2,3}(D_1\cup D_2) = \iota_{1,2}(D_1\cup D_2)$;}
\item{$\iota_{1,2,3}$ extends to a ribbon surface $\iota'_S: S\cup D_3 \rightarrow D$;}
\item{the restriction to $S$ does not intersect the restriction to the disk $D_3$ bounded by $K_3$, $\iota'_S(S) \cap \iota_{1,2,3}(D_3) = \iota_{1,2,3}(D_1\cup D_2) \cap \iota_{1,2,3}(D_3) = \varnothing$.}
\end{itemize} 
\begin{proof}
Take a \emph{parallel copy} $\iota'_{1,2}:D_1\cup D_2 \rightarrow M$ of $\iota_{1,2}: D_1\cup D_2\rightarrow M$, and let $K'_1$ and $K'_2$ be the boundary components of $\iota'_{1,2}(D_1 \cup D_2)$. By ``parallel copy'' we mean a disjoint copy that locally follows the part of the ribbon disks as in Fig.~\ref{figure:parallel_copies}. Hence $K_j$ and $K'_j$ are isotopic. Let the disk $D_3$ be the pair of pants $P$ with $D_1$ and $D_2$ glued to two boundary components, and let $B$ be a band in $P$ connecting those boundary components. Since $B$ is the thickening of an arc and $P\setminus (D_1\cup B \cup D_2)$ is just an annulus, we can find an embedding $\iota'_P : P \rightarrow M$ such that $\iota'_P$ is isotopic to $\iota_P$, $\iota'_P(\partial P)= K'_1 \cup K'_2 \cup K_3$, and $\iota'_p(P)\cap \iota'_{1,2}(D_1\cup D_2) = \iota'_{1,2}(\partial D_1\cup \partial D_2) = K'_1\cup K'_2$. Thus we define the ribbon disk $\iota_3:D_3\rightarrow M$ as the union of $\iota'_P$ and $\iota'_{1,2}$. Then $\iota_{1,2,3}$ is the union of $\iota_{1,2}$ and $\iota_3$ and $\iota'_S$ is the union of $\iota_S$ and $\iota_3$. 
\end{proof}
\end{lem}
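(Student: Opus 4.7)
My plan is to construct the missing disk $D_3$ by combining a parallel push-off of the existing ribbon disks $\iota_{1,2}(D_1 \cup D_2)$ with a slightly perturbed copy of the pair of pants $\iota_P(P)$ itself, then to verify that the union with $\iota_S$ gives a ribbon surface. The intuition is that the three boundary components of $\iota_P(P)$ cobound an embedded pair of pants, so once two of them already bound ribbon disks, a third disk can be obtained by sliding a parallel copy of those two disks across the pair of pants to $K_3$.

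First, I would take a parallel copy $\iota'_{1,2} : D_1 \cup D_2 \to M$ of $\iota_{1,2}$, i.e. a disjoint copy obtained by pushing $\iota_{1,2}(D_1 \cup D_2)$ slightly in the direction normal to the disks, in the way suggested by Fig.~\ref{figure:parallel_copies}. This push-off is again a ribbon immersion because ribbon singularities are stable under small perturbation (they are arcs of transverse self-intersection in a $3$-manifold). The parallel copy has boundary $K'_1 \cup K'_2$ with $K'_j$ isotopic to $K_j$, and by choosing the push-off small enough it is disjoint from $\iota_S(S)$.

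Next, I would think of $D_3$ combinatorially as $D'_1 \cup B \cup D'_2$, where $B$ is a band inside $P$ joining the two boundary components of $P$ that correspond to $K_1$ and $K_2$ (this works because $P \setminus (D'_1 \cup B \cup D'_2)$ deformation retracts to an annulus neighborhood of $K_3$). Using the isotopy extension theorem, I would perturb $\iota_P$ to an embedding $\iota'_P : P \to M$ such that $\iota'_P(\partial P) = K'_1 \cup K'_2 \cup K_3$ and $\iota'_P(P) \cap \iota'_{1,2}(D_1 \cup D_2) = K'_1 \cup K'_2$; this is possible because the annular collar neighborhoods of $K_1, K_2$ in $\iota_P(P)$ can be matched to parallel collars of $K'_1, K'_2$ inside the push-off. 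Then the disk $\iota_3 : D_3 \to M$ is defined as the union of $\iota'_P$ and $\iota'_{1,2}$, and $\iota_{1,2,3} := \iota_{1,2} \cup \iota_3$, $\iota'_S := \iota_S \cup \iota_3$.

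The hard part will be checking that $\iota_3$ is a genuine ribbon disk and that $\iota'_S$ has only ribbon singularities. There are two potential issues. First, the new singularities of $\iota_3$ with itself: these come only from the interior of $\iota'_{1,2}(D_1 \cup D_2)$, since $\iota'_P$ is an embedding and the gluing along $K'_1, K'_2$ is clean; those singularities are ribbon because $\iota'_{1,2}$ is a parallel copy of the ribbon immersion $\iota_{1,2}$. Second, the intersections of $\iota_3$ with $\iota_S(S)$: the piece coming from $\iota'_{1,2}$ is disjoint from $\iota_S(S)$ by choice of parallel copy, so the only new intersections come from $\iota'_P(P)$ meeting $\iota_S(S)$, which we can arrange to be transverse double arcs by a small generic perturbation of $\iota_P$ relative to its boundary, hence ribbon. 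Once these transversality statements are confirmed via standard general-position arguments, the four conclusions of the lemma follow directly from the construction.
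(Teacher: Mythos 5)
Your construction is the same as the paper's: take a parallel push-off $\iota'_{1,2}$ of the two ribbon disks, view $D_3$ as the two pushed-off disks joined by a band $B\subset P$ (using that $P\setminus(D_1\cup B\cup D_2)$ is a collar annulus of $K_3$), realize this by an embedding $\iota'_P$ isotopic to $\iota_P$ with boundary $K'_1\cup K'_2\cup K_3$, and set $\iota_3=\iota'_P\cup\iota'_{1,2}$. So the approach is essentially identical.

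The one place you deviate is your final verification, and there it falls short of what the lemma asserts. The fourth conclusion requires $\iota'_S(S)\cap\iota_{1,2,3}(D_3)=\varnothing$, i.e.\ the new disk must be \emph{disjoint} from the previously built ribbon surface; you only claim that the intersections of $\iota'_P(P)$ with $\iota_S(S)$ can be made into ``transverse double arcs, hence ribbon.'' That is both too weak for the statement and not a valid inference: a generic transverse intersection of two surfaces in a $3$-manifold consists of circles and arcs, and neither is automatically a ribbon singularity (a ribbon singularity requires one preimage arc to be properly embedded in one sheet and the other to lie in the interior of the other sheet). The paper avoids this entirely by choosing $\iota'_P$ so that $\iota'_P(P)$ meets the relevant surfaces only along $K'_1\cup K'_2$, so that no new singularities with $\iota_S(S)$ are created at all; you should replace your general-position argument with that stronger positioning of $\iota'_P$, which is exactly what the annulus-plus-band decomposition of $P$ is used for.
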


\begin{figure}[htbp]
\begin{center}
\includegraphics[scale=0.6]{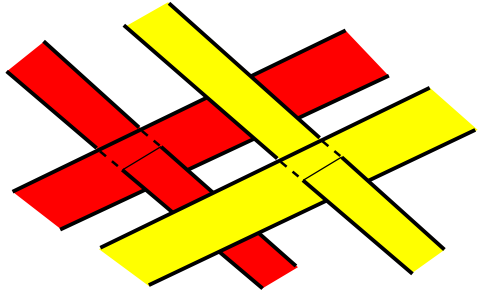}
\end{center}
\caption{Two parallel copies of a ribbon disk.}
\label{figure:parallel_copies}
\end{figure}

We introduced the notions of ``shadow moves'' and ``equivalent shadows'' (Definition~\ref{defn:sh_moves}).
\begin{prop}\label{prop:eq_sh_and_ribbon}
Let $X$ and $X'$ be two equivalent shadows. If the shadow link given by all the regions of $X$ is ribbon then it is so also the one given by all the regions of $X'$.
\begin{proof}
A \rm{Y}-\emph{graph} is a graph like a \rm{Y}: three edges all adjacent in a vertex and with a free end. A \rm{Y}-graph $Y$ has a natural 2-dimensional thickening $Y^{(2)}$ that is a 2-disk in which the graph $Y$ is properly embedded and is equipped with the natural retraction $\pi : D^2 \rightarrow Y$ (see Fig.~\ref{figure:Y(2)}). The 4-dimensional thickening of an edge $e$ of a shadow is the orientable $I$-bundle over its 3-dimensional thickening that is a $Y^{(2)}$-bundle over an open interval or a circle (see Remark~\ref{rem:reconstruction} and Theorem~\ref{theorem:reconstruction}). Hence locally the 4-dimensional thickening of $e$ is $Y^{(2)}\times (-1,1)\times [-1,1]$. This contributes to the boundary of the 4-manifold with the two copies of $Y^{(2)} \times (-1,1)$ glued together with the identity of $\partial Y^{(2)} \times (-1,1)$, where $\partial Y^{(2)}$ is the boundary of the 2-disk minus the open regular neighborhood of the intersection of the \rm{Y}-graph with the boundary of the disk (the black bold lines of Fig.~\ref{figure:Y(2)} form $\partial Y^{(2)}$). This is $S^2 \times (-1,1)$ minus a tubular neighborhood of three strands of the form $\{x\}\times (-1,1)$, namely the product of  a pair of pants with $(-1,1)$. For each (internal) point $p$ of an edge of the \rm{Y}-graph, the double of $\pi^{(-1)}(p) \subset Y^{(2)}$ (the red segment in Fig.~\ref{figure:Y(2)}) is the boundary of the co-core of the region where $p$ lies. In $0\in (-1,1)$ there are two copies of $Y^{(2)} \times \{0\}$ glued together along $\partial Y^{(2)} \times \{0\}$. This gluing form a pair of pants $P_e \subset \partial W$ in the boundary of the 4-manifold whose boundary is the union of the boundary of the co-cores of the regions adjacent to the edge $e$.

The shadows $X$ and $X'$ are related by a sequence of shadow moves. Every shadow move modifies the shadow just locally. They slightly modify some regions, introduce a new one or remove an old one. 

The boundary of the co-cores of the regions modified by a shadow move are the same curves in the boundary of the 4-manifold given by them before the application of the move. In fact we can take the point $p\in R$ that defines one such co-core outside the small part that is modified by the move. In particular if these were boundary components of a ribbon surface $D^2\cup \ldots \cup D^2 \rightarrow M$, they remain to be boundary components of one such ribbon surface. 

Now we consider a move that adds a new region $R$. Take an edge $e$ adjacent to $R$. The edge $e$ touches $R$ and two old regions, $R_1$ and $R_2$ (maybe $R_1=R_2$). Since by hypothesis the link given by all the regions of the starting shadow is ribbon, the pant $P_e$ has two ribbon boundary components that are the boundary of the co-cores of $R_1$ and $R_2$. Therefore by Lemma~\ref{lem:ribbon_pants} $P_e$ gives a ribbon disk bounded by the boundary of the co-core of $R$ that does not intersect the the previous ribbon surface. Therefore the link given by all the regions of the final shadow is ribbon. 

If the move removes an old region the link given by all the regions of the the second shadow is equal to the first link minus the component given by the removed region. Therefore it would remain ribbon if it was so.
\end{proof}
\end{prop}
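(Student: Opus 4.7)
The plan is to reduce to a single shadow move and then analyze, move by move, the three possibilities (slight modification of existing regions, creation of a new region, deletion of an old region), applying Lemma~\ref{lem:ribbon_pants} to control the newly born components. Since $X$ and $X'$ are equivalent by definition, they are connected by a finite sequence of moves of type $0 \to 2$, $2 \to 3$, $1 \to 2$, and their inverses (Definition~\ref{defn:sh_moves}); by induction on the length of the sequence it suffices to prove that if the shadow link associated with a shadow $Y$ is ribbon and $Y \to Y'$ is a single shadow move, then the shadow link associated with $Y'$ is ribbon.

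First I would set up the local geometric picture. Each shadow move is supported in a small contractible piece of the shadow, so regions that appear in both $Y$ and $Y'$ are only slightly modified. Choosing the basepoint $p \in R$ that defines the co-core outside the small modified piece, the boundary of the co-core in $\partial W$ stays literally the same curve before and after the move: if that curve was already a boundary component of the given ribbon surface, it remains so. So the only real work is produced by regions that are truly created or destroyed. If the move destroys a region, the new shadow link is just a sublink of the old one, hence still ribbon, and we are done.

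The heart of the argument is the case in which the move creates a new region $R$. Here I would exploit the local model of the $4$-dimensional thickening of an edge, as set up in the excerpt: the thickening of a point of an edge contributes a $\mathrm{Y}$-shaped slice $Y^{(2)}$ whose double glues into $\partial W$ to produce an embedded pair of pants $P_e \subset \partial W$, with boundary equal to the union of the boundaries of the co-cores of the three regions incident to $e$. I would pick an edge $e$ of $Y'$ adjacent to the new region $R$; such $e$ is also adjacent to two (possibly coinciding) old regions $R_1, R_2$ of $Y$. By hypothesis, the boundary curves of the co-cores of $R_1$ and $R_2$ bound disjoint ribbon disks $D_1 \cup D_2$ inside a ribbon surface $S$ realizing the ribbon structure of the old shadow link. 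Lemma~\ref{lem:ribbon_pants} applied to the embedded pants $P_e$ then produces a new ribbon disk for the boundary of the co-core of $R$, disjoint from $S$ on the interior. Replacing $D_1 \cup D_2 \subset S$ by its image under the lemma gives a ribbon surface for the entire new shadow link.

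The main obstacle I expect is not in any single case but in the bookkeeping that guarantees all the newly produced ribbon disks remain mutually disjoint as we process the whole sequence of moves and, within a single move, as we process the (possibly several) new regions. This is handled by Lemma~\ref{lem:ribbon_pants}, whose statement is designed exactly so that the output ribbon disk $D_3$ sits disjointly from the enlarged ribbon surface $S \cup D_3$; thus at each step we upgrade the ambient ribbon surface in a controlled way, keeping the disjointness invariant along the induction. One small care needed is that when several new regions are created by one move, one must choose the auxiliary edges $e$ and apply Lemma~\ref{lem:ribbon_pants} in an order such that at each application the two ``old'' regions used as anchors already have their co-core boundaries in the current ribbon surface; the local combinatorics of the shadow moves $0\to 2$, $2\to 3$, $1\to 2$ makes this straightforward.
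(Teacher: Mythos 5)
Your proposal is correct and follows essentially the same route as the paper: reduce to a single shadow move, observe that co-core boundaries of merely modified regions are unchanged, discard removed regions as sublinks, and handle each newly created region by applying Lemma~\ref{lem:ribbon_pants} to the pair of pants $P_e$ coming from an adjacent edge. The extra bookkeeping remark about ordering multiple new regions is harmless but not needed, since each shadow move introduces at most one new region.
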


\begin{figure}[htbp]
\begin{center}
\includegraphics[scale=0.7]{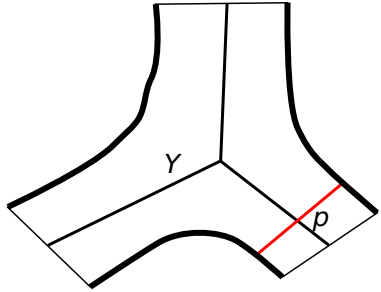}
\end{center}
\caption{The 2-dimensional thickening $Y^{(2)}$ of a \rm{Y}-graph $Y$. The black bold lines define the boundary $\partial Y^{(2)}$. The red arc is the inverse image of a (internal) point $p$ of an edge of the graph under the projection $\pi: D^2 \rightarrow Y$.}
\label{figure:Y(2)}
\end{figure}

\begin{rem}\label{rem:sh_eq1}
If the answer to Question~\ref{quest:eq_shadows} were true we would conclude that every shadow link with respect to a shadow of a 4-dimensional handlebody is ribbon just using Proposition~\ref{prop:eq_sh_and_ribbon}, Lemma~\ref{lem:sh_and_sym} and Lemma~\ref{lem:sym_and_ribbon}. Unfortunately we do not know if it is true or false.
\end{rem}

Let $D$ be a handle-decomposition of a compact 4-manifold $W$. Every handle $h^{(k)}$ is diffeomorphic to $D^k\times D^{4-k}$, where $k$ is the \emph{index} of $h^{(k)}$. The \emph{core} of $h^{(k)}$ is what corresponds to $D^k\times\{0\}$, while the \emph{co-core} is $\{0\}\times D^{4-k}$. If $D$ has no handle of index 3 and 4, up to isotopies each 2-handle $h^{(2)}$ intersects the boundary of $W$ with $D^2\times \partial D^2 = D^2\times S^1$. Hence in this case the boundary of a co-core of a 2-handle is a slice knot in $\partial W$ with respect to $W$.
\begin{defn}
We say that a link in a closed orientable 3-manifold $M$ is a \emph{handle link} with respect to $W$ if $W$ is a compact orientable 4-manifold bounded by $M$ and there is a handle-decomposition $D$ of $W$, composed only by 0-, 1- and 2-handles and such that each component $K$ of $L$ is the boundary of the co-core of a 2-handle $h^{(2)}_K$ of $D$. We say that $D$ is a \emph{support} of $L$.
\end{defn}

\begin{rem}
Almost all the known possible counter-examples of the slice-ribbon conjecture are known to be handle link.
\end{rem}

The following proposition shows that the notion of ``shadow link'' is equivalent to the one of ``handle link''.

\begin{prop}
$\ $
\begin{enumerate}
\item{Every shadow link is a handle link with respect to the same 4-manifold.}
\item{Every handle link is a shadow link with respect to the same 4-manifold.}
\end{enumerate}
\begin{proof}
$1.$ Each triangulation of a shadow gives rise to a handle-decomposition of the 4-manifold where each vertex of the triangulation is the core of a 0-handle, each edge is the core of a 1-handle and each triangle is the core of a 2-handle. The co-core of a region is isotopic to the co-core of each triangle that subdivides the region.

$2.$ Let $D$ be a support handle-decomposition of the handle link. In Theorem~\ref{theorem:admit_sh} we constructed a shadow of the same 4-manifold starting from $D$. This shadow is constructed by taking a shadow of the union of the 0- and 1-handles and then attaching disk regions to it. Each region of the final step corresponds to a 2-handle of $D$ and their co-cores coincide. Hence that shadow is a support for the link as a shadow link.
\end{proof}
\end{prop}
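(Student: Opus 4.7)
The plan is to exploit the fact that the two constructions relating shadows and handle decompositions used in the proof of Theorem~\ref{theorem:admit_sh} already preserve (up to isotopy) the data of co-cores, so the proof amounts to inspecting those constructions and checking that the distinguished curves on the boundary match. I will treat the two implications separately.

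For part (1), I start with a shadow link $L$ with a support shadow $X$ of $W$. As in the first half of the proof of Theorem~\ref{theorem:admit_sh}, I fix a triangulation $X^{t}$ extending that of $\partial X$ (none is needed here since $L$ sits in the interior) and build the handle decomposition of $W$ dual to $X^{t}$: a $0$-handle about each vertex, a $1$-handle about each edge, and a $2$-handle about each triangle. This decomposition has no $3$- or $4$-handles, so it is a candidate support. For a region $R$ of $X$ and a component $K\subset L$ with $R=R_{K}$, I pick the point $p\in R$ defining the co-core of $R$ so that $p$ lies in the interior of some triangle $\tau$ of $X^{t}\cap R$. Then the fiber disk of $F(R)$ over $p$ coincides with the co-core $\{0\}\times D^{2}$ of the $2$-handle about $\tau$, so $\partial(\text{fiber})=K$ is the boundary of the co-core of $h^{(2)}_{K}$.

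For part (2), I start with a handle link $L$ with support handle decomposition $D$ of $W$ having only $0$-, $1$- and $2$-handles. I apply the construction from the second half of the proof of Theorem~\ref{theorem:admit_sh}: insert a Bing's house with two rooms inside each $0$-handle, glue two small disks in these Bing's houses for each $1$-handle to produce a spine $X_{0}$ of the $3$-dimensional handlebody $H^{(3)}$ underlying the union of the $0$- and $1$-handles, then (by Lemma~\ref{lem:shadow0}) attach one disk region $R_{h}$ along the attaching circle of each $2$-handle $h^{(2)}$ to obtain a shadow $X$ of $W$. For each component $K$ of $L$, let $h^{(2)}_{K}$ be the $2$-handle whose co-core boundary is $K$; then $R_{h^{(2)}_{K}}$ is a disk region, and choosing the point $p\in R_{h^{(2)}_{K}}$ at the center of the core disk of $h^{(2)}_{K}$ shows that the co-core of $R_{h^{(2)}_{K}}$ in $W$ is, up to isotopy, exactly $\{0\}\times D^{2}\subset h^{(2)}_{K}$. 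Hence $K$ is the boundary of the co-core of a region, and $X$ has no closed regions because $H^{(3)}$ has non-empty boundary and no $3$-handles are involved. Thus $L$ is a shadow link with support $X$.

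The main obstacle, such as it is, lies in making precise the phrase ``the co-core of a region is isotopic to the co-core of each triangle that subdivides the region''. The cleanest way to do this is to observe that $F(R)\cong R\times D^{2}$ and that any two fibers $\{p\}\times D^{2}$, $\{p'\}\times D^{2}$ are ambient-isotopic inside $F(R)$ through fibers, and then note that the $2$-handle about a triangle $\tau\subset R$ in the dual handle decomposition is canonically identified with $\tau\times D^{2}\subset F(R)$ with co-core a fiber $\{q\}\times D^{2}$ for some interior point $q\in\tau$. This trivial compatibility is the only step requiring care; once it is in place, both implications follow immediately by unpacking the constructions already performed in Theorem~\ref{theorem:admit_sh}.
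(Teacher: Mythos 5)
Your proof follows the same route as the paper's: for (1) the handle decomposition dual to a triangulation of the support shadow (0-handles at vertices, 1-handles at edges, 2-handles at triangles), for (2) the shadow built in the proof of Theorem~\ref{theorem:admit_sh} from the support handle decomposition, with the observation in both directions that the co-core of a region and the co-core of the corresponding 2-handle agree up to isotopy. One small inaccuracy: $F(R)$ need not be a product $R\times D^2$ when the gleam is nonzero or $R$ is non-orientable, but the fact you actually use --- that any two fiber disks of a $D^2$-bundle over a connected base are ambient isotopic through fibers --- holds regardless, so the argument is unaffected.
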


\subsection{Slice $\Rightarrow$ shadow ?}

\begin{quest}\label{quest:slice-shadow-handle}
Is every slice link a handle link (hence a shadow link) with respect to the same 4-manifold?
\end{quest}

\begin{rem}
If both the answers to Question~\ref{quest:eq_shadows} and Question~\ref{quest:slice-shadow-handle} were true we could conclude that every slice link is ribbon just using Remark~\ref{rem:sh_eq1}. Unfortunately we do not know if they are true or false. 
\end{rem}

By ``the contribute to the boundary'' of a 2-handle $h^{(2)} = D^2\times D^2$ attached along a map $\varphi: S^1\times D^2 \rightarrow \partial W$, we mean the part $D^2\times S^1 = \partial (h^{(2)} \cup_\varphi W ) \setminus \partial W$.

The following theorem partially answers positively to Question~\ref{quest:slice-shadow-handle}.

\begin{theo}\label{theorem:1}
Every slice link $L$ of a closed 3-manifold $M$ with respect to a compact orientable 4-manifold $W$ ($\partial W = M$) can be obtained in the following way:
\begin{enumerate}
\item{take a 4-manifold $W'\subset W$ with a handle-decomposition with just 0-, 1- and 2-handles;}
\item{construct a link $L'\subset \partial W'$ taking the boundary of the co-core of some 2-handles of $W'$;}
\item{add some 3-handles to $W'$ whose attaching spheres do not intersect the contribution to the boundary of the 2-handles of $W'$ giving the components of $L'$.}
\end{enumerate}
\begin{proof}
Thick $M$ to $M \times [-1,1]$. Attach a 4-dimensional 2-handle $h^{(2)}_i$ over each component of the slice link $L$ in $M\times \{1\}$. This forms a cobordism from $M$ to another compact orientable 3-manifold $M'$. Moreover this is a 4-sub-manifold of $W$. It is the regular neighborhood of the union of the boundary and the slice disks bounded by the components of the link. In fact the core of each $h^{(2)}_i$ is the slice disk in $W$ bounded by the attaching curve of $h^{(2)}_i$. Complete this cobordism to get $W$ (that is a cobordism from $M$ to the empty set $\varnothing$) by using only 1-, 2-, 3- and 4-handles. We can avoid using 0-handles because they would just add connected components of the 4-manifold or create complementary pairs of (0-1)-handles. Take the dual of this cobordism and handle-decomposition. Hence we get a cobordism starting from the empty set and arriving to $M$ that is $W$ with a handle-decomposition without 4-handles. The co-cores of the dual of the $h^{(2)}_i$'s are the cores of the $h^{(2)}_i$'s, hence their boundary are the components of the link.

On the boundary view point attaching a 4-dimensional 3-handle is selecting a 2-sided embedded 2-sphere $S$ (a sphere with a tubular neighborhood diffeomorphic to $S^2\times [-1,1]$), cutting the 3-manifold along $S$ and gluing a 3-disk along the two spheres created by the cut. Since a 3-handle intersects the boundary just in two disjoint 3-disks we can slide them with an isotopy so that they do not intersect the attaching tori of the dual of the $h^{(2)}_i$'s. In particular we can attach the dual of the $h^{(2)}_i$'s before attaching the 3-handles. 
\end{proof}
\end{theo}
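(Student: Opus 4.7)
The plan is to first realize the slicing disks as cores of $2$-handles inside $W$, then to arrange the remaining handles so that after dualizing they all have index $\leq 3$, and finally to isotope the $3$-handles off the $2$-handles carrying $L$.

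First I would choose a collar $M\times[-1,1]\hookrightarrow W$ with $M\cong M\times\{-1\}$ and then take a tubular neighborhood $D_i\times D^2$ of each slicing disk $D_i\subset W$ bounded by a component $K_i$ of $L$. The intersection with $M\times\{1\}$ is a solid torus $K_i\times D^2$, so attaching these tubular neighborhoods to the collar realizes a $4$-submanifold $W_0\subset W$ obtained from $M\times[-1,1]$ by attaching one $4$-dimensional $2$-handle $h_i^{(2)}$ along $K_i\subset M\times\{1\}$, with core the slice disk $D_i$. By construction the boundary of the co-core of $h_i^{(2)}$ equals $K_i$, which is the condition we ultimately want to read off.

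Next I would view $W\setminus\mathrm{int}(W_0)$ as a cobordism from $\partial_+W_0$ to $\emptyset$ and pick any handle decomposition of it. A standard cancellation argument lets me assume it uses no $0$-handles: any such $0$-handle either creates a new connected component (which can be avoided by starting in a connected piece) or can be cancelled against a $1$-handle. Concatenating with the decomposition already used for $W_0$ gives a handle decomposition of $W$, viewed as a cobordism from $M$ to $\emptyset$, in which the slicing $2$-handles come first and all remaining handles have index in $\{1,2,3,4\}$. Now I would dualize, turning the cobordism upside down to read it from $\emptyset$ to $M$; this converts each $k$-handle into a $(4-k)$-handle, so the absence of $0$-handles in $W\setminus\mathrm{int}(W_0)$ becomes absence of $4$-handles in the dualized description, and each $h_i^{(2)}$ stays a $2$-handle whose co-core is now its original core $D_i$. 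Taking $W'$ to be the union of all $0$-, $1$-, and $2$-handles of the dual decomposition then fulfills the first two conditions, with $L'$ the family of boundaries of co-cores of the $2$-handles dual to the $h_i^{(2)}$, which is exactly $L$.

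The main obstacle is the last requirement: the attaching $2$-spheres of the remaining $3$-handles must be arranged to miss the $D^2\times S^1$ contributions to the boundary of the $2$-handles carrying $L'$. Since both are $2$-dimensional subsets sitting inside a $3$-manifold, bare transversality forces intersections along curves rather than emptiness. The plan is to isotope each attaching $2$-sphere off each solid-torus boundary piece one at a time: because a $3$-handle meets the intermediate boundary in two disjoint $3$-balls which can be pushed around freely subject only to not recreating intersections with earlier handles, I expect to be able to slide each attaching sphere across the complementary side of the solid torus. This local sliding/isotopy argument in the intermediate $3$-manifold is the delicate step, and I would expect it to be the main technical point requiring care.
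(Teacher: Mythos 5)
Your proposal is correct and follows essentially the same route as the paper: realize the slice disks as cores of 2-handles attached to a collar of $M$, complete to a handle decomposition of $W$ using no 0-handles, dualize to kill the 4-handles, and then push the dual 2-handles carrying $L$ below the 3-handles. The step you flag as delicate is just the standard reordering of handles by increasing index: in the intermediate 3-manifold the attaching circle of a 2-handle (dimension 1) can be isotoped off the belt sphere of a 3-handle (dimension 0) by general position, after which the 2-handle slides down past the 3-handle and the 3-handle's attaching sphere automatically misses the 2-handle's $D^2\times S^1$ contribution --- which, incidentally, is a 3-dimensional solid torus rather than a 2-dimensional set as you wrote, though this does not affect your (correct) conclusion that bare transversality alone would not suffice.
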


\begin{prop}\label{theorem:slice-shadow_s_conn}
Let $L\subset M$ be a link. Then $L$ is a handle link (hence a shadow link) with respect to some simply connected 4-manifold $W$. Furthermore $W$ is homotopically equivalent to a bouquet of 2-spheres.
\begin{proof}
Attach to $M \times[-1,1]$ a 2-handle $h^{(2)}_K$ along each component $K$ of the link. We get a cobordism from $M$ to a closed 3-manifold $M'$ that is obtained by surgery on $L$. We follow Lemma~\ref{lem:shadow1} to get a cobordism $W'$ from the empty set to $M'$ with a handle-decomposition with one 0-handle and some 2-handles. Complete the cobordism $W':\varnothing \rightarrow M'$ adding the dual of the 2-handles $h^{(2)}_K$'s. The components of $L$ are the co-cores of these latest 2-handles.
\end{proof} 
\end{prop}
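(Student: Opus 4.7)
The plan is to build $W$ in two stages, freely exploiting Lemma~\ref{lem:shadow1}. First I would pick any framing on $L$, let $M'$ denote the result of surgery on $L$ with that framing, and apply Lemma~\ref{lem:shadow1} to $M'$. That produces a shadow of $M'$ homotopy equivalent to a bouquet of $2$-spheres whose $4$-dimensional thickening $W'$ therefore satisfies $\partial W' = M'$, admits a handle-decomposition using only $0$-, $1$-, and $2$-handles, and is homotopy equivalent to $\bigvee_j S^2$; in particular $W'$ is simply connected.

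Second, for each component $K\subset L$ I would attach a dual $2$-handle $h^{(2),*}_K$ to $W'$ along the belt sphere of the phantom $2$-handle $h^{(2)}_K$ that realizes the chosen surgery on $K$ (as in the proof of Theorem~\ref{theorem:1}). By the standard duality of $4$-dimensional handles, these attachments reverse the surgery on the boundary, so the resulting $4$-manifold $W := W' \cup \bigcup_K h^{(2),*}_K$ has $\partial W = M$. The decisive observation is that the co-core of $h^{(2),*}_K$ is precisely the core of $h^{(2)}_K$, a properly embedded $2$-disk in $W$ whose boundary is $K\subset M$. Hence $L$ appears exactly as the union of boundaries of co-cores of the distinguished $2$-handles $h^{(2),*}_K$, which is the handle-link condition for $W$ with the combined handle-decomposition of $W'$ and the $h^{(2),*}_K$'s.

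It remains to check the two homotopical claims. Simple connectedness of $W$ follows because $W'$ is simply connected and attaching $2$-handles can only kill elements of $\pi_1$. Moreover, $W$ is a $2$-handlebody, so it collapses onto the $2$-complex formed by the cores of its $0$-, $1$-, $2$-handles; a simply connected $2$-dimensional CW complex has $H_n=0$ for $n\geq 3$ and $H_2$ free abelian, so any map from a bouquet of $2$-spheres realizing a basis of $H_2$ is a homology equivalence between simply connected spaces, hence a homotopy equivalence by Whitehead. Therefore $W\simeq \bigvee S^2$, completing the proof.

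The only piece that genuinely demands care is the framing/duality bookkeeping: I must ensure that the dual attachments truly undo the surgery and deliver $M$ back as the boundary, and that the disks realizing the co-cores of the $h^{(2),*}_K$'s are disjoint from one another (which is automatic since distinct $2$-handles are attached along disjoint regions). This is classical $4$-dimensional handle calculus and presents no serious obstacle; no novel input beyond Lemma~\ref{lem:shadow1} is required.
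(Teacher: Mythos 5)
Your proposal is correct and follows essentially the same route as the paper: surger along $L$ to obtain $M'$, invoke Lemma~\ref{lem:shadow1} to produce a simply connected $2$-handlebody $W'$ bounded by $M'$ and homotopy equivalent to a bouquet of $2$-spheres, and then reattach the dual $2$-handles to recover $M$ as the boundary, with the components of $L$ appearing as boundaries of the co-cores of these dual handles. The extra verification you supply for the homotopy type of $W$ (via Whitehead) is a detail the paper leaves implicit, but the underlying construction is identical.
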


\chapter{The Tait conjecture in $\#_g(S^1\times S^2)$}\label{chapter:Tait}

We introduced the notions of ``diagram'' of a link in $\#_g(S^1\times S^2)$, ``H-decomposition'', ``e-shadow'', ``alternating diagram'', ``alternating link'', ``crossing number'', ``$\Z_2$-homologically trivial'' and ``Kauffman state'' (Section~\ref{sec:Kauf_g}). 

The Tait conjecture states that reduced alternating diagrams of links in $S^3$ have the minimal number of crossings (see Section~\ref{sec:Tait_conj0}). In this chapter we extend the result to alternating links in the connected sum $\#_g(S^1\times S^2)$ of $g\geq 0$ copies of $S^1\times S^2$. We follow \cite{Carrega_Tait1} and \cite{Carrega_Taitg}.

We find a dichotomy given by the $\Z_2$-homology class of links: the appropriate version of the statement is false for $\Z_2$-homologically non trivial links. On the other hand, in $S^1\times S^2$ and $\#_2(S^1\times S^2)$ the appropriate version of the statement is true for $\Z_2$-homologically trivial links, and the proof also uses the Jones polynomial. In the general case the method applied to $\Z_2$-homologically trivial links provides just a partial result and we are not able to say if the appropriate statement is true. As we saw in Proposition~\ref{prop:0Kauff_gr}, the Kauffman bracket is also very sensitive to the $\Z_2$-homology class.
 
The results for the case of links in $S^1\times S^2$ need just some basic notions of skein theory, while the general case of links in $\#_g(S^1\times S^2)$ needs more complicated tools. We prove the results for general $g$ using Turaev's shadows (see Chapter~\ref{chapter:shadows}, Section~\ref{sec:sh_for_br} and Remark~\ref{rem:sh_for_br}). 

The proof of the results is based on the study of the \emph{breadth} of the Kauffman bracket. The Kauffman bracket of links in $S^1\times S^2$ is an integral Laurent polynomial (Proposition~\ref{prop:Laurent_pol}) and we can use the classical notion of ``breadth'' (Definition~\ref{defn:breadth}). The Kauffman bracket of links in $\#_g(S^1\times S^2)$ for $g\geq 2$ may not be a Laurent polynomial, but just a rational function (see Example~\ref{ex:Kauff}), hence we need to extend the notion of ``breadth'' (Definition~\ref{defn:breadth_g}).

\section{The extended Tait conjecture}

In this section we introduce some further notions, give some results, state the main theorems (Theorem~\ref{theorem:Tait_conj_g} and Theorem~\ref{theorem:Tait_conj_Jones_g}), and we show that the natural extension of the conjecture is false if we remove the hypothesis of being $\Z_2$-homologically trivial or, in the case $g=1$, if we substitute ``\emph{simple}'' with ``\emph{quasi-simple}'' (Definition~\ref{defn:reduced_g} and Definition~\ref{defn:quasi-reduced}).

\subsection{Simple diagrams}

\begin{defn}\label{defn:reduced_g}
A link diagram $D \subset S_{(g)}$ is \emph{simple} if there is no disk $B$ embedded in $S_{(g)}$ whose boundary intersects $D$ exactly in one crossing (as the ones of Fig.~\ref{figure:reducedD_g}, the yellow square is the disk $B$), and $D$ has no crossings adjacent to two (possibly coinciding) external regions (as the ones of Fig.~\ref{figure:reducedD2_g}).
\end{defn}

\begin{figure}[htbp]
\begin{center}
\includegraphics[scale=0.55]{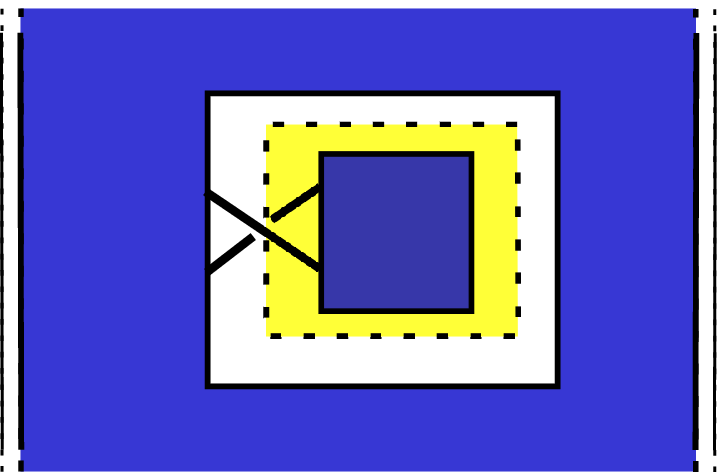} 
\hspace{0.5cm}
\includegraphics[scale=0.55]{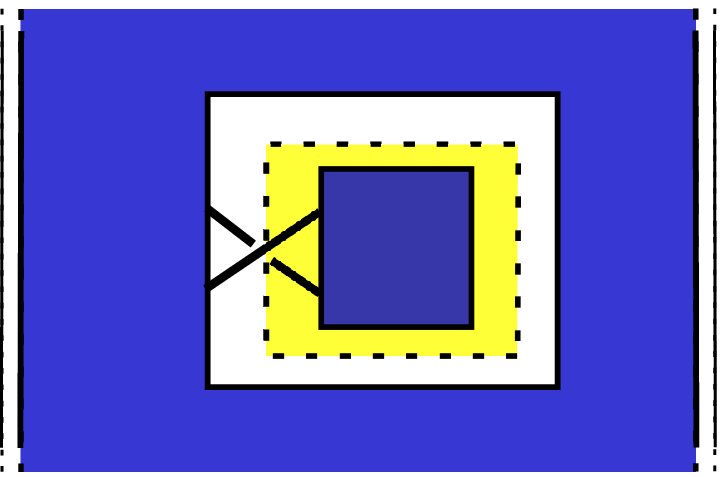}
\end{center}
\caption{Some non simple diagrams. We only show a
portion of $S_{(g)}$ that is a proper embedding of $[-1, 1]\times (-1,1)$. The blue parts cover the rest of the diagram and the yellow box is the embedded disk $B\subset S_{(g)}$ whose boundary intersects the diagram just in a crossing.}
\label{figure:reducedD_g}
\end{figure}

\begin{figure}[htbp]
\begin{center}
\includegraphics[scale=0.55]{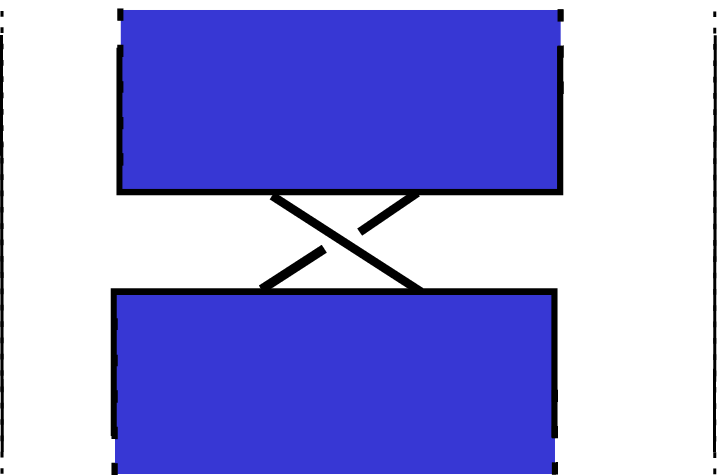} 
\hspace{0.5cm}
\includegraphics[scale=0.55]{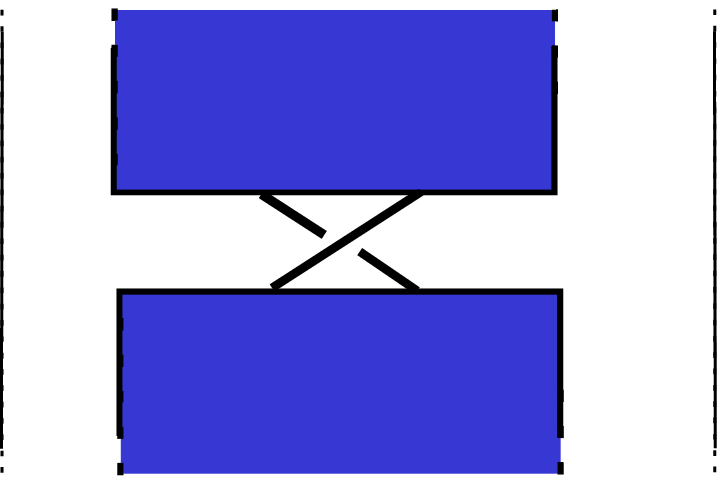}
\end{center}
\caption{More non simple diagrams. We only show a
portion of $S_{(g)}$ that is a proper embedding of $[-1, 1]\times (-1,1)$. The blue parts cover the rest of the diagram.}
\label{figure:reducedD2_g}
\end{figure}

There are no diagrams in the disk $D^2$ as the ones of Fig.~\ref{figure:reducedD2_g} that are not as the ones of Fig.~\ref{figure:reducedD_g}, hence clearly simple diagrams in $D^2$ are exactly the reduced ones (Definition~\ref{defn:reduced_0}: a diagram without crossings as the ones of Fig.~\ref{figure:reducedDS3}). In $S^3$ every link diagram with the minimal number of crossings is reduced. It would be nice if in $\#_g(S^1\times S^2)$ every link except some obvious selected cases like the knot $1_1$ in $S^1\times S^2$ (Subsection~\ref{subsec:1_1}), had a simple diagram with the minimal number of crossings. With the following proposition we can note that almost all links have one such diagram. The fact that we can not remove all the crossings as the ones of Fig.~\ref{figure:reducedD2_g} is the reason why we adopt the word ``simple'' instead of ``reduced''.

\begin{defn}\label{defn:quasi-reduced}
A \emph{quasi-simple} diagram is a link diagram $D\subset S^1\times [-1,1]=S_{(1)}$ without crossings as the ones in Fig.~\ref{figure:reducedD_g} and with at most one crossing as the ones in Fig.~\ref{figure:reducedD2_g}.
\end{defn}

\begin{prop}\label{prop:simple_diag}
$\ $
\begin{enumerate}
\item{Every diagram $D\subset S_{(g)}$ can be replaced by another diagram $D'$ without crossings as the ones of Fig.~\ref{figure:reducedD_g}, with no more crossings ($n(D') \leq n(D)$, where $n(\bar D)$ is the number of crossings of $\bar D$) and representing the same link in $\#_g(S^1\times S^2)$ by the same e-shadow (Definition~\ref{defn:e-shadow}).} 
\item{If a link in $\#_g(S^1\times S^2)$ does not intersect twice a non separating 2-sphere, it has a simple diagram with the minimal number of crossings.}
\item{For every e-shadow, every link in $S^1\times S^2$ is represented by a quasi-simple diagram with the minimal number of crossings for that e-shadow.}
\end{enumerate}
\begin{proof}
To get the first statement we simply apply an isotopy on the handlebody where the punctured disk lies. The second statement follows from the first one.

Consider the case $g=1$. Applying some Reidemeister moves we can put all the crossings as the ones in Fig.~\ref{figure:reducedD2_g} in the same band, one after the other (see the first move in Fig.~\ref{figure:screw}). We can suppose that these crossings are all of the same type: the band is represented by an alternating part of diagram (see the second move of Fig.~\ref{figure:screw}). If we apply the move described in Subsection~\ref{subsec:diag} we add or remove two crossings as the ones in Fig.~\ref{figure:reducedD2_g} (see the last three moves in Fig.~\ref{figure:screw}). Therefore, given a $n$-crossing diagram $D$ with $k$ crossings as the ones in Fig.~\ref{figure:reducedD2_g}, we can get another diagram $D'$ representing the same link which has $n- k + \bar k$ crossings, where $\bar k =0$ if $k\in 2\Z$ and $\bar k =1$ if $k \in 2\Z+1$, and has $\bar k$ crossings as the ones in Fig.~\ref{figure:reducedD2_g}.
\end{proof}
\end{prop}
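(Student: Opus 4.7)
The plan is to prove the three statements in order, each building on the previous. For statement 1, the crossings of Fig.~\ref{figure:reducedD_g} are classical \emph{nugatory} crossings: the disk $B$ contains one strand of the diagram, and a $180^\circ$ rotation of $B$ (realized as an ambient isotopy of the handlebody thickening of $S_{(g)}$) eliminates the crossing without altering the rest of $D$. Iterating this untwisting terminates since the crossing count strictly decreases at each step, producing a diagram $D'$ with $n(D') \le n(D)$ representing the same link for the same e-shadow.

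For statement 2, I would start from a diagram realizing the crossing number of $L$ and apply statement 1 to eliminate nugatory crossings without increasing $n(D)$, obtaining a still-minimal diagram $D'$ with no crossings of Fig.~\ref{figure:reducedD_g}. If $D'$ contained a crossing $c$ as in Fig.~\ref{figure:reducedD2_g}, the local rectangle around $c$ would exhibit a properly embedded arc $\alpha \subset S_{(g)}$ meeting $D'$ only at $c$, with both endpoints on $\partial S_{(g)}$. Thickening $\alpha$ through the handlebody and doubling produces a 2-sphere $S \subset \#_g(S^1 \times S^2)$ meeting $L$ transversely in exactly two points (the two strands at $c$). The main obstacle is verifying that $\alpha$ can always be chosen non-separating in $S_{(g)}$: otherwise one component of $S_{(g)} \setminus \alpha$ would be a disk whose interior meets $D'$ only at $c$, so $c$ could be pushed across this disk to reduce the crossing count, contradicting minimality. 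Hence $S$ is non-separating in $\#_g(S^1 \times S^2)$, contradicting the hypothesis, and $D'$ must be simple.

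For statement 3, where $g=1$ and $S_{(1)}$ is an annulus, the cross-section $\{*\} \times S^2$ is the unique essential non-separating sphere in $S^1 \times S^2$ and may genuinely meet $L$ twice; hence one cannot always eliminate Fig.~\ref{figure:reducedD2_g} crossings, but one can reduce their number to at most one. Starting from a minimum-crossing diagram, I would apply statement 1 and then use Reidemeister II and III moves to collect all remaining Fig.~\ref{figure:reducedD2_g} crossings into a single band of the annulus, arranged one above the other, and further local moves to equalize their signs. The handleslide of Fig.~\ref{figure:new_move}, which is a second Kirby move along the $0$-framed meridian of the solid torus, cancels pairs of same-sign crossings in this band without affecting the rest of the diagram. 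Iterating reduces the $k$ crossings of Fig.~\ref{figure:reducedD2_g} type to exactly $\bar k \in \{0,1\}$ equal to the parity of $k$, yielding a quasi-simple diagram with $n(D) - k + \bar k \le n(D)$ crossings. The main technical obstacle throughout is bookkeeping: each collection-and-cancellation step must be confirmed to be purely local to the band in question, so that crossings elsewhere in the annulus are undisturbed and the total count is monotonically non-increasing; this reduces to a direct Reidemeister-move analysis inside a disk neighborhood of the band.
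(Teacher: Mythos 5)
Your arguments for statements 1 and 3 are essentially the paper's: statement 1 is handled by an ambient isotopy of the handlebody (your $180^\circ$ rotation of $B$ is fine, since an embedded disk in $S_{(g)}$ contains no holes), and statement 3 follows the same route as the paper — collect the crossings of Fig.~\ref{figure:reducedD2_g} into one band, equalize their type, and cancel them in pairs with the encircling move of Fig.~\ref{figure:new_move}, leaving $\bar k = k \bmod 2$ of them.

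For statement 2 the paper only writes ``follows from the first one,'' so you are supplying an argument it omits, and your main mechanism is the right one: a crossing as in Fig.~\ref{figure:reducedD2_g} gives a properly embedded arc $\alpha$ meeting $D'$ only at that crossing, hence a $2$-sphere meeting $L$ twice, and the sphere is non-separating exactly when $\alpha$ is. The gap is in your disposal of the separating case. You claim that if $\alpha$ separates $S_{(g)}$ then one component of $S_{(g)}\setminus\alpha$ is a disk meeting $D'$ only at $c$, so the crossing is nugatory. That is false for $g\geq 2$: a separating arc with both endpoints on the same boundary component can have holes of $S_{(g)}$ on both sides, in which case neither component is a disk and no rotation is available. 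Concretely, in $S_{(2)}$ take the one-crossing ``figure-eight'' $K$ whose two lobes encircle the two holes (representing $x_1x_2^{-1}\in F_2$, which is not the class of any embedded curve, so this diagram is minimal), and add a small circle around each hole inside the corresponding lobe; the unique crossing is adjacent twice to the outer external region, the only arc through its external quadrants separates the two holes, and the crossing cannot be removed. This link is excluded from statement 2 only because a \emph{different} non-separating sphere (dual to $x_1$, say) meets it twice — so the statement survives, but establishing it in the separating case requires producing such an auxiliary sphere elsewhere in the diagram, which is an argument you have not given and which does not reduce to minimality of the crossing count.
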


Unfortunately we do not have an analogous version of Proposition~\ref{prop:simple_diag}-(3.) for $g\geq 2$.

\begin{quest}\label{quest:qred_no_red}
Are there links in $S^1\times S^2$ with crossing number bigger than $1$ and not admitting a simple diagram in the annulus with the minimal number of crossings? Is it true that links in $S^1\times S^2$ which are alternating, non \emph{H-split} (Definition~\ref{defn:split_homotopic_genus}), intersecting twice a non separating 2-sphere, $\Z_2$-homologically trivial and not bounding an orientable surface, do not have a simple diagram in the annulus with the minimal number of crossings? Are these links the only ones not admitting such diagrams?
\end{quest}

\begin{figure}[htbp]
$$
\picw{1.5}{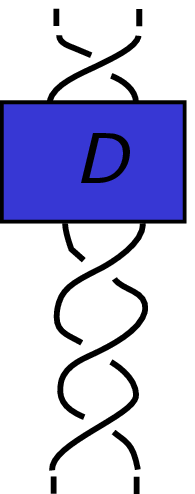} \ \ \leftrightarrow\ \  \picw{1.5}{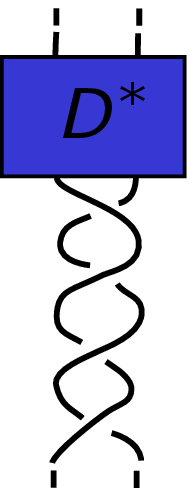} \ \ \leftrightarrow \ \ \picw{1.5}{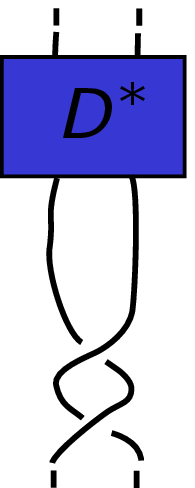}\ \  \leftrightarrow
$$
$$
\leftrightarrow\ \ \picw{1.5}{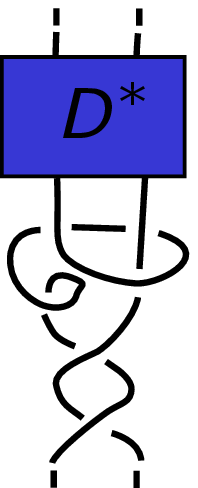}\ \  \leftrightarrow \ \ \picw{1.5}{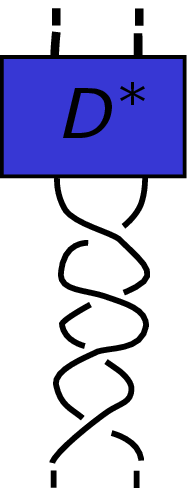} \ \ \leftrightarrow\ \  \picw{1.5}{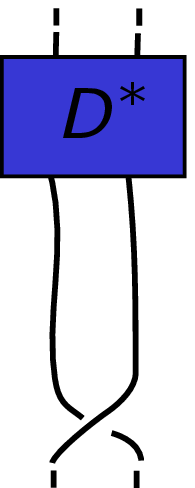}
$$
\caption{Moves for the crossings as the ones in Fig.~\ref{figure:reducedD2_g}. The part of diagram $D^*$ is the mirror image of $D$.}
\label{figure:screw}
\end{figure}

\begin{rem}\label{rem:quasi-simple}
Applying the moves seen in the proof of Proposition~\ref{prop:simple_diag}-(3.) we can substitute a crossing as the ones in Fig.~\ref{figure:reducedD2_g} with its mirror image and get another diagram representing the same link in $\#_g(S^1\times S^2)$ by the same embedding of the punctured disk. However the induced framings are different.
\end{rem}

The following proposition shows that Theorem~\ref{theorem:Tait_conj_g} becomes false if, in the case $g=1$, we simply replace ``simple'' with ``quasi-simple''.

\begin{prop}[\cite{Carrega_Tait1}]
The diagrams in Fig.~\ref{figure:no_Tait_qr} represent the same $\Z_2$-homologically trivial knot in $S^1\times S^2$.
\begin{proof}
By Remark~\ref{rem:quasi-simple} we can perform the moves in Fig.~\ref{figure:no_Tait_qr_moves} to get diagrams of the same link in $S^1\times S^2$.
\end{proof}
\end{prop}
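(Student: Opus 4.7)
The plan is to exhibit an explicit sequence of diagrammatic transformations in the annulus $S^1 \times [-1,1]$ carrying one of the two diagrams in Fig.~\ref{figure:no_Tait_qr} to the other, and then to check separately the $\Z_2$-homological triviality of the represented knot. The main tool will be Remark~\ref{rem:quasi-simple}: a crossing of the type shown in Fig.~\ref{figure:reducedD2_g} (i.e.\ a ``screw'' crossing adjacent to two external regions of the annulus) may be replaced by its mirror image, up to Reidemeister moves of types II and III and the annular slide of Fig.~\ref{figure:new_move}, without changing the represented link in $S^1 \times S^2$ (although the induced framing may differ). This is exactly the moves package encoded in Fig.~\ref{figure:screw}.

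First, I would identify in each of the two diagrams the screw crossings of the type in Fig.~\ref{figure:reducedD2_g}; by construction of quasi-simple diagrams only such crossings can obstruct a direct Reidemeister-equivalence inside the annulus. Then, applying Remark~\ref{rem:quasi-simple} one crossing at a time (flipping it to its mirror image via the chain of moves in Fig.~\ref{figure:screw}), I would perform the sequence of substitutions displayed in Fig.~\ref{figure:no_Tait_qr_moves} to convert one diagram into the other. After each substitution the diagram still represents the same unframed knot in $S^1\times S^2$ for the fixed e-shadow, so composing the whole sequence gives the desired equality of knots.

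Finally, to verify $\Z_2$-homological triviality it suffices, by Proposition~\ref{prop:Z_2_tr}-(4.), to check that every generic properly embedded arc in the annulus $S^1\times[-1,1]$ meets the diagram in an even number of points; equivalently one may read off the algebraic intersection of the knot with the non-separating $2$-sphere $\{pt\}\times S^2$ from either diagram and observe that it is even. Since both diagrams in Fig.~\ref{figure:no_Tait_qr} share the same intersection pattern with a meridian arc (they are related only by mirror-flipping of screw crossings), a single such count settles this point.

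The only genuine work is the explicit bookkeeping of the move sequence in Fig.~\ref{figure:no_Tait_qr_moves}, making sure that every intermediate diagram still lies in the annulus and that each local flip is legitimately an instance of Fig.~\ref{figure:screw}; no essential obstacle is expected, since the statement is a direct consequence of Remark~\ref{rem:quasi-simple} applied finitely many times.
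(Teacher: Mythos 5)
Your proposal is correct and follows essentially the same route as the paper: the paper's proof is exactly the observation that Remark~\ref{rem:quasi-simple} licenses the mirror-flip of the screw crossing, realized by the explicit sequence in Fig.~\ref{figure:no_Tait_qr_moves}. Your additional check of $\Z_2$-homological triviality via Proposition~\ref{prop:Z_2_tr}-(4.) is a sensible (if routine) supplement that the paper leaves implicit.
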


\begin{figure}[htbp]
$$
\picw{2}{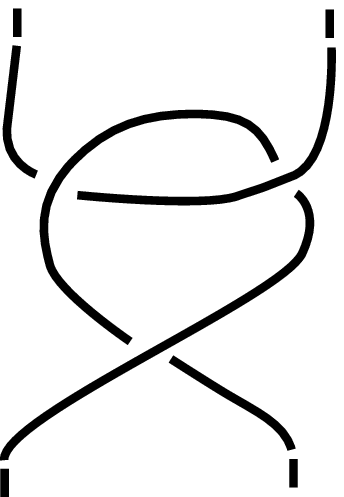} \ \ \leftrightarrow \ \  \picw{2}{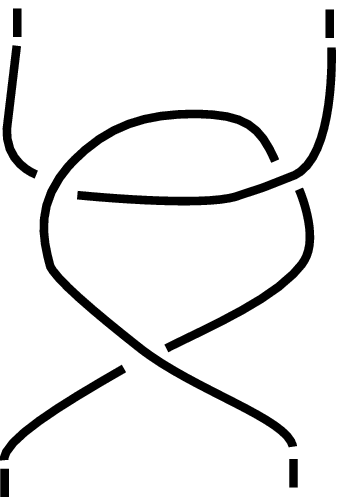} \ \ \leftrightarrow \ \ \picw{2}{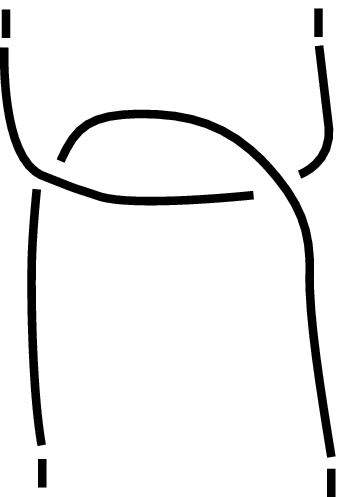}
$$
\caption{Moves in a part of $S_{(g)}$ that is a proper embedding of $[-1, 1]\times (-1,1)$ showing that, for $g=1$, the diagrams in Fig.~\ref{figure:no_Tait_qr} represent the same knot.}
\label{figure:no_Tait_qr_moves}
\end{figure}

\begin{figure}[htbp]
\begin{center}
\includegraphics[scale=0.55]{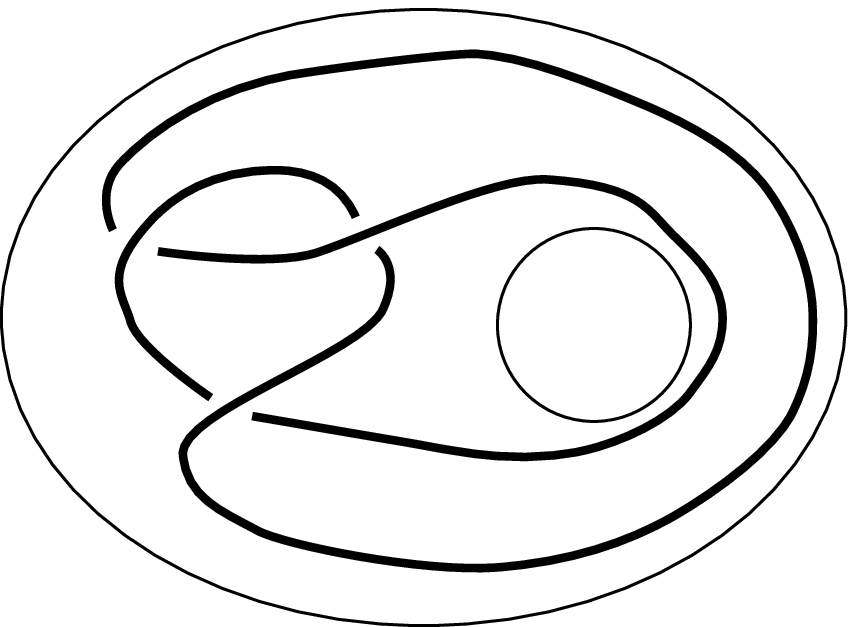} 
\hspace{0.5cm}
\includegraphics[scale=0.55]{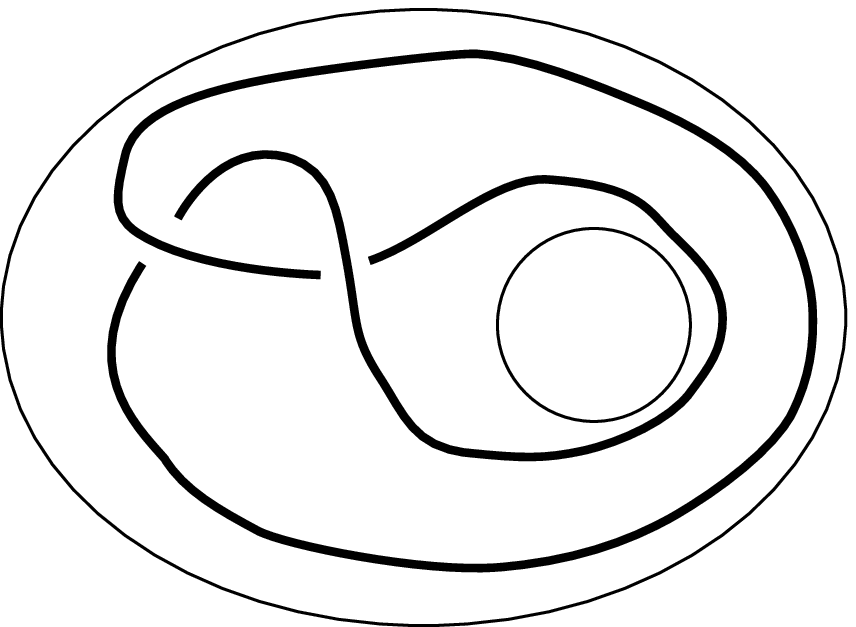}
\end{center}
\caption{Two quasi-simple alternating diagrams with different numbers of crossings and representing the same $\Z_2$-homologically trivial knot.}
\label{figure:no_Tait_qr}
\end{figure}

\subsection{More notions and results}

Let $D\subset S_{(g)}$ be a link diagram and $s$ a state of $D$. We introduced the following notations (Section~\ref{sec:Kauf_g}):
\begin{itemize}
\item{$sD$ is the number of homotopically trivial components of the splitting of $D$ with the state $s$;}
\item{$\sum s(i)$ is the sum over the crossings of the signs assigned by the state $s$;}
\item{$D_s$ is the diagram obtained removing the homotopically trivial components from the splitting of $D$ with $s$;}
\item{$X_s$ is the shadow collapsing onto a graph of genus $g$ that is associated to the state $s$ (Remark~\ref{rem:sh_for_br}).}
\end{itemize}

\begin{defn}\label{defn:adequate}
Let $D\subset S_{(g)}$ be a $n$-crossing link diagram. We denote by $g(D)$ the minimal number of holes of a punctured disk containing $D$ that is contained in $S_{(g)}$. We denote by $s_+$ and $s_-$ the constant states of $D$ assigning respectively always $+1$ and always $-1$. The diagram $D$ is said to be \emph{plus-adequate} if $s_+D > sD$ for every $s$ such that $\sum_i s(i)=n-2$, namely for every state $s$ differing from $s_+$ only at a crossing. It is said to be \emph{minus-adequate} if $s_-D > sD$ for every $s$ such that $\sum_i s(i) = 2-n$. The diagram $D$ is \emph{adequate} if it is both plus-adequate and minus-adequate. A link diagram is \emph{connected} if it is so as a 4-valent graph.
\end{defn}

\begin{rem}\label{rem:adeq}
If the diagram is contained in a 2-disk, being plus-adequate (resp. minus-adequate) is equivalent to the following fact: in the splitting of the diagram $D$ corresponding to $s_+$ (resp. $s_-$), for every crossing $j$ of $D$ the two strands replacing $j$ lie on two different components of that splitting of $D$ (remark after \cite[Definition 5.2]{Lickorish}). If the diagram is not contained in a 2-disk the two conditions are not related: Fig.~\ref{figure:rem_adeq} shows two diagrams in the annulus ($g=1$) satisfying the condition described above, but only the diagram on the right is plus-adequate.
\end{rem}

\begin{figure}
\begin{center}
\includegraphics[scale=0.55]{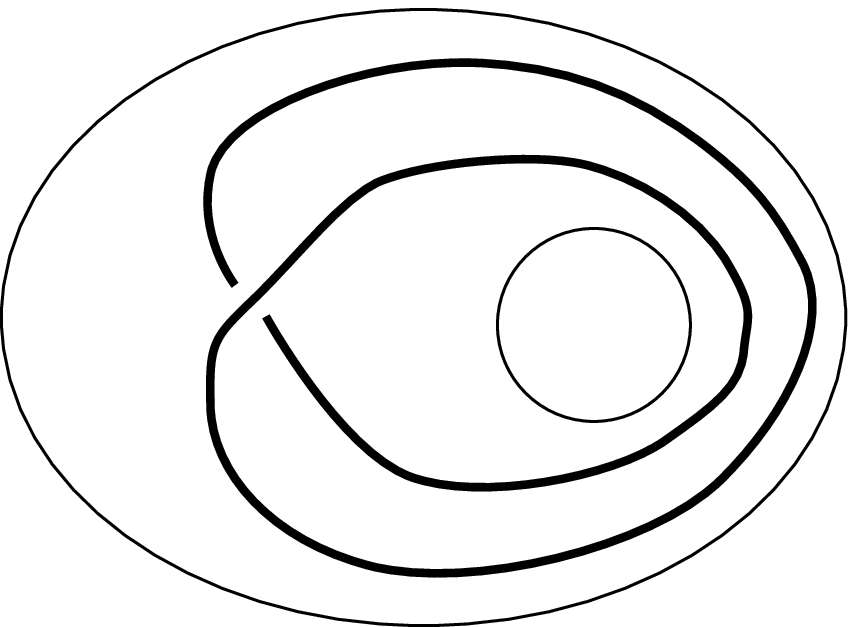} 
\hspace{0.5cm}
\includegraphics[scale=0.55]{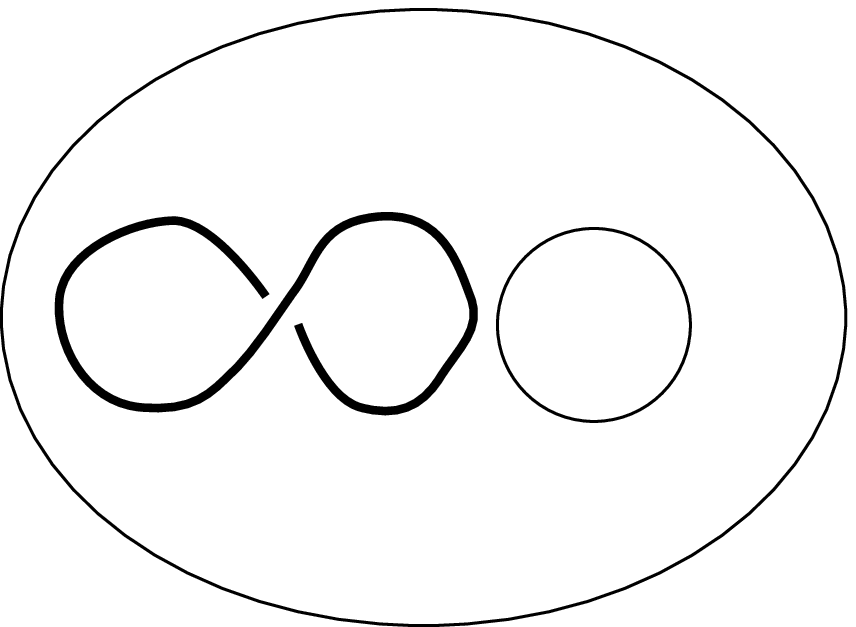}
\end{center}
\caption{Only the diagram on the right is plus-adequate.}
\label{figure:rem_adeq}
\end{figure}

\begin{prop}[\cite{Carrega_Taitg}]\label{prop:reduced_D_g}
Fix an e-shadow of $\#_g(S^1\times S^2)$. Every simple, alternating, connected link diagram $D\subset S_{(g)}$ that represents a $\Z_2$-homologically trivial link in $\#_g(S^1\times S^2)$ is adequate.
\begin{proof}
Every link diagram $D\subset S_{(g)}$ divides the punctured disk in connected \emph{regions} of dimension 2. We say that a region is \emph{external} if it touches the boundary of $S_{(g)}$, otherwise it is \emph{internal}. Since $D$ is connected we have that in the internal regions are 2-disks. We can give to the regions a black/white coloring as in a chessboard. The link represented by $D$ is $\Z_2$-homologically trivial, hence the external regions are colored in the same way. We say that a boundary component of a region is \emph{internal} if it is different from a boundary component of $S_{(g)}$. Since $D$ is alternating the splitting with $s_+$ is equal to the union of the internal boundary components of all the black regions or the white ones (see Fig.~\ref{figure:black-white}). We assume that the components of the splitting with $s_+$ bound the black regions, hence the splitting of $D$ with $s_-$ is equal to the union of the internal boundary components of all the white regions. Changing the splitting of a crossing either merges two different components of the splitting of $D$, or divides one component in two. The diagram $D$ is not plus-adequate if and only if there is a crossing $j$ in $D$ such that after splitting every crossing different from $j$ in the positive way we have a situation as in Fig.~\ref{figure:cases_lem}: if we change the splitting from $+1$ to $-1$ on $j$ we get one more homotopically trivial component by
\begin{enumerate}
\item{dividing in two a previous homotopically trivial component (Fig.~\ref{figure:cases_lem}-(left));}
\item{dividing in two a homotopically non trivial component (Fig.~\ref{figure:cases_lem}-(center));}
\item{fusing two homotopically non trivial components (Fig.~\ref{figure:cases_lem}-(right)).}
\end{enumerate}
In the various cases this implies that
\begin{enumerate}
\item{there is a black region that is adjacent twice to the same crossing;}
\item{the considered homotopically non trivial component must bound an external region that is a black annulus adjacent twice to the same crossing;}
\item{the two considered homotopically non trivial components must bound two external regions and these are black and adjacent to the same crossing.}
\end{enumerate}
The first two cases happen only if the crossing is as in Fig.~\ref{figure:reducedD_g}. The third case happens only if the crossing is as in Fig.~\ref{figure:reducedD2_g}. Therefore all these cases are avoided by our hypothesis. In the same way we prove minus-adequacy. 
\end{proof}
\end{prop}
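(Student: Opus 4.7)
My plan is to adapt the classical checkerboard argument used for the Tait conjecture in $S^3$ to the punctured disk setting, with the crucial input that $\Z_2$-homological triviality forces a consistent two-coloring across the boundary components of $S_{(g)}$.

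First, I would produce a checkerboard coloring of the regions of $S_{(g)}\setminus D$ in black and white so that regions sharing an edge receive opposite colors. By Proposition~\ref{prop:Z_2_tr}-(4), every generic properly embedded arc in $S_{(g)}$ meets $D$ in an even number of points; since walking from one boundary component of $S_{(g)}$ to another along such an arc toggles the color each time it crosses $D$, all external regions receive the same color. Connectedness of $D$ together with the fact that the interior of $S_{(g)}\setminus D$ decomposes into open disks plus the external ones makes the coloring well-defined. Normalize so that external regions are white.

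Next, I would use the alternating hypothesis to identify the $s_+$-splitting with the union of the internal boundary components of the black regions (this is exactly the observation recorded in Fig.~\ref{figure:black-white} in the proof of Proposition~\ref{prop:reduced_D_g}; at each alternating crossing the $+1$-resolution is the one that cuts out the black corners). Symmetrically, the $s_-$-splitting is the union of internal boundaries of white regions. At this point $s_+D$ counts precisely the homotopically trivial circles among these black boundary components, and analogously for $s_-D$.

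Now I would suppose for contradiction that $D$ fails to be plus-adequate: there is a crossing $j$ such that flipping it from $+1$ to $-1$ while keeping every other crossing at $+1$ strictly \emph{increases} the count of trivial circles. The flip either splits one circle into two or merges two circles into one, so the gain of a trivial component must fall into one of three cases: (i) a previously trivial circle is split into two circles, both trivial; (ii) a previously non-trivial circle is split into a trivial and a non-trivial circle; (iii) two previously non-trivial circles merge into a single trivial circle. In case (i) the relevant black region is adjacent to the crossing $j$ along two distinct corners — exactly the local picture of Fig.~\ref{figure:reducedD_g}, violating simplicity. In case (ii) the non-trivial circle bounds a black external region which must be an annulus touching $j$ twice, again the configuration of Fig.~\ref{figure:reducedD_g}. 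In case (iii) the two non-trivial circles bound two external black regions both incident to $j$, which is exactly the configuration of Fig.~\ref{figure:reducedD2_g}. All three cases are excluded by the hypothesis that $D$ is simple, contradiction. The minus-adequate case is obtained by the same argument after swapping the roles of black and white.

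The main obstacle is really just the bookkeeping in the last paragraph: one has to verify that the three combinatorial ways of gaining a trivial component genuinely correspond to the two forbidden local pictures of non-simple diagrams, and in particular that cases (ii) and (iii) — which are the new cases absent from the classical $S^3$ argument — are precisely the reason one must forbid crossings incident to two external regions (Fig.~\ref{figure:reducedD2_g}) in addition to the classical ``nugatory'' configuration (Fig.~\ref{figure:reducedD_g}). The $\Z_2$-triviality hypothesis is used exactly once, to make the global checkerboard coloring well-defined; without it the argument breaks down even at the setup stage.
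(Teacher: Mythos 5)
Your proposal is correct and follows essentially the same route as the paper's own proof: the checkerboard coloring made globally consistent by $\Z_2$-homological triviality, the identification via alternation of the $s_\pm$ splittings with the internal boundary components of the two color classes, and the three-case analysis of how a flip can gain a trivial circle, matched to the forbidden configurations of Fig.~\ref{figure:reducedD_g} and Fig.~\ref{figure:reducedD2_g}. The one slip is the double normalization: you cannot simultaneously impose ``external regions are white'' and ``the $s_+$ splitting bounds the black regions,'' since only the coloring swap is a free choice and in your cases (ii)--(iii) the external regions are forced to carry the $s_+$ color (taken literally, your convention would make those cases vacuous); keep only the paper's normalization that the $s_+$ class is black and let the external color be whatever the diagram dictates.
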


\begin{figure}[htbp]
$$
\pic{1.6}{0.4}{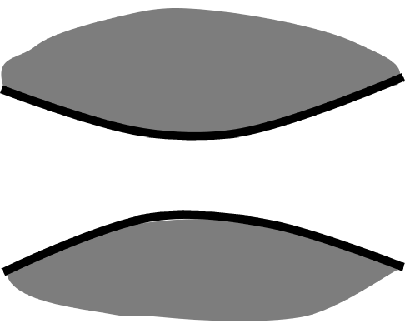} \ \stackrel{+1}{\longleftarrow} \ \pic{1.6}{0.4}{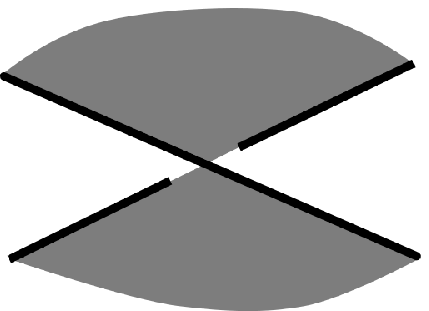} \ \stackrel{-1}{\longrightarrow} \ \pic{1.6}{0.4}{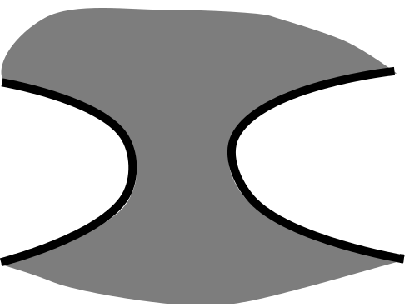}
$$
\caption{The colors of the
regions near a crossing in an alternating diagram are all of this
type. Therefore the splitting with $s_+$ (resp. $s_-$) gives the internal boundary components of all the black (white) regions.}
\label{figure:black-white}
\end{figure}

\begin{figure}[htbp]
\begin{center}
\parbox[c]{3cm}{ 
\includegraphics[scale=0.4]{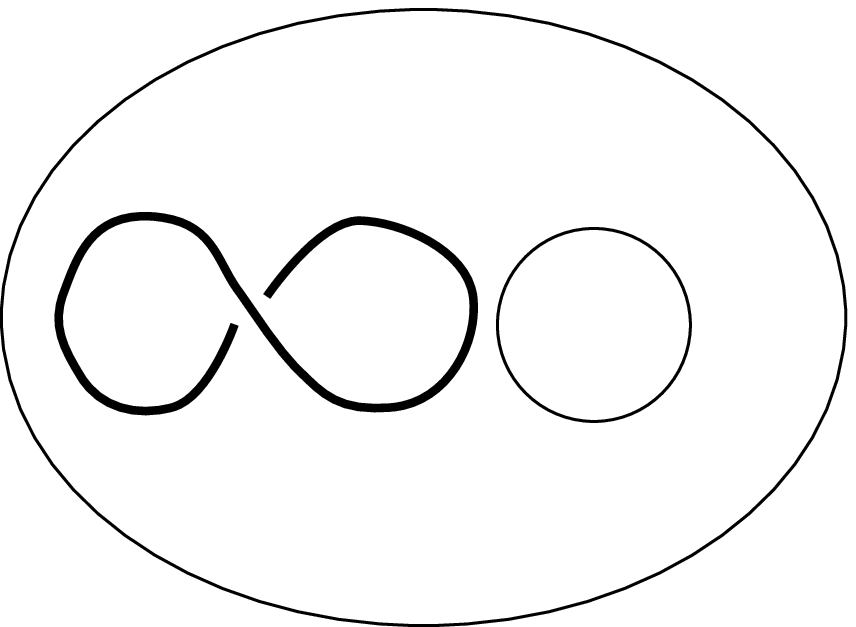} \\
\includegraphics[scale=0.4]{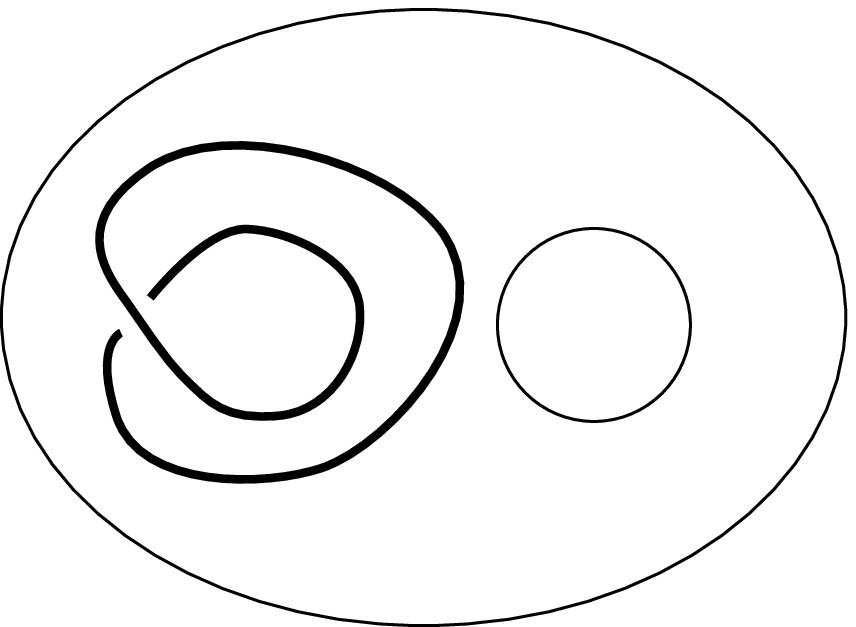} }
\hspace{0.35cm}
\parbox[c]{3cm}{  \includegraphics[scale=0.4]{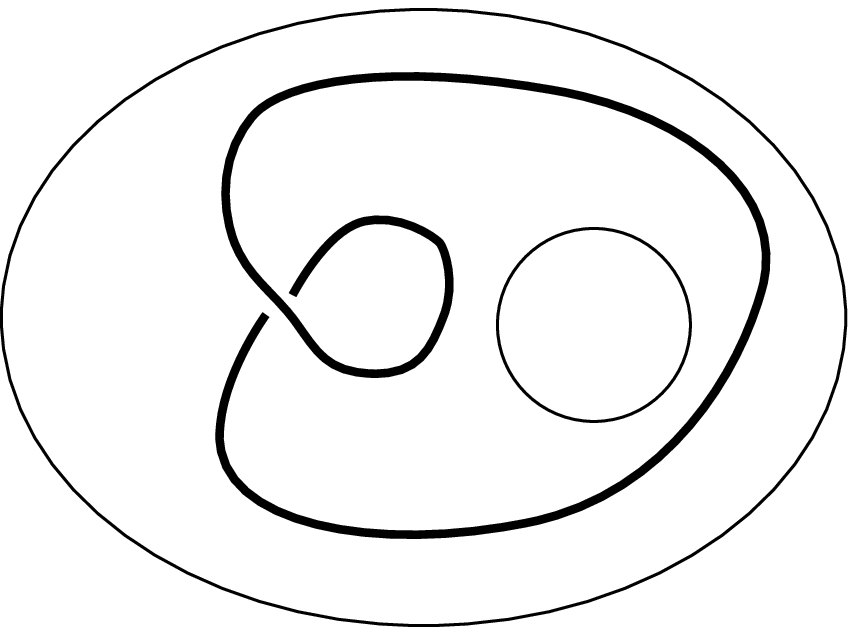} }
\hspace{0.35cm}
\parbox[c]{3cm}{ \includegraphics[scale=0.4]{rem_adeq1.eps} }
\end{center}
\caption{The cases described in the proof of Proposition~\ref{prop:reduced_D_g} and the first three type of cases in the proof of Lemma~\ref{lem:ineq1_g}. Only an homotopically non trivial annlus contained in $S_{(g)}$ is shown.}
\label{figure:cases_lem}
\end{figure}

\begin{defn}\label{defn:split_homotopic_genus}
A link $L\subset \#_g(S^1\times S^2)$ is \emph{H-split} if it has a non connected diagram in some e-shadow (non connected as a planar graph).

The \emph{homotopic genus} of $L$ is the minimum of $g(D)$ (Definition~\ref{defn:adequate}) as $D$ varies among all diagrams of $L$ in any e-shadow.
\end{defn}

\begin{prop}[\cite{Carrega_Tait1}]\label{prop:split}
A link $L\subset S^1\times S^2$ is H-split if and only if there are two non empty sub-links, $L_1$ and $L_2$, such that $L= L_1\cup L_2$ and they are separated by either a trivial sphere or a Heegaard torus. That means that either there is a 3-ball $B$ such that $L_1\subset B$ and $L_2\subset (S^1\times S^2)\setminus B$, or there are two disjoint solid tori, $V_1$ and $V_2$ such that $S^1\times S^2= V_1\cup V_2$, $L_1\subset V_1$ and $L_2\subset V_2$ ($\partial V_1= \partial V_2$ is the Heegaard torus).
\begin{proof}
Let $D,D_1,D_2\subset S^1\times [-1,1]$ be non empty link diagrams such that $D$ is the disjoint union of $D_1$ and $D_2$. Up to enumeration there are two cases: $D_1$ is contained in a 2-disk disjoint from $D_2$, and neither $D_1$ nor $D_2$ are contained in a 2-disk. Fix an e-shadow. In the first case we have a 3-ball in $S^1\times S^2$ that contains the components of the link that project on $D_1$ and is disjoint from the ones that project on $D_2$. In the second case we have a simple closed curve in the annulus that is isotopic to the core, this defines a separating Heegaard torus. Conversely, if we have a sphere or a Heegaard torus that separate the link components we can find one such diagram for some e-shadow.
\end{proof}
\end{prop}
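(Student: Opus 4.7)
The plan is to prove both directions by a case analysis indexed by the topological type of the separating surface.

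For the ($\Rightarrow$) direction, I fix an e-shadow, identifying the handlebody $V_1 \subset S^1\times S^2$ with $S^1\times D^2$ and the annulus $S_{(1)}$ with $A = S^1\times I_1$ for a diameter $I_1 \subset D^2$. Given a disconnected diagram $D = D_1 \sqcup D_2 \subset A$ of $L$, there are two subcases: either one of the pieces, say $D_1$, is contained in an embedded 2-disk $B^2 \subset A$ disjoint from $D_2$, or neither piece is contained in any such 2-disk. In the first case, the 3-dimensional thickening of $B^2$ inside $V_1$ is a 3-ball $B^3 \subset S^1\times S^2$ containing the sublink $L_1 \subset L$ projecting to $D_1$ and disjoint from the sublink $L_2$ projecting to $D_2$, so $\partial B^3$ is the desired separating 2-sphere. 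In the second case, I find a simple closed curve $\gamma \subset A \setminus (D_1 \cup D_2)$ separating $D_1$ from $D_2$; such a $\gamma$ is necessarily essential in $A$ and hence isotopic to the core.

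The main step is then promoting $\gamma$ to a Heegaard torus. Parametrizing so that $\gamma = S^1 \times \{0\}$ for $0$ the center of $D^2$, I take a second diameter $I_2 \subset D^2$ perpendicular to $I_1$ and set $C := S^1 \times I_2 \subset V_1$. The annulus $C$ separates the $D_1$-thickening from the $D_2$-thickening inside $V_1$. Its boundary $\partial C$ lies on $T = \partial V_1$ and consists of two longitudes of $V_1$; under the Heegaard identification $V_2 = S^1 \times D^2$ glued to $V_1$ by the identity on $T$, these are also longitudes of $V_2$, and I cap $C$ off inside $V_2$ by $C' := S^1 \times I_2$ in the $V_2$-coordinates. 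The resulting closed surface $T' = C \cup C'$ is a torus, and a direct computation identifies each of its two complementary regions with the union of two half-disk solid tori glued along a half-annulus of $T$, which yields $S^1 \times D^2$, a solid torus; hence $T'$ is a Heegaard torus, and by construction it separates $L_1$ from $L_2$.

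For the converse direction, suppose $L = L_1 \cup L_2$ is separated by either a 2-sphere bounding a 3-ball or a Heegaard torus. In the sphere case, I isotope $L_1$ into a small 3-ball inside $V_1$ whose projection to $A$ is an embedded 2-disk disjoint from the projection of $L_2$, yielding the sought disconnected diagram. In the Heegaard torus case, the uniqueness result of Theorem~\ref{theorem:Heegaard_split_S1xS2} allows me to assume the separating torus is $\partial V_1$ itself, so that $L_1 \subset V_1$ and $L_2 \subset V_2$. I then isotope $L_2$ through $\partial V_1$ into a thin collar neighborhood of $\partial V_1$ inside $V_1$, whose projection to $A$ lies in a neighborhood of $\partial A$, while simultaneously arranging $L_1$ to project into the complementary interior portion of $A$; this yields a disconnected diagram of $L$. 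The main obstacle is the verification in the second subcase of the forward direction that the closed torus $T' = C \cup C'$ really bounds a solid torus on each side; the remaining steps rely on standard uniqueness results and routine isotopy and projection arguments.
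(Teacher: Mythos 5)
Your proposal is correct and follows essentially the same route as the paper: the dichotomy on whether one piece of the disconnected diagram lies in a 2-disk, the promotion of an essential separating curve in the annulus to a Heegaard torus, and the converse by isotoping the two sublinks into the two pieces of the decomposition; your explicit verification that $T'=C\cup C'$ bounds a solid torus on each side just fills in a step the paper asserts without proof. One phrase in your converse is imprecise --- the projection to $A$ of a boundary collar of $V_1$ is all of $A$, not a neighborhood of $\partial A$ --- but the intended isotopy (push $L_2$ into a small tubular neighborhood of the core of $V_2$ and carry it across the Heegaard torus into $S^1\times D_\epsilon$ for a small disk in the lower half-diameter) works and is at the level of detail the paper itself omits.
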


\begin{prop}[\cite{Carrega_Taitg}]\label{prop:homot_genus}
The homotopic genus of a link $L\subset \#_g(S^1\times S^2)$ is equal to the minimum $g'$ such that the complement has a connected sum decomposition of the form 
$$
\#_g(S^1\times S^2)\setminus L = (\#_{g'}(S^1\times S^2)\setminus L') \# (\#_{g-g'}(S^1\times S^2))
$$
for some link $L'\subset \#_{g'}(S^1\times S^2) $.
\begin{proof}
If $L$ has homotopic genus $g'\leq g$ there is a diagram $D\subset S_{(g)}$ that represents $L$ via some e-shadow and is contained in a disk with $g'$ holes lying in $S_{(g)}$. Hence there is a factor $\#_{g-g'}(S^1\times S^2)$ in a connected sum decomposition of the complement of $L$. On the other hand if we have a decomposition $\#_g(S^1\times S^2)= (\#_{g'}(S^1\times S^2)\setminus L') \# (\#_{g-g'}(S^1\times S^2))$, we can get a diagram $D\subset S_{(g)}$ representing $L$ via some e-shadow by adding $g-g'$ 1-handles to $S_{g'}$ equipped with a diagram $D'\subset S_{(g')}$ of $L'$. This implies that the homotopic genus of $L$ is at most $g'$.
\end{proof}
\end{prop}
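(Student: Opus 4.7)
The plan is to prove the two inequalities separately, using the correspondence between diagrams in punctured disks and Heegaard decompositions of $\#_g(S^1\times S^2)$, together with the fact that splitting off an unused hole of $S_{(g)}$ corresponds to splitting off an $S^1\times S^2$ summand.

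First I would prove the inequality ``homotopic genus $\leq g'$''. Suppose the complement admits the decomposition
$$
\#_g(S^1\times S^2)\setminus L = (\#_{g'}(S^1\times S^2)\setminus L') \# (\#_{g-g'}(S^1\times S^2)) .
$$
Pick a diagram $D'\subset S_{(g')}$ of $L'$ for some e-shadow of $\#_{g'}(S^1\times S^2)$. The connected sum with $\#_{g-g'}(S^1\times S^2)$ can be realized topologically by attaching $g-g'$ extra 1-handles to the handlebody containing the chosen e-shadow, which at the level of the e-shadow means attaching $g-g'$ extra empty 1-handles (i.e., holes) to $S_{(g')}$, producing an e-shadow of $\#_g(S^1\times S^2)$ diffeomorphic to $S_{(g)}$. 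Since $D'$ is disjoint from the new holes, the same diagram $D=D'\subset S_{(g)}$ represents $L$ and is contained in $S_{(g')}$, so $g(D)\leq g'$.

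For the reverse inequality I would argue as follows. Suppose $L$ has homotopic genus $g'$, and choose a diagram $D\subset S_{(g)}$ with $g(D)=g'$, so that $D$ sits inside an embedded disk with $g'$ holes $S_{(g')}\subset S_{(g)}$. The handlebody $H_g\subset\#_g(S^1\times S^2)$ collapsing onto $S_{(g)}$ contains the sub-handlebody $H_{g'}$ collapsing onto $S_{(g')}$, and the extra $g-g'$ holes of $S_{(g)}$ correspond to $g-g'$ 1-handles of $H_g$ attached to $H_{g'}$. Each such 1-handle can be enclosed in a 3-ball in $\#_g(S^1\times S^2)$ meeting the Heegaard surface in a separating sphere of the boundary; cutting along these $g-g'$ spheres exhibits $\#_g(S^1\times S^2)$ as the connected sum of $\#_{g'}(S^1\times S^2)$, containing $L$, with $g-g'$ copies of $S^1\times S^2$. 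Since $D\subset S_{(g')}$, the link $L$ lies entirely in the $\#_{g'}(S^1\times S^2)$ summand, giving the desired decomposition of the complement with $L'=L$ viewed inside $\#_{g'}(S^1\times S^2)$.

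The main delicate point will be verifying that the extra holes of $S_{(g)}$ really do correspond to separating spheres splitting off $S^1\times S^2$ factors, rather than, say, contributing to a more subtle decomposition; this uses Theorem~\ref{theorem:Heegaard_split_S1xS2} (uniqueness of the Heegaard splitting of $\#_g(S^1\times S^2)$ up to isotopy), which guarantees that the standard Heegaard surface obtained by doubling $S_{(g)}$ agrees up to isotopy with the one obtained by doubling the sub-surface $S_{(g')}$ joined by $g-g'$ trivial annuli, so the separating spheres in question can be taken to be the doubles of the properly embedded arcs in $S_{(g)}$ isolating the extra holes. Everything else is a direct unpacking of definitions.
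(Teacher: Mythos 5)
Your proposal is correct and follows essentially the same route as the paper: both directions are established via the correspondence between a diagram contained in a disk with $g'$ holes and a connected sum decomposition splitting off a $\#_{g-g'}(S^1\times S^2)$ factor, with the separating spheres arising as doubles of properly embedded disks over arcs isolating the unused holes. Your appeal to Theorem~\ref{theorem:Heegaard_split_S1xS2} is harmless but not needed, since the separating spheres are constructed directly from the arcs; the paper leaves this detail implicit.
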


\begin{rem}
By Proposition~\ref{prop:homot_genus} if a link $L\subset \#_g(S^1\times S^2)$ has homotopic genus $g'<g$, its complement $\#_g(S^1\times S^2)\setminus L$ is reducible, in particular it is not hyperbolic. By Proposition~\ref{prop:homot_genus} and Remark~\ref{rem:tensor} the links with homotopic genus $g'\leq g$ can be seen as links in $\#_{g'}(S^1\times S^2)$, hence the really interesting links in $\#_g(S^1\times S^2)$ are the ones with homotopic genus $g$.
\end{rem}

We need to extend the notion of ``\emph{breadth}'' from Laurent polynomials to rational functions:
\begin{defn}\label{defn:breadth_g}
Let $f = g/h$ be a rational function and $g$ and $h$ be two polynomials. The \emph{degree}, or the \emph{order} at $\infty$, of $f$ is the difference of the maximal degree of the non zero monomials of $g$, $\ord_\infty g$, and the maximal degree of the non zero monomials of $h$, $\ord_\infty h$:
$$
\ord_\infty f := \ord_\infty g -\ord_\infty h .
$$ 
The \emph{order} at $0$ of $f$ is the difference of the minimal degree of the non zero monomials of $g$, $\ord_0 g$, and the minimal degree of the non zero monomials of $h$, $\ord_0 h$:
$$
\ord_0 f := \ord_0 g -\ord_0 h.
$$
The \emph{breadth} of $f$ is 
$$
B(f):= \ord_\infty f - \ord_0 f = B(g) -B(h).
$$
If $f=0$ we have $\ord_\infty f := -\infty$ and $\ord_0 f:= \infty$ and we define $B(f):=0$.
\end{defn}
Clearly the definitions above do not depend on the choice of $g$ and $h$. If $0\leq \ord_0 f < \infty$, $\ord_0 f$ is equal to the multiplicity of $f$ in $0$ as a zero. If $\ord_0 f \leq 0$, $\ord_0 f$ is the negative of the order of $f$ in $0$ as a pole. If $-\infty < \ord_\infty f \leq 0$, $\ord_\infty f$ is the negative of the multiplicity of $f$ in $\infty$ as a zero. If $0 \leq \ord_\infty f$, $\ord_\infty f$ is the order of $f$ in $\infty$ as a pole. Here are some easy and useful properties:
\begin{itemize}
\item{if $f,g\neq 0$ then $B(f/g) = B(f)-B(g)$;}
\item{$\ord_0 f(A)= - \ord_\infty f(A^{-1})$;}
\item{if $f$ is \emph{symmetric} (for all $A$ we have $f(A)=f(A^{-1})$), then $\ord_\infty f = -\ord_0 f$;}
\item{$\ord_\infty (f+g) \leq \max\{ \ord_\infty f , \ord_\infty g\}$;}
\item{$\ord_\infty (f+g) = \max\{ \ord_\infty f , \ord_\infty g\}$ if $\ord_\infty f \neq \ord_\infty g$;}
\item{$\ord_0 (f+g) \geq \min\{ \ord_0 f , \ord_0 g\}$;}
\item{$\ord_0 (f+g) = \min\{ \ord_0 f , \ord_0 g\}$ if $\ord_0 f \neq \ord_0 g$;}
\item{$\ord_\infty (f\cdot g) = \ord_\infty f +\ord_\infty g$, $\ord_0 (f\cdot g) = \ord_0 f +\ord_0 g$;}
\item{$\ord_\infty \frac 1 f = -\ord_\infty f$, $\ord_0 \frac 1 f = -\ord_0 f$.}
\end{itemize}
We recall that $\cerchio_n$ is the Kauffman bracket of a 0-framed homotopically trivial unknot colored with $n$. We have
$$
\cerchio_n = (-1)^n [n+1]|_{q=A^2} , \ \ [n+1]|_{q=A^2} = \frac{A^{2(n+1)} - A^{-2(n+1)} }{ A^2 -A^{-2} } .
$$
We note that
$$
\ord_\infty \cerchio_n = -\ord_0 \cerchio_n = 2n .
$$

Let $D\subset S_{(g)}$ be a link diagram. Using the skein relations we get
$$
\langle D \rangle = \sum_s  A^{\sum s(i)} (-A^2-A^{-2})^{sD} \langle D_s\rangle .
$$
Moreover we recall that the shadow formula applied on $X_s$ gives $\langle D_s \rangle$ (Remark~\ref{rem:sh_for_br}), hence
$$
\langle D_s \rangle = \sum_\xi \prod_R \cerchio^{\chi(R)}_{\xi(R)} ,
$$
where $\xi$ runs over all the admissible colorings of $X_s$ that extend the coloring of the boundary (that is $1$), $R$ runs over all the regions of $X_s$ (the external regions do not matter because either they are annuli or their color is $0$), $\chi(R)$ is the Euler characteristic of $R$, and $\xi(R)$ is the color of $R$ given by $\xi$. Therefore $\langle D_s \rangle$ is a symmetric function of $A^2$, namely there are two polynomials $f,h\in \Z[q]$ such that 
$$
\langle D_s \rangle = \frac{f|_{q=A^2}}{h|_{q=A^2}}  , \ \ \langle D_s \rangle|_A = \langle D_s \rangle|_{A^{-1}}  .
$$

\begin{defn}\label{defn:psi}
Let $s$ be a state of the link diagram $D\subset S_{(g)}$. We set:
$$
\psi(s):= \frac 1 2 \ord_\infty \langle D_s \rangle = -\frac 1 2 \ord_0 \langle D_s \rangle \ \ \in \Z .
$$
\end{defn}

Note that if $g\leq 1$ the quantity $\psi(s)$ is always null.

\subsection{Main theorems}

\begin{theo}[\cite{Carrega_Taitg}]\label{theorem:Tait_conj_g}
Let $D\subset S_{(g)}$ be an alternating, simple diagram that represents a link $L\subset \#_g(S^1\times S^2)$ by an e-shadow $e$. Suppose that $L$ is non H-split (\emph{e.g.} a knot), $\Z_2$-homologically trivial and with homotopic genus $g$. Then for any diagram $D'\subset S_{(g)}$ that represents $L$ by an e-shadow $e'$ we have
$$
n(D) \leq n(D') + \frac{g-1}{2} ,
$$
where $n(D)$ and $n(D')$ are the number of crossings of $D$ and $D'$. In particular if $g\leq 2$, we have that $n(D)$ is the crossing number of $L$.
\end{theo}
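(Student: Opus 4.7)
The plan is to extend the classical Thistlethwaite--Kauffman--Murasugi strategy by analysing the breadth of the Kauffman bracket (Definition~\ref{defn:breadth_g}). The goal is to establish the two inequalities
$$B(\langle L \rangle) \geq 4\, n(D) \quad \text{and} \quad B(\langle L \rangle) \leq 4\, n(D') + 2(g-1),$$
the first valid for the simple, alternating $D$, and the second valid for every diagram $D'$ of $L$ in $S_{(g)}$. Since $B(\langle L\rangle)$ depends only on $L$, combining them gives $n(D) \leq n(D') + (g-1)/2$.

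First I would prove the upper bound. Apply the state-sum expression of Proposition~\ref{prop:state_sum} to $D'$: each summand $A^{\sigma(s)}(-A^2-A^{-2})^{sD'}\langle D'_s\rangle$ has extreme degrees $\sigma(s)\pm 2sD' \pm 2\psi(s)$, whence
$$\ord_\infty \langle L \rangle \leq \max_s\bigl(\sigma(s)+2sD'+2\psi(s)\bigr), \quad \ord_0\langle L\rangle \geq \min_s\bigl(\sigma(s)-2sD'-2\psi(s)\bigr),$$
where $\sigma(s)=\sum_i s(i)$. The classical crossing-flip argument (a single flip changes $\sigma$ by $\pm 2$ and $sD'$ by at most $\pm 1$) shows that, up to a correction coming from $\psi$, the maximum is attained near $s_+$ and the minimum near $s_-$. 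To handle $\psi(s_\pm)$, I would use the shadow formula (Remark~\ref{rem:sh_for_br}) to write $\langle D'_s\rangle$ as a sum over admissible colorings $\xi$ of the shadow $X_s$ of terms $\prod_R \cerchio_{\xi(R)}^{\chi(R)}$; since the external regions of $X_s$ carry boundary colors $0$ and $1$, and since $\chi(S_{(g)})=1-g$ distributes over the internal regions, one can bound $\psi(s_\pm)$ by a quantity linear in $g-1$, producing the desired correction term.

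Next I would establish the lower bound. By Proposition~\ref{prop:reduced_D_g} the diagram $D$ is adequate. As in the classical argument, adequacy prevents cancellation of the extreme monomials in the state sum, so that
$$\ord_\infty\langle D\rangle = n(D) + 2 s_+D + 2\psi(s_+), \quad \ord_0\langle D\rangle = -n(D) - 2 s_-D - 2\psi(s_-).$$
Using the hypotheses that $L$ is non H-split, $\Z_2$-homologically trivial and of homotopic genus exactly $g$, the internal regions of the shadow $X_{s_\pm}$ are disks and the external regions are controlled (annuli or boundary-disks), so each admissible coloring extending the boundary coloring is rigid enough to pin down $\psi(s_\pm)$ and to yield an Euler-characteristic identity relating $s_+D + s_-D$ to $n(D)$, giving $B(\langle L\rangle) \geq 4n(D)$ after collecting the two contributions.

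The main obstacle will be the uniform control of $\psi(s)$ across all states $s$ of a general diagram $D'$. In the classical $S^3$ case one has $\langle D'_s\rangle \in \Z$ and $\psi$ vanishes identically; here $\langle D'_s\rangle$ is a non-trivial rational function, and a priori its order at infinity could grow with $n(D')$. The delicate point is to use the collapsibility of $X_s$ (Proposition~\ref{prop:sh_collapse}), the admissibility triangle inequalities along edges, and the constraint that regions incident to $\partial S_{(g)}$ carry color $0$ or $1$, to show that the maximal color appearing in any admissible extension is bounded in terms of $g$ alone, not of $n(D')$. Once this uniform control is achieved, the rest of the argument proceeds along the classical TKM lines, with Proposition~\ref{prop:reduced_D_g} playing the role of the reduced-alternating-implies-adequate implication.
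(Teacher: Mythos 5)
Your overall strategy is the paper's (the Thistlethwaite--Kauffman--Murasugi breadth argument, with adequacy supplied by Proposition~\ref{prop:reduced_D_g}), but your two announced target inequalities are numerically wrong, and your plan for the genuinely new difficulties does not work. For the simple alternating diagram the correct value is $B(\langle D\rangle)=4n(D)+4-4g$ (Lemma~\ref{lem:alter_eq_g} together with adequacy and Lemma~\ref{lem:ineq1_g}), so your lower bound $B(\langle L\rangle)\geq 4n(D)$ is false for every $g\geq 2$; dually, your upper bound $B(\langle L\rangle)\leq 4n(D')+2(g-1)$ fails for $g=0$ (a reduced alternating diagram in the disk has $B=4n+4>4n-2$). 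The two errors cancel in the difference, which is why you still land on $n(D)\leq n(D')+\frac{g-1}2$, but as stated neither half of your plan is provable for general $g$. Your assertion that ``the internal regions of the shadow $X_{s_\pm}$ are disks'' is the source of the first error: for exactly one of the two extreme states the splitting leaves an internal region that is a disk with $g$ holes, so $\psi(s_\pm)=1-g$ for that state, producing the $+4-4g$. Moreover, the genus correction in the upper bound does not come from bounding $\psi$ at all: the correct chain is $B(\langle D'\rangle)\leq 2\bigl(n(D')+s_+D'+s_-D'+\psi(s_+)+\psi(s_-)\bigr)\leq 4n(D')+2-2g$, where the decisive input is the combinatorial inequality $s_+D'+s_-D'\leq n(D')+1-g$ of Lemma~\ref{lem:ineq2_g} (proved by induction on the number of crossings), which is entirely absent from your outline, while $\psi$ is simply bounded above by $0$.

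Your proposed resolution of the ``main obstacle'' also rests on a false claim. The maximal color in an admissible coloring of $X_s$ is \emph{not} bounded in terms of $g$ alone: at each interior edge the two incident regions of the punctured disk must carry colors differing by exactly $1$, so a nest of homotopically nontrivial parallel curves forces colors as large as the nesting depth, which grows with $n(D')$. The actual mechanism is different and simpler: since $\ord_\infty\cerchio_n=2n\geq 0$ and every region of $X_s$ has $\chi(R)\leq 0$, the order at infinity of $\prod_R\cerchio_{\xi(R)}^{\chi(R)}$ is maximized by the \emph{minimal} coloring, and $\Z_2$-homological triviality guarantees a unique admissible coloring $\xi_0$ taking only the values $0$ and $1$, whence $\psi(s)=\sum_R\chi(R)\xi_0(R)\leq 0$ (Lemma~\ref{lem:psi}); one also has to check that the leading coefficients of the various colorings' contributions all have the same sign so that no cancellation raises or lowers the extreme degrees. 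Finally, the technical heart of the proof --- the monotonicity $M(s)\geq M(s')$ and $m(s)\geq m(s')$ along the lattice of states, Lemma~\ref{lem:ineq1_g} --- is not the ``classical crossing-flip argument'': a single flip can merge or split homotopically nontrivial circles and thereby change $\psi$ by $\pm 1$, and one must run the four-case analysis of how the canonical $0/1$ coloring transforms under the flip to conclude. Your sketch does not engage with this step, and without it adequacy cannot be converted into the equality $B(\langle D\rangle)=\ord_\infty\langle D|s_+\rangle-\ord_0\langle D|s_-\rangle$ that your lower bound requires.
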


\begin{theo}[\cite{Carrega_Taitg}]\label{theorem:Tait_conj_Jones_g}
Let $D\subset S_{(g)}$ be a $n$-crossing, connected, alternating diagram of a $\Z_2$-homologically trivial link in $\#_g(S^1\times S^2)$ without crossings as the ones of Fig.~\ref{figure:reducedD_g} and not contained in a disk with less than $g$ holes ($g(D)=g$). Then
$$
B(\langle D \rangle) = 4n +4 -4g -4k ,
$$
where $k$ is the number of crossings adjacent to two external regions (as the ones of Fig.~\ref{figure:reducedD2_g}), neither adjacent twice to just one external region.
\end{theo}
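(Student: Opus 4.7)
The plan is to upgrade the classical Kauffman--Murasugi--Thistlethwaite breadth argument to the punctured-disk setting by combining the state sum of Proposition~\ref{prop:state_sum} with the symmetry $\ord_\infty\langle D_s\rangle = 2\psi(s) = -\ord_0\langle D_s\rangle$ coming from the shadow formula (Definition~\ref{defn:psi}, Remark~\ref{rem:sh_for_br}). Reading off the extreme orders of each summand, the contribution of a state $s$ to $\ord_\infty\langle D\rangle$ is at most
\[
\Phi(s) := \sum_i s(i) + 2\,sD + 2\psi(s),
\]
and its contribution to $\ord_0\langle D\rangle$ is at least $\sum_i s(i) - 2\,sD - 2\psi(s)$. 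Evaluating at $s_+$ and $s_-$ gives the preliminary bound $B(\langle D\rangle) \leq 2n + 2(s_+D + s_-D) + 2(\psi(s_+)+\psi(s_-))$, and the theorem will follow from two refinements: (i) the bound is saturated, and (ii) the right-hand side equals $4n+4-4g-4k$.

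For (i), I would prove an \emph{extended adequacy lemma}: under the hypotheses, flipping any single crossing of $s_+$ strictly decreases $\Phi$, except at the $k$ crossings of Figure~\ref{figure:reducedD2_g} type where $\Phi$ stays constant. The argument mimics Proposition~\ref{prop:reduced_D_g}: the absence of Figure~\ref{figure:reducedD_g} crossings rules out the two splitting patterns (split trivial into two trivials, split non-trivial annular into two components with one trivial) that would force an increase in $sD$, while alternation together with $\Z_2$-homological triviality (via Proposition~\ref{prop:Z_2_tr}) rules out the third. At a Figure~\ref{figure:reducedD2_g} crossing, flipping converts a trivial component into a non-trivial one (or vice versa), so $sD$ decreases by $1$ but $\psi(s)$ increases by exactly $1$ (this is the key local identity, to be read off from the shadow $X_s$ using Remark~\ref{rem:sh_for_br} and the region-Euler-characteristic formula). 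Extending this by induction on the Hamming distance from $s_+$ yields $\Phi(s)\leq \Phi(s_+)$ for all $s$, with equality on a $2^k$-dimensional affine family obtained by freely flipping the $k$ bad crossings; summing over this family, the identities of Figure~\ref{figure:fusion} and Figure~\ref{figure:sphere} collapse the top-order contribution to a non-zero scalar, so the leading term of $\langle D\rangle$ does not cancel but the $k$ neutral directions manifest as a deficit of $4k$ in the breadth. A symmetric argument at $s_-$ handles $\ord_0$.

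For (ii), I would compute $s_+D + s_-D + \psi(s_+) + \psi(s_-)$ directly from the topology of $S_{(g)}\setminus D$. Viewing $D$ as a $4$-valent graph gives $\chi(D) = n - 2n = -n$, whence $\sum_R \chi(R) = \chi(S_{(g)}) - \chi(D) = n+1-g$ summed over regions $R$ of $S_{(g)}\setminus D$. Chessboard-colouring these regions black and white, with all external regions black by $\Z_2$-triviality, the black disk regions contribute to $s_+D$ while the remaining non-disk black regions contribute to $\psi(s_+)$ (because the extremal admissible colouring $\xi$ of $X_{s_+}$ maximising $\sum_R \chi(R)\cdot\ord_\infty\cerchio_{\xi(R)}$ is exactly the one that colours each region by the number of boundary components of the corresponding annular track); the same holds with white and $s_-$. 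Collecting terms, $s_+D + s_-D + \psi(s_+)+\psi(s_-) = \sum_R \chi(R) + 1 - 2k = n+2-g-2k$ after correcting for: the $+1$ coming from the empty-colouring convention $\varnothing$ at the outer boundary, the $-g$ coming from the $g$ annular external pieces always being colour-$1$ rather than colour-$0$, and the $-2k$ coming from the $k$ bad crossings whose four incident regions double-count a boundary arc.

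The main obstacle will be the step in (i) showing that flipping a Figure~\ref{figure:reducedD2_g} crossing leaves $\Phi$ exactly invariant, since $\psi$ is a global invariant computed from the shadow formula and its behaviour under local moves is not manifest. I expect to handle this via a direct skein-theoretic computation: express $\langle D_s\rangle$ and $\langle D_{s'}\rangle$ (where $s'$ differs from $s$ at one such crossing) as scalars on the shadows $X_s$ and $X_{s'}$, then apply the $2$-strand sphere identity of Figure~\ref{figure:sphere}-(right) across the external region containing the crossing; the identity is forced by the fact that an embedded arc in $S_{(g)}$ joining two boundary components of the same external region lifts to a non-separating $2$-sphere in $\#_g(S^1\times S^2)$. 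This localises the topological miracle underlying the $-4k$ term and ties the counting directly to the classical Euler-characteristic bookkeeping used in the $S^3$ proof of Theorem~\ref{theorem:Tait_conj_0}.
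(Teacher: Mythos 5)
Your overall strategy --- a state-sum upper bound via $\Phi(s)=\sum_i s(i)+2sD+2\psi(s)$, an adequacy-type argument for sharpness, and an Euler-characteristic count --- is the right machinery for the $k=0$ case (it is exactly Lemma~\ref{lem:ineq1_g}, Proposition~\ref{prop:reduced_D_g} and Lemma~\ref{lem:alter_eq_g}), but your treatment of the $k>0$ case has a genuine gap. The quantity $\ord_\infty\langle D|s_+\rangle-\ord_0\langle D|s_-\rangle$ equals $4n+4-4g$ for \emph{every} connected alternating diagram with $g(D)=g$, independently of $k$: the presence of crossings adjacent to two external regions does not lower the naive bound. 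So the entire $-4k$ must come from cancellation among the states realizing the extreme orders, and this is precisely what your part (i) asserts but does not prove. Saying that the $2^k$ maximal states ``collapse the top-order contribution to a non-zero scalar'' while simultaneously producing ``a deficit of $4k$'' is contradictory, and pinning down that the cancellation eats \emph{exactly} $2k$ orders at each end (no more, no less) is the whole content of the theorem for $k>0$; nothing in your sketch controls it. Your part (ii) is also inconsistent with this: for $k=0$ the correct value of $s_+D+s_-D+\psi(s_+)+\psi(s_-)$ is $n+2-2g$ (one splitting consists of internal boundaries of the external regions, giving $\psi=1-g$; the other is entirely trivial), not $n+2-g$, and the $-2k$ you insert there cannot appear since this quantity does not see the bad crossings. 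Finally, the local analysis is off: flipping a Figure~\ref{figure:reducedD2_g} crossing from $s_+$ \emph{fuses} two homotopically non-trivial components into a trivial one, so $sD$ increases by $1$ and $\psi$ is unchanged (the $-2$ from $\sum_i s(i)$ cancels the $+2$ from $sD$), rather than $sD$ dropping and $\psi$ rising.

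Your closing paragraph does identify the correct tool, but the paper deploys it globally rather than state-by-state, and this is what makes the proof work. One applies the identity of Figure~\ref{figure:sphere}-(right) with $a=b=1$ once, across the non-separating sphere through a crossing adjacent to two distinct external regions: this gives $\langle D\rangle=\langle D'\rangle/(-A^2-A^{-2})$ exactly, where $D'$ is an $n$-crossing connected alternating diagram with $g(D')=g-1$ in which $h\le k$ crossings have become adjacent twice to a single external region and can be removed, yielding a diagram $D'''\subset S_{(g-1)}$ with $n-h$ crossings and $k-h$ bad crossings. Since $B(f/\cerchio_1)=B(f)-4$, induction on $g$ gives $B(\langle D\rangle)=4(n-h)+4-4(g-1)-4(k-h)-4=4n+4-4g-4k$, with the $k=0$ base case handled by the adequacy argument you already have. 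If you want to salvage your direct approach, you would need to prove the exact-cancellation statement for the $2^k$ extremal states, which amounts to redoing this division by $\cerchio_1$ inside the state sum anyway.
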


Theorem~\ref{theorem:Tait_conj_Jones_g} gives us criteria to detect if a link in $\#_g(S^1\times S^2)$ is alternating:
\begin{cor}[\cite{Carrega_Taitg}]\label{cor:conj_Tait_Jones_g}
Let $L\subset \#_g(S^1\times S^2)$ be a non H-split $\Z_2$-homologically trivial link. 
\begin{enumerate}
\item{If $B(\langle L \rangle)$ is not a multiple of $4$, then $L$ is not alternating.}
\item{If $B(\langle L \rangle)$ is not a positive multiple of $4$, then $L$ is not alternating with a simple diagram.}
\item{If $L$ has homotopic genus $g$, $L$ does not intersect twice any non separating 2-sphere and $B(\langle L \rangle)< 4n+4-4g$, then either $L$ is not alternating, or $L$ has crossing number lower than $n$.}
\end{enumerate}
\begin{proof}
It follows from Theorem~\ref{theorem:Tait_conj_Jones_g}. To get the second statement we must note that every connected diagram in $S_{(g)}$ that is not contained in a disk with $g'<g$ holes has at least $g$ crossings.
\end{proof}
\end{cor}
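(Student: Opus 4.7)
All three statements are most naturally proved as contrapositives of consequences of Theorem~\ref{theorem:Tait_conj_Jones_g}. The common strategy is: if $L$ is alternating, choose an alternating diagram of $L$, normalize it so that the hypotheses of Theorem~\ref{theorem:Tait_conj_Jones_g} are satisfied, and then read off the constraint on $B(\langle L\rangle)=B(\langle D\rangle)$ from the formula $4n(D)+4-4g(D)-4k$.

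For part (1), I would take any alternating diagram $D_0$ of $L$ and apply Proposition~\ref{prop:simple_diag}-(1.) to eliminate all crossings of the type in Fig.~\ref{figure:reducedD_g}. The move used there is an ambient isotopy inside the handlebody containing the punctured disk, so over/under data is unaffected and alternation is preserved; the resulting diagram $D$ is connected since $L$ is non H-split, and lies in $S_{(g(D))}$ for some $g(D)\leq g$. Theorem~\ref{theorem:Tait_conj_Jones_g} then yields $B(\langle L\rangle)=4n(D)+4-4g(D)-4k$ for some $k\geq 0$, a multiple of $4$; the contrapositive is precisely (1).

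For part (2), suppose in addition that the diagram above can be chosen simple. Simplicity excludes crossings of both types in Fig.~\ref{figure:reducedD_g} and Fig.~\ref{figure:reducedD2_g}, so $k=0$, and the formula reduces to $B(\langle L\rangle)=4(n(D)+1-g(D))$, a non-negative multiple of $4$. To upgrade non-negativity to strict positivity I would prove $n(D)\geq g(D)$ by an Euler-characteristic count on the cell decomposition of $S_{(g(D))}$ induced by the connected $4$-valent graph $D$: applying the chessboard colouring argument from the proof of Proposition~\ref{prop:reduced_D_g}, which goes through because $L$ is $\Z_2$-homologically trivial and $D$ is simple and alternating, one sees that every complementary region is a disk, whence $-n(D)+F=1-g(D)$ with $F\geq 1$, so $n(D)\geq g(D)$.

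For part (3), the hypothesis that $L$ meets no non-separating $2$-sphere in two points together with Proposition~\ref{prop:simple_diag}-(2.) provides a simple diagram of $L$ realizing the crossing number $n_L$; combining this with the alternating hypothesis, using Proposition~\ref{prop:simple_diag}-(1.) to eliminate Fig.~\ref{figure:reducedD_g} crossings while preserving alternation, and invoking the homotopic-genus hypothesis to force $g(D)=g$, produces a simple alternating diagram $D$ with $g(D)=g$ and $n(D)=n_L$. Theorem~\ref{theorem:Tait_conj_Jones_g} then delivers $B(\langle L\rangle)=4n_L+4-4g$, and the assumption $B(\langle L\rangle)<4n+4-4g$ forces $n_L<n$, which is the desired conclusion. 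The step I expect to require the most care is arranging alternation, simplicity, and the minimal value of $g(D)$ in a single diagram—that is, verifying that the normalisation moves of Proposition~\ref{prop:simple_diag} can be performed without destroying alternation or without leaving some residual Fig.~\ref{figure:reducedD2_g} crossing; the Euler/chessboard argument of part (2) is the principal tool for ruling out such obstructions.
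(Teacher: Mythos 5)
Your overall strategy---argue each part as a contrapositive, normalise an alternating diagram, and read the breadth off Theorem~\ref{theorem:Tait_conj_Jones_g}---is the paper's, and part (1) is essentially fine (though your justification that alternation survives the removal of a Fig.~\ref{figure:reducedD_g} crossing is off: the untwisting rotation switches all the crossings inside the rotated tangle, so the over/under data is \emph{not} unaffected; alternation nevertheless survives, by the same standard argument as in $S^3$). The gaps are in parts (2) and (3). In part (2), the claim that ``every complementary region is a disk'' is false whenever $g(D)\geq 1$: the $g(D)+1$ external regions each contain a boundary circle of $S_{(g(D))}$ and are annuli, exactly as in the count in the proof of Lemma~\ref{lem:alter_eq_g}. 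The correct identity is that the number of internal (disk) regions equals $n(D)+1-g(D)$, so what you must exhibit is \emph{one internal region}, not ``$F\geq 1$ faces''. That is where the chessboard colouring genuinely enters: $\Z_2$-triviality forces all external regions to carry the same colour, and once $n(D)\geq 1$ the four regions around a crossing carry both colours, so a region of the opposite colour exists and must be internal; the crossingless case is handled separately ($D$ is then a null-homotopic circle and $g(D)=0$). As written, your derivation of $n(D)\geq g(D)$ rests on a false premise.

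Part (3) is the serious gap. You propose to ``combine'' the minimal simple diagram of Proposition~\ref{prop:simple_diag}-(2.) with an alternating diagram into a single diagram that is simultaneously simple, alternating, of minimal crossing number, and with $g(D)=g$. No such combination is available: Proposition~\ref{prop:simple_diag}-(2.) says nothing about alternation, and the existence of a minimal \emph{alternating} diagram is essentially the Tait-type statement one is trying to establish, so assuming it is circular (you flag this step as delicate but do not close it). The argument that works needs no minimality at all: take any alternating diagram of $L$, remove the Fig.~\ref{figure:reducedD_g} crossings preserving alternation, note $g(D)=g$ by the homotopic-genus hypothesis, and observe that each crossing counted by $k$ in Theorem~\ref{theorem:Tait_conj_Jones_g} is adjacent to two \emph{distinct} external regions, hence yields a properly embedded arc joining two different boundary components of $S_{(g)}$ and meeting $D$ in two points, hence a non-separating sphere meeting $L$ twice---forbidden by hypothesis. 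Thus $k=0$ and $B(\langle L\rangle)=4n(D)+4-4g\geq 4c+4-4g$ where $c$ is the crossing number of $L$; combined with $B(\langle L\rangle)<4n+4-4g$ this gives $c<n$ directly.
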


\begin{ex}\label{ex:no_alt_g}
The knots represented by the diagrams in Fig.~\ref{figure:ex_no_alt} ($g=1$) are $\Z_2$-homologically trivial and have Kauffman bracket equal to $0$ (left) and $A-A^{-3}-A^{-5}$ (right). Therefore by Corollary~\ref{cor:conj_Tait_Jones_g}-(2.) they are not alternating. 

By Corollary~\ref{cor:conj_Tait_Jones_g}-(1.) the links in Example~\ref{ex:Kauff}-(6) and Example~\ref{ex:Kauff}-(7) ($g=2$) are non alternating. By Corollary~\ref{cor:conj_Tait_Jones_g}-(3.) the knot in Example~\ref{ex:Kauff}-(5) ($g=2$) either is non alternating or has crossing number lower than $4$. For all the links in $\#_2(S^1\times S^2)$ with crossing number $3$ there is a non separating sphere that intersects the link at most twice. The knot in Example~\ref{ex:Kauff}-(5) does not have one such sphere, hence it is not alternating. 

Unfortunately we are not able to say if the link in Example~\ref{ex:Kauff}-(11) ($g=2$) is alternating.
\end{ex}

\begin{figure}
\begin{center}
\includegraphics[scale=0.55]{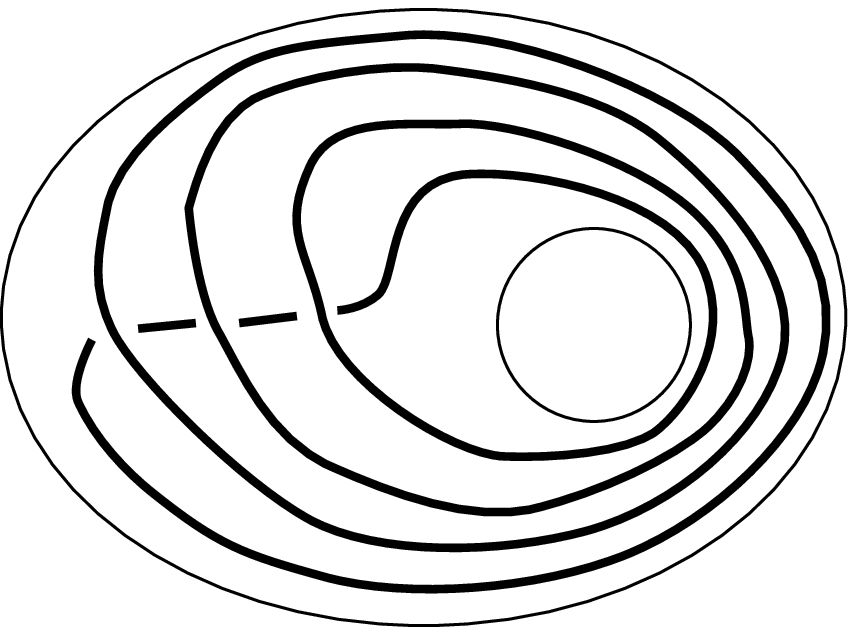}
\hspace{0.5cm}
\includegraphics[scale=0.55]{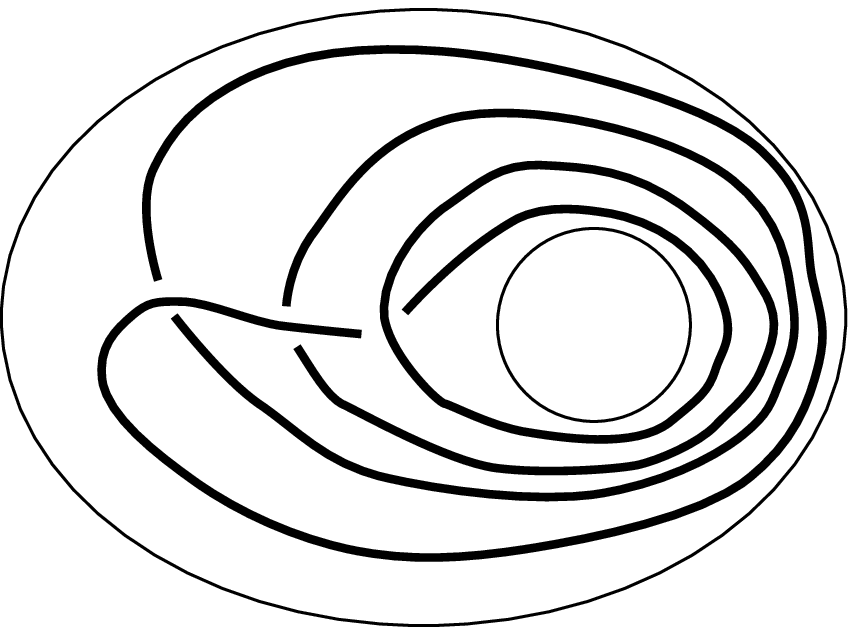}
\end{center}
\caption{Two $\Z_2$-homologically trivial knots in $S^1\times S^2$ whose Kauffman bracket are $0$ (left) and $A-A^{-3}-A^{-5}$ (right).}
\label{figure:ex_no_alt}
\end{figure}

\subsection{Alternating diagrams for $\Z_2$-homologically non trivial links}\label{subsec:alt_diag_Z_2_non_tr}

\begin{prop}[\cite{Carrega_Tait1}]\label{prop:no_Tait}
Once an e-shadow of $\#_g(S^1\times S^2)$ is fixed, two link diagrams in $S_{(g)}$ differing only in a part of the punctured disk that is a proper embedding of $[-1,1]\times (-1,1)$ where they are of the form
$$
\pic{1.8}{0.4}{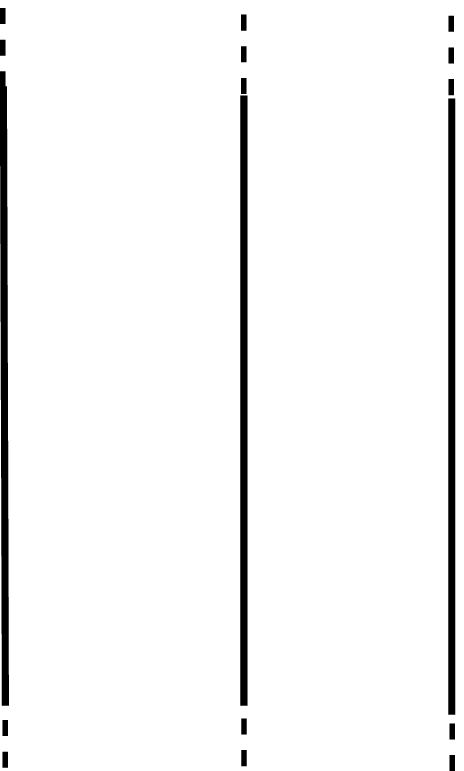} \ \ \ \text{and} \ \ \ \pic{1.8}{0.4}{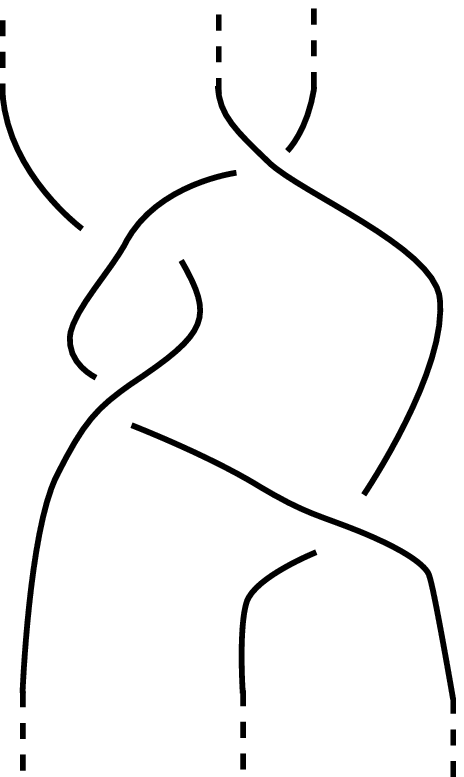},
$$
represent the same link in $\#_g(S^1\times S^2)$. Therefore the diagrams in Fig.~\ref{figure:no_Tait_g} represent the same knot $K$ in $\#_g(S^1\times S^2)$.
\begin{proof}
Let us consider the diagram with three parallel strands. If we apply the second Kirby move on the left strand of the initial diagram over a 0-framed unknot encircling the three strands (the move described in Subsection~\ref{subsec:diag}), we get the second diagram of the sequence in Fig.~\ref{figure:prop_no_Tait} or its mirror image. To get the third diagram of Fig.~\ref{figure:prop_no_Tait} we apply some Reidemeister moves. The fourth one is obtained applying again the previous moves to the third diagram but on the opposite sense. The other equivalences of Fig.~\ref{figure:prop_no_Tait} come applying Reidemeister moves.
\end{proof}
\end{prop}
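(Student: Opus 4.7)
The plan is to show the two local diagrams are related by the sequence of moves depicted in the (to-be-produced) Fig.~\ref{figure:prop_no_Tait}, exploiting the encircling move from Subsection~\ref{subsec:diag} combined with Reidemeister moves of types II and III. Recall that the encircling move corresponds to a second Kirby move performed along a $0$-framed meridian of one of the $1$-handles of the handlebody containing the diagram, and therefore does not alter the represented link in $\#_g(S^{1}\times S^{2})$.

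First I would fix an e-shadow and place a $0$-framed unknot $U$ encircling the three parallel strands in the first diagram; this is a legitimate meridian of a $1$-handle (which may be introduced at no cost, as it is a null operation in the surgery presentation sense). Then I would apply the encircling move to the leftmost strand along $U$, which drags it across the remaining two strands and around $U$; by planar genericity this produces a local picture where the previously straight strand now weaves over/under the other two and around $U$ in a prescribed way, exactly matching (up to planar isotopy) the second picture in the sequence. A short sequence of Reidemeister II/III moves then rearranges the crossings into the third picture of the sequence, which has the same combinatorial type as the target but with the strand having been pushed to the opposite side of $U$.

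Next I would reverse the roles: the same sequence of Reidemeister moves and one more encircling move along $U$, applied now to the right strand, eliminates $U$ again and yields the second local picture in the statement. Since every intermediate step is either an encircling move (invariance in $\#_g(S^{1}\times S^{2})$) or a Reidemeister move (invariance of the represented link for the chosen e-shadow), the two endpoints of the sequence represent the same link. The claim about the diagrams in Fig.~\ref{figure:no_Tait_g} then follows by applying this local equivalence at the indicated portion of the diagram.

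The main obstacle is purely pictorial: one has to verify carefully that after the first encircling move the over/under information on the new crossings is exactly the one appearing in the target, and that the Reidemeister III moves used to rearrange them can indeed be applied (i.e.\ that the three affected strands form an admissible triangle). This is a finite check done by tracking the framed strand through the regular neighborhood of the $1$-handle, and the absence of any framing change follows because the encircling move is performed along a $0$-framed meridian, so the black-board framing is preserved throughout. Once this combinatorial bookkeeping is done, the proposition and its application to Fig.~\ref{figure:no_Tait_g} are immediate.
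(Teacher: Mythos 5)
Your proposal follows essentially the same route as the paper's proof: two applications of the encircling move of Subsection~\ref{subsec:diag} (the second Kirby move over the $0$-framed meridian of the handle) interleaved with Reidemeister moves, with the verification reduced to a pictorial check of the intermediate diagrams. One small caution: the $0$-framed unknot is not "introduced at no cost" as a new component (a split $0$-framed unknot would change the manifold by a connected sum with $S^1\times S^2$); it is already present as the meridian in the surgery presentation of $\#_g(S^1\times S^2)$, which is exactly why sliding over it is legitimate — but since you identify it as the meridian of the $1$-handle, the argument stands as written.
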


\begin{figure}
\beq
 & \pic{2.2}{0.5}{diagrp1.eps}  \ \ \  \leftrightarrow\ \ \  \pic{2.6}{0.5}{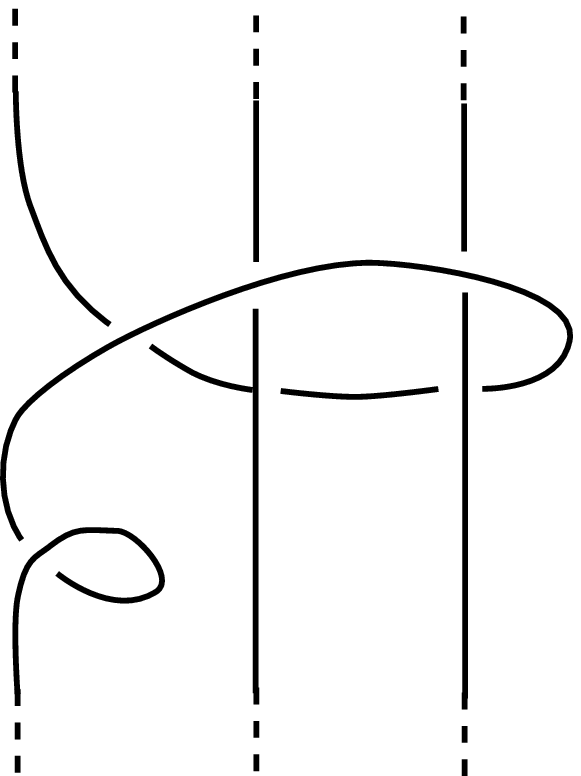} \ \ \  \leftrightarrow\ \ \  \pic{2.6}{0.5}{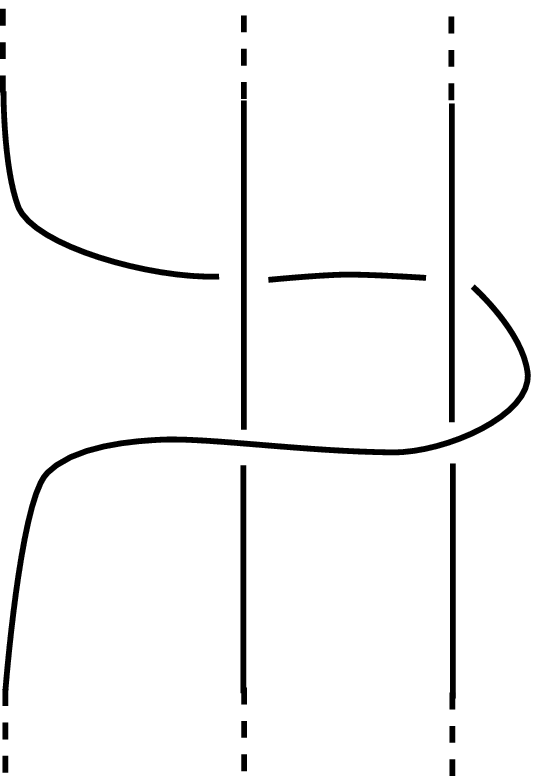} & \\
 & \leftrightarrow\ \ \ \  \pic{2.6}{0.5}{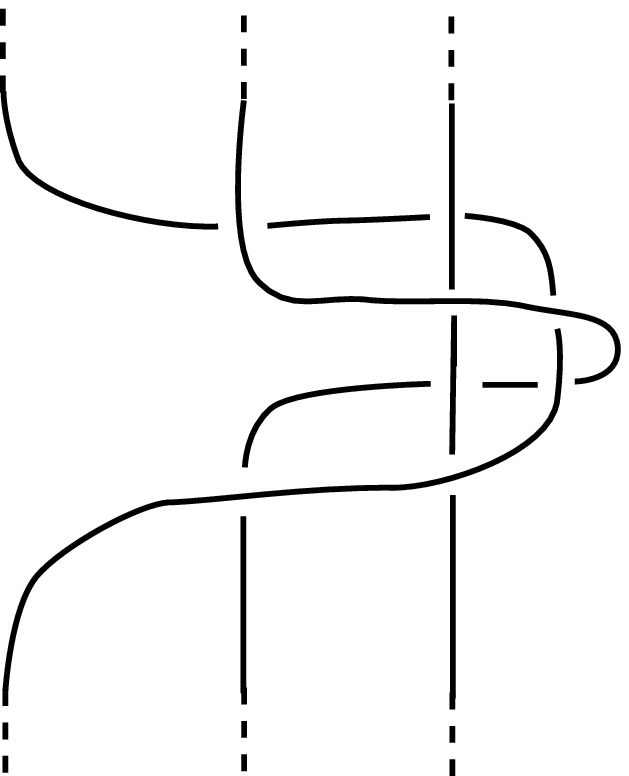}  \ \ \ \  \leftrightarrow\ \ \  \pic{2.6}{0.5}{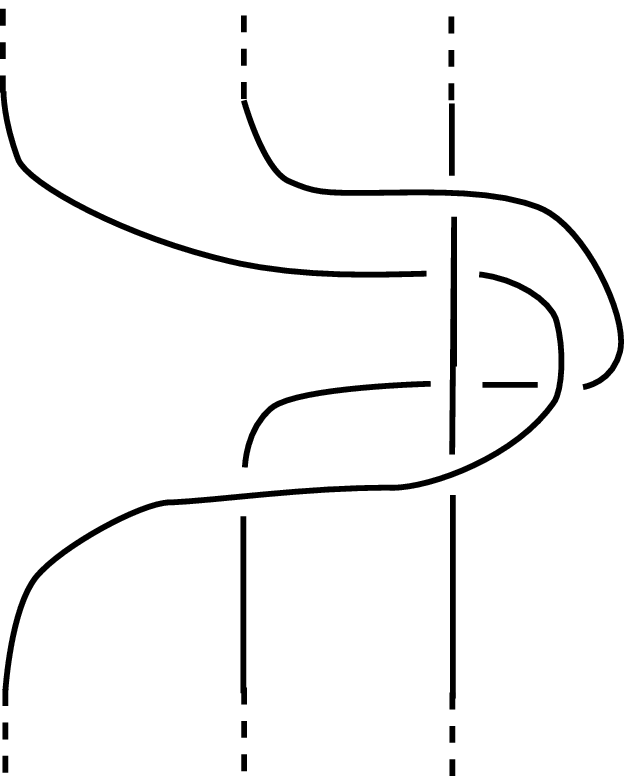}  \ \ \ \leftrightarrow \ \ \ \pic{2.6}{0.5}{diagrp2.eps}  . & 
\eeq
\caption{Moves on diagrams intersecting $(-1, 1)\times [-1,1]\subset S_{(g)}$ in three parallel strands.}
\label{figure:prop_no_Tait}
\end{figure}

The knot $K$ of Proposition~\ref{prop:no_Tait} is $\Z_2$-homologically non trivial. The diagrams are all alternating and simple but they have different numbers of crossings. Therefore the natural extension of the Tait conjecture for $\Z_2$-homologically non trivial links in $\#_g(S^1\times S^2)$ is false for any $g\geq 0$. We think that the homotopic genus of $K$ is $g$, of course if $g=1$ it is so. In fact $K$ is not contained in a 3-ball.

\begin{figure}
\begin{center}
\includegraphics[scale=0.45]{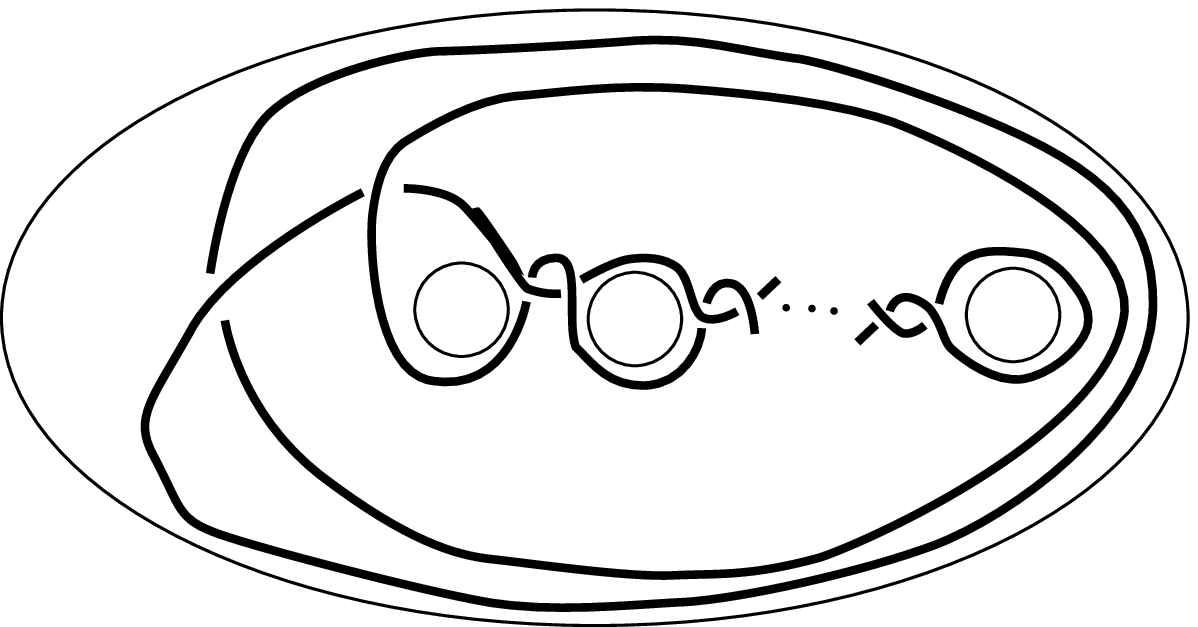} 
\hspace{0.5cm}
\includegraphics[scale=0.45]{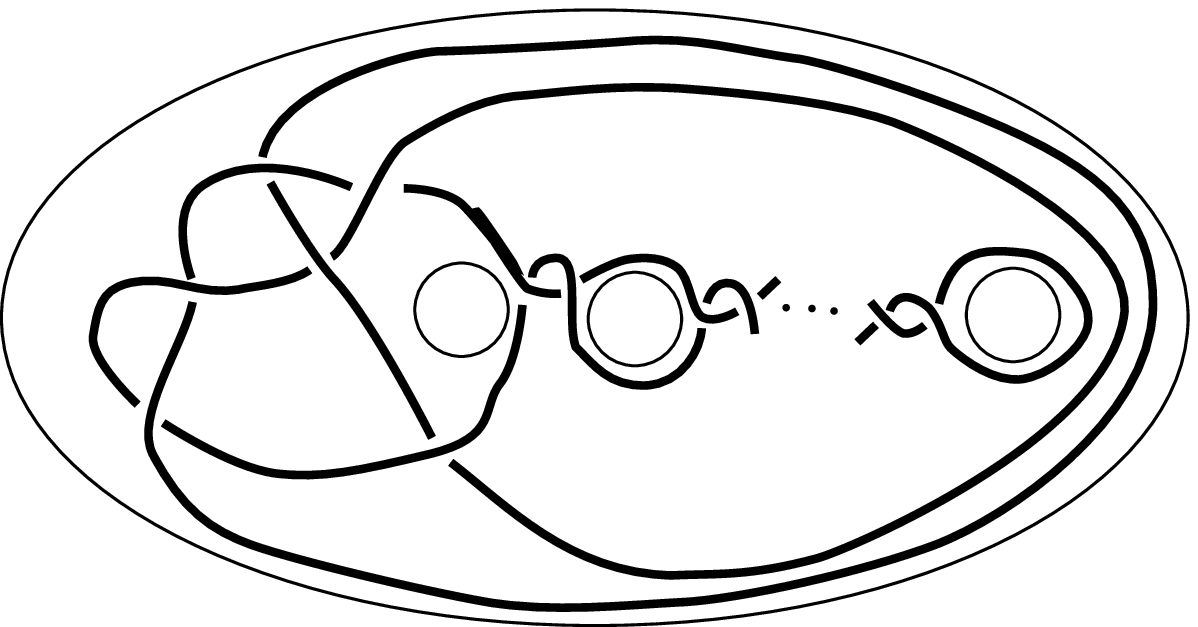}
\end{center}
\caption{Two alternating and simple diagrams of the same $\Z_2$-homologically non trivial knot which have different number of crossings.}
\label{figure:no_Tait_g}
\end{figure}

\section{Proof of the theorems}

In this section we prove Theorem~\ref{theorem:Tait_conj_g} and Theorem~\ref{theorem:Tait_conj_Jones_g}. We follow the classical proof of Kauffman, K. Murasugi and Thistlethwaite (for instance we can also see \cite[Chapter 5]{Lickorish}) applying some modifications for our case.

The following lemma is extremely useful to manage the quantity $\psi(s)$ and is no needed for the $g \leq 1$ case.

\begin{lem}[\cite{Carrega_Taitg}]\label{lem:psi}
Let $D\subset S_{(g)}$ be a diagram of a $\Z_2$-homologically trivial link in $\#_g(S^1\times S^2)$ and let $s$ be a state of $D$ such that $D_s$ is non empty. Then the following hold:
\begin{enumerate}
\item{there is a unique admissible coloring $\xi_0$ of the shadow $X_s$ that extends the boundary coloring and assigns only the colors $0$ and $1$;}
\item{$$
\psi(s) = \max_\xi \frac 1 2 \ord_\infty \prod_R \cerchio^{\chi(R)}_{\xi(R)} = \max_\xi \sum_R \chi(R) \cdot \xi(R) ;
$$}
\item{$$
\psi(s)= \sum_R \chi(R) \cdot \xi_0(R) .
$$}
\end{enumerate}
\begin{proof}
Every meromorphic function $f:\mathbb{C} \dashrightarrow \mathbb{C}$ is of the form $f(A) =\lambda_f A^{\ord_\infty f} + g(A)$, where $\lambda_f$ is a unique non zero complex number and $g:\mathbb{C} \dashrightarrow \mathbb{C}$ is a unique meromorphic function such that $\ord_\infty g < \ord_\infty f$. The equality $\ord_\infty (f_1+f_2) = \max\{ \ord_\infty f_1 , \ord_\infty f_2\}$ holds if and only if either $\ord_\infty f_1 \neq \ord_\infty f_2$ or $\lambda_{f_1} \neq - \lambda_{f_2}$. For an admissible coloring $\xi$ of the shadow $X_s$ let $f(\xi)$ be its contribution to the shadow formula:  
$$
f(\xi) := \prod_R \cerchio^{\chi(R)}_{\xi(R)} .
$$
We get the second statement by showing that $\lambda_{f(\xi)} = \pm 1$ and the sign does not depend on $\xi$ for each coloring $\xi$. We have 
$$
f(\xi) = (-1)^{\sum_R \chi(R)\xi(R)} \prod_R [\xi(R) +1]|_{q=A^2}^{\chi(R)} ,
$$
where 
$$
[n+1]|_{q=A^2} = \frac{A^{2(n+1)} - A^{-2(n +1)} }{ A^2 - A^{-2} } .
$$

The regions given by a diagram are colored odd or even like in a chessboard. Therefore the parity of the colors of the regions does not depend on the choice of the admissible coloring. Hence $(-1)^{\sum_R \chi(R)\xi(R)}$ does not depend on $\xi$. We have $\lambda_{[n+1]} = 1$. Therefore $(-1)^{\sum_R \chi(R)\xi(R)} \lambda_{f(\xi)} = 1$.

Since the link represented by $D$ is $\Z_2$-homologically trivial and the first skein relation does not change the $\Z_2$-homology class, we have that the link $L_s\subset \#_g(S^1\times S^2)$ represented by $D_s$ is $\Z_2$-homologically trivial. The link $L_s$ is contained in the punctured disk $S_{(g)}$ that is contained in the handlebody $H_g$ whose double is $\#_g(S^1\times S^2)$. Hence $0 = [L_s]= [D_s] \in H_1(S_{(g)};\Z_2)$. Therefore we can find a surface $S_s \subset S_{(g)}$ merging some internal regions of $S_{(g)}$ (given by $D_s$) and whose boundary is $D_s$. We color with $1$ the regions that compose $S_s$ and the annuli attached to the components of $D_s$, and we color with $0$ the remaining ones. We found an admissible coloring of $X_s$ that extends the boundary coloring and assigns just color $0$ and $1$. Such coloring is unique because, as we observed before, the parity of the color of a region does not depend on the admissible coloring. Let $\xi_0$ be such coloring. Since the Euler characteristics of the regions are all non positive and the colorings are all non negative, we have $\sum_R \chi(R)\xi(R) \leq \sum_R \chi(R)\xi_0(R)$ for every admissible coloring $\xi$. Therefore using this and the second statement we get the third statement.
\end{proof}
\end{lem}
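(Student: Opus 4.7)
The plan is to handle all three parts through a single parity analysis of admissible colorings of $X_s$. Every edge of $X_s$ imposes that the incident region-colors form an admissible triple, in particular that their sum be even; at a boundary edge the adjacent region-color equals the boundary color. Since the boundary of $X_s$ assigns $1$ to the attached annuli and $0$ to the components of $\partial S_{(g)}$, the annuli are forced to carry color $1$ and the external $S_{(g)}$-regions to carry color $0$. At an interior edge lying on a component of $D_s$, the three region-colors $(x,y,1)$ must satisfy $x+y+1 \in 2\Z$, so the two internal $S_{(g)}$-regions across a component of $D_s$ have opposite parity. This chessboard constraint forces a unique parity function on the set of regions, provided it is consistent.

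For part (1) I would use the hypothesis exactly here: since skein resolutions preserve $\Z_2$-homology, $L_s$ is $\Z_2$-trivial in $\#_g(S^1\times S^2)$, and through the isomorphism $H_1(S_{(g)};\Z_2)\cong H_1(\#_g(S^1\times S^2);\Z_2)$ the curve $D_s$ is $\Z_2$-null in $S_{(g)}$. By Proposition~\ref{prop:Z_2_tr}, $D_s$ bounds an embedded surface $S_s\subset S_{(g)}$ which is a union of internal regions. Assigning $1$ to the regions forming $S_s$ (and to the annuli) and $0$ to the remaining regions gives an admissible coloring $\xi_0$. Uniqueness among $\{0,1\}$-colorings follows from the parity pattern above, which determines each region's color up to parity and then pins down the value in $\{0,1\}$.

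For part (2) I would analyse the leading monomials in the shadow formula $\langle D_s\rangle = \sum_\xi \prod_R \cerchio_{\xi(R)}^{\chi(R)}$. Since $\cerchio_n = (-1)^n[n+1]|_{q=A^2}$ has leading monomial $(-1)^n A^{2n}$, the top monomial of the $\xi$-summand is
\[
(-1)^{\sum_R \chi(R)\xi(R)}\, A^{2\sum_R \chi(R)\xi(R)}.
\]
The sign depends only on the parities of the $\xi(R)$ at regions where $\chi(R)$ is odd, and those parities are fixed by the parity constraint; hence the sign is $\xi$-independent. The leading terms of the summands therefore cannot cancel at the top order, yielding $\tfrac12\,\ord_\infty \langle D_s\rangle = \max_\xi \sum_R \chi(R)\xi(R)$, and the symmetry $\langle D_s\rangle(A)=\langle D_s\rangle(A^{-1})$ gives the dual equality for $\ord_0$.

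For part (3) I would argue that $\xi_0$ achieves this maximum. Every relevant region has $\chi(R)\leq 0$: none of the $S_{(g)}$-regions cut out by $D_s$ is a disk (because $D_s$ has no homotopically trivial components), and the attached annuli have $\chi=0$. Since $\chi(R)\leq 0$ and $\xi(R)\geq 0$, each term $\chi(R)\xi(R)$ is non-positive and is maximized by taking $\xi(R)$ as small as possible subject to its forced parity; $\xi_0$ does exactly this. The main obstacle will be the non-cancellation step in (2): the entire strategy collapses unless $(-1)^{\sum_R \chi(R)\xi(R)}$ is genuinely $\xi$-independent, which in turn requires the chessboard parity pattern to be unambiguous, and this is precisely what the $\Z_2$-triviality hypothesis delivers.
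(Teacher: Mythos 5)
Your proposal is correct and follows essentially the same route as the paper's proof: the chessboard parity constraint forces the mod-$2$ coloring, $\Z_2$-triviality of $D_s$ in $S_{(g)}$ produces the surface $S_s$ defining $\xi_0$, the $\xi$-independence of the leading sign $(-1)^{\sum_R\chi(R)\xi(R)}$ rules out top-order cancellation in the shadow formula, and $\chi(R)\leq 0$ makes $\xi_0$ the maximizer. The only difference is expository: you make the parity bookkeeping at edges slightly more explicit than the paper does.
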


Although we do not need it to prove the main theorems, it is interesting to get also a lower bound of the quantity $\psi(s)$:
\begin{prop}[\cite{Carrega_Taitg}]
Let $D\subset S_{(g)}$ be a link diagram and $s$ a state such that $D_s$ is non empty. For $1\leq h \leq g$ let $\varphi_h(D_s)$ be the number of internal regions given by $D_s$ that are disks with $h$ holes. Then
\begin{enumerate}
\item{
$$
\sum_{h=2}^{g(D)} (h-1)\varphi_h(D_s) \leq g(D_s) -1 ;
$$}
\item{
$$
1-g(D) \leq 1-g(D_s) \leq \psi(s) .
$$}
\end{enumerate}
\begin{proof}
$1.$ We proceed by induction on $g(D)$. If $D$ is contained in an annulus ($g(D)=1$) obviously the equality holds. Suppose that the inequality is true for every diagram $D'$ such that $g(D') < g(D)$. There are two cases:
\begin{enumerate}
\item{there is an internal region that is a disk with $g$ holes ($\varphi_g(D)=1$) and the other internal regions are annuli ($\varphi_h(D)=0$ for $1<h<g$);}
\item{there are no internal regions that are disks with $g$ holes ($\varphi_g(D)=0$).}
\end{enumerate}
In the first case the equality holds. Suppose we are in the second case. The diagram $D$ is obtained merging three disjoint diagrams, $D_1, D_2 , D_3 \subset S_{(g)}$, such that $g(D_1) + g(D_2) =g(D)$, $1\leq g(D_1), g(D_2)$, and $D_3$ is a (maybe empty) set of parallel curves encircling $D_1 \cup D_2$. If $D_3$ is non empty there is an $h$ such that $1<h<g$ and $\varphi_h(D) = \varphi_h(D_1)+\varphi_h(D_2) +1$ and $\varphi_{h'}(D) = \varphi_{h'}(D_1)+\varphi_{h'}(D_2)$ for $1<h'<g$, $h'\neq h$. The number $h$ is the number of holes of the region bounded by the most internal component of $D_3$. If $D_3$ is emty $\varphi_h(D) = \varphi_h(D_1)+\varphi_h(D_2)$ for all $h$. We conclude using the inductive hypothesis on $D_1$ and $D_2$.

$2.$ Clearly $g(D_s)\leq g(D) \leq g$, furthermore the Euler characteristic of the regions of the shadow $X_s$ is non positive. Therefore we have the inequalities on the left and right of the statement. For the the point $1.$ and Lemma~\ref{lem:psi}-(3.) we get
$$
1-g(D_s) \leq \sum_{h=2}^{g(D_s)} (1-h) \varphi_h(D_s) = \sum_{h=1}^{g(D_s)} (1-h) \varphi_h(D_s) \leq  \sum_R \chi(R)\xi_0(R) = \psi(s)  .
$$
\end{proof}
\end{prop}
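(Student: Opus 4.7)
The plan is to prove Part 1 by induction on $g(D)$ and then obtain Part 2 by combining Part 1 with Lemma~\ref{lem:psi}.

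For Part 1, the base cases $g(D)=0$ and $g(D)=1$ are immediate, since then the sum on the left-hand side is empty while the hypothesis that $D_s$ is non-empty forces $g(D_s)\geq 1$. For the inductive step with $g(D)\geq 2$, I would dichotomize: either (a) some internal region of $D$ is a disk with $g(D)$ holes, in which case that region is necessarily unique and all other internal regions of $D$ are annuli, so a direct count of how $D_s$ inherits its multi-hole regions gives equality; or (b) no internal region of $D$ has $g(D)$ holes, in which case $D$ decomposes as a disjoint union $D=D_1\cup D_2\cup D_3$, with $D_1,D_2$ contained in embedded punctured sub-disks of genera $g(D_j)<g(D)$ satisfying $g(D_1)+g(D_2)=g(D)$, and $D_3$ a possibly empty nested family of parallel curves encircling $D_1\cup D_2$. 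Applying the inductive hypothesis to $D_1$ and $D_2$ separately and adding up, while noting that each curve in $D_3$ merely promotes the hole-count of a single region by one (contributing one extra multi-hole disk region of a precise type), closes the induction.

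For Part 2, the leftmost inequality $1-g(D)\leq 1-g(D_s)$ is the tautology $g(D_s)\leq g(D)$, valid because every punctured sub-disk of $S_{(g)}$ containing $D$ also contains $D_s$ (splittings remove crossings locally without enlarging the supporting disk). For $1-g(D_s)\leq \psi(s)$, I would apply Part 1 to the crossing-free diagram $D_s$ (which is its own splitting) to get
\[
1-g(D_s) \leq \sum_{h=1}^{g(D_s)} (1-h)\,\varphi_h(D_s) \;=\; \sum_{R\text{ internal}} \chi(R),
\]
the last equality because every internal region of $D_s$ is a disk with $h\geq 1$ holes and so contributes $\chi(R)=1-h$. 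By Lemma~\ref{lem:psi}(3), $\psi(s)=\sum_R \chi(R)\,\xi_0(R)$ with $\xi_0\in\{0,1\}$: the annuli attached along components of $D_s$ receive $\xi_0=1$ but contribute $0$ (they are annuli), while the remaining regions with $\xi_0=1$ are exactly the internal regions of $S_{(g)}$ making up a surface $S_s$ bounded by $D_s$. Since $\chi(R)\leq 0$ for every internal region, excluding those internal regions with $\xi_0=0$ only increases the sum, yielding $\psi(s)\geq \sum_{R\text{ internal}}\chi(R)$ and chaining the inequalities.

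I expect the main obstacle to be case (b) of the induction in Part 1: making the decomposition $D=D_1\cup D_2\cup D_3$ rigorous and tracking precisely how each encircling curve in $D_3$ perturbs the counts $\varphi_h(D_s)$. The effect is telescoping, since each new outer curve raises the hole count of exactly one region by one and creates a new annular external region, but writing this out without double-counting the innermost encircling curve requires some care.
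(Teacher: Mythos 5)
Your proposal is correct and follows essentially the same route as the paper: the same induction on $g(D)$ with the same dichotomy (a unique internal region with $g$ holes versus a decomposition $D=D_1\cup D_2\cup D_3$ with encircling curves), and the same derivation of Part 2 by applying Part 1 to $D_s$ and comparing $\sum_{R}\chi(R)$ with $\sum_R \chi(R)\xi_0(R)$ via Lemma~\ref{lem:psi}(3) and the non-positivity of $\chi(R)$ on internal regions. The bookkeeping in case (b) that you flag as delicate is handled in the paper at the same level of detail you sketch, so nothing essential is missing.
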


\begin{defn}
Enumerate the boundary components of $S_{(g)}$. Let $D\subset S_{(g)}$ be a link diagram and $s$ a state of $D$. We denote by $p_i(s)$ the number of components of the splitting of $D$ with $s$ that are parallel to the $i^{\rm th}$ boundary component of $S_{(g)}$ ($i=1,\ldots, g+1$), and with $p(s)$ the total number of homotopically non trivial components of the splitting of $D$ with $s$. 
\end{defn}
For $g\geq 3$ it is not true that $\sum_{i=1}^{g+1} p_i(s) =p(s)$, but it is true if $g=2$. The links represented by a diagram $D\subset S_{(2)}$ are $\Z_2$-homologically trivial if and only if $p_i(s)+p_j(s)\in 2\Z$ for any state $s$ and any $i,j$. If this holds, since $p(s)=p_1(s)+p_2(s)+p_3(s)$, for every $i$ we have that $p_i(s)\in 2\Z$ if and only if $p(s)\in 2\Z$. These simple properties furnish some more information about the Kauffman bracket of links in $\#_2(S^1\times S^2)= (S^1\times S^2)\# (S^1\times S^2)$ (we do not need it to prove the main theorem):
\begin{prop}[\cite{Carrega_Taitg}]\label{prop:state_sum2}
Fix an e-shadow of $\#_2(S^1\times S^2)$. Let $D\subset S_{(2)}$ be a link diagram of a $\Z_2$-homologically trivial link in $\#_2(S^1\times S^2)$, and let $s$ be a state of $D$. Then
$$
\psi(s) = \begin{cases}
0 & \text{ if } p(s)\in 2\Z  \\
-1 & \text{ if } p(s) \in 2\Z+1
\end{cases}.
$$
\begin{proof}
If $p_i(s)=0$ for some $i=1,2,3$ ($p(s) \in 2\Z$) then $D_s$ is contained in the union of two disjoint annuli, $A_1$ and $A_2$, that encircle the boundary components different from the $i^{\rm th}$ one. Let $D_{s,j}$ be the diagram in the annulus $A_j$. Then by Remark~\ref{rem:tensor} $\langle D_s\rangle =\langle D_{s,1}\rangle \cdot \langle D_{s,2}\rangle $ where $D_{s,1}$ and $D_{s,2}$. Hence by \cite[Lemma 3.3]{Carrega_Tait1} $\langle D_s \rangle $ is a positive integer number and so $\psi(s) =0$.

Suppose $p_i(s) \neq 0$ for all $i=1,2,3$. The shadow $X_s$ has a region that is a 2-disk with two holes and the other ones are annuli. By Lemma~\ref{lem:psi} we can compute $\psi(s)$ using the admissible coloring $\xi_o$, $\psi(s) = \sum_R \chi(R) \cdot \xi_0(R)$. Hence $\psi(s)= -\xi_0(R')$, where $R'$ is the region that is a disk with two holes. For any admissible coloring $\xi$ of $X_s$ we have that the color $\xi(R')\in 2\Z+1$ if and only if $p(s)\in 2\Z+1$. We conclude since $\xi_0$ assigns just color $0$ and $1$.
\end{proof}
\end{prop}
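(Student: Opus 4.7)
The plan is to split into two cases according to whether some $p_i(s)$ vanishes, and in each case compute $\psi(s)$ explicitly via Lemma~\ref{lem:psi}. Throughout, label the three boundary components of $S_{(2)}$ as $\partial_1,\partial_2,\partial_3$, so that $p_i(s)$ counts the components of $D_s$ parallel to $\partial_i$. Recall that $\Z_2$-triviality of the link forces $p_1(s)\equiv p_2(s)\equiv p_3(s)\pmod 2$.

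Suppose first that some $p_i(s) = 0$, say $p_3(s) = 0$. Then $D_s$ is contained in the disjoint union of two annular collars $A_1,A_2$ of $\partial_1,\partial_2$ in $S_{(2)}$, and each $A_j$ embeds canonically into one of the $S^1\times S^2$ summands of $\#_2(S^1\times S^2)$. By Remark~\ref{rem:tensor} the bracket factors as $\langle D_s\rangle = \langle D_{s,1}\rangle\cdot\langle D_{s,2}\rangle$, where $D_{s,j}$ consists of $p_j(s)$ parallel copies of the core of $S^1\times S^2$. The parity constraint above and $p_3(s)=0$ force $p_1(s),p_2(s)\in 2\Z$, so by Lemma~\ref{lem:parallel_cores} each factor is a positive integer, whence $\langle D_s\rangle\in\Z_{>0}$ and $\psi(s)=0$. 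This matches the claim since $p(s)=p_1(s)+p_2(s)$ is even.

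Suppose now that all $p_i(s)\neq 0$. The components of $D_s$ are then concentric circles around each $\partial_i$, and $S_{(2)}\setminus D_s$ consists of annuli together with a single central region $R'$ homeomorphic to a disk with two holes ($\chi(R')=-1$); every other region of $X_s$, including the annuli attached along the components of $D_s$, has Euler characteristic $0$. By Lemma~\ref{lem:psi}(3),
$$\psi(s) = \sum_R \chi(R)\,\xi_0(R) = -\xi_0(R'),$$
and by Lemma~\ref{lem:psi}(1) one has $\xi_0(R')\in\{0,1\}$, so the task reduces to determining $\xi_0(R')\bmod 2$.

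The main step, and the only nontrivial point, is the following parity argument. At each edge of $X_s$ coming from a component of $D_s$, three regions meet: the two sides inside $S_{(2)}$ and the attached annulus (colored $1$); since the only admissible triple with entries in $\{0,1\}$ containing a $1$ is $(0,1,1)$, the value of $\xi_0$ must toggle between $0$ and $1$ across every circle of $D_s$. The external regions of $X_s$ adjacent to $\partial_1,\partial_2,\partial_3$ all carry color $0$ (the prescribed boundary color), so walking radially from any such external region into $R'$ crosses exactly $p_i(s)$ circles, giving $\xi_0(R')\equiv p_i(s)\pmod 2$. The mutual consistency of these three congruences is precisely the parity condition provided by $\Z_2$-triviality, and since $p(s)=p_1(s)+p_2(s)+p_3(s)$ with all $p_i(s)$ of the same parity, we conclude $\xi_0(R')\equiv p(s)\pmod 2$. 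Hence $\psi(s)=0$ if $p(s)$ is even and $\psi(s)=-1$ if $p(s)$ is odd, as claimed.
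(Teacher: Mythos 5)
Your proof is correct and follows essentially the same route as the paper: the same case split on whether some $p_i(s)$ vanishes, the same use of Remark~\ref{rem:tensor} and Lemma~\ref{lem:parallel_cores} in the first case, and the same reduction to $\psi(s)=-\xi_0(R')$ via Lemma~\ref{lem:psi} in the second. Your explicit toggle argument across the circles of $D_s$ merely fills in the parity claim that the paper states without justification.
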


Let $D\subset S_{(g)}$ be a $n$-crossing, connected diagram with $g(D)=g$ that represents a $\Z_2$-homologically trivial link in $\#_g(S^1\times S^2)$ by an e-shadow. Then for a state $s$ we get
\beq
\ord_\infty \langle D | s\rangle & = & \sum s(i) + 2( sD + \psi(s) ) \\  
\ord_0 \langle D | s\rangle & = & \sum s(i) - 2( sD + \psi(s) ) .
\eeq
Therefore
$$
\ord_\infty \langle D | s_+ \rangle - \ord_0 \langle D | s_- \rangle = 2(n + s_+D +s_-D +\psi(s_+) + \psi(s_-)) .
$$

The following is the part that needs most work to get the final result. If $g=1$ there are less cases to analyze and $\psi(s)$ is always null, hence the proof is much easier:
\begin{lem}[\cite{Carrega_Taitg}]\label{lem:ineq1_g}
Let $D\subset S_{(g)}$ be a $n$-crossing, connected diagram of a $\Z_2$-homologically trivial link in $\#_g(S^1\times S^2)$. Then
$$
B(\langle D \rangle) \leq \ord_\infty \langle D|s_+ \rangle - \ord_0 \langle D|s_- \rangle = 2(n + s_+D + s_-D + \psi(s_+) +\psi(s_-)) .
$$
Moreover if $D$ is adequate the equality holds:
$$
B(\langle D \rangle) = \ord_\infty \langle D | s_+ \rangle -\ord_0 \langle D|s_- \rangle .
$$
\begin{proof}
For every state $s$ we denote by $M(s)$ and $m(s)$ respectively the order at $\infty$ and in $0$ of $\langle D | s\rangle$. Hence
$$
B(\langle D \rangle) \leq \max_s M(s) - \min_s m(s) ,
$$
and the equality holds if there is a unique maximal $s$ for $M$ and a unique minimal $s$ for $m$. We have:
\beq
M(s) & = & \sum_i s(i) +2sD +2\psi(s) \\
m(s) & = & \sum_i s(i) -2sD -2\psi(s) .
\eeq
Let $s$ and $s'$ be two states differing only in a crossing $j$ where $s(j)=+1$ and $s'(j)=-1$. We have
\beq
M(s) - M(s') & = & 2(1+sD-sD' +\psi(s) - \psi(s')) \\
m(s) - m(s') & = & 2(1-sD+sD' -\psi(s) + \psi(s')) .
\eeq
After the splitting of every crossing different from $j$ we have some cases up to mirror image. Four of them are divided in three types and are shown in Fig.~\ref{figure:cases_lem} (we pictured just an homotopically non trivial annulus where the diagrams hold): the crossing $j$ lies on a component contained in an annulus, $s'D=sD \pm 1$. In the remaining cases $j$ lies on a component that is not contained in an annulus, $s'D = sD$, they are shown in Fig.~\ref{figure:cases_lem2} (we pictured just a disk with two holes that is not homotopically equivalent to an annulus where the diagrams hold). Now we better examine the four types of case in order to get information about $\psi(s)-\psi(s')$, and get $M(s)\geq M(s')$ and $m(s)\geq m(s')$.

In the first two types of case (Fig.~\ref{figure:cases_lem}-(left) and Fig.~\ref{figure:cases_lem}-(center)) $D_{s'}=D_s$, hence $\psi(s')=\psi(s)$. Therefore $M(s)-M(s')= 2\pm 2 \geq 0$, and $m(s)-m(s') = 2\pm 2\geq 0$. 

In the third type of case (Fig.~\ref{figure:cases_lem}-(right)) $D_{s'}$ and $D_s$ differ by the removal or the addition of an annulus region that produces the fusion or the division of previous regions. Suppose we are in the third case and the splitting with $s'$ is obtained fusing two components of the splitting with $s$ (the other case is analogous). Then $D_{s'}$ is obtained from $D_s$ removing two parallel components. The regions of $D_s$ that do not touch the selected parallel components remain unchanged in $D_{s'}$. The selected components of $D_s$ bound an annulus region $A$. The annulus $A$ touches two different regions, $R_1$ and $R_2$, that are disks respectively with $h_1>0$ and $h_2>0$ holes. In $D_{s'}$ the regions $A$, $R_1$ and $R_2$ are replaced with a region $R$ that is a disk with $h_1+h_2-1$ holes. Let $\xi_0$ and $\xi'_0$ be the admissible colorings of $X_s$ and $X_{s'}$ that extend the boundary coloring and assign only color $0$ and $1$ (see Lemma~\ref{lem:psi}). We have two cases:
\begin{enumerate}
\item{$\xi_0(R_1)=\xi_0 (R_2) =\xi'_0(R) =0$ and $\xi(A)=1$;}
\item{$\xi_0(R_1)=\xi_0 (R_2) =\xi'_0(R) =1$ and $\xi(A)=0$.}
\end{enumerate}
Since $\chi(A)=0$, $\chi(R) = \chi(R_1)+\chi(R_2)$, $\psi(s)= \sum_{\bar R} \chi(\bar R) \xi_0(\bar R)$ and $\psi(s')= \sum_{\bar R} \chi(\bar R) \xi'_0(\bar R)$, we get that $\psi(s)=\psi(s')$. Therefore $M(s)-M(s')= 2\pm 2\geq 0$ and $m(s)- m(s') =2\pm 2 \geq 0$.

Suppose we are in the fourth type of case (Fig.~\ref{figure:cases_lem2}) and the splitting with $s'$ is obtained dividing in two a component of the splitting with $s$ (the other case is analogous). In the same way we get $D_{s'}$ from $D_s$. We have that the selected component of $D_s$ bounds on a side a disk with $h_1+h_2$ holes, $R_1$, and on the other one a disk with $k\geq 2$ holes, $R_2$, while those two components of $D_{s'}$ together bound an internal region with $k+1$ holes, $R'_2$, and two different punctured disks with $h_1>0$ and $h_2$ holes, respectively $R'_{1,1}$ and $R'_{1,2}$. Again, let $\xi_0$ and $\xi'_0$ be the admissible colorings of $X_s$ and $X_{s'}$ that extend the boundary coloring and assign only the colors $0$ and $1$ (see Lemma~\ref{lem:psi}). We have two cases:
\begin{enumerate}
\item{$\xi_0(R_1)=\xi'_0(R'_{1,1})= \xi'_0(R'_{1,2}) =0$ and $\xi_0(R_2)=\xi'_0(R'_2)=1$;}
\item{$\xi_0(R_1)=\xi'_0(R'_{1,1})= \xi'_0(R'_{1,2}) =1$ and $\xi_0(R_2)=\xi'_0(R'_2)=0$.}
\end{enumerate}
Since $\psi(s)= \sum_{\bar R} \chi(\bar R)\xi_0(\bar R)$ and $\psi(s') = \sum_{\bar R} \chi(\bar R) \xi'_0(\bar R)$ we get $\psi(s')=\psi(s) -1$ in the first case, and $\psi(s')=\psi(s)$ in the second one. Therefore in each case $M(s)-M(s')\geq 0$ and $m(s)- m(s') \geq 0$.

Given a state $s$ we can connect it to $s_+$ finding a sequence of states $s_+=s_0 , s_1 \ldots , s_k =s$ such that $s_r$ differs from $s_{r+1}$ only in a crossing and $\sum_i s_r(i) = \sum_i s_{r+1}(i) +2$. Analogously for $s_-$. Hence $M(s) \leq M(s_+)$ and $m(s) \geq m(s_-)$. Thus we have the first statement. 

If the diagram is plus-adequate (resp. minus-adequate) the fourth type of case (Fig.~\ref{figure:cases_lem2}) is not allowed and it holds $M(s_+) - M(s)=4$ ($m(s) - m(s_-) =4$) for each $s$ such that $\sum_s s(i)= n-2$ ($= 2-n$). Namely we have a strict inequality already from the beginning of the sequence $s_0,\ldots, s_k$. This proves the second statement.
\end{proof}
\end{lem}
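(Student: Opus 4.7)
The plan is to run a state-sum argument for $\langle D\rangle$ and show that the extremal powers of $A$ on the two ends are realized by the constant states $s_+$ and $s_-$. Starting from Proposition~\ref{prop:state_sum}, we have $\langle D\rangle=\sum_s\langle D\,|\,s\rangle$, and combining the definitions of $\langle D\,|\,s\rangle$ and $\psi(s)$ (Definition~\ref{defn:psi}) gives the exact formulas
\[
\ord_\infty\langle D\,|\,s\rangle=\textstyle\sum_i s(i)+2sD+2\psi(s),\qquad \ord_0\langle D\,|\,s\rangle=\textstyle\sum_i s(i)-2sD-2\psi(s),
\]
so that $\ord_\infty\langle D\,|\,s_+\rangle-\ord_0\langle D\,|\,s_-\rangle$ is exactly the right-hand side of the stated bound. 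The inequality $B(\langle D\rangle)\le\max_s\ord_\infty\langle D\,|\,s\rangle-\min_s\ord_0\langle D\,|\,s\rangle$ is then immediate from subadditivity of $\ord_\infty$ and superadditivity of $\ord_0$ under sums, so the task reduces to showing that $s_+$ maximizes $M(s):=\ord_\infty\langle D\,|\,s\rangle$ and $s_-$ minimizes $m(s):=\ord_0\langle D\,|\,s\rangle$.

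For this I would argue locally by flipping the value of a single state at one crossing. If $s$ and $s'$ agree everywhere except at crossing $j$, with $s(j)=+1$ and $s'(j)=-1$, then a direct computation gives
\[
M(s)-M(s')=2\bigl(1+sD-s'D+\psi(s)-\psi(s')\bigr),\qquad m(s)-m(s')=2\bigl(1-sD+s'D-\psi(s)+\psi(s')\bigr).
\]
A chain of such flips connects any state to $s_+$ (raising $\sum_i s(i)$ by $2$ at each step) or to $s_-$, so the desired monotonicity $M(s)\le M(s_+)$ and $m(s)\ge m(s_-)$ follows provided that, at each single flip, $sD-s'D+\psi(s)-\psi(s')\ge-1$ and $-sD+s'D-\psi(s)+\psi(s')\ge -1$.

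The technical heart, and the main obstacle, is the case analysis of this local flip. After splitting every other crossing according to $s$, the two strands at $j$ either lie on a component contained in some annular region (so $s'D=sD\pm 1$) or on a component that is not so contained (so $s'D=sD$); in either situation we must control $\psi(s)-\psi(s')$. Using the uniqueness of the $0/1$–admissible coloring from Lemma~\ref{lem:psi}-(1.) and the formula $\psi(s)=\sum_R\chi(R)\xi_0(R)$ from Lemma~\ref{lem:psi}-(3.), one compares the $0/1$–colorings of the shadows $X_s$ and $X_{s'}$: merging or splitting components in the splitting of $D$ either leaves the $\xi_0$–contribution unchanged (because the affected regions have matching Euler characteristic sums and the $\{0,1\}$ labels transfer consistently), or modifies it by $\pm1$. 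Going through the possible local pictures (a homotopically trivial component being split off, an annular external region being cut, two non-trivial components being merged, or one non-trivial component being divided into a trivial one and a non-trivial one), one checks that in every case $M(s)\ge M(s')$ and $m(s')\ge m(s)$, which is the claim.

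For the adequacy equality, I would argue that the extremal terms in the state sum do not cancel. Plus-adequacy means that every $s$ with $\sum_i s(i)=n-2$ satisfies $s_+D>sD$, and in the case analysis above this rules out precisely the case in which $M(s_+)-M(s)=0$: one then has $M(s_+)-M(s)\ge 4$ for every such neighbor, and by induction along flip chains $M(s_+)>M(s)$ for all $s\ne s_+$. Consequently the leading term of $\langle D\,|\,s_+\rangle$ cannot be cancelled by any other summand, forcing $\ord_\infty\langle D\rangle=M(s_+)$. The symmetric argument using minus-adequacy gives $\ord_0\langle D\rangle=m(s_-)$, and subtracting yields the equality $B(\langle D\rangle)=\ord_\infty\langle D\,|\,s_+\rangle-\ord_0\langle D\,|\,s_-\rangle$.
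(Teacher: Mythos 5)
Your proposal is correct and takes essentially the same route as the paper's own proof: the same state-sum reduction to $M(s)$ and $m(s)$, the same single-crossing flip analysis controlled by the unique $0/1$ admissible coloring of Lemma~\ref{lem:psi}, the same chaining to $s_\pm$, and the same use of adequacy to force a gap of $4$ at the first flip so that the extremal terms cannot cancel. The only blemish is a sign slip in your summary sentence, where you write $m(s')\ge m(s)$ but mean $m(s)\ge m(s')$ (consistent with the inequality you correctly stated just before).
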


\begin{figure}[htbp]
\begin{center}
\includegraphics[scale=0.4]{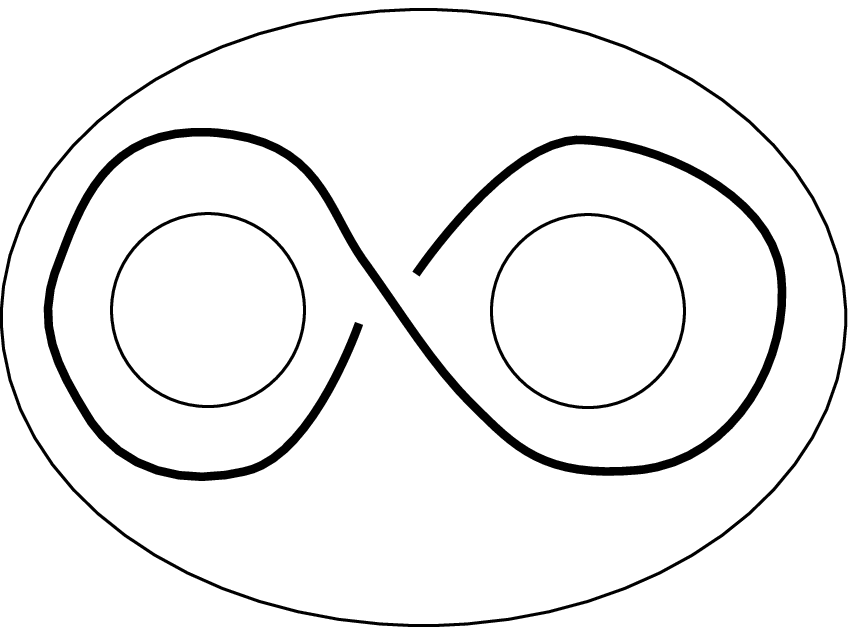}
\hspace{0.5cm}
\includegraphics[scale=0.4]{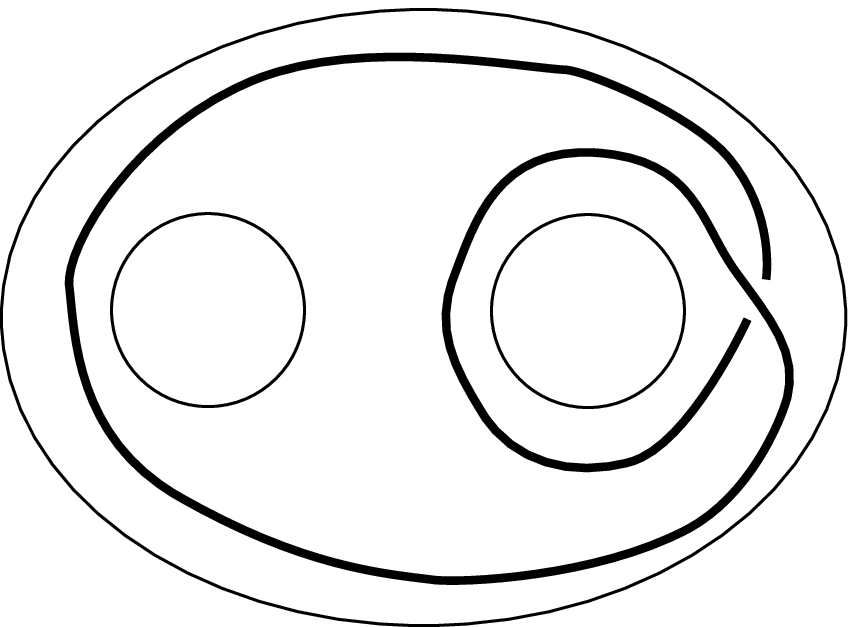}
\end{center}
\caption{The fourth type of case of the proof of Lemma~\ref{lem:ineq1_g}.}
\label{figure:cases_lem2}
\end{figure}

\begin{lem}[\cite{Carrega_Taitg}]\label{lem:ineq2_g}
Let $D\subset S_{(g)}$ be a $n$-crossing, connected diagram with $g(D)=g$ of a $\Z_2$-homologically trivial link in $\#_g(S^1\times S^2)$. Then
$$
s_+ D + s_- D \leq n + 1-g  .
$$
\begin{proof}
We proceed by induction on $n$. We have $g \leq n $, and, up to homeomorphism of $S_{(g)}$, there are exactly $2^g$ diagrams with these characteristics (connected, $g(D)=g$, and representing a $\Z_2$-homologically trivial link) and with $g$ crossings: the ones described in Fig.~\ref{figure:n=g}. Now we prove the statement for the diagrams with $g$ crossings (the base for the induction on $n$) by induction on $g$. 

For $g=0$ we have just a homotopically trivial circle and its Kauffman bracket is $-A^2-A^{-2}$, hence the statement holds. Suppose that the statement is true for such diagrams in $S_{(g-1)}$ with $g-1$ crossings. We split the crossing on the right of Fig.~\ref{figure:n=g}. The result is a diagram obtained taking a one of those diagrams $D'$ in $S_{(g-1)}$ and either adding an hole in the external region surrounding the hole on the right (according to the figure) of $S_{(g-1)}$, or adding a hole in the internal region and surrounding it with a circle. We did not add any homotopically trivial components, hence
$$
s_+D +s_-D =  s_+D' +s_-D' \leq 1 .
$$

Now suppose that the statement is true for all diagrams in $S_{(g)}$ with less than $n>g$ crossings. We claim, and prove later, that there is a crossing $j$ of $D$ and a splitting $D'$ of $j$ with the same characteristics of $D$ (connected and $g(D')=g$). We can suppose that the splitting of $j$ is done in the positive way (the negative way is analogous), hence $s_+D' = s_+D$ and $s_-D -1 \leq s_-D' \leq s_-D + 1$. Therefore
\beq
s_+D + s_-D & \leq  & s_+D' + s_- D' +1 \\
& \leq & (n-1)+1-g +1 \\
& = & n+1-g .
\eeq

Finally we prove the claim. Since $D$ is connected every crossing has a splitting that is still connected. If there is a crossing not adjacent to two external regions, every splitting $D'$ of the crossing satisfies $g(D')=g$. Since $n>g$, we get the previous case if $D$ has no more than $g$ crossings adjacent to two external regions. If there are at least $g+1$ crossings adjacent to two external regions, we take as $j$ one of them, its connected splitting $D'$  satisfies $g(D')=g$. 
\end{proof}
\end{lem}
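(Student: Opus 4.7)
My plan is to proceed by a double induction: the outer variable is the number of crossings $n$, and the inner variable is the genus $g$, used only for the base case $n=g$. The overall structure mirrors the classical argument of Kauffman--Murasugi--Thistlethwaite for $S^3$, but I must track how the quantity $s_{+}D+s_{-}D$ changes under a single crossing resolution in the presence of holes and the $\Z_2$--triviality constraint.

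For the base case $n=g$ I would first classify the diagrams that are connected, $\Z_2$--homologically trivial and satisfy $g(D)=g$ while using exactly $g$ crossings. A counting argument shows there are $2^g$ such diagrams up to a homeomorphism of $S_{(g)}$, and I would describe a recursive procedure that constructs each one from a diagram in $S_{(g-1)}$ by adjoining a single hole and a single crossing in one of two combinatorial patterns (essentially, placing the new hole in an external or an internal region and encircling it). I would then run a secondary induction on $g$: the anchor $g=0$ is the unknot in the disk, for which the inequality is verified by inspection, and the step is concluded by noting that the newly adjoined crossing does not create any additional homotopically trivial circles in either of the two constant-state resolutions, so $s_{+}D+s_{-}D=s_{+}D'+s_{-}D'$.

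For the inductive step $n>g$ I would search for a crossing $j$ of $D$ whose resolution in one of the two ways produces a diagram $D'$ that is still connected and still satisfies $g(D')=g$. Assume without loss of generality that this is the positive resolution; then $s_{+}D=s_{+}D'$ directly from the definition, while a single crossing change between the two constant states can alter the count of homotopically trivial circles by at most one, so $s_{-}D\le s_{-}D'+1$. Combining these with the outer inductive hypothesis applied to the $(n-1)$-crossing diagram $D'$ yields
\[
s_{+}D+s_{-}D \;\le\; s_{+}D'+s_{-}D'+1 \;\le\; (n-1)+1-g+1 \;=\; n+1-g,
\]
completing the step.

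The hard part is establishing the existence of such a crossing $j$. Connectedness of $D$ as a 4-valent graph ensures that at least one of the two resolutions of any crossing is again connected, so that condition is essentially free. The real obstacle is forcing $g(D')=g$: a crossing that is adjacent to two distinct external regions is dangerous, because smoothing it may reduce the minimum genus. I would handle this by a case analysis on the number $k$ of crossings adjacent to two external regions. If $k\le g$, then since $n>g\ge k$ there must exist a crossing that is adjacent to at most one external region, and either of its resolutions keeps $g(D')=g$. If instead $k\ge g+1$, a pigeonhole-type argument among the $g+1$ boundary components of $S_{(g)}$ locates a crossing adjacent to two external regions whose smoothing nevertheless does not collapse any hole of $S_{(g)}$, so $g(D')=g$. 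With this lemma in hand the inductive step goes through as above.
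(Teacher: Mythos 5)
Your proposal is correct and follows essentially the same route as the paper: a double induction on $n$ with the base case $n=g$ handled by a secondary induction on $g$, the same resolution step giving $s_+D+s_-D \le s_+D'+s_-D'+1$, and the same case split on the number of crossings adjacent to two external regions to produce a resolvable crossing preserving connectedness and $g(D')=g$. The only difference is cosmetic: where the paper simply asserts that some crossing among the $g+1$ dangerous ones has a good splitting, you gesture at a pigeonhole argument over the boundary components, which is the same idea at the same level of detail.
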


\begin{figure}[htbp]
\begin{center}
\includegraphics[width=7cm]{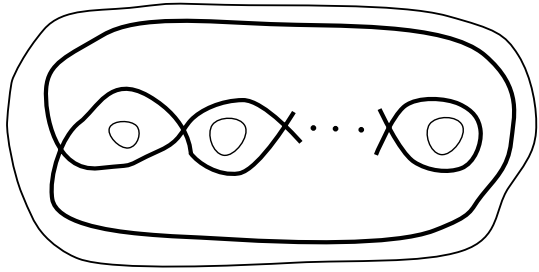}
\end{center}
\caption{Up to homeomorphism of $S_{(g)}$ all the link diagrams $D$ of $\Z_2$-homologically trivial links in $\#_g(S^1\times S^2)$ with $g$ crossings and $g(D)=g$ are obtained choosing the over/underpass for every crossing of the graph in figure.}
\label{figure:n=g}
\end{figure}

\begin{lem}[\cite{Carrega_Taitg}]\label{lem:alter_eq_g}
Let $D\subset S_{(g)}$ be a $n$-crossing alternating connected link diagram with $g(D)= g$ that represents a $\Z_2$-homologically trivial link in $\#_g(S^1\times S^2)$. Then 
$$
\ord_\infty \langle D|s_+ \rangle - \ord_0 \langle D|s_- \rangle = 4n +4 -4g .
$$
\begin{proof}
The number of edges of $D$ as a planar graph is $2n$, hence with a computation of Euler characteristic of $S_{(g)}$ we get that the sum of the Euler characteristics of the regions is $n+1-g$. All the internal regions are disks and there are $g+1$ external regions that are annuli. Thus there are $n+1-g$ internal regions. The diagram $D_{s_\pm}$ has a region that is a disk with $g$ holes, and the others are annuli. Thus
$$
\langle D|s_\pm \rangle = A^{\pm n} (-A^2 -A^{-2})^{s_\pm D + 1 - g} .
$$
The regions of $D_{s_\mp}$ are all disks. Thus
$$
\langle D|s_\mp \rangle = A^{\mp n} (-A^2 -A^{-2})^{s_\mp D}  .
$$
Since the link is alternating, the number of internal regions is equal to $s_+D+s_-D$. Therefore
\beq
\ord_\infty \langle D|s_+ \rangle - \ord_0 \langle D|s_- \rangle & = &  2(n + s_+ D + s_- D + 1 - g) \\
& = & 2(n+n+2-p(s_+ )- p(s_-) +1 -g) \\
& = & 4n + 4 -4g .
\eeq
\end{proof}
\end{lem}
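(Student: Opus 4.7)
The plan is to compute each ingredient of the identity
$$\ord_\infty\langle D\,|\,s_+\rangle - \ord_0\langle D\,|\,s_-\rangle = 2\bigl(n + s_+D + s_-D + \psi(s_+) + \psi(s_-)\bigr),$$
which follows immediately from Definition~\ref{defn:psi} and the evaluation of $\sum s(i)$ at the extremal states, and to verify that $s_+D + s_-D + \psi(s_+) + \psi(s_-) = n+2-2g$; substitution will then yield $4n+4-4g$.

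First I would determine the region structure of $D$ in $S_{(g)}$. Connectedness of $D$ together with the hypothesis $g(D)=g$ forces each of the $g+1$ boundary components of $S_{(g)}$ to be adjacent to a single external region that is an annulus ($\chi=0$). Regarding $D$ as a $4$-valent graph with $n$ vertices and $2n$ edges and using $\chi(S_{(g)})=1-g$, one obtains $n+1-g$ internal regions, each a disk. Next I would exploit the checkerboard coloring of the regions provided by the alternating condition (Figure~\ref{figure:black-white}). By Proposition~\ref{prop:Z_2_tr} the $\Z_2$-homologically trivial hypothesis forces the $g+1$ external annuli to share a single color, say black. The two smoothings $s_+$ and $s_-$ then correspond to the two color classes and, after relabelling if necessary, I may assume $s_+$ produces the boundaries of the black regions. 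The inner boundaries of the external black annuli are homotopically non-trivial curves parallel to $\partial S_{(g)}$, so they contribute to $p(s_+)$ but not to $s_+D$; hence $s_+D$ and $s_-D$ count respectively the internal black and internal white disk regions, giving $s_+D + s_-D = n+1-g$.

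Then I would compute $\psi(s_+)$ and $\psi(s_-)$ by running the shadow formula of Remark~\ref{rem:sh_for_br} with the canonical $\{0,1\}$-coloring $\xi_0$ from Lemma~\ref{lem:psi}. For $s_+$, the diagram $D_{s_+}$ consists precisely of the $g+1$ non-trivial splitting curves, which are parallel to $\partial S_{(g)}$; cutting $S_{(g)}$ along them leaves the $g+1$ external annuli on one side and a single connected disk with $g$ holes on the other (connectedness follows from connectedness of $D$). Admissibility forces $\xi_0$ to assign the disk-with-$g$-holes the color $1$, while every remaining region of $X_{s_+}$ has $\chi=0$; hence $\langle D_{s_+}\rangle = (-A^2-A^{-2})^{1-g}$ and $\psi(s_+)=1-g$. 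For $s_-$ every component of the splitting is homotopically trivial, so $D_{s_-}$ is empty, $\langle D_{s_-}\rangle = 1$, and $\psi(s_-)=0$.

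The main technical point is the identification of $D_{s_+}$ with parallel copies of $\partial S_{(g)}$: this requires a local check at each crossing that the $+$-smoothing picks out boundaries of the color class containing the external annuli, and the verification that, after deleting all trivial circles, the non-trivial components are isotopic to the $g+1$ boundary parallels. Once this is granted, the three computations combine to give $s_+D + s_-D + \psi(s_+) + \psi(s_-) = (n+1-g) + (1-g) + 0 = n+2-2g$, so the right-hand side of the opening identity equals $2(n + n + 2 - 2g) = 4n+4-4g$, as claimed.
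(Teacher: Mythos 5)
Your proof is correct and follows essentially the same route as the paper's: the same count of regions ($n+1-g$ internal disks and $g+1$ external annuli), the same checkerboard identification of the two constant splittings, and the same evaluation of the boundary-parallel splitting via the shadow formula, merely packaged through $\psi(s_\pm)$ and the identity $\ord_\infty\langle D|s_+\rangle-\ord_0\langle D|s_-\rangle=2(n+s_+D+s_-D+\psi(s_+)+\psi(s_-))$ rather than by writing out $\langle D|s_\pm\rangle$ explicitly. (The only caveat, shared with the paper's own wording, is the degenerate case $g=0$, where the boundary-parallel circle is homotopically trivial, so the factor $(-A^2-A^{-2})^{1-g}$ is absorbed into $s_\pm D$ rather than into $\psi(s_\pm)$; the sum $s_+D+s_-D+\psi(s_+)+\psi(s_-)=n+2-2g$ is unaffected.)
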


\begin{proof}[Proof of Theorem~\ref{theorem:Tait_conj_g}]
Every diagram $\bar D\subset S_{(g)}$ of $L$ for any e-shadow must be connected and with $g(\bar D)=g$. By Proposition~\ref{prop:reduced_D_g}, Lemma~\ref{lem:ineq1_g}, Lemma~\ref{lem:ineq2_g} and Lemma~\ref{lem:alter_eq_g} we get
\beq
4n(D)+4 -4g & = & B(\langle D \rangle) \\
& = & B(\langle D' \rangle) \\
& \leq & 4n(D') +2 -2g +2(\psi(s_+,D') +\psi(s_-,D')) \\
& \leq & 4n(D') +2 -2g ,
\eeq
where $\psi(s_\pm,D')$ is the quantity $\psi(s_\pm)$ related to the diagram $D'$. Therefore $n(D)\leq n(D') + (g-1)/2$. If $g\leq 2$, $n(D)\leq n(D') +\frac 1 2$, but $n(D)$ and $n(D')$ are integers, hence $n(D)\leq n(D')$. 
\end{proof}

\begin{proof}[Proof of Theorem~\ref{theorem:Tait_conj_Jones_g}]
We have already proved the case $k=0$ in Lemma~\ref{lem:alter_eq_g}. Suppose $k>0$. We proceed by induction on $g$. We apply the second identity of Fig.~\ref{figure:sphere} (with $a=b=1$) near one such crossing. We get $\langle D \rangle= \langle D' \rangle /(-A^2-A^{-2})$, where $D'\subset S_{(g)}$ is a $n$-crossing, connected, alternating diagram with $g(D')=g-1$ and $0<h\leq k$ crossings adjacent twice to a region. We have that $\langle D' \rangle = \langle D''\rangle$, where $D''$ is a link diiaigram in $S_{(g-1)}$ with the same characteristics of $D'$. Moreover we can easily get a diagram $D'''\subset S_{(g-1)}$ representing the same link of $D''$ and is connected, alternating, with $g(D''')=g-1$ and has $n-h$ crossings. Therefore by the inductive hypothesis 
\beq
B(\langle D \rangle) & = & B(\langle D' \rangle) -4 \\
& = & B(\langle D''' \rangle) -4 \\
& = & 4(n-h) +4 - 4(g-1)-4 (k-h)-4 \\
& = & 4n +4 -4k .
\eeq
\end{proof}

\section{Open questions}
In this final section we ask some open questions whose solution in the positive would produce a sharper, or more complete, result than Theorem~\ref{theorem:Tait_conj_g}, and we provide some information that
suggests that the natural extension of the Tait conjecture in $\#_g(S^1\times S^2)$ could be false for $g\geq 3$. 

The first obvious question is the natural extension of the Tait conjecture:
\begin{quest}\label{quest:Tait_conj_g}
Fix an e-shadow of $\#_g(S^1\times S^2)$. Let $D\subset S_{(g)}$ be a connected, alternating, simple diagram of a $\Z_2$-homologically trivial link $L\subset \#_g(S^1\times S^2)$ that is non H-split and with homotopic genus $g$. Is the number of crossings of $D$ equal to the crossing number of $L$?
\end{quest}

The following two questions aim to remove from Theorem~\ref{theorem:Tait_conj_g} the hypothesis of ``homotopic genus $g$'' and ``non H-split''.

\begin{quest}
Let $D\subset S_{(g)}$ be an alternating connected link diagram. Once an e-shadow of $\#_g(S^1\times S^2)$ is fixed, is the link represented by $D$ non H-split?
\end{quest}

\begin{quest}
Let $D\subset S_{(g)}$ be a connected, alternating link diagram. Is the homotopic genus of the links represented by $D$ equal to $g(D)$?
\end{quest}

\begin{rem}\label{rem:ineq_psi}
If for every connected diagram $D\subset S_{(g)}$ with $g(D)=g$ and representing a $\Z_2$-homologically trivial link in $\#_g(S^1\times S^2)$, we had
$$
\psi(s_+) + \psi(s_-) \leq  2-g ,
$$
we could improve Theorem~\ref{theorem:Tait_conj_g} and answer positively to Question~\ref{quest:Tait_conj_g}. Unfortunately it is not true:  a diagram $D\subset S_{(3)}$ is shown in Example~\ref{ex:Kauff}-$(7)$, $D$ is simple, connected, with $g(D)=3$, represents a $\Z_2$-homologically trivial link, and it satisfies $\langle D | s_+ \rangle = A^7(-A^2-A^{-2})$, $\langle D|s_-\rangle = A^{-7}$, hence $\psi(s_+)=\psi(s_-)=0 >2-g$.
\end{rem}

In Remark~\ref{rem:ineq_psi} we did not require that the diagrams represent alternating links, and we know that the shown example represents a non alternating link (see Example~\ref{ex:no_alt_g}). So, it is natural to ask the following: 

\begin{quest}
Let $D\subset S_{(g)}$ be a connected (maybe not alternating) diagram with $g(D)=g$ that represents an alternating $\Z_2$-homologically trivial link in $\#_g(S^1\times S^2)$. Is it true that 
$$
\psi(s_+) + \psi(s_-) \leq 2-g \ \ ?
$$
\end{quest}

\chapter{A generalization of Eisermann's theorem}\label{chapter:Eisermann}

We introduced the notions of ``ribbon surface'', ``ribbon link'', ``slice link'', \ldots (Section~\ref{sec:slice-ribbon}). 

Eisermann showed that the Jones polynomial of a $n$-component ribbon link $L\subset S^3$ is divided by the Jones polynomial of the trivial $n$-component link (see Section~\ref{sec:Eisermann} and \cite{Eisermann}). In this chapter we improve this theorem by extending its range of application from links in $S^3$ to colored knotted trivalent graphs in $\#_g(S^1\times S^2)$, the connected sum of $g\geq 0$ copies of $S^1\times S^2$. We follow \cite{Carrega-Martelli}.

We show in particular that if the Kauffman bracket of a knot in $\#_g(S^1\times S^2)$ has a pole in $q=A^2=i$ of order $n$, the ribbon genus of the knot is at least $\frac {n+1}2$. 

We prove these estimates using Turaev's shadows (see Chapter~\ref{chapter:shadows} and Section~\ref{sec:sh_for_br}). We need to extend the notion of ``multiplicity'' in a point $q_0\in \mathbb{C}$ as a zero and we call it \emph{order} (Definition~\ref{defn:order}). In this chapter we also provide more lower and upper bounds of the order at $q=A^2=i$. 

Throughout this chapter we use the variable $q=A^2$ instead of the variable $A$.

\section{Statement}

The Kauffman bracket of a link in $S^3$ is just a Laurent polynomial. We now consider also links in $\#_g(S^1\times S^2)$ and framed trivalent colored graphs. In general the Kauffman bracket is a rational function and may not be a Laurent polynomial, in particular it may have poles in  $q=A^2=i$ (see Example~\ref{ex:Kauff}). Hence we can consider also the order of a pole and not just the multiplicity as a zero.

\begin{defn}\label{defn:order}
Given a meromorphic function $f$ defined in a neighborhood of $q_0\in \mathbb{C}$, we denote by
$$
\ord_{q_0}f\in \Z \cup\{+\infty\}
$$
the maximum integer $k$ such that $f(q)/(q-q_0)^{k-1}$ vanishes in $q_0$ and we call it the \emph{order} at $q_0$. If $\ord_{q_0}f = + \infty$ the function $f$ vanishes in a neighborhood of $q_0$, otherwise it has a Laurent expansion
$$
f(q) =\lambda (q-q_0)^{\ord_{q_0}f} + {\rm o}\big((q-q_0)^{\ord_{q_0}f}\big)
$$
for some $\lambda \neq 0$. This number is equal to the multiplicity of $f$ in $q_0$ as a zero, or to minus the order of $f$ in $q_0$ as a pole.
\end{defn}

We are interested in the order of the Kauffman bracket at $q=A^2=i$.

\begin{theo}[\cite{Carrega-Martelli}]\label{theorem:Eisermann_gen}
Let $L\subset \#_g(S^1\times S^2)$ be a link and $S\rightarrow \#_g(S^1\times S^2)$ a ribbon surface bounded by $L$. Then
$$
\ord_i \langle L \rangle \geq \chi(S).
$$
\begin{proof}
It follows directly from the more general Theorem~\ref{theorem:Eisermann_gen_gr}.
\end{proof}
\end{theo}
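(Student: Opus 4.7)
The statement reduces to the colored graph generalization (Theorem~\ref{theorem:Eisermann_gen_gr}) simply by viewing $L$ as a framed trivalent graph whose edges all have color $1$; so the plan is to sketch how I would handle the graph version. The whole strategy hinges on the shadow formula (Theorem~\ref{theorem:sh_for_br}) combined with the observation that at $q=A^2=i$ the loop value $\cerchio_n=(-1)^n[n+1]|_{q=A^2}$ satisfies $\ord_i \cerchio_n = 1$ if $n$ is odd and $\ord_i \cerchio_n = 0$ if $n$ is even (and the edge/vertex atoms $\teta$ and $\tetra$ are finite and nonzero at $q=i$ on admissible triples).

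First I would use Remark~\ref{rem:ribbon_g} to isotope the ribbon surface $S$ to a properly embedded surface in the 4-dimensional handlebody $W_g$, in Morse position with only minima and saddles relative to the distance from an internal graph. Using this Morse structure, and mimicking the construction of Proposition~\ref{prop:shadow2}, I would build a standard shadow $X$ of $(\#_g(S^1\times S^2),L)$ that contains a distinguished sub-polyhedron $\Sigma\subset X$ whose regular neighborhood in $W_X$ is exactly $S$. Thus the regions of $X$ split into two families, the $S$-regions $R\subset \Sigma$ and the others, and the assignment $\xi_S$ colored $1$ on $S$-regions and $0$ on the rest is an admissible coloring of $X$ extending the boundary coloring of $L$ (all edges colored $1$).

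Now apply the shadow formula. For any admissible coloring $\xi$ extending the coloring of $\partial X = L$, the set $S_\xi$ of odd-colored regions forms a surface with $\partial S_\xi = L$ (admissibility at each edge forces the three incident colors to have even sum). The single-coloring contribution to $\langle L\rangle$ is $\prod_R A_R \cerchio_{\xi(R)}^{\chi(R)}\cdot (\text{edge/vertex factors})$; taking the order at $q=i$, the edge and vertex factors do not contribute (they are non-vanishing), and the region product contributes
\[
\sum_{R\,:\,\xi(R)\text{ odd}} \chi(R)\cdot \ord_i \cerchio_{\xi(R)} \;=\; \sum_{R\subset S_\xi}\chi(R)\;=\;\chi(S_\xi).
\]
Next one needs two facts: (i) $\chi(S_\xi)\geq \chi(S)$ for every admissible $\xi$, and (ii) leading terms of colorings that achieve the minimal order do not cancel, so the bound survives the summation over $\xi$. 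For (i), the coloring $\xi_S$ realizes $\chi(S_{\xi_S})=\chi(S)$, and any other admissible $\xi$ differs from $\xi_S$ by a sequence of \emph{parity moves} (swapping a region's parity in a way compatible with admissibility at the adjacent edges); a local analysis at each model of Fig.~\ref{figure:models} shows each parity move either leaves $\chi(S_\xi)$ unchanged or strictly increases it. For (ii), the leading coefficients $\lambda_{f(\xi)}$ at $q=i$ have controlled signs depending only on the parities of the $\xi(R)$, exactly as in the computation at the start of the proof of Lemma~\ref{lem:psi}: colorings with a common parity pattern give contributions of the same order and consistent sign, so they add rather than cancel.

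The main obstacle is precisely (ii), ruling out cancellation: one must show that when several admissible colorings share the minimal value of $\chi(S_\xi)$, the sign of the leading term at $q=i$ is independent of which such $\xi$ is picked. This requires a careful bookkeeping of the phases $A_R = (-1)^{\gl(R)\xi(R)} A^{-\gl(R)\xi(R)(\xi(R)+2)}$ and of the sign contributions from $\cerchio_n=(-1)^n[n+1]|_{q=A^2}$, and probably uses shadow moves (Definition~\ref{defn:sh_moves}) to reduce to the case of a standard shadow built explicitly from $S$, where the combinatorics becomes transparent. A secondary technical point is verifying that the specific shadow $X$ built from $S$ can indeed be equipped with a $\xi_S$ that is genuinely admissible at every interior edge, which is where the hypothesis that $S$ is ribbon (no maxima in the Morse decomposition, hence no region of $X$ with closed boundary inside $S$) is used in an essential way.
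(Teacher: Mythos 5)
Your overall skeleton (embed the ribbon surface in a shadow, apply the shadow formula, extract the order at $q=i$ from the loop values $\cerchio_n$) is the same as the paper's, which splits the argument into the topological Theorem~\ref{theorem:main_topol} and the quantum Theorem~\ref{theorem:main_quantum}. But there are two genuine gaps. First, your claim that the edge and vertex factors $\teta_e$ and $\tetra_v$ ``are finite and nonzero at $q=i$ on admissible triples'' is false, and this is where essentially all of the technical work lives. By Lemma~\ref{lem:block}, $\ord_i\teta_{a,b,c}=|L|-\frac r2$, which equals $1$ when some color is odd and the triple is not red, and equals $-1$ for instance for $\teta_{2,2,2}$ (all three angles odd); so $\teta$ can vanish or have a pole at $q=i$. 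These orders are not incidental: the edges and vertices of $X$ lying in the odd surface $S_\xi$ are exactly what supply the terms $-\sum_e\chi(e)$ and $+\sum_v 1$ needed to assemble $\chi(S_\xi)=\sum_f\chi(f)-\sum_e\chi(e)+\sum_v 1$; your count $\sum_{R\subset S_\xi}\chi(R)$ accounts only for the regions and does not give the Euler characteristic of the surface. Moreover, for the tetrahedron the naive estimate fails in one configuration (all $\Box_i$ even, all $\triangle_j$ odd), and the bound is rescued only by a hidden cancellation of the type $[5]-1=[2]\big([4]-[2]\big)$, formalized in Lemma~\ref{lem:bad_case}; your sketch does not touch this.

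Second, both of the issues you single out as the main obstacles are non-issues. The comparison $\chi(S_\xi)\ge\chi(S)$ over all admissible $\xi$ is vacuous: since $X$ collapses onto a graph, $H_2(X;\Z_2)=0$, so by Proposition~\ref{prop:correspondences} and Corollary~\ref{cor:unique_sur} there is a unique surface in $X$ bounded by the odd sub-link, hence $S_\xi=S$ for every $\xi$ and no ``parity move'' analysis is needed. And no non-cancellation argument is needed either: the theorem asserts a lower bound on the order, and $\ord_i\sum_\xi\langle X\rangle_\xi\ge\min_\xi\ord_i\langle X\rangle_\xi$ holds unconditionally, since cancellation of leading terms can only increase the order of the sum, which only helps the inequality (a non-cancellation argument would be required to prove equality, which is not what is claimed). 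The effort you propose to spend on phases and leading signs should instead go into the local order estimates for $\teta$ and $\tetra$.
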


Clearly if we consider links in $S^3$ ($g=0$) we get Eiserman's theorem (Theorem~\ref{theorem:Eisermann}).

An interesting corollary is of course the following:
\begin{cor}[\cite{Carrega-Martelli}]\label{cor:lower_bound_ribbon_link}
If $L\subset \#_g(S^1\times S^2)$ is a $n$-component ribbon link then $\langle L \rangle$ vanishes at least $n$ times at $q=i$:
$$
\ord_i \langle L \rangle \geq n .
$$
\end{cor}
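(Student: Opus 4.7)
The plan is to deduce this corollary directly from Theorem~\ref{theorem:Eisermann_gen}, using the definition of ribbon link (Definition~\ref{defn:ribbon_link}). The strategy is essentially a one-line application: identify the appropriate ribbon surface bounded by $L$, compute its Euler characteristic, and invoke the inequality $\ord_i \langle L\rangle \geq \chi(S)$.

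First, I would recall the definition: by Definition~\ref{defn:ribbon_link}, a link $L\subset \#_g(S^1\times S^2)$ is ribbon precisely when it bounds a ribbon surface $S\to \#_g(S^1\times S^2)$ which is an immersed collection of disks, one for each component of $L$. Since $L$ has $n$ components, $S$ is a disjoint union of $n$ immersed disks $D_1,\ldots, D_n$. The Euler characteristic of an abstract disk is $1$, so
\[
\chi(S) = \sum_{j=1}^n \chi(D_j) = n.
\]

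Next, I would apply Theorem~\ref{theorem:Eisermann_gen} to this particular ribbon surface $S$. The theorem yields
\[
\ord_i \langle L \rangle \;\geq\; \chi(S) \;=\; n,
\]
which is exactly the claim. No further argument is needed, and there is no real obstacle: the only delicate point is to remember that $\chi(S)$ is the Euler characteristic of the abstract surface (the $n$ disks), not of its image in $\#_g(S^1\times S^2)$, so the ribbon singularities of the immersion play no role in the count. Everything else is already contained in Theorem~\ref{theorem:Eisermann_gen}, whose proof in turn is reduced to the more general graph-theoretic statement Theorem~\ref{theorem:Eisermann_gen_gr} mentioned in the excerpt.
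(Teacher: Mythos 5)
Your proof is correct and is exactly the intended argument: the paper states this corollary as an immediate consequence of Theorem~\ref{theorem:Eisermann_gen}, applied to the ribbon surface consisting of $n$ immersed disks guaranteed by Definition~\ref{defn:ribbon_link}, whose abstract Euler characteristic is $n$. Your remark that the ribbon singularities are irrelevant because $\chi(S)$ is computed on the abstract surface is the right (and only) point of care.
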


In Section~\ref{sec:Eisermann} we saw that the order at $q=A^2=i$ of the Kauffman bracket does not give any information about knots in $S^3$. Theorem~\ref{theorem:Eisermann_gen} is potentially stronger in $\#_g(S^1\times S^2)$ than in $S^3$ because now $\ord_i\langle L \rangle$ can be a negative number. In particular it provides non trivial information also for knots, as the following example shows:

\begin{ex}\label{ex:nodo}
The framed knot $K\subset \#_g(S^2\times S^1)$ drawn in Fig.~\ref{figure:knot_ribbon} has 
$$
\langle K \rangle =  \left. (-1)^{1-g} q^{-\frac{3g}2}\frac{(1+ q^2+q^4+q^6)^g}{(q+q^{-1})^{2g-1}} \right|_{q=A^2}
$$
and hence $\ord_i\langle K \rangle = g-(2g-1) = 1-g$. Therefore $K$ bounds no ribbon surface $S$ with $\chi(S) >1-g$. In particular, it is not a ribbon knot.
\end{ex}

\begin{figure}[htbp]
\begin{center}
\includegraphics[width = 9 cm]{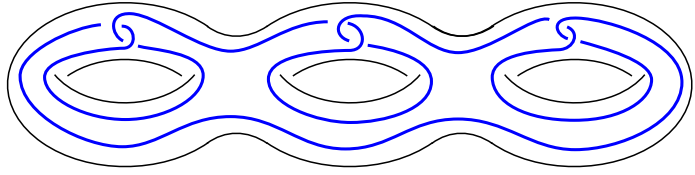}
\caption{A knot $K$ in $\#_g(S^1\times S^2)$. To get $\#_g(S^1\times S^2)$ simply double the handlebody in the picture. We draw here the case $g=3$, the general case is obvious from the picture. Note that the knot is null-homotopic.}
\label{figure:knot_ribbon}
\end{center}
\end{figure}

In order to be ribbon, a link $L\subset \#_g(S^1\times S^2)$ must be homotopically trivial, namely there is a continuous map $(S^1 \sqcup \ldots \sqcup S^1) \times [-1,1] \rightarrow \#_g(S^1\times S^2)$ such that the restriction $(S^1 \sqcup \ldots \sqcup S^1) \times \{1\} \rightarrow \#_g(S^1\times S^2)$ is the embedding of $L$ and the image of the restriction $(S^1 \sqcup \ldots \sqcup S^1) \times \{-1\} \rightarrow \#_g(S^1\times S^2)$ is a finite set of points. This condition can be easily checked looking at the diagrams in the punctured disk $S_{(g)}$ and the knot in Fig.~\ref{figure:knot_ribbon} satisfies it. In Example~\ref{ex:Kauff} and Chapter~\ref{chapter:table} we can find more examples of links and knots in $\#_g(S^1\times S^2)$ together with their Kauffman bracket. Throughout these examples there are links and knots that are homotopically trivial, and by using Theorem~\ref{theorem:Eisermann_gen} we can conclude that they are not ribbon. In particular looking at Chapter~\ref{chapter:table}, we can conclude that there are no ribbon (and non trivial) links in $S^1\times S^2$ with crossing number at most $3$.

\begin{quest}\label{quest:slice-ribbon_Eis_g}
Is there a $k$-component slice link $L \subset \#_g(S^1\times S^2)$ such that
$$
\ord_i \langle L \rangle < k \ \ ?
$$
\end{quest}

By Corollary~\ref{cor:lower_bound_ribbon_link} a positive answer to Question~\ref{quest:slice-ribbon_Eis_g} would imply that the slice-ribbon conjecture is false for $\#_g(S^1\times S^2)$. In Subsection~\ref{subsec:Alex_slice} we saw that the answer is ``no'' for the case $g=0$ and $k=1,2$. 

The \emph{ribbon genus} of a knot $K$ is the minimum genus of an orientable connected ribbon surface $S$ with $\partial S = K$. As a consequence of Theorem~\ref{theorem:Eisermann_gen}, the ribbon genus of the knot $K$ shown in Fig.~\ref{figure:knot_ribbon} is at least $\frac {g}2$. In this more general setting we get:

\begin{cor}[\cite{Carrega-Martelli}]
Let $K\subset \#_g(S^1\times S^2)$ be a knot. If $\langle K \rangle$ has a pole at $q=i$ of order $n>0$, the ribbon genus of $K$ is at least $\frac {n+1}2$.
\end{cor}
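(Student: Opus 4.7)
The plan is to deduce this corollary directly from Theorem~\ref{theorem:Eisermann_gen} by plugging in the Euler characteristic of a minimal genus orientable connected ribbon surface bounded by $K$. Let $g_K$ denote the ribbon genus of $K$, and pick an orientable connected ribbon surface $S \rightarrow \#_g(S^1\times S^2)$ with $\partial S = K$ realizing this minimum, so $S$ has genus $g_K$ and a single boundary component.

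The Euler characteristic of such a surface is $\chi(S) = 2 - 2g_K - 1 = 1 - 2g_K$. By hypothesis $\ord_i \langle K \rangle = -n$ with $n>0$, so applying Theorem~\ref{theorem:Eisermann_gen} gives
\[
-n \;=\; \ord_i \langle K \rangle \;\geq\; \chi(S) \;=\; 1 - 2g_K.
\]
Rearranging yields $2g_K \geq n+1$, that is, $g_K \geq \tfrac{n+1}{2}$, which is the desired bound.

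There is essentially no obstacle here: the entire content is packaged inside Theorem~\ref{theorem:Eisermann_gen}, and the only thing to verify is the bookkeeping that a connected orientable surface with one boundary circle and genus $g_K$ has Euler characteristic $1-2g_K$. One small point worth flagging explicitly in the write-up is that the ribbon genus is \emph{defined} (by Definition~\ref{defn:ribbon_link} and the discussion following it) as the minimum genus of a connected orientable ribbon surface with $\partial S = K$, so such a surface $S$ exists to feed into Theorem~\ref{theorem:Eisermann_gen}; without orientability the Euler characteristic formula would change and the argument would not give the stated $(n+1)/2$ bound.
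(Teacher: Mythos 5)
Your proof is correct and is exactly the deduction the paper intends: the corollary is stated as an immediate consequence of Theorem~\ref{theorem:Eisermann_gen}, and the only content is the computation $\chi(S)=1-2g_K$ for a connected orientable genus-$g_K$ surface with one boundary circle, which you carry out correctly.
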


We consider also framed colored graphs getting Theorem~\ref{theorem:Eisermann_gen_gr}. Theorem~\ref{theorem:Eisermann_gen} follows immediately by Theorem~\ref{theorem:Eisermann_gen_gr}. We need the following new definition:

\begin{defn}
Let $(a,b,c)$ be an admissible triple. The following integers are called \emph{angles} of the triple:
$$
\frac{a+b-c}2, \ \frac{b+c-a}2, \ \frac{c+a-b}2.
$$

We say that the triple $(a,b,c)$ is \emph{red} if at least two of the three angles are odd numbers. 

Let $G$ be a colored framed knotted trivalent graph. A vertex of $G$ is \emph{red} if the colors of the three incident edges form a red triple.

The edges in $G$ having an odd color form a sub-link $L\subset G$ called the \emph{odd sub-link}.

We denote by $r(G)$ the number of red vertices of a graph $G$. If there is no ambiguity, we just use $r$.
\end{defn}

Clearly if the odd sub-link $L$ of a colored trivalent graph $G$ coincides with the whole graph, $L=G$, there are no vertices, in particular there are no red vertices.

The following is the main theorem:
\begin{theo}[\cite{Carrega-Martelli}]\label{theorem:Eisermann_gen_gr}
Let $G\subset \#_g(S^1\times S^2)$ be a framed trivalent colored graph and $L$ the odd sub-link of $G$. Let $S\rightarrow \#_g(S^1\times S^2)$ be a ribbon surface bounded by $L$. Then
$$
\ord_i \langle G \rangle \geq \chi(S) - \frac r 2
$$
where $\chi(S)$ is the Euler characteristic of $S$ and $r$ is the number of red vertices in $G$.
\begin{proof}
By Theorem~\ref{theorem:main_topol} there is a shadow $X$ of $(G,\#_g(S^1\times S^2))$ that contains $S$ and collapses onto a graph. Theorem~\ref{theorem:main_quantum} with the shadow $X$ implies that the order of $\langle G \rangle$ in $q=i$ is at least $\chi(S) - \frac r 2$.
\end{proof}
\end{theo}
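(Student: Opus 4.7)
The plan is to leverage the shadow formula (Theorem~\ref{theorem:sh_for_br}) to reduce the inequality to a combinatorial order estimate at $q=i$ on the admissible colorings of a carefully chosen shadow of the pair $(G,\#_g(S^1\times S^2))$. The argument splits cleanly into a topological step producing the right shadow and a quantum step extracting the order bound.

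For the topological step, I would show that given a ribbon surface $\iota:S\to \#_g(S^1\times S^2)$ bounded by the odd sub-link $L$ of $G$, one can construct a shadow $X$ of the pair $(G,\#_g(S^1\times S^2))$ that collapses onto a graph and such that $S$ appears literally as the union of a distinguished set of regions of $X$. Starting from any shadow $X_0$ of $\#_g(S^1\times S^2)$ collapsing onto a graph (which exists by Proposition~\ref{prop:shadow2}), I would thicken $G$ to a properly embedded 2-dimensional piece and then glue in a 2-dimensional model of $S$ using its Morse description inside $W_g$ (only minima and saddles, no maxima, cf.\ Remark~\ref{rem:ribbon_g}). The ribbon singularities of $S$ and the trivalent vertices of $G$ are absorbed into the 1-skeleton of $X$, and the gleams are adjusted using the construction of Proposition~\ref{prop:intersect_shadows} so that the resulting polyhedron is indeed a shadow of the original pair. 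By Proposition~\ref{prop:sh_collapse} the property of collapsing onto a graph is preserved provided the attachment does not close off a sub-polyhedron.

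For the quantum step, I would apply the shadow formula to $X$, giving $\langle G\rangle=\sum_\xi \langle X\rangle_\xi$, and use the basic inequality $\ord_i(f_1+f_2)\geq \min\{\ord_i f_1,\ord_i f_2\}$ to reduce the theorem to the per-coloring bound
\[
\ord_i\langle X\rangle_\xi \ \geq \ \chi(S)-\frac{r}{2}
\]
for every admissible coloring $\xi$ extending the coloring of $\partial X=G$. The basic inputs are the parity formulas $\ord_i\cerchio_a=1$ if $a$ is odd and $0$ if $a$ is even, together with the analogous orders of $\teta_{a,b,c}$ and $\tetra$ computed from the explicit quantum-factorial formulas of Subsection~\ref{subsec:three_graphs}; these orders are controlled by the parities of the angles of the incident admissible triples. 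The key structural observation is that, for each admissible $\xi$, the regions of $X$ with odd color assemble into an embedded surface $S_\xi\subset X$ with $\partial S_\xi=L$: admissibility forces $a+b+c$ to be even at every interior edge, so an even number of odd regions meet there. Summing the local contributions region by region, edge by edge, vertex by vertex, together with the phase $A_f$, should yield $\ord_i\langle X\rangle_\xi=\chi(S_\xi)-\tfrac12 r(\xi)$, where $r(\xi)$ counts the red interior vertices of $X$ under $\xi$ plus the red boundary vertices coming from $G$.

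The main obstacle is the final comparison: one must show the uniform lower bound $\chi(S_\xi)-\tfrac12 r(\xi)\geq \chi(S)-\tfrac12 r$ for \emph{every} admissible $\xi$, not just for the canonical coloring $\xi_0$ with $S_{\xi_0}=S$. This is the technical heart of the argument; the strategy is to prove it as a purely local statement, showing that any elementary modification of $\xi$ either preserves the combination $\chi(S_\xi)-\tfrac12 r(\xi)$ or can only increase it, so that the $\xi_0$ giving $S$ realizes the minimum. Handling the interplay between the parity-driven orders of the $6j$-symbols and the Euler-characteristic bookkeeping on $S_\xi$ — in particular, ensuring that the $\tfrac12$ in the $r/2$ correction emerges from the exact order of $\tetra$ at $q=i$ when evaluated on red triples — is where the most delicate combinatorics will lie.
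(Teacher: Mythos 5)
Your two-step skeleton (topological: build a shadow of $(G,\#_g(S^1\times S^2))$ collapsing onto a graph that contains $S$; quantum: apply the shadow formula and bound $\ord_i\langle X\rangle_\xi$ coloring by coloring) is exactly the paper's route via Theorem~\ref{theorem:main_topol} and Theorem~\ref{theorem:main_quantum}. However, you have misplaced the difficulty. The "final comparison" you flag as the technical heart --- proving $\chi(S_\xi)-\tfrac12 r(\xi)\geq \chi(S)-\tfrac12 r$ uniformly over $\xi$ by local modifications of colorings --- is not needed at all. Since $X$ collapses onto a graph, $H_2(X;\Z_2)=0$, so the boundary map $H_2(X,G;\Z_2)\to H_1(G;\Z_2)$ is injective (Proposition~\ref{prop:correspondences}); hence there is a \emph{unique} surface in $X$ with boundary $L$, and the odd surface $S_\xi$ coincides with $S$ for \emph{every} admissible $\xi$ extending the coloring of $G$ (Corollary~\ref{cor:unique_sur}). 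There is nothing to compare. Moreover, $r$ counts red vertices of $G$ only: in the bookkeeping of Theorem~\ref{theorem:order} the red contributions of interior vertices cancel against those of interior edges, so your $r(\xi)$ including "red interior vertices of $X$" does not match the statement, and your claimed \emph{equality} $\ord_i\langle X\rangle_\xi=\chi(S_\xi)-\tfrac12 r(\xi)$ is false in general --- only the inequality holds.

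The step that would actually fail as you have sketched it is the order estimate for $\tetra$ at $q=i$. Unlike $\cerchio$ and $\teta$, the tetrahedron evaluation is a product of a multinomial with a \emph{sum} over $z$, and the order of a sum can strictly exceed the minimum of the orders of its terms. Parity bookkeeping on the angles alone gives, in the configuration where all $\Box_i$ are even and all $\triangle_j$ are odd, a bound of $-4$ where $-2$ is required; one must exhibit a cancellation between consecutive terms $z=2k-1$ and $z=2k$, which in the paper rests on the observation that $\prod_i[x_i]-\prod_j[y_j]$ has order at least $2$ at $q=i$ for odd $x_i,y_j$ with $\sum(x_i-1)\equiv\sum(y_j-1)\pmod 4$ (proved by evaluating the function and its derivative at $q=i$, Lemma~\ref{lem:bad_case}). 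Without this case-by-case analysis and the cancellation lemma, the per-coloring bound $\ord_i\langle X\rangle_\xi\geq\chi(S)-\tfrac r2$ does not follow from the quantum-factorial formulas.
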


The proof of Theorem~\ref{theorem:Eisermann_gen_gr} splits into two parts: the topological Theorem~\ref{theorem:main_topol}, and the more technical Theorem~\ref{theorem:main_quantum}, and we will see them in the following two sections. 

While the topological side of the story is a one-page proof, the technical part needs a long case-by-case analysis that we would have never pursued if we were not aware of Eisermann's Theorem.

\begin{quest}\label{quest:silce-ribbon_Eis_gr}
Is there a colored trivalent graph $G \subset \#_g(S^1\times S^2)$ such that the odd sub-link $L\subset G$ is a $k$-component slice link and
$$
\ord_i \langle G \rangle + \frac r 2 < k \ \ ?
$$
Here $r$ is the number of red vertices of $G$.
\end{quest}

A positive answer to Question~\ref{quest:silce-ribbon_Eis_gr} would imply that the slice ribbon conjecture is false for $\#_g(S^1\times S^2)$. This
question is open also in the $g=0$ case, that is in $S^3$.

\subsection{More manifolds and RTW}

In this subsection we just ask some questions.

The notion of ``ribbon surface'' applies to any kind of 3-manifold $M$, but the Jones polynomial does not. To define $\langle L \rangle$ as a rational function we need the skein space $K(M)$ to be 1-dimensional.

\begin{quest}
Let $L$ be a link in a closed 3-manifold $M$ with skein vector space $K(M)$ generated by the empty set. Let $S\rightarrow M$ be a ribbon surface bounded by $L$. Is it always true that
$$
\ord_i \langle L \rangle \geq \chi(S) \ \ ?
$$
\end{quest}

When $K(M)$ is not 1-dimensional, quantum invariants survive only at roots of unity: these are the well known Reshetikhin-Turaev-Witten invariants. As we saw these invariants can also be calculated using shadows, so it might be that some of the techniques used for the Jones polynomial extend to that context:

\begin{quest}
Can we relate the ribbon genus of a link to Reshetikhin-Turaev-Witten invariants?
\end{quest}

For instance we can define:
\begin{defn}
The \emph{order} $\ord_i I_r(M,G)$ at $q=A^2=i$ of the Reshetikhin-Turaev-Witten invariant of $(M,G)$ is the maximum integer $k$ such that
$$
\lim_{r \rightarrow \infty} \frac{I_r(M,G)}{(\zeta_r - i)^{k-1}} = 0 ,
$$
where $\zeta_r$ is a $2r^{\rm th}$ root of unity such that $(\zeta_r)^n\neq 1$ for $0<n<r$, $\zeta_r \rightarrow i$ for $r \rightarrow \infty$, and $I_r(M,G)$ is based on the evaluation at $q=A^2=\zeta_r$ (for instance $\zeta_r= \exp(\pi i((\lfloor \frac{r}{2}\rfloor +1)/r))$, where $\lfloor x \rfloor$ is the integer part of $x$).
\end{defn}

\begin{prop}
$$
\ord_i I_r(\#_g(S^1\times S^2),G) = \ord_i \langle G \rangle .
$$
\begin{proof}
It follows from the shadow formulas: Theorem~\ref{theorem:sh_for_br} and Theorem~\ref{theorem:sh_for_RTW}.
\end{proof}
\end{prop}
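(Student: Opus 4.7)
The plan is to realize both invariants through a common shadow and then compare the two shadow formulas term by term. First I would apply Proposition~\ref{prop:shadow2} to select a shadow $X$ of $(G,\#_g(S^1\times S^2))$ that collapses onto a graph of genus $g$; this presents $X$ as a shadow of the 4-dimensional handlebody $W_g$, which has $\sigma(W_g)=0$ and $\chi(X)=\chi(W_g)=1-g$. With this choice, Theorem~\ref{theorem:sh_for_RTW} collapses to
\[
I_r\!\bigl(\#_g(S^1\times S^2),G\bigr)\;=\;\eta^{1-g}\cdot|X|^r,
\]
while Theorem~\ref{theorem:sh_for_br} gives
\[
\langle G\rangle(q)\;=\;\sum_\xi \langle X\rangle_\xi(q),
\]
the sum running over the finitely many (by Proposition~\ref{prop:sh_collapse}) admissible colorings $\xi$ of $X$ extending the coloring of $G$.

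Next I would exploit the fact that the quantities $|X|^r$ and $\langle G\rangle(\zeta_r)$ are assembled from the same building blocks $\langle X\rangle_\xi$; the only difference is that $|X|^r$ sums only over $q$-admissible colorings and evaluates at $q=\zeta_r$. Since the admissible colorings extending the coloring of $G$ form a finite set, the colors occurring in any such $\xi$ are uniformly bounded. For $r$ sufficiently large this bound forces every admissible coloring of $X$ extending the coloring of $G$ to be automatically $q$-admissible, so the restriction becomes vacuous and
\[
|X|^r\;=\;\sum_\xi\langle X\rangle_\xi(\zeta_r)\;=\;\langle G\rangle(\zeta_r).
\]
Consequently, for all sufficiently large $r$ one obtains the clean identity
\[
I_r\!\bigl(\#_g(S^1\times S^2),G\bigr)\;=\;\eta^{1-g}\cdot\langle G\rangle(\zeta_r).
\]

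It then remains to deduce the equality of orders from this factorization. Writing $\eta=(\zeta_r-\zeta_r^{-1})/\sqrt{-2r}$, the value of $\zeta_r-\zeta_r^{-1}$ tends to $2i\neq 0$ as $\zeta_r\to i$, so $\eta^{1-g}$ is nonzero for each large $r$ and its $r$-dependence enters only through the overall scalar $\sqrt{-2r}$, independent of the variable $\zeta_r-i$. Therefore, inserting the Laurent expansion $\langle G\rangle(\zeta_r)=\lambda(\zeta_r-i)^{\ord_i\langle G\rangle}\bigl(1+o(1)\bigr)$ with $\lambda\neq 0$, the quotient $I_r/(\zeta_r-i)^{k-1}$ factors as a nonzero scalar (depending on $r$) times $(\zeta_r-i)^{\ord_i\langle G\rangle-k+1}(1+o(1))$, and this tends to zero precisely when $k\le\ord_i\langle G\rangle$, identifying the maximum such integer as $\ord_i\langle G\rangle$.

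The main obstacle is the last step: one must verify rigorously that the $r$-dependent normalization $\eta^{1-g}$ cannot shift the integer order of vanishing in $\zeta_r-i$. This requires separating the two sources of $r$-dependence in the limit (the overall scalar coming from $\sqrt{-2r}$ versus the analytic behavior inherited from $\zeta_r\to i$) and checking that the former does not interact with the integer-valued order measurement based on powers of $(\zeta_r-i)$; once this bookkeeping is in place, the equality $\ord_i I_r(\#_g(S^1\times S^2),G)=\ord_i\langle G\rangle$ follows directly from the factorization displayed above.
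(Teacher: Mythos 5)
Your overall strategy is exactly the one the paper's one-line proof gestures at, and the first two thirds of your argument are sound and fill in the details correctly: taking a shadow $X$ of $(G,\#_g(S^1\times S^2))$ that collapses onto a graph, using $\sigma(W_g)=0$ and $\chi(X)=1-g$, and observing that since $X$ admits only finitely many admissible colorings extending that of $G$, the colors are uniformly bounded, so for $r$ large every such coloring is $q$-admissible and no quantum-integer denominator vanishes at $\zeta_r$; hence $|X|^r=\langle G\rangle(\zeta_r)$ and $I_r(\#_g(S^1\times S^2),G)=\eta^{1-g}\langle G\rangle(\zeta_r)$ for all large $r$.

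The gap is precisely the step you defer to ``bookkeeping,'' and the bookkeeping does not go the way you claim. The scalar $\sqrt{-2r}$ is not independent of the order measurement: for the sample sequence in the definition, $\zeta_r=\exp\bigl(\pi i(\lfloor r/2\rfloor+1)/r\bigr)$, one has $|\zeta_r-i|\asymp 1/r$, so $\eta^{1-g}=\bigl((\zeta_r-\zeta_r^{-1})/\sqrt{-2r}\bigr)^{1-g}\asymp r^{(g-1)/2}\asymp|\zeta_r-i|^{-(g-1)/2}$ up to nonzero constants. Feeding this into your own final computation, $I_r/(\zeta_r-i)^{k-1}\asymp|\zeta_r-i|^{\,\ord_i\langle G\rangle-k+1-(g-1)/2}$, so the maximal admissible $k$ is shifted away from $\ord_i\langle G\rangle$ by roughly $(g-1)/2$, and the shift vanishes only for $g=1$ (and, after rounding the half-integer exponent, $g=2$). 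The simplest test case already exhibits the problem: for $G=\varnothing$ and $g=0$ one has $I_r(S^3)=\eta\to 0$, so $k=1$ already satisfies the defining limit, giving $\ord_i I_r(S^3,\varnothing)\geq 1$ while $\ord_i\langle\varnothing\rangle=0$. To close the argument you must either renormalize (e.g.\ measure the order of $I_r(M,G)/I_r(M)$, which kills the $\eta^{1-g}$ factor), or restrict $g$, or make precise a convention under which polynomial-in-$r$ prefactors are discarded; note also that since $\zeta_r$ is only required to tend to $i$, the rate $|\zeta_r-i|$ is not pinned down by the definition, so the half-integer shift (and hence the ``order'' itself) can depend on the chosen sequence. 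As written, your last paragraph asserts rather than proves the one fact on which the whole statement hinges.
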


Therefore we can shift the problem of the order at $q=i$ of the Jones polynomial to the one of the Reshetikhin-Turaev-Witten invariants.
\begin{quest}
Can this approach lead somewhere?
\end{quest}

Furthermore we note that we get the same object even if we use the square of the module of the Reshetikhin-Turaev-Witten invariant $|I_r(M,G)|^2$ (and $|\zeta_r - i|^{2(k-1)}$). Hence maybe we can also shift the problem to the order at $q=i$ of the Turaev-Viro invariants of a certain pair $(M',G')$. We have that the Turaev-Viro invariant of a pair $(N,H)$, where $N$ is a compact orientable 3-manifold and $H$ is a colored trivalent graph on the boundary of $N$, is equal to the Reshetikhin-Turaev-Witten invariant of $(\tilde{N}, \tilde{H})$ where $\tilde{N}$ is the double of $N$ and $\tilde{H}$ is the image of $H$ under the standard inclusion of $N$ in $\tilde{N}$. If $N$ has no boundary we have that the Turaev-Viro invariant of $N$ is equal to the square of the absolute value of the Reshetikhin-Turaev-Witten invariant $|I_r(N)|^2$ (Theorem~\ref{theorem:Turaev-Viro}).

\begin{quest}
Can we shift the problem of the order at $q=i$ of the Jones polynomial of $(M,G)$ also to the one of the Turaev-Viro invariants of a certain pair $(M',G')$?
\end{quest}

\section{The topological part of the proof}

\subsection{The result}

Let $W_g$ be the 4-dimensional orientable handlebody of genus $g\geq 0$ (the orientable compact 4-manifold with a handle-decomposition with $k$ 0-handles and $k+g-1$ 1-handles). We recall that a ribbon surface $S\rightarrow \#_g(S^1\times S^2)= \partial W_g$ is equivalent to a properly embedded surface in $W_g$ that is in Morse position with respect to the distance function from a graph $\Gamma \subset W_g$ and has just points of minima and saddles (no maxima) (see Remark~\ref{rem:ribbon_g}). 

The following purely topological fact is one of the two results that lead to Theorem~\ref{theorem:Eisermann_gen_gr}:
\begin{theo}[\cite{Carrega-Martelli}] \label{theorem:main_topol}
Let $G$ be a knotted trivalent graph in $\#_g(S^1\times S^2)$ and $L$ be a sub-link of $G$. Every ribbon surface $S\subset W_g$ bounded by $L$ is contained in some shadow $X$ of $W_g$ such that $X$ collapses onto a graph and $\partial X = G$.
\end{theo}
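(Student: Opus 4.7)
The plan is to build $X$ directly from $S$ in two stages: first extend $S$ to a 2-complex $Y$ carrying the full graph $G$ on its boundary, then enlarge $Y$ by auxiliary 2-cells to get a genuine shadow of $W_g$ that still collapses onto a graph.

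First I will put $S$ in Morse position using Remark~\ref{rem:ribbon_g}, with respect to distance from a graph $\Gamma \subset W_g$ such that $W_g \setminus \Gamma \cong \partial W_g \times [0,1)$ and $S$ has only minima and saddle critical points. In a small collar $\partial W_g \times [0,\epsilon]$ of the boundary, I may further assume that $S$ meets the collar in the straight product $L \times [0,\epsilon]$. Note in particular that, being a surface with non-empty boundary, $S$ already deformation retracts onto a graph.

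Next, to build $Y$, for every edge $e$ of $G \setminus L$ I would attach a 2-dimensional ``flap'' $D_e \subset W_g$ whose boundary consists of $e$ (on $\partial W_g$) together with arcs running through the collar: at each endpoint $v$ of $e$ the flap runs along $v \times [0,\epsilon]$ before gluing to the collar $L \times [0,\epsilon] \subset S$ (or, for the part of $e$ adjacent to a vertex off $L$, in a standard way inside the collar alone). The flaps are arranged so that at every vertex $v$ of $G$ exactly three half-planes meet locally at an arc $v \times [0,\epsilon]$, producing a boundary vertex of type~(5) in Fig.~\ref{figure:models}, while every edge of $G$ appears as a type-(4) boundary edge of $Y$. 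A local check using the collar model of $\partial W_g$ in $W_g$ shows that $Y$ is a simple polyhedron with $\partial Y = G$, containing $S$, and still collapses onto a graph, since it is built from $S$ and a finite family of 2-disks attached along proper arcs.

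For the enlargement, the complement of a regular neighborhood of $Y$ in $W_g$ is a compact 4-manifold with boundary, for which I would construct a spine $Z$ of disk-with-holes type, attached to $Y$ only along $1$-dimensional pieces, following the same strategy as in the proof of Proposition~\ref{prop:shadow2}. Setting $X := Y \cup Z$ then gives a simple polyhedron, properly embedded in $W_g$, with $\partial X = \partial Y = G$, containing $S$, and onto which $W_g$ collapses. Since neither $Y$ nor $Z$ contains a closed simple sub-polyhedron, and they are glued along arcs, Proposition~\ref{prop:sh_collapse} ensures that $X$ itself contains no closed simple sub-polyhedron and hence collapses onto a graph.

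The hard part will be the second stage: creating the required type-(5) structure at each vertex of $G$ demands attaching flaps in a consistent way, which is already non-trivial at a vertex on $L$ where $S$ contributes just one of the three half-planes, and one must carry this out without producing any unexpected singularities. Once $Y$ is built cleanly, the third stage is essentially routine, but one must be vigilant that the auxiliary spine $Z$ is attached to $Y$ only along $1$-dimensional subsets in order to preserve the crucial property that $X$ collapses onto a graph.
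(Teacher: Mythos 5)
Your strategy is genuinely different from the paper's (you grow a shadow outward from $S$ itself, whereas the paper encodes $S$ by a graph $G_S$ of circles and arcs, builds the shadow of $G_S$ from a diagram in $S_{(g)}$ via Proposition~\ref{prop:shadow2}, and observes that this shadow already contains the minima disks so that only the saddle bands need to be added). Unfortunately the third stage of your argument has a genuine gap, and it sits exactly where the content of the theorem lies. You assert that $\overline{W_g\setminus N(Y)}$ has a $2$-dimensional spine $Z$ ``of disk-with-holes type'' that can be glued to $Y$ along $1$-dimensional sets so that $W_g$ collapses onto $X=Y\cup Z$. None of this is justified: (i) the complement of a regular neighborhood of a $2$-complex in a $4$-manifold has, a priori, only $3$-dimensional spines, and a $2$-spine exists only if that complement admits a handle decomposition without $3$- and $4$-handles, which you have not produced; (ii) even granting such a $Z$, the two collapses $N(Y)\searrow Y$ and $\overline{W_g\setminus N(Y)}\searrow Z$ do not concatenate across the separating hypersurface $\partial N(Y)$ to give $W_g\searrow Y\cup Z$; and (iii) invoking ``the same strategy as Proposition~\ref{prop:shadow2}'' does not apply, since that proposition builds a shadow of $W_g$ from a link diagram in $S_{(g)}$, not a spine of an arbitrary $4$-manifold complement. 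The decisive symptom is that the ribbon hypothesis is never actually used after the first sentence: Morse position only normalises the collar, and every subsequent step would read identically for a properly embedded surface with maxima. Since Proposition~\ref{prop:sh_homot_rib} shows that a surface contained in a shadow must be homotopically ribbon, and there are properly embedded disks in $D^4$ that are not, an argument of your shape that does not use ribbonness in the collapsing step cannot be correct as stated.

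There is also a smaller, fixable issue in the second stage: at a vertex $v$ of $G$ all of whose edges lie in $G\setminus L$, the three flaps meet along the arc $v\times[0,\epsilon]$, which is the germ of an \emph{interior} edge of the polyhedron; this edge must be continued into $Y$ and terminated at a legitimate type-(1) vertex or run into $S$ as in Fig.~\ref{figure:models}, and ``in a standard way inside the collar alone'' does not specify how. If you want to keep your bottom-up approach, the honest route is to exploit the Morse position directly: show that $N(Y)$ is a $4$-dimensional $1$-handlebody (one $0$-handle per minimum, one $1$-handle per saddle) and that $W_g$ is obtained from it by attaching $2$-handles only, and then convert that relative handle decomposition into $2$-cells attached to $Y$ along curves as in Theorem~\ref{theorem:admit_sh}. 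That is essentially where the ribbon condition must do its work.
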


We single out a couple of examples:

\begin{ex}
Consider the trivially embedded annulus $S\subset D^4$ as in Fig.~\ref{figure:shadow_annulus_0}. A shadow $X$ containing $S$ is constructed by attaching a disk $D$ to its core. Note that indeed $D^4$ collapses onto $X$ that collapses onto a point.
\end{ex}

\begin{figure}
\begin{center}
\includegraphics[width = 7 cm]{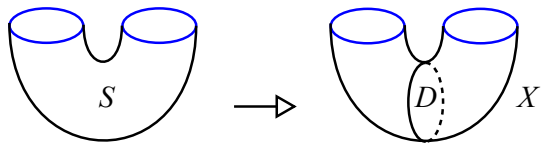}
\caption{If $S\subset D^4$ is the trivially embedded annulus bounding the unlink $\partial S$, a shadow $X= S \cup D$ is obtained by attaching a disk $D$ to its core.}
\label{figure:shadow_annulus_0}
\end{center}
\end{figure}

\begin{ex}
The 2-disk $D^2$ is properly embedded in the 3-disk and hence in $S^3$. The disk is itself a shadow of $D^4$. However, a non trivial ribbon disk $D\subset  D^4$ is not a shadow: $D^4$ does not collapse onto $D$ (see Proposition~\ref{prop:trivial}). The shadow containing $D$ may be rather complicate.
\end{ex}

\begin{ex}
The disk with $g\geq 0$ holes $S_{(g)}$ is a properly embedded surface in the 3-dimensional handlebody of genus $g$, hence it is an embedded surface in its double $\#_g(S^1\times S^2)$. The surface is itself a shadow of $W_g$.
\end{ex}

Every ribbon surface $S\rightarrow \#_g(S^1\times S^2)$ can be constructed from a trivalent graph $G_S\subset \#_g(S^1\times S^2)$ as in Fig.~\ref{figure:construct_ribbon}, consisting of some disjoint circles and some arcs connecting them in space.

\begin{figure}
\begin{center}
\includegraphics[width = 11.5 cm]{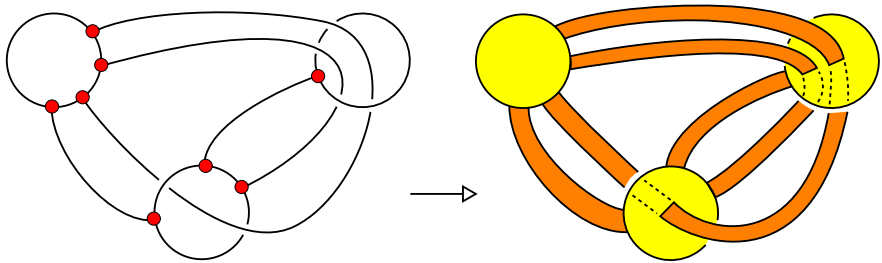}
\caption{Every ribbon surface can be constructed from a planar diagram with some disjoint circles representing the minima and some edges connecting them representing the saddles (left). The surface is obtained by filling the circles (yellow) and thickening the edges to (orange) bands.}
\label{figure:construct_ribbon}
\end{center}
\end{figure}

\begin{proof}[Proof of Theorem~\ref{theorem:main_topol}]
For simplicity, first we suppose $L=G$. Construct $S$ from a trivalent graph $G_S \subset \#_g(S^1\times S^2)$ as in Fig.~\ref{figure:construct_ribbon}-(left) and let $D_S \subset S_{(g)}$ be a diagram of $G_S$. Via Reidemeister moves we may suppose that there is a smallest surface in $S_{(g)}$ that is homeomorphic to $S_{(g)}$ and contains $D_S$. We construct a shadow $X$ for $G_S$ using the algorithm described in the proof of Proposition~\ref{prop:shadow2} using $D_S$.

Note that $X$ contains the yellow disks of Fig.~\ref{figure:construct_ribbon}-(right). To complete the construction, we simply add to $X$ the orange bands shown in Fig.~\ref{figure:construct_ribbon}-(right), and then push their interior a bit inside the 4-dimensional handlebody $W_g$. We end up with a shadow $X$ containing the whole of $S$ and with $\partial X = \partial S=L$.

Now we consider the general case $L \subset G$. Let $G_S$ be a graph defining $S$. Up to isotopies we may suppose that $G\setminus L$ is attached to $L$ only at the circles that form $G_S$. Hence the graph $G_S$ can be easily extended to a trivalent graph $G'$ such that $G_S\subset G' \subset \#_g(S^1\times S^2)$ and after the thickening of the orange bands of $G_S$ we get the graph $G$. We proceed like in the previous case to get a shadow of the trivalent graph $G'$ from a diagram in $S_{(g)}$. Then we add the orange bands and get a shadow of $G$.
\end{proof}

\begin{figure}[htbp]
\begin{center}
\includegraphics[width = 10 cm]{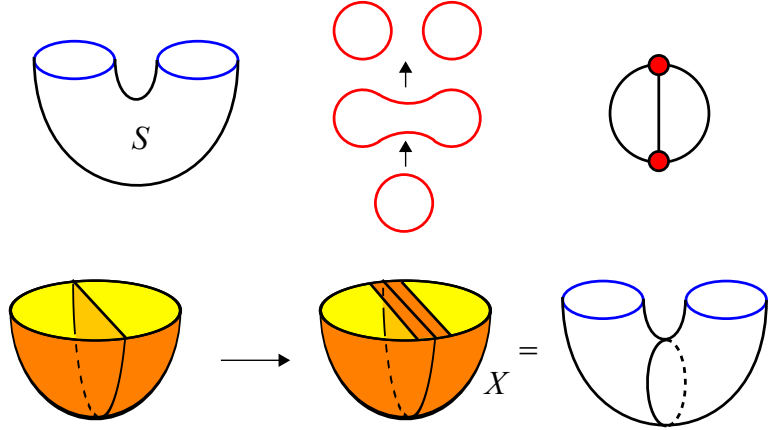}
\caption{How to build a shadow $X$ containing a given ribbon surface $S$. We show here the construction for the ribbon annulus $S$.}
\label{figure:shadow_annulus}
\end{center}
\end{figure}

\begin{ex}\label{ex:annulus}
Fig.~\ref{figure:shadow_annulus} illustrates the construction in a simple case. The ribbon surface $S\subset D^4$ is a trivially embedded annulus with boundary $L=\partial S$ the unlink with two components; the annulus $S$ in Morse position has one minimum and one saddle, and it is hence a ribbon surface constructed from the graph $G$ shown in Fig.~\ref{figure:shadow_annulus}-(top-right): a circle (the minimum) with a diameter (encoding the saddle). A shadow for $G$ is shown in Fig.~\ref{figure:shadow_annulus}-(bottom-left). By adding a band we obtain a shadow $X$ for $L$ containing $S$, and $X$ is just $S$ with a disk attached to its core. Note that indeed $D^4$ collapses onto $X$ that collapses to a point.
\end{ex}

\subsection{Non-ribbon surfaces}
One may wonder whether every surface $S$ is contained in a shadow. We now show that this is not true: indeed being contained in a shadow is quite restrictive. 
\begin{defn}
A properly embedded surface $S\subset W_g$ is \emph{homotopically ribbon} if the inclusion 
$$
(\#_g(S^1\times S^2) \setminus \partial S) \hookrightarrow (W_g \setminus S)
$$
induces an epimorphism on fundamental groups
$$
\pi_1(\#_g(S^1\times S^2) \setminus \partial S) \twoheadrightarrow \pi_1(W_g \setminus S) .
$$
\end{defn}

For a general properly embedded surface $S\subset W_g$, the following implications hold:
$$
S {\rm \ ribbon} \Longrightarrow S {\rm\ contained\ in\ a\ shadow} \Longrightarrow S {\rm\ homotopically\ ribbon}.
$$
We have already proved the first implication, so we now turn to the second. 
\begin{prop}[\cite{Carrega-Martelli}]\label{prop:sh_homot_rib}
If $S\subset W_g$ is contained in a shadow $X$ of the 4-dimensional handlebody $W_g$ then it is homotopically ribbon.
\begin{proof}
The shadow $X$ contains $S$ and is hence obtained from $S$ by adding cells of index $0$, $1$, or $2$. Therefore a regular neighborhood $N(X)$ of $X$ is obtained from a regular neighborhood $N(S)$ of $S$ by adding handles of index $0$, $1$, or $2$. Since $W_g$ collapses onto $X$, we can take $N(X)=W_g$.

By turning handles upside-down we get that $W_g\setminus N(S)$ is obtained from a collar of $\#_g(S^1\times S^2)\setminus N(\partial S)$ by adding handles of index $4$, $3$, or $2$. Since there are no 1-handles, the inclusion 
$$
\#_g(S^1\times S^2) \setminus N(\partial S) \hookrightarrow W_g \setminus N(S)
$$
induces a surjection on fundamental groups.
\end{proof}
\end{prop}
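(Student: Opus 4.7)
The plan is to mimic, at the level of handle decompositions, the standard proof that a ribbon disk complement has surjective $\pi_1$ from the boundary. The key input is that $W_g$ collapses onto $X$ (this is the definition of shadow), so up to isotopy we may replace $W_g$ by a regular neighborhood $N(X)$ of $X$ in itself. Because $X$ is a $2$-dimensional simple polyhedron containing the surface $S$, the cells of $X$ not already contained in $S$ are only of dimension $0$, $1$, and $2$; thickening these cells to $4$-dimensional handles shows that $N(X)=W_g$ is obtained from a regular neighborhood $N(S)$ of $S$ by attaching $4$-dimensional handles of index $0$, $1$, and $2$ only (one such handle per vertex, edge, or region of $X$ lying outside $S$).

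Next I would invert this handle structure. Regarding the cobordism $W_g\setminus N(S)$ as running from the boundary piece $\#_g(S^1\times S^2)\setminus N(\partial S)$ to $\partial N(S)\setminus \partial W_g$, the handles of index $k$ attached from the $N(S)$-side become handles of dual index $4-k$ attached from the $\#_g(S^1\times S^2)$-side. So $W_g\setminus N(S)$ is built from a collar of $\#_g(S^1\times S^2)\setminus N(\partial S)$ by attaching only handles of index $2$, $3$, and $4$.

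The final step is the standard van Kampen computation: attaching a handle of index $\geq 2$ to a manifold never introduces new generators in $\pi_1$, and attaching one of index $2$ only adds relations. Hence the inclusion of the collar induces an epimorphism
\[
\pi_1\bigl(\#_g(S^1\times S^2)\setminus N(\partial S)\bigr)\twoheadrightarrow \pi_1\bigl(W_g\setminus N(S)\bigr),
\]
which is exactly the definition of $S$ being homotopically ribbon.

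The only non-routine point is making rigorous the passage from ``cells of $X$ outside $S$'' to ``$4$-dimensional handles attached to $N(S)$.'' The subtle issue is that $S$ is locally flat in $W_g$ but may meet the singular set (edges and vertices) of $X$ non-trivially, so one cannot just thicken blindly. I would handle this by first choosing a triangulation of $X$ in which $S$ appears as a subcomplex (possible since $S$ is a subsurface of $X$ with locally flat embedding in $W_g$), and then thickening cells of this triangulation one by one in order of increasing dimension to obtain the handle decomposition of $W_g$ relative to $N(S)$. Everything else is a direct Morse/handle-theory argument.
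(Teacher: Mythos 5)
Your proposal is correct and follows essentially the same route as the paper's own proof: view $W_g=N(X)$ as obtained from $N(S)$ by attaching handles of index $0$, $1$, $2$ (one per cell of $X$ outside $S$), turn the handles upside-down to present $W_g\setminus N(S)$ as a collar of $\#_g(S^1\times S^2)\setminus N(\partial S)$ with handles of index $2$, $3$, $4$ attached, and conclude surjectivity on $\pi_1$ from the absence of $1$-handles. The extra care you take about triangulating $X$ with $S$ as a subcomplex is a reasonable elaboration of a step the paper leaves implicit, but it does not change the argument.
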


\begin{rem}
Note that in Proposition~\ref{prop:sh_homot_rib} we do not require that the shadow $X$ collapses onto a graph.
\end{rem}
We do not know if any of the two implications can be reversed. It is easy to construct some surface $S\subset D^4$ that is not homotopically ribbon, hence such an $S$ cannot be contained in a shadow. The following example is certainly known to experts.

\begin{prop}
The trivial knot bounds some disk that is not homotopically ribbon.
\begin{proof}
Pick a knotted 2-sphere $S\subset S^4$ whose complement has non cyclic fundamental group $G$, for instance a spun knot \cite[Chapter 3.J]{Rolfsen}.

By tubing one such knotted sphere with a trivial properly embedded disk we get a disk $D^2\subset D^4$ such that $\pi_1(D^4\setminus D^2) = G$. Since $\partial D^2$ is the trivial knot, the complement $S^3\setminus \partial D^2$ is a solid torus and has cyclic $\pi_1$. The map 
$$
\pi_1(S^3\setminus \partial D^2) \longrightarrow \pi_1(D^4 \setminus D^2)
$$
cannot be surjective since the left group is cyclic and the right one is not.
\end{proof}
\end{prop}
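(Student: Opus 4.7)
The plan is to exhibit an explicit disk $D \subset D^4$ with $\partial D$ the unknot in $S^3$, such that $\pi_1(D^4 \setminus D)$ is not cyclic. Since $\pi_1(S^3 \setminus \partial D) = \pi_1(S^1) = \mathbb{Z}$ is cyclic, no map from it to a non-cyclic group can be surjective, giving the failure of the homotopic ribbon condition.

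The main ingredient I would use is the existence of a knotted 2-sphere $\Sigma \subset S^4$ whose complement has a non-cyclic fundamental group; the standard examples are Artin's spun knots, for which $\pi_1(S^4 \setminus \Sigma)$ is isomorphic to the fundamental group of the complement of the underlying classical knot in $S^3$, and is hence non-cyclic whenever the spun knot comes from a nontrivial classical knot (e.g.\ the spun trefoil, whose group is the trefoil knot group). I would assume this as a known result from classical 4-dimensional knot theory.

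Now I would carry out the tubing construction: let $D_0 \subset D^4$ be the trivial properly embedded disk bounded by the unknot, and let $\Sigma \subset S^4$ be a knotted sphere as above. Remove a small open 4-ball from $S^4$ disjoint from $\Sigma$, obtaining a copy of $D^4$ containing $\Sigma$ as a closed surface in its interior. Inside $D^4$, pick a smooth embedded arc $\gamma$ joining a point of $D_0$ to a point of $\Sigma$, whose interior is disjoint from $D_0 \cup \Sigma$, and replace a small disk neighborhood in $D_0$ and one in $\Sigma$ by the annular tube obtained from a regular neighborhood of $\gamma$. The result is a new properly embedded disk $D \subset D^4$, still bounded by the unknot (the boundary was never touched), but now the complement $D^4 \setminus D$ is diffeomorphic to the boundary-connected sum along a tube of $D^4 \setminus D_0$ and $S^4 \setminus \Sigma$ minus a small ball. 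By Van Kampen applied across the gluing 3-ball (whose fundamental group is trivial), we obtain
\[
\pi_1(D^4 \setminus D) \;\cong\; \pi_1(D^4 \setminus D_0) \ast \pi_1(S^4 \setminus \Sigma) \;\cong\; \mathbb{Z} \ast G,
\]
where $G = \pi_1(S^4 \setminus \Sigma)$ is non-cyclic.

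The hard part, really the only nontrivial one, is the Van Kampen computation: one must verify that tubing $D_0$ with $\Sigma$ along $\gamma$ indeed produces a complement that is a free product of the two original complements, and in particular that no unexpected relation is introduced. This reduces to checking that the meridian along which one glues is nullhomotopic in both pieces (being the boundary of a meridional disk of the tube), so the amalgamation is free. Once this is established, $\pi_1(D^4 \setminus D) = \mathbb{Z} \ast G$ is manifestly non-cyclic, and the natural map $\mathbb{Z} = \pi_1(S^3 \setminus \partial D) \to \pi_1(D^4 \setminus D)$ cannot be surjective, completing the proof.
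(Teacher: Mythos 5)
Your overall strategy is the same as the paper's (tube a trivial slice disk to a 2-knot with non-cyclic group and compare $\pi_1$ of the boundary complement with $\pi_1$ of the disk complement), and your final conclusion is correct, but the step you yourself single out as ``the only nontrivial one'' is justified incorrectly. The circle along which the two complements are glued is a \emph{meridian of the surface}, and a meridian is never nullhomotopic in the complement of a 2-knot or of a properly embedded disk: by Alexander duality $H_1(S^4\setminus\Sigma;\Z)\cong\Z$ and $H_1(D^4\setminus D_0;\Z)\cong\Z$, each generated by the meridian, so the meridian has infinite order in both groups. (The disk you describe as a ``meridional disk of the tube'' meets the tube, hence meets $D$, in a point, so it is not a disk in the complement.) Consequently the Van Kampen amalgamation is not free: it is taken over $\Z=\langle\mu\rangle$ with $\mu$ mapping to the \emph{generator} of $\pi_1(D^4\setminus D_0)\cong\Z$ on one side and to the meridian of $\Sigma$ on the other, so the amalgamated product collapses to
$$
\pi_1(D^4\setminus D)\;\cong\;\Z *_{\Z} G\;\cong\;G,
$$
which is what the paper asserts, rather than $\Z * G$.

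Fortunately this does not affect the theorem: $G$ is non-cyclic by the choice of $\Sigma$ (e.g.\ the spun trefoil), so the map from the cyclic group $\pi_1(S^3\setminus\partial D)\cong\Z$ still cannot be surjective. To repair your write-up, replace the claim that the gluing circle dies in both pieces by the correct identification of that circle as the meridian, and carry out the amalgamated (not free) product computation; everything else in your argument stands.
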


\begin{quest}\label{quest:homot_rib1}
Is every homotopically ribbon surface of $W_g$ ribbon?
\end{quest}

The requirement that $S$ is contained in some shadow lies between these two properties and breaks this question in two parts:

\begin{quest} \label{quest:homot_rib2}
$\ $
\begin{enumerate}
\item{Is every homotopically ribbon surface in $W_g$ contained in a shadow of $W_g$?}
\item{Is every surface contained in a shadow of $W_g$ ribbon?}
\end{enumerate}
\end{quest}

\section{The more technical part of the proof}

\subsection{The results}

We turn to quantum invariants. By analyzing carefully the shadow formula for the Kauffman bracket we prove all the needed estimates at $q=i$. The main result of this section is Theorem~\ref{theorem:order} from which follows Theorem~\ref{theorem:main_quantum}, which is one of the two ingredients of the proof of Theorem~\ref{theorem:Eisermann_gen_gr}.

\begin{theo}[\cite{Carrega-Martelli}]\label{theorem:main_quantum}
Let $G\subset \#_g(S^1\times S^2)$ be a framed trivalent colored graph and $L$ be the odd sub-link of $G$. Let $X$ be a shadow of the 4-dimensional handlebody $W_g$ that collapses onto a graph and $\partial X= G$. Let $S\subset X$ be a (unique) surface with $\partial S = L$. Then
$$
\ord_i\langle G \rangle \geq \chi(S) - \frac r 2
$$
where $r$ is the number of red vertices in $G$.
\end{theo}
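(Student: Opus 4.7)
The plan is to apply the shadow formula (Theorem~\ref{theorem:sh_for_br})
$$
\langle G \rangle = \sum_{\xi} \langle X \rangle_\xi ,
$$
where $\xi$ runs over all admissible colorings of $X$ extending the given coloring of $G = \partial X$, and to bound $\ord_i \langle X \rangle_\xi \geq \chi(S) - r/2$ term-by-term, then argue that the leading Laurent coefficients at $q=i$ do not cancel in the sum. The first observation is that admissibility forces the parity of $\xi(R)$ across every edge, so the parity of $\xi(R)$ is determined by the parities of the colors on $\partial X$ alone. Consequently the union of regions on which $\xi$ takes odd values is the same surface $S$ for every admissible $\xi$, and $\partial S = L$.

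Using $\cerchio_n = (-1)^n[n+1]|_{q=A^2}$, a direct check gives $\ord_i \cerchio_n = 1$ if $n$ is odd and $\ord_i \cerchio_n = 0$ if $n$ is even. Writing the shadow formula as
\begin{align*}
\ord_i \langle X \rangle_\xi &= \sum_f \chi(f)\,\ord_i \cerchio_{\xi(f)} + \sum_v \ord_i \tetra_v + \sum_{v_\partial} \ord_i \teta_{v_\partial} \\
& \quad - \sum_e \chi(e)\,\ord_i \teta_e - \sum_{e_\partial} \chi(e_\partial)\,\ord_i \cerchio_{e_\partial}
\end{align*}
(the phases $A_f$ are units at $q=i$), the first sum collapses to $\sum_{f \subset S} \chi(f) = \chi(S)$, independently of $\xi$. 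The remaining task is to show that the combined contribution of the $\teta$'s and $\tetra$'s is at least $-r/2$.

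This is the combinatorial heart of the argument. Using $\ord_i [n]! = \lfloor n/2 \rfloor$ and the explicit formulas for $\teta_{a,b,c}$ and $\tetra$ from Subsection~\ref{subsec:three_graphs}, I would localize the computation at each vertex $v$ by distributing the contribution of every incident edge and region evenly among its endpoints. This produces a \emph{local weight} $\nu(v)$ whose sum over all vertices equals $\ord_i \langle X \rangle_\xi - \chi(S)$. A case analysis on the six colors at $v$ (and on the parities of their angles), together with the standard identity $\lfloor a/2\rfloor + \lfloor b/2\rfloor + \lfloor c/2\rfloor \leq \lfloor (a+b+c)/2\rfloor$ with a discrepancy controlled exactly by the number of odd entries, would then establish
$$
\nu(v) \geq \begin{cases} 0 & \text{if } v \text{ is not red,} \\ -\tfrac{1}{2} & \text{if } v \text{ is red,} \end{cases}
$$
and similar (in fact easier) inequalities for boundary vertices and edges. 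Summing yields $\ord_i \langle X \rangle_\xi \geq \chi(S) - r/2$ for each $\xi$.

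Finally, to conclude the theorem from a term-by-term estimate on the orders, one must exclude cancellation of the leading Laurent coefficients in $\sum_\xi \langle X \rangle_\xi$ at $q=i$. As in the proof of Lemma~\ref{lem:psi}, the leading coefficient of each $\langle X \rangle_\xi$ at $q=i$ has a sign determined by the parities $\{\xi(R) \bmod 2\}$ and by the phases $A_f$, all of which are fixed by the boundary coloring and by $\xi$-independent data. Hence every summand with the minimal order contributes with the same sign, no cancellation occurs, and $\ord_i \langle G \rangle \geq \min_\xi \ord_i \langle X \rangle_\xi \geq \chi(S) - r/2$. The main obstacle is clearly the vertex-by-vertex analysis: the dichotomy built into the definition of ``red vertex'' (at least two odd angles) corresponds precisely to the additional zero of the $6j$-symbol at $q=i$, and verifying the exact $\nu(v) \geq -1/2$ bound in every parity configuration of $(a,b,c,d,e,f)$ is the substantive calculation that the rest of the argument reduces to.
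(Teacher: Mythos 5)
Your overall route is the same as the paper's: apply the shadow formula, bound $\ord_i\langle X\rangle_\xi$ term-by-term for each admissible coloring, observe that the odd surface $S_\xi$ is independent of $\xi$ and equals $S$, and reduce everything to a parity case analysis on the local building blocks. Two remarks before the main issue. First, your final step about ``excluding cancellation of the leading Laurent coefficients'' is unnecessary: for a lower bound on the order one only needs $\ord_i(\sum_\xi f_\xi)\geq\min_\xi\ord_i f_\xi$, which always holds --- cancellation can only increase the order. Second, your bookkeeping is slightly off: the sum $\sum_f\chi(f)\,\ord_i\cerchio_{\xi(f)}$ over regions alone does not equal $\chi(S)$; the Euler characteristic of $S$ as a subcomplex of $X$ is assembled from the region terms \emph{together with} the $|L|$-parts of the $\teta_e$ and $\tetra_v$ contributions for the edges and vertices lying in $S$. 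This is fixable but means the ``remaining task'' is not cleanly ``show the $\teta$'s and $\tetra$'s contribute at least $-r/2$''.

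The genuine gap is in the tetrahedron estimate, which you correctly identify as the substantive calculation but for which your proposed tool is insufficient. The evaluation of $\tetra$ is a product of a generalized multinomial (whose order at $q=i$ the floor-function identity $\ord_i[n]!=\lfloor n/2\rfloor$ handles exactly) with an \emph{alternating sum} over $z$ of further multinomials. Estimating that sum by taking the minimum order of its terms --- which is what a vertex-local weight built from $\lfloor a/2\rfloor+\lfloor b/2\rfloor+\lfloor c/2\rfloor\leq\lfloor(a+b+c)/2\rfloor$ amounts to --- works in every parity configuration except one: when all the $\Box_i$ are even and all the $\triangle_j$ are odd. There the termwise bound gives order $\geq -4$ for the product while the theorem requires $\geq -2$. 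The paper closes this by pairing consecutive terms $z=2k-1$ and $z=2k$ of the alternating sum and proving a separate cancellation lemma: for odd integers $x_i,y_j$ with $\sum_j(y_j-1)\equiv\sum_i(x_i-1)\ (\mathrm{mod}\ 4)$ one has $\ord_i\bigl(\prod_i[x_i]-\prod_j[y_j]\bigr)\geq 2$, using that both the value and the derivative of $[2k+1]$ behave suitably at $q=\sqrt{-1}$. Without an argument of this kind your claimed bound $\nu(v)\geq -\tfrac12$ fails in that configuration (already for $a=b=c=d=e=f=2$), so the case analysis as you have set it up cannot be completed.
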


We will need the following:

\begin{prop}[\cite{Carrega-Martelli}]\label{prop:correspondences}
Let $X$ be a shadow collapsing onto a graph of a trivalent knotted graph $G\subset \#_g(S^1\times S^2)$. There are natural 1-1 correspondences:
$$
\left\{ \begin{array}{c} {\rm properly\ embedded} \\ {\rm surfaces\ }S \subset X 
\end{array} \right\}
\longleftrightarrow
H_2(X,G;\Z_2) 
 \text{ and }
H_1(G;\Z_2) 
\longleftrightarrow 
\left\{ \begin{array}{c} {\rm links} \\ L \subset G
\end{array} \right\} ,
$$
and a natural injective homomorphism
$$
H_2(X,G;\Z_2) \longrightarrow H_1(G;\Z_2) .
$$
According to the correspondences the surface $S$ is sent to $L=\partial S$. The empty surface is included. Moreover if $g=0$ the homomorphism is surjective too.
\begin{proof}
The morphism $\partial: H_2(X,G;Z_2) \rightarrow H_1(G;\Z_2)$ is injective because
$X$ collapses onto a graph and hence $H_2(X;\Z_2)=\{e\}$. If $g=0$ we have $H_1(X;\Z_2)=\{e\}$ hence in that case the homomorphism is also surfective. Using cellular homology, every $\Z_2$-homology class in $(X,G)$ is realized by a unique cycle, and that cycle is a surface since $X$ has simple singularities.
\end{proof}
\end{prop}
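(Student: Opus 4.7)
My plan is to prove the proposition by identifying everything in sight with cellular chains modulo $2$, then reading off the conclusions from the long exact sequence of the pair $(X,G)$. Since $X$ is a $2$-dimensional simple polyhedron with $\partial X = G$, I take the natural cellular decomposition in which the $2$-cells are the regions, the $1$-cells are the (internal and external) edges, and the $0$-cells are the (internal and external) vertices; $G$ is the subcomplex consisting of external edges and vertices. As $X$ has no $3$-cells, $B_2(X,G;\Z_2)=0$, so $H_2(X,G;\Z_2)=Z_2(X,G;\Z_2)$ literally equals the set of $\Z_2$-linear combinations of regions whose boundary is supported on $G$.

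Next I will establish the two bijections. For the link bijection, a $\Z_2$-$1$-chain on the trivalent graph $G$ is a choice of edges; it is a $1$-cycle iff the number of chosen edges at every trivalent vertex is even, i.e.\ $0$ or $2$, which is exactly the condition that the chosen subset be a disjoint union of circles, that is, a sub-link $L\subset G$. Since $G$ is a $1$-complex, $H_1(G;\Z_2)=Z_1(G;\Z_2)$, and the bijection $H_1(G;\Z_2)\leftrightarrow\{L\subset G\}$ follows. For the surface bijection, a relative $2$-cycle $S=\sum_i R_i$ (mod $2$) must have boundary in $G$, which at each internal edge means $0$ or $2$ of the three incident regions are chosen. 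I must check that this combinatorial cycle condition forces $S$ to be a genuine properly embedded surface at every singular point of $X$: at an internal edge the condition $0$ or $2$ adjacent regions immediately yields a local $2$-manifold structure; at an internal vertex, whose link in $X$ is the $1$-skeleton of a tetrahedron $K_4$ with the six edges labelled by the six regions incident to the vertex, the four edge-conditions say that each vertex of $K_4$ has even degree in the selected subgraph, and the only even subgraphs of $K_4$ are $\varnothing$, a triangle, or a Hamiltonian $4$-cycle, all of which are disjoint unions of circles, so $S$ is a $2$-manifold at the vertex. The analysis at external edges and vertices is easier because the local model is a half-plane/half-disk. This gives the bijection $\{S\subset X\}\leftrightarrow H_2(X,G;\Z_2)$, with $S\mapsto \partial S$ corresponding to the connecting homomorphism $\partial_*\colon H_2(X,G;\Z_2)\to H_1(G;\Z_2)$.

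Finally I invoke the long exact sequence of the pair
\[
H_2(X;\Z_2)\longrightarrow H_2(X,G;\Z_2)\xrightarrow{\ \partial_*\ }H_1(G;\Z_2)\longrightarrow H_1(X;\Z_2).
\]
Since $X$ collapses onto a graph, $X$ is homotopy equivalent to a wedge of circles, in particular $H_2(X;\Z_2)=0$, proving injectivity of $\partial_*$. When $g=0$ the shadow $X$ is a shadow of $W_0=D^4$, hence homotopy equivalent to $D^4$ and therefore contractible, so $H_1(X;\Z_2)=0$ and exactness yields surjectivity of $\partial_*$ as well.

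The only step that is not essentially formal is the verification that a $\Z_2$-relative $2$-cycle of $X$ is automatically a $2$-manifold, which I expect to be the main obstacle; this reduces to the $K_4$-link computation at internal vertices, and once this is in place the long exact sequence does the rest.
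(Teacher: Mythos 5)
Your proposal is correct and follows essentially the same route as the paper: identify relative classes with cellular $\Z_2$-cycles (unique representatives since there are no $3$-cells), observe these cycles are surfaces because of the simple local models, and get injectivity from $H_2(X;\Z_2)=0$ (and surjectivity for $g=0$ from $H_1(X;\Z_2)=0$) via the exact sequence of the pair. The only difference is that you explicitly carry out the local check the paper dismisses with ``since $X$ has simple singularities'' --- your $K_4$-link computation at interior vertices is exactly the right verification and is correct (the even subgraphs of $K_4$ are the empty graph, the four triangles, and the three $4$-cycles, all disjoint unions of circles).
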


Let now $\xi$ be an admissible coloring for $X$. Its reduction modulo $2$ is a cycle in $H_2(X, G;\Z_2)$ because the admissibility relation around every interior edge of $X$ reduces to $i+j+k \equiv 0$ (mod $2$). This cycle gives a surface $S_\xi \subset X$ that consists of all regions in $X$ having an odd color: we call $S_\xi$ the \emph{odd surface} of $\xi$.

Proposition~\ref{prop:correspondences} implies the following:

\begin{cor}[\cite{Carrega-Martelli}]\label{cor:unique_sur}
Let $G\subset \#_g(S^1\times S^2)$ be a colored framed knotted trivalent graph and $X$ be any shadow of $G$ that collapses onto a graph. The odd surface $S_\xi\subset X$ of a coloring $\xi$ that extends that of $G$ is the unique surface whose boundary $\partial S_\xi$ is the odd sub-link of $G$. In particular $S_\xi$ does not depend on $\xi$.
\end{cor}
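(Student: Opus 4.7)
The plan is to deduce the corollary as an almost immediate consequence of Proposition~\ref{prop:correspondences}, using the odd-sub-link description of $\partial S_\xi$. First I would recall the setup: given the admissible coloring $\xi$ extending that of $G$, the odd surface $S_\xi\subset X$ is by construction the union of those (closed) regions of $X$ whose color under $\xi$ is odd, and its boundary consists precisely of those edges of $G$ having odd color, i.e.\ $\partial S_\xi = L$, where $L$ is the odd sub-link of $G$. This identification of $\partial S_\xi$ with $L$ is built into the definition and requires no further work.

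Next I would apply Proposition~\ref{prop:correspondences}, which provides the bijections
\[
\left\{\text{properly embedded surfaces } S\subset X\right\} \longleftrightarrow H_2(X,G;\Z_2),\qquad H_1(G;\Z_2) \longleftrightarrow \left\{\text{links } L\subset G\right\},
\]
and a natural \emph{injective} homomorphism $\partial\colon H_2(X,G;\Z_2)\to H_1(G;\Z_2)$ sending a surface class to the class of its boundary sub-link. Injectivity of $\partial$ here is the key ingredient and, as noted in the proof of Proposition~\ref{prop:correspondences}, it holds precisely because $X$ collapses onto a graph, so $H_2(X;\Z_2)=0$ and the long exact sequence of the pair $(X,G)$ makes the connecting map $H_2(X,G;\Z_2)\to H_1(G;\Z_2)$ injective.

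From these two facts I would conclude as follows: suppose $S\subset X$ is any properly embedded surface with $\partial S=L$. Under the correspondences of Proposition~\ref{prop:correspondences}, both $S$ and $S_\xi$ represent classes in $H_2(X,G;\Z_2)$ that are sent by $\partial$ to the same element of $H_1(G;\Z_2)$ (namely the class of $L$). By injectivity of $\partial$, the two classes coincide, and by the bijection between surfaces and $H_2(X,G;\Z_2)$ this forces $S=S_\xi$ as properly embedded surfaces in $X$. This proves uniqueness and, in particular, shows that $S_\xi$ depends only on the odd sub-link $L\subset G$, not on the specific extension $\xi$; any two admissible extensions produce odd surfaces with the same boundary $L$, hence the same surface. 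There is no real obstacle here: the corollary is essentially a one-line reading of Proposition~\ref{prop:correspondences}, so the only thing to be careful about is the identification $\partial S_\xi=L$, which follows directly from the fact that along each edge of $G$ incident to $X$ the admissibility triple forces the edge to lie in the topological boundary of $S_\xi$ iff its color is odd.
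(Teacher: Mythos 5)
Your proposal is correct and follows exactly the argument the paper intends: the corollary is stated as an immediate consequence of Proposition~\ref{prop:correspondences}, and your use of the injectivity of $\partial\colon H_2(X,G;\Z_2)\to H_1(G;\Z_2)$ (coming from $H_2(X;\Z_2)=0$ since $X$ collapses onto a graph), together with the observation that $\partial S_\xi$ is the odd sub-link, is precisely the paper's reasoning.
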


\begin{theo}[\cite{Carrega-Martelli}]\label{theorem:order}
Let $X$ be a shadow colored by $\xi$. We have 
$$
\ord_i \langle X \rangle_\xi \geq \chi(S_\xi) - \frac{r}2 ,
$$
where $r$ is the number of red vertices in $\partial X$.
\end{theo}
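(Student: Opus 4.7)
The plan is to take $\ord_i$ of the shadow formula, which since each phase $A_f$ is a monomial in $A$ reduces to
\[
\ord_i\langle X\rangle_\xi=\sum_f\chi(f)\,\ord_i\cerchio_f+\sum_v\ord_i\tetra_v+\sum_{v_\partial}\ord_i\teta_{v_\partial}-\sum_e\chi(e)\,\ord_i\teta_e-\sum_{e_\partial}\chi(e_\partial)\,\ord_i\cerchio_{e_\partial},
\]
and match the right-hand side, term by term, to a local cellular decomposition of $\chi(S_\xi)-r/2$ over the faces, edges and vertices of $X$. The uniqueness of $S_\xi$ proved in Corollary~\ref{cor:unique_sur} is what makes this bookkeeping well-posed.

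First I would compute the orders of the atomic factors using the closed formulas of Subsection~\ref{subsec:three_graphs}. The elementary identity $\ord_i[k]=1$ for $k$ even and $0$ for $k$ odd gives $\ord_i\cerchio_n=1$ if $n$ is odd and $0$ otherwise, so only the odd regions $f\subset S_\xi$ feed into the $\cerchio$-sum. Applying $\ord_i[k]!=\lfloor k/2\rfloor$ to the explicit formula for $\teta_{a,b,c}$, a short calculation indexed by the three angles of the admissible triple yields $\ord_i\teta_{a,b,c}$ equal to $0$, $1$, $0$, $-1$ according to whether $0$, $1$, $2$, or $3$ angles are odd. By definition, the red triples are exactly those with two or three odd angles, and it is in these red cases that $\teta$ fails to detect odd edges in the manner the combinatorial bookkeeping would naively require.

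The main obstacle is the analogous analysis of the $6j$-symbol $\tetra_v$ at every internal vertex $v$. The six regions meeting $v$ correspond combinatorially to the six edges of a tetrahedron (the link of $v$), and admissibility modulo $2$ forces the odd regions to form an even subgraph of $K_4$; there are exactly eight such patterns (the empty graph, the four triangles, and the three four-cycles). In each of these cases I would bound $\ord_i\tetra_v$ from below using the Masbaum--Vogel formula, estimating the alternating summation by tracking the leading non-vanishing term and exploiting the natural symmetry of $\tetra$ under the relevant subgroup of $S_4$. Once these local orders are available, I would complete the proof by expanding $\chi(S_\xi)$ as an alternating sum of local contributions over the cells of $X$ lying in $\bar S_\xi$, and checking summand-by-summand that the interior contributions are dominated by the $\cerchio_f$, $\teta_e$ and $\tetra_v$ bounds, while the boundary contributions reproduce the $\teta_{v_\partial}$ and $\cerchio_{e_\partial}$ terms up to an additional deficit of exactly $1/2$ at every red boundary vertex of $\partial X$, yielding the claimed inequality.
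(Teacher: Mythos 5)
Your overall strategy is the same as the paper's: take $\ord_i$ of the shadow formula, compute the orders of the atomic factors, and match the result against a cellular expansion of $\chi(S_\xi)-r/2$. Your computations for $\cerchio$ and $\teta$ are correct (the values $0,1,0,-1$ according to the number of odd angles agree with $\ord_i\teta=|L|-r/2$), and the final bookkeeping you sketch is essentially the identity $\sum_e r(e)=\sum_v r(v)+\sum_{v_\partial}\frac{r(v_\partial)}2$ used in the paper.

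The gap is in the $\tetra$ step, and it is twofold. First, classifying the local configurations at a vertex by which of the six edges are odd (the eight even subgraphs of $K_4$) is too coarse: $\ord_i\tetra_v$ depends on the parities of the four triangles $\triangle_j$ and three squares $\Box_i$, and these are \emph{not} determined by the parities of $a,\ldots,f$ (e.g.\ $a=b=c=0$ gives $\triangle_1$ even while $a=b=c=2$ gives $\triangle_1$ odd). The paper's case analysis must therefore be indexed by the parities of the $\Box_i$'s and $\triangle_j$'s, yielding eleven configurations rather than eight. Second, and more seriously, your plan to bound the alternating sum in the Masbaum--Vogel formula by ``tracking the leading non-vanishing term'' fails in exactly one configuration: when all $\Box_i$ are even and all $\triangle_j$ are odd, the multinomial prefactor has order $-6$ and the naive bound on the sum (the minimum order over its summands) is $2$, giving $-4$ where the theorem requires $\geq -2$. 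In that case the leading terms of consecutive summands $z=2k-1$ and $z=2k$ cancel, and one must prove that their difference has order at least $2$ more than each term individually; the paper does this via a separate lemma showing that $\prod_i[x_i]-\prod_j[y_j]$ has order $\geq 2$ at $q=i$ for odd $x_i,y_j$ with $\sum(x_i-1)\equiv\sum(y_j-1)\pmod 4$, using that $[n]'$ vanishes at $q=i$ for odd $n$. Without this cancellation argument your estimate does not close, so you need to supply it (or an equivalent) to complete the proof.
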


In contrast with the topological Theorem~\ref{theorem:main_topol}, this theorem has a long technical proof, to which we devote the rest of this section.

Before starting with the proof we prove Theorem~\ref{theorem:main_quantum}:

\begin{proof}[Proof of Theorem~\ref{theorem:main_quantum}]
We have
$$
\ord_i \langle G \rangle = \ord_i \sum_\xi \langle X \rangle_\xi
 \geq \min_\xi \ord_i \langle X \rangle_\xi 
$$
where $\xi$ runs over all the admissible colorings of $X$ that extends the coloring of $G$. By Theorem~\ref{theorem:order}
$$
\ord_i \langle G \rangle \geq \min_\xi \chi(S_\xi) - \frac r 2 .
$$
By Corollary~\ref{cor:unique_sur} for each $\xi$ the surface $S_\xi$ coincides with $S$.
\end{proof}

\subsection{Localization of Theorem~\ref{theorem:order}}

We now localize the proof of Theorem~\ref{theorem:order}, by reducing it to the building blocks \cerchio, \teta, and \tetra. The following lemma will be proved in the next subsection:

\begin{lem}[\cite{Carrega-Martelli}]\label{lem:block}
Let $G$ be a colored $\cerchio, \teta$, or $\tetra$. 
We have 
$$
\ord_i \langle G \rangle \geq |L| - \frac r 2 
$$
where $L$ is the odd (un-)link $L\subset G$, $|L|$ is the number of components of $L$, and $r$  is the number of red vertices in $G$. If $G=\cerchio$ or $\teta$ then the equality holds.
\end{lem}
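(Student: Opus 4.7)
The plan is to verify each of the three cases by direct computation with the closed formulas for $\cerchio_a$, $\teta_{a,b,c}$, and $\tetra$ recalled in Subsection~\ref{subsec:three_graphs}. The arithmetic backbone is the elementary observation that, for a positive integer $n$, one has $\ord_i[n]=1$ when $n$ is even and $\ord_i[n]=0$ when $n$ is odd, whence $\ord_i[n]!=\lfloor n/2\rfloor$ and $\ord_i$ is additive on quantum multinomials. Everything else is a parity bookkeeping.

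The circle case is immediate from $\cerchio_a=(-1)^a[a+1]$: we get $\ord_i\cerchio_a=1$ when $a$ is odd and $0$ when $a$ is even, matching $|L|$; since $\cerchio$ carries no vertices, $r=0$ and equality holds. For $\teta_{a,b,c}$ the factorial formula yields
\[
\ord_i\teta_{a,b,c}=\left\lfloor\tfrac{a+b+c+2}{4}\right\rfloor+\left\lfloor\tfrac{a+b-c}{4}\right\rfloor+\left\lfloor\tfrac{b+c-a}{4}\right\rfloor+\left\lfloor\tfrac{c+a-b}{4}\right\rfloor-\left\lfloor\tfrac{a}{2}\right\rfloor-\left\lfloor\tfrac{b}{2}\right\rfloor-\left\lfloor\tfrac{c}{2}\right\rfloor.
\]
Admissibility forces either all of $a,b,c$ even (so $|L|=0$) or exactly two of them odd (so $|L|=1$), and the two vertices of $\teta$ are simultaneously red or simultaneously not, according to the parities of the three angles. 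The plan is to check equality by a case split on the residues of $a,b,c$ modulo $4$, which determines every floor on the right-hand side.

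For the tetrahedron, the plan is to expand via the Masbaum--Vogel formula
\[
\tetra=\begin{bmatrix}\Box_i-\triangle_j\\ a,b,c,d,e,f\end{bmatrix}\sum_{z=\max\triangle_j}^{\min\Box_i}(-1)^z\begin{bmatrix}z+1\\ z-\triangle_j,\Box_i-z,1\end{bmatrix},
\]
and bound $\ord_i\tetra$ from below by the order of the prefactor plus the minimum of $\ord_i$ taken over the individual $z$-summands, using that $\ord_i$ of a sum is at least the minimum of its summands. Each summand is a quantum multinomial whose order is computed by the floor formula. The main obstacle will be this final step: the six colors $(a,b,c,d,e,f)$ admit $2^6$ parity patterns (cut down by tetrahedral symmetry to a short list), and for each pattern one must identify which of the four vertices are red, combine the floors of the prefactor with those of each $z$-summand, and verify that the total is at least $|L|-r/2$. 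The technical heart will be exhibiting a common $(q-i)$-divisibility valid across all admissible $z$, so that the loss from the ``minimum over $z$'' bound is kept under control.
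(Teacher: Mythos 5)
Your treatment of $\cerchio$ and $\teta$ will work, though the mod-$4$ case split for $\teta$ is heavier than necessary: the paper's Proposition~\ref{prop:orders} shows that $\ord_i$ of a generalized multinomial depends only on the \emph{number} of odd entries in the numerator and denominator (the $\lfloor N/2\rfloor$ contributions cancel because the two rows have equal sums), which reduces the $\teta$ computation to one line of parity counting rather than a residue analysis; the red-vertex count then appears directly because the three angles sum to $\frac{a+b+c}{2}+1$ minus $1$.

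The genuine gap is in the tetrahedron case. Your bound ``order of the prefactor plus the minimum over $z$ of the orders of the summands'' fails in exactly one parity configuration: when all three $\Box_i$ are even and all four $\triangle_j$ are odd. There $|L|=0$ and $r=4$, so you must prove $\ord_i\tetra\geq -2$, but the prefactor has order $-6$ and each individual $z$-summand has order exactly $2$, so the minimum-over-summands estimate only yields $\geq -4$. No ``common $(q-i)$-divisibility valid across all admissible $z$'' can rescue this, because each summand genuinely has order $2$ and no more; what saves the statement is a cancellation \emph{between} summands. The sum runs from the odd $z=\max\triangle_j$ to the even $z=\min\Box_i$, hence has an even number of terms, and each consecutive pair $z=2k-1,\,2k$ combines, after factoring out a multinomial of order $2$, into an expression proportional to $\prod_j[2k-\triangle_j]-[2k+1]\prod_i[\Box_i-2k+1]$, which the paper's Lemma~\ref{lem:bad_case} shows has order at least $2$ at $q=i$ (both the value and the first derivative vanish there, using that every quantum integer involved is an odd one and that $\sum_j\triangle_j=\sum_i\Box_i$). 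This pairwise cancellation is the missing idea; without it your case analysis terminates at $\geq -4$ in the bad configuration and the lemma does not follow.
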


Note that for $G=\cerchio, \teta, \tetra$ we have:
\begin{itemize}
\item $|L|=1$ if $G$ contains some odd colored edges,
\item $|L|=0$ otherwise.
\end{itemize}
We postpone the proof of Lemma~\ref{lem:block} to the next subsections, and we now deduce Theorem~\ref{theorem:order} from it.

\begin{proof}[Proof of Theorem~\ref{theorem:order}]
We have
$$
\langle X \rangle_\xi = \frac{\prod_f \cerchio_f^{\chi(f)}q_f \prod_v \tetra_v \prod_{v_\partial} \teta_{v_\partial}}
{\prod_e \teta_e^{\chi(e)} \prod_{e_\partial} \cerchio_{e_\partial}^{\chi(e_\partial)}} .
$$
The phase $q_f$ is a monomial in $q$ and hence does not contribute to $\ord_i\langle X \rangle_\xi$. We get
\beq
\ord_i \langle X \rangle_\xi & = & \sum_f \chi(f)\cdot \ord_i\cerchio_f + \sum_v \ord_i\tetra_v +  \sum_{v_\partial} \ord_i \teta_{v_\partial} \\ 
&  & - \sum_e \chi(e)\cdot \ord_i \teta_e - \sum_{e_\partial} \chi(e_\partial)\cdot\ord_i\cerchio_{e_\partial} .
\eeq
We now use Lemma~\ref{lem:block}. Note that for every colored $\cerchio, \teta, \tetra$ involved, we have $|L|=1$ precisely when the corresponding stratum (vertex, edge, or region) is contained in $S_\xi$, otherwise we get $|L|=0$. We denote by $r(G)$ the number of red vertices in $G$ and we get:
\beq
\ord_i\langle X \rangle_\xi & \geq & \sum_{f\subset S_\xi} \chi(f) + \sum_{v\in S_\xi} 1 +  \sum_{v_\partial \in S_\xi} 1 
- \sum_{e\subset S_\xi} \chi(e) - \sum_{e_\partial\subset S_\xi} \chi(e_\partial) \\
&  & - \sum_v \frac {r(v)}2  
- \sum_{v_\partial}\frac {r(v_\partial)}2 
+ \sum_e \frac {r(e)}2 \\
& = & \ \chi(S_\xi) - \sum_v \frac {r(v)}2 
- \sum_{v_\partial}\frac {r(v_\partial)}2
+ \sum_e \frac {r(e)}2 .
\eeq
Let $e$ be an interior edge. The two vertices of $\teta_e$ are colored by the same triple $(a,b,c)$: hence $\teta_e$ has either zero or two red vertices. If an interior vertex $v$ of $X$ is adjacent to $e$, then $\tetra_v$ has a corresponding vertex colored by $(a,b,c)$. If an exterior vertex $v_\partial$ is adjacent to $e$, then both vertices of $\teta_{v_\partial}$ are colored as $(a,b,c)$. From this we get
$$
\sum_e r(e) = \sum_v r(v) + \sum_{v_\partial} \frac{r(v_\partial)}2
$$
and therefore
$$ 
\ord_i\langle X \rangle_\xi \geq \chi(S_\xi) - \sum_{v_\partial} \frac {r(v_\partial)}4 = \chi(S_\xi) - \frac r2
$$
because $r(v_\partial)$ equals $2$ when $v_\partial$ is red and $0$ otherwise.
\end{proof}

\subsection{Order of generalized multinomials}

It remains to prove Lemma~\ref{lem:block}, and to do so we will need the following:

\begin{prop}[\cite{Carrega-Martelli}]\label{prop:orders}
We have
\beq
\ord_i[n] & = & 
\begin{cases} 
0 & \text{ if } n\in 2\Z+1   \\
1 & \text{ if } n\in 2\Z 
\end{cases} , \\
\ord_i[n]! & = & \big\lfloor \frac n2 \big\rfloor, \\
\ord_i \begin{bmatrix} 
m_1, \ldots, m_h \\ 
n_1, \ldots, n_k 
\end{bmatrix} 
& = & 
\big\lfloor \frac{\#\big\{{\rm odd}\ n_i \big\}}2 \big\rfloor - \big\lfloor\frac{\#\big\{{\rm odd\ } m_j \big\}}2 \big\rfloor 
\eeq
where $\lfloor x \rfloor$ is the integer part of $x$.
\begin{proof}
The function
$$
[n] = \frac {q^n - q^{-n}}{q-q^{-1}} = \frac{q^{-n}}{q-q^{-1}} (q^{2n}-1)$$
has simple zeroes at the $(2n)^{\rm th}$ roots of unity (except $q=\pm 1$), hence at $q=i$ when $n$ is even. The equality $\ord_i[n]! = \lfloor \tfrac n2 \rfloor$ follows. On the multinomial, recall that $m_1+\ldots +m_h = n_1+\ldots+n_h = N$ by hypothesis. We get
\beq
\ord_i \begin{bmatrix} m_1, \ldots, m_h \\ n_1, \ldots n_k \end{bmatrix} 
& = & \sum_i \big\lfloor \frac{m_i}2 \big\rfloor - \sum_j \big\lfloor \frac{n_j}2 \big\rfloor
\\
& = & \big\lfloor\frac N2\big\rfloor - \big\lfloor\frac{\#\big\{{\rm odd\ } m_i\big\}}2\big\rfloor - 
\big\lfloor\frac N2\big\rfloor + \big\lfloor\frac{\#\big\{{\rm odd\ } n_j\big\}}2\big\rfloor \\
& = & \Big\lfloor \frac{\#\big\{{\rm odd}\ n_i \big\}}2\Big\rfloor - \Big\lfloor\frac{\#\big\{{\rm odd\ } m_j \big\}}2 \Big\rfloor.
\eeq
\end{proof}
\end{prop}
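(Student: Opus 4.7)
The plan is to establish the three claims sequentially, starting with the building block and then propagating to factorials and multinomials.

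First I would analyze $[n] = (q^n - q^{-n})/(q - q^{-1})$. Writing the numerator as $q^{-n}(q^{2n}-1)$, the zeros (other than $q=\pm 1$, which are killed by the denominator) are simple and are exactly the $(2n)$-th roots of unity different from $\pm 1$. Thus $q=i$ is a simple zero of $[n]$ precisely when $4 \mid 2n$, i.e.\ when $n$ is even, giving the first formula. (Note $q-q^{-1}$ does not vanish at $q=i$.)

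Next I would compute $\ord_i[n]!$ by the multiplicativity of $\ord_i$ on products: summing the previous formula over $k=2,\dots,n$ reduces to counting the even integers in $\{2,\dots,n\}$, which is $\lfloor n/2\rfloor$.

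For the multinomial, the ratio structure and the standing hypothesis $N := m_1+\cdots+m_h = n_1+\cdots+n_k$ give
\[
\ord_i\begin{bmatrix} m_1,\dots,m_h \\ n_1,\dots,n_k\end{bmatrix} = \sum_i \lfloor m_i/2\rfloor - \sum_j \lfloor n_j/2\rfloor .
\]
The key identity is $\sum_i \lfloor m_i/2\rfloor = \tfrac{1}{2}\bigl(N - \#\{\text{odd } m_i\}\bigr)$, which I would rewrite as $\lfloor N/2\rfloor - \lfloor \#\{\text{odd } m_i\}/2\rfloor$ using the parity observation that $N \equiv \#\{\text{odd } m_i\} \pmod 2$ (so both expressions equal $(N-\#\{\text{odd } m_i\})/2$). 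Doing the same for the $n_j$ and subtracting, the $\lfloor N/2\rfloor$ terms cancel and the stated formula drops out.

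There is no real obstacle here; the only subtle point is the parity bookkeeping in the last step, which is forced by $N \equiv \#\{\text{odd } m_i\} \equiv \#\{\text{odd } n_j\} \pmod 2$. Everything else is a direct computation from the definition of $[n]$.
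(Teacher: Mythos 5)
Your proof is correct and follows essentially the same route as the paper: factor $[n]$ as $\frac{q^{-n}}{q-q^{-1}}(q^{2n}-1)$ to read off the simple zero at $q=i$ exactly when $n$ is even, sum over the factorial, and then convert $\sum_i\lfloor m_i/2\rfloor$ to $\lfloor N/2\rfloor - \lfloor \#\{\mathrm{odd}\ m_i\}/2\rfloor$ via the parity congruence $N\equiv\#\{\mathrm{odd}\ m_i\}\pmod 2$ so that the $\lfloor N/2\rfloor$ terms cancel. Your write-up is if anything slightly more explicit than the paper's about why the parity bookkeeping is legitimate.
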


We can now evaluate $\cerchio, \teta$, and $\tetra$ at $q=i$. 

\subsection{Orders of the circle, theta, and tetrahedron}
It remains to prove Lemma~\ref{lem:block}.

\begin{proof}[Proof of Lemma~\ref{lem:block}]
If $G=\cerchio$ then
$\ord_i\cerchio_a = \ord_i[a+1]$ equals $1$ if $a$ is odd and $0$ if $a$ is even: the odd link $L$ is respectively $G$ and $\varnothing$, therefore $\ord_i\cerchio_a = |L|$ in any case.

If $G=\teta$ we have
\beq
\ord_i\teta_{a,b,c} & = & \ord_i \begin{bmatrix} \frac{a+b+c}2+1, \frac{a+b-c}2, \frac{b+c-a}2, \frac{c+a-b}2 \\
a, b, c, 1 \end{bmatrix} \\
 & = & \Big\lfloor \frac{\#\big\{{\rm odd}\ a,b,c,1 \big\}}2 \Big\rfloor - 
 \Big\lfloor\frac{\#\big\{{\rm odd\ } \frac{a+b+c}2+1, \frac{a+b-c}2, \frac{b+c-a}2, \frac{c+a-b}2 \big\}}2 \Big\rfloor \\
 & = & |L| - \frac r2.
\eeq
To prove the last equality, note that the first addendum is $0$ if $a,b,c$ are even and $1$ otherwise (there are either zero or two odd numbers in $a,b,c$ by admissibility), and $L\subset G$ is respectively empty or a circle. Concerning the second addendum, note that
$$
\frac{a+b+c}2 +1= \frac{a+b-c}2 + \frac{b+c-a}2 + \frac{c+a-b}2 +1 ,
$$
and hence one easily sees that the second addendum equals
$$
\Big\lfloor\frac{\#\big\{{\rm odd\ } \frac{a+b-c}2, \frac{b+c-a}2, \frac{c+a-b}2 \big\}}2 \Big\rfloor ,
$$
which is $1$ if the triple is red and $0$ otherwise, by definition.

For $G=\tetra$ we do a long case-by-case analysis. We recall the formula
$$
\pic{2}{0.8}{tetra_color.eps} =
\begin{bmatrix} \Box_i-\triangle_j \\
a, b, c, d, e, f \end{bmatrix} \cdot \sum_{z = \max \triangle_j }^{\min \Box_i}\!\!\! (-1)^z
\begin{bmatrix} z+1 \\
z-\triangle_j, \Box_i-z, 1 \end{bmatrix}.
$$
with
\beq
& \triangle_1 = \frac{a+b+c}{2},\ \triangle_2 = \frac{a+e+f}{2},\ \triangle_3 =\frac{ d+b+f}{2},\ \triangle_4 = \frac{d+e+c}{2}, & \\
& \Box_1 = \frac{a+b+d+e}{2},\ \Box_2 = \frac{a+c+d+f}{2},\ \Box_3 = \frac{b+c+e+f}{2}. &
\eeq
Note that
$$
a+b+c+d+e+f = \sum_i \Box_i = \sum_j \triangle_j .
$$
We now estimate the factor
\begin{equation} \label{eqn:factor2}
\sum_{z = \max \triangle_j }^{\min \Box_i} (-1)^z 
\begin{bmatrix} 
z+1 \\
z-\triangle_j, \Box_i-z, 1 
\end{bmatrix}
\end{equation}
in terms of the parity of the $\Box_j$'s and the $\triangle_i$'s.

We first consider the case $a+b+c+d+e+f$ is even. In that case the number of odd $\Box_i$'s is either $0$ or $2$, while the number of odd $\triangle_j$'s is either $0$, $2$, or $4$. Using Proposition~\ref{prop:orders} we easily see that
$$
\ord_i \begin{bmatrix} 
z+1 \\
z-\triangle_j, \Box_i-z, 1 
\end{bmatrix}
$$
is a number that depends on the parity of $z$, on the number of odd $\Box_i$'s ($0$ or $2$) and of odd $\triangle_j$'s ($0$, $2$ or $4$) according to the tables: 

\begin{center}
\begin{tabular}{|c|c|c|}
\hline
\multicolumn{3}{c}{$z$ even} \\
\hline \hline
 & $0\ \Box_i$ & $2\ \Box_i$  \\
\hline
 $0\ \triangle_j$ & $0$ & $1$ \\
 $2\ \triangle_j$ & $1$ & $2$ \\
 $4\ \triangle_j$ & $2$ & $3$ \\
\hline
\end{tabular}
\qquad
\begin{tabular}{|c|c|c|}
\hline
\multicolumn{3}{c}{$z$ odd} \\
\hline \hline
 & $0\ \Box_i$ & $2\ \Box_i$  \\
\hline
 $0\ \triangle_j$ & $4$ & $3$ \\
 $2\ \triangle_j$ & $3$ & $2$ \\
 $4\ \triangle_j$ & $2$ & $1$ \\
\hline
\end{tabular}
\end{center}

By taking the minimum we get that the order at $q=i$ of (\ref{eqn:factor2}) is at least:

\begin{equation} \label{eqn:even}
\begin{tabular}{|c|c|c|} 
\hline
 & $0\ \Box_i$ & $2\ \Box_i$  \\
\hline
 $0\ \triangle_j$ & $0$ & $1$ \\
 $2\ \triangle_j$ & $1$ & $2$ \\
 $4\ \triangle_j$ & $2$ & $1$ \\
\hline
\end{tabular}
\end{equation}
The case $a+b+c+d+e+f$ odd is treated analogously: now the number of odd $\Box_i$'s is either $1$ or $3$, and the number of odd $\triangle_i$'s is either $1$ or $3$. We get
\begin{center}
\begin{tabular}{|c|c|c|}
\hline
\multicolumn{3}{|c|}{$z$ even} \\
\hline \hline
 & $1\ \Box_i$ & $3\ \Box_i$  \\
\hline
 $1\ \triangle_j$ & $1$ & $2$ \\
 $3\ \triangle_j$ & $2$ & $3$ \\
\hline
\end{tabular}
\qquad
\begin{tabular}{|c|c|c|}
\hline
\multicolumn{3}{c}{$z$ odd} \\
\hline \hline
 & $1\ \Box_i$ & $3\ \Box_i$  \\
\hline
 $1\ \triangle_j$ & $3$ & $2$ \\
 $3\ \triangle_j$ & $2$ & $1$ \\
\hline
\end{tabular}
\end{center}
The order at $q=i$ of (\ref{eqn:factor2}) is hence at least:
\begin{equation} \label{eqn:odd}
\begin{tabular}{|c|c|c|} 
\hline
 & $1\ \Box_i$ & $3\ \Box_i$  \\
\hline
 $1\ \triangle_j$ & $1$ & $2$ \\
 $3\ \triangle_j$ & $2$ & $1$ \\
\hline
\end{tabular}
\end{equation}
We now turn to the factor
\begin{equation}\label{eqn:factor1}
\begin{bmatrix} \Box_i-\triangle_j \\
a, b, c, d, e, f \end{bmatrix}.
\end{equation}
The $12$ numbers $\Box_i - \triangle_j$ are the angles of the vertices of the colored \tetra, namely they are of type $\frac{m+n-p}2$ where $(m,n,p)$ are the colors of the edges incident to some vertex: there are $4$ vertices and $3$ such expressions at each vertex. The $12$ numbers correspond to the $12$ red arcs in the picture
\begin{center}
\includegraphics[width = 1.5 cm]{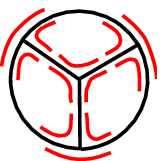}
\end{center}
where the red arc corresponding to $\frac{m+n-p}2$ is the one parallel to the edges $m,n$ and opposite to $p$. The parities of these $12$ numbers determine the parities of all the quantities $\Box_i, \triangle_j, a,b,c,d,e,f$, and hence also $|L|$ and $\frac r2$. The possible configurations (considered up to symmetries of the tetrahedron) are easily classified and are shown in Table~\ref{table:cases} and Table~\ref{table:cases2}.

\begin{table}
\begin{center}
\begin{tabular}{|c|c|c|c|c|c|c|c|}
\hline
\phantom{\Big|}\! odd $\Box_i$'s & odd $\triangle_j$'s & red arcs & $\ord_i\big((\ref{eqn:factor1})\big)$ & $\ord_i\big((\ref{eqn:factor2})\big)$ & $|L|$ & $\frac r2$ & works? \\
\hline \hline
 0 & 0 & \raisebox{-0.65 cm}{\includegraphics[width = 1.8 cm]{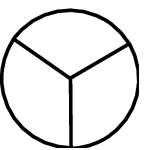}} & 0 & $\geq 0$ & $0$ & $0$ & yes \\
 0 & 2 & \raisebox{-0.65 cm}{\includegraphics[width = 1.8 cm]{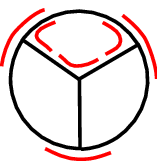}} & $-1$ & $\geq 1$ & $1$  & 1& yes \\
  0 & 4 & \raisebox{-0.65 cm}{\includegraphics[width = 1.8 cm]{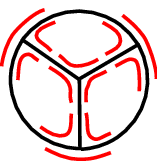}} & $-6$ & $\geq 2$ & $0$ & 2& no \\
  2 & 0 & \raisebox{-0.65 cm}{\includegraphics[width = 1.8 cm]{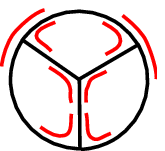}} & $-2$ & $\geq 1$ & $1$  & 2& yes \\
  2 & 2 & \raisebox{-0.65 cm}{\includegraphics[width = 1.8 cm]{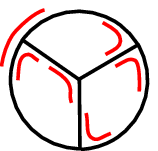}} & $-1$ & $\geq 2$ & $1$  & 1& yes \\
  2 & 2 & \raisebox{-0.65 cm}{\includegraphics[width = 1.8 cm]{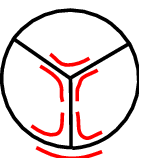}} & $-3$ & $\geq 2$ & $0$  & 1& yes \\
  2 & 4 & \raisebox{-0.65 cm}{\includegraphics[width = 1.8 cm]{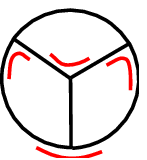}} & $0$ & $\geq 1$ & $1$  & 0 & yes \\
\hline
\end{tabular}
\caption{For each case: the number of odd $\Box_i$'s, of odd $\triangle_j$'s, the red arcs, the order of the first factor (\ref{eqn:factor1}), of the second (\ref{eqn:factor2}) estimated in (\ref{eqn:even}), the number of components of the odd link $|L|$, and $\frac r2$. If 
(\ref{eqn:factor1}) + (\ref{eqn:factor2}) $\geq |L|-\frac r2$ then the estimate works (last column).}
\label{table:cases}
\end{center}
\end{table}

\begin{table}
\begin{center}
\begin{tabular}{|c|c|c|c|c|c|c|c|}
\hline
\phantom{\Big|}\! odd $\Box_i$'s & odd $\triangle_j$'s & red arcs & $\ord_i\big((\ref{eqn:factor1})\big)$ & $\ord_i\big((\ref{eqn:factor2})\big)$ & $|L|$ & $\frac r2$ & works? \\
\hline \hline
 1 & 1 & \raisebox{-0.65 cm}{\includegraphics[width = 1.8 cm]{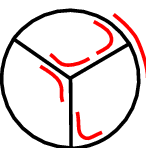}} & $-1$ & $\geq 1$ & $1$ & $1$ & yes \\
 1 & 3 & \raisebox{-0.65 cm}{\includegraphics[width = 1.8 cm]{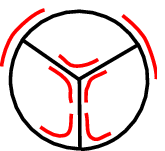}} & $-2$ & $\geq 2$ & $1$ & $1$ & yes \\
 3 & 1 & \raisebox{-0.65 cm}{\includegraphics[width = 1.8 cm]{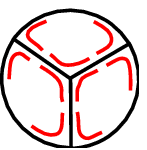}} & $-3$ & $\geq 2$ & $1$ & $2$ & yes \\
 3 & 3 & \raisebox{-0.65 cm}{\includegraphics[width = 1.8 cm]{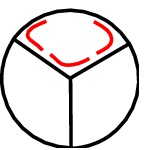}} & 0 & $\geq 1$ & $1$ & $0$ & yes \\
\hline
\end{tabular}
\caption{For each case: the number of odd $\Box_i$'s, of odd $\triangle_j$'s, the red arcs, the order of the first factor (\ref{eqn:factor1}), of the second (\ref{eqn:factor2}) estimated in (\ref{eqn:odd}), the number of components of the odd link $|L|$, and $\frac r2$. If 
(\ref{eqn:factor1}) + (\ref{eqn:factor2}) $\geq |L|-\frac r2$ then the estimate works (last column).}
\label{table:cases2}
\end{center}
\end{table}

As the tables show, the needed inequality 
$$
\ord_i\big((\ref{eqn:factor2})\big) + \ord_i\big((\ref{eqn:factor1})\big) \geq |L| + \frac r2
$$
is verified for all the configurations, except one bad case: when the $\Box_i$'s are all even and the $\triangle_j$'s are all odd. For that case we need to prove that 
$$
\ord_i\big((\ref{eqn:factor2})\big) + \ord_i\big((\ref{eqn:factor1})\big) \geq -2
$$
but we only get $\geq -4$. This bad case holds for instance when $a=b=c=d=e=f=2$ and hence $\Box_i = 4$ and $\triangle_j = 3$. If we look more carefully at this example we find

\beq
\pic{2}{0.8}{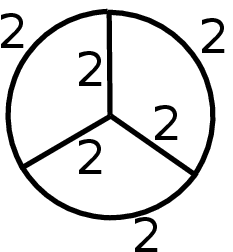} & = &
\begin{bmatrix} 1 \cdots 1 \\
2, 2, 2, 2, 2, 2 \end{bmatrix} \cdot \sum_{z = 3}^{4} (-1)^z 
\begin{bmatrix} z+1 \\
z-3, 4-z, 1 \end{bmatrix} \\
& = & \frac{1}{[2]^6} \cdot \left(-[4]!+[5]!\right) \\
& = & \frac{[4]!}{[2]^6} \cdot ([5]-1).
\eeq
Now it turns out that the difference
$$
[5]-1 = q^4+q^2+q^{-2}+q^4 = (q+q^{-1})(q^3+q^{-3}) = [2]\big([4]-[2]\big)
$$
has order $2$ in $q=i$: this difference produces a cancellation that increases the order of (\ref{eqn:factor2}) at $q=i$ by two, giving overall the desired $-4$ instead of the $\geq -2$ expected by the tables.

We now prove that this kind of cancellation holds in general, provided that the $\Box_i$'s are all even and the $\triangle_j$'s are all odd. The sum
$$
\sum_{z = \max \triangle_j }^{\min \Box_i} (-1)^z 
\begin{bmatrix} z+1 \\
z-\triangle_j, \Box_i-z, 1 
\end{bmatrix}
$$
goes from the odd $z=\max \triangle_j$ to the even $z=\min \Box_i$ and so contains an even number of terms. Two subsequent terms $z=2k-1$ and $z=2k$ give
$$
- \begin{bmatrix} 2k \\
2k-1-\triangle_j, \Box_i-2k+1, 1 
\end{bmatrix}
+ 
\begin{bmatrix} 
2k+1 \\
2k-\triangle_j, \Box_i-2k, 1 
\end{bmatrix}
$$
that may be rewritten as
$$
- \begin{bmatrix} 
2k \\ 
2k-1-\triangle_j, \Box_i-2k,1,1,1,1 
\end{bmatrix} 
\cdot \left( \frac 1{\prod_i[\Box_i-2k+1]} - \frac{[2k+1]}{\prod_j[2k-\triangle_j]}\right) .
$$
The left factor has order $2$ as prescribed by Table~\ref{table:cases}. Quite surprisingly, the second factor 
$$
\frac{\prod_j[2k-\triangle_j] - {[2k+1]}\cdot \prod_i[\Box_i-2k+1]}
{\prod_i[\Box_i-2k+1]\cdot \prod_j[2k-\triangle_j]} .
$$
has order at least $2$: all the quantum integers in the formula are quantum odd numbers; the denominator is a non-zero constant at $q=i$, while the numerator has order $\geq 2$ thanks to the following lemma. 

\begin{lem}[\cite{Carrega-Martelli}]\label{lem:bad_case}
Let $x_1,\ldots,x_n,y_1,\ldots,y_m$ be odd non negative integers with
$$
\sum_j (y_j-1) \equiv \sum_i (x_i-1) \ ({\rm mod}\ 4) .
$$
Then
$$
\ord_i\left(\prod_i[x_i] - \prod_j[y_j]\right) \geq 2 .
$$
\begin{proof}
We set $f(q) = \prod_i[x_i] - \prod_j[y_j]$ and write $\sqrt{-1}$ instead of $i$ to avoid confusion. Now
$$
[2k+1](\sqrt{-1}) = (-1)^k
$$
gives
$$ 
f(\sqrt{-1})  = (-1)^{\frac 12\sum_i (x_i-1)} - (-1)^{\frac 1 2 \sum_j (y_j -1)} = 0
$$
since $\frac 1 2 \sum_i (x_i-1)$ and $\frac 1 2 \sum_j (y_j -1)$ have the same parity by hypothesis. This gives $\ord_i f \geq 1$. We now calculate the derivative $f'$ of $f$. Note that
$$
[n]' = \frac{n(q^{n-1} + q^{-n-1})(q-q^{-1}) - (1+q^{-2})(q^n-q^{-n})}{(q-q^{-1})^2} .
$$
vanishes when $q=\sqrt{-1}$ and $n$ is odd, since both $q^{n-1}+q^{-n-1}$ and $1+q^{-2}$ do. Therefore the derivatives of $\prod [x_i]$ and $\prod [y_j]$ both vanish at $q=\sqrt{-1}$ and hence $f'(\sqrt{-1})=0$. Therefore $\ord_i f \geq 2$.
\end{proof}
\end{lem}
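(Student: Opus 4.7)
The plan is to establish that the Laurent polynomial $f(q) = \prod_i [x_i] - \prod_j [y_j]$ and its first derivative both vanish at $q = \sqrt{-1}$, which immediately yields $\ord_i f \geq 2$. I would first compute $[2k+1](\sqrt{-1})$ directly from the definition $[n] = (q^n - q^{-n})/(q - q^{-1})$: at $q = \sqrt{-1}$ the denominator becomes $2\sqrt{-1}$, while the numerator evaluates to $(\sqrt{-1})^{2k+1} - (\sqrt{-1})^{-(2k+1)} = 2(-1)^k \sqrt{-1}$, giving $[2k+1](\sqrt{-1}) = (-1)^k$. Multiplying over the $x_i$'s yields $\prod_i [x_i](\sqrt{-1}) = (-1)^{\frac{1}{2}\sum(x_i-1)}$, and similarly for the $y_j$'s. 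The congruence hypothesis $\sum(y_j - 1) \equiv \sum(x_i - 1) \pmod 4$ is exactly what forces these two signs to coincide, so $f(\sqrt{-1}) = 0$.

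For the derivative, the strategy is to show that $[n]'(\sqrt{-1}) = 0$ whenever $n$ is odd; then by the product rule the derivatives of both $\prod_i [x_i]$ and $\prod_j [y_j]$ vanish at $\sqrt{-1}$, and so does $f'(\sqrt{-1})$. Writing $[n]'$ via the quotient rule produces a numerator with two summands: one carries the factor $(1 + q^{-2})$, which vanishes at $q = \sqrt{-1}$, and the other carries the factor $q^{n-1} + q^{-n-1}$. For odd $n = 2k+1$ this second factor equals $(\sqrt{-1})^{2k} + (\sqrt{-1})^{-(2k+2)} = (-1)^k + (-1)^{-(k+1)} = 0$. Hence both summands vanish simultaneously for odd $n$, and $[n]'(\sqrt{-1}) = 0$ follows.

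The argument is almost entirely mechanical, so the main obstacle to anticipate is really a bookkeeping one: verifying that the two terms arising from differentiating $[n]$ both vanish at $\sqrt{-1}$ precisely when $n$ is odd, and that the hypothesis is used in exactly the right modular form (mod $4$, not mod $2$) to align the two signs in the zeroth-order step. Once these are in place, the proof assembles itself and gives the desired estimate $\ord_i f \geq 2$.
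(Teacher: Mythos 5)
Your proposal is correct and follows essentially the same route as the paper: evaluate $[2k+1]$ at $q=\sqrt{-1}$ to get $(-1)^k$ and use the mod $4$ hypothesis to cancel the two signs, then show $[n]'(\sqrt{-1})=0$ for odd $n$ via the quotient rule (both the factor $1+q^{-2}$ and the factor $q^{n-1}+q^{-n-1}$ vanish there) and conclude by the product rule. All the computations you outline check out, so nothing further is needed.
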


To conclude the proof of Lemma~\ref{lem:block} we must verify that 
$$
\sum_j (2k-\triangle_j-1) \equiv 2k + \sum_i (\Box_i-2k) \ ({\rm mod}\ 4)
$$
and apply Lemma~\ref{lem:bad_case}. This is equivalent to $\sum_j \triangle_j \equiv \sum_i \Box_i$ which is true since actually $\sum_j \triangle_j = \sum_i \Box_i$.
\end{proof}

\section{More lower bounds for the order at $q=A^2=i$}\label{sec:lower_bounds}

In this section we investigate more lower bounds for the order at $q=A^2=i$ of the Kauffman bracket.

\begin{prop}\label{prop:1-g}
Let $L$ be link in $\#_g(S^1\times S^2)$. Then
$$
\ord_i \langle L \rangle \geq 1-g ,
$$
and this estimation can be sharp.
\begin{proof}
The Kauffman bracket $\langle L \rangle $ is a linear combination of diagrams in the punctured disk $S_{(g)}$ without crossings and homotopically trivial components
$$
\langle L \rangle = \sum_j \lambda_j \langle D_j \rangle
$$
(see Proposition~\ref{prop:state_sum}). The order at $q=i$ of $\langle L \rangle$ is bigger equal than the one of the minimal order of the summands. The coefficients $\lambda_j$'s are integral Laurent polynomials, hence they can have only non negative order at $q=i$. We get a natural shadow $X_j$ of the link given by $D_j$ with $1+g$ that has a number of boundary components equal to $1+g+k_j$, where $k_j$ is the number of components of $D_j$ (see Remark~\ref{rem:sh_for_br}). The shadow $X_j$ is obtained by attaching to the punctured 2-disk an annulus to each component of the diagram and giving to all the regions gleam $0$. We have that $\langle D_j \rangle$ is equal to the Kauffman bracket of the boundary of $X_j$ with all the components corresponding to the ones of $D_j$ colored with $1$ and the other ones with $0$. 

The admissible colorings of $X_j$ that extend the one of the boundary define a unique (probably disconnected) surface $S_j$ bounded by the boundary components corresponding to the ones of $D_j$ (see Proposition~\ref{prop:Z_2_tr} and Corollary~\ref{cor:unique_sur}). We have that $\ord_i \langle D_j\rangle$ is bigger equal than the Euler characteristic of $S_j$. The surface $S_j$ is diffeomorphic to a sub-surface of $S_{(s)}$ that is bounded by $D_j$. Since $D_j$ has no homotopically trivial components, the lowest Euler characteristic of one such sub-surface is exactly $1-g$. It is reached by a smaller 2-disk with $g$ holes surrounded by some annuli. Hence $\langle L \rangle \geq 1-g$.
\end{proof}
\end{prop}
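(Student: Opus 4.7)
The plan is to combine the state-sum expansion of the Kauffman bracket with the general lower bound provided by Theorem~\ref{theorem:Eisermann_gen} (or, equivalently, by the shadow formula applied to a particularly simple shadow). First I would pick a diagram $D \subset S_{(g)}$ of $L$ for some e-shadow and apply Proposition~\ref{prop:state_sum} to write
$$\langle L \rangle \;=\; \sum_s A^{\sum_i s(i)}(-A^2-A^{-2})^{sD}\langle D_s \rangle,$$
where each $D_s$ is a diagram in $S_{(g)}$ with no crossings and no homotopically trivial components. Since the coefficients $A^{\sum_i s(i)}(-A^2-A^{-2})^{sD}$ lie in $\mathbb{Z}[A,A^{-1}]$, they have order $\geq 0$ at $q=i$, so it suffices to bound $\mathrm{ord}_i\langle D_s\rangle$ from below by $1-g$ for every state $s$.

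Next I would bound each $\langle D_s\rangle$ individually. Following Remark~\ref{rem:sh_for_br}, I attach to $S_{(g)}$ an annulus along each component of $D_s$, give every region gleam $0$, and obtain a shadow $X_s$ that collapses onto a graph (its 4-dimensional thickening is $W_g$); the boundary components coming from $D_s$ are colored with $1$, and the outer boundary components of $S_{(g)}$ are colored with $0$. The odd sub-link here is exactly the link $L_s \subset \#_g(S^1 \times S^2)$ represented by $D_s$, and by Corollary~\ref{cor:unique_sur} there is a unique surface $S \subset X_s$ bounded by $L_s$, obtained as a sub-surface of $S_{(g)}$ bounded by $D_s$. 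Applying Theorem~\ref{theorem:Eisermann_gen} (or directly Theorem~\ref{theorem:order}, noting that there are no red vertices since $L_s$ is a link) yields
$$\mathrm{ord}_i \langle D_s\rangle \;\geq\; \chi(S).$$

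The key remaining step, and the main (though modest) obstacle, is to show that any sub-surface of $S_{(g)}$ whose boundary has no homotopically trivial components has Euler characteristic at least $1-g$. This is a purely topological lemma: a compact sub-surface $S' \subset S_{(g)}$ with essential boundary components is, up to isotopy, a union of disks-with-holes and annuli whose combined Euler characteristic is minimized by a single disk with $g$ holes (surrounded by annular regions), giving $\chi(S') \geq 1-g$. Combining these steps gives $\mathrm{ord}_i\langle L\rangle \geq 1-g$.

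Finally, for sharpness I would point to Example~\ref{ex:nodo}: the knot $K \subset \#_g(S^1\times S^2)$ in Fig.~\ref{figure:knot_ribbon} was computed to have $\mathrm{ord}_i\langle K\rangle = 1-g$, which shows that the bound cannot be improved. This completes the proposed proof.
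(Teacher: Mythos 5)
Your proposal is correct and follows essentially the same route as the paper: expand $\langle L\rangle$ via the state sum of Proposition~\ref{prop:state_sum}, bound each crossingless summand $\langle D_s\rangle$ from below by the Euler characteristic of the unique surface in the shadow $X_s$ of Remark~\ref{rem:sh_for_br}, and observe that a sub-surface of $S_{(g)}$ with essential boundary has $\chi \geq 1-g$. The only difference is cosmetic — you cite Theorem~\ref{theorem:order} explicitly where the paper states the bound directly — and you additionally justify sharpness via Example~\ref{ex:nodo}, which the paper's proof leaves implicit.
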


\begin{quest}
Let $L$ be a link in $\#_g(S^1\times S^2)$. Let $g'\leq g$ be the minimal number of holes of a punctured disk in $S_{(g)}$ that contains a diagram of $L$ ($g'$ is the minimal number such that $L$ can be seen as a link in $\#_{g'}(S^1\times S^2)$, see Remark~\ref{rem:tensor}, Definition~\ref{defn:split_homotopic_genus} and Proposition~\ref{prop:homot_genus}). Let $S\rightarrow \#_g(S^1\times S^2)$ be a ribbon surface without components without boundary and with biggest Euler characteristic that is bounded by $L$. Is the order at $q=i$ of $\langle L \rangle$ equal to the maximum between $1-g'$ and the Euler characteristic of $S$:
$$
\ord_i \langle L \rangle = \max\{ 1-g', \chi(S)\} \ \ ?
$$
\end{quest}

More in general we have the following:
\begin{prop}
Let $L\subset \#_g(S^1\times S^2)$ be a colored framed link. If at least one component of $L$ has an odd color then
$$
\ord_i \langle L \rangle \geq 1-g .
$$
\begin{proof}
We prove it by induction on the maximum color $c$ on $L$. If $c=1$, this is the case of Proposition~\ref{prop:1-g}. If some component $K$ of $L$ has a color $c>1$, we modify $K$ via the well known skein move shown in Fig.~\ref{figure:Cheb} that takes place in a solid torus neighborhood of $K$. Each of the new two addenda is a colored link with at least one odd-colored component. We perform this move on all components with maximum color $c$ and we conclude by induction since the order at a point is at least the lowest order of the summands.
\end{proof}
\end{prop}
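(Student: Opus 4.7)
The plan is to proceed by induction on $c$, the maximum color appearing on any component of $L$, following the scheme sketched in the text but being careful to check that the odd-color hypothesis is preserved at each step. The base case $c=1$ is immediate: every component of $L$ is colored with $0$ or $1$, the color-$0$ components may be deleted without changing $\langle L\rangle$, and what remains is a framed link colored uniformly with $1$, to which Proposition~\ref{prop:1-g} applies to yield $\ord_i\langle L\rangle\geq 1-g$.

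For the inductive step, suppose $c\geq 2$ and pick a component $K\subset L$ whose color is $c$. Applying the Chebyshev-type identity of Fig.~\ref{figure:Cheb} inside a solid-torus neighborhood of $K$ expresses
\[
\langle L\rangle \;=\; \langle L'_1\rangle \,-\, \langle L'_2\rangle,
\]
where $L'_1$ is obtained from $L$ by replacing $K$ with two parallel copies colored $1$ and $c-1$, and $L'_2$ is obtained from $L$ by recoloring $K$ with $c-2$ (a color $0$ meaning the component is simply erased). In both summands the multiset of colors has been modified only on the single component $K$, and in each one the contribution from $K$ now uses only colors strictly smaller than $c$. Thus, inducting on the lexicographic pair (maximum color $c$, number of components attaining that maximum), both $L'_1$ and $L'_2$ are strictly smaller than $L$.

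The crucial point to verify is that the hypothesis \emph{at least one component has odd color} is inherited by both $L'_1$ and $L'_2$. I split into cases according to the parity of $c$. If $c$ is odd, then $K$ itself was the (or an) odd-colored component; after the move, $L'_1$ contains a parallel copy of $K$ colored $1$, which is odd, and $L'_2$ contains a copy of $K$ colored $c-2$, again odd. If $c$ is even, then some component other than $K$ carries an odd color, and since the move only modifies the neighborhood of $K$ this odd component is preserved intact in both $L'_1$ and $L'_2$. In every case the inductive hypothesis applies to both summands, yielding $\ord_i\langle L'_j\rangle\geq 1-g$ for $j=1,2$.

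Finally, since $\ord_i(f-g)\geq\min\{\ord_i f,\ord_i g\}$, we conclude $\ord_i\langle L\rangle\geq 1-g$, completing the induction. The only step that required genuine care was the parity analysis ensuring the odd-color hypothesis survives the Chebyshev expansion; the rest is a routine induction, with Proposition~\ref{prop:1-g} providing the single nontrivial geometric ingredient at the base.
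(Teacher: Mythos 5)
Your proof is correct and follows essentially the same route as the paper: induction on the maximum color, base case from Proposition~\ref{prop:1-g}, and the Chebyshev recursion of Fig.~\ref{figure:Cheb} to reduce to summands of smaller complexity, using $\ord_i(f+g)\geq\min\{\ord_i f,\ord_i g\}$. The parity case analysis you spell out (odd $c$ gives the color-$1$ and color-$(c-2)$ copies as odd components; even $c$ means the odd component lies elsewhere and is untouched) is exactly the content of the paper's unjustified assertion that ``each of the new two addenda is a colored link with at least one odd-colored component,'' so your version is, if anything, slightly more complete.
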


\begin{rem}
Proposition~\ref{prop:1-g} does not hold for any knotted framed colored trivalent graph with at least one odd color in the obvious sense. In fact the graph in Fig.~\ref{figure:graph_tetr} lies in $S^3$ and has order at $q=i$ of the Kauffman bracket equal to $-1$.
\end{rem}

\begin{figure}[htbp]
\begin{center}
\includegraphics[scale=0.5]{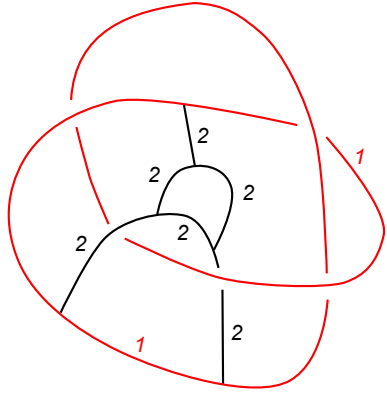}
\end{center}
\caption{A knotted colored framed trivalent graph in $S^3$ with order at $q=A^2=i$ of the Kauffman bracket equal to $-1$ and $r=6$, $\langle D \rangle = (-A^{24} +3A^{18} -A^{16} -A^{15} -2A^{14} +A^{13} -3A^{11} +3A^{10} +2A^9 +4A^8 -3A^7 -A^6 +A^5 -3A^4 -A^3 -3A^2 -2) /(A^{11} +A^7)$}
\label{figure:graph_tetr}
\end{figure}

\begin{quest}
Does Proposition~\ref{prop:1-g} hold also for any knotted framed trivalent colored graph $G\subset \#_g(S^1\times S^2)$ with at least one odd color in the following sense:
$$
\ord_i \langle G \rangle + \frac r 2 \geq 1-g ?
$$
Here $r$ is the number of red vertices of $G$.
\end{quest}

\begin{quest}\label{quest:no_graphs}
Let $G$ be a knotted colored framed trivalent graph in $\#_g(S^1\times S^2)$ and let $L$ be its odd sub-link. Is it always true that $\ord_i \langle G \rangle + \frac r 2$  is bigger equal than the order at $q=i$ of $\langle L \rangle$ where the components of $L$ are colored with $1$:
$$
\ord_i \langle G \rangle + \frac r 2 \geq \ord_i \langle L \rangle \ \ ?
$$
\end{quest}

\begin{rem}
We have to put ``bigger equal'' instead of ``equal'' in Question~\ref{quest:no_graphs} because the graph in Fig.~\ref{figure:graph_tetr} is an example where the equality does not hold. It is a graph in $S^3$ whose odd sub-link is the trefoil knot colored with $1$, the number of red vertices is $6$ and the order at $q=i$ of the Kauffman bracket is $-1$ ($-1+\frac 6 2 > 1$).
\end{rem}

\section{Upper bounds for the order at $q=A^2=i$}\label{sec:upper_bounds}

In this section we provide some upper bounds of the order at $q=A^2=i$ of the Kauffman bracket.

The following has been proved by Eisermann \cite{Eisermann} for links in $S^3$, the same proof applies to the case of links in $S^1\times S^2$.
\begin{prop}\label{prop:upper_bound}
Let $L$ be a $k$-component link in $S^1\times S^2$ or in $S^3$. If $L$ is homotopically trivial then
$$
\ord_i \langle L \rangle \leq k .
$$
\begin{proof}
 Evaluate the Kauffman bracket in $A=-1$. By Proposition~\ref{prop:no_0Kauff} $\langle L \rangle|_{A=-1} = (-2)^k$. We know that $\langle L \rangle = (-A^2 -A^{-2})^{\ord} \cdot f$, where $\ord$ is the order at $q=A^2=i$ of $\langle L \rangle$ and $f$ is a rational function with null order at $q=A^2=i$. Therefore
$$
(-2)^k = \langle L \rangle|_{A=- 1}  =  (-2)^\ord f(- 1) .
$$
By Proposition~\ref{prop:state_sum} $\langle L \rangle$ is an integral Laurent polynomial and hence $f$ is so. Thus $f(-1)$ is an integer number and $(-2)^\ord$ divides $(-2)^k$ in $\Z$, namely $\ord \leq k$.
\end{proof}
\end{prop}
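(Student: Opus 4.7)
The plan is to combine two facts that appear earlier in the paper. First, by Proposition~\ref{prop:Laurent_pol} (and the trivial $S^3$ case), $\langle L\rangle$ is an integral Laurent polynomial in $\Z[A,A^{-1}]$. Second, by Proposition~\ref{prop:no_0Kauff}, since $L$ is homotopically trivial and has $k$ components, evaluation at $A=-1$ gives
\[
\langle L\rangle|_{A=-1} = (-2)^k.
\]

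Next I would factor out the vanishing at $q=A^2=i$. The Laurent polynomial $-A^2-A^{-2}$ is (up to units) a generator of the ideal of Laurent polynomials vanishing at $A^2=i$; in $\Z[A,A^{-1}]$ we can write uniquely
\[
\langle L\rangle = (-A^2-A^{-2})^{n}\cdot f(A),
\]
where $n=\ord_i\langle L\rangle$ and $f\in\Z[A,A^{-1}]$ satisfies $f|_{A^2=i}\neq 0$ (and in particular $f$ is not divisible by $-A^2-A^{-2}$). The key point here is that $f$ must have integer coefficients, which follows from the fact that $\Z[A,A^{-1}]$ is a UFD and $-A^2-A^{-2}$ is primitive.

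Finally I would evaluate at $A=-1$. Since $(-A^2-A^{-2})|_{A=-1}=-2$ and $f(-1)\in\Z$, substituting yields
\[
(-2)^k = (-2)^n\cdot f(-1).
\]
Hence $2^n$ divides $2^k$ in $\Z$, which forces $n\leq k$, i.e.\ $\ord_i\langle L\rangle\leq k$. The only mild subtlety (and the main thing to justify carefully) is the claim that the quotient $f=\langle L\rangle/(-A^2-A^{-2})^n$ remains in $\Z[A,A^{-1}]$ rather than merely in $\mathbb{Q}(A)$, which is needed to ensure $f(-1)$ is an integer; this is what makes the divisibility argument in $\Z$ work.
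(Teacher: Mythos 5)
Your proposal is correct and follows essentially the same route as the paper: evaluate at $A=-1$ to get $(-2)^k$ via Proposition~\ref{prop:no_0Kauff}, write $\langle L\rangle=(-A^2-A^{-2})^{\ord}\cdot f$ with $f$ still an integral Laurent polynomial (using Proposition~\ref{prop:Laurent_pol}), and deduce $\ord\leq k$ from divisibility of $(-2)^k$ by $(-2)^{\ord}$ in $\Z$. Your extra remark justifying that $f$ stays in $\Z[A,A^{-1}]$ (primitivity of $-A^2-A^{-2}$ in the UFD $\Z[A,A^{-1}]$) is a welcome clarification of the step the paper passes over quickly.
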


\begin{quest}
Does Proposition~\ref{prop:upper_bound} hold also for links in $\#_g(S^1\times S^2)$ with $g\geq 2$?
\end{quest}

\begin{cor}
The order at $q=i$ of the Kauffman bracket of every $k$-components ribbon link $L$ in $S^1\times S^2$ or in $S^3$ is $k$
$$
\ord_i \langle L \rangle = k .
$$
\begin{proof}
It follows from Proposition~\ref{prop:upper_bound} and Corollary~\ref{cor:lower_bound_ribbon_link}.
\end{proof}
\end{cor}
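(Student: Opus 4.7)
The plan is to combine the two one-sided estimates that have just been established in the preceding sections: the lower bound coming from the generalized Eisermann theorem and the upper bound coming from evaluation at $A=-1$. Concretely, I would argue that $\ord_i \langle L\rangle = k$ by showing $\geq k$ and $\leq k$ separately.

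First, for the lower bound, I would invoke Corollary~\ref{cor:lower_bound_ribbon_link}: since $L$ is a $k$-component ribbon link (in $S^3$ or in $S^1\times S^2$, the latter being $\#_g(S^1\times S^2)$ with $g\leq 1$), that corollary directly gives $\ord_i \langle L\rangle \geq k$. No further work is needed here, as the two ambient manifolds $S^3$ and $S^1\times S^2$ are exactly covered by the range of that corollary.

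Second, for the upper bound, I would use Proposition~\ref{prop:upper_bound}. The hypothesis there is that $L$ be homotopically trivial. So the only small step to make is to observe that every ribbon link is homotopically trivial: by Definition~\ref{defn:ribbon_link} (and Remark~\ref{rem:ribbon_g}) the components of $L$ bound a collection of immersed disks $D_1\sqcup\cdots\sqcup D_k$ in the ambient 3-manifold, and collapsing each $D_j$ radially onto its center gives the required null-homotopy of each component that is even globally compatible (the images collapse to $k$ points). Once homotopic triviality is in place, Proposition~\ref{prop:upper_bound} yields $\ord_i \langle L\rangle \leq k$.

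There is essentially no obstacle in this argument; both needed inputs have already been proved in the preceding sections, and the only verification is the elementary observation ribbon $\Rightarrow$ homotopically trivial. Combining the two inequalities gives $\ord_i \langle L\rangle = k$, which is the claim.
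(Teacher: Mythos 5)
Your proof is correct and is exactly the paper's argument: the lower bound from Corollary~\ref{cor:lower_bound_ribbon_link} combined with the upper bound from Proposition~\ref{prop:upper_bound}, the latter applying because a ribbon link is homotopically trivial (a fact the paper also notes when discussing Example~\ref{ex:nodo}). Your explicit justification of the ribbon $\Rightarrow$ homotopically trivial step is a welcome small addition but does not change the route.
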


The following slightly generalizes the previous result:
\begin{prop}\label{prop:upper_bound2}
Let $L$ be a homotopically trivial, colored, framed link in $S^1\times S^2$ or in $S^3$ and let $c_1,\ldots,c_k$ be the odd colors of $L$. If $c_j=2^{d_j}m_j -1$ with $m_j \in 2\Z+1$ then
$$
\ord_i \langle L \rangle \leq \sum_{j=1}^k d_j .
$$
\begin{proof}
With some little complications that we are going to explain, the topics of the proof of Proposition~\ref{prop:upper_bound} work here too.
The evaluation of the Kauffman bracket in $-1$ is still an invariant not distinguishing the over/underpasses. In fact we can put the projectors outside the crossings getting local situations consisting of some parallel straight strands passing over another set of parallel straight strands. Here we apply the first skein relation to show the equivalence of the two skeins once evalued in $A=-1$. Therefore $\langle L \rangle|_{A=-1}$ is equal to the evaluation in $A=-1$ of the Kauffman bracket of the colored unlink colored with $c_1, c_2, \ldots , c_k$. We used the fact that $\cerchio_1|_{A=-1} = -2$. Now we have $\cerchio_n|_{A=-1} = (-1)^n(n+1)$. Hence 
$$
\langle L \rangle|_{A=-1} =  \prod_{j=1}^k (-1)^{c_j}(c_j +1) .
$$ 
Hence
$$
\langle L \rangle|_{A=-1} = 2^{\sum_{j=1}^k d_j} n ,
$$
where $n$ is an odd integer.
\end{proof}
\end{prop}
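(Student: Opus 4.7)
My proof proposal follows exactly the strategy of Proposition~\ref{prop:upper_bound}, using the evaluation at $A=-1$ as the critical tool, with minor additions to handle the projectors and the odd factor $m_j$. The backbone is this: \emph{if $\langle L\rangle=(-A^{2}-A^{-2})^{\mathrm{ord}}\cdot f$ with $f(-1)$ a nonzero integer, then $2^{\mathrm{ord}}\mid \langle L\rangle|_{A=-1}$ in $\Z$, so the 2-adic valuation of $\langle L\rangle|_{A=-1}$ is an upper bound for $\mathrm{ord}=\ord_{i}\langle L\rangle$.}

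\textbf{Step 1: Evaluation at $A=-1$ is insensitive to over/under information.} First I would show that $\langle L\rangle|_{A=-1}$ depends only on the planar projection of $L$ together with the colors. Isotope a diagram $D$ so that the $c_j$ parallel copies representing a component $K_j$ stay together between crossings, with the Jones--Wenzl projector $f^{(c_j)}$ inserted in a small box on each bundle away from all crossings. A bundle crossing of a $c_j$-bundle with a $c_{j'}$-bundle is then a grid of $c_j c_{j'}$ ordinary single-strand crossings. At each such strand crossing, the first skein relation gives
\[
\left\langle \pic{1.2}{0.3}{incrociop.eps}\right\rangle\bigg|_{A=-1}=-\left\langle\pic{1.2}{0.3}{Acanalep.eps}\right\rangle\bigg|_{A=-1}-\left\langle\pic{1.2}{0.3}{Bcanalep.eps}\right\rangle\bigg|_{A=-1}=\left\langle\pic{1.2}{0.3}{incrociop2.eps}\right\rangle\bigg|_{A=-1},
\]
so every over/under choice can be flipped without changing the evaluation. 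Repeating this at each strand crossing transforms $D$ into a diagram of the $k$-component colored unlink (homotopic triviality is needed precisely to ensure that after unlinking we end up with an unlink, with each component bounding a disk).

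\textbf{Step 2: Explicit product formula.} At the colored unlink, each colored component contributes $\cerchio_{c_j}$, and
\[
\cerchio_{n}\big|_{A=-1}=(-1)^{n}[n+1]\big|_{q=1}=(-1)^{n}(n+1),
\]
so
\[
\langle L\rangle|_{A=-1}=\prod_{j}(-1)^{c_j}(c_j+1).
\]
For an even color $c_j$, the factor $c_j+1$ is odd; for an odd color $c_j=2^{d_j}m_j-1$ with $m_j$ odd, $c_j+1=2^{d_j}m_j$, contributing exactly $2^{d_j}$ to the 2-adic valuation. Hence $\langle L\rangle|_{A=-1}=2^{\,\sum_{j}d_j}\cdot N$ for some odd integer $N$.

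\textbf{Step 3: Converting the 2-adic valuation into the order bound.} By Corollary~\ref{cor:col_link_int_pol}, $\langle L\rangle\in\Z[A,A^{-1}]$ for colored links in $S^{3}$ or $S^{1}\times S^{2}$. Since $-A^{2}-A^{-2}$ has leading and trailing coefficients $\pm 1$, writing $\langle L\rangle=(-A^{2}-A^{-2})^{\mathrm{ord}}\,f$ with $\mathrm{ord}=\ord_{i}\langle L\rangle$ forces $f\in\Z[A,A^{-1}]$ as well, and $f$ is not divisible by $-A^{2}-A^{-2}$ so $f(-1)$ is a nonzero integer. Evaluating at $A=-1$ yields
\[
2^{\sum_{j}d_j}\cdot N=\langle L\rangle|_{A=-1}=(-2)^{\mathrm{ord}}\cdot f(-1).
\]
Taking 2-adic valuations, $\sum_{j}d_j=\mathrm{ord}+v_{2}(f(-1))\geq \mathrm{ord}$, which is the desired inequality.

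\textbf{Expected obstacle.} The only delicate point is Step 1: one must be sure that the presence of the projectors $f^{(c_j)}$ does not interact badly with the crossing changes. This is fine because the skein identity at $A=-1$ is local at a single crossing and the projectors can always be pushed into bundle segments that lie away from the crossing region; everything else is bookkeeping. The homotopic triviality hypothesis enters exactly here, guaranteeing that after all over/under flips the underlying link becomes the unlink (so that the product formula of Step 2 applies), which is also precisely where the argument would break for general links in $\#_{g}(S^{1}\times S^{2})$ with $g\geq 2$, where a corresponding integrality of $\langle L\rangle$ is also not available.
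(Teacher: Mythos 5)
Your proposal is correct and follows essentially the same route as the paper's own proof: evaluate at $A=-1$, use the first skein relation to show crossing changes are invisible there (pushing the Jones--Wenzl projectors away from the crossings), reduce via homotopic triviality to the colored unlink, compute $\cerchio_{c_j}|_{A=-1}=(-1)^{c_j}(c_j+1)$, and extract the bound from the $2$-adic valuation as in Proposition~\ref{prop:upper_bound}. Your Step 3 merely makes explicit the integrality of the cofactor $f$ that the paper leaves implicit.
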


\begin{quest}
Can we improve Proposition~\ref{prop:upper_bound2} saying that
$$
\ord_i \langle L \rangle \leq k \ \ ?
$$
\end{quest}

\begin{quest}
Does Proposition~\ref{prop:upper_bound2} hold also for homotopically trivial, colored, framed trivalent graphs in the following sense:
$$
\ord_i \langle G \rangle + \frac r 2 \leq \sum_j d_j \ \ ?
$$
Here $r$ is the number of red vertices and the $d_j$'j are the numbers related to the odd colors that are described in the proposition.
\end{quest}

\begin{rem}
It seems to be impossible to have a sharper upper bound of the order at $q=i$ of the Kauffman bracket of a link that is related just to easy properties of the link (for instance number of components and homotopy class). In fact we can find a knot in $S^1\times S^2$ with homotopy class $\pm 4 \in \pi_1(S^1\times S^2) \cong \Z$ whose order at $q=i$ is $1$. This is pictured in Fig.~\ref{figure:ex_knot1} and its Kauffman bracket is $2A^3-A^{-1}$. Moreover there are $\Z_2$-homologically trivial links in $S^1\times S^2$ whose Kauffman bracket is $0$ (see Fig.~\ref{figure:ex_0Kauf}), hence $\ord_i \langle L \rangle = \infty$.
\end{rem}

\begin{figure}[htbp]
\begin{center}
\includegraphics[scale=0.55]{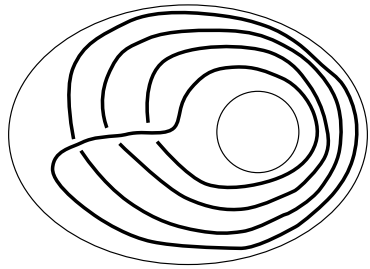}
\end{center}
\caption{A homotopically non trivial knot in $S^1\times S^2$ whose order at $q=i$ is $1$.}
\label{figure:ex_knot1}
\end{figure}

\chapter{Table of knots and links in $S^1\times S^2$}\label{chapter:table}

In this chapter we classify the non H-split links in $S^1\times S^2$ (Definition~\ref{defn:split_homotopic_genus} and Proposition~\ref{prop:split}) that are not contained in a 3-ball and have crossing number at most $3$ (Definition~\ref{defn:alt_cr_num}), and we compute some invariants. 

Links are seen up to reflections with respect to a Heegaard torus and reflections with respect to the $S^1$ factor, namely the homeomorphisms $S^1\times S^2\rightarrow S^1\times S^2$ $(e^{i\theta},y)\mapsto (e^{-i\theta}, y)$. This means that two diagrams $D_1,D_2\subset S^1\times [-1,1]$ represent the same object if $D_2$ is equal to $D_1$ after changing all the over/underpasses, and if $D_2$ is the image of $D_1$ under the maps $S^1\times [-1,1]\rightarrow S^1\times [-1,1]$ $(e^{i\theta},t) \mapsto (e^{i\theta},-t)$ and $(e^{i\theta},t) \mapsto (e^{-i\theta}, t)$. 

The computed invariants are: the number of components, the $\Z_2$-homology class $[L]\in H_1(S^1\times S^2;\Z_2)$, if the link is homotopically trivial (there is a continuous map $(S^1\cup \ldots \cup S^1) \times [0,1]\rightarrow S^1\times S^2$ whose restriction to $(S^1\cup \ldots \cup S^1)\times \{0\}$ is the embedding of $L$, and the image of restriction to $(S^1\cup \ldots \cup S^1)\times \{1\}$ is a finite set of points), if the link is alternating (Definition~\ref{defn:alt_cr_num}), the Kauffman bracket (or the Jones polynomial) $\langle D \rangle$, the order at $q=A^2=i$ $\ord_{q=A^2=i} \langle D\rangle$ of the Kauffman bracket, the first homology group of the complement $H_1(S^1\times S^2 \setminus L;\Z)$, a presentation of the fundamental group of the complement $\pi_1(S^1\times S^2\setminus L)$, if the link is hyperbolic (the complement has a complete finite-volume hyperbolic structure), the hyperbolic volume (if it is hyperbolic), (whether there are) some separating surfaces in the complement (incompressible, not parallel to each other and not parallel to the boundary). 

The $\Z_2$-homology class and if the link is homotopically trivial can be easily checked looking at the diagrams (see Proposition~\ref{prop:Z_2_tr}). 

The Kauffman bracket is the one computed using the pictured diagrams, we look at this Laurent polynomial up to multiplications by an integer power of $-A^3$ so that it becomes an invariant of unframed and unoriented links. The Kauffman bracket is sufficient to distinguish the $\Z_2$-homologically trivial links. 

We can say that all the $\Z_2$-homologically trivial links in the list whose pictured diagram is non alternating are actually non alternating by applying Theorem~\ref{theorem:Tait_conj_Jones_g}.

The other (non trivial) invariants are computed using SnapPy. We say that a link is hyperbolic if and only if SnapPy found a geometric triangulation of the complement that has just positive tetrahedra, otherwise we say that it is not hyperbolic. We remind that if SnapPy finds one such triangulation it is extremely probable that the result is correct. For the non separating surfaces we report the result of the command ``.splitting\_surfaces()''.

Although we just consider links with crossing number at most $3$, interesting examples come out. For instance we find a $\Z_2$-homologically trivial knot with null Kauffman bracket ($3_4$), knots and links whose complement has first homology group with torsion ($1_1$, $2_2$, $2_3$, $3_1$, $3_2$, $3_3$, $3_4$, $L3_2$) or with a free part with rank bigger than the number of components ($2_1$). 

In order to be ribbon, a link must be homotopically trivial (and so $\Z_2$-homologically trivial). There are knots that are not homotopically trivial, computing the order at $q=A^2=i$ of the Kauffman bracket of those knots and using Theorem~\ref{theorem:Eisermann_gen} we can conclude that there are no ribbon links in the list. 

Sometimes there are more than one diagram to represent the same link with the same embedding of the annulus. We can show the equivalence of the represented links by Reidemeister moves and the move described in Subsection~\ref{subsec:diag} as shown in Subsection~\ref{subsec:alt_diag_Z_2_non_tr}.

There are three pairs of links in the list whose elements we are not able to say if they are equivalent, in the sense described above, or not: $(2_2, 2_3)$, $(3_5, 3_6)$ and $(L3_3, L3_4)$. They are all $\Z_2$-homologically non trivial. The invariants we computed give the same result on the elements of the pairs (even the same presentation of the fundamental group). Hence probably their complements are diffeomorphic. Unfortunately all the known invariants do not distinguish two links $L,L' \subset S^1\times S^2$ that are related by an orientation preserving diffeomorphim of the ambient manifold $\varphi:S^1\times S^2 \rightarrow S^1\times S^2$, $\varphi(L)=L'$. In particular the complements of $3_5$ and $3_6$ are isometric as hyperbolic manifolds, and it is conjectured that such links are related by a diffeomorphism of $S^1\times S^2$. We count the elements of these pairs separately.

We can find more examples of links in $\#_g(S^1\times S^2)$ and their Kauffman bracket in Section~\ref{sec:Kauf_g}.

Throughout this chapter we present some tables and we need some abbreviations. ``N. comp.'' is for the number of components of the link. On the fields ``$\Z_2$-h.l. tr.'', ``H.t.. tr.'' and ``Alt'', we say ``yes'' if the link is respectively $\Z_2$-homologically trivial, homotopically trivial and alternating, otherwise we say ``no''. ``$\ord_{q=A^2=i}$'' is for the order in $q=A^2=i$ of the Kauffman bracket of the link. We write ``yes'' on the fields ``H.l. tor.'' and ``Big h.l.'', if the first homology group of the complement of the link has respectively torsion and rank bigger than the number of components, otherwise we write ``no''. We write ``yes'' or ``no'' on the field ``Hyp.'' according to the convention about hyperbolicity and SnapPy mentioned above.

\section{Knots}

\subsection{$0_1$}
$\ $\\
\begin{minipage}{0.5\textwidth}
\begin{itemize}

\item{$\Z_2$-homologically trivial: no.}

\item{Homotopically trivial: no.}

\item{Alternating: yes.}

\item{Kauffman bracket:\\
$ \langle D \rangle = 0 $.}

\item{$\ord_{q=A^2=i} \langle D \rangle = \infty $.}

\item{Hyperbolic: no.}

\item{Homology group:\\
$H_1(S^1\times S^2 \setminus K; \Z) = \Z $.}

\item{Fundamental group,\\
$\Z $.}

\item{Separating surfaces: no.}

\end{itemize}
\end{minipage}
\qquad
\begin{minipage}{0.3\textwidth}
\begin{center}
\includegraphics[scale=0.45]{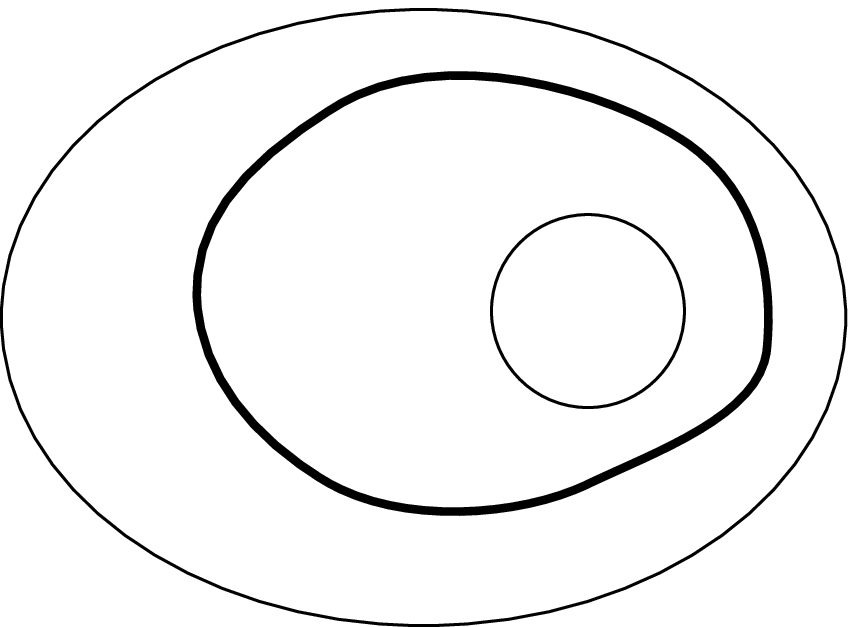} ($0_1$)
\\
\vspace{0.5cm}
\includegraphics[scale=0.45]{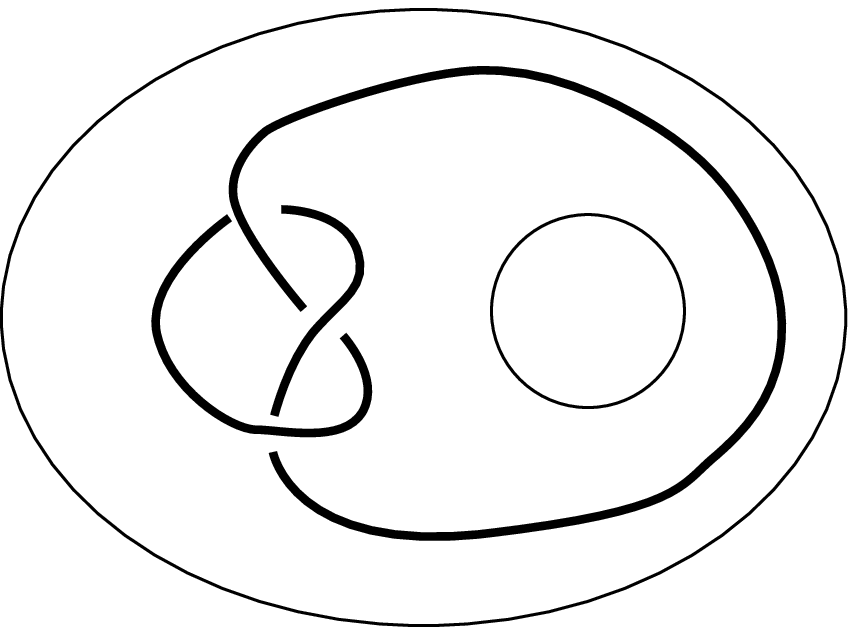} ($0_1$)
\end{center}
\end{minipage}

\subsection{$1_1$}\label{subsec:1_1}
$\ $\\
\begin{minipage}{0.5\textwidth}
\begin{itemize}

\item{$\Z_2$-homologically trivial: yes.}

\item{Homotopically trivial: no.}

\item{Alternating: yes.}

\item{Kauffman bracket:\\
$\langle D \rangle = -A^3$.}

\item{$\ord_{q=A^2=i} \langle D \rangle = 0$.}

\item{Hyperbolic: no.}

\item{Homology group:\\
$H_1(S^1\times S^2 \setminus K; \Z) = \Z_2 \oplus \Z$.}

\item{Fundamental group:\\
generators:\\
$a,b$,\\
relators:\\
$aabb$.}

\item{Separating surfaces: no.}

\end{itemize}
\end{minipage}
\qquad
\begin{minipage}{0.3\textwidth}
\begin{center}
\includegraphics[scale=0.45]{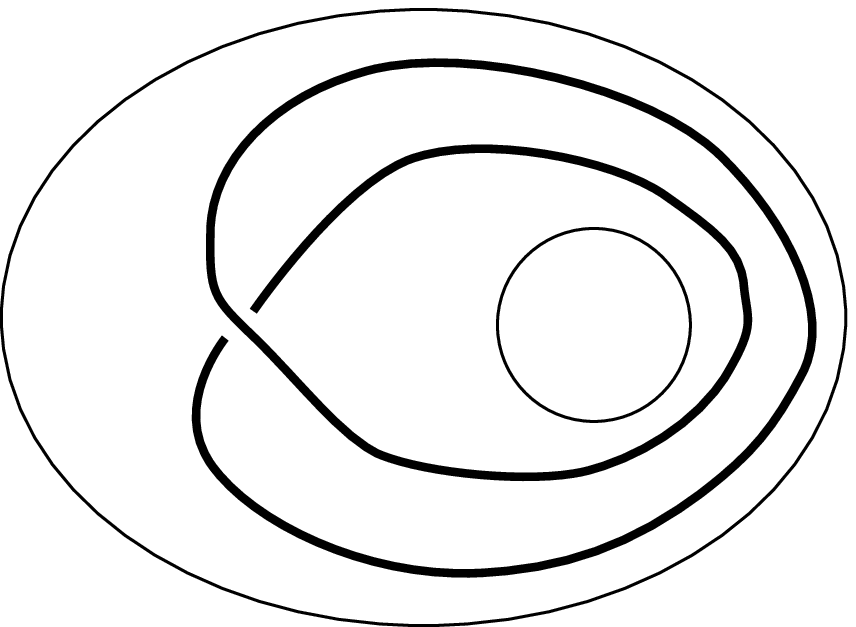} ($1_1$)
\end{center}
\end{minipage}

\subsection{$2_1$}
$\ $\\
\begin{minipage}{0.5\textwidth}
\begin{itemize}

\item{$\Z_2$-homologically trivial: yes.}

\item{Homotopically trivial: yes.}

\item{Alternating: yes.}

\item{Kauffman bracket:\\
$\langle D \rangle = -A^4-A^{-4}$.}

\item{$\ord_{q=A^2=i} \langle D \rangle = 0$.}

\item{Hyperbolic: no.}

\item{Homology group:\\
$H_1(S^1\times S^2 \setminus K; \Z) = \Z^2$.}

\item{Fundamental group,\\
generators:\\
$a,b$,\\
relators:\\
$abbab^{-1}a^{-1}a^{-1}b^{-1}$.}

\item{Separating surfaces: no.}

\end{itemize}
\end{minipage}
\qquad
\begin{minipage}{0.3\textwidth}
\begin{center}
\includegraphics[scale=0.45]{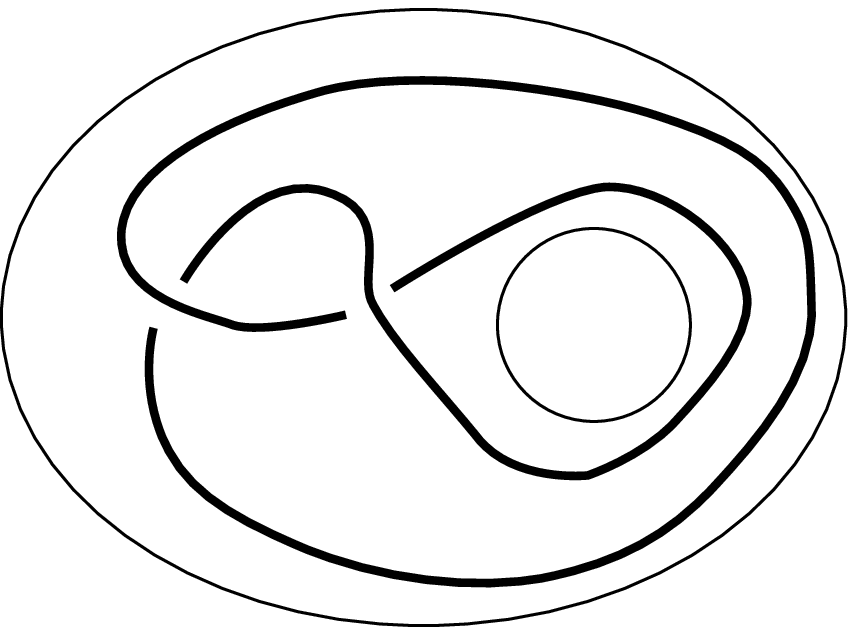} ($2_1$)
\end{center}
\end{minipage}

\subsection{$2_2$ and $2_3$}
We do not know if they are equivalent or not.\\
\begin{minipage}{0.5\textwidth}
\begin{itemize}

\item{$\Z_2$-homologically trivial: no.}

\item{Homotopically trivial: no.}

\item{Alternating: yes for $2_2$, we do not know for $2_3$.}

\item{Kauffman bracket:\\
$\langle D \rangle = 0$.}

\item{$\ord_{q=A^2=i} \langle D \rangle = \infty$.}

\item{Hyperbolic: no.}

\item{Homology group:\\
$H_1(S^1\times S^2 \setminus K; \Z) = \Z_3 \oplus \Z$.}

\item{Fundamental group,\\
generators:\\
$a,b$,\\
relators:\\
$aaabbb$.}

\item{Separating surfaces: no.}

\end{itemize}
\end{minipage}
\qquad
\begin{minipage}{0.3\textwidth}
\begin{center}
\includegraphics[scale=0.45]{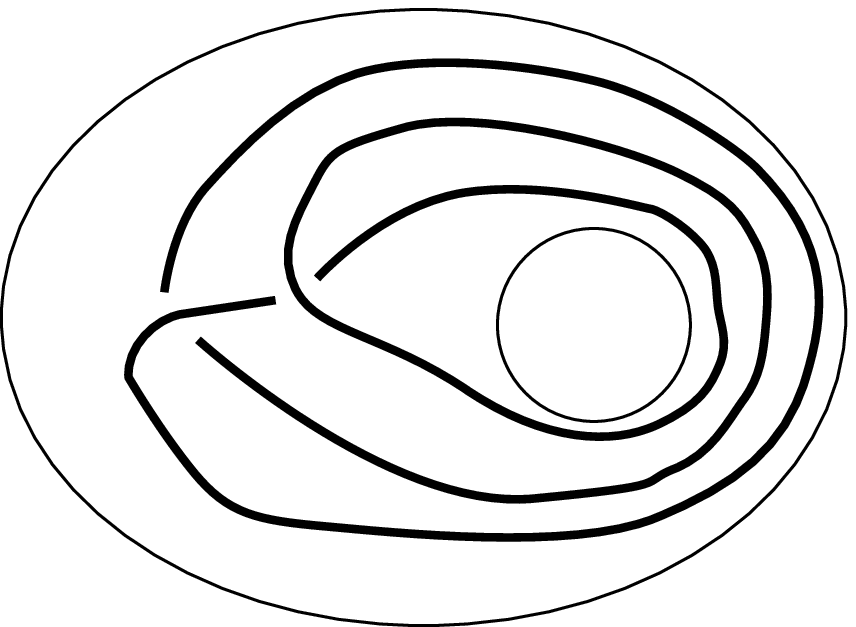} ($2_2$)
\\
\vspace{0.5cm}
\includegraphics[scale=0.45]{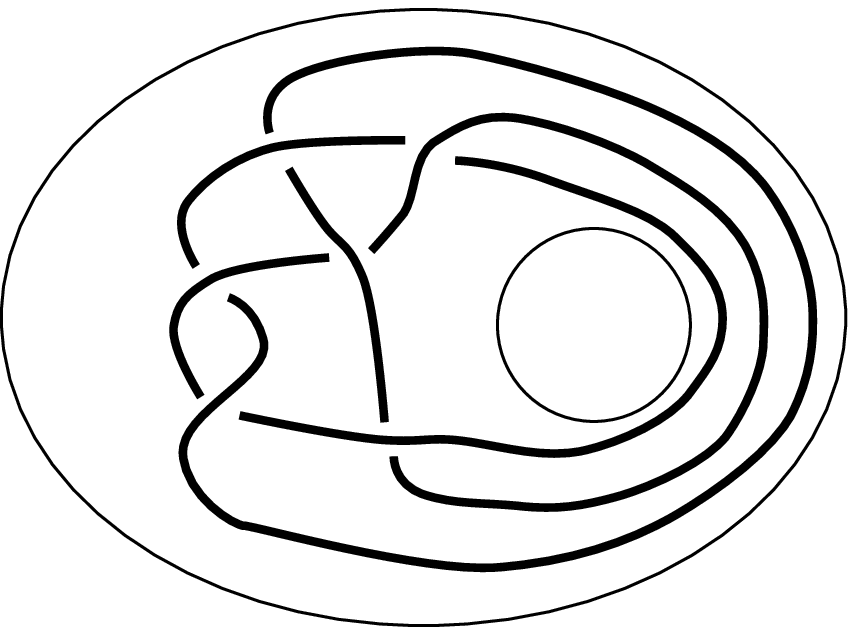} ($2_2$)
\\
\vspace{0.5cm}
\includegraphics[scale=0.45]{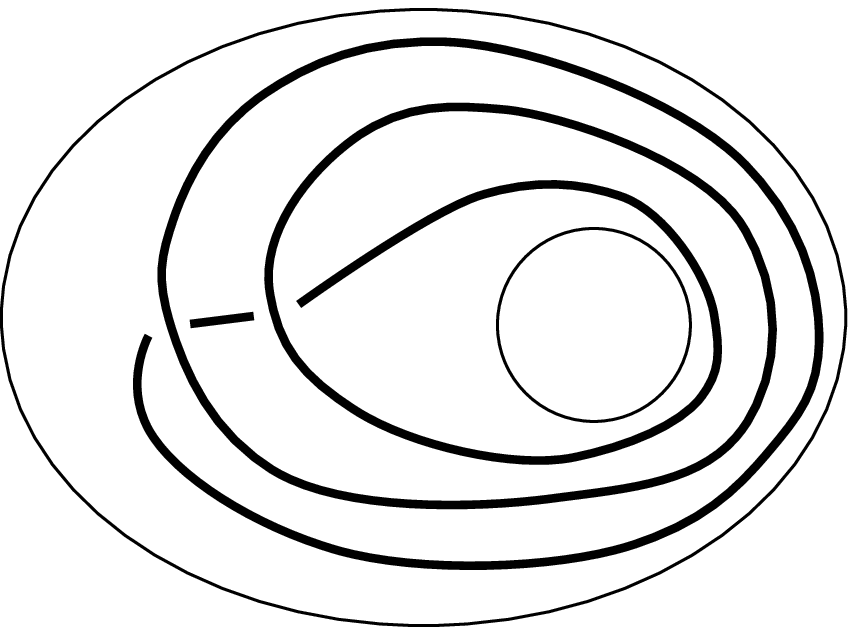} ($2_3$)
\end{center}
\end{minipage}


\subsection{$3_1$}
$\ $\\
\begin{minipage}{0.5\textwidth}
\begin{itemize}

\item{$\Z_2$-homologically trivial: yes.}

\item{Homotopically trivial: yes.}

\item{Alternating: yes.}

\item{Kauffman bracket:\\
$\langle D \rangle = -A^5 -A^{-3} +A^{-7}$.}

\item{$\ord_{q=A^2=i} \langle D \rangle = 0$.}

\item{Hyperbolic: no.}

\item{Homology group:\\
$H_1(S^1\times S^2 \setminus K; \Z) = \Z_2 \oplus \Z$.}

\item{Fundamental group,\\
generators:\\
$a,b$,\\
relators:\\
$aaba^{-1}b^{-1}abbab^{-1}a^{-1}b$.}

\item{Separating surfaces:\\
a two-sided torus.}

\end{itemize}
\end{minipage}
\qquad
\begin{minipage}{0.3\textwidth}
\begin{center}
\includegraphics[scale=0.45]{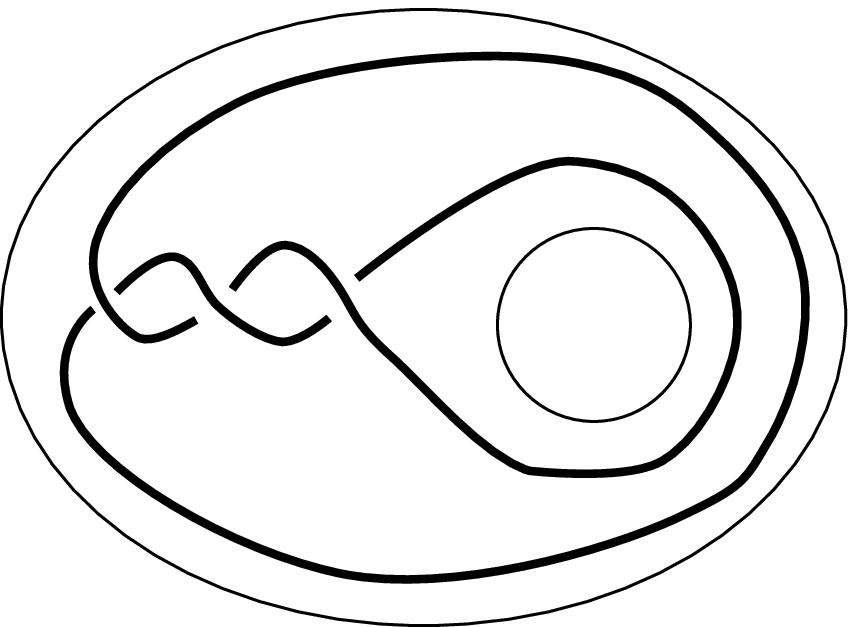} ($3_1$)
\end{center}
\end{minipage}

\subsection{$3_2$}
$\ $\\
\begin{minipage}{0.5\textwidth}
\begin{itemize}

\item{$\Z_2$-homologically trivial: yes.}

\item{Homotopically trivial: no.}

\item{Alternating: yes.}

\item{Kauffman bracket:\\
$\langle D \rangle = A^7 -A^3 +A^{-1} + A^{-5}$.}

\item{$\ord_{q=A^2=i} \langle D \rangle = 0$.}

\item{Hyperbolic: yes.}

\item{Hyperbolic volume:\\
$5.33348956690$}

\item{Homology group:\\
$H_1(S^1\times S^2 \setminus K; \Z) = \Z_4 \oplus \Z$.}

\item{Fundamental group,\\
generators:\\
$a,b$,\\
relators:\\
$aaba^{-1}baabbaaba^{-1}baab^{-1}a^{-1}a^{-1}b^{-1}$.}

\item{Separating surfaces: no.}

\end{itemize}
\end{minipage}
\qquad
\begin{minipage}{0.3\textwidth}
\begin{center}
\includegraphics[scale=0.45]{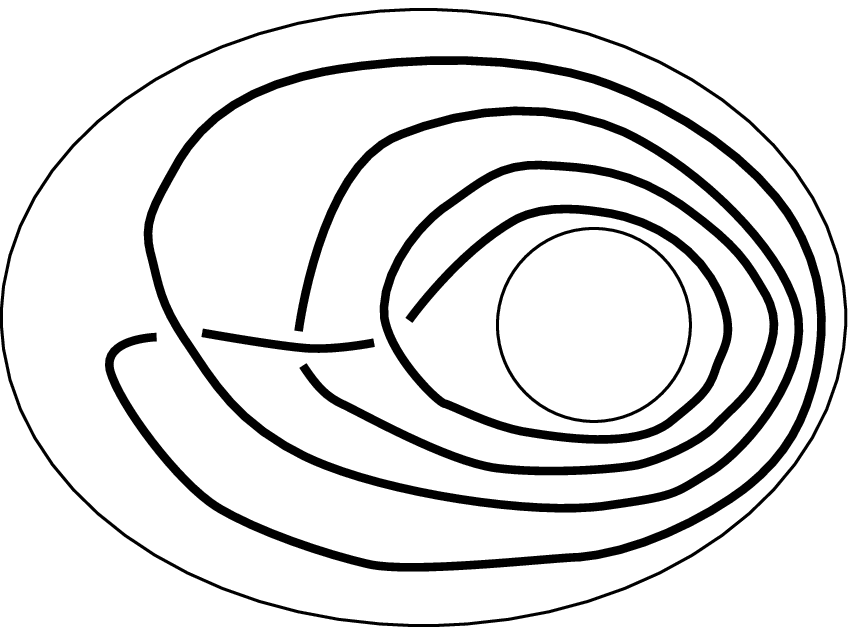} ($3_2$)
\end{center}
\end{minipage}

\subsection{$3_3$}
$\ $\\
\begin{minipage}{0.5\textwidth}
\begin{itemize}

\item{$\Z_2$-homologically trivial: yes.}

\item{Homotopically trivial: no.}

\item{Alternating: no.}

\item{Kauffman bracket:\\
$\langle D \rangle = A -A^{-3} -A^{-5}$.}

\item{$\ord_{q=A^2=i} \langle D \rangle = 0$.}

\item{Hyperbolic: no.}

\item{Homology group:\\
$H_1(S^1\times S^2 \setminus K; \Z) = \Z_4 \oplus \Z$.}

\item{Fundamental group,\\
generators:\\
$a,b$,\\
relators:\\
$aabbbbbaabbb$.}

\item{Separating surfaces:\\
a two-sided torus,\\
and a one-sided non orientable surface with null Euler characteristic.}

\end{itemize}
\end{minipage}
\qquad
\begin{minipage}{0.3\textwidth}
\begin{center}
\includegraphics[scale=0.45]{3_3.eps} ($3_3$)
\end{center}
\end{minipage}

\subsection{$3_4$}
$\ $\\
\begin{minipage}{0.5\textwidth}
\begin{itemize}

\item{$\Z_2$-homologically trivial: yes.}

\item{Homotopically trivial: no.}

\item{Alternating: no.}

\item{Kauffman bracket:\\
$\langle D \rangle = 0$.}

\item{$\ord_{q=A^2=i} \langle D \rangle = \infty$.}

\item{Hyperbolic: no.}

\item{Homology group:\\
$H_1(S^1\times S^2 \setminus K; \Z) = \Z_4 \oplus \Z$.}

\item{Fundamental group,\\
generators:\\
$a,b$,\\
relators:\\
$aaaabbbb$.}

\item{Separating surfaces: no.}

\end{itemize}
\end{minipage}
\qquad
\begin{minipage}{0.3\textwidth}
\begin{center}
\includegraphics[scale=0.45]{3_4.eps} ($3_4$)
\end{center}
\end{minipage}

\subsection{$3_5$ and $3_6$}
We know that the complements of these knots are isometric as hyperbolic manifolds. We do not know if they are equivalent or not. \\
\begin{minipage}{0.5\textwidth}
\begin{itemize}

\item{$\Z_2$-homologically trivial: no.}

\item{Homotopically trivial: no.}

\item{Alternating: yes for $3_5$, we do not know for $3_6$.}

\item{Kauffman bracket:\\
$\langle D \rangle = 0$.}

\item{$\ord_{q=A^2=i} \langle D \rangle = \infty$.}

\item{Hyperbolic: yes.}

\item{Hyperbolic volume:\\
$3.66386237671$}

\item{Homology group:\\
$H_1(S^1\times S^2 \setminus K; \Z) = \Z$.}

\item{Fundamental group,\\
generators:\\
$a,b$,\\
relators:\\
$aaabba^{-1}b^{-1}b^{-1}b^{-1}a^{-1}bb$.}

\item{Separating surfaces: no.}

\end{itemize}
\end{minipage}
\qquad
\begin{minipage}{0.3\textwidth}
\begin{center}
\includegraphics[scale=0.45]{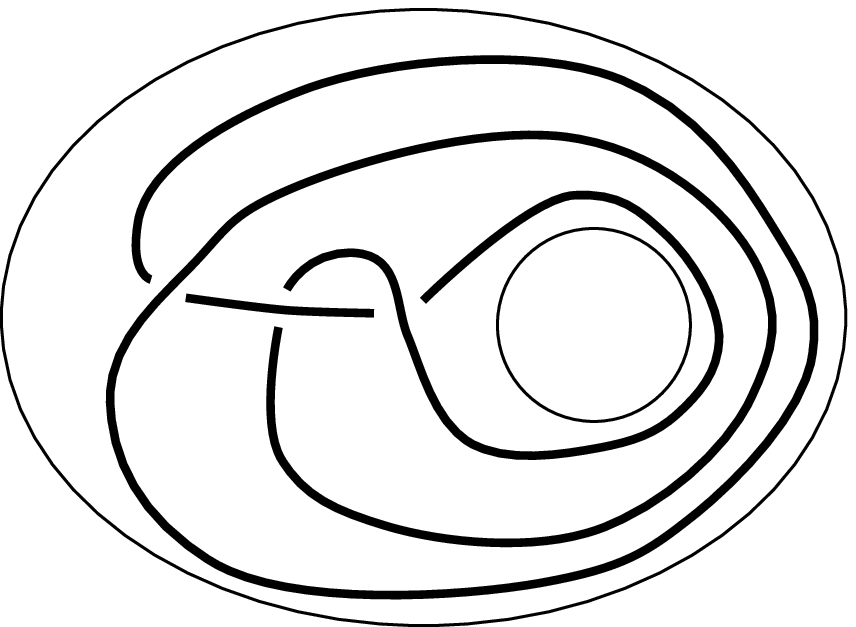} ($3_5$)
\\
\vspace{0.5cm}
\includegraphics[scale=0.45]{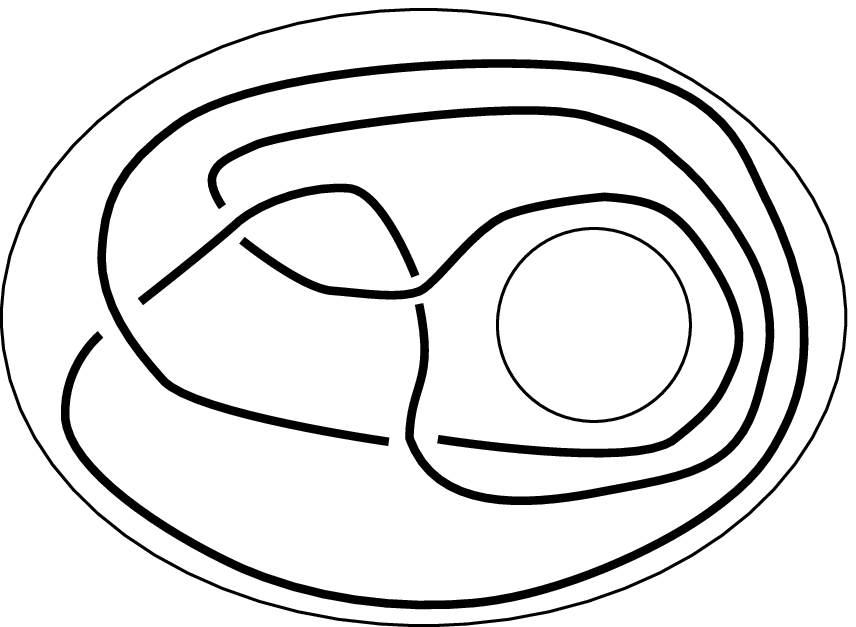} ($3_5$)
\\
\vspace{0.5cm}
\includegraphics[scale=0.45]{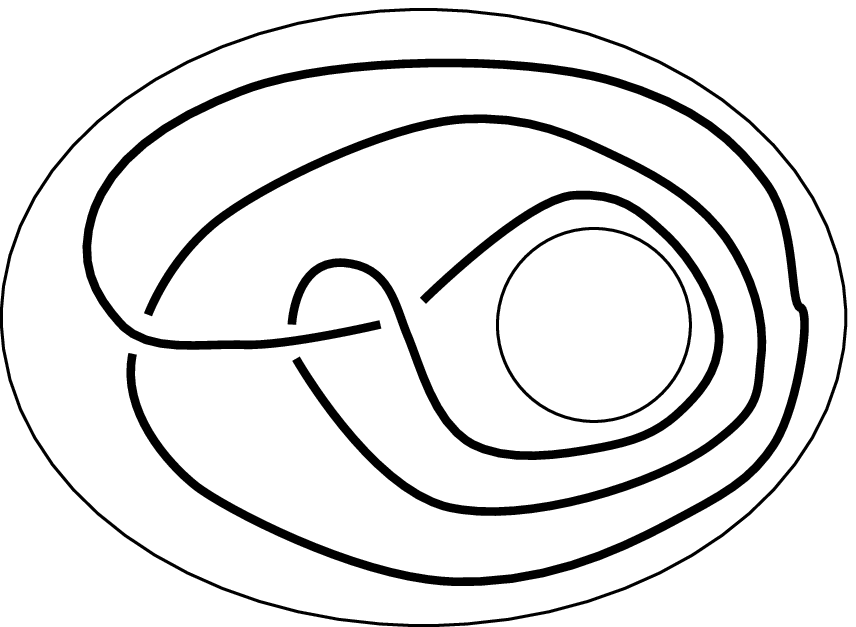} ($3_6$)
\end{center}
\end{minipage}

\subsection{Summary for knots}

$$
\begin{array}{|c|c|c|c|c|c|c|c|}
\hline 
\text{Name} & \text{$\Z_2$-h.l. tr.} & \text{H.t. tr.} & \text{Alt.} & \ord_{q=A^2=i} & \text{H.l. tor} & \text{Big h.l.} & \text{Hyp.} \\
\hline \hline
0_1 & \text{no} & \text{no} & \text{yes} & \infty & \text{no} & \text{no} & \text{no} \\
\hline \hline
1_1 & \text{yes} &  \text{no} & \text{yes} & 0 & \text{yes} & \text{no} & \text{no} \\
\hline \hline
2_1 & \text{yes} & \text{yes} & \text{yes} & 0  & \text{no} & \text{yes} & \text{no} \\
\hline
2_2 & \text{no} & \text{no} & \text{yes} & \infty & \text{yes} & \text{no} & \text{no} \\
\hline
2_3 & \text{no} & \text{no} & \text{no} & \infty & \text{yes} & \text{no} & \text{no} \\
\hline \hline
3_1 &\text{yes} &\text{yes} &\text{yes} & 0 & \text{yes} & \text{no} &\text{no} \\
\hline
3_2 &\text{yes} &\text{no} &\text{yes} & 0 &\text{yes} &\text{no} &\text{yes} \\
\hline
3_3 & \text{yes} &\text{no} &\text{no} & 0 &\text{yes} &\text{no} &\text{no} \\
\hline
3_4 &\text{yes} &\text{no} &\text{no} & \infty &\text{yes} &\text{no} &\text{no} \\
\hline
3_5 &\text{no} &\text{no} &\text{yes} & \infty &\text{no} &\text{no} &\text{yes} \\
\hline
3_6 &\text{no} &\text{no} &\text{no} & \infty &\text{no} &\text{no} &\text{yes} \\
\hline \hline
\end{array}
$$

\section{Links}
	
\subsection{$L3_1$}
$\ $\\
\begin{minipage}{0.5\textwidth}
\begin{itemize}

\item{No. of components: $2$.}

\item{$\Z_2$-homologically trivial: yes.}

\item{Homotopically trivial: no.}

\item{Alternating: yes.}

\item{Kauffman bracket:\\
$\langle D \rangle = A^7 -A^3 -A^{-5}$.}

\item{$\ord_{q=A^2=i} \langle D \rangle = 0$.}

\item{Hyperbolic: no.}

\item{Homology group:\\
$H_1(S^1\times S^2 \setminus K; \Z) = \Z^2$.}

\item{Fundamental group,\\
generators:\\
$a,b$,\\
relators:\\
$aab^{-1}a^{-1}bba^{-1}a^{-1}bab^{-1}b^{-1}$.}

\item{Separating surfaces:\\
a two-sided torus.}

\end{itemize}
\end{minipage}
\qquad
\begin{minipage}{0.3\textwidth}
\begin{center}
\includegraphics[scale=0.45]{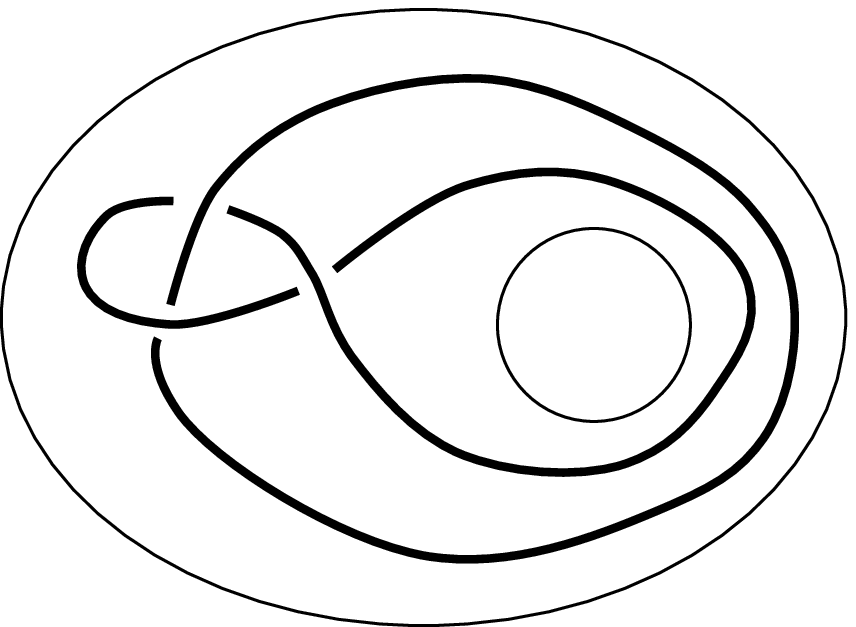} ($L3_1$)
\end{center}
\end{minipage}

\subsection{$L3_2$}
$\ $\\
\begin{minipage}{0.5\textwidth}
\begin{itemize}

\item{No. of components: $2$.}

\item{$\Z_2$-homologically trivial: yes.}

\item{Homotopically trivial: no.}

\item{Alternating: yes.}

\item{Kauffman bracket:\\
$\langle D \rangle = A^7 +A^{-1}$.}

\item{$\ord_{q=A^2=i} \langle D \rangle = 0$.}

\item{Hyperbolic: no.}

\item{Homology group:\\
$H_1(S^1\times S^2 \setminus K; \Z) = \Z_2 \oplus \Z^2$.}

\item{Fundamental group,\\
generators:\\
$a,b,c$,\\
relators:\\
$ac^{-1}a^{-1}c$,\\
$abab^{-1}$.}

\item{Separating surfaces:\\
a two-sided torus,\\
and two one-sided non-orientable surfaces with null Euler characteristic.}

\end{itemize}
\end{minipage}
\qquad
\begin{minipage}{0.3\textwidth}
\begin{center}
\includegraphics[scale=0.45]{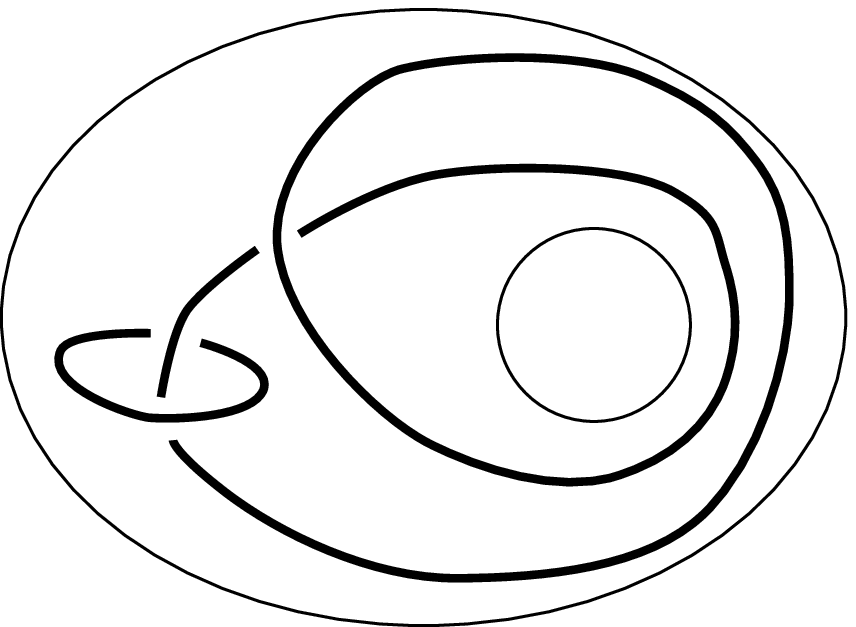} ($L3_2$)
\end{center}
\end{minipage}

\subsection{$L3_3$ and $L3_4$}
We do not know if they are equivalent or not.\\
\begin{minipage}{0.5\textwidth}
\begin{itemize}

\item{No. of components: $2$.}

\item{$\Z_2$-homologically trivial: no.}

\item{Homotopically trivial: no.}

\item{Alternating: yes for $L3_3$, we do not know for $L3_4$.}

\item{Kauffman bracket:\\
$\langle D \rangle = 0$.}

\item{$\ord_{q=A^2=i} \langle D \rangle = \infty$.}

\item{Hyperbolic: no.}

\item{Homology group:\\
$H_1(S^1\times S^2 \setminus K; \Z) = \Z^2$.}

\item{Fundamental group,\\ 
generators:\\
$a,b,c$,\\
relators:\\
$ab^{-1}b^{-1}a^{-1}bb$.}

\item{Separating surfaces:\\
a two-sided torus.}

\end{itemize}
\end{minipage}
\qquad
\begin{minipage}{0.3\textwidth}
\begin{center}
\includegraphics[scale=0.45]{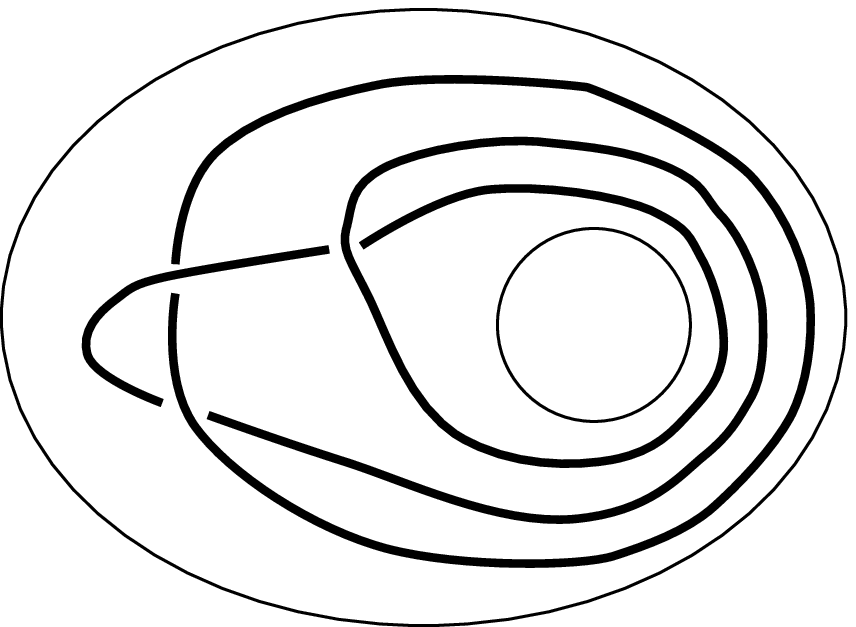} ($L3_3$)
\\
\vspace{0.5cm}
\includegraphics[scale=0.45]{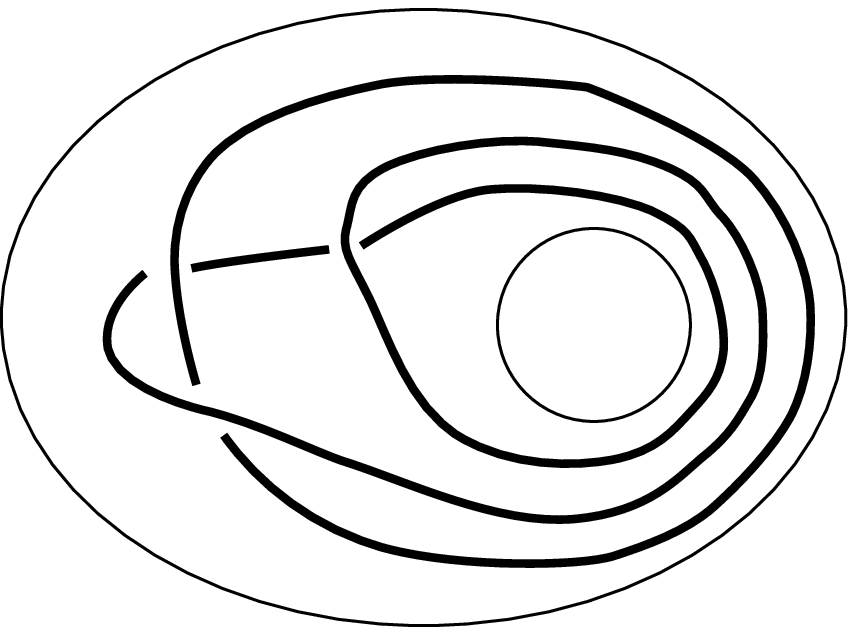} ($L3_4$)
\end{center}
\end{minipage}

\subsection{Summary for links non knots}
	
$$
\begin{array}{|c|c|c|c|c|c|c|c|c|}
\hline 
\text{Name} & \text{N. comp.} & \text{$\Z_2$-h.l. tr.} & \text{H.t. tr.} & \text{Alt.} & \ord_{q=A^2=i} & \text{H.l. tor.} & \text{Big h.l.} & \text{Hyp.} \\
\hline \hline
L3_1 & 2  & \text{yes}  & \text{no} & \text{yes} & 0 & \text{no} & \text{no} & \text{no}  \\
\hline
L3_2 & 2  & \text{yes}  & \text{no} & \text{yes} & 0 & \text{yes} & \text{no} & \text{no}  \\
\hline
L3_3 & 2  & \text{no}  & \text{no} & \text{yes} & \infty & \text{no} & \text{no} & \text{no}  \\
\hline
L3_4 & 2  & \text{no}  & \text{no} & \text{yes} & \infty & \text{no} & \text{no} & \text{no}  \\
\hline \hline
\end{array}
$$

\section{Summary}

\begin{center}

\begin{tabular}{|l||c|c|c|c|}
\hline
 & $0$ & $1$ & $2$ & $3$ \\
\hline \hline
Max. n. of components & $1$ & $1$ & $2$ & $2$ \\
\hline
N. of knots  & $1$ & $1$ & $3$ & $7$ \\
\hline
N. of 2-components links  & $0$ & $0$ & $0$ & $4$ \\
\hline
N. of $Z_2$-h.l. non tr. knots  & $1$ & $0$ & $2$ & $2$ \\
\hline
N. of $Z_2$-h.l. non tr., non knots, links & $0$ & $0$ & $0$ & $2$ \\
\hline
N. of h.t. tr. knots & $0$ & $0$ & $1$ & $2$ \\
\hline
N. of h.t. tr., non knots, links & $0$ & $0$ & $0$ & $0$ \\
\hline
N. of alternating knots & $1$ & $1$ & $2$ & $4$ \\
\hline
N. of alternating,non knots, links & $0$ & $0$ & $0$ & $4$ \\
\hline
N. of $\Z_2$-h.l. tr. alternating knots & $0$ & $1$ & $1$ & $2$ \\
\hline
N. of $\Z_2$-h.l. tr. alternating, non knots, links & $0$ & $0$ & $0$ & $2$ \\
\hline
N. of $Z_2$-trivial links with null Jones pol. & $0$ & $0$ & $0$ & $1$ \\
\hline
N. of $Z_2$-trivial knots with symmetric Kauf. br. & $0$ & $1$ & $1$ & $0$ \\
\hline
N. of $Z_2$-trivial, non knots, links with symmetric Kauf. br. & $0$ & $0$ & $0$ & $1$ \\
\hline
N. of hyp. knots & $0$ & $0$ & $0$ & $3$ \\
\hline
N. of, non knots, hyp. links & $0$ & $0$ & $0$ & $0$ \\
\hline
N. of knots with h.l. with tor. & $0$ & $1$ & $1$ & $4$ \\
\hline
N. of, non knots, links with h.l. with tor. & $0$ & $0$ & $0$ & $1$ \\
\hline
N. of knots with big h.l. & $0$ & $0$ & $1$ & $0$ \\
\hline
N. of, non knots, links with big h.l. & $0$ & $0$ & $0$ & $0$ \\
\hline
N. of links with $\ord_{q=A^2=i}= $ n. comp. & $0$ & $0$ & $0$ & $0$ \\
\hline
N. of ribbon links & $0$ & $0$ & $0$ & $0$ \\
\hline
\end{tabular}
\end{center}

\end{document}